\newcommand{\fkb}{\ensuremath{\mathfrak{b}}\xspace}
\newcommand{\fkc}{\ensuremath{\mathfrak{c}}\xspace}
\newcommand{\fkg}{\ensuremath{\mathfrak{g}}\xspace}
\newcommand{\fkk}{\ensuremath{\mathfrak{k}}\xspace}
\newcommand{\fkl}{\ensuremath{\mathfrak{l}}\xspace}
\newcommand{\fks}{\ensuremath{\mathfrak{s}}\xspace}
\newcommand{\fku}{\ensuremath{\mathfrak{u}}\xspace}
\newcommand{\club}{{\clubsuit}}
\newcommand{\nat}{{\natural}}
\newcommand{\spade}{{\spadesuit}}
\newcommand{\BA}{\ensuremath{\mathbb {A}}\xspace}
\newcommand{\BC}{\ensuremath{\mathbb {C}}\xspace}
\newcommand{\BE}{\ensuremath{\mathbb {E}}\xspace}
\newcommand{\BG}{\ensuremath{\mathbb {G}}\xspace}
\newcommand{\BL}{\ensuremath{\mathbb {L}}\xspace}
\newcommand{\BM}{\ensuremath{\mathbb {M}}\xspace}
\newcommand{\BP}{\ensuremath{\mathbb {P}}\xspace}
\newcommand{\BQ}{\ensuremath{\mathbb {Q}}\xspace}
\newcommand{\BV}{\ensuremath{\mathbb {V}}\xspace}
\newcommand{\BX}{\ensuremath{\mathbb {X}}\xspace}
\newcommand{\BZ}{\ensuremath{\mathbb {Z}}\xspace}
\newcommand{\CC}{\ensuremath{\mathcal {C}}\xspace}
\newcommand{\CD}{\ensuremath{\mathcal {D}}\xspace}
\newcommand{\CE}{\ensuremath{\mathcal {E}}\xspace}
\newcommand{\CF}{\ensuremath{\mathcal {F}}\xspace}
\newcommand{\CM}{\ensuremath{\mathcal {M}}\xspace}
\newcommand{\CN}{\ensuremath{\mathcal {N}}\xspace}
\newcommand{\CO}{\ensuremath{\mathcal {O}}\xspace}
\newcommand{\CS}{\ensuremath{\mathcal {S}}\xspace}
\newcommand{\CT}{\ensuremath{\mathcal {T}}\xspace}
\newcommand{\CU}{\ensuremath{\mathcal {U}}\xspace}
\newcommand{\CW}{\ensuremath{\mathcal {W}}\xspace}
\newcommand{\CY}{\ensuremath{\mathcal {Y}}\xspace}
\newcommand{\CZ}{\ensuremath{\mathcal {Z}}\xspace}
\newcommand{\RM}{\ensuremath{\mathrm {M}}\xspace}
\newcommand{\RN}{\ensuremath{\mathrm {N}}\xspace}
\newcommand{\RU}{\ensuremath{\mathrm {U}}\xspace}
\DeclareMathOperator{\charac}{char}
\newcommand{\corr}{\mathrm{corr}}
\newcommand{\del}{\operatorname{\partial Orb}}
\DeclareMathOperator{\diag}{diag}
\DeclareMathOperator{\dist}{dist}
\DeclareMathOperator{\End}{End}
\newcommand{\Fil}{\ensuremath{\mathrm{Fil}}\xspace}
\newcommand{\GL}{\mathrm{GL}}
\newcommand{\gl}{\frak{gl}}
\newcommand{\GU}{\mathrm{GU}}
\DeclareMathOperator{\Hom}{Hom}
\newcommand{\id}{\ensuremath{\mathrm{id}}\xspace}
\let\Im\relax
\DeclareMathOperator{\Im}{Im}
\newcommand{\inj}{\hookrightarrow}
\newcommand{\Int}{\ensuremath{\mathrm{Int}}\xspace}
\newcommand{\lInt}{\ensuremath{\text{$\ell$-}\mathrm{Int}}\xspace}
\DeclareMathOperator{\Ker}{Ker}
\DeclareMathOperator{\length}{length}
\DeclareMathOperator{\Lie}{Lie}
\newcommand{\loc}{\ensuremath{\mathrm{loc}}\xspace}
\newcommand{\M}{\mathrm{M}}
\newcommand{\naive}{\ensuremath{\mathrm{naive}}\xspace}
\DeclareMathOperator{\Nm}{Nm}
\newcommand{\OGr}{\mathrm{OGr}}
\DeclareMathOperator{\Orb}{Orb}
\DeclareMathOperator{\ord}{ord}
\DeclareMathOperator{\rank}{rank}
\newcommand{\PGL}{{\mathrm{PGL}}}
\newcommand{\rc}{\ensuremath{\mathrm{rc}}\xspace}
\renewcommand{\Re}{{\mathrm{Re}}}
\newcommand{\red}{\ensuremath{\mathrm{red}}\xspace}
\DeclareMathOperator{\Res}{Res}
\newcommand{\rs}{\ensuremath{\mathrm{rs}}\xspace}
\newcommand{\SL}{{\mathrm{SL}}}
\DeclareMathOperator{\Spec}{Spec}
\DeclareMathOperator{\Spf}{Spf}
\newcommand{\Sp}{{\mathrm{Sp}}}
\newcommand{\SU}{{\mathrm{SU}}}
\DeclareMathOperator{\supp}{supp}
\DeclareMathOperator{\tr}{tr}
\newcommand{\U}{\mathrm{U}}
\DeclareMathOperator{\vol}{vol}
\newcommand{\wt}{\widetilde}
\newcommand{\ov}{\overline}
\newcommand{\incl}{\hookrightarrow}
\newcommand{\lra}{\longrightarrow}
\newcommand{\bs}{\backslash}
\newcommand{\uF}{\underline{F}}
\newcommand{\uV}{\underline{V}}
\newtheorem{theorem}{Theorem}
\newtheorem{proposition}[theorem]{Proposition}
\newtheorem{lemma}[theorem]{Lemma}
\newtheorem {conjecture}[theorem]{Conjecture}
\newtheorem {`conjecture'}[theorem]{``Conjecture''}
\newtheorem{corollary}[theorem]{Corollary}
\theoremstyle{definition}
\newtheorem{definition}[theorem]{Definition}
\newtheorem{example}[theorem]{Example}
\newtheorem{remark}[theorem]{Remark}
\newtheorem{remarks}[theorem]{Remarks}
\newenvironment{altenumerate}
   {\begin{list}
      {\textup{(\theenumi)} }
      {\usecounter{enumi}
       \setlength{\labelwidth}{0pt}
       \setlength{\labelsep}{0pt}
       \setlength{\leftmargin}{0pt}
       \setlength{\itemsep}{\the\smallskipamount}
       \renewcommand{\theenumi}{\roman{enumi}}
      }}
   {\end{list}}
\newenvironment{altenumerate2}
   {\begin{list}
      {\textup{(\theenumii)} }
      {\usecounter{enumii}
       \setlength{\labelwidth}{0pt}
       \setlength{\labelsep}{0pt}
       \setlength{\leftmargin}{2em}
       \setlength{\itemsep}{\the\smallskipamount}
       \renewcommand{\theenumii}{\alph{enumii}}
      }}
   {\end{list}}
\newenvironment{altitemize}
   {\begin{list}
      {$\bullet$}
      {\setlength{\labelwidth}{0pt}
	   \setlength{\itemindent}{5pt}
       \setlength{\labelsep}{5pt}
       \setlength{\leftmargin}{0pt}
       \setlength{\itemsep}{\the\smallskipamount}
      }}
   {\end{list}}
\numberwithin{equation}{section}
\numberwithin{theorem}{section}
\newcommand{\aform}{\ensuremath{\langle\text{~,~}\rangle}\xspace}
\newcounter{filler}
\renewcommand{\to}{%
   \ifbool{@display}{\longrightarrow}{\rightarrow}%
   }
\let\shortmapsto\mapsto
\renewcommand{\mapsto}{%
   \ifbool{@display}{\longmapsto}{\shortmapsto}%
   }
\newlength{\olen}
\newlength{\ulen}
\newlength{\xlen}
\newcommand{\xra}[2][]{%
   \ifbool{@display}%
      {\settowidth{\olen}{$\overset{#2}{\longrightarrow}$}%
       \settowidth{\ulen}{$\underset{#1}{\longrightarrow}$}%
       \settowidth{\xlen}{$\xrightarrow[#1]{#2}$}%
       \ifdimgreater{\olen}{\xlen}%
          {\underset{#1}{\overset{#2}{\longrightarrow}}}%
          {\ifdimgreater{\ulen}{\xlen}%
             {\underset{#1}{\overset{#2}{\longrightarrow}}}
             {\xrightarrow[#1]{#2}}}}%
      {\xrightarrow[#1]{#2}}
   }
\newcommand{\xyra}[2][]{%
   \settowidth{\xlen}{$\xrightarrow[#1]{#2}$}%
   \ifbool{@display}%
      {\settowidth{\olen}{$\overset{#2}{\longrightarrow}$}%
       \settowidth{\ulen}{$\underset{#1}{\longrightarrow}$}%
       \ifdimgreater{\olen}{\xlen}%
          {\mathrel{\xymatrix@M=.12ex@C=3.2ex{\ar[r]^-{#2}_-{#1} &}}}%
          {\ifdimgreater{\ulen}{\xlen}%
             {\mathrel{\xymatrix@M=.12ex@C=3.2ex{\ar[r]^-{#2}_-{#1} &}}}
             {\mathrel{\xymatrix@M=.12ex@C=\the\xlen{\ar[r]^-{#2}_-{#1} &}}}}}%
      {\mathrel{\xymatrix@M=.12ex@C=\the\xlen{\ar[r]^-{#2}_-{#1} &}}}%
   }
\newcommand{\xla}[2][]{%
   \ifbool{@display}%
      {\settowidth{\olen}{$\overset{#2}{\longleftarrow}$}%
       \settowidth{\ulen}{$\underset{#1}{\longleftarrow}$}%
       \settowidth{\xlen}{$\xleftarrow[#1]{#2}$}%
       \ifdimgreater{\olen}{\xlen}%
          {\underset{#1}{\overset{#2}{\longleftarrow}}}%
          {\ifdimgreater{\ulen}{\xlen}%
             {\underset{#1}{\overset{#2}{\longleftarrow}}}
             {\xleftarrow[#1]{#2}}}}%
      {\xleftarrow[#1]{#2}}
   }
\newcommand{\isoarrow}{%
   \ifbool{@display}{\overset{\sim}{\longrightarrow}}{\xrightarrow\sim}%
   }
\renewcommand{\lra}{%
   \ifbool{@display}{\longleftrightarrow}{\leftrightarrow}%
   }   
\newcommand{\undertilde}{\raisebox{0.4ex}{\smash[t]{$\scriptstyle\sim$}}}
\begin{document}


\title[On the arithmetic transfer conjecture for exotic smooth moduli spaces]{On the arithmetic transfer conjecture for exotic smooth formal moduli spaces}
\author{M. Rapoport}
\address{Mathematisches Institut der Universit\"at Bonn, Endenicher Allee 60, 53115 Bonn, Germany}
\email{rapoport@math.uni-bonn.de}
\author{B. Smithling}
\address{Johns Hopkins University, Department of Mathematics, 3400 N.\ Charles St.,\ Baltimore, MD  21218, USA}
\email{bds@math.jhu.edu}
\author{W. Zhang}
\address{Columbia University, Department of Mathematics, 2990 Broadway, New York, NY 10027, USA}
\email{wzhang@math.columbia.edu}


\begin{abstract}

In the relative trace formula approach to the arithmetic Gan--Gross--Prasad conjecture, we formulate a local conjecture (arithmetic transfer) in the case of an exotic smooth formal moduli space of $p$-divisible groups, associated to a unitary group relative to a ramified quadratic extension of a $p$-adic field. We prove our conjecture in the case of a unitary group in three variables. 
\end{abstract}

\maketitle

\tableofcontents
\section{Introduction}\label{Intro section}

The theorem of Gross and Zagier \cite{GZ} relates the Neron--Tate heights of Heegner points on modular curves to special values of derivatives of certain $L$-functions. This has been generalized in various ways to higher-dimensional Shimura varieties. One such generalization, which is still conjectural, has been proposed by Gan--Gross--Prasad \cite{GGP} and the third-named author \cite{Z09,Z12}. This arithmetic Gan--Gross--Prasad conjecture is inspired by the (usual) Gan--Gross--Prasad conjecture relating period integrals on classical groups to special values of certain $L$-functions.  In \cite{JR} Jacquet and Rallis proposed a relative trace formula approach to this last conjecture in the case of unitary groups, which led them to formulate two local conjectures in this context: a fundamental lemma (FL) conjecture, and a smooth transfer (ST) conjecture. Both of their local conjectures are now proved to a large extent, the first for $p\gg0$ thanks to the work of Yun \cite{Y} (and Gordon \cite{Go}), and the second for arbitrary $p$-adic non-archimedean fields by  the third-named author \cite{Z14}.

In \cite{Z12} the third-named author proposed a relative trace formula approach to the arithmetic Gan--Gross--Prasad conjecture. In this context, he formulated the arithmetic fundamental lemma (AFL) conjecture, cf.~\cite{Z12,RTZ}. The AFL conjecturally relates the special value of the derivative of an orbital integral to an arithmetic intersection number on a Rapoport--Zink formal moduli space of $p$-divisible groups attached to a unitary group. The AFL is proved for low ranks of the unitary group ($n=2$ and $3$) in \cite{Z12}, and for arbitrary rank $n$ and \emph{minuscule} group elements in \cite{RTZ}. A simplified proof for $n=3$ appears in \cite{M-AFL}. At present, the general case of the AFL seems out of reach, even though Yun has obtained interesting results concerning the function field analog \cite{Y12}. 

In the present paper, we address an arithmetic transfer (AT) analog of the ST conjecture in the arithmetic context, in a very specific case; we refer to \cite{RSZ} for a more general context in which we expect such arithmetic analogs of ST. The special feature of the case at hand is that, despite the fact that we take the unitary group to be ramified, the corresponding RZ space is smooth, cf.~\cite{PR}. For this reason we speak of \emph{exotic smoothness}. 

Now that we have explained the title of the paper, let us describe its contents in more detail. 

\smallskip

Let $p$ be an odd prime number, and let $F_0$ be a finite extension of $\BQ_p$. Let $F/F_0$ be a quadratic field extension. We denote by $a\mapsto \ov a$ the non-trivial automorphism of $F/F_0$, and by $\eta=\eta_{F/F_0}$ the corresponding quadratic character on $F_0^\times$. Let $e:=(0,\dotsc,0,1)\in F_0^n$, and let $\GL_{n-1}\incl \GL_n$ be the natural embedding that identifies $\GL_{n-1}$ with the subgroup fixing $e$ under left multiplication, and fixing the transposed vector $\tensor[^t]{e}{}$ under right multiplication. Let 
\[
   S_n := \{\, s\in \Res_{F/F_0}\GL_n \mid s\ov s=1 \,\},
\]
with its action by conjugation of $\GL_{n-1}$. On the other hand, let $W_0$ and $W_1$ be the respective split and non-split non-degenerate $F/F_0$-hermitian spaces of dimension $n$. For $i=0$ and $1$, fix a vector $u_i\in W_i$ of length $1$, and denote by $W_i^\flat$ the orthogonal complement of the line spanned by $u_i$. The unitary group $\U(W_i^\flat)$ acts by conjugation on $\U(W_i)$.

We now explain the matching relation between regular semi-simple elements of $S_n(F_0)$ and of $\U(W_0)(F_0)$ and $\U(W_1)(F_0)$. Here an element of  $S_n(F_0)$, resp.~of $\U(W_i)(F_0)$, is called \emph{regular semi-simple} (rs) if its orbit under $\GL_{n-1}$, resp.~$\U(W_i^\flat)$, is Zariski-closed of maximal dimension.  For each $i$, choose a basis of $W_i$ by first choosing a basis of $W_i^\flat$ and then appending $u_i$ to it. This identifies $\U(W^\flat_i)(F_0)$ with a subgroup of $\GL_{n-1}(F)$ and  $\U(W_i)(F_0)$ with a subgroup of $\GL_{n}(F)$. An element $\gamma\in S_n(F_0)_\rs$ is said to \emph{match} an element $g\in \U(W_i)(F_0)_\rs$ if both elements are conjugate under $\GL_{n-1}(F)$ when considered as elements in $\GL_n(F)$. This matching relation induces a bijection 
\[
   \big[ \U(W_0)(F_0)_\rs\big]\amalg \big[\U(W_1)(F_0)_\rs\big]\simeq \big[S_n(F_0)_\rs\big],
\]
cf.~\cite[\S2]{Z12}, where the brackets indicate the sets of orbits under $\U(W_i^\flat)(F_0)$, resp.~$\GL_{n-1}(F_0)$.

Dual to the matching of elements is the transfer of functions, which is defined through weighted, resp.~ordinary, orbital integrals. For a function $f'\in C_c^\infty(S_n(F_0))$, an element $\gamma\in S_n(F_0)_\rs$, and a complex parameter $s\in\BC$, we define the weighted orbital integral
\[
   \Orb(\gamma,f', s) := \int_{\GL_{n-1}(F_0)}f'(h^{-1}\gamma h)\lvert\det h \rvert^s\eta(\det h) \, dh,
\]
as well as its special value 
\[
   \Orb(\gamma,f') := \Orb(\gamma,f', 0) .
\]
Here the Haar measure on $\GL_{n-1}(F_0)$ is normalized so that $\vol(\GL_{n-1}(O_{F_0}))=1$. 
For a function $f_i\in C_c^\infty(\U(W_i)(F_0))$ and an element $g\in  \U(W_i)(F_0)_\rs$, we define the orbital integral
\[
   \Orb(g,f_i) := \int_{\U(W^\flat_i)(F_0)}f_i(h^{-1} g h)\, dh .
\]
Then the function $f'\in C_c^\infty(S_n(F_0))$ is said to \emph{transfer} to the pair of functions $(f_0,f_1)$ in $ C_c^\infty(\U(W_0)(F_0))\times  C_c^\infty(\U(W_1)(F_0))$
if 
\[
   \omega(\gamma)\Orb(\gamma,f')=\Orb(g,f_i)
\]
whenever $\gamma\in S_n(F_0)_\rs$ matches the element $g\in \U(W_i)(F_0)_\rs$. Here 
\[
   \omega\colon S_n(F_0)_\rs\to \BC^\times
\]
is a fixed transfer factor \cite[p.~988]{Z14}, and the Haar measures on $\U(W_i^\flat)(F_0)$ are fixed. The ST conjecture asserts that for any $f'$, a transfer $(f_0,f_1)$ exists (non-uniquely), and that any pair $(f_0,f_1)$ arises as a transfer from some (non-unique) $f'$.  The FL conjecture asserts a specific transfer relation in a completely unramified situation.

When $F/F_0$ is unramified, if one takes for $\omega\colon S_n(F_0)_\rs \to \{\pm 1\}$ the \emph{natural} transfer factor (see \cite[(1.5)]{RTZ}), and normalizes the Haar measure on  $\U(W_0^\flat)(F_0)$ by giving a hyperspecial maximal compact subgroup volume one, then the FL conjecture asserts that $\mathbf{1}_{S_n(O_{F_0})}$  transfers to $(\mathbf{1}_{K_0},0)$, where $K_0 \subset \U(W_0)(F_0)$ denotes a hyperspecial maximal open subgroup.

By contrast, when $F/F_0$ is ramified, there is no natural choice of a conjugacy class of open compact subgroups $K_0$, no natural choice of a transfer factor, and no natural candidate for $f'$ transferring to $(\mathbf{1}_{K_0},0)$.

\smallskip

We next pass to the AFL conjecture, which requires, just as in the FL conjecture, that $F/F_0$ is unramified. We take the same transfer factor as in the FL conjecture, and the same fixed Haar measure on $\U(W^\flat_0)(F_0)$. For $f'\in C_c^\infty(S_n(F_0))$ and $\gamma \in S_n(F_0)_\rs$, set
\[
   \del(\gamma,f') := \frac d{ds} \Big|_{s=0} \Orb(\gamma,f', s).
\]
Then the AFL conjecture asserts that 
\begin{equation}\label{IntroAFL}
\omega(\gamma)\del \bigl(\gamma, \mathbf{1}_{S_n(O_{F_0})}\bigr) = -\Int(g)\cdot\log q,
\end{equation}
whenever $\gamma\in S_n(F_0)_\rs$ matches $g\in \U(W_1)(F_0)_\rs$ (note that the FL conjecture asserts that $\Orb(\gamma, \mathbf{1}_{S_n(O_{F_0})}) = 0$ for such $\gamma$). Here $q$ denotes the number of elements in the residue field of $F_0$. 

The term $\Int(g)$ requires explanation. Let $\CN_n=\CN_{F/F_0,n}$ denote the formal scheme over $\Spf O_{\breve{F}}$ which represents the following functor on the category of $O_{\breve F}$-schemes $S$ such that $p\cdot \CO_S$ is a locally nilpotent ideal sheaf. The functor associates to $S$ the set of isomorphism classes of tuples $(X,\iota,\lambda,\rho)$ where $X$ is a formal $p$-divisible $O_{F_0}$-module of relative height $2n$ and dimension $n$, where $\iota \colon O_F\to \End(X)$ is an action of $O_F$ satisfying the Kottwitz condition of signature $(1,n-1)$ on $\Lie(X)$ (cf.~\cite[\S 2]{KR-U1}), where $\lambda$ is a principal polarization whose Rosati involution induces the automorphism $a\mapsto \ov a$ on $\iota(O_F)$, and where $\rho\colon X\times_S \ov S\to \BX_n\times_{\Spec\ov k} \ov S$ is a framing of the restriction of $X$ to the special fiber $\ov S$ of $S$, compatible with $\iota$ and $\lambda$ in a certain sense, cf.~\cite[\S2]{RTZ}. Then $\CN_n$ is  formally smooth of relative formal dimension $n-1$ over $\Spf O_{\breve F}$. The automorphism group (in a certain sense) of the framing object $\BX_n$ can be identified with $\U(W_1)(F_0)$; it acts on $\CN_n$ by changing the framing. Let $\CE$ be the canonical lifting of the formal $O_F$-module of relative height $1$ and dimension $1$ over $\Spf O_{\breve F}$, with its canonical $O_F$-action $\iota_\CE$ and its natural polarization $\lambda_\CE$. There is a natural closed embedding of $\CN_{n-1}$ into $\CN_n$,
\[
   \delta_\CN\colon
	\xymatrix@R=0ex{
	   \CN_{n-1} \ar[r]  &  \CN_n\\
		Y \ar@{|->}[r]  &  Y \times \ov\CE.
	}
\]
Here all auxiliary structure ($O_F$-action, polarization, framing) has been suppressed from the notation, and $\ov\CE$ denotes $\CE$, with $\iota_\CE$ replaced by its conjugate. 
Let 
$$
   \Delta\subset \CN_{n-1}\times_{\Spf O_{\breve F}} \CN_n
$$
denote the graph of $\delta_\CN$. Then $\Int(g)$ is defined as the intersection number of $\Delta$ with its translate under the automorphism $1\times g$ of $\CN_{n-1}\times_{\Spf O_{\breve F}} \CN_n$,
\[
   \Int(g) = \chi\bigl( \CO_\Delta\otimes^\BL \CO_{(1 \times g)\Delta} \bigr) .
\]
This concludes the statement of the AFL conjecture. 

It should be true in the situation of the AFL that for \emph{any} $f'\in C_c^\infty(S_n(F_0))$ with transfer  $(\mathbf{1}_{K_0},0)$, there exists a function $f'_{\corr}\in C_c^\infty(S_n(F_0))$ such that
\begin{equation*}
   \omega(\gamma)\del (\gamma, f') = -\Int(g)\cdot\log q+\omega(\gamma)\Orb (\gamma,f'_{\corr})
\end{equation*}
whenever $\gamma\in S_n(F_0)_\rs$ matches $g\in \U(W_1)(F_0)_\rs$.
This would follow along the lines of Lemma \ref{lem a to b} below from a conjectural \emph{density principle} on weighted orbital integrals.
 See Conjecture \ref{conj density} for the statement of the density principle in the setting of this paper.

\smallskip

Now we come to the formulation of our AT conjecture. We assume for this that $F/F_0$ is \emph{ramified}. We then modify the definition of the formal moduli space $\CN_{F/F_0,n}=\CN_n$ by slightly changing the conditions on the tuples  $(X,\iota,\lambda,\rho)$. Namely, in addition to the Kottwitz condition of signature $(1,n-1)$, we impose on $\iota$ the Pappas wedge condition of signature $(1,n-1)$, and the spin condition. The latter condition states that for a uniformizer $\pi$ of $F$, the endomorphism $\iota(\pi) \mid \Lie(X)$ is nowhere zero on $S$. Furthermore, we change the condition that $\lambda$ is principal to the condition 
\[
   \Ker(\lambda) \subset X[\iota(\pi)]
	\quad\text{with}\quad
	\lvert\Ker(\lambda)\rvert = q^{2\lfloor n/2 \rfloor}.
\]
It turns out that $\CN_n$ is again formally smooth of relative formal dimension $n-1$ over $\Spf O_{\breve F}$, and is essentially proper when $n$ is even. We stress that this result is quite surprising in the presence of ramification.

The morphism $\delta_\CN\colon \CN_{n-1}\to\CN_n$ can be defined exactly as before when discussing the AFL setup, provided that $n$ is odd, since then $2\lfloor\frac{n-1}{2}\rfloor = 2\lfloor \frac{n}{2}\rfloor$. We then define $\Int(g)$ as before. Our AT conjecture is as follows. 

\begin{conjecture}\label{Intro-Conj}
Let $F/F_0$ be ramified, and let $n\geq 3$ be odd. 
\begin{altenumerate}
\renewcommand{\theenumi}{\alph{enumi}}
\item\label{intro conj a}
There exists a function $f'\in C^\infty_c(S(F_0))$ with transfer $(\mathbf{1}_{K_0}, 0)$ such that 
\begin{equation}\label{introAT}
   2 \omega(\gamma)\del(\gamma,f')=-\Int(g)\cdot\log q 
\end{equation}
for any $\gamma\in S(F_0)_\rs$ matching an element $g\in \U(W_1)(F_0)_\rs$.
\item\label{intro conj b}
For any $f'\in C^\infty_c(S(F_0))$  with transfer $(\mathbf{1}_{K_0}, 0)$, there exists a function  $f'_\corr\in C^\infty_c(S(F_0))$ such that 
$$
2 \omega(\gamma)\del(\gamma,f') = -\Int(g)\cdot\log q + \omega(\gamma)\Orb(\gamma,f'_\corr)
$$ for any $\gamma\in S(F_0)_\rs$ matching an element $g\in \U(W_1)(F_0)_\rs$.
\end{altenumerate}
\end{conjecture}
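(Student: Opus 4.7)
The plan is to prove part \eqref{intro conj a} by an explicit computation for $n=3$ and to deduce part \eqref{intro conj b} from \eqref{intro conj a} by a density-principle argument of the type indicated in Lemma \ref{lem a to b}, exactly analogous to the passage in the AFL setting from the bare identity \eqref{IntroAFL} to its corrected form with an arbitrary transfer $f'$.

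For part \eqref{intro conj a}, the first task is to construct a specific candidate $f'\in C^\infty_c(S_3(F_0))$ that transfers to $(\mathbf{1}_{K_0},0)$. Since the ramified case admits no canonical choice, I would begin by fixing the conjugacy class of $K_0\subset \U(W_0)(F_0)$ that corresponds geometrically to the exotic smooth model (namely, the stabilizer of a self-dual or almost-self-dual lattice in $W_0$ compatible with the $\Ker(\lambda)$ condition in the moduli problem), and then build $f'$ out of characteristic functions of lattice-type compact open subsets of $S_3(F_0)$ stable under $\GL_2(O_{F_0})$-conjugation, with coefficients chosen to force the $\U(W_1)$-side of the transfer to vanish. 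For such $f'$ and for $\gamma\in S_3(F_0)_\rs$ parametrized by its matching invariants, the weighted orbital integral $\Orb(\gamma,f',s)$ reduces to a one-variable $\eta$-twisted Tate-type integral, whose derivative at $s=0$ admits a closed-form expression as a combination of terms of the shape $(\ord(\cdot)+c)\log q$ in the invariants of $\gamma$.

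On the geometric side, I would work with an explicit presentation of $\CN_3$ afforded by the Pappas local model in the exotic smooth case: $\CN_3$ is two-dimensional and formally smooth, and $\delta_\CN(\CN_2)\subset\CN_3$ is a smooth formal curve. For $g\in\U(W_1)(F_0)_\rs$, the intersection $\Delta\cap (1\times g)\Delta\subset\CN_2\times_{\Spf O_{\breve F}}\CN_3$ is supported on finitely many closed points, and at each point the local intersection multiplicity should be computable through Grothendieck--Messing/display theory adapted to the Pappas wedge and spin conditions, following the template of \cite{RTZ,M-AFL} but incorporating the modified kernel condition $\lvert\Ker(\lambda)\rvert=q^{2\lfloor n/2\rfloor}$ on the polarization. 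Comparing this length, point by point, with the analytic formula and summing over the intersection locus should yield \eqref{introAT}.

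The main obstacle I expect is the geometric side: the Pappas wedge together with the spin condition modifies the deformation theory away from the naive Kottwitz picture, so the local structure of $\CN_3$ along $\Delta$ has to be reworked from scratch and is not quotable from the AFL calculations. A secondary but conceptually important point is the factor of $2$ on the left of \eqref{introAT}; this factor is characteristic of the ramified setting and must be tracked through the ramified transfer factor $\omega$, the normalization of the Haar measure on $\U(W_1^\flat)(F_0)$, and the discriminant of $F/F_0$ entering via $\eta(\det h)$. Matching the shape of the derivative of the analytic side to the geometric intersection multiplicity, including this factor of $2$, is the heart of the argument.
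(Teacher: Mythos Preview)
Your logical architecture is essentially inverted relative to the paper's, and the inversion matters. You propose to prove \eqref{intro conj a} first by constructing an explicit $f'$ and computing both sides, then to deduce \eqref{intro conj b} from \eqref{intro conj a} via the density principle. The paper does the opposite: it proves \eqref{intro conj b} directly for an \emph{arbitrary} $f'$ with transfer $(\mathbf{1}_{K_0},0)$, by showing that the sum $2\omega(\gamma)\del(\gamma,f')+\Int(g)\log q$ is an orbital integral function, and then deduces \eqref{intro conj a} from \eqref{intro conj b} by the elementary Lemma~\ref{lem btoa} (no density principle needed in that direction). The reason the paper goes this way is precisely that your first step is not feasible in the ramified setting: there is no canonical $f'$, and in particular $f'$ cannot be taken in the Iwahori Hecke algebra (Remark~\ref{rem pro-u}\eqref{it pro-u}); the characteristic function $\mathbf{1}_{S(O_{F_0})}$ transfers to $(0,0)$. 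So ``build $f'$ out of characteristic functions of lattice-type sets and compute $\Orb(\gamma,f',s)$ in closed form'' is exactly the strategy the paper abandons as unworkable.

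What replaces it is a two-pronged method. On the geometric side, the paper does not use Grothendieck--Messing or display theory in the style of \cite{RTZ}; instead it reduces (via Cayley transform) to the Lie algebra and to a reduced subset thereof, identifies $\CN_{2,+}$ with the Lubin--Tate deformation space $\CM$ via a Serre tensor construction, expresses the special divisors as sums of quasi-canonical divisors, and computes $\lInt(x)$ by the Gross--Keating formulas. On the analytic side, the key input is a \emph{germ expansion} of $\Orb(\sigma(x),\phi',s)$ around every point $x_0$ of the categorical quotient $\fkb_\red$; this decomposes $\del$ into a ``principal'' part determined by $x_0$-nilpotent orbital integrals and an ``error'' part which is automatically an orbital integral function. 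The comparison is then made locally near each $x_0$, showing that the principal part plus $\lInt(x)\log q$ is locally constant. None of this local harmonic analysis machinery appears in your plan. Finally, a small correction: the factor of $2$ does not come from the transfer factor or the Haar measure normalization but from the fact that the normalized absolute value on $F$ restricts to the square of that on $F_0$ (see Lemma~\ref{lem G' to S} and the remark following Conjecture~\ref{inhomconj}).
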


Here $K_0$ denotes the maximal compact subgroup stabilizing a \emph{nearly $\pi$-modular lattice} $\Lambda_0$ in $W_0$ (see \eqref{Lambda_0} below), and $\omega$ is the transfer factor defined in \eqref{sign Sn} below. The Haar measure on $\U(W^\flat_0)(F_0)$ is defined by $\vol K^\flat_0=1$ for a special maximal compact subgroup $K^\flat_0$ of $\U(W^\flat_0)(F_0)$. 
We also formulate a ``homogeneous'' variant of the AT conjecture (Conjecture \ref{conj homog}), which we show is equivalent to the above conjecture in \S\ref{inhom setting section}. Note that between the statements of the AFL conjecture \eqref{IntroAFL} and the AT conjecture \eqref{introAT}, there is a discrepancy of a factor of $2$. This is a genuine difference between the unramified and ramified cases, and we refer to \cite{RSZ3} for an explanation by way of a global comparison between the height pairing and the derivative of a relative trace formula.

Our main result concerns the first non-trivial case $n=3$ of the AT conjecture. More precisely, we prove the following.

\begin{theorem}\label{Intr-Mainthm}
 Let $F_0=\BQ_p$, and let $n=3$. Then Conjecture \ref{Intro-Conj} holds true. In addition, for any $g\in \U(W_1)(F_0)_\rs$, the intersection of $\Delta$ and $(1\times g)\Delta$, if non-empty,  is an artinian scheme with two points, and $\Int(g)={\rm length} (\Delta\cap (1\times g)\Delta)$. 
 \end{theorem}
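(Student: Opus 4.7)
The plan is to verify both assertions by direct computation in the concrete setting $n = 3$, $F_0 = \BQ_p$, exploiting the low dimensions. On the geometric side, I would first leverage the local model theory of \cite{PR} to give an explicit description of $\CN_3$ and $\CN_2$ in the ramified case: with the Pappas wedge and spin conditions in place, $\CN_3$ is formally smooth of relative dimension $2$ and $\CN_2$ of relative dimension $1$ over $\Spf O_{\breve F}$. Translating the moduli descriptions, a geometric point $(Y, X)\in \CN_2\times \CN_3$ lies in $\Delta\cap (1\times g)\Delta$ precisely when $X = \delta_\CN(Y) = g\delta_\CN(Y)$, so the intersection is identified with the scheme-theoretic fixed locus of $g$ acting on the image $\delta_\CN(\CN_2)\subset \CN_3$.

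To show this intersection is artinian with at most two geometric points, I would combine a dimension count with a lattice-theoretic analysis. Both $\Delta$ and $(1\times g)\Delta$ have codimension $2$ in the smooth $4$-dimensional formal ambient $\CN_2\times_{\Spf O_{\breve F}}\CN_3$, so a proper intersection is expected to be $0$-dimensional. Using a Dieudonn\'e/lattice description of the closed points of $\CN_3$ following \cite{PR}, I would show that the regular semi-simplicity of $g$ forces the fixed set of $g$ on $\delta_\CN(\CN_2)$ to be finite, and moreover to consist of at most two geometric points, via a direct case analysis according to the invariants of $g$. Once $0$-dimensionality is established, Cohen--Macaulayness of both smooth cycles inside the regular ambient scheme yields the Tor-vanishing needed in the Serre intersection formula, giving $\Int(g) = \length(\Delta\cap (1\times g)\Delta)$ and thus the second assertion, together with an explicit length formula in terms of local invariants of $g$.

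On the analytic side, I would construct an explicit test function $f'\in C_c^\infty(S_3(F_0))$ whose transfer is $(\mathbf{1}_{K_0}, 0)$, by matching orbital integrals on a full set of $\GL_2(F_0)$-orbit representatives of $S_3(F_0)_\rs$; the low rank $n = 3$ makes this feasible as a direct matrix computation. The weighted orbital integrals $\Orb(\gamma, f', s)$ then reduce to elementary $2\times 2$ integrals admitting closed-form expressions in $s$, and differentiating at $s = 0$ produces $\del(\gamma, f')$. Matching the latter against the length computed on the geometric side yields part \ref{intro conj a} of Conjecture \ref{Intro-Conj}; part \ref{intro conj b} then follows formally from part \ref{intro conj a} via Lemma \ref{lem a to b} combined with the density principle (Conjecture \ref{conj density}), which would be verified for $n = 3$ as part of the argument.

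The main obstacle will be the geometric step: despite the formal smoothness of $\CN_3$, the exotic nature of that smoothness --- crucially relying on the Pappas wedge and spin conditions in a ramified setting --- makes the classification of closed points and their infinitesimal neighborhoods more intricate than in the AFL case, and establishing the uniform two-point bound demands a delicate case distinction over the invariants of $g$. A secondary but nontrivial bookkeeping task is to track the transfer factor $\omega$ and the factor of $2$ in \eqref{introAT} correctly, consistent with the genuine normalization difference between the unramified and ramified settings alluded to in \cite{RSZ3}.
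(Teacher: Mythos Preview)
Your analytic strategy has a genuine gap that the paper explicitly addresses. You propose to construct an explicit test function $f'\in C_c^\infty(S_3(F_0))$ transferring to $(\mathbf{1}_{K_0},0)$ and then compute $\del(\gamma,f')$ directly. But in the ramified setting there is no distinguished candidate for $f'$: the obvious choice $\mathbf{1}_{S(O_{F_0})}$ has vanishing orbital integrals at \emph{all} regular semi-simple elements (it transfers to $(0,0)$), and in fact $f'$ cannot lie in the Iwahori Hecke algebra because of the ramified quadratic character $\eta$ (see Remark~\ref{rem pro-u}\eqref{it pro-u}). The paper emphasizes that the AFL-style ``compute both sides explicitly'' approach fails here precisely because the left-hand side is not determined by the transfer pair $(\mathbf{1}_{K_0},0)$. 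So you would be stuck before ever reaching a computation of $\del(\gamma,f')$.

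The paper's route is the reverse of yours: it proves part~\eqref{intro conj b} first, and then deduces part~\eqref{intro conj a} from it (Proposition~\ref{cor b to a}). The key idea is to show that for \emph{any} $f'$ transferring to $(\mathbf{1}_{K_0},0)$, the sum $2\omega(\gamma)\del(\gamma,f')+\Int(g)\log q$ descends to an orbital integral function on the categorical quotient. This is checked locally via a germ expansion of orbital integrals (Part~\ref{germ expansion part}), which determines $\omega(\gamma)\del(\gamma,f')$ up to a local orbital integral function near each point of the quotient --- without ever pinning down $f'$ itself. Your geometric outline is also rather different from what the paper does: rather than a direct lattice classification on $\CN_3$, the paper uses a Serre tensor isomorphism $\CM\cong\CN_{2,+}$ to the Lubin--Tate space, a Cayley transform to pass to the Lie algebra and further to its reduced locus, and then expresses the special divisors as sums of quasi-canonical divisors so that Gross--Keating's formulas yield the explicit length. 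Those explicit formulas (\S\ref{calc of l-Int}) are essential input to the germ-expansion comparison, and it is not clear your dimension-count sketch would produce them.
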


We also prove a Lie algebra version of the above theorem; see Theorem \ref{thm lie}.

\smallskip

Let us comment on the proof of Theorem \ref{Intr-Mainthm}. In those cases in which the AFL conjecture has been established, the proof proceeds by calculating explicitly both sides of the conjectured identity and comparing the results. This approach fails for the AT conjecture because the left-hand side of the identity is not well-determined by the pair of transfer functions $(\mathbf{1}_{K_0}, 0)$. Unlike the AFL situation, there is no canonical choice for the function $f'$; in fact, $f'$ cannot come from the Iwahori Hecke algebra, cf.~Remark \ref{rem pro-u}\eqref{it pro-u}.  In particular, we note that the characteristic function $\mathbf{1}_{S(O_{F_0})}$ has vanishing orbital integrals at \emph{all} regular semi-simple elements in the ramified setting, i.e.~it transfers to $(0,0)$.  Instead we prove part \eqref{intro conj b} of Conjecture \ref{Intro-Conj} (in the case $F_0=\BQ_p$ and $n=3$) by showing that, for any $f'$ as in the statement, the sum 
\begin{equation}\label{introsum}
2 \omega(\gamma)\del (\gamma, f')+\Int(g)\cdot\log q
\end{equation}
is an \emph{orbital integral function}, i.e.~of the form $\omega(\gamma)\Orb(\gamma, f'_\corr)$ for a suitable function $f'_\corr$. Then part \eqref{intro conj a} of Conjecture \ref{Intro-Conj} follows easily.

To prove that \eqref{introsum} is an orbital integral function, we first remark that \eqref{introsum} may be viewed as a function on an open subset of the categorical quotient of $S_n$ by $\GL_{n-1}$, and then use the fact \cite{Z14} that the desired property may be checked locally on the base. To achieve this goal, we proceed in two steps. First, we determine explicitly $\Int(g)$. Second, we develop a \emph{germ expansion} around each point of the categorical quotient,  which is sufficiently explicit that it determines $\omega(\gamma)\del (\gamma, f')$ up to a \emph{local orbital integral function}. Putting these two steps together, we check that \eqref{introsum} is an orbital integral function. The description just given is inaccurate, insofar as we first perform a reduction to a Lie algebra analog. Here the \emph{Cayley transform} from \cite{Z14} plays a key role. For the first step we use, similarly to \cite{KR-U1}, the results of Gross and Keating \cite{ARGOS} on \emph{quasi-canonical liftings} (the reduction to \cite{ARGOS} in \cite[\S8]{KR-U1} is transposed here to the ramified case; in fact, we found a drastic simplification of the proof (due to Zink) in loc.~cit., which applies to both the unramified and the ramified case). For the second step, we base ourselves on the results on local harmonic analysis in \cite{Z12b}, which we complete and make more explicit in various ways.

Additionally, let us point out two group-theoretic features in the case $n=3$ which seem to be important. The first is the exceptional isomorphism $\SL_2\simeq \SU_2$. One geometric manifestation of this is that there is a natural isomorphism between \CM and a connected component of $\CN_2$, where $\CM$ is the Lubin--Tate deformation space over $\Spf O_{\breve F}$ of the formal $O_{F_0}$-module of dimension $1$ and height $2$; see Proposition \ref{Serre isom}. 
To state the second feature, we note that the aforementioned maximal compact subgroups $K_0$ and $K_0^\flat$ have symplectic reductions; see Remark \ref{rem syp red}. When $n=3$, associated to the reduction of $K_0^\flat$ is a second exceptional isomorphism $\Sp_2\simeq \SL_2$, which plays a role in reducing the conjecture to a Lie algebra version; see the proof of Lemma \ref{lem cayley U}. 

Let us also remark on the restriction to $F_0 = \BQ_p$ in Theorem \ref{Intr-Mainthm}.  While we certainly expect that the overall framework of this paper should be valid for any $p$-adic field $F_0$, there are a few instances, all of which occur when working with $\CN_n$ or related formal schemes, where we need to appeal to results in the literature which are only established at the level of generality of $q = p$ or $F_0 = \BQ_p$.  Indeed, strictly speaking, this is already the case for the representability result in \cite{RZ} 
which is needed to know
that $\CN_n$ is a formal scheme in the first place! In accordance with our expectations, we will use the general notation $q$ and $F_0$ throughout the paper, but when working in a context where formal schemes are present, we will always tacitly take $q = p$ and $F_0 = \BQ_p$. By contrast, the 
parts of this paper lying in the realm of harmonic analysis
are valid without any restriction on $F_0$.

\smallskip

Now let us comment on the possibility of extending our main result to odd integers $n>3$. The difficulties seem formidable. First, one would have to deal with degenerate intersections. Related to this is the fact that the reduction procedure to a Lie algebra analog breaks down. In fact, we are unable to even formulate a conjectural Lie algebra version of Conjecture \ref{Intro-Conj}, since we are lacking a reasonable definition of an intersection multiplicity in this context; see Conjecture \ref{lieconj}  below, in which we have to assume that the intersection is artinian.  The second difficulty is that our knowledge of local harmonic analysis when $n>3$ is not advanced enough; even a germ expansion principle is missing beyond the case of $n=3$ \cite{Z12b}.  
One possibility for making further progress would be to consider Conjecture \ref{Intro-Conj} only for elements $\gamma$ and $g$ that satisfy certain simplifying restrictions, in the spirit of \cite{RTZ} 
(which considers only minuscule elements).

On the positive side, there are other instances of AT conjectures. Indeed, in \cite{RSZ}, we formulate AT conjectures for $F/F_0$ ramified (as in the present paper) and $n$ even, and also 
for $F/F_0$ unramified and any $n$.  The methods developed in the present paper can be applied to some low-dimensional cases of them, 
cf.~\cite{RSZ}.

Let us finally comment on the global motivation behind all of these conjectures.  
The AFL was introduced by the third-named author in \cite{Z12} in the context of a relative trace formula approach to the hermitian case of the arithmetic Gan--Gross--Prasad conjecture, to facilitate a comparison between geometric and analytic sides at \emph{inert} primes of the unitary group.  In forthcoming work \cite{RSZ3} we will explain how our AT conjectures facilitate analogous comparisons in this framework, which in particular encompass the \emph{ramified} primes of the unitary group.  We refer to loc.\ cit.\ for details.

\smallskip

We now give an overview of the contents of this paper. The paper consists of four parts. 

In Part 1, we give the group-theoretic setup (in its homogeneous, its inhomogeneous, and its Lie algebra versions); we define the formal moduli spaces of $p$-divisible groups, and establish some structural properties for them; and we define the arithmetic intersection numbers that enter into the formulation of our conjectures and results. In \S\ref{conj and results}, we formulate our main results. 

In Part 2, we explicitly calculate the arithmetic intersection numbers in the case $n = 3$, by reduction to the Lie algebra and by relating this case to the Gross--Keating formulas. 

In Part 3, we tackle the left-hand side of the identities to be proved in Theorem \ref{Intr-Mainthm}. This is done by reducing the problem to one on the \emph{reduced subset} of the Lie algebra. The rest of part 3 is devoted to explicitly evaluating the germ expansion of the orbital integral of a function $f'$ with transfer $({\bf 1}_{K_0}, 0)$, and then making the comparison with the result of part 2. At the end of part 3, the proof of Theorem \ref{Intr-Mainthm} is complete.

In Part 4, we prove the germ expansion of the orbital integral of a general function $f'$. 
This part of the paper can be read independently of the rest and lies squarely in the domain of local harmonic analysis for the Jacquet--Rallis relative trace formula approach to the Gan--Gross--Prasad conjecture.

\subsection*{Acknowledgments} We are grateful to A.~Mihatsch for his help; in particular, he alerted us to the difficulties arising when considering degenerate intersections. We also thank B.~ Gross, S.~Kudla, P.~Scholze, and Z.~Yun for helpful discussions.  We finally thank the referees for their remarks on the text.

M.R. was supported by a grant from the Simons foundation and by the Deutsche Forschungsgemeinschaft through the grant SFB/TR 45. B.S. is supported by an NSA grant H98230-16-1-0024.  W.Z. is supported by an NSF grant DMS \#1301848 and a Sloan research fellowship.

\subsection*{Notation} We list here some notation that we use throughout the whole paper. We denote by $p$ an odd prime number. 
 
$F/F_0$ is a ramified quadratic extension of finite extensions of $\BQ_p$. We denote by $a\mapsto \ov a$ the nontrivial automorphism of $F/F_0$, and by  $\eta = \eta_{F/F_0}$ the corresponding  quadratic character on $F_0^\times$.  When $h$ is a square matrix with entries in $F_0$, we sometimes abbreviate $\eta(\det h)$ to $\eta(h)$.  Since $p \neq 2$, we may and do choose uniformizers $\pi$ of $F$ and $\varpi$ of $F_0$ such that $\pi^2=\varpi$. We denote by $\ov k$ an algebraic closure of the common residue field $k$ of $F$ and $F_0$, and we set $q := \#k$.  As stated above, when working in an algebro-geometric context where formal schemes are present, we always understand that $q = p$ and $F_0 = \BQ_p$.  We denote the group of squares in $F_0^\times$ by $F_0^{\times,2}$.  We denote the group of norm $1$ elements in $F^\times$ by
\[
   F^1 := \{\, a \in F \mid a \ov a = 1 \,\}.
\]
We denote by $\breve F_0$ the completion of a maximal unramified extension of $F_0$, and by $\breve F := \breve F_0 \otimes_{F_0} F$ the analogous object for $F$.

A polarization on a $p$-divisible group $X$ is an anti-symmetric isogeny $X \to X^\vee$, where $X^\vee$ denotes the dual.  We use a superscript $\circ$ to denote the operation $-\otimes_\BZ \BQ$ on groups of homomorphisms, so that for example
\[
   \Hom^\circ(X,Y) := \Hom(X,Y) \otimes_\BZ \BQ,
\]
where $Y$ is another $p$-divisible group.  For any quasi-isogeny $\rho \colon X \to Y$ and polarization $\lambda$ on $Y$, we define the pullback polarization
\[
   \rho^* (\lambda) := \rho^\vee \circ \lambda \circ \rho.
\]

We denote by $\BE$ the unique (up to isomorphism) formal $O_{F_0}$-module of relative height $2$ and dimension $1$ over $\Spec \ov k$.  We set
\[
   O_D := \End_{O_{F_0}}(\BE)
	\quad\text{and}\quad
	D := O_D \otimes_{O_{F_0}} F_0.
\]
Thus $D$ is ``the'' quaternion division algebra over $F_0$, and $O_D$ is its maximal order.  Since $F/F_0$ is ramified, any $F_0$-embedding of $F$ into $D$ makes $\BE$ into a formal $O_F$-module of relative height $1$.  We fix such an embedding
\begin{equation*}
   \iota_\BE\colon F \to D
\end{equation*}
once and for all, and we always understand $\BE$ to be a formal $O_F$-module via $\iota_\BE$.  We denote by $\CE$ the corresponding canonical lift of $\BE$ over $\Spf O_{\breve F}$, equipped with its $O_F$-action $\iota_\CE$ and $O_F$-linear framing isomorphism $\rho_\CE\colon \CE_{\ov k} \isoarrow \BE$.  We denote by $\ov\BE$ the same object as $\BE$, except 
where the $O_F$-action $\iota_{\ov\BE}$ is
equal to $\iota_\BE$ precomposed by the nontrivial automorphism of $F/F_0$; and ditto for $\ov\CE$ in relation to $\CE$, which is furthermore equipped with the same framing $\rho_{\ov\CE} := \rho_\CE$ on the level of $O_{F_0}$-modules over $\Spec \ov k$.  Of course $\ov\BE$ is a formal $O_F$-module of relative height $1$ in its own right, but note that $\ov\CE$ is \emph{not} its canonical lift.  In \S\ref{framing obs} we will 
specify a principal polarization $\lambda_\BE$ on $\BE$,
and we will denote by $\lambda_\CE$ the principal polarization on $\CE$ lifting $\lambda_\BE$.  We write $\lambda_{\ov\BE} := \lambda_{\BE}$ and $\lambda_{\ov\CE} := \lambda_{\CE}$ for the same polarizations when we regard them as defined on $\ov \BE$ and $\ov \CE$, respectively.

We denote the main involution on $D$ by $c \mapsto \ov c$, and the reduced norm by $\RN$.  We also write $\RN$ for the norm map $F^\times \to F_0^\times$; of course, all of this is compatible with any embedding of $F$ into $D$.  We write $v_D$ for the normalized valuation on $D$, and we use $\pi$ as a uniformizer for $D$, via $\iota_\BE$.  We write $v$ for the normalized (i.e.~$\varpi$-adic) valuation on $F_0$.  For $c \in D^\times$, we define the conjugate embedding
\begin{equation}\label{conjugate emb}
\begin{gathered}
	\tensor[^c]{\iota}{_\BE} \colon
	\xymatrix@R=0ex{
	   F \ar[r]  &  D\\
		a \ar@{|->}[r]  &  c\iota_\BE(a)c^{-1}.
	}
\end{gathered}
\end{equation}
We denote by $\tensor[^c]{F}{}$ the image of $\tensor[^c]{\iota}{_\BE}$.

Given a variety $V$ over $\Spec F_0$, we denote by $C_c^\infty(V(F_0))$ the set of locally constant, compactly supported functions on the space $V(F_0)$, endowed with its $\varpi$-adic topology; and
we typically abbreviate this set to $C_c^\infty(V)$. We normalize the respective Haar measures on $F_0$, $F$, $F_0^\times$, and $F^\times$ such that $\vol(O_{F_0}) = \vol(O_F) = \vol(O^\times_{F_0}) = \vol(O^\times_F) = 1$.

We write $\BA$ for the affine line, $\M_n$ for the scheme of $n \times n$ matrices, and $1_n$ for the $n \times n$ identity matrix.  We use a subscript $S$ to denote base change to $S$, and when $S = \Spec A$, we often use a subscript $A$ instead.

\part{The conjectures}
In this first part of the paper we introduce the  objects involved in the statements of our AT conjectures. In \S\ref{conj and results} we state the conjectures and our main results.

\section{Group-theoretic setup and orbit matching}\label{setup}

We begin by explaining the general group-theoretic setup in this paper and the attendant matching relation for regular semi-simple elements.  We consider three cases: the homogeneous group setting, the inhomogeneous group setting, and the Lie algebra setting.  In this section $n \geq 2$ is an integer.

\subsection{Homogenous setting}\label{homog case}
We begin with the algebraic group
\[
   G' := \Res_{F/F_0}(\GL_{n-1}\times\GL_n)
\]
over $F_0$.  We consider the following two subgroups of $G'$. The first subgroup is
\[
   H_1' := \Res_{F/F_0} \GL_{n-1},
\]
which is embedded diagonally, via the inclusion of $\GL_{n-1}$ in $\GL_n$ sending $A \mapsto \diag(A,1)$.  The second subgroup is
\[
   H_2' := \GL_{n-1}\times\GL_n,
\]
with its obvious embedding into $G'$.  Let
\[
   H_{1,2}' := H_1'\times H_2'.
\]
Then $H_{1, 2}'$ acts on $G'$ by $(h_1, h_2)\colon \gamma\mapsto h_1^{-1}\gamma h_2$. We call an element $\gamma\in G'(F_0)$ \emph{regular semi-simple} if it is regular semi-simple for the action of $H_{1, 2}'$, i.e.~its orbit under $H_{1, 2}'$ is closed, and its stabilizer is of minimal dimension.  In the case at hand, it is equivalent that $\gamma$ have closed orbit and trivial stabilizer, which follows from \cite[Th.~6.1]{RS}. We denote by $G'(F_0)_\rs$ the set of regular semi-simple elements in $G'(F_0)$.

We next consider non-degenerate $F/F_0$-hermitian spaces of dimension $n$.  Up to isomorphism there are two of them, a split one $W_0$ and a non-split one $W_1$.  They are distinguished by the rule
\begin{equation}\label{det def}
   \eta\bigl((-1)^{n(n-1)/2}\det W_i\bigr) = (-1)^i ,
\end{equation}
where $\det W_i := \det J_i$ for any hermitian matrix $J_i$ (relative to the choice of a basis) representing the hermitian form.  Let $i \in \{0,1\}$.  Write
\begin{equation}\label{U_i def}
   U_i := \U(W_i),
\end{equation}
and let
\[
   u_i \in W_i
\]
be a non-isotropic vector, which we call the \emph{special vector}.  We will choose $u_0$ and $u_1$ such that their norms are congruent mod $\RN F^\times$, which can always be done since $n \geq 2$.  Let $W_i^\flat$ denote the orthogonal complement in $W_i$ of the line spanned by $u_i$, and let
\[
   H_i := \U(W_i^\flat).
\]
Then $H_i$ naturally embeds into $U_i$ as the stabilizer of $u_i$.  Since we assume that $u_0$ and $u_1$ have congruent norms mod $\RN F^\times$, $W^\flat_0$ and $W^\flat_1$ are non-isomorphic as hermitian spaces. 

To lighten notation, now set $W := W_i$, and define $W^\flat$, $U$, $H$, and $u$ analogously. Let
\[
   G_W := H \times U,
\]
and  consider $H$ as a subgroup of $G_W$, embedded diagonally. Then $H\times H$ acts on $G_W$ via the rule
\begin{equation*}
   (h_1, h_2)\colon g\mapsto h_1^{-1} g h_2.
\end{equation*}
An element $g\in G_W(F_0)$ is called \emph{regular semi-simple} if it is regular semi-simple for the action of $H\times H$. We denote by $G_W(F_0)_\rs$ the set of regular semi-simple elements in $G_W(F_0)$. 

We now recall the matching relation between regular semi-simple elements, as in \cite{Z12}.  Choose an $F$-basis for $W^\flat$ and complete it to a basis for $W$ by adjoining $u$.  This identifies $W^\flat$ with $F^{n-1}$ and $W$ with $F^n$ in such a way that $u$ corresponds to the column vector
\[
   e := (0,\dotsc,0,1)
\]
in $F^n$, and hence determines embeddings of groups $U \inj \Res_{F/F_0} \GL_n$ and $G_W \inj G'$.  We call the embeddings obtained in this way \emph{special embeddings}.  An element $\gamma\in G'(F_0)_\rs$ and an element $g\in G_{W}(F_0)_\rs$ are said to \emph{match}
if these two elements, when considered as elements in $G'(F_0)$, are conjugate under $H_{1, 2}'(F_0)$.
The matching relation is independent of the choice of special embedding and induces a bijection \cite[\S2]{Z12}\footnote{In loc.~cit.~only the inhomogeneous version, which will be taken up  in the next subsection, is considered. However, the inhomogeneous version easily implies the homogeneous version.}
\[
   \bigl[ G_{W_0}(F_0)_\rs\bigr]\amalg \bigl[ G_{W_1}(F_0)_\rs\bigr]\simeq \bigl[G'(F_0)_\rs\bigr] .
\]
Here on the left-hand side, the square brackets denote the sets of orbits under the respective actions of $H_{0}(F_0)\times H_0(F_0)$ and $H_{1}(F_0)\times H_1(F_0)$, whereas the brackets on the right-hand side denote the set of orbits under $H_{1, 2}'(F_0)$.

\subsection{Inhomogeneous setting} 
Now we pass to the inhomogeneous version of the previous subsection.
Consider the following identifications of algebraic varieties over $F_0$. First, 
\begin{equation*}
   H'_1\backslash G' \isoarrow \Res_{F/F_0}\GL_n, \quad \gamma=(\gamma_1, \gamma_2)\mapsto \gamma_1^{-1}\gamma_2. 
\end{equation*}
Second, let
\[
    S := S_n := \{\,g\in \Res_{F/F_0}\GL_n\mid g\ov g=1_n\,\}
\]
and
\[
   H' :=\GL_{n-1} .
\]
Then 
\[
   \Res_{F/F_0}(\GL_n)\big/\GL_n \isoarrow S, \quad \gamma\mapsto \gamma\ov\gamma^{-1} ,
\]
and the above two identifications induce an identification on $F_0$-rational points
\begin{equation}\label{idGL}
   G'(F_0)\big/H_{1, 2}'(F_0)\simeq S(F_0)\big/H'(F_0) . 
\end{equation}
Here the action of $H'$ on $S$ is through conjugation. In other words, the map
\[
G'(F_0)\to S(F_0) , \quad \gamma=(\gamma_1, \gamma_2)\mapsto s(\gamma):=(\gamma_1^{-1} \gamma_2) \bigl(\ov{\gamma_1^{-1} \gamma_2}\bigr)^{-1}  ,
\]
induces the bijection \eqref{idGL}.

On the unitary group side, let $i \in \{0,1\}$ and use the notation $W$, $U$, $H$, etc.~as in the previous subsection.  Then we similarly have an identification of $F_0$-points of algebraic varieties over $F_0$,
\[
   G_W(F_0)\big/\bigl(H(F_0)\times H(F_0)\bigr) \simeq U(F_0) \big/ H(F_0), \quad (g_1, g_2)\mapsto g_1^{-1} g_2 . 
\]
Here the action of $H(F_0)$ on $U(F_0)$ is by conjugation. 

The definitions from the homogeneous setting readily transfer to the inhomogeneous setting. Thus an element $\gamma\in S(F_0)$ is called \emph{regular semi-simple} if it is regular semi-simple for the action of $\GL_{n-1}$ on $\GL_n$.  As in the homogeneous setting, it is again equivalent that $\gamma$ have closed orbit and trivial stabilizer, cf.~\cite[Th.~6.1]{RS}.  We denote by $S(F_0)_\rs$ or $S_n(F_0)_\rs$ the set of regular semi-simple elements in $S(F_0)$. An element $g\in U(F_0)$ is \emph{regular semi-simple} if it is regular semi-simple for the action of $H$ on $U$; equivalently, when $g$ is considered as an element in $\GL_n(F)$ upon choosing a special embedding for $U$ as in the previous subsection, it is regular semi-simple for the action of $\GL_{n-1}$ on $\GL_n$.  We denote by  $U(F_0)_\rs$ the set of regular semi-simple elements in $U(F_0)$.

An element $\gamma\in S(F_0)_\rs$ \emph{matches} an element $g\in U(F_0)_\rs$
if these two elements are conjugate under $\GL_{n-1}(F)$ when considered as elements of $\GL_n(F)$, upon choosing a special embedding for $U$.  The matching relation is again independent of the choice of special embedding, and induces a bijection \cite[\S 2]{Z12}
\[
   \bigl[ U_0(F_0)_\rs\bigr] \amalg \bigl[  U_1(F_0)_\rs\bigr]\simeq \bigl[S(F_0)_\rs\bigr] ,
\]
where on the left-hand side are the respective sets of orbits under $H_0(F_0)$ and $H_1(F_0)$, and on the right-hand side the set of orbits under $\GL_{n-1}(F_0)$.  In particular, there is a disjoint union decomposition
\begin{equation}\label{Srs decomp}
   S(F_0)_\rs = S_{\rs,0} \amalg S_{\rs,1},
\end{equation}
where $S_{\rs,i}$ denotes the set of elements in $S(F_0)_\rs$ that match with elements in $U_i(F_0)_\rs$.

\subsection{Lie algebra setting} 
\label{Lie algebra setting}
We also consider a Lie algebra version of the inhomogeneous setup. We introduce the Lie algebra version of $S_n$, 
\begin{equation}\label{fks def}
   \fks := \fks_n := \bigl\{\, y \in \Res_{F/F_0}\M_n \bigm| y + \ov y = 0 \,\bigr\} .
\end{equation}
Then $H'=\GL_{n-1}$ acts on $\fks$, and we call an element of $\fks(F_0)$ \emph{regular semi-simple} if its $H'$-orbit is closed and of maximal dimension.  It is again equivalent that the element have closed orbit and trivial stabilizer.  We denote by $\fks(F_0)_\rs = \fks_n(F_0)_\rs$ the set of regular semi-simple elements in $\fks(F_0)$. 

For $i \in \{0,1\}$, let
\[
   \fku_i := \Lie U_i.
\]
An element of $\fku_i(F_0)$ is called \emph{regular semi-simple} if its orbit under $H_i$ is closed and of maximal dimension.  We denote by $\fku_i(F_0)_\rs$ the set of regular semi-simple elements in $\fku_i(F_0)$. 

As in \S\ref{homog case}, the choice of a basis for $W_i^\flat$, extended by $u_i$ to a basis for $W_i$, determines an embedding $\fku_i \inj \Res_{F/F_0} \M_n$, which we again call a \emph{special embedding}.  An element $y\in\fks(F_0)_\rs$ \emph{matches} an element $x\in \fku_i(F_0)_\rs$
if these two elements are conjugate under $\GL_{n-1}(F)$ when considered as elements of 
$\M_n(F)$.  The matching relation is independent of the special embedding and induces a bijection \cite[\S5]{JR}
\[
   [ \fku_0(F_0)_\rs]\amalg [\fku_1(F_0)_\rs] \simeq [\fks(F_0)_\rs] ,
\]
where on the left-hand side are the respective sets of orbits under $H_0(F_0)$ and $H_1(F_0)$, and on the right-hand side the set of orbits under $\GL_{n-1}(F_0)$.  As before, we get a disjoint union decomposition
\[
   \fks(F_0)_\rs = \fks_{\rs,0} \amalg \fks_{\rs,1},
\]
where $\fks_{\rs,i}$ denotes the set of elements in $\fks(F_0)_\rs$ that match with elements in $\fku_i(F_0)_\rs$.

\subsection{Linear algebra characterizations}\label{ss:LAchar} 
We now recall the linear algebra characterizations of regular semi-simple elements and of their matching, in the inhomogeneous group setting and the Lie algebra setting. First we introduce the \emph{discriminant} $\Delta$, which is a morphism of varieties
\begin{equation}\label{Delta}
	\begin{gathered}
   \Delta\colon
	\xymatrix@R=0ex{
	   \Res_{F/F_0} \M_n \ar[r]  &  \Res_{F/F_0} \BA\\
		x \ar@{|->}[r]  &  \det(\tensor*[^t] e {} x^{i+j}e)_{0\leq i, j\leq n-1}
	}  
	\end{gathered}
\end{equation}
over $F_0$. We remark that beginning in \S\ref{reduction to LA} we will actually work with a rescaled version of $\Delta$, cf.~\eqref{Delta rescaled}.

An element $\gamma\in S(F_0)$ is  regular semi-simple  if and only if, considering $\gamma$ as an element of $ \GL_n(F)$, the sets of vectors $\{\gamma^ie\}_{i=0}^{n-1}$ and $\{\tensor*[^t] e {}\gamma^i\}_{i=0}^{n-1}$ are both linearly independent \cite[Th.~6.1]{RS}.\footnote{In \cite{RS}, the Lie algebra version is considered, but it is easy to deduce the group version from this.}  Equivalently, $\Delta(\gamma) \neq 0$. 
Hence  the regular semi-simple locus inside $S$ is the complement of the locus $\Delta=0$, i.e.~the Zariski-open subscheme 
\[
   S_\rs := S_{n,\rs} := \bigl\{\, \gamma\in S \bigm| \Delta(\gamma)\neq 0 \,\bigr\} .
\]
Thus we may write $S(F_0)_\rs$ and $S_\rs(F_0)$ interchangeably.

To $x\in \M_n(F)$ we associate the following numerical invariants: the $n$ coefficients of the characteristic polynomial 
 $\charac_x(T)\in F[T]$, and the $n-1$ elements $\tensor*[^t] e {}x^ie\in F$ for $ i=1,\dotsc,n-1$.  Then two  elements of $S(F_0)_\rs$ are conjugate under $\GL_{n-1}(F_0)$ if and only if they have the same numerical invariants when considered as elements of $\M_n(F)$, cf.~\cite{Z12}. 
 
For $i \in \{0, 1\}$ and $U = U_i$, an element $g\in U(F_0)$ is regular semi-simple if and only if, when considered as an element of $\GL_n(F)$ via any special embedding,  $g$ satisfies the conditions above, i.e.~the sets of vectors $\{g^ie\}_{i=0}^{n-1}$ and $\{\tensor*[^t] e {}g^i\}_{i=0}^{n-1}$ are both linearly independent. Equivalently, $\Delta(g) \neq 0$, so that the regular semi-simple set is the Zariski open complement to the locus $\Delta(g) = 0$, and we write $U(F_0)_\rs$ and $U_{\rs}(F_0)$ interchangeably. Two elements $g\in U(F_0)_\rs$ and $\gamma\in S(F_0)_\rs$ are matched if and only if their numerical invariants, when both are considered as elements of $\M_n(F)$, coincide. 
 
The theory in the Lie algebra setting is entirely analogous.  An element $y\in \fks(F_0)$ is  regular semi-simple  if and only if, considering $y$  as an element of $\M_n(F)$, the sets of vectors $\{y^ie\}_{i=0}^{n-1}$ and $\{\tensor*[^t] e {} y^i\}_{i=0}^{n-1}$ are both linearly independent \cite[Th.~6.1]{RS}.   Equivalently, $\Delta(y) \neq 0$, i.e.~the regular semi-simple set is the Zariski open complement to the locus $\Delta(y) = 0$, and we write $\fks(F_0)_\rs$ and $\fks_\rs(F_0)$ interchangeably. Furthermore,  two  elements of $\fks(F_0)_\rs$ are conjugate under  $\GL_{n-1}(F_0)$ if and only if they have the same numerical invariants when considered as elements of $\M_n(F)$, cf.~\cite{RS}. 
 
For $i \in \{0,1\}$ and $\fku = \fku_i$, an element $x\in\fku(F_0)$ is regular semi-simple if and only if, when considered as an element of $\M_n(F)$ via any special embedding, $x$ satisfies the conditions above, i.e.~the sets of vectors $\{ x^ie\}_{i=0}^{n-1}$ and $\{\tensor*[^t] e {} x^i\}_{i=0}^{n-1}$ are both linearly independent. Equivalently, $\Delta(x)\neq 0$, and we again write $\fku(F_0)_\rs$ and $\fku_\rs(F_0)$ interchangeably.  Two elements  $x\in \fku(F_0)_\rs$ and $y\in \fks(F_0)_{\rs}$ are matched if and only if their numerical invariants, when both are considered as elements of $\M_n(F)$, coincide \cite{JR,Z12}. 

From now on in the paper, we make the blanket assumption that
\begin{equation}\label{norm 1 assumption}
	\text{\emph{the special vectors $u_0 \in W_0$ and $u_1 \in W_1$ have norm $1$.}}
\end{equation}
Under this assumption, we have the following simple formula to distinguish between $W_0$ and $W_1$ in terms of the discriminant.

\begin{lemma}\label{lem delta=det} 
For $i \in \{0,1\}$ and any $x\in \fku_{i}(F_0)_\rs$, 
\[
   \eta\bigl(\Delta(x)\bigr) = (-1)^i. 
\]
\end{lemma}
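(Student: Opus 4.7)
My plan is to reduce the assertion to a direct matrix computation in a well-chosen special embedding, using the skew-hermitian relation to factor $\Delta(x)$ as a norm times a sign times $\det W_i$.

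First, I would fix a special embedding as in \S\ref{ss:LAchar}, choosing a basis $(e_1,\dots,e_{n-1})$ of $W_i^\flat$ and adjoining $u_i$ to get a basis of $W_i$. Because $W_i^\flat$ is the orthogonal complement of $u_i$ and $u_i$ has norm $1$ by the blanket assumption \eqref{norm 1 assumption}, the Gram matrix takes the block form $J_i=\diag(J_i^\flat,1)$, so in particular $J_i e = e$. The skew-hermitian condition on $x\in\fku_i(F_0)$ then reads $\tensor*[^t]{\bar x}{}J_i+J_ix=0$, i.e.\ $\tensor*[^t]{x}{}=-\bar J_i\,\bar x\,\bar J_i^{-1}$.

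Next, the key observation is a factorization of $\Delta(x)$. Let $V$ be the $n\times n$ matrix with columns $e, xe, x^2e,\dots, x^{n-1}e$, and let $W$ be the $n\times n$ matrix with rows $\tensor*[^t]{e}{}, \tensor*[^t]{e}{}x, \dots, \tensor*[^t]{e}{}x^{n-1}$. A straightforward multiplication shows $(WV)_{r,s}=\tensor*[^t]{e}{}x^{r+s}e$, so
\[
   \Delta(x)=\det(WV)=\det(W)\det(V).
\]
Transposing $W$ and using the skew-hermitian identity together with $\bar J_i e = e$ (so also $\bar J_i^{-1}e=e$), I get $(\tensor*[^t]{x}{})^{r}e=(-1)^r\,\overline{J_i\,x^r e}$ for each $r$, whence
\[
   \tensor*[^t]{W}{}=\bar J_i\,\bar V\,\diag(1,-1,1,\dots,(-1)^{n-1}).
\]
Taking determinants and using that $J_i$ is hermitian (so $\overline{\det J_i}=\det J_i=\det W_i$) yields
\[
   \Delta(x)=(-1)^{n(n-1)/2}\,\det(W_i)\,\Nm_{F/F_0}(\det V).
\]

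Finally, I apply the quadratic character $\eta$: since $\eta\circ\Nm_{F/F_0}$ is trivial on $F^\times$ and $\det V\neq 0$ for $x$ regular semi-simple (by the linear-algebra characterization in \S\ref{ss:LAchar}), the norm factor disappears, and the definition \eqref{det def} of $W_i$ gives
\[
   \eta\bigl(\Delta(x)\bigr)=\eta\bigl((-1)^{n(n-1)/2}\det W_i\bigr)=(-1)^i.
\]
The only real bookkeeping issue is tracking signs and conjugations carefully when powering the identity $\tensor*[^t]{x}{}=-\bar J_i\bar x\bar J_i^{-1}$, but nothing deeper than that is needed; the geometric input is simply that the special vector has norm $1$ and the basis of $W_i^\flat$ is automatically orthogonal to $u_i$.
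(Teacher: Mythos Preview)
Your proof is correct and follows essentially the same route as the paper. The paper phrases the computation in terms of the hermitian form, writing $\tensor*[^t]{e}{}x^{i+j}e=h(u,x^{i+j}u)=(-1)^i h(x^iu,x^ju)$ and recognizing $\Delta(x)$ as $(-1)^{n(n-1)/2}$ times the Gram determinant of the basis $u,xu,\dots,x^{n-1}u$; you instead factor $\Delta(x)=\det(W)\det(V)$ and use the skew-hermitian relation to write $\det(W)=(-1)^{n(n-1)/2}\det(W_i)\,\overline{\det V}$. Since the Gram matrix in the paper's argument is exactly $\tensor*[^t]{\bar V}{}J_iV$ with determinant $\Nm_{F/F_0}(\det V)\det W_i$, the two computations are the same identity unwound differently, and both conclude via~\eqref{det def}.
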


\begin{proof}
Let $u=u_i$ and $W = W_i$.
Let $h$ denote the hermitian form on $W$. By \eqref{norm 1 assumption},
\[
   \tensor*[^t]{e}{} x^{i+j} e=h(u,x^{i+j}u).
\]
Since $x$ is in the Lie algebra $\fku_i$ we have $h(xv,w)=-h(v,xw)$ for all $v,w\in W$. Hence 
\[
   h(u,x^{i+j}u) = (-1)^i h(x^{i}u,x^ju).
\]
Hence
\[
   \Delta(x) 
	   = \det\bigl((-1)^i h(x^{i}u,x^ju)_{0\leq i, j\leq n-1}\bigr)
		= (-1)^{n(n-1)/2}\det h(x ^{i}u,x^ju)_{0\leq i, j\leq n-1}.
\]
Since $x$ is regular semi-simple, the vectors $u,xu,\dotsc,x^{n-1}u$ form a basis of $W$. Hence the lemma follows from 
\eqref{det def}.
\end{proof}

\subsection{Invariants in the Lie algebra setting}\label{ss: Lie invars}
Consider the $2n-1$ maps of varieties over $F_0$
\[
   \Res_{F/F_0} \M_n \to \Res_{F/F_0} \BA
\]
defined on points by sending $x$ to the quantities
\begin{equation}\label{inv gen}
   \tr \wedge^i x,
	\quad
	1\leq i\leq n,
	\quad\text{and}\quad
	\tensor*[^t] e {} x^{j}e,
	\quad
	1\leq j\leq n-1.
\end{equation}
When restricted to the subscheme $\fks = \fks_n \subset \Res_{F/F_0} \M_n$, each of these maps factors through either $\BA \subset \Res_{F/F_0} \BA$ or $\fks_1 \subset \Res_{F/F_0} \BA$ according as $i$, resp.~$j$, is even or odd.  Here we have allowed the case $n = 1$ in the definition \eqref{fks def} of $\fks_n$, i.e.~$\fks_1$ denotes the scheme of points $y$ in $\Res_{F/F_0}\BA$ such that $\ov y = - y$.  Upon choosing special embeddings $\fku_0, \fku_1 \inj \Res_{F/F_0} \M_n$, the same statement is true of each of the maps in \eqref{inv gen} when restricted to $\fku_0$ and $\fku_1$.  Thus we obtain a map from each of $\fks$, $\fku_0$, and $\fku_1$ into the common $(2n-1)$-fold product of the corresponding $\BA$'s and $\fks_1$'s; and in the case of $\fku_0$ and $\fku_1$, this map is independent of the choice of special embedding.  These maps into the product of $\BA$'s and $\fks_1$'s are invariant for the respective actions of $H'$, $H_0$, and $H_1$ on \fks, $\fku_0$, and $\fku_1$, and it is shown in \cite[Lem.~3.1]{Z14} that they identify the target with the categorical quotients $\fks/H'$, $\fku_0/H_0$, and $\fku_1/H_1$.  In other words, the ring of global invariants on each of $\fks$, $\fku_0$, and $\fku_1$ is a polynomial ring over $F_0$ generated by the $2n-1$ functions \eqref{inv gen}.

There is another set of polynomial generators, also given in loc.~cit., which will be a little more convenient for us to work with.  Write a point $x$ in $\Res_{F/F_0} \M_n$ in the form
\begin{equation}\label{x block decomp}
   x = 
	\begin{bmatrix}
      A  &  \mathbf{b}\\
		\mathbf{c}  &  d
   \end{bmatrix} ,
\end{equation}
where the block decomposition is with respect to the special vector $e$.  Then the functions
\begin{equation}\label{inv sec}
   \tr \wedge^i A,
	\quad 
	1\leq i\leq n-1, 
	\quad
	\mathbf{c}A^j\mathbf{b}, 
	\quad
	0\leq j\leq n-2,
	\quad\text{and}\quad
	d
\end{equation}
realize the categorical quotients $\fks/H'$, $\fku_0/H_0$, and $\fku_1/H_1$ as above (again after choosing any special embeddings $\fku_0,\fku_1 \inj \Res_{F/F_0} \M_n$ for the latter two).  Explicitly, for $\fkb$ any of these three quotients, the invariants \eqref{inv sec} induce an isomorphism of schemes over $F_0$
\begin{equation}\label{cat quot isom}
\begin{gathered}
   \xymatrix@R=-3ex{
	   \fkb \ar[r]^-\sim  &  {}\overbrace{\fks_1 \times \BA \times \dotsb}^{\substack{n-1\\ \text{alternating}\\ \text{factors}}} \times
		                      \overbrace{\BA \times \fks_1 \times \dotsb}^{\substack{n-1\\ \text{alternating}\\ \text{factors}}} \times
									 \: \fks_1\\
		x \ar@{|->}[r]  &  (\tr A, \tr \wedge^2 A,\dotsc, \mathbf{c}\mathbf{b}, \mathbf{c} A \mathbf{b},\dotsc, d).
	}
\end{gathered}
\end{equation}

\section{The moduli space}\label{moduli space}

In this section we introduce the moduli space of $p$-divisible groups $\CN_n$ over $\Spf O_{\breve F}$.  It is an analog of spaces appearing in \cite{KR-U1,RTZ,V2,VW,Z12} in the unramified setting, but in the ramified setting a subtler definition is required to obtain a formally smooth space.  
In accordance with our convention in the Introduction, throughout this section we take $F_0 = \BQ_p$.
Let $n \geq 1$ be an integer.

\subsection{Unitary $p$-divisible groups}\label{unitary p-div groups}

Let $S$ be a scheme over $\Spf O_{\breve F}$.  A \emph{unitary $p$-divisible group of signature $(1,n-1)$} in the setting of this paper is a triple
\[
   (X,\iota_X,\lambda_X)
\]
consisting of a $p$-divisible group $X$ over $S$, a homomorphism
\[
   \iota_X \colon O_F \to \End_S (X),
\]
and a polarization
\[
   \lambda_X \colon X \to X^\vee,
\]
subject to the following constraints:
\begin{altitemize}
\item 
(\emph{Kottwitz condition})
for the action of $O_F$ on $\Lie X$ induced by $\iota_X$, there is an equality of polynomials
\[
   \charac\bigl(\iota_X(\pi) \mid \Lie X\bigr) = (T - \pi)(T + \pi)^{n-1} \in \CO_S[T];
\]
\item 
(\emph{wedge condition}) $\bigwedge^2_{\CO_S} \bigl(\iota_X(\pi) + \pi \mid \Lie X\bigr) = 0$;
\item 
(\emph{spin condition}) if $n > 1$, then for every point $s \in S$, the operator $\iota_X(\pi) \mid \Lie X_s$ is nonzero;
\item 
the Rosati involution on $\End_S^\circ(X)$ attached to $\lambda_X$ induces the nontrivial Galois automorphism on $O_F$; and
\item 
if $n$ is even, then $\Ker \lambda_X = X[\iota_X(\pi)]$; and if $n$ is odd, then $\Ker\lambda_X \subset X[\iota_X(\pi)]$ of height $n-1$.
\end{altitemize}

Let us make a few remarks on the definition.  First note that our formulation of the Kottwitz condition implies that
\[
   \charac\bigr(\iota_X(a) \mid \Lie X\bigl) = (T - a)(T + \ov a)^{n-1} \in \CO_S[T]
   \quad\text{for all}\quad a \in O_F,
\]
and that $\rank_{\CO_S} (\Lie X) = n$.  Since $X$ is isogenous to its dual, it follows that $X$ has height $2n$.
We also note that the Rosati condition on $\lambda_X$ is equivalent to requiring that $\lambda_X$ is $O_F$-linear, where $O_F$ acts on the dual $X^\vee$ via the rule
\begin{equation}\label{dual action}
   \iota_{X^\vee}(a) = \iota_X(\ov a)^\vee.
\end{equation}

The wedge condition is due to Pappas \cite{P}.  There is another part to the wedge condition in loc.~cit., which for signature $(1,n-1)$ is
\[
   \sideset{}{^n_{\CO_S}}\bigwedge \bigl(\iota_X(\pi) - \pi \mid \Lie X\bigr) = 0.
\]
Since $\Lie X$ has rank $n$, this condition holds automatically by the Kottwitz condition.  Similarly, the wedge condition as we have formulated it above is implied by the Kottwitz condition when $n \leq 2$.

The spin condition is based on the spin condition introduced in \cite{PR}; see Remark \ref{relation to spin cond} below for further discussion.  Note that the spin condition is a condition only on the underlying point set of $S$.  Since 
$S$ lies 
over $\Spf O_{\breve F}$, we have $\pi \cdot \kappa(s) = 0$ for every point $s \in S$.  Thus in the presence of the wedge condition, the spin condition is equivalent to the condition that $\iota_X(\pi) \mid \Lie X_s$ has rank $1$ for every $s \in S$.

\subsection{Serre tensor construction}\label{serre const}
Before continuing, we
pause to briefly review the Serre tensor construction, which will also 
play a central role in \S\ref{serre isom}.

Quite generally, let $S$ be any base scheme, $A$ a commutative ring, $M$ a finite projective $A$-module, and $X$ a contravariant functor on the category of $S$-schemes, valued in $A$-modules.  For $T$ a scheme over $S$, define
\[
   (M \otimes_A X)(T) := M \otimes_A X(T).
\]
When $X$ is a scheme, so is $M \otimes_A X$, and moreover many properties of $X$ are inherited by $M \otimes_A X$.  See \cite[\S7]{C}.  It follows easily from loc.\ cit.\ that $M \otimes_A X$ is a $p$-divisible group when $X$ is, which will be the case of interest to us.  In this case, one furthermore has canonical isomorphisms
\[
   (M \otimes_A X)^\vee \cong M^\vee \otimes_A X^\vee,
\]
where $M^\vee := \Hom_A(M,A)$ is the dual $A$-module; and
\[
   \Lie (M \otimes_A X) \cong M \otimes_A \Lie X.
\]

\subsection{Framing objects}\label{framing obs}
In this subsection we consider unitary $p$-divisible groups of signature $(1,n-1)$ over $\Spec \ov k$.  The first point of business is that, up to isogeny, there is 
only one
whose underlying $p$-divisible group is supersingular, in the following sense.

\begin{proposition}\label{k-bar isog classes}
Let $(X,\iota_X,\lambda_X)$ and $(Y,\iota_Y,\lambda_Y)$ be supersingular unitary $p$-divisible groups of signature $(1,n-1)$ over $\Spec \ov k$.  Then there exists an $O_F$-linear quasi-isogeny
\[
   \rho \colon X \to Y
\]
such that $\rho^* (\lambda_Y)$ is an $F_0^\times$-multiple of $\lambda_X$ in $\Hom_{O_F}^\circ(X,X^\vee)$.
\end{proposition}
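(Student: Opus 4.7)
The plan is to pass to rational covariant Dieudonn\'e theory and exploit the rigidity of isocrystals with $O_F$-action over an algebraically closed residue field. Let $N_X$ and $N_Y$ denote the rational Dieudonn\'e modules of $X$ and $Y$; they are $\breve F_0$-isocrystals of height $2n$. The $O_F$-action together with the identification $F \otimes_{F_0} \breve F_0 \simeq \breve F$ promotes each of them to a $\breve F$-vector space of dimension $n$ on which the Frobenius $\mathbf{F}$ is $\sigma$-semilinear. The supersingularity hypothesis forces the Newton polygon to be a single segment of slope $1/2$, and the Kottwitz signature condition pins down the Hodge filtration inside $N \otimes \bar k$.

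First I would construct a raw $O_F$-linear quasi-isogeny $\rho_0 \colon X \to Y$ using Dieudonn\'e--Manin: over $\breve F$, any two supersingular $\sigma$-isocrystals of the same rank with a common slope $1/2$ are $\breve F$-linearly isomorphic, and such an isomorphism lifts to a quasi-isogeny on the level of $p$-divisible groups. Second, I would compare $\lambda_X$ with $\rho_0^*(\lambda_Y)$. Via the $O_F$-linearity and the Rosati condition, each of these corresponds to a non-degenerate $F/F_0$-Hermitian form on the $n$-dimensional $\breve F$-vector space $N_X$ which is compatible with $\mathbf{F}$ in the usual sense of polarized isocrystals. The goal is then to find an $O_F$-linear automorphism $h$ of the isocrystal $N_X$ such that $h^*(\rho_0^*\lambda_Y)$ is an $F_0^\times$-multiple of $\lambda_X$; setting $\rho := \rho_0 \circ h$ will finish the proof.

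The main obstacle is this last step, namely showing that on the fixed isocrystal $(N_X, \mathbf{F})$ any two polarization forms of the type under consideration lie in a single orbit for the action of the $F$-linear automorphism group $J$ of $N_X$ on forms modulo $F_0^\times$-scaling. The group $J$ is the group of $F_0$-points of an inner form of $\mathrm{GU}_n$, and the desired transitivity follows from a Galois cohomology computation: the pointed set classifying such forms up to similitude is $H^1$ of (a form of) the projective unitary similitude group, which vanishes over $\breve F_0$ because $\ov k$ is algebraically closed (Steinberg/Lang). Equivalently, one argues directly that the Hermitian form on $N_X$ is uniquely determined up to $F_0^\times$-scalar by its rank and Frobenius type, since over the ramified quadratic extension $\breve F / \breve F_0$ the discriminant invariant coincides for the two forms thanks to the common Kottwitz signature $(1,n-1)$. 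Composing $\rho_0$ with the resulting automorphism yields the required $\rho$.
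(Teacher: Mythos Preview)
Your overall architecture---pass to rational Dieudonn\'e theory, realize that an $O_F$-linear quasi-isogeny exists as soon as the two polarized $F$-isocrystals are isomorphic, and reduce the latter to a statement about hermitian forms---is exactly the paper's approach. The first step (producing a raw $\rho_0$) is fine.

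The gap is in your transitivity argument. The $\sigma$-linear operator $\tau = \zeta\pi\uV^{-1}$ on $N_X$ has slope $0$, so $C := N_X^{\tau=1}$ is an $n$-dimensional $F$-vector space with $C \otimes_{F_0} \breve F_0 \cong N_X$, and the automorphism group $J$ of the $F$-isocrystal is precisely $\GL_F(C)$. The polarization descends to an $F/F_0$-hermitian form on $C$, and the question becomes whether any two such forms on $C$ are \emph{similar}. This is a classification problem over $F_0$, not over $\breve F_0$; Steinberg's theorem is irrelevant here, and in fact $H^1(F_0,-)$ does not vanish. Concretely, for $n$ odd all $n$-dimensional hermitian spaces over $F/F_0$ are similar and you are done, but for $n$ even there are two similitude classes, so your argument as written is incomplete.

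Your fallback claim, that the Kottwitz signature forces the discriminants to agree, is also incorrect: the signature constrains the Hodge filtration in $N \otimes \ov k$, which says nothing directly about the similitude class of the hermitian form on $C$. What actually pins down the class when $n$ is even is the combination of the spin condition and the polarization condition $\Ker\lambda = X[\iota(\pi)]$. The paper translates these into a statement about the $\pi$-modular Dieudonn\'e lattice $M$ inside $N$ and proves (Lemma~\ref{C parity lem}) that the $O_{\breve F}$-length of $(M + \tau^{-1}M)/\tau^{-1}M$ has parity determined by whether $C$ is split or non-split; the spin condition forces this length to be $1$, hence $C$ is non-split for both $X$ and $Y$. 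You need an argument of this kind to close the even case.
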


\begin{proof}
Let $M$ denote the covariant Dieudonn\'e module of $X$, endowed with its Frobenius operator $\uF$ and Verschiebung $\uV$.  The polarization on $X$ translates to an alternating form \aform on $M$ satisfying
\[
   \langle \uF x, y \rangle = \langle x, \uV y \rangle^\sigma
	\quad\text{for all}\quad
	x,y \in M,
\]
where $\sigma$ denotes the Frobenius operator on $W(\ov k) = O_{\breve F_0}$.  The $O_F$-action on $X$ translates to an $O_F$-action on $M$ commuting with $\uF$ and $\uV$ and satisfying
\[
   \langle a x,y \rangle = \langle x, \ov a y\rangle
\]
for all $x,y$.

Let $N := M \otimes_{O_{\breve F_0}} \breve F_0$ denote the rational Dieudonn\'e module of $X$.  Then \aform extends to a non-degenerate alternating form on $N$.  We must show that the rational Dieudonn\'e module of $Y$ is isomorphic to $N$ as a polarized isocrystal (with the polarization taken up to scalar in $F_0^\times$) with $F$-action.

Let $\zeta \in O_{\breve F_0}^\times$ satisfy $\zeta^2 \varpi = -p$.  Since $N$ is supersingular, all slopes of the $\sigma$-linear operator
\begin{equation}\label{tau}
   \tau := \zeta \pi \uV^{-1} \colon N \to N
\end{equation}
are $0$.  Hence
\[
   C := N^{\tau = 1}
\]
is an $F_0$-subspace of $N$ such that
\[
   C \otimes_{F_0} \breve F_0 \isoarrow N;
\]
and in this way $\id_C \otimes \sigma$ identifies with $\tau$.  Furthermore $C$ is $F$-stable, the restriction of \aform to $C$ takes values in $F_0$, and the form
\[
   h(x,y) := \langle \pi x, y \rangle + \langle x,y \rangle \pi, \quad x,y \in C,
\]
makes $C$ into a non-degenerate $F/F_0$-hermitian space of dimension $n$, cf.~\cite[pp.~1170--1]{RTW}.\footnote{Note that the quantity $\eta$ in loc.~cit.~should be a square root of $-\epsilon^{-1}$, rather than  a square root of $\epsilon^{-1}$.}

Clearly, to classify $N$ up to isomorphism as a polarized isocrystal with $F$-action is to classify $C$ up to similarity as a hermitian space.  When $n$ is odd, all $n$-dimensional non-degenerate $F/F_0$-hermitian spaces are similar, which proves the lemma.  When $n$ is even, the two isomorphism types of $n$-dimensional non-degenerate hermitian spaces remain non-similar.  By Dieudonn\'e theory, the Lie algebra of $X$ identifies with $M / \uV M = M / \pi\tau^{-1} M$, and the spin condition is that $\dim_{\ov k} (\pi M + \pi \tau^{-1} M)/\pi \tau^{-1} M = 1$.
The condition $\Ker \lambda_X = X[\iota_X(\pi)]$ translates to $M^\vee = \pi^{-1} M$ inside $N$, where the dual lattice $M^\vee$ is the set of $x \in N$ such that $\langle x, M \rangle \subset O_{\breve F_0}$, or equivalently such that $h(x,M) \subset O_{\breve F}$.
The lemma is now a consequence of the general result in Lemma~\ref{C parity lem} below.
\end{proof}

We will need to prepare a little before coming to Lemma~\ref{C parity lem}.  Suppose that $n$ is even, and let $N$ be ``the'' $n$-dimensional non-degenerate $\breve F/\breve F_0$-hermitian space.  Let $M$ be a $\pi$-modular $O_{\breve F}$-lattice in $N$, which is to say that the dual lattice of $M$ with respect to the hermitian form is $\pi^{-1}M$. Let $U := \RU(N)$ denote the (quasi-split) unitary group of $N$ over $\Spec \breve F_0$.  Let $K$ denote the stabilizer in $U(\breve F_0)$ of $M$.  Then $K$ is a special maximal parahoric subgroup of $U(\breve F_0)$, and all special maximal parahoric subgroups of $U(\breve F_0)$ are conjugate to $K$; see e.g.~case (b) in \cite[\S4.a]{PR-TLG}.  We also need the \emph{Kottwitz homomorphism} on $U(\breve F_0)$, which is a homomorphism
\begin{equation}\label{Kottwitz}
   \kappa\colon U(\breve F_0) \to \{\pm 1\}
\end{equation}
admitting the following simple description.  For any $g \in U(\breve F_0)$, the determinant $\det_F(g)$ is a norm $1$ element in $\breve F$, and $\kappa(g)$ takes the value $\pm 1$ according as $\det_F(g) \equiv \pm 1 \bmod \pi$; this follows from e.g.~\cite[\S\S3.b.1, 4.a]{PR-TLG}, or see \cite[\S1.2.3(b)]{PR} for the closely related case of quasi-split $GU_n$.

\begin{lemma}\label{Kott lem}
For $n$ even and any $g \in U(\breve F_0)$, $\kappa(g)$ equals $1$ or $-1$ according as the $O_{\breve F}$-length of $(M + gM)/M$ is even or odd.
\end{lemma}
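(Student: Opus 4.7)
The plan is to reduce to the rank-one case $n=2$ by exploiting bi-$K$-invariance together with a Witt decomposition of $N$, and then to verify the base case by hand.

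First, I would observe that both $\kappa(g)$ and $(-1)^{\ell(g)}$, where $\ell(g) := \length_{O_{\breve F}}(M + gM)/M$, are invariant under left and right multiplication by $K$. For $\kappa$ this is because $\kappa$ is a homomorphism and $K$, being a parahoric subgroup, is contained in its kernel. For $\ell$, since $KM = M$, one has $M + k_1 g k_2 M = k_1(M + gM)$ and $M = k_1 M$, so the $O_{\breve F}$-length of the quotient is unchanged. It therefore suffices to verify the claimed identity on a set of representatives for $K\backslash U(\breve F_0)/K$.

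Next, I would use that $N$ is quasi-split, so it admits a hyperbolic decomposition $N = H_1 \perp \cdots \perp H_{n/2}$ which can be chosen so that $M = M_1 \perp \cdots \perp M_{n/2}$, with each $M_i$ a $\pi$-modular lattice in $H_i$. A maximal $\breve F_0$-split torus $A$ of $U$ then sits inside $\prod_i \RU(H_i)$ as the product of maximal split tori of each factor. By the Cartan decomposition attached to the special maximal parahoric $K$, representatives of $K\backslash U(\breve F_0)/K$ can be chosen in $A(\breve F_0)$; that is, of the form $t = (t_1,\ldots,t_{n/2})$ with $t_i$ in a split torus of $\RU(H_i)$. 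For such $t$, the orthogonal decomposition gives $M + tM = \bigoplus_i(M_i + t_i M_i)$, hence $\ell(t) = \sum_i \ell_i(t_i)$; and $\det_F(t) = \prod_i \det_F(t_i)$, so $\kappa(t) = \prod_i \kappa_i(t_i)$. Multiplicativity of both sides then reduces the lemma to the case $n=2$.

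For $n=2$, I would fix a basis $e, f$ of the hyperbolic plane $H$ with $(e,f) = 1$, take $M = \pi O_{\breve F} e + O_{\breve F} f$, and consider $t = \diag(a, \ov a^{-1})$ for $a = u\pi^m$, $u \in O_{\breve F}^\times$. A direct calculation yields $tM = \pi^{m+1} O_{\breve F} e + \pi^{-m} O_{\breve F} f$, whence $\ell(t) = |m|$. Meanwhile $\det_F(t) = a/\ov a = (-1)^m (u/\ov u)$, and since $F/F_0$ is totally ramified the Galois action is trivial on the residue field, so $u \equiv \ov u \pmod \pi$ and hence $\kappa(t) = (-1)^m$. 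Thus $(-1)^{\ell(t)} = (-1)^{|m|} = (-1)^m = \kappa(t)$, as required.

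The main obstacle in this plan is the Cartan decomposition step: showing that representatives for $K\backslash U(\breve F_0)/K$ can be taken in the product of rank-one subgroups $\prod_i \RU(H_i)$. This relies on Bruhat--Tits theory for the quasi-split ramified unitary group, in particular the fact that $K$ is a special maximal parahoric so that representatives of the relative Weyl group lie in $K$. Everything else is formal or a routine explicit computation.
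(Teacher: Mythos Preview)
Your proof is correct and follows essentially the same route as the paper: both use bi-$K$-invariance and the Cartan decomposition for the special maximal parahoric $K$ to reduce to elements of the diagonal torus $T(\breve F_0) = \{\diag(a_1,\dotsc,a_{n/2},\ov a_{n/2}^{-1},\dotsc,\ov a_1^{-1})\}$, and then check the identity there via the congruence $a/\ov a \equiv (-1)^{\ord_\pi a}\bmod \pi$. The only difference is presentational: you make the reduction to $n=2$ via the hyperbolic decomposition explicit, whereas the paper simply says the torus case is obvious.
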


\begin{proof}
We use the Cartan decomposition.  We can choose a split $\breve F$-basis $e_1,\dotsc,e_n$ for $N$ (meaning that $e_i$ and $e_j$ pair to $\delta_{i,n+1-j}$ under the hermitian form) such that
\[
   M = O_{\breve F} e_1 + \dotsb + O_{\breve F} e_{n/2} + O_{\breve F} \pi e_{n/2 + 1} + \dotsb + O_{\breve F} \pi e_n.
\]
Let $T$ be the maximal torus in $U$ whose $\breve F_0$-points are
\[
   T(\breve F_0) = \bigl\{\, \diag\bigl(a_1,\dotsc,a_{n/2}, \ov a_{n/2}^{-1},\dotsc, \ov a_1^{-1}\bigr) \bigm| a_1,\dotsc,a_{n/2} \in \breve F^\times \,\bigr\}.
\]
Since $K$ is a special maximal parahoric subgroup, by the Cartan decomposition $g = k_1 t k_2$ for some $k_1,k_2 \in K$ and $t \in T(\breve F_0)$.  Then $(M + gM)/M = (M + k_1tM)/M$, which, multiplying by $k_1^{-1}$, is isomorphic to $(M + tM)/M$.  Since $k_1$ and $k_2$ of course have trivial Kottwitz invariant, we have therefore reduced the lemma to the case $g = t$, where it is obvious.  (It may be helpful to note that an element of the form $a/\ov a$, $a \in \breve F^\times$, is congruent to $(-1)^{\ord_\pi a} \bmod \pi$.)
\end{proof}

\begin{lemma}\label{C parity lem}
Let $n$ be even, let $C$ be a non-degenerate $F/F_0$-hermitian space of dimension $n$, let $N := C \otimes_{F_0} \breve F_0$, and let $\tau := \id_C \otimes \sigma$.  Let $M$ be an $O_{\breve F}$-lattice in $N$ which is $\pi$-modular with respect to the induced $\breve F/ \breve F_0$-hermitian form.  Then the $O_{\breve F}$-length of $(M + \tau^{-1}M)/\tau^{-1}M$ is even or odd according as $C$ is a split or non-split hermitian space.
\end{lemma}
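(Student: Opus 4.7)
The plan is to show that the parity of $\length_{O_{\breve F}}\bigl((M + \tau^{-1}M)/\tau^{-1}M\bigr)$ depends only on the isomorphism class of $C$, and then to evaluate it in the two cases via an orthogonal decomposition. Since $\tau$ is a $\sigma$-semilinear bijection of $N$ preserving $O_{\breve F}$-lattices, applying $\tau$ to numerator and denominator shows this length equals $\length\bigl((\tau M + M)/M\bigr)$, which I will work with below.

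First I would prove the parity is independent of the choice of $\pi$-modular $M$. Because $\tau$ preserves the hermitian form, $\tau M$ is again $\pi$-modular, and since all $\pi$-modular lattices of $N$ form a single $U(\breve F_0)$-orbit (by the conjugacy of their stabilizers noted before Lemma~\ref{Kott lem}), one can write $\tau M = gM$ for some $g \in U(\breve F_0)$. Lemma~\ref{Kott lem} then gives $\length((\tau M + M)/M) \equiv \kappa(g) \pmod 2$. Replacing $M$ by $hM$ replaces $g$ by $(\tau h \tau^{-1}) g h^{-1}$; as $\kappa$ is a homomorphism to $\{\pm 1\}$ on which the $\breve F_0/F_0$-Frobenius acts trivially, $\kappa(g)$ is unchanged. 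Hence the parity depends only on $(N,\tau)$, i.e.\ only on the isomorphism class of $C$.

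Next I would use the orthogonal decomposition $C = C^{\mathrm{hyp}} \perp C^{\mathrm{an}}$, where $C^{\mathrm{an}}$ is the anisotropic kernel, of $F$-dimension $0$ in the split case and $2$ in the non-split case. Choose a $\pi$-modular $O_F$-lattice $L^{\mathrm{hyp}}$ in $C^{\mathrm{hyp}}$ (e.g.\ $O_F v + O_F \pi w$ in any hyperbolic pair $\{v,w\}$), so $M^{\mathrm{hyp}} := L^{\mathrm{hyp}} \otimes_{O_F} O_{\breve F}$ is $\tau$-stable; and choose any $\pi$-modular $M^{\mathrm{an}}$ in $N^{\mathrm{an}}$, which exists because $N^{\mathrm{an}}$ is split over $\breve F_0$. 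The $\pi$-modular lattice $M := M^{\mathrm{hyp}} \oplus M^{\mathrm{an}}$ then satisfies $(\tau M + M)/M \cong (\tau M^{\mathrm{an}} + M^{\mathrm{an}})/M^{\mathrm{an}}$. In the split case $C^{\mathrm{an}} = 0$, the length is $0$, and we are done. In the non-split case I would compute directly: after diagonalizing the form on $C^{\mathrm{an}}$ and rescaling over $\breve F$ so that the Gram matrix becomes $\diag(1,1)$, pass to the hyperbolic basis $f_1 = e_1 + i e_2'$, $f_2 = (e_1 - i e_2')/2$ with $i \in O_{\breve F_0}$ satisfying $i^2 = -1$. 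The nontriviality of $\sigma$ on either $i$ or on the rescaling element---forced by the anisotropy of $C^{\mathrm{an}}$---yields $\tau f_1 = 2 f_2$ and $\tau f_2 = f_1/2$. Taking $M^{\mathrm{an}} = O_{\breve F} f_1 + O_{\breve F} \pi f_2$, a direct check gives $\tau M^{\mathrm{an}} = O_{\breve F} f_2 + O_{\breve F} \pi f_1$, whence $(\tau M^{\mathrm{an}} + M^{\mathrm{an}})/M^{\mathrm{an}} \cong O_{\breve F}/\pi O_{\breve F}$ has length $1$.

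The main obstacle will be the explicit non-split computation: verifying that the anisotropy of $C^{\mathrm{an}}$ forces $\sigma$ to swap the two isotropic lines of $N^{\mathrm{an}}$, producing length $1$ rather than $0$. The reduction steps are formal; the geometry of the $2$-dimensional anisotropic form does the essential work.
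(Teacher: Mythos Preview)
Your proposal is correct and follows essentially the same route as the paper. Both arguments reduce to $(\tau M + M)/M$, invoke Lemma~\ref{Kott lem} to convert the parity question into a Kottwitz-invariant computation, handle the split case via a $\tau$-stable $\pi$-modular lattice coming from $C$, and treat the non-split case by splitting off a $2$-dimensional anisotropic summand and computing there explicitly.

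The organizational differences are minor. You first isolate ``parity is independent of $M$'' as a separate lemma (a clean abstraction of what the paper does implicitly by writing $M = gL$ for a fixed reference lattice $L$ and noting $\kappa(\sigma(g)g^{-1}) = 1$). In the $2$-dimensional anisotropic piece, the paper diagonalizes to $\diag(1,-b)$ and uses $\beta \in O_{\breve F_0}$ with $\beta^2 = b$; your rescaling to $\diag(1,1)$ together with $i = \sqrt{-1}$ amounts to the same thing, since $(ic)^2 = b$ plays exactly the role of the paper's $\beta$, and the non-square-ness of $b$ in $F_0$ forces $\sigma(ic) = -ic$, which is precisely your ``nontriviality of $\sigma$ on either $i$ or the rescaling element.'' Your identification of ``anisotropy forces $\tau$ to swap the isotropic lines'' as the geometric crux is exactly right and matches the paper's computation.
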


\begin{proof}
The length in question is the same as the length of the module $(M + \tau M)/M$, which we will work with instead.  Let $U$ denote the unitary group of $C$ over $\Spec F_0$.  (In terms of the notation in the previous lemma, in this way we view the unitary group of $N$ as defined over $F_0$.)

If $C$ is split, then it contains a $\pi$-modular $O_F$-lattice $\Lambda$.  Let $L := O_{\breve F} \cdot \Lambda \subset N$.  Then $M = gL$ for some $g\in U(\breve F_0)$.  Hence $\tau M = \sigma(g)L$, and
\[
   (M + \tau M)/M = \bigl(M + \sigma(g)g^{-1}M\bigr)\big/M.
\]
Since $\sigma(g)$ and $g$ have the same Kottwitz invariant, we conclude from Lemma~\ref{Kott lem} that the $O_{\breve F}$-length of the displayed module is even.

If $C$ is non-split, then it can be expressed as an orthogonal direct sum of a non-split $2$-dimensional space $C_1$ and a split $(n-2)$-dimensional space $C_2$.  There exists a basis $e_1,e_2$ of $C_1$ such that the associated matrix of the hermitian form is of the form
\[
   \begin{bmatrix}
		1\\
		  &  -b
	\end{bmatrix}
\]
for some $b \in O_{F_0}^\times \smallsetminus \RN O_F^\times$.  Let $\beta \in O_{\breve F_0}$ be a square root of $b$.  Then the vectors
\[
   f_1 := e_1 + \beta^{-1}e_2,\quad f_2:= \frac 1 2(e_1 - \beta^{-1}e_2)
\]
form a split basis of $N_1 := C_1 \otimes_{F_0} \breve F_0$.  Hence $L_1 := O_{\breve F} f_1 + O_{\breve F} \pi f_2$ is a $\pi$-modular lattice in $N_1$.  Since $C_2$ is split, $N_2 := C_2 \otimes_{F_0} \breve F_0$ contains a $\pi$-modular lattice $L_2$ which is stable under $\id_{C_2} \otimes \sigma$.  Then the lattice $L := L_1 \oplus L_2$ in $N$ is $\pi$-modular, and by inspection, $L$ and $\tau L$ differ by the transformation $g_0 \in U(\breve F_0)$ which interchanges $f_1$ and $f_2$ and is the identity on $N_2$.  Then $\det(g_0) = -1$, and hence $\kappa(g_0) = -1$.  Now we argue as in the case that $C$ is split.  Writing $M = gL$ for an appropriate $g \in U(\breve F_0)$, we have
\[
   (M + \tau M)/M 
	   = \bigl(gL + \sigma(g)\tau L\bigr) / gL 
		\cong \bigl(L + g^{-1}\sigma(g)g_0L\bigr) \big/ L.
\]
By Lemma~\ref{Kott lem}, the length of the module on the right is odd.
\end{proof}

In the rest of this subsection we are going to fix particular framing objects $(\BX_n,\iota_{\BX_n},\lambda_{\BX_n})$, $n \geq 1$, over $\Spec \ov k$ for use in the rest of the paper.  When $n = 1$, we define
\[
   (\BX_1, \iota_{\BX_1}) := (\BE, \iota_\BE),
\]
where as in the Introduction, \BE is the unique (up to isomorphism) connected $p$-divisible group of dimension $1$ and height $2$ over $\Spec \ov k$, and $\iota_\BE$ is an embedding
\[
   \iota_\BE\colon O_F \inj O_D = \End_{O_{F_0}}(\BE),
\]
which makes \BE into a formal $\pi$-divisible module of relative height $1$.
(Recall that in this section $F_0 = \BQ_p$.)  The Dieudonn\'e module \BM of \BE can be identified with $W(\ov k)^2 = O_{\breve F_0}^2$ endowed with the Frobenius operator given in matrix form by
\[
   \begin{bmatrix}
	   0  &  \varpi\\
	   1  &  0
   \end{bmatrix}
   \sigma,
\]
where $\sigma$ denotes the usual Frobenius homomorphism on the Witt vectors.
To give a polarization on \BE is to give an alternating bilinear pairing on \BM with associated matrix of the form
\[
   \begin{bmatrix}
	   0  &  \delta\\
	   -\delta  &  0
   \end{bmatrix}
\]
for $\delta \in O_{\breve F_0}$ satisfying $\sigma(\delta) = -\delta$.  We define the (principal) polarization $\lambda_{\BE}$ by fixing any such $\delta \in O_{\breve F_0}^\times$ once and for all.  Note that any other principal polarization of \BE differs from $\lambda_\BE$ by an $O_{F_0}^\times$-multiple.  We define
\[
   \lambda_{\BX_1} := -\lambda_\BE.
\]
The $F_0$-algebra $D = \End_{O_{F_0}}^\circ(\BE)$ is the quaternion division algebra over $F_0$, and $O_D = \End_{O_{F_0}}(\BE)$ is its maximal order.  
The Rosati involution attached to $\lambda_\BE$ (and to $\lambda_{\BX_1}$) is the main involution on $D$, and therefore it induces the nontrivial Galois automorphism on $O_F$.

When $n = 2$, we define
\[
   \BX_2 := O_F \otimes_{O_{F_0}} \BE
\]
via the Serre tensor construction, with $\iota_{\BX_2}$ given by the tautological $O_F$-action on the left tensor factor.  Then canonically
\[
   \Lie(O_F \otimes_{O_{F_0}} \BE) \cong O_F \otimes_{O_{F_0}} \Lie \BE
\]
as $(O_F \otimes_{O_{F_0}} \ov k)$-modules.  It is clear from this that $(\BX_2, \iota_{\BX_2})$ satisfies the Kottwitz and spin conditions.  To define the polarization $\lambda_{\BX_2}$, first note that canonically
\[
   \BX_2^\vee \cong \ov{O_F^\vee} \otimes_{O_{F_0}} \BE^\vee
\]
as $O_F$-modules (where $O_F$ acts on $\BX_2^\vee$ as prescribed by \eqref{dual action}); here $\ov{O_F^\vee}$ is the $O_{F_0}$-linear dual of $O_F$, made into an $O_F$-module by the rule
\[
   (x\cdot f)(y) = f(\ov x y) \quad\text{for}\quad x,y \in O_F, \quad f \in O_F^\vee.
\]
Define the (injective but not surjective) $O_F$-linear map
\begin{equation}\label{varphi}
   \varphi\colon O_F  \to  \ov{O_F^\vee}, \quad x \mapsto \bigl[y \shortmapsto \tfrac 1 2 \tr_{F/F_0}(\ov x y)\bigr].
\end{equation}
Then we define $\lambda_{\BX_2}$ to be the map
\[
    O_F \otimes_{O_{F_0}} \BE 
        \xra{\varphi \otimes \lambda_\BE}
		\ov{O_F^\vee} \otimes_{O_{F_0}} \BE^\vee \cong (O_F \otimes_{O_{F_0}} X)^\vee.
\]
Note that $\lambda_{\BX_2}$ is anti-symmetric because $\varphi$ is symmetric and $\lambda_\BE$ is anti-symmetric,\footnote{There is a mistake in \cite[(6.2)]{KR-U1}: the $\delta^{-1}$ in loc.~cit.~should be eliminated to obtain an anti-symmetric homomorphism into the dual, rather than a symmetric one. } and one readily verifies that $\Ker \lambda_{\BX_2} = \BX_2[\iota_{\BX_2}(\pi)]$.

The triple $(\BX_2,\iota_{\BX_2},\lambda_{\BX_2})$ can be expressed in more concrete terms after choosing an $O_{F_0}$-basis for $O_F$.  Indeed the choice of basis $1$, $\pi$ induces an isomorphism of $O_{F_0}$-modules
\begin{equation}\label{BX_2 concrete}
   O_F \otimes_{O_{F_0}} \BE \simeq  \BE \times \BE.
\end{equation}
The action of $\pi$ on the left-hand side of this isomorphism translates to the action of the matrix
\[
   \begin{bmatrix}
	  0  &  \varpi\\
	  1  &  0
   \end{bmatrix}
\]
on the right-hand side.  Using the dual basis to identify $\ov{O_F^\vee} \otimes_{O_{F_0}} \BE^\vee \simeq (\BE^\vee)^2$, the polarization $\lambda_{\BX_2}$ is given by the matrix
\begin{equation}\label{explicit lambda_BX_2}
   \begin{bmatrix}
	  \lambda_\BE  &  0\\
	  0  &  - \varpi\lambda_\BE
   \end{bmatrix}.
\end{equation}

\begin{remark}\label{KR framing object}
A framing object in our setting in the case $n = 2$ is also described in \cite[\S5~d)]{KR-alt}, but contrary to the claim made there, it does not give rise to a formally smooth moduli space.  Indeed this object manifestly does not satisfy the spin condition; or one can check directly that the hermitian space corresponding to it (defined as in \S\ref{aut of framing obs} below) is split.  The object in loc.~cit.~should be replaced with $\BX_2$ as we have defined it.
\end{remark}

Now we define framing objects for $n > 2$. As in the Introduction, we denote by $\ov \BE$ the same $O_{F_0}$-module as $\BE$, but with $O_F$-action $\iota_{\ov\BE}$ equal to $\iota_\BE$ precomposed by the nontrivial Galois automorphism.  Then $\lambda_\BE$ is still $O_F$-linear with respect to $\iota_{\ov\BE}$, and we denote it by $\lambda_{\ov\BE}$.  If $n$ is even, then we define
\begin{align*}
   \BX_n &:= \BX_2 \times \ov\BE^{n-2},\\
   \iota_{\BX_n} &:= \iota_{\BX_2} \times \iota_{\ov\BE}^{n-2},
\end{align*}
and, in matrix form,
\[
   \lambda_{\BX_n} := \lambda_{\BX_2} \times 
      \diag \biggl(
	     \underbrace{\begin{bmatrix} 
	           0  &  \lambda_{\ov\BE} \iota_{\ov\BE}(\pi)\\
	           -\lambda_{\ov\BE} \iota_{\ov\BE}(\pi)  &  0
			\end{bmatrix}
			,\dotsc,
			\begin{bmatrix} 
	           0  &  \lambda_{\ov\BE} \iota_{\ov\BE}(\pi)\\
	           -\lambda_{\ov\BE} \iota_{\ov\BE}(\pi)  &  0
			\end{bmatrix}}_{(n-2)/2 \text{ times}}\biggr).
\]
Then indeed $\Ker \lambda_{\BX_n} = \BX_n[\iota_{\BX_n}(\pi)]$. 
If $n$ is odd, then we define
\begin{equation}\label{odd framing object}
   (\BX_n,\iota_{\BX_n}, \lambda_{\BX_n}) :=
      (\BX_{n-1} \times \ov\BE, \iota_{\BX_{n-1}} \times \iota_{\ov\BE},\lambda_{\BX_{n-1}} \times \lambda_{\ov\BE}).
\end{equation}
Then indeed $\Ker \lambda_{\BX_n} \subset \BX_n[\iota_{\BX_n}(\pi)]$ of height $n-1$.  For either parity of $n$, if $n > 1$, then it is clear that $\iota_{\BX_n}(\pi)$ acts on $\Lie \BX_n$ with rank $1$.  Hence $(\BX_n,\iota_{\BX_n}, \lambda_{\BX_n})$ is a unitary $p$-divisible group of signature $(1,n-1)$ for all $n$.

\subsection{Automorphisms of framing objects}\label{aut of framing obs}

For $n \geq 1$, let $g \mapsto g^\dag$ denote the Rosati involution on $\End_{O_F}^\circ(\BX_n)$ induced by $\lambda_{\BX_n}$. Define
\begin{equation}\label{Aut(BX_n)}
   U(\BX_n) := U(\BX_n,\iota_{\BX_n},\lambda_{\BX_n}) := 
      \bigl\{\, g \in \End_{O_F}^\circ(\BX_n) \bigm| gg^\dag = \id_{\BX_n} \,\bigr\}.
\end{equation}
Thus $U(\BX_n)$ is the group of $O_F$-linear self-quasi-isogenies of $\BX_n$ which preserve $\lambda_{\BX_n}$ on the nose.

Next define the space of \emph{special quasi-homomorphisms}
\begin{equation}\label{BV_n}
   \BV_n := \Hom_{O_F}^\circ \bigl(\ov\BE, \BX_n\bigr);
\end{equation}
cf.~e.g.~\cite[Def.~3.1]{KR-U1}.  Then $\BV_n$ is an $n$-dimensional $F$-vector space.  It carries a natural non-degenerate $F/F_0$-hermitian form $h$: for $x,y \in \BV_n$, the composite
\[
   \ov \BE \xra{y} \BX_n \xra{\lambda_{\BX_n}} \BX_n^\vee \xra{x^\vee} \ov\BE^\vee \xra{\lambda_{\ov\BE}^{-1}} \ov\BE
\]
lies in $\End_{O_F}^\circ(\ov\BE)$, and hence identifies with an element $h(x,y) \in F$ via the isomorphism 
\[
   \iota_{\ov\BE} \colon F \isoarrow \End_{O_F}^\circ(\ov\BE).
\]

\begin{lemma}
The hermitian space $(\BV_n,h)$ is non-split for all $n$.
\end{lemma}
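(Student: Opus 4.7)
The plan is to verify the discriminant criterion \eqref{det def} by explicit computation, using the concrete description of $(\BX_n,\lambda_{\BX_n})$ from \S\ref{framing obs}. Since the polarization is block-diagonal in the factors of $\BX_n$, I first deduce an orthogonal decomposition of $\BV_n$: for $n$ odd, $\BX_n = \BX_{n-1}\times \ov\BE$ yields $\BV_n = \BV_{n-1} \perp \Hom_{O_F}^\circ(\ov\BE,\ov\BE)$; for $n \geq 4$ even, the block structure of $\lambda_{\BX_n}$ yields $\BV_n = \BV_{n-2} \perp W$, where $W = \Hom_{O_F}^\circ(\ov\BE,\ov\BE^2)$ is equipped with the two-by-two antidiagonal polarization block appearing in the definition of $\lambda_{\BX_n}$. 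Hence the problem reduces to computing $\BV_1$, $\BV_2$, and the two types of summands.

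For the base cases I would compute everything inside the quaternion division algebra $D = \End_{O_{F_0}}^\circ(\BE)$, in which the Rosati involution attached to $\lambda_\BE$ is the main involution. For $n = 1$, $\BV_1$ is the $F$-line of elements $c \in D$ anti-commuting with $\iota_\BE(\pi)$, i.e.\ $\BV_1 = F \cdot j$ for any such $j$; the definition of $h$, together with $\lambda_{\BX_1} = -\lambda_\BE$, yields $h(j,j) = j^2 \in F_0^\times$. Since $F$ sits inside the division algebra $D$ as a maximal subfield, $j^2 \notin \RN F^\times$, so $\eta(\det \BV_1) = -1$. For $n = 2$, using the Serre-tensor description $\BX_2 = O_F \otimes_{O_{F_0}} \BE$ and the basis $(1,\pi)$ of $O_F/O_{F_0}$, I would parametrize $\BV_2$ by $b \in D$ via $f_b \leftrightarrow (-b\Pi, b)$ with $\Pi := \iota_\BE(\pi)$, and compute using \eqref{explicit lambda_BX_2} that $h(f_b,f_b) = -2\varpi\, \RN(b)$. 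Taking the basis $b \in \{1,j\}$ one checks that the cross term vanishes, so the form is $\diag(-2\varpi,\, 2\varpi j^2)$ and $\det \BV_2 \equiv -j^2 \pmod{\RN F^\times}$, giving $\eta(-\det\BV_2) = -1$.

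For the induction step, the $1$-dimensional summand $\Hom_{O_F}^\circ(\ov\BE,\ov\BE) \cong F$ carries the form $h(f,g) = \ov f g$ with discriminant $1$, while a similar direct calculation shows that $W$ has hermitian matrix $\bigl[\begin{smallmatrix} 0 & \pi \\ -\pi & 0 \end{smallmatrix}\bigr]$ and hence $\det W = \varpi$; since $\varpi = -\RN(\pi)$, this gives $\varpi \equiv -1 \pmod{\RN F^\times}$. Setting $\delta_n := \eta\bigl((-1)^{n(n-1)/2}\det\BV_n\bigr)$, the two recursions become: for $n$ odd, $\det\BV_n = \det\BV_{n-1}$ and the exponent shift $n-1$ is even, so $\delta_n = \delta_{n-1}$; for $n \geq 4$ even, $\det\BV_n = \varpi\,\det\BV_{n-2}$ produces an extra factor of $\eta(\varpi) = \eta(-1)$, while the exponent shift $2n-3$ is odd, contributing another $\eta(-1)$ that cancels the first, so $\delta_n = \delta_{n-2}$. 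Combined with $\delta_1 = \delta_2 = -1$, this gives $\delta_n = -1$ for all $n$.

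The main technical obstacle is keeping conventions straight in the $n = 2$ calculation: one must track both the distinction between the $O_F$-actions via $\iota_{\ov\BE}$ and $\iota_{\BX_2}$ and the anti-commutation $j\Pi = -\Pi j$ in $D$, any sign error in which propagates through the whole induction. A conceptually cleaner but less direct alternative would be to identify $\BV_n$ with the hermitian space $C$ constructed from the rational Dieudonn\'e module of $\BX_n$ in the proof of Proposition~\ref{k-bar isog classes}, and then appeal to Lemma~\ref{C parity lem} for the parity calculation in the even case.
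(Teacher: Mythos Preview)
Your proof is correct and follows essentially the same strategy as the paper: both reduce to the base cases $n=1,2$ via the orthogonal decomposition coming from the block-diagonal polarization, and both compute those base cases explicitly inside $D$. The only difference is bookkeeping: the paper argues qualitatively (for $n=2$, the form $h(f_b,f_b)=-2\varpi\RN b$ has no nonzero isotropic vectors; for the induction, the summands $V_1$ and $V_2$ are \emph{split}, so ``non-split $\oplus$ split of the right parity $=$ non-split''), whereas you track the discriminant invariant $\delta_n$ numerically through the recursion. Your approach is slightly more laborious but has the virtue of being entirely self-contained, not invoking the structural fact about sums of split and non-split hermitian spaces.
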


\begin{proof}
When $n = 1$, the space $\BV_1 = \Hom_{O_F}^\circ(\ov\BE, \BE)$ is the $-1$-eigenspace in $D$ for the conjugation action by $\iota_\BE(\pi)$, endowed with the hermitian norm $(x, x) = - \RN x$, which is indeed non-split. When $n = 2$, using the $O_{F_0}$-linear isomorphism $\BX_2 \simeq \BE \times \BE$ in \eqref{BX_2 concrete}, $\BV_2$ identifies with
\[
   \biggl\{
	\begin{bmatrix}
		a\\
		b
	\end{bmatrix}
	   \in \RM_{2 \times 1}(D)
	\biggm|
	\begin{bmatrix}
		  & \varpi\\
	   1
	\end{bmatrix}
	\begin{bmatrix}
		a\\
		b
	\end{bmatrix}
	=
	\begin{bmatrix}
		a\\
		b
	\end{bmatrix}
	\iota_{\ov\BE}(\pi)
	\biggr\}
	=
	\biggl\{
	\begin{bmatrix}
		b\iota_{\ov\BE}(\pi)\\
		b
	\end{bmatrix}
	\biggm|
	b \in D
	\biggr\}.
\]
For $b\in D$, one computes from the explicit form \eqref{explicit lambda_BX_2} of the polarization that $\bigl[\begin{smallmatrix} b\iota_{\ov\BE}(\pi) \\ b \end{smallmatrix}\bigr]$ pairs with itself under $h$ to $-2\varpi \RN b$.  Thus $\BV_2$ has no nonzero isotropic vectors, which characterizes it as the non-split hermitian space of dimension $2$.
	
To complete the proof for higher $n$, note that the definition of $\BV_n$ as a hermitian space makes sense for any polarized formal $O_F$-module in place of $\BX_n$.  Doing this for the pair $(\ov\BE,\lambda_{\ov\BE})$, we obtain a $1$-dimensional space $V_1$ which is obviously split; and doing this for
\[
   \biggl( \ov\BE^2, 
	   \begin{bmatrix}
         0  &  \lambda_{\ov\BE} \iota_{\ov\BE}(\pi)\\
         -\lambda_{\ov\BE} \iota_{\ov\BE}(\pi)  &  0
		\end{bmatrix}
	\biggr),
\]
we obtain a $2$-dimensional space $V_2$ which is obviously split.  When $n$ is even, we conclude that
\[
   \BV_n \cong \BV_2 \oplus V_2^{(n-2)/2}
\]
is non-split, because it is the orthogonal direct sum of a non-split space and an even-dimensional split space.  When $n$ is odd, we conclude that
\[
   \BV_n \cong \BV_{n-1} \oplus V_1
\]
is non-split, because it is the orthogonal direct sum of an even-dimensional non-split space with a split space.
\end{proof}

The group $U(\BX_n)$ acts naturally from the left on $\BV_n$, and in this way identifies with $\U(h)$.  Thus in terms of the notation \eqref{U_i def}, we may, and will, choose an isomorphism
\begin{equation}\label{Aut(BX_n)=U(W_1)}
	U(\BX_n) \simeq U_1(F_0)
\end{equation}
for all $n$.

\subsection{The moduli space}\label{CN_n def}

We now define the moduli space $\CN_n$.  For $S$ a scheme over $\Spf O_{\breve F}$, let
\[
   \ov S := \Spec \CO_S/\pi\CO_S.
\]
The $S$-points on $\CN_n$ are isomorphism classes of quadruples
\[
   (X,\iota_X,\lambda_X,\rho_X),
\]
where $(X,\iota_X,\lambda_X)$ is a unitary $p$-divisible group of signature $(1,n-1)$, and where
\[
   \rho_X \colon X \times_S \ov S \to \BX_n \times_{\Spec \ov k} \ov S
\]
is an $O_F$-linear quasi-isogeny of height $0$ such that $\rho_X^*(\lambda_{\BX_n}\times_{\Spec \ov k} \ov S)$ is locally an $O_{F_0}^\times$-multiple of $\lambda_X \times_S \ov S$ in $\Hom_{O_F}^\circ(X_{\ov S}, X_{\ov S}^\vee)$, i.e., locally on $S$, 
\begin{equation}\label{multiplierconst}
   \rho_X^*(\lambda_{\BX_n}\times_{\Spec \ov k} \ov S) = c(\lambda_X)\cdot (\lambda_X \times_S \ov S), \quad c(\lambda_X)\in O_{F_0}^\times .
\end{equation}  Here an isomorphism between quadruples $(X,\iota_X,\lambda_X,\rho_X) \isoarrow (Y,\iota_Y,\lambda_Y,\rho_Y)$ is an $O_F$-linear isomorphism of $p$-divisible groups $\alpha\colon X \isoarrow Y$ over $S$ such that $\rho_Y \circ (\alpha \times_S \ov S) = \rho_X$ and such that $\alpha^* (\lambda_Y)$ is locally an $O_{F_0}^\times$-multiple of $\lambda_X$.

Each $g$ in the group $U(\BX_n)$ \eqref{Aut(BX_n)} is a quasi-isogeny of height $0$, and therefore $U(\BX_n)$ acts naturally on $\CN_n$ on the left via the rule $g\cdot (X,\iota_X,\lambda_X,\rho_X) = (X,\iota_X,\lambda_X, g\rho_X)$.

\begin{remark}\label{alt formulation}
We have formulated the moduli problem defining $\CN_n$ in a way that conforms with other instances of RZ spaces in the literature, by taking the polarization $\lambda_X$ up to scalar in $O_{F_0}^\times$.  But the definition can be reformulated without reference to scalar factors: consider the moduli problem of quadruples $(X,\iota_X,\lambda_X,\rho_X)$ as above, except where $\rho_X^*(\lambda_{\BX_n} \times_{\Spec \ov k} \ov S)$ is required to equal $ \lambda_X \times_S \ov S$ on the nose, and where isomorphisms $\alpha$ as above satisfy $\alpha^* (\lambda_Y) = \lambda_X$ on the nose.  If $(X,\iota_X,\lambda_X,\rho_X)$ determines a point on this variant moduli problem, then $(X,\iota_X,\lambda_X,\rho_X)$ also determines a point on $\CN_n$, and it is easy to see that this defines an isomorphism from the variant moduli problem to $\CN_n$. In some situations this variant of the moduli problem is a little more convenient to work with, cf.\ Remark \ref{alt simpler} and the proof of Proposition \ref{Serre isom}.
\end{remark}

\begin{example}[$n = 1$]\label{CN_1}
Let us make the definition of $\CN_n$ explicit in the case $n = 1$.  The framing object $\BE = \BX_1$ is a formal $\pi$-divisible $O_F$-module of height $1$ via $\iota_\BE$.  When $\ov S$ is reduced, any framing map $\rho$ of height $0$ into $\BE \times_{\Spec \ov k} \ov S$ as above must be an isomorphism, and it follows that $\CN_1$ is the universal deformation space of $\BE$ over $\Spf O_{\breve F}$, which is just $\Spf O_{\breve F}$ itself.  We write \CE for the universal $p$-divisible group over $\CN_1$, endowed with its $O_F$-action $\iota_\CE$, principal polarization $\lambda_\CE$, and framing $\rho_\CE\colon \CE_{\ov k} \isoarrow \BE$.  Of course, the triple $(\CE,\iota_\CE,\rho_\CE)$ is nothing but the canonical lift of $(\BE,\iota_\BE)$ in the sense of Lubin--Tate theory, as in the Introduction.
\end{example}

\subsection{Formal smoothness and essential properness}\label{formal smoothness section}
In this subsection we prove the following basic result on the geometry of $\CN_n$.

\begin{proposition}\label{formally sm ess proper}
The formal scheme $\CN_n$ is formally locally of finite type and formally smooth over $\Spf O_{\breve F}$ of relative formal dimension $n-1$ at every point.  If $n$ is even, then $\CN_n$ is essentially proper over $\Spf O_{\breve F}$.
\end{proposition}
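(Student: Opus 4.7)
The plan is to work via the local model diagram, reducing each assertion to a statement about a local model for the ramified unitary group. First, representability of $\CN_n$ as a formal scheme locally formally of finite type over $\Spf O_{\breve F}$ follows from the general theorem of Rapoport--Zink (using the blanket assumption $F_0 = \BQ_p$), once one observes that the Kottwitz signature condition, the wedge condition, the spin condition, and the prescribed kernel of $\lambda_X$ are each representable by locally closed conditions; the spin condition in particular is representable as a closed condition by the analysis of \cite{PR}.

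For formal smoothness and the computation of the relative dimension, I would appeal to Grothendieck--Messing theory. For a square-zero thickening $S \hookrightarrow \tilde S$ over $\Spf O_{\breve F}$ and a point $(X,\iota_X,\lambda_X,\rho_X)$ of $\CN_n(S)$, deformations to $\tilde S$ are classified by lifts of the Hodge filtration on the evaluation of the crystal of $X$ at $\tilde S$, subject to compatibility with the $O_F$-action and polarization, together with the pointwise Kottwitz, wedge, and spin conditions on the lifted $\Lie \tilde X$. This identifies the completed local rings of $\CN_n$ at geometric points with those of the local model $M^{\mathrm{loc}}$ inside a suitable Grassmannian, namely the model cut out of the naive local model by the Kottwitz, wedge, and spin conditions studied in \cite{PR}. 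The main theorem there asserts that $M^{\mathrm{loc}}$ is smooth over $\Spec O_F$ of relative dimension $n-1$; the role of the spin condition is precisely to single out a smooth irreducible component of the naive model, which without the spin condition is reducible with non-smooth components. This gives the first assertion.

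For essential properness when $n$ is even, I would show that the underlying reduced subscheme $(\CN_n)_{\mathrm{red}}$ is projective over $\Spec \ov k$. Via covariant Dieudonn\'e theory, $\ov k$-points of $\CN_n$ correspond to $O_F$-stable $W(\ov k)$-lattices $M$ in the rational Dieudonn\'e module $N$ of $\BX_n$ satisfying $\uF M, \uV M \subset M$, the signature and spin conditions on $M/\uV M$, and the polarization condition. For $n$ even, the requirement $\Ker \lambda_X = X[\iota_X(\pi)]$ translates to the $\pi$-modularity condition $M^\vee = \pi^{-1}M$ with respect to the induced hermitian form (as in the proof of Proposition \ref{k-bar isog classes}). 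By Lemma \ref{C parity lem}, the $\pi$-modular lattices in $N$ form a single orbit under $U(\BX_n)$, and the quasi-isogeny-of-height-$0$ condition bounds the relative position of $M$ against a fixed such lattice. Consequently $(\CN_n)_{\mathrm{red}}$ is realized as a closed subscheme of a finite union of Schubert varieties inside the affine partial flag variety of $U(\BX_n)$ attached to a special maximal parahoric, hence is projective. In contrast, for $n$ odd the polarization is not $\pi$-modular, the analogous lattice-chain argument yields an unbounded locus, and essential properness fails.

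The principal obstacle is the local-model smoothness assertion, which is the nontrivial input from \cite{PR}: verifying that the spin condition carves out a smooth (not merely flat) irreducible component of the naive local model is the substantive combinatorial step and is the reason for the ``exotic'' terminology. Granted this, everything else reduces to a standard application of the local model diagram together with the theory of $\pi$-modular hermitian lattices, and to the fact that affine Schubert varieties attached to a special maximal parahoric are projective.
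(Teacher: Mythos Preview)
Your approach to formal smoothness via Grothendieck--Messing theory and the local model diagram is essentially the same as the paper's, and the key input is indeed the smoothness of the local model from \cite{PR} (together with \cite{A} for odd $n$). One wrinkle you gloss over: the paper spends some effort verifying that the wedge and spin conditions on the RZ side and on the local model side pull back to \emph{equivalent} conditions on the \'etale cover furnished by the local model diagram. This is not completely automatic, since the local model compares the Hodge filtration $\CF_i$ inside $\Lambda_i \otimes \CO_S$ while the moduli problem imposes conditions on $\Lie X$, which is the quotient; a short duality argument is needed. Also, your claim that the spin condition is closed is not correct for odd $n$: the condition ``$\iota_X(\pi)$ is nowhere zero on $\Lie X$'' is open, and indeed the paper shows only that $\CN_n \subset \CN_n^\naive$ is locally closed in general, and closed precisely when $n$ is even.

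Your treatment of essential properness, however, is both different from the paper's and incomplete. The paper's argument is much simpler: $\CN_n^\naive$ is essentially proper by the general results of \cite{RZ}, and for $n$ even one has just shown that $\CN_n$ is a \emph{closed} formal subscheme of $\CN_n^\naive$ (because $M^\loc$ is closed in $M^\naive$, via an orthogonal Grassmannian argument), whence essential properness is inherited. Your direct Dieudonn\'e-theoretic argument has a gap at the crucial step: the height-$0$ condition on the framing does \emph{not} bound the relative position of $M$ against a fixed $\pi$-modular lattice; it only says the two indices $[M : M \cap M_0]$ and $[M_0 : M \cap M_0]$ agree, not that either is bounded. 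Boundedness of the reduced locus is precisely the content of essential properness in \cite{RZ} and requires real work there. Your citation of Lemma~\ref{C parity lem} is also misplaced (that lemma concerns the parity of $\length_{O_{\breve F}}(M + \tau M)/M$, not transitivity on $\pi$-modular lattices). Finally, your assertion that essential properness \emph{fails} for odd $n$ is neither proved nor claimed in the paper; the point is rather that the spin condition is only open in that case, so the easy closedness argument does not apply.
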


Here \emph{essentially proper} means that every irreducible component of $(\CN_n)_\red$ is proper over $\Spec \ov k$.  The proof of the proposition is via the \emph{local model}, whose definition we now recall in the situation at hand.  Let
\[
   m := \lfloor n/2 \rfloor.
\]
Let $e_1,\dotsc,e_n$ denote the standard basis in $F^n$.  Let $\phi$ be the (split) $F/F_0$-hermitian form on $F^n$ specified by
\[
   \phi(ae_i,be_j) = \ov a b \delta_{i,n+1-j} \quad\text{(Kronecker delta)}.
\]
Let \aform 
be the 
alternating 
$F_0$-bilinear form $F^n \times F^n \to F_0$ defined by
\[
   \langle x,y \rangle := \frac 1 2 \tr_{F/F_0} \bigl(\pi^{-1}\phi(x,y)\bigr).
\]
Then
\begin{equation}\label{angle eq}
   \langle \pi x , y \rangle = - \langle x, \pi y \rangle
	\quad\text{for all}\quad
	x,y \in F^n.
\end{equation}
For $i = bn + c$ with $0 \leq c < n$, define the $O_F$-lattice
\[
   \Lambda_i := \sum_{j = 1}^c \pi^{-b-1}O_F e_j + \sum_{j = c+1}^n \pi^{-b} O_F e_j \subset F^n.
\]
For each $i$, the form \aform induces a perfect $O_{F_0}$-bilinear pairing
\[
   \Lambda_i \times \Lambda_{-i} \to O_{F_0}.
\]
In this way, for fixed nonempty $I \subset \{0,\dotsc,m\}$, the set
\[
   \Lambda_I := \{\, \Lambda_i \mid i \in \pm I + n\BZ \, \}
\]
forms a polarized chain of $O_F$-lattices over $O_{F_0}$ in the sense of \cite[Def.\  3.14]{RZ}.
The following definition of the naive local model is an alternative formulation of \cite[Def.\ 3.27]{RZ} in our situation, in the case $I = \{m\}$.

\begin{definition}\label{LM def}
The \emph{naive local model $M^\naive$} is the scheme over $\Spec O_F$ representing the functor that sends each $O_F$-scheme $S$ to the set of all families
\[
   (\CF_i \subset \Lambda_i \otimes_{O_{F_0}} \CO_S)_{i \in \pm m + n\BZ}
\]
such that
\begin{altenumerate}
\renewcommand{\theenumi}{\arabic{enumi}}
\item for all $i$, $\CF_i$ is an $O_F \otimes_{O_{F_0}} \CO_S$-submodule of $\Lambda_i \otimes_{O_{F_0}} \CO_S$ which is Zariski-locally on $S$ an $\CO_S$-direct summand of rank $n$;
\item for all $i < j$, the natural arrow $\Lambda_i \otimes_{O_{F_0}} \CO_S \to \Lambda_j \otimes_{O_{F_0}} \CO_S$ carries $\CF_i$ into $\CF_j$;
\item\label{LM periodic cond} for all $i$, the isomorphism $\Lambda_i \otimes_{O_{F_0}} \CO_S \xra[\undertilde]{\pi \otimes 1} \Lambda_{i-n} \otimes_{O_{F_0}} \CO_S$ identifies $\CF_i \isoarrow \CF_{i-n}$;
\item\label{LM perp cond} for all $i$, the perfect $\CO_S$-bilinear pairing
\[
   (\Lambda_i \otimes_{O_{F_0}} \CO_S) \times (\Lambda_{-i} \otimes_{O_{F_0}} \CO_S)
   \xra{\aform \otimes \CO_S} \CO_S
\]
identifies $\CF_i^\perp$ with $\CF_{-i}$ inside $\Lambda_{-i} \otimes_{O_{F_0}} \CO_S$; and
\item (\emph{Kottwitz condition}) for all $i$, the section $\pi \otimes 1 \in O_F \otimes_{O_{F_0}} \CO_S$ acts on $\CF_i$ as an $\CO_S$-linear endomorphism with characteristic polynomial
\[
   \charac(\pi \otimes 1 \mid \CF_i) = (T-\pi)^{n-1}(T+\pi) \in \CO_S[T].
\]
\setcounter{filler}{\value{enumi}}
\end{altenumerate}

The \emph{local model $M^\loc$} is the subscheme of $M^\naive$ defined by the additional conditions
\begin{altenumerate}
\renewcommand{\theenumi}{\arabic{enumi}}
\setcounter{enumi}{\value{filler}}
\item\label{LM wedge cond} (\emph{wedge condition}) for all $i$,
\[
   \sideset{}{_{\CO_S}^2}\bigwedge ( \pi \otimes 1 - 1\otimes \pi \mid \CF_i ) = 0;
\]
and
\item\label{LM spin cond} if $n > 1$, then $\pi \otimes 1 \mid \CF_i \otimes_{\CO_S} \kappa(s)$ is nonvanishing for all $s \in S$ and all $i$.
\end{altenumerate}
\end{definition}

Plainly $M^\naive$ is representable by a closed subscheme of a product of Grassmannians.  Furthermore the inclusion $M^\loc \subset M^\naive$ is an isomorphism on generic fibers, which both identify naturally with $\BP_F^{n-1}$ \cite[\S1.5.3]{PR}.

\begin{proposition}\label{LM smooth}
$M^\loc$ is smooth over $\Spec O_F$ of relative dimension $n-1$ at every point, and a closed subscheme of $M^\naive$ when $n$ is even.
\end{proposition}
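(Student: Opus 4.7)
The plan is to analyze $M^\loc$ via affine charts, treating the generic and special fibers separately.

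In the generic fiber, $M^\loc$ agrees with $M^\naive$, which as noted identifies with $\BP_F^{n-1}$; hence it is smooth of dimension $n-1$ there. The work lies entirely in the special fiber, where $\pi^2=\varpi=0$ acts on $\Lambda_i\otimes_{O_{F_0}}\ov k$, so $\pi$ acts on each $\CF_i$ as a nilpotent operator squaring to zero. The Kottwitz condition forces $\charac(\pi\mid\CF_i)=T^n$ in the special fiber; the wedge condition (6) then forces the $\ov k$-linear map $\pi\mid\CF_i$ to have rank $\le 1$; and combined with the spin condition (7) this rank is exactly $1$. This rigidifies the local picture: each $\CF_i$ in the special fiber carries a canonical line $\Im(\pi\mid\CF_i)$, which is the key datum that smooths out what is otherwise a singular Grassmannian condition.

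I would then proceed by fixing a closed point $x\in M^\loc(\ov k)$ corresponding to a subspace $\CF^0_m\subset\Lambda_m\otimes_{O_{F_0}}\ov k$ (together with $\CF^0_{-m}$ when $n$ is odd), and choosing an $O_F$-adapted basis of $\Lambda_m$ that realizes $\CF^0_m$ as a coordinate subspace on which the distinguished rank-$1$ image of $\pi$ is visible. In the corresponding affine chart of the Grassmannian, the universal submodule $\CF_m$ is parametrized by a matrix of entries in $\CO_S$; the $O_F$-stability, Kottwitz, wedge, and (when applicable) perpendicularity conditions then cut out an explicit closed subscheme by polynomial equations. After imposing the spin condition, one verifies directly that this subscheme is smooth of relative dimension $n-1$ over $O_F$ by exhibiting enough free coordinate parameters and checking that the remaining equations are either automatic consequences or define a smooth complete intersection.

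The main obstacle is the combinatorial case analysis in the affine-chart computation, where two features make things delicate. First, for $n$ odd the chain involves both $\Lambda_m$ and $\Lambda_{-m}$, which are distinct $O_F$-lattices, and the perpendicularity condition (4) couples $\CF_m$ and $\CF_{-m}$; the chart analysis must be run compatibly on both factors. Second, for $n$ even one has $\Lambda_{-m}=\Lambda_m$, so (4) becomes a self-duality condition on $\CF_m$; here $M^\naive$ is known to have multiple irreducible components in the special fiber, distinguished by the behavior of $\pi\mid\CF_m$, and the spin condition (7) excludes precisely those components on which $\pi\mid\CF_m$ vanishes identically. Since a union of whole irreducible components of $M^\naive$ is closed in $M^\naive$, this yields the closed-subscheme assertion; smoothness on the remaining components can then be read off from the explicit charts, exactly as in the odd case. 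Throughout, I would cross-reference the ramified unitary local-model analysis of \cite{PR} to reuse their explicit charts wherever possible.
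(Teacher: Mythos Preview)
Your smoothness strategy via explicit affine charts is sound and is essentially what \cite{A} and \cite{PR} carry out; the paper simply cites those references for one open chart isomorphic to $\BA_{O_F}^{n-1}$ and then uses the Schubert-cell description (the special fiber $M^\loc_{\ov k}$ is a single orbit $C_1$ under the loop group action) to propagate smoothness from one point to every point. So you would be redoing work already in the literature, but the approach is correct.

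The closedness argument for $n$ even, however, has a genuine gap. You write that the spin condition excludes ``those components on which $\pi\mid\CF_m$ vanishes identically'' and conclude closedness because a union of irreducible components is closed. But irreducible components are closed, not open, and $M^\loc$ is by definition the \emph{open} subscheme of $M^\wedge$ where $\pi\mid\CF_m$ is pointwise nonzero. For $M^\loc$ to be closed in $M^\wedge$ (hence in $M^\naive$), you need its complement $C_0$ to be \emph{open} in $M^\wedge$, i.e.\ you need $C_0$ and $C_1$ to lie in different \emph{connected} components, not merely different irreducible components. Nothing in your outline establishes this; a priori the single point $C_0$ could lie in the closure of $C_1$, in which case $M^\loc$ would be strictly locally closed.

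The paper supplies the missing ingredient: when $n$ is even, the pairing $\Lambda_m\times\Lambda_m \xra{\id\times\pi} \Lambda_m\times\Lambda_{-m} \xra{\aform} O_{F_0}$ is split symmetric, so $M^\naive$ embeds in the orthogonal Grassmannian $\OGr(n,2n)$. This scheme has two connected components (distinguished by the spinor norm, whence the name ``spin condition''), and $C_0$ lands in one component while the rest of $M^\wedge$ lands in the other. This disconnection is what forces $M^\loc = M^\wedge \smallsetminus C_0$ to be clopen in $M^\wedge$, hence a closed subscheme of $M^\naive$. Your irreducible-component heuristic does not see this global decomposition.
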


\begin{proof}
This is essentially a matter of collecting results in the literature.  Let $M^\wedge$ denote the closed subscheme of $M^\naive$ defined by the wedge condition.  Then $M^\loc$ is an open subscheme of $M^\wedge$.  As is explained in \cite[\S3.3]{PR}, the geometric special fiber $M_{\ov k}^\naive$ of the naive local model embeds into an affine flag variety for $\GU_n$, where it and $M_{\ov k}^\wedge$ decompose (topologically) into unions of Schubert cells.  In the present setting, the Schubert cells $C_r$ that occur in $M_{\ov k}^\naive$ are described in \S2.4 of loc.~cit.  They are indexed by the rank $r$ of $\pi \otimes 1$ acting on $\CF_m$ at each point.  On the level of topological spaces, it follows from the definitions that $M_{\ov k}^\wedge = C_0 \cup C_1$ and $M_{\ov k}^\loc = C_1$.  Now, Arzdorf \cite[Prop.~3.2]{A} and \cite[\S5.3]{PR} show respectively for odd and even $n$ that $M^\wedge$ contains an open subscheme isomorphic to $\BA_{O_F}^{n-1}$, and it is immediate from these references that this open subscheme is contained in $M^\loc$.\footnote{Strictly speaking, these references work with signature 
$(n-1,1)$, whereas 
we are working with the
 opposite signature $(1,n-1)$.  But these situations are isomorphic:  explicitly, the isomorphism $F^n \isoarrow F^n$ given by applying the nontrivial Galois automorphism on each factor induces an isomorphism of lattice chains $\Lambda_I \isoarrow \Lambda_I$, which in turn induces an isomorphism between the corresponding local models.  Also, when $n$ is odd, we note that the scheme denoted $M^\loc$ in \cite{A} does not coincide with $M^\loc$ as we have defined it.\label{signature flip}}
Since $M^\loc$ has generic fiber $\BP_F^{n-1}$, and since $C_1$ is an orbit for a group action, it follows that $M^\loc$ is everywhere smooth of relative dimension $n-1$.

Furthermore, when $n$ is even, one readily verifies that the perfect pairing
\[
   \Lambda_m \times \Lambda_m \xra[\undertilde]{\id \times \pi} \Lambda_m \times \Lambda_{-m} \xra\aform O_{F_0}
\]
is split symmetric.  Hence by conditions \eqref{LM periodic cond} and \eqref{LM perp cond} above, $M^\naive$ embeds into the orthogonal Grassmannian $\OGr$ of totally isotropic $n$-planes in $2n$-space; cf.~\cite[Rem.~2.32]{PRS}.  The scheme $\OGr$ has $2$ connected components, and it is easy to see that these separate $C_0$, which consists of a single point in the special fiber, from the rest of $M^\wedge$.  Hence $M^\loc = M^\wedge \smallsetminus C_0$ is closed in $M^\wedge$, and hence in $M^\naive$.
\end{proof}

\begin{remark}\label{relation to spin cond}
When $n$ is even, and in the presence of the other conditions in the definition of $M^\loc$, condition \eqref{LM spin cond} is equivalent to the \emph{spin condition} formulated in \cite[\S7.2]{PR}.  For a general signature and parahoric level structure, the spin condition in loc.~cit.~is not a purely topological condition, but in this special case it is; see again Rem.~2.32 and the paragraph following it in \cite{PRS}.

When $n$ is odd, and again in the presence of the other conditions, condition \eqref{LM spin cond} \emph{implies} the spin condition in \cite{PR}. Indeed, the spin condition is a closed condition on $M^\naive$ which is satisfied on the generic fiber, and hence on the flat closure of the generic fiber, and we have just seen that $M^\loc$ is smooth, and hence flat, with the same generic fiber.  However, \eqref{LM spin cond} and the spin condition are not equivalent, since imposing the spin condition on $M^\wedge$ does not eliminate the point $C_0$.

The above relationships are the origin of the term ``spin condition'' in the definition of unitary $p$-divisible group in \S\ref{unitary p-div groups}. But, for the reason just explained, note that this is a slight abuse of language when $n$ is odd.
\end{remark}

\begin{remark}\label{better def}
More is true when $n$ is odd.  Indeed, in this case let $N$ denote the scheme-theoretic closure of the generic fiber of $M^\naive$ in $M^\naive$.  Then $N$ contains $M^\loc$ as an open subscheme, and is itself smooth over $\Spec O_F$ by Richarz \cite[Prop.~4.16]{A}.  The main result of \cite{S} establishes a moduli description for $N$, by introducing a condition which is weaker than \eqref{LM spin cond} above but stronger than the spin condition in \cite{PR}.  However, this condition is much more complicated to state than \eqref{LM spin cond}, and for the purposes of this paper it suffices to work just with \eqref{LM spin cond} instead.
\end{remark}

The link between Propositions \ref{formally sm ess proper} and \ref{LM smooth} is via the \emph{local model diagram} \cite{RZ}.  Let $\CN_n^\naive$ denote the moduli functor over $\Spf O_{\breve F}$ defined in the same way as $\CN_n$, except without the wedge and spin conditions.  By \cite[Th.~3.25]{RZ}, $\CN_n^\naive$ is representable by a formal scheme which is formally locally of finite type over $\Spf O_{\breve F}$.  We will see in the course of proving Proposition \ref{formally sm ess proper} below that the inclusion
\[
   \CN_n \subset \CN_n^\naive
\]
is a locally closed embedding, and a closed embedding when $n$ is even.

Let $\wt \CN_n^\naive$ denote the functor that associates to each scheme $S$ over $\Spf O_{\breve F}$ the set of isomorphism classes of quintuples
\[
   (X,\iota_X,\lambda_X,\rho_X,\gamma),
\]
where $(X,\iota_X,\lambda_X,\rho_X) \in \CN_n^\naive(S)$, and $\gamma$ is an isomorphism of polarized chains of $O_F \otimes_{O_{F_0}} \CO_S$-modules
\[
   [{}\dotsb \xra{\iota_X(\pi)_*} M(X) \xra{\iota_X(\pi)_*} M(X) \xra{\iota_X(\pi)_*} \dotsb{}] \xra[\undertilde]\gamma \Lambda_{\{m\}} \otimes_{O_{F_0}} \CO_S
\]
when $n$ is even, and
\[
   [{}\dotsb \to M(X) \xra{(\lambda_X)_*} M(X^\vee) \xra{\mu_*} \dotsb {}] \xra[\undertilde]\gamma \Lambda_{\{m\}} \otimes_{O_{F_0}} \CO_S
\]
when $n$ is odd, in the terminology of \cite{RZ}.  Let us explain the notation.  We have denoted by $M$ the functor that assigns to a $p$-divisible group the Lie algebra of its universal vector extension.  Our requirements for $\iota_X$ and $\lambda_X$ imply that there is a unique (necessarily $O_F$-linear) isogeny $\mu\colon X^\vee \to X$ such that the composite
\[
   X \xra{\lambda_X} X^\vee \xra\mu X
\]
is $\iota_X(\pi)$; thus $\mu$ is an isomorphism or of height $1$ according as $n$ is even or odd.  Upon applying $M$, this diagram extends periodically to the chains appearing above (and it explains the source of the chain's polarization when $n$ is even).

The functor $\wt\CN_n^\naive$ is representable by a formal scheme over $\Spf O_{\breve F}$, and it sits in a diagram
\[
   \xymatrix@C-4ex{
      & \wt\CN_n^\naive \ar[dl]_-{\varphi} \ar[dr]\\
	  \CN_n^\naive & & M_{\Spf O_{\breve F}}^\naive.
   }
\]
Here $\varphi$ is the natural map that forgets $\gamma$; it is a torsor under the automorphism scheme of $\Lambda_{\{m\}}$ as a polarized $O_F$-lattice chain over $O_{F_0}$, which is smooth.  The arrow on the right sends an $S$-point $(X,\iota_X,\lambda_X,\rho_X,\gamma)$ to the family $(\CF_i \subset \Lambda_i \otimes_{O_{F_0}} \CO_S)_{i \in \pm m + n\BZ}$, where for each $i$, $\CF_i$ is the image under $\gamma$ in $\Lambda_i \otimes_{O_{F_0}} \CO_S$ of the $\Fil^1$ term in the covariant Hodge filtration for the $p$-divisible group.  With this in hand, we now arrive at the proof of Proposition \ref{formally sm ess proper}.

\begin{proof}[Proof of Proposition \ref{formally sm ess proper}]
The key point is that by \cite[Prop.\ 3.33]{RZ}, after passing to an \'etale cover $\CU \to \CN_n^\naive$, the map $\varphi$ admits a section $s$ such that the composite $\CU \to M_{\Spf O_{\breve F}}^\naive$ in the diagram
\[
   \xy
	   (-17,11)*+{\CU}="U";
	   (7,15)*+{\wt\CN_n^\naive}="N~";
		(-7,0)*+{\CN_n^\naive}="N";
		(21,0)*+{M_{\Spf O_{\breve F}}^\naive}="M";
		{\ar "U";"N~" ^-{s}};
		{\ar "U";"N"};
		{\ar "N~";"N" _-\varphi};
		{\ar "N~";"M"};
	\endxy
\]
is formally \'etale.  We claim that the respective inverse images of $\CN_n$ and $M_{\Spf O_{\breve F}}^\loc$ in \CU coincide.  In fact, we will show that the respective wedge conditions on the one hand, and the spin condition and condition \eqref{LM spin cond} on the other hand, separately pull back to equivalent conditions on \CU.

First consider the wedge conditions.  Let $(\CF_i \subset \Lambda_i \otimes_{O_{F_0}} \CO_{M^\naive})_i$ denote the universal object over $M^\naive$.  For fixed $i$, consider the tautological exact sequence
\begin{equation}\label{exact seq}
   0 \to \CF_i \to \Lambda_i \otimes_{O_{F_0}} \CO_{M^\naive} 
	         \to (\Lambda_i \otimes_{O_{F_0}} \CO_{M^\naive}) / \CF_i \to 0.
\end{equation}
It is easy to see that on the explicit affine charts on $M^\naive$ calculated in \cite[\S5.1--5.3]{PR},
\[
   \sideset{}{_{\CO_{M^\naive}}^2}\bigwedge ( \pi \otimes 1 - 1\otimes \pi \mid \CF_i ) = 0
	\iff
	\sideset{}{_{\CO_{M^\naive}}^2}\bigwedge \bigl( \pi \otimes 1 + 1\otimes \pi \mid (\Lambda_i \otimes_{O_{F_0}} \CO_{M^\naive}) / \CF_i \bigr) = 0.
\]
Since these charts meet every Schubert cell in the special fiber (after embedding the geometric special fiber into an affine flag variety, as discussed in the proof of Proposition \ref{LM smooth}), the equivalence in the display holds on all of $M^\naive$.%
\footnote{Strictly speaking, when $n$ is odd, \cite{PR} computes an affine chart for a different maximal parahoric level structure than the one we are using.  But the calculations in \cite{PR} are easily adapted to our case.  See also Arzdorf \cite{A}.}
Now consider the $\CO_{M^\naive}$-linear dual of \eqref{exact seq},
\[
   0 \to \bigl((\Lambda_i \otimes_{O_{F_0}} \CO_{M^\naive}) / \CF_i\bigr)^\vee 
      \to (\Lambda_i \otimes_{O_{F_0}} \CO_{M^\naive})^\vee
	  \to \CF_i^\vee
	  \to 0. 
\]
The isomorphism $\Lambda_{-i} \otimes_{O_{F_0}} \CO_{M^\naive} \isoarrow (\Lambda_i \otimes_{O_{F_0}} \CO_{M^\naive})^\vee$ induced by \aform identifies $\CF_{-i}$ with $((\Lambda_i \otimes_{O_{F_0}} \CO_{M^\naive}) / \CF_i )^\vee$ by condition \eqref{LM perp cond}, and it identifies the operator $\pi \otimes 1$ on $\Lambda_{-i} \otimes_{O_{F_0}} \CO_{M^\naive}$ with $-\pi \otimes 1$ on $(\Lambda_i \otimes_{O_{F_0}} \CO_{M^\naive})^\vee$ by \eqref{angle eq}.  It follows from these observations and from condition \eqref{LM periodic cond} that the wedge condition on $M^\naive$ holds for all $i$ as soon as it holds for a single $i$, and that it and the wedge condition on $\CN_n^\naive$ pull back to equivalent conditions on \CU.

Now consider condition \eqref{LM spin cond} on $M^\naive$.  This condition is automatically satisfied in the generic fiber, so let $S$ be the spectrum of a field $\kappa$ which is an extension of $k$. Let $(\CF_i \subset \Lambda_i \otimes_{O_{F_0}}\kappa)_i$ be an $S$-point on $M^\naive$.  Since $\Ker(\pi \otimes 1 \mid \Lambda_i \otimes_{O_{F_0}}\kappa) = \Im(\pi \otimes 1 \mid \Lambda_i \otimes_{O_{F_0}}\kappa)$, we have
\[
   (\pi \otimes 1 \mid \CF_i ) = 0 
   \;\iff\;
   \CF_i = (\pi \otimes 1) \cdot (\Lambda_i \otimes_{O_{F_0}} \kappa)
   \;\iff\;
   \bigl(\pi \otimes 1 \mid (\Lambda_i \otimes_{O_{F_0}} \kappa)/ \CF_i \bigr) = 0.
\]
Using conditions \eqref{LM periodic cond} and \eqref{LM perp cond}, it follows as above that \eqref{LM spin cond} holds for all $i$ as soon as it holds for a single $i$, and that it and the spin condition on $\CN_n^\naive$ pull back to equivalent conditions on \CU.

Thus we have shown that $\CN_n$ and $M_{\Spf O_{\breve F}}^\loc$ have common inverse image in \CU.  We conclude that $\CN_n \subset \CN_n^\naive$ is a locally closed embedding in general and closed embedding when $n$ is even, since the same is true of $M^\loc \subset M^\naive$.  Thus $\CN_n$ inherits the property of being formally locally of finite type from $\CN_n^\naive$.  By Proposition~\ref{LM smooth}, it also follows that $\CN_n$ is formally smooth over $\Spf O_{\breve F}$ of relative formal dimension $n-1$.  Finally, by Prop.~2.32 and the proof of Th.~3.25 in \cite{RZ}, $\CN_n^\naive$ is essentially proper over $\Spf O_{\breve F}$.  Thus the same is true of $\CN_n$ when $n$ is even.
\end{proof}

\begin{remark}
In analogy with Remark~\ref{better def}, when $n$ is odd, one can replace the spin condition in the definition of $\CN_n$ with a weaker condition, based on the one introduced in \cite{S}, to obtain an essentially proper, formally smooth formal scheme containing $\CN_n$ as an open formal subscheme; see \cite[\S7]{RSZ}.  Although this formal scheme is in some sense a ``better'' object, its definition is more complicated to state, and for our purposes it suffices to work with $\CN_n$ as we have defined it.
\end{remark}

\section{Intersection numbers}\label{intersection numbers}

In this section we define the intersection numbers that appear in our AT conjectures.  Let $n\geq 3$ be an odd integer.

\subsection{The morphisms $\delta_\CN$ and $\Delta_\CN$}
We begin by introducing some closed embeddings of moduli spaces.
As in the Introduction, over $O_{\breve F}$ we have the canonical lift $\CE$ of $\BE$ equipped with its action $\iota_\CE \colon O_F \to \End(\CE)$, its principal polarization $\lambda_\CE$, and its framing isomorphism $\rho_\CE\colon \CE_{\ov k} \isoarrow \BE$; see also Example \ref{CN_1}.  
We further have the quadruple $(\ov\CE, \iota_{\ov\CE}, \lambda_{\ov\CE}, \rho_{\ov\CE})$, where $\ov \CE$ is the same underlying $p$-divisible group, where the $O_F$-action $\iota_{\ov\CE}$ is obtained by precomposing $\iota_\CE$ with the nontrivial Galois automorphism on $O_F$, and where $\lambda_{\ov\CE} = \lambda_\CE$ and $\rho_{\ov\CE} = \rho_\CE$.  

Using $(\ov\CE, \iota_{\ov\CE}, \lambda_{\ov\CE}, \rho_{\ov\CE})$, we define a morphism of formal moduli schemes
\begin{equation}\label{delta}
	\delta_\CN\colon \CN_{n-1}^\naive\to \CN_n^\naive 
\end{equation}
as follows. Let $S$ be a scheme over $\Spf O_{\breve F}$, and let $(Y, \iota_Y, \lambda_Y, \rho_Y)\in \CN_{n-1}^\naive(S)$, cf.~\S\S\ref{CN_n def}, \ref{formal smoothness section}. Then locally on $S$ there exists $c_Y\in O_{F_0}^\times$ such that $\rho_Y^* (\lambda_{\BX_{n-1}}\times_{\Spec \ov k} \ov S)=c_Y\cdot (\lambda_Y \times_S \ov S)$, cf.~\eqref{multiplierconst}. Recall that for odd $n$ we have the framing object $\BX_n = \BX_{n-1} \times_{\Spec \ov k} \ov\BE$ and its polarization $\lambda_{\BX_n} = \lambda_{\BX_{n-1}} \times \lambda_{\ov\BE}$.  We define 
\[
   \delta_\CN\colon (Y, \iota_Y, \lambda_Y, \rho_Y) 
   \mapsto 
   \bigl(Y \times \ov\CE, \iota_Y \times \iota_{\ov\CE},\lambda_Y \times c_Y^{-1} \lambda_{\ov\CE},\rho_Y \times \rho_{\ov\CE} \bigr) .
\]
This is a well-defined morphism of functors. Indeed, $(\rho_Y\times \rho_{\ov\CE})^* (\lambda_{\BX_{n-1}}\times  \lambda_{\ov\BE}) = c_Y(\lambda_Y \times c_Y^{-1}\lambda_{\ov\BE})$. Furthermore, if 
$(Y, \iota_{Y}, \lambda_{Y}, \rho_{Y})$ 
is isomorphic to $(Y', \iota_{Y'}, \lambda_{Y'}, \rho_{Y'})$, then there exists an $O_F$-linear isomorphism $\alpha\colon Y'\isoarrow Y$ compatible with $\rho_{Y'}$ and $\rho_Y$,  and such that, locally on the base,  $\gamma \lambda_{Y'}= \alpha^*(\lambda_Y)$ for some $\gamma\in O_{F_0}^\times$. But then $c_{Y'} = \gamma c_Y$, from which one verifies that $\alpha\times  \id_{\ov\CE} \colon Y'\times \ov\CE \isoarrow Y\times \ov\CE$ defines an isomorphism from $\delta_\CN (Y', \iota_{Y'}, \lambda_{Y'}, \rho_{Y'})$ to $\delta_\CN (Y, \iota_{Y}, \lambda_{Y}, \rho_{Y})$.

\begin{remark}\label{alt simpler}
If one uses the variant formulation of the moduli problem defining $\CN_n$ given in Remark \ref{alt formulation}, where $c_Y$ is required to equal $1$, then it is essentially obvious that $\delta_\CN$ is well-defined.
\end{remark}

\begin{lemma}\label{delta lem}
The map $\delta_\CN$ induces a closed embedding of formal schemes 
\[
   \delta_\CN\colon \CN_{n-1} \to \CN_n.
\]
\end{lemma}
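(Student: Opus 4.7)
The plan has three steps: verify the constructed tuple lies in $\CN_n$ rather than only in $\CN_n^\naive$; show the resulting functor map is a monomorphism; and upgrade this to a closed embedding.

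First, I would check each of the four conditions cutting $\CN_n$ out of $\CN_n^\naive$ for the tuple $(Y \times \ov\CE, \iota_Y \times \iota_{\ov\CE}, \lambda_Y \times c_Y^{-1}\lambda_{\ov\CE}, \rho_Y \times \rho_{\ov\CE})$. Since $\iota_{\ov\CE}(\pi)$ acts on $\Lie\ov\CE$ as multiplication by $-\pi$, its Kottwitz polynomial is $(T+\pi)$; combined with the signature-$(1,n-2)$ polynomial of $Y$, this yields $(T-\pi)(T+\pi)^{n-1}$, so the Kottwitz condition is satisfied. The operator $\iota(\pi)+\pi$ vanishes identically on $\Lie\ov\CE$, so the wedge condition on $Y$ extends to $Y \times \ov\CE$. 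The spin condition holds for $Y$ since $n-1 \geq 2$, and hence for $Y \times \ov\CE$ via the first factor. Finally, since $\lambda_{\ov\CE}$ is principal and $\Ker\lambda_Y = Y[\iota_Y(\pi)]$ (using that $n-1$ is even), the kernel $\Ker(\lambda_Y \times c_Y^{-1}\lambda_{\ov\CE}) = \Ker\lambda_Y$ has height $n-1$ and is contained in $(Y \times \ov\CE)[\iota(\pi)]$, as required for odd $n$.

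For the monomorphism, I would invoke Drinfeld rigidity of quasi-isogenies. Given an $S$-point $(X, \iota, \lambda, \rho)$ in the image, composing $\rho^{-1}$ with the canonical embedding $\ov\BE \hookrightarrow \BX_{n-1} \times \ov\BE = \BX_n$ produces a quasi-isogeny $\ov\CE_{\ov S} \to X_{\ov S}$. Since $p$ is locally nilpotent on $S$, this lifts uniquely to a quasi-isogeny $\alpha\colon \ov\CE_S \to X$. For points in the image of $\delta_\CN$, $\alpha$ is necessarily the inclusion of a direct factor, and the orthogonal complement of $\alpha(\ov\CE_S)$ under the polarization (together with its induced action, polarization, and framing) recovers $(Y, \iota_Y, \lambda_Y, \rho_Y)$ uniquely.

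To upgrade to a closed embedding I would use essential properness: since $n-1$ is even, Proposition \ref{formally sm ess proper} gives that $\CN_{n-1}$ is essentially proper over $\Spf O_{\breve F}$, so that on reduced subschemes $\delta_\CN$ is a proper monomorphism, hence a closed immersion. Both source and target are formally smooth over $\Spf O_{\breve F}$ of relative formal dimensions $n-2$ and $n-1$, and a local dimension count should upgrade the closed immersion on reduced subschemes to a regular closed immersion of formal schemes. The main obstacle I anticipate is precisely this last upgrade; the cleanest alternative is to describe the image intrinsically inside $\CN_n$ as the closed formal subscheme where the rigidity-lifted quasi-isogeny $\alpha$ is an honest closed immersion onto a direct summand, which is visibly a closed condition and defines a closed formal subscheme isomorphic to $\CN_{n-1}$.
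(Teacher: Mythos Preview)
Your steps 1 and 2 are correct and in the same spirit as the paper, though more detailed: the paper verifies only the wedge and spin conditions tersely (the Kottwitz and polarization conditions being already handled at the $\CN^\naive$ level) and declares the monomorphism ``obvious'' (it is, once one projects back onto the $\BX_{n-1}$-factor of the framing, which your rigidity argument makes precise).

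The gap is in step 3. You have the right ingredient---essential properness of $\CN_{n-1}$ from Proposition~\ref{formally sm ess proper}---but you omit the other half: \emph{separatedness} of $\CN_n$ over $\Spf O_{\breve F}$, which the paper supplies by citing \cite[Lem.~2.3.23]{F}. With both in hand, the conclusion is immediate: a monomorphism from an essentially proper formal scheme to a separated one is a closed embedding (argue at each truncation level, where the source is then a proper scheme and the target separated, so the map is a proper monomorphism of schemes, hence a closed immersion). There is no need to pass through reduced subschemes and then ``upgrade.'' Your proposed dimension-count upgrade is not a valid step as stated---formal smoothness and relative dimensions alone do not force a monomorphism that is closed on reductions to be a closed embedding of formal schemes---and your alternative closed-locus description (essentially identifying the image with the special cycle $\CZ(u)$, as the paper indeed does later in Remark~\ref{g rs => Int(g) proper}) is correct but more work than needed here.
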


\begin{proof}
To see that $\delta_\CN$ carries $\CN_{n-1}$ into $\CN_n$, let $(Y,\iota,\lambda,\rho)$ be a point on $\CN_{n-1}$. Then $(\iota \times \iota_{\ov\CE})(\pi) \mid \Lie (Y \times \ov\CE)$ is pointwise nonvanishing because $\iota(\pi) \mid \Lie Y$ is, and $\delta_\CN(Y,\iota,\lambda,\rho)$ satisfies the wedge condition because $(Y,\iota,\lambda,\rho)$ does and because $(\iota_{\ov\CE}(\pi) + \pi \mid \Lie\ov\CE) = 0$.  That $\delta_\CN$ is a closed embedding follows from the facts that it is obviously a monomorphism, that $\CN_{n-1}$ is essentially proper over $\Spf O_{\breve F}$ by Proposition \ref{formally sm ess proper}, and that $\CN_n$ is separated over $\Spf O_{\breve F}$ by \cite[Lem.\ 2.3.23]{F}.
\end{proof}

Using $\delta_\CN$, we obtain a closed embedding of formal schemes (again using separatedness of $\CN_n$), 
\begin{equation*}
   \Delta_\CN\colon \CN_{n-1} \xra{(\id_{\CN_{n-1}},\delta_\CN)} \CN_{n-1} \times_{\Spf O_{\breve F}} \CN_n.
\end{equation*}

Let us conclude this subsection by explaining the equivariance properties of the maps $\delta_\CN$ and $\Delta_\CN$.  Since $n$ is odd, the non-split hermitian space $\BV_n = \Hom_{O_F}^\circ(\ov\BE, \BX_n )$ in \eqref{BV_n} has a canonical special vector $u$ of norm $1$, namely
\begin{equation}\label{u def}
   u := (0,\id_{\ov\BE}) \in \Hom_{O_F}^\circ\bigl(\ov\BE, \BX_n \bigr)
      = \Hom_{O_F}^\circ\bigl(\ov\BE, \BX_{n-1} \times \ov\BE\bigr).
\end{equation}
The stabilizer of $u$ in $U(\BX_n) \cong \U(\BV_n)$ identifies with $U(\BX_{n-1})$. In this way we obtain an identification of
\begin{equation}\label{group identifications}
   H_1(F_0) \subset U_1(F_0)
   \quad\text{with}\quad
   U(\BX_{n-1}) \subset U(\BX_n),
\end{equation}
as in \eqref{Aut(BX_n)=U(W_1)}.
%
Via these identifications, $H_1(F_0)$ acts on $\CN_{n-1}$, $U_1(F_0)$ acts on $\CN_n$, and the product $G_{W_1}(F_0) = H_1(F_0) \times U_1(F_0)$ acts on $\CN_{n-1} \times_{\Spf O_{\breve F}} \CN_n$.  The maps $\delta_\CN$ and $\Delta_\CN$ are then equivariant for the action of $H_1(F_0)$, i.e.
\begin{equation}\label{equivdelta}
\begin{aligned}
   \delta_\CN \bigl(h \cdot(Y, \iota_Y, \lambda_Y, \rho_Y) \bigr) &= h\cdot\delta_\CN\bigl((Y, \iota_Y, \lambda_Y, \rho_Y) \bigr) ,\\
   \Delta_\CN \bigl( h \cdot(Y, \iota_Y, \lambda_Y, \rho_Y) \bigr) &= h\cdot\Delta_\CN\bigl((Y, \iota_Y, \lambda_Y, \rho_Y) \bigr),
\end{aligned}
\end{equation}
where  $H_1(F_0)$ acts via its diagonal embedding into $G_{W_1}(F_0)$ on the right-hand side of the second equation.

\subsection{Homogeneous setting}
We now use $\Delta_\CN$ to define intersection numbers. We start with the homogeneous case.  Define the closed formal subscheme of $\CN_{n-1}\times_{\Spf O_{\breve F}}\CN_n$,
\begin{equation}\label{Delta cycle}
   \Delta := \Delta_\CN(\CN_{n-1}).
\end{equation}
Of course this is not to be confused with the discriminant \eqref{Delta}; throughout the paper context should suffice to  make the meaning of $\Delta$ clear.  For $g\in G_{W_1}(F_0)$, we define the translate
\begin{equation}\label{Deltaghomog}
   \Delta_g := g \Delta.
\end{equation}
We then define the intersection number of $\Delta$ and $\Delta_g$ by the Euler-Poincar\'e characteristic of the derived tensor product (cf.~\cite{KR-U1}),
\begin{equation}\label{Int(g) homog}
   \Int(g) := \langle\Delta, \Delta_g\rangle := \chi\bigl(\CO_\Delta\otimes^\BL\CO_{\Delta_g}\bigr) . 
\end{equation}

Note that, since $\CN_{n-1}\times_{\Spf O_{\breve F}}\CN_n$ is a regular formal scheme by Proposition~\ref{formally sm ess proper}, this derived tensor product is represented by a finite complex of locally free coherent modules. Furthermore, the intersection $\Delta\cap\Delta_g$ can be identified with 
$$
\Delta\cap\Delta_g=\Delta_\CN^{-1}(\Delta_g) ,
$$
where $\Delta$ and $\Delta_g$ are  closed formal subschemes of $\CN_{n-1}\times_{\Spf O_{\breve F}}\CN_n$. It follows that $\Delta\cap\Delta_g$ may be identified with a closed formal subscheme of $\CN_{n-1}$. Since $n-1$ is an even integer, the formal scheme $\CN_{n-1}$ is essentially proper over $\Spf O_{\breve F}$ by Proposition \ref{formally sm ess proper}. It follows that $\Int(g)$ is finite, provided that the underlying reduced scheme of the intersection  $\Delta\cap\Delta_g$ is of finite type over $\Spec O_{\breve F}$, and that the ideal of definition of this formal scheme is nilpotent. Indeed, under these conditions, $\Delta\cap\Delta_g$ will be a scheme $X$ proper over $\Spec O_{\breve F}$ with support in the special fiber, and the cohomology groups of any perfect complex of $\CO_X$-modules are finite length $O_{\breve F}$-modules, and there are only finitely many of them. For situations where we can make sure this happens, see Remark \ref{g rs => Int(g) proper} below.

\begin{remark}\label{equivint}
We note that $\Int(g)$ only depends on the 
double coset of $g$ modulo $H_1(F_0)$ (with respect to the diagonal embedding $H_1(F_0) \subset G_{W_1}(F_0)$). 
Indeed, the equivariance property \eqref{equivdelta} implies that 
\begin{equation*}
   h \Delta = \Delta
	\quad\text{for all}\quad
	 h\in H_1(F_0). 
\end{equation*} 
Hence
\[
   \Delta_{gh_2} = g h_2 \Delta = g \Delta = \Delta_g,
\]
and 
\[
   \langle\Delta, \Delta_{h_1 g h_2}\rangle
	   = \langle\Delta, \Delta_{h_1 g}\rangle
		= \langle\Delta, h_1\Delta_g\rangle
		= \langle h_1^{-1}\Delta, \Delta_g\rangle
		= \langle\Delta, \Delta_g\rangle.
\]
\end{remark}

\subsection{Inhomogeneous setting}

For the inhomogeneous case, we recycle notation by introducing, for $g\in U_1(F_0)$, the closed formal subscheme of $\CN_{n-1}\times_{\Spf O_{\breve F}}\CN_n$,
\begin{equation}\label{Deltaginhomog}
\Delta_g := (1\times g) \Delta,
\end{equation}
and setting 
\begin{equation}\label{Int(g) inhomog}
   \Int(g) := \langle \Delta, \Delta_g\rangle = \chi(\CO_\Delta\otimes^\BL\CO_{\Delta_g}) .
\end{equation}
The same remarks as in the homogeneous case above apply to this definition. 

\begin{remark}\label{indepoforb}
It follows from Remark~\ref{equivint} that in the inhomogeneous setting, $\Int(g)$ only depends on the orbit of $g$ under the conjugation action of $H_1(F_0)$. 
\end{remark}

\begin{remark}\label{g rs => Int(g) proper}
Identify $U_1(F_0)$ with $U(\BX_n)$, and $G_{W_1}(F_0)$ with $U(\BX_{n-1}) \times U(\BX_n)$, according to \eqref{group identifications}.  We claim that for any $g \in U_{1,\rs}(F_0)$ in the inhomogeneous case, or any $g \in G_{W_1, \rs}(F_0)$ in the homogeneous case, the quantity $\Int(g)$ is finite.  More precisely, we claim that in either case, the intersection of $\Delta$ and $\Delta_g$ inside $\CN_{n-1}\times_{O_{\breve F}} \CN_n$  is a \emph{scheme} over $\Spf O_{\breve F}$ (i.e., any ideal of definition of this formal scheme is nilpotent) which is proper over $\Spec O_{\breve F}$.
 
Indeed, for simplicity, let us consider the inhomogeneous case. Then the proof of \cite[Lem.~2.8]{Z12} goes through, although we are lacking at the moment a suitable reference for the global facts used in loc.~cit. The argument is based on the relation with KR divisors \cite{KR-U1}. Namely, let $\CZ(u)$ be the special cycle defined by the special vector $u=u_1$. Then there is the identification $\CZ(u)=\delta_\CN(\CN_{n-1})$. Via the second projection there is an identification $\Delta\cap\Delta_g\subset  \CN_n^g$.  Therefore we obtain an inclusion
\[
 \Delta\cap\Delta_g \subset \CZ(u)\cap\CZ(gu)\cap\dotsb\cap\CZ(g^{n-1}u) .
\] 
But since $g$ is regular semi-simple, the vectors $u, gu, \dotsc, g^{n-1}u$ 
in $\BV_n$ 
are linearly independent, cf.~\S\ref{ss:LAchar}. In other words, the fundamental matrix of the special divisors $\CZ(u),\dotsc,\CZ(g^{n-1}u)$ is non-singular. Now we approximate the vectors $u, gu,\dotsc, g^{n-1}u$ by ``global vectors'' $v_1, \dotsc, v_n$, and we imitate in the present ramified case the global argument of \cite[Lem.~2.8]{Z12}, which shows that there is a chain of inclusions of schemes
\[
   \CZ(u)\cap\CZ(xu)\cap\dotsb\cap\CZ(x^{n-1}u)\subset \CZ(v_1)\cap\CZ(v_2)\cap\dotsb\cap\CZ(v_{n})\subset {\rm Sh}^{\rm ss} .
\]
Here $ {\rm Sh}^{\rm ss}$ denotes the supersingular locus of the integral model of the global Shimura variety, and we are implicitly using nonarchimedean uniformization to make these identifications. (In the unramified situation the $\CZ(v_i)$ are relative divisors on ${\rm Sh}$, whereas in our ramified situation we can only assert that the underlying point set has codimension one.) 

On the other hand, in the case $n=3$, we will show directly in Theorem \ref{main prop} below that $\Delta\cap\Delta_g$ is an artinian scheme whenever $g \in U_{1,\rs}(F_0)$.  The proof of this does not use a global argument.
\end{remark}

\subsection{Lie algebra setting}

It would be interesting to transpose the above to the Lie algebra case. To this end,  first note that  the closed formal subscheme \eqref{Deltaginhomog} can be identified with the closed sublocus of points $((Y, \iota_Y, \lambda_Y, \rho_Y), (X, \iota_X, \lambda_X, \rho_X)) \in \CN_{n-1}\times_{\Spf O_{\breve F}}\CN_n$ where
\begin{equation}\label{Deltag}
   \text{\emph{the quasi-endomorphism $g\colon \BX_n\rightarrow \BX_n$ lifts to a homomorphism $Y\times\ov \CE\rightarrow X$.}}
\end{equation}
Here we recall from \eqref{odd framing object} that $\BX_n = \BX_{n-1} \times \ov\BE$ by definition, and by ``lifts'' we mean with respect to the framings $\rho_Y$ of $Y$, resp.~$\rho_{\ov\CE}$ of $\ov\CE$, resp.~$\rho_X$ of $X$. 

Note that condition \eqref{Deltag} still makes sense when $g$ is replaced by any quasi-endomorphism $\BX_n$. Hence we may replace the formal subspace $\Delta_g = (1\times g) \cdot \Delta_\CN(\CN_{n-1})$ in the inhomogeneous version above by the analogous subspace $\Delta_x$, for any $x$ in the Lie algebra $\fku_1$ of $U_1$. Unfortunately,  we do not know how to give a reasonable definition of  $\Int(x) = \langle \Delta, \Delta_x\rangle$ in general, cf.~our remarks in the Introduction. 
Here by ``reasonable'' we mean that at least, as in Remark \ref{equivint}, $\Int (x)$ only depends on the conjugation orbit of $x$ under $H_1(F_0)$, and that if $\Delta\cap\Delta_x$ is an artinian \emph{scheme}, then $\Int(x)$ coincides with the length of $\Delta\cap\Delta_x$. One problem, pointed out to us by A.~Mihatsch, is that it may happen that the formal dimension of $\Delta_x$ is smaller than $n$. 

\begin{remark}\label{intersection conjugation invar}
For any quasi-endomorphism $x$ of $\BX_n$ and any $h_1,h_2 \in H_1(F_0)$, it is elementary to verify that
\[
   (h_2 \times h_1)\Delta_x = \Delta_{h_1 x h_2^{-1}}.
\]
Taking $h_1 = h_2 =: h$, and taking $H_1(F_0)$ to act diagonally on $\CN_{n-1} \times_{\Spf O_{\breve F}} \CN_n$ as before, we conclude that the formal subspaces $\Delta \cap \Delta_x$ and $\Delta \cap \Delta_{hxh^{-1}} = \Delta \cap h\Delta_x = h(\Delta \cap \Delta_x)$ are isomorphic.
\end{remark}

\begin{remark}\label{Intfiniteforliealg}
Similar to Remark \ref{g rs => Int(g) proper}, 
the intersection $\Delta\cap\Delta_x$ is a scheme proper over $\Spec O_{\breve F}$, provided that $x\in \fku_{1, \rs}(F_0)$. Indeed, the same proof works, once the following two observations are  taken into account. First,
\begin{equation*}
\Delta\cap \Delta_x\subset \Delta\cap \Delta_{x^i},\quad i\geq 1. 
\end{equation*}
Indeed, abusing notation in the obvious way, the left-hand side is the locus of points $(Y, X)$ in $\CN_{n-1}\times_{\Spf O_{\breve F}}\CN_n$ where $Y\times\ov\CE\simeq X$ and where $x$ lifts to an endomorphism of $X$. It is clear that then also $x^i$ lifts as an endomorphism of $X$, for any $i\geq 1$. Second,  for any $y\in \End_{O_F}(\BX_n)$, 
\begin{equation*}
\Delta_y\subset \CZ(yu) .
\end{equation*}
Indeed, the restriction of $y\colon Y\times\ov\CE\to X$ to the second factor coincides with the homomorphism $yu\colon \ov\CE\to X$, and hence the locus where $y$ extends to a homomorphism is contained in the locus $\CZ(yu)$ where the homomorphism $yu$ extends. We conclude that
\[
   \Delta\cap\Delta_x \subset \CZ(u)\cap\CZ(xu)\cap\dotsb\cap\CZ(x^{n-1}u) ,
\] 
and the proof proceeds from here as in the group case.
\end{remark}

We also conjecture the following partial converse to Remarks \ref{g rs => Int(g) proper} and \ref{Intfiniteforliealg}.  

\begin{conjecture}\label{conj Int(g) proper => g rs}
Let $g\in G_{W_1}(F_0)$ be semi-simple (i.e., its orbit under $H_1 \times H_1$ is Zariski-closed). If the intersection of $\Delta$ and $\Delta_g$ inside $\CN_{n-1}\times_{O_{\breve E}} \CN_n$  is a nonempty scheme proper over $\Spec O_{\breve F}$, then  $g$ is regular semi-simple.  The inhomogeneous version for $g \in U_1(F_0)$ and the Lie algebra version for $x\in \fku_1(F_0)$ instead of $g$ also hold true. 
\end{conjecture}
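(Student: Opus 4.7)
My approach is to prove the contrapositive. Suppose $g$ is semi-simple but not regular semi-simple; I will argue that $\Delta\cap\Delta_g$, if nonempty, cannot be a scheme proper over $\Spec O_{\breve F}$. By the linear-algebra characterization of regular semi-simplicity in \S\ref{ss:LAchar}, combined with the Gram-matrix computation in the proof of Lemma~\ref{lem delta=det}, the failure of regular semi-simplicity is equivalent to the $F$-linear dependence of $u, gu, \dotsc, g^{n-1}u \in \BV_n$; hence these vectors span a proper $F$-subspace $V \subsetneq \BV_n$ of some dimension $r < n$. The chain of inclusions used in Remark~\ref{g rs => Int(g) proper} then yields
\[
   \Delta\cap\Delta_g \;\subset\; \bigcap_{i=0}^{n-1}\CZ(g^iu) \;\subset\; \bigcap_{v\in V} \CZ(v).
\]

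The central technical input I would require is the expected dimension bound: on the exotic-smooth RZ space $\CN_n$ of \S\ref{CN_n def}, the intersection $\bigcap_{v\in V} \CZ(v)$ of special cycles indexed by an $F$-subspace $V$ of dimension $r$ should be pure of formal dimension $n-1-r$ wherever non-empty, in analogy with the Kudla--Rapoport theory on unramified unitary RZ spaces \cite{KR-U1}. Granting this, $r<n$ forces the intersection either to be empty or to contain a positive-dimensional formal piece, ruling out properness as a scheme over $\Spec O_{\breve F}$.

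To support the above, there is a complementary group-theoretic argument. Orbit--stabilizer applied to the Zariski-closed $H_1$-orbit of $g$ of non-maximal dimension shows that the algebraic stabilizer $H_1^g \subset H_1$ has positive $F_0$-dimension; semi-simplicity of $g$ ensures that $H_1^g$ is $F_0$-reductive. By the mechanism of Remark~\ref{intersection conjugation invar}, the group $H_1^g(F_0)$ acts on $\Delta\cap\Delta_g$. Since the stabilizer of any point of $\CN_n$ under the automorphism group of the framing object is a compact open subgroup, if $H_1^g$ contains a non-anisotropic $F_0$-subtorus $T$ (so that $T(F_0)$ is non-compact) one obtains infinite orbits confined to the intersection, immediately contradicting properness.

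The principal obstacle is the remaining case in which $H_1^g$ is $F_0$-anisotropic; then $H_1^g(F_0)$ is compact and the group-theoretic argument collapses, so one must genuinely establish the dimension estimate on the intersection of special cycles sketched above. Such an estimate in the ramified setting appears to require a local-model analysis extending that of \S\ref{formal smoothness section} to accommodate the cycle conditions, and this is, I expect, the crux of the difficulty. Once the group version is in hand, the inhomogeneous version for $g\in U_1(F_0)$ and the Lie-algebra version for $x\in\fku_1(F_0)$ should follow along parallel lines, passing between the settings via the Cayley transform of \cite{Z14}.
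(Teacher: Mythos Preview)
First, note that the statement you are attempting to prove is labelled a \emph{Conjecture} in the paper and is \emph{not} proved there for general $n$. The paper only establishes the case $n=3$, as part of Theorem~\ref{main prop} via Proposition~\ref{char nondeg}, and by a method entirely different from yours: for a non--regular-semi-simple $x\in\fkk_{1,\red}$ written in the coordinates~\eqref{coordofred}, non-regularity forces $b^{-1}\alpha b\in F$ (or $b=0$), and one then exhibits an explicit lift of $\BE$ over $\Spf O_{\breve F}$ (the canonical lift for the conjugate embedding $\tensor[^b]{\iota}{_\BE}$, resp.\ any lift carrying $\alpha$) such that $x$ extends to an endomorphism of $(O_F\otimes_{O_{F_0}}\CE_\iota)\times\ov\CE$. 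This produces a $\Spf O_{\breve F}$-point of $\Delta\cap\Delta_x$, so the ideal of definition is not nilpotent and $\Delta\cap\Delta_x$ is not a scheme at all.

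Your special-cycle argument has a genuine logical gap: the inclusion
\[
   \Delta\cap\Delta_g \;\subset\; \bigcap_{v\in V}\CZ(v)
\]
points the wrong way. Even granting the expected dimension $n-1-r$ for the right-hand side, this says nothing about the left-hand side; a closed formal subscheme of a large formal scheme can be an artinian scheme. (And when $r=n-1$ the right-hand side would itself be $0$-dimensional, so the argument gives nothing even for the ambient intersection.) What is actually needed is something \emph{inside} $\Delta\cap\Delta_g$ witnessing non-nilpotence of the ideal of definition---precisely what the paper constructs by hand for $n=3$.

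Your group-theoretic argument has the same defect at a different point. Even when $H_1^g(F_0)$ is non-compact, infinite orbits in $(\Delta\cap\Delta_g)(\ov k)$ do not contradict being a proper scheme over $\Spec O_{\breve F}$: a proper $\ov k$-scheme may well have infinitely many $\ov k$-points. The assertion to violate is that the ideal of definition is nilpotent, and orbit-counting in the reduced locus does not touch this. So beyond the acknowledged anisotropic-stabilizer gap, the isotropic case is also incomplete as stated.
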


\section{Conjectures and main results}\label{conj and results}

In this section, we formulate the general conjecture that is addressed in this paper.  Following the case distinctions we have made earlier, we will formulate three variants: a homogeneous version, an inhomogeneous version, and a Lie algebra version. We continue to denote by $F/F_0$ a ramified quadratic extension (and assume $p\neq 2$). Throughout this section, $n\geq 3$ is an \emph{odd} integer. 

\subsection{Homogeneous setting}  

For $\gamma\in G'(F_0)_\rs$, for a function $f'\in C_c^\infty(G')$, and for a complex parameter $s\in \BC$, we introduce the weighted orbital integral
\begin{equation}\label{inhwithpara}
   \Orb(\gamma,f', s) := \int_{H_{1, 2}'(F_0)}f'(h_1^{-1}\gamma h_2)\lvert \det h_1 \rvert^s\eta(h_2)\,dh_1\, dh_2 .
\end{equation}
Here $\eta=\eta_{F/F_0}$ is the quadratic character corresponding to $F/F_0$, and we are using a product Haar measure on $H_{1, 2}'(F_0) = H'_1(F_0)\times H'_2(F_0)$. Also, for
\[
   h_2=(h_2', h_2'')\in H_2'(F_0) = \GL_{n-1}(F_0)\times\GL_n(F_0),
\]
we write  $\eta(h_2)$ for $\eta(\det h_2')$. Note that $\det h_1$ is an element of $F$, and we are taking the normalized absolute value on $F$ in \eqref{inhwithpara}.  We also introduce
\begin{equation*}
   \Orb(\gamma,f') := \Orb(\gamma,f', 0) 
   \quad\text{and}\quad 
   \del(\gamma,f') := \frac d{ds} \Big|_{s=0} \Orb(\gamma, f',s) . 
\end{equation*}
The integral in  \eqref{inhwithpara} is absolutely convergent, and 
\begin{equation*}
\Orb(h_1^{-1}\gamma h_2,f')= \eta(h_2)\Orb(\gamma,f') \quad\text{for}\quad (h_1, h_2)\in H_{1, 2}'(F_0)=H_1'(F_0)\times H_2'(F_0) .
\end{equation*}

Now let $i \in \{0,1\}$, and set $W := W_i$ and $H := H_i$.  For $g\in G_W(F_0)_\rs$ and for a function $f \in C_c^\infty(G_W)$, we  introduce the orbital integral
\begin{equation*}
\Orb(g,f) := \int_{H(F_0)\times H(F_0)} f(h_1^{-1} g h_2)\, dh_1\, dh_2 .
\end{equation*}
Here on $H(F_0)\times H(F_0)$ we take a product  measure of identical Haar measures on  $H(F_0)$. 

Dual to the matching of regular semi-simple elements discussed in \S\ref{setup} is the transfer relation of functions on $G'(F_0)$, $G_{W_0}(F_0)$, and $G_{W_1}(F_0)$. To define this, recall  that a \emph{transfer factor} is a function $\Omega\colon G'(F_0)_\rs\to \BC^\times$ such that
\[
   \Omega(h_1^{-1}\gamma h_2)=\eta(h_2)\Omega(\gamma) \quad\text{for all}\quad (h_1, h_2)\in H_{1, 2}'(F_0)=H'_1(F_0)\times H_2'(F_0).
\] 
Transfer factors always exist, and we will specify our particular choice below.

\begin{definition}\label{homog transfer}
A function $f'\in C_c^\infty(G')$ and a pair of functions $(f_0,f_1) \in C_c^\infty(G_{W_0}) \times C_c^\infty(G_{W_1})$ are \emph{transfers} of each other, or are \emph{associated} (for the fixed choices of Haar measures and a fixed choice of transfer factor and a fixed choice of special vectors $u_i$ in $W_i$), if for each $i \in \{0,1\}$ and each $g\in G_{W_i}(F_0)_\rs$,
\[
 \Orb(g,f_i)=\Omega(\gamma) \Orb(\gamma,f')
\]
whenever $\gamma\in G'(F_0)_\rs $ matches $g$.
\end{definition}

In the specific case at hand, we will take the transfer factor $\Omega=\omega$ to be given by the following formula. Let $\wt \eta$ be an extension of $\eta$ from $F_0^\times$ to $F^\times$ (not necessarily of order $2$). Then we take\footnote{Note that in this paper our choice of various transfer factors is slightly different from that in \cite{Z14}.} 
\begin{equation}\label{sign}
   \omega (\gamma) := \wt\eta\bigl(  \det(\wt\gamma)^{-(n-1)/2}   \det(\wt{\gamma}^ie)_{i=0,\dotsc, n-1}\bigr) ,
\end{equation}
where for $\gamma=(\gamma_1, \gamma_2)\in G'(F_0)_\rs$ we set $\wt \gamma=s(\gamma)=(\gamma_1^{-1}\gamma_2)\bigl(\ov{\gamma_1^{-1}\gamma_2}\bigr)^{-1}\in S_n(F_0)$, and where we recall the column vector $e=(0,\dotsc,0,1)\in F^n$. We point out that 
\[
   \det(\wt\gamma)^{-(n-1)/2}\det(\wt\gamma^i e)_{i=0,1,\dotsc,n-1}
\]
is an eigenvector under the Galois involution, i.e., it is in either $F_0$ or $\pi F_0$.

Let us fix the rest of the choices that go into the statement of our AT conjecture. We normalize the Haar measure in \eqref{inhwithpara} by assigning the subgroup $H'_1(O_{F_0})=\GL_{n-1}(O_{F})$ measure $1$, and by taking the product Haar measure on $H_2'(F_0) = \GL_{n-1}(F_0) \times \GL_n(F_0)$ such that  $\GL_{n-1}(O_{F_0})$ and $\GL_n(O_{F_0})$ have measure $1$.  On the unitary side, recall from \eqref{norm 1 assumption} that we assume that the special vectors $u_0 \in W_0$ and $u_1 \in W_1$ have norm $1$.   Since $n$ is odd, it follows that the perp spaces $W^\flat_0$ and $W^\flat_1$ are the respective split and non-split non-degenerate hermitian spaces of dimension $n-1$.  Since $W_0^\flat$ is furthermore even dimensional, it contains a \emph{$\pi$-modular lattice} \cite[Prop.~8.1(b)]{J62}, that is, an $O_F$-lattice $\Lambda_0^\flat$ such that $(\Lambda_0^\flat)^\vee = \pi^{-1}\Lambda_0^\flat$, where $(\Lambda_0^\flat)^\vee \subset W_0^\flat$ denotes the set of elements that pair with $\Lambda_0^\flat$ to values in $O_F$ under the hermitian form.  We fix such a $\Lambda_0^\flat$ and denote by $K^\flat_0$ its stabilizer in $H_0(F_0)$. We normalize the Haar measure on $H_0(F_0)$ such that $K^\flat_0$ gets measure $1$. The normalization of the Haar measure on $H_1(F_0)$ will not be important for us. We set
\begin{equation}\label{Lambda_0}
   \Lambda_0 := \Lambda^\flat_0\oplus O_F u_0,
\end{equation}
which is a \emph{nearly $\pi$-modular lattice} in $W_0$, i.e.~$\Lambda_0 \subset \Lambda_0^\vee \subset \pi^{-1}\Lambda_0$ with $\pi^{-1}\Lambda_0 / \Lambda_0^\vee$ of length $1$.  We denote by $K_0$ the stabilizer of $\Lambda_0$ in $U_0(F_0)$. 

\begin{remark}\label{rem syp red}
The maximal compact open subgroup $K_0^\flat \subset H_0(F_0)$ is a special maximal parahoric subgroup in the sense of Bruhat--Tits theory.  The maximal compact open subgroup $K_0 \subset U_0(F_0)$ is the full stabilizer in $U_0(F_0)$ of a special vertex in the (extended) building, and contains the associated maximal parahoric subgroup with index $2$.  See \cite[\S4.a]{PR-TLG}.  These subgroups have symplectic reduction in the sense that the quotients $(\Lambda_0^\flat)^\vee/\Lambda_0^\flat$ and $\Lambda_0^\vee/\Lambda_0$ have dimension $n-1$ over the residue field $k$, and the hermitian form on the ambient space induces a non-degenerate alternating bilinear pairing on both.
\end{remark}

We now state the homogeneous version of the arithmetic transfer conjecture in the case at hand. 
For $g \in G_{W_1}(F_0)$, recall the intersection number $\Int(g)$ from \eqref{Int(g) homog}.

\begin{conjecture}[Homogeneous AT conjecture]\label{conj homog}\hfill
\begin{altenumerate}
\renewcommand{\theenumi}{\alph{enumi}}
\item\label{conj homog a} There exists a function $f'\in  C^\infty_c(G')$ which transfers to $(\mathbf{1}_{K^\flat_0\times K_0}, 0) \in C_c^\infty(G_{W_0}) \times C_c^\infty (G_{W_1})$, and which satisfies the following identity for any $\gamma\in G'(F_0)_\rs$ matched with an element $g\in G_{W_1}(F_0)_\rs$:
$$
\omega(\gamma)\del(\gamma,f')=-\Int(g)\cdot \log q .
$$ 

\item\label{conj homog b} For any $f'\in C^\infty_c(G')$ which transfers to $(\mathbf{1}_{K^\flat_0\times K_0}, 0) \in C_c^\infty(G_{W_0}) \times C_c^\infty (G_{W_1})$, there exists a function $f'_\corr\in C^\infty_c(G')$ such that for any $\gamma\in G'(F_0)_\rs$ matched with an element $g\in  G_{W_1}(F_0)_\rs$,
$$
\omega(\gamma)\del(\gamma,f')=-\Int(g) \cdot \log q+ \omega(\gamma)\Orb(\gamma,f'_\corr) .
$$
\end{altenumerate}
\end{conjecture}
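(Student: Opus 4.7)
The plan is first to reduce Conjecture \ref{conj homog} to its inhomogeneous counterpart, i.e.\ Conjecture \ref{Intro-Conj} of the Introduction. This reduction, to be carried out in \S\ref{inhom setting section}, rests on the identification $G'(F_0)/H_{1,2}'(F_0) \simeq S(F_0)/H'(F_0)$ of \eqref{idGL}: the weighted orbital integral $\Orb(\gamma,f',s)$ on $G'(F_0)$ factors through a weighted integral on $S(F_0)$, with the change of variables introducing precisely the factor of $2$ that accounts for the discrepancy between the identity in Conjecture \ref{conj homog} and that of the inhomogeneous conjecture. Orbital integrals on $G_W(F_0)$ similarly factor through those on $U(F_0)$, and the transfer factor $\omega$ matches. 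Thus it suffices to prove the inhomogeneous version.

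From there I would reduce to a Lie algebra analogue via the Cayley transform of \cite{Z14}. Around any $\xi$ in $S_n(F_0)$, resp.\ in $U(F_0)$, this gives a local analytic isomorphism of $H'$-stable, resp.\ $H$-stable, neighborhoods with open subsets of $\fks_n(F_0)$, resp.\ $\fku(F_0)$, compatible with matching, transfer factors, and orbital integrals up to explicit Jacobians. The harmonic-analytic content of the conjecture thereby becomes a statement about the categorical quotients $\fks_n/H'$ and $\fku/H$, on which the tools of \cite{Z12b} are available.

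The main line of attack is on part \ref{conj homog b}. Given $f'$ transferring to $(\mathbf{1}_{K_0^\flat \times K_0},0)$, view
\[
   \Phi(\gamma) := \omega(\gamma)\del(\gamma, f') + \Int(g) \cdot \log q
\]
as a function on the regular semi-simple locus of the categorical quotient. I would show that $\Phi$ is an orbital integral function (i.e.\ equals $\omega(\gamma)\Orb(\gamma,f'_\corr)$ for some $f'_\corr$) by invoking the density principle of \cite{Z14}, which allows this to be checked \emph{locally} near each point of the quotient. The local verification splits into two steps: (i) evaluate $\Int(g)$ explicitly, reducing to the Gross--Keating formulas on quasi-canonical liftings \cite{ARGOS} along the lines of \cite[\S 8]{KR-U1}, aided by a simplification of Zink that applies uniformly in the unramified and ramified cases, with the spin and nearly $\pi$-modular conditions tracked carefully; and (ii) establish a sharp enough germ expansion of $\omega(\gamma)\del(\gamma,f')$ about each point of the quotient, determining it up to a local orbital integral function. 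Once (i) and (ii) are shown to agree locally, $\Phi$ is globally an orbital integral function. Part \ref{conj homog a} then follows from \ref{conj homog b} by replacing $f'$ with $f' - f'_\corr$.

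The principal obstacle will be step (ii). For $n = 3$ the local harmonic analysis of \cite{Z12b} can be sharpened to yield the required germ expansion, but for $n > 3$ no analogous expansion is presently known. Step (i) also becomes delicate for larger $n$, because $\Delta \cap \Delta_g$ can fail to be artinian and one lacks even a working definition of the expected intersection multiplicity in this generality, as reflected in the hypothesis of Conjecture \ref{lieconj}. In the case $n = 3$ both difficulties are surmountable thanks to the exceptional isomorphisms $\SL_2 \simeq \SU_2$ and $\Sp_2 \simeq \SL_2$, which in particular identify a connected component of $\CN_2$ with a Lubin--Tate deformation space and cleanly mediate the Lie algebra reduction; these tools are unavailable for $n > 3$.
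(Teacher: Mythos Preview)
Your outline matches the paper's strategy closely: reduce to the inhomogeneous conjecture via Lemma~\ref{lem G' to S} and Lemma~\ref{hom=inhom} (where the factor of $2$ enters exactly as you say), pass to the Lie algebra via the Cayley transform, attack part~\eqref{conj homog b} by showing the function $\Phi$ is an orbital integral function locally on the categorical quotient, and feed in the explicit $\Int(g)$ from Gross--Keating together with the germ expansion of $\del$. One terminological point: what lets you check orbital-integral-ness locally is not the density principle but rather the localization result \cite[Prop.~3.8]{Z14} (recorded here as Theorems~\ref{locoforblie}, \ref{local pro of orb fun}, \ref{thm stab const red}); the density principle (Conjecture~\ref{conj density}, known for $n=3$ by \cite{Z12b}) is a separate ingredient, used in the paper to show that a function with identically vanishing orbital integrals has $\del$ equal to an orbital integral (Corollary~\ref{cor density}), which is what drives the group-to-Lie-algebra reduction.

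Your final step, deducing \eqref{conj homog a} from \eqref{conj homog b} by ``replacing $f'$ with $f' - f'_\corr$,'' does not work as written: this would require $\del(\gamma, f'_\corr) = \Orb(\gamma, f'_\corr)$, which is false in general, and nothing forces $f'_\corr$ to transfer to $(0,0)$. The paper's device (Lemma~\ref{lem btoa}, Proposition~\ref{cor b to a}) is to build from any $\phi$ a function $\phi^\sharp := \tensor*[^{\eta(h)h-1}]{\phi}{}/\log\lvert\det h\rvert$, for any $h \in H'(F_0)$ with $\lvert\det h\rvert \neq 1$, which satisfies $\Orb(\gamma,\phi) = \del(\gamma,\phi^\sharp)$ and transfers to $(0,0)$; then $f' - (f'_\corr)^\sharp$ is the function witnessing part~\eqref{conj homog a}.
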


\begin{remarks}
\begin{altenumerate}
\renewcommand{\theenumi}{\roman{enumi}}
\item\label{DOrb invar} 
Both sides of these identities depend only on the respective orbits of $\gamma$ and $g$: for the right-hand side this follows from   Remark~\ref{equivint}; and for the left-hand side, this follows from $\Orb(\gamma, f')=0$ for any $\gamma\in G'(F_0)_\rs$ matched with an element $g\in  G_{W_1}(F_0)_\rs$, which holds because $f'$ transfers to $0$ on $G_{W_1}(F_0)$. 
\item 
Part \eqref{conj homog a} of Conjecture \ref{conj homog} follows from part \eqref{conj homog b}; see Proposition \ref{cor b to a} below. 
The converse \eqref{conj homog a}$\implies$\eqref{conj homog b} would follow from a conjectural density principle (Conjecture \ref{conj density}); see Lemma \ref{lem a to b} below.
\item\label{it pro-u}
The function $f'$ in Conjecture \ref{conj homog}\eqref{conj homog a} cannot be taken to lie in the Iwahori Hecke algebra of $G'(F_0)=\GL_{n-1}(F)\times\GL_n(F)$ due to the presence of the ramified quadratic character $\eta$. It is tempting to guess that such an $f'$ could be taken in the pro-unipotent Hecke algebra of $G'(F_0)$, i.e., to be bi-invariant under the pro-unipotent radical of an Iwahori subgroup.  But even in the case $n=3$ we do not know how to prove this. 
\end{altenumerate}
\label{rem pro-u}
\end{remarks}

\subsection{Inhomogeneous setting}\label{inhom setting section}
Now we give the inhomogeneous version of the conjecture.  Recall the scheme $S = S_n = \{ g \in \Res_{F/F_0}\GL_n \mid g \ov g = 1_n\}$.  For  $\gamma\in S(F_0)_\rs$,  a function $f'\in C_c^\infty(S)$, and a complex parameter $s\in \BC$, we introduce the weighted orbital integral
\begin{align}\label{eqn def inhom}
   \Orb(\gamma,f', s) := \int_{H'(F_0)}f'(h^{-1}\gamma h)\lvert \det h \rvert^s \eta(h) \, dh ,
\end{align}
as well as the special values
\begin{equation*}
   \Orb(\gamma,f') := \Orb(\gamma,f', 0)
   \quad\text{and}\quad
   \del(\gamma,f') : = \frac d{ds} \Big|_{s=0} \Orb(\gamma, f',s) . 
\end{equation*}
Here we write $\eta(h)$ for $\eta(\det h)$, as in the Introduction.  Note that $\det h\in F_0$, and in \eqref{eqn def inhom} we are taking its absolute value for the normalized absolute value on $F_0$.  As in the homogeneous setting, the integral defining $\Orb(\gamma,f', s)$ is absolutely convergent, and
\[
   \Orb(h^{-1}\gamma h,f') = \eta(h) \Orb(\gamma,f')
	\quad\text{for all}\quad
	h \in H'(F_0) = \GL_{n-1}(F_0).
\]
For $i \in \{0,1\}$, set $U := U_i$ and $H := H_i$.  For $g\in U(F_0)_\rs$ and a function $f \in C_c^\infty(U)$, we similarly introduce the orbital integral
\begin{equation*}
   \Orb(g,f) := \int_{H(F_0)}f(h^{-1} g h)\, dh .
\end{equation*}

A \emph{transfer factor} on $S(F_0)_\rs$ is a function $\Omega\colon S(F_0)_\rs\to \BC^\times$ such that $\Omega(h^{-1}\gamma h) = \eta(h)\Omega(\gamma)$ for all $\gamma \in S(F_0)_\rs$ and $h \in H'(F_0)$.  Quite analogously to the homogeneous setting, in the case at hand we take the transfer factor $\Omega = \omega$ given by
\begin{equation}\label{sign Sn}
   \omega(\gamma) := \wt\eta\bigl( \det(\gamma)^{-(n-1)/2} \det({\gamma}^i e)_{i=0,\dotsc, n-1}\bigr).
\end{equation}
By definition, this is compatible with the transfer factor \eqref{sign} on $G'(F_0)$ in the sense that, for $\gamma\in G'(F_0)$,
\begin{equation}\label{sign Sn=}
   \omega(\gamma) = \omega\bigl(s(\gamma)\bigr).
\end{equation}
We take the other normalizations (the length of the special vectors, the Haar measures on $H'(F_0)$ and $H(F_0)$) and the notation $K_0$ as in the homogeneous setting.  The transfer relation for functions is the following.

\begin{definition}
A function $f' \in C_c^\infty(S)$ and a pair of functions $(f_0,f_1) \in C_c^\infty(U_0) \times C_c^\infty(U_1)$ are \emph{transfers} of each other, or are \emph{associated}, if for each $i \in \{0,1\}$ and each $g\in U_i(F_0)_\rs$,
\[
   \Orb(g,f_i)=\omega(\gamma) \Orb(\gamma,f')
\]
whenever $\gamma\in S(F_0)_\rs $ matches $g$.
\end{definition}

The inhomogeneous version  of the conjecture takes the following form. 

\begin{conjecture}[Inhomogeneous AT conjecture]\label{inhomconj}\hfill
\begin{altenumerate}
\renewcommand{\theenumi}{\alph{enumi}}
\item\label{inhomconj a}
There exists a function $f'\in C^\infty_c(S)$ which transfers to $(\mathbf{1}_{K_0}, 0) \in C_c^\infty(U_0) \times C_c^\infty(U_1)$, and which satisfies the following identity for any $\gamma\in S(F_0)_\rs$ matched with an element $g\in U_1(F_0)_\rs$:
$$
2\omega(\gamma)\del(\gamma,f')=-\Int(g)\cdot\log q .
$$ 
\item\label{inhomconj b}
For any $f'\in C^\infty_c(S)$ which transfers to $(\mathbf{1}_{K_0}, 0) \in C_c^\infty(U_0) \times C_c^\infty(U_1)$, there exists a function  $f'_\corr\in C^\infty_c(S)$ such that for any $\gamma\in S(F_0)_\rs$ matched with an element $g\in U_1(F_0)_\rs$,
$$
2\omega(\gamma)\del(\gamma,f') = -\Int(g)\cdot\log q + \omega(\gamma)\Orb(\gamma,f'_\corr).
$$ 
\end{altenumerate}
\end{conjecture}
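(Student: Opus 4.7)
\medskip

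The plan is to prove Conjecture \ref{inhomconj} in the case $n=3$, $F_0 = \BQ_p$, following the strategy outlined in the Introduction. First I would reduce part \eqref{inhomconj a} to part \eqref{inhomconj b}: given a correction function $f'_\corr$ as in \eqref{inhomconj b} for a chosen $f'$, the modified function $f' - $ (a transfer of the pair matching $f'_\corr$, shifted appropriately, whose existence is guaranteed by the smooth transfer theorem of \cite{Z14}) still transfers to $(\mathbf{1}_{K_0}, 0)$, and its weighted orbital integral absorbs the correction term. So the main work is part \eqref{inhomconj b}, and it suffices to show that for any $f'$ transferring to $(\mathbf{1}_{K_0}, 0)$, the function
\[
   \gamma \mapsto 2\omega(\gamma)\del(\gamma, f') + \Int(g_\gamma) \cdot \log q
\]
on $S(F_0)_\rs$ (where $g_\gamma$ is a matching element) equals $\omega(\gamma)\Orb(\gamma, f'_\corr)$ for some $f'_\corr \in C_c^\infty(S)$; equivalently, by \cite{Z14}, this function is a transfer of some pair of functions on $U_0 \amalg U_1$. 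Since being an orbital integral function can be checked locally on the categorical quotient, I can work in a neighborhood of each semi-simple orbit.

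Next, I would reduce to a Lie algebra analog via the Cayley transform $\fkc_\xi$ from \cite{Z14}. Near a chosen semi-simple element, the Cayley transform identifies a neighborhood in $S_n$ (resp.\ $U_i$) with a neighborhood in $\fks_n$ (resp.\ $\fku_i$) in a manner compatible with orbital integrals, transfer factors, and weighted orbital integrals up to explicit factors. On the geometric side, I would use the exceptional isomorphism $\Sp_2 \simeq \SL_2$ arising from the symplectic reduction of $K_0^\flat$ (Remark \ref{rem syp red}), together with the isomorphism $\SL_2 \simeq \SU_2$ that gives, via Proposition \ref{Serre isom}, the identification of $\CM$ with a component of $\CN_2$, to reduce the intersection problem on $\CN_2 \times_{\Spf O_{\breve F}} \CN_3$ to a Lie algebra intersection problem whose geometry matches the analytic reduction.

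Then I would explicitly compute the intersection number $\Int(g)$ for $g \in U_1(F_0)_\rs$. Following the strategy of \cite{KR-U1} (and using Zink's simplification alluded to in the Introduction), the problem reduces to counting quasi-canonical liftings of $\BE$, at which point the Gross--Keating formulas \cite{ARGOS} yield an explicit expression in terms of the invariants of $g$. Along the way I would establish the qualitative claim in Theorem \ref{Intr-Mainthm}: that $\Delta \cap (1 \times g)\Delta$ is always artinian with at most two points when nonempty, so that $\Int(g) = \length(\Delta \cap (1\times g)\Delta)$ and no derived correction is needed.

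Finally, I would develop a germ expansion for the weighted orbital integral $\del(\gamma, f')$ at each point of the categorical quotient (parametrized via the invariants \eqref{inv sec}), sufficiently explicit to pin down $\omega(\gamma)\del(\gamma, f')$ up to a local orbital integral function. For $n=3$ the necessary harmonic analysis is available in \cite{Z12b}, which I would sharpen to give the leading terms with explicit constants. Comparing the germ expansion term-by-term with the explicit formula for $\Int(g)\cdot \log q$ from the previous step then exhibits the sum as an orbital integral function locally near each orbit, completing part \eqref{inhomconj b} and hence the conjecture. The main obstacle will be the germ expansion step: matching the explicit local expansion of the derivative of the orbital integral against the Gross--Keating formula requires precise control of all the leading-order constants, and the compatibility with the Cayley-transform reduction and the transfer factor $\omega$ is the most delicate bookkeeping.
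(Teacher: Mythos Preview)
Your plan is essentially the paper's own strategy: reduce (a) to (b) via Lemma~\ref{lem btoa}/Proposition~\ref{cor b to a}, recast (b) as showing that $2\omega(\gamma)\del(\gamma,f')+\Int(g)\log q$ is an orbital integral function, localize on the categorical quotient, pass to the Lie algebra via the Cayley transform, compute $\Int$ through the Serre isomorphism and Gross--Keating, and match against a germ expansion from \cite{Z12b}.

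Two points where the paper's execution differs from your outline and which you should incorporate. First, the paper inserts an additional reduction to the \emph{reduced} Lie algebra $\fks_\red$, $\fku_{i,\red}$ (Definition~\ref{fku_1 reduced}, \S\ref{subsred}): the explicit Gross--Keating computation in \S\ref{sec expl Lie} and the germ expansion of Part~\ref{germ expansion part} are carried out only for reduced elements, and the passage back to the full Lie algebra (Proposition~\ref{lem2}) uses that $\lInt(x)=\lInt(x_\red)$ and $\omega(y)=\omega(y_\red)$. Second, the group-to-Lie-algebra step (Proposition~\ref{lem1}, via Lemma~\ref{lem orb cayley}) is not purely formal: it relies on the density principle for $n=3$ (Theorem~\ref{densprince}, from \cite{Z12b}) through Corollary~\ref{cor density}, because the pulled-back function $\fkc_\xi^*(f')$ and the Lie-algebra transfer $\phi'$ need not agree, only have the same regular semi-simple orbital integrals. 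Your sketch implicitly assumes this step is a clean compatibility; in fact it is where the $n=3$ hypothesis enters most sharply on the analytic side. Your description of the role of $\Sp_2\simeq\SL_2$ is also slightly off: it is used not to reduce the intersection problem geometrically, but to show (Lemma~\ref{lem cayley U}) that the Cayley transform carries $\fkk_0$ onto $K_0$ over suitable loci, which is what makes $\mathbf{1}_{\fkk_0}$ and $\mathbf{1}_{K_0}$ correspond.
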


Note that there is a factor of $2$ in Conjecture \ref{inhomconj} which is not present in Conjecture \ref{conj homog}; this is due to the fact that the restriction to $F_0$ of the normalized absolute value of $F$  is the square of the normalized absolute value of $F_0$, cf.~Lemma \ref{lem G' to S} and its proof below.

In the rest of this subsection, we show that Conjectures \ref{conj homog} and \ref{inhomconj} are equivalent.

\begin{lemma}\label{lem G' to S}
Let $f'\in  C^\infty_c(G')$, and define the function $\wt f'$ on $S(F_0)$ by, for $g\in \GL_n(F)$,
\begin{equation}\label{eqn def wt f}
   \wt f'(g\ov g^{-1}) := \int_{\GL_{n-1}(F)\times \GL_n(F_0)}f'(h_1,h_1 g h_2)\, dh_1\,dh_2.
\end{equation}

\begin{altenumerate}
\item\label{conj equiv lem i}
$\wt f'\in C^\infty_c(S)$, and every element in $C^\infty_c(S)$ arises in this way.  
\item \label{conj equiv lem ii}
For all $\gamma\in G'(F_0)_\rs$,
\begin{equation}\label{eqn G'toS 0}
   \Orb(\gamma,f') = \Orb\bigl(s(\gamma),\wt f'\bigr).
\end{equation}
\item \label{conj equiv lem iii} If $f'$ transfers to $(f_0,0)$ for some $f_0\in  C^\infty_c(G_{W_0})$, then there exists a function $\phi'_\corr\in C^\infty_c(G')$ such that for any $\gamma$ matching an element in $G_{W_1}(F_0)_{\rs}$,
\begin{equation}\label{eqn G'toS 1}
   \del(\gamma,f') = 2 \del\bigl(s(\gamma),\wt f'\bigr)+   \Orb(\gamma,\phi'_{\corr}).
\end{equation}
\end{altenumerate}
\end{lemma}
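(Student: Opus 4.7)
The key is to view $\wt f'$ as a fiber integral of $f'$ along the composition
\begin{equation*}
   s\colon G'(F_0)\twoheadrightarrow \bigl(H_1'\backslash G'\bigr)(F_0)\simeq \GL_n(F)\twoheadrightarrow \bigl(\GL_n(F)/\GL_n(F_0)\bigr)\simeq S(F_0),
\end{equation*}
where the first arrow is $(\gamma_1,\gamma_2)\mapsto \iota(\gamma_1)^{-1}\gamma_2$ (with $\iota\colon\GL_{n-1}\inj\GL_n$ the embedding $A\mapsto\diag(A,1)$) and the second is $g\mapsto g\ov g^{-1}$. Once this is set up, the lemma becomes essentially an application of Fubini, together with a single algebraic identity for the weight.

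For part~\eqref{conj equiv lem i}, the $s$-fiber through $(1,g)$ is precisely the orbit of $H_1'(F_0)\times(\{1\}\times\GL_n(F_0))$ on $(1,g)$, and after the harmless substitution $h_1\mapsto h_1^{-1}$ the formula \eqref{eqn def wt f} is fiber integration along $s$ with product Haar measure. Well-definedness on $S(F_0)$ is immediate: replacing $g$ by $gh$ with $h\in\GL_n(F_0)$ is absorbed by the substitution $h_2\mapsto h^{-1}h_2$ on the unimodular group $\GL_n(F_0)$. Compact support, local constancy, and the surjectivity of $f'\mapsto\wt f'$ onto $C_c^\infty(S)$ now follow from general properties of fiber integration along closed unimodular subgroups, applied in the two steps of the display above.

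For part~\eqref{conj equiv lem ii}, decompose $H_{1,2}'(F_0)=\bigl(H_1'(F_0)\times(\{1\}\times\GL_n(F_0))\bigr)\times(\GL_{n-1}(F_0)\times\{1\})$ and integrate over the first factor first. Fixing $h_2'\in\GL_{n-1}(F_0)$, Fubini identifies the inner integral as the fiber integral of $f'$ over the $s$-fiber through $\gamma\cdot(h_2',1)=(\gamma_1 h_2',\gamma_2)$, so by part~\eqref{conj equiv lem i} it equals $\wt f'\bigl(s(\gamma_1 h_2',\gamma_2)\bigr)$; and since $\iota(h_2')\in\GL_n(F_0)$ is Galois-fixed, this further equals $\wt f'\bigl(\iota(h_2')^{-1}s(\gamma)\iota(h_2')\bigr)$. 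Integrating the result against $\eta(h_2')$ over $\GL_{n-1}(F_0)$ reproduces $\Orb(s(\gamma),\wt f')$ on the nose.

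For part~\eqref{conj equiv lem iii}, the weight $|\det h_1|_F^s$ does not factor cleanly through the first fiber integration, so I differentiate $\Orb(\gamma,f',s)$ at $s=0$ and rewrite the resulting weight via the elementary identity
\begin{equation*}
   \log|\det h_1|_F = \log|\det\gamma_1|_F + \log|\det h_2'|_F - \log\bigl|\det(h_1^{-1}\gamma_1 h_2')\bigr|_F,
\end{equation*}
which splits $\del(\gamma,f')$ into three summands. The middle summand equals $\log|\det\gamma_1|_F\cdot\Orb(\gamma,f')$, which vanishes because $\gamma$ matches an element of $G_{W_1}(F_0)_\rs$ and $f'$ transfers to $(f_0,0)$; this is the only place where the transfer hypothesis enters. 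The summand involving $\log|\det h_2'|_F=2\log|\det h_2'|_{F_0}$ (with the factor of $2$ arising from $|x|_F=|x|_{F_0}^2$ for $x\in F_0^\times$) equals $2\del(s(\gamma),\wt f')$ by the argument of part~\eqref{conj equiv lem ii} applied to the weight $\log|\det h_2'|_{F_0}$. The remaining summand is $\Orb(\gamma,\phi'_\corr)$ with
\begin{equation*}
   \phi'_\corr(\gamma_1,\gamma_2):=-f'(\gamma_1,\gamma_2)\log|\det\gamma_1|_F,
\end{equation*}
because the weight $-\log|\det(h_1^{-1}\gamma_1 h_2')|_F$ depends only on the first coordinate of the translated element $h_1^{-1}\gamma h_2$. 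Since $f'$ has compact support, $\log|\det\gamma_1|_F$ is locally constant and bounded on that support, so $\phi'_\corr\in C_c^\infty(G')$. The only genuinely conceptual step is the vanishing of the middle summand; everything else is bookkeeping, the main subtlety being the factor-of-$2$ discrepancy between the normalized absolute values on $F$ and $F_0$.
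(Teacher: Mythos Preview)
Your proof is correct and follows essentially the same approach as the paper: both view $\wt f'$ as a fiber integral along $s$, apply Fubini to separate the $\GL_{n-1}(F_0)$-variable, and use the transfer hypothesis exactly once to kill the term proportional to $\Orb(\gamma,f')$. The only cosmetic difference is that the paper first reduces to $\gamma=(1,\gamma_2)$ and keeps the $s$-parameter throughout (splitting $|\det(h_2'h_1)|^s$ multiplicatively into the two terms \eqref{eqn term 1} and \eqref{eqn term 2}), whereas you work at general $\gamma$ and differentiate at $s=0$ first, splitting the resulting logarithm additively via your identity for $\log|\det h_1|_F$; the two routes yield the same explicit $\phi'_\corr$ (up to a harmless sign).
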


\begin{proof}
Part \eqref{conj equiv lem i} is clear; we show parts \eqref{conj equiv lem ii} and \eqref{conj equiv lem iii}.
Note that for any $(\gamma_1,\gamma_2) \in G'(F_0) = \GL_{n-1}(F) \times \GL_n(F)$,
\[
   \Orb\bigl((\gamma_1,\gamma_2),f', s\bigr)= |\gamma_1|^{-s}\Orb\bigl((1,\gamma_1^{-1}\gamma_2),f', s\bigr).
\] 
Therefore the left-hand side of \eqref{eqn G'toS 0} is invariant if we replace $(\gamma_1,\gamma_2)$ by $(1,\gamma_1^{-1}\gamma_2)$, and the same is true of the left-hand side of \eqref{eqn G'toS 1} when $f'$ and $(\gamma_1,\gamma_2)$  satisfy the assumptions of \eqref{conj equiv lem iii}, since then $\Orb((\gamma_1,\gamma_2),f') = 0$.
Hence it suffices to consider elements of the form $(1,\gamma)\in G'(F_0)_\rs$.
By definition,
\[
   \Orb\bigl((1,\gamma),f', s\bigr) 
	   = \int_{H_{1, 2}'(F_0)} f'(h_1^{-1}h_2',h_1^{-1}\gamma h_2'') \lvert\det h_1\rvert^s \eta(h_2') \,dh_1\,dh_2'\,dh_2'',
\]
where $h_1\in H_1'(F_0) = \GL_{n-1}(F)$ and $(h_2',h_2'')\in H_2'(F_0)=\GL_{n-1}(F_0)\times\GL_n(F_0)$. Replacing $h_1$ by $h_2'h_1$, we have 
\[
   \Orb\bigl((1,\gamma),f', s\bigr)
	   = \int_{H_{1, 2}'(F_0)} f'\bigl(h_1^{-1},h_1^{-1}(h_2')^{-1}\gamma h_2''\bigr) \lvert\det(h_2'h_1)\rvert^s \eta(h_2')\,dh_1 \,dh_2'\, dh_2'' .
\]
This is equal to the sum of 
\begin{equation}\label{eqn term 1}
  \int_{H_{1, 2}'(F_0)} f'\bigl(h_1^{-1},h_1^{-1}(h_2')^{-1}\gamma h_2''\bigr) \lvert \det h_2' \rvert^s\eta(h_2')\,dh_1\,dh_2'\, dh_2''
\end{equation}
and
\begin{equation}\label{eqn term 2}
  \int_{H_{1, 2}'(F_0)}f'\bigl(h_1^{-1},h_1^{-1}(h_2')^{-1}\gamma h_2''\bigr) (\lvert\det h_1\rvert^s-1)\lvert \det h_2' \rvert^s\eta(h_2')\,dh_1\,dh_2'\, dh_2'' .
\end{equation}

Comparing  with the definition \eqref{eqn def inhom}, we see that  the term \eqref{eqn term 1} is equal to 
\[
   \int_{\GL_{n-1}(F_0)}   \wt f'\bigl((h_2')^{-1}\gamma\ov\gamma^{-1} h_2'\bigr)\lvert\det h_2'\rvert_{F_0}^{2s} \eta(h_2') \,dh_2'
	   = \Orb\bigl(\gamma\ov\gamma^{-1},\wt f', 2s\bigr).
\]
The term \eqref{eqn term 2} has an obvious zero at $s=0$ for all regular semi-simple elements $(1,\gamma)$. This proves part (\ref{conj equiv lem ii}). 
To prove part (\ref{conj equiv lem iii}), we note that, since $\lvert\det h_1\rvert^s-1$ has a zero at $s=0$, the first derivative of  the term  \eqref{eqn term 2} evaluated at $s=0$ is equal to 
\begin{equation}\label{first deriv}
  \int_{H_{1, 2}'(F_0)}f'\bigl(h_1^{-1},h_1^{-1}(h_2')^{-1}\gamma h_2''\bigr)\eta(h_2')  \log \lvert\det h_1\rvert\,dh_1\,dh_2'\, dh_2''.
\end{equation}
Now we define a function on $G'(F_0)$,
\begin{equation}
   \phi'_\corr(\gamma_1,\gamma_2) :=
	   -f' (\gamma_1,\gamma_2)\log  \lvert\det \gamma_1^{-1}\rvert, \quad (\gamma_1,\gamma_2)\in G'(F_0).
\end{equation}
This clearly belongs to $C^\infty_c(G')$, and the term  \eqref{first deriv} is equal to 
$-\Orb((1,\gamma),\phi'_\corr )$. This completes the proof.
\end{proof}

\begin{lemma}\label{hom=inhom}
Conjectures \ref{conj homog}(\ref{conj homog a}) and \ref{inhomconj}(\ref{inhomconj a}) are equivalent. Similarly for Conjectures \ref{conj homog}(\ref{conj homog b}) and \ref{inhomconj}(\ref{inhomconj b}).
\end{lemma}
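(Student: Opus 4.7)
The engine is Lemma \ref{lem G' to S}, which gives a surjection $f'\mapsto\wt{f}'$ from $C^\infty_c(G')$ onto $C^\infty_c(S)$ together with compatibilities for orbital integrals and their $s$-derivatives. The first step is a \emph{transfer dictionary}: a function $f'\in C^\infty_c(G')$ transfers to $(\mathbf{1}_{K^\flat_0\times K_0},0)$ on the $G_{W_i}$ side if and only if $\wt{f}'$ transfers to $(\mathbf{1}_{K_0},0)$ on the $U_i$ side. The only nontrivial content here is the reciprocity
\[
   \Orb\bigl(g,\mathbf{1}_{K^\flat_0\times K_0}\bigr) = \Orb\bigl(s(g),\mathbf{1}_{K_0}\bigr) \quad\text{for } g\in G_{W_i}(F_0)_\rs,
\]
which is a direct Fubini computation using $K^\flat_0\subset K_0$, $\vol(K^\flat_0)=1$, and the definition $s(g_1,g_2)=g_1^{-1}g_2$. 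Combining this with Lemma \ref{lem G' to S}(ii), the identity $\omega(\gamma)=\omega(s(\gamma))$ from \eqref{sign Sn=}, and the compatibility of matching with $s$ gives the dictionary. Surjectivity of $f'\mapsto\wt{f}'$ from Lemma \ref{lem G' to S}(i) lets us move freely between the two settings.

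For the equivalence of parts (b), merging Lemma \ref{lem G' to S}(ii) into Lemma \ref{lem G' to S}(iii) yields
\[
   \omega(\gamma)\del(\gamma,f') = 2\,\omega\bigl(s(\gamma)\bigr)\del\bigl(s(\gamma),\wt{f}'\bigr) + \omega\bigl(s(\gamma)\bigr)\Orb\bigl(s(\gamma),\widetilde{\phi'_\corr}\bigr),
\]
for $f'$ transferring to $(\mathbf{1}_{K^\flat_0\times K_0},0)$ and $\gamma$ matching $g\in G_{W_1}(F_0)_\rs$. The homogeneous (b)-identity for $f'$ with correction $f'_\corr$ is therefore equivalent to the inhomogeneous (b)-identity for $\wt{f}'$ with correction $\wt{f}'_\corr := \widetilde{f'_\corr} - \widetilde{\phi'_\corr}\in C^\infty_c(S)$. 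The surjectivity in Lemma \ref{lem G' to S}(i), used to lift $\wt{f}'$ and $\wt{f}'_\corr$ back to $G'$ in the opposite direction (setting $f'_\corr := f'_{\corr,0}+\phi'_\corr$ where $f'_{\corr,0}$ lifts $\wt{f}'_\corr$), completes the equivalence.

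For the equivalence of parts (a), assume Conjecture \ref{conj homog}(a) with witness $f'$. Specializing the display above to $f'_\corr = 0$ shows that $\wt{f}'$ transfers to $(\mathbf{1}_{K_0},0)$ and realizes an \emph{instance} of the inhomogeneous (b)-identity with explicit correction $-\widetilde{\phi'_\corr}$. I then invoke (the inhomogeneous analog of) Proposition \ref{cor b to a}: it upgrades this instance of (b) to a witness of (a) by adding to $\wt{f}'$ a function $\wt{g}\in C^\infty_c(S)$ that transfers to $(0,0)$ and whose weighted orbital integral at any matching $\delta$ equals $\tfrac12\Orb\bigl(\delta,\widetilde{\phi'_\corr}\bigr)$, thereby absorbing the error. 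The converse direction is symmetric: a witness of inhomogeneous (a) lifts via Lemma \ref{lem G' to S}(i) to an $f'\in C^\infty_c(G')$ realizing an instance of homogeneous (b) with correction $\phi'_\corr$, to which Proposition \ref{cor b to a} is applied.

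The only delicate step is the (a)-equivalence, which rests on Proposition \ref{cor b to a} and hence on a smooth-transfer-with-derivative construction: one must produce test functions with vanishing ordinary orbital integrals but prescribed $s$-derivatives along the matching locus. Once this input is available, the remaining work is a formal translation via Lemma \ref{lem G' to S} — in particular the (b)-equivalence requires nothing beyond the reciprocity computation and the formula relating $\del(\gamma,f')$ to $\del(s(\gamma),\wt{f}')$.
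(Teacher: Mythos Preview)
Your argument is correct and follows essentially the same route as the paper's proof: both hinge on Lemma~\ref{lem G' to S} to pass between $G'$ and $S$, the identity $\omega(\gamma)=\omega(s(\gamma))$, and the observation (Remark~\ref{remark b to a}, which is the precise form of what you invoke as Proposition~\ref{cor b to a}) that a single instance of the (b)-identity already yields a witness for (a). Your transfer dictionary and the reciprocity $\Orb(g,\mathbf{1}_{K^\flat_0\times K_0})=\Orb(g_1^{-1}g_2,\mathbf{1}_{K_0})$ make explicit a step the paper leaves implicit; one small slip is that the auxiliary $\wt g$ you describe should have prescribed $\del$ (not ``weighted orbital integral''), as your own final paragraph correctly indicates---a function transferring to $(0,0)$ has vanishing $\Orb$ everywhere.
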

\begin{proof}
We have
\begin{equation*}
   \Int(h_1gh_2) = \Int(g)
	\quad\text{for all}\quad
	g\in G_{W_1}(F_0), \quad h_1,h_2\in H_1(F_0).
\end{equation*}
Therefore it suffices to consider elements of the form $(1,g)$ for $g\in U_1(F_0)$. 
By \eqref{sign Sn=}, we have $\omega(\gamma)=\omega(s(\gamma))$ for $\gamma\in G'(F_0)$. 
The equivalence between Conjectures \ref{conj homog}\eqref{conj homog a} and \ref{inhomconj}\eqref{inhomconj a} now follows from Lemma \ref{lem G' to S}. Indeed, if $f'\in C^\infty_c(G')$ satisfies the conclusion of Conjecture \ref{conj homog}\eqref{conj homog a}, then parts \eqref{conj equiv lem ii} and \eqref{conj equiv lem iii} of Lemma \ref{lem G' to S} imply that the function $\wt f'\in C^\infty_c(S)$ satisfies the conclusion of Conjecture \ref{inhomconj}\eqref{inhomconj b}. This implies Conjecture \ref{inhomconj}\eqref{inhomconj a} by Remark \ref{remark b to a} below. Conversely, if $f''\in C^\infty_c(S)$ satisfies the conclusion of Conjecture \ref{inhomconj}\eqref{inhomconj a}, then we may choose $f'\in C^\infty_c(G')$ such that $\wt f'=f''$, and $f'$ will then satisfy the conclusion of Conjecture \ref{conj homog}\eqref{conj homog b}. This implies Conjecture \ref{conj homog}\eqref{conj homog a} again by Remark \ref{remark b to a}. 

One may similarly show the equivalence between Conjectures \ref{conj homog}\eqref{conj homog b} and \ref{inhomconj}\eqref{inhomconj b}, by taking the $f'_{\corr}$ in   Conjecture \ref{inhomconj}\eqref{inhomconj b} to be $\wt{f'_{\corr}}- \wt{\phi'_{\corr}}$ for the $f'_{\corr}$ in Conjecture \ref{conj homog}\eqref{conj homog b} and $\phi'_{\corr}$ in Lemma \ref{lem G' to S}\eqref{conj equiv lem iii}.
\end{proof}

\subsection{Lie algebra setting}\label{lie conj sec}

We would like to formulate a Lie algebra version of Conjecture \ref{inhomconj}. At least the analytic side of the conjecture makes sense. 
Recall the Lie algebra version of the symmetric space $S=S_n$,
\[
   \fks = \bigl\{\,y\in \Res_{F/F_0}\M_n \bigm| y+\ov y=0 \,\bigr\}.
\]
For a regular semi-simple element $y\in \fks(F_0)_\rs$, a function $\phi'\in C^\infty_c(\frak s)$, and a complex parameter $s$, we define the weighted orbital integral
\[
  \Orb(y,\phi',s) := \int_{H'(F_0)}\phi'(h^{-1} yh)\lvert \det h \rvert^s\eta(h) \,dh ,
\]
as well as the special values
\[
   \Orb(y,\phi') := \Orb(y,\phi', 0)
   \quad\text{and}\quad 
   \del(y,\phi') : = \frac d{ds} \Big|_{s=0} \Orb(y, \phi',s) . 
\]
As in \eqref{eqn def inhom}, here we are using the normalized absolute value on $F_0$.  As before, the integral defining $\Orb(y,\phi',s)$ is absolutely convergent, and
\[
   \Orb(h^{-1}yh,\phi') = \eta(h)\Orb(y,\phi')
	\quad\text{for all}\quad
	h \in H'(F_0) = \GL_{n-1}(F_0).
\]
For $i \in \{0,1\}$, set $\fku := \fku_i$ and $H := H_i$.  For $x\in \fku(F_0)_\rs$ and a function $\phi\in C_c^\infty(\fku)$, we similarly introduce the orbital integral
\[
   \Orb(x,\phi) := \int_{H(F_0)}\phi(h^{-1} x h)\, dh .
\]

A \emph{transfer factor} on $\fks(F_0)_\rs$ is a function $\Omega\colon \fks(F_0)_\rs \to \BC^\times$ such that $\Omega(h^{-1} y h) = \eta(h)\Omega(y)$ for all $y \in \fks(F_0)_\rs$ and $h \in H'(F_0)$.  We take the transfer factor $\Omega = \omega$ on $\fks(F_0)_\rs$ given by
\begin{equation}\label{omega lie}
   \omega(y) := \wt\eta\bigl(\det(y^ie)_{i=0,1,\dotsc,n-1}\bigr);
\end{equation}
cf.~\cite[(3.7)]{Z14}. Again we normalize the Haar measure on the symmetric space side such that $H'(O_{F_0})=\GL_{n-1}(O_{F_0})$ has measure $1$.  On the unitary side, we denote by $\fkk_0$ the stabilizer in $\fku_0(F_0)$ of the nearly $\pi$-modular lattice $\Lambda_0 \subset W_0$ defined in \eqref{Lambda_0}. Again we normalize the Haar measure on $H_0(F_0)$ such that the stabilizer $K^\flat_0$ of $\Lambda^\flat_0$ has measure $1$.  The transfer relation for functions extends readily to the Lie algebra setting as follows.

\begin{definition}
A function $\phi' \in C_c^\infty(\fks)$ and a pair of functions $(\phi_0,\phi_1) \in C_c^\infty(\fku_0) \times C_c^\infty(\fku_1)$ are \emph{transfers} of each other, or are \emph{associated}, if for each $i \in \{0,1\}$ and each $x\in \fku_i(F_0)_\rs$,
\[
   \Orb(x,\phi_i)=\omega(y) \Orb(y,\phi')
\]
whenever $y\in \fks(F_0)_\rs $ matches $x$.
\end{definition}

Now we state the Lie algebra version of the conjecture.

\begin{conjecture}[Lie algebra AT conjecture]\label{lieconj}\hfill
\begin{altenumerate}
\renewcommand{\theenumi}{\alph{enumi}}
\item\label{lieconj a} 
There exists a function $\phi'\in C^\infty_c(\frak s)$ which transfers to $(\mathbf{1}_{\fkk_0}, 0) \in C_c^\infty(\fku_0) \times C_c^\infty(\fku_1)$, and which satisfies the following identity for any $y\in \fks(F_0)_\rs$ matched with an element $x\in \fku_1(F_0)_\rs$ for which the  intersection $\Delta\cap\Delta_x$ is an artinian scheme:
\[
 2  \omega(y)\del(y,\phi') = -\lInt (x)\cdot\log q .
\]
Here we set
\[
   \lInt(x) := \length(\Delta\cap\Delta_x) .
\]
\item\label{lieconj b}
For any $\phi'\in C^\infty_c(\frak s)$  which transfers to $(\mathbf{1}_{\fkk_0}, 0) \in C_c^\infty(\fku_0) \times C_c^\infty(\fku_1)$, there exists a function $\phi'_\corr\in C^\infty_c(\fks)$ such that for any 
$y\in \fks(F_0)_\rs$ matched with an element  $x\in \fku_1(F_0)_\rs$ for which the  intersection $\Delta\cap\Delta_x$ is an artinian scheme,
\[
   2 \omega(y)\del(y,\phi') = -\lInt(x) \cdot\log q + \omega(y)\Orb(y,\phi'_\corr) .
\]
\end{altenumerate}
\end{conjecture}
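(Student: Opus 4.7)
The plan is to focus on the case $n=3$ (where the hypothesis that $\Delta\cap\Delta_x$ be artinian is automatic for all $x\in\fku_{1,\rs}(F_0)$, as asserted in Theorem~\ref{Intr-Mainthm}) and to prove part~\eqref{lieconj b} first, then deduce part~\eqref{lieconj a}. Concretely, I would show that the function
\[
   y \longmapsto 2\omega(y)\del(y,\phi') + \lInt(x)\cdot\log q
\]
on $\fks_{\rs,1}(F_0)$ (where $x\in\fku_1(F_0)_\rs$ matches $y$) is an orbital integral function, i.e.\ of the form $\omega(y)\Orb(y,\phi'_\corr)$. Following \cite{Z14}, being an orbital integral function can be verified locally on the categorical quotient $\fks/H'$, and by \eqref{cat quot isom} this quotient is an explicit product of affine lines and copies of $\fks_1$; so the problem reduces to a local analysis near each of its points. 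Once \eqref{lieconj b} is established, \eqref{lieconj a} follows by absorbing $\phi'_\corr$ into a modified choice of $\phi'$, in the spirit of Proposition~\ref{cor b to a}.

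For the geometric half, the goal is to evaluate $\lInt(x)$ in closed form. I would first use the Serre tensor construction of \S\ref{serre const} together with the isomorphism promised in Proposition~\ref{Serre isom}, which identifies a connected component of $\CN_2$ with the Lubin--Tate deformation space $\CM$, to put $\delta_\CN\colon\CN_2\hookrightarrow\CN_3$ into a more tractable form. The exceptional isomorphisms $\SU_2\simeq\SL_2$ and $\Sp_2\simeq\SL_2$ specific to $n=3$ trivialize the relevant hermitian and symplectic data on $\BV_3$ and on the parahoric $K_0^\flat$. Following the strategy of \cite{KR-U1} but transposed to the ramified setting, and exploiting Zink's simplification referenced in the introduction, I would then express $\Delta\cap\Delta_x$ as a deformation moduli of simultaneous special quasi-endomorphisms of $\BE$, and apply the Gross--Keating formulas on quasi-canonical liftings to extract a closed formula for $\lInt(x)$ in terms of the invariants $\tr A$, $\mathbf{c}A^j\mathbf{b}$, $d$ from \eqref{x block decomp}--\eqref{cat quot isom}.

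For the analytic half, I would produce a germ expansion of $\omega(y)\del(y,\phi')$ around each point of $\fks/H'$ which is explicit enough to determine this quantity modulo an orbital integral function that is locally given in the analytic parameters of the quotient. The harmonic analysis tools developed in \cite{Z12b}, which target exactly the case $n=3$, form the natural starting point; they would need to be completed and rendered uniform across the stratification of the quotient. Comparing the resulting expansion with the explicit geometric formula from the previous step should make the cancellation visible point by point on $\fks/H'$, so that the expression $2\omega(y)\del(y,\phi')+\lInt(x)\log q$ is manifestly a local orbital integral function; a partition-of-unity argument on the categorical quotient then assembles the local $\phi'_\corr$'s into a single $\phi'_\corr\in C_c^\infty(\fks)$.

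The main obstacle is the analytic side: one must push the germ expansion of \cite{Z12b} to sufficient explicitness at each stratum of $\fks/H'$ (including the boundary strata where the discriminant $\Delta(y)$ degenerates), and match it precisely against the Gross--Keating output on the geometric side. The dependence on $n=3$ enters both through the exceptional group-theoretic isomorphisms used to compute $\lInt(x)$ and through the availability of \cite{Z12b}; for $n>3$ one would, as the authors note, additionally confront degenerate intersections that obstruct even the definition of $\lInt(x)$, which is why the conjecture carries the artinian hypothesis and why this approach does not obviously generalize.
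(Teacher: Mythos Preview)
Your proposal is correct and follows essentially the same route as the paper's proof of Theorem~\ref{thm lie} (the $n=3$ case of the conjecture): prove part~\eqref{lieconj b} by showing the sum $2\omega(y)\del(y,\phi')+\lInt(x)\log q$ is an orbital integral function via the local criterion of \cite{Z14}, compute $\lInt(x)$ through the Serre isomorphism and Keating's quasi-canonical formulas, and match against a germ expansion extending \cite{Z12b}; then deduce~\eqref{lieconj a} from~\eqref{lieconj b} as in Proposition~\ref{cor b to a}. Two organizational devices the paper uses that you leave implicit are worth flagging: first, a preliminary reduction to \emph{reduced} elements (those with $\tr A=d=0$) on both sides, which strips off two trivial $\fks_1$-factors of the quotient and is where the Cayley-type manipulations and the identity $\lInt(x)=\lInt(x_\red)$ enter; second, a Leibniz-rule splitting $\del=\del_1+\del_2$ of the derivative along the germ expansion, where $\del_2$ is shown to be an orbital integral function for general reasons (Theorem~\ref{thm error term}) and the genuine comparison with $\lInt$ happens only for $\del_1$, which one checks is locally \emph{constant} after adding $\lInt\cdot\log q$.
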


\begin{remark}\label{lie conj artinian remark}
It is not currently clear to us how to formulate a conjecture without the hypothesis that $\Delta\cap \Delta_x$ is  artinian; see our comments right before Remark \ref{intersection conjugation invar}. 
\end{remark}

\subsection{Relation between parts (a) and (b) of the conjectures}\label{sec relate a b}
In this subsection we explain how parts (a) and (b) in each of Conjectures \ref{conj homog},  \ref{inhomconj}, and \ref{lieconj} are related.

The group $H'(F_0) = \GL_{n-1}(F_0)$ acts on $S(F_0)$ and hence on the space $C^\infty_c(S)$.  For a function $f'\in C^\infty_c(S)$ and an element $h\in H'(F_0)$, we denote by $\tensor[^h]{f}{^\prime} \in C_c^\infty(S)$ the function defined by 
\[
   \tensor*[^h]{f}{^\prime}\colon \gamma \mapsto f'(h^{-1} \gamma h).
\]
We also denote by $\tensor*[^{\eta(h)h-1}]{f}{^\prime} \in C_c^\infty(S)$ the function
\[
   \tensor*[^{\eta(h)h-1}]{f}{^\prime}\colon \gamma\mapsto \eta(h)f'(h^{-1}\gamma h)-f'(\gamma).
\]
We use analogous notation in the Lie algebra setting, with $\fks$ in place of $S$.

\begin{lemma}
\begin{altenumerate}
\item\label{twisted del S}
For any $f' \in C_c^\infty(S)$, $\gamma \in S(F_0)_\rs$, and $h \in H'(F_0)$,
\[
   \Orb\bigl(\gamma,\tensor*[^{\eta(h)h-1}]{f}{^\prime}\bigr) = 0
	\quad\text{and}\quad
   \del\bigl(\gamma,\tensor*[^{\eta(h)h-1}]{f}{^\prime}\bigr) = \log\lvert \det h \rvert \Orb(\gamma, f').
\]
\item\label{twisted del s}
For any $\phi'\in C^\infty_c(\fks)$, $\gamma \in S(F_0)_\rs$, and $h \in H'(F_0)$,
\[
   \Orb\bigl(\gamma,\tensor[^{\eta(h)h-1}]{\phi}{^\prime}\bigr) = 0
	\quad\text{and}\quad
   \del\bigl(\gamma,\tensor[^{\eta(h)h-1}]{\phi}{^\prime}\bigr) = \log\lvert \det h \rvert \Orb(\gamma, \phi').
\]
\end{altenumerate}
\label{twisted del}
\end{lemma}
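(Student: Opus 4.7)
\medskip

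The plan is to prove both identities by a single change-of-variable calculation inside the weighted orbital integral defining $\Orb(\gamma,-,s)$, and then specialize (respectively differentiate) at $s=0$.

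Unpacking the definition, $\tensor*[^{\eta(h)h-1}]{f}{^\prime}(\gamma') = \eta(h) f'(h^{-1}\gamma' h) - f'(\gamma')$, so substituting $\gamma' = h_1^{-1}\gamma h_1$ and integrating against $\lvert\det h_1\rvert^s\eta(h_1)\,dh_1$ over $H'(F_0)$ yields
\[
   \Orb\bigl(\gamma,\tensor*[^{\eta(h)h-1}]{f}{^\prime},s\bigr)
   = \eta(h)\int_{H'(F_0)} f'\bigl(h^{-1}h_1^{-1}\gamma h_1 h\bigr)\lvert\det h_1\rvert^s\eta(h_1)\,dh_1 - \Orb(\gamma,f',s).
\]
In the first integral I would make the change of variable $h_2 := h_1 h$, using the invariance of Haar measure together with the multiplicativity of $\det$ and $\eta$, and the fact that $\eta(h)^2 = 1$. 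This will collapse the displayed expression to
\[
   \Orb\bigl(\gamma,\tensor*[^{\eta(h)h-1}]{f}{^\prime},s\bigr) = \bigl(\lvert\det h\rvert^{-s}-1\bigr)\Orb(\gamma,f',s).
\]

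From here both conclusions of part \eqref{twisted del S} are immediate: setting $s=0$ gives the first identity, and the product rule applied at $s=0$ gives the second, since the factor $\lvert\det h\rvert^{-s}-1$ vanishes at $s=0$ while its derivative is $-\log\lvert\det h\rvert$ (matching the stated identity up to the sign conventions of \eqref{eqn def inhom}).

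For part \eqref{twisted del s}, the argument is literally the same: the weighted orbital integral on $\fks$ has the same formal shape as on $S$, and the only properties of the ambient space used are the $H'(F_0)$-equivariance of the orbit map and the multiplicativity of $\lvert\det\cdot\rvert$ and $\eta$ on $H'(F_0)$. Thus no step is a real obstacle; the only point requiring a moment of care is bookkeeping the two cancellations $\eta(h)\eta(h^{-1})=1$ and $|\det h_1|^s = |\det h_2|^s |\det h|^{-s}$ produced by the substitution $h_2 = h_1 h$.
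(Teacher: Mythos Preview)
Your proof is correct and follows exactly the same approach as the paper: a single change of variable in the weighted orbital integral to obtain a factorization $\Orb(\gamma,\tensor*[^{\eta(h)h-1}]{f}{^\prime},s)=(\lvert\det h\rvert^{\pm s}-1)\Orb(\gamma,f',s)$, then evaluate and differentiate at $s=0$.

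One remark on the sign you flag: your substitution $h_2=h_1h$ genuinely yields the factor $(\lvert\det h\rvert^{-s}-1)$, whereas the paper writes $(\lvert\det h\rvert^{s}-1)$; your version is the correct one, so the second identity in the lemma should carry a minus sign, $\del(\gamma,\tensor*[^{\eta(h)h-1}]{f}{^\prime})=-\log\lvert\det h\rvert\,\Orb(\gamma,f')$. This is a harmless slip in the paper---every later use of the lemma (Lemma~\ref{lem btoa}, Lemma~\ref{lem uniqueness}, Corollary~\ref{cor density}) is insensitive to this sign---so your hedging about ``sign conventions'' is in fact pointing at a genuine typo rather than anything in your own argument.
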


\begin{proof}
We prove \eqref{twisted del S}; the proof of \eqref{twisted del s} is analogous. The integral $\Orb(\gamma, f',s)$ transforms under the action of $H'(F_0)$ on $f'$ by the character $\eta_s\circ\det$, where
\[
   \eta_s(a) := \eta(a)|a|^s,
	\quad
	a \in F_0^\times,
\] 
i.e.
\begin{equation*}
   \Orb\bigl(\gamma, \tensor*[^h]{f}{^\prime}, s\bigr) = \eta_s(\det h) \Orb(\gamma, f', s).
\end{equation*}
It follows that
\begin{equation*}
   \Orb\bigl(\gamma, \tensor*[^{\eta(h)h-1}]{f}{^\prime}, s\bigr) = (\lvert \det h \rvert^s-1)\Orb(\gamma, f', s).
\end{equation*} 
The proves the first desired identity, and the second follows by differentiating.
\end{proof}

\begin{lemma}
\begin{altenumerate}
\item\label{exists wt f'} 
For any $f'\in C^\infty_c(S)$, there exists $ f'^\sharp\in C^\infty_c(S)$ such that 
\[
   \Orb(\gamma,f') = \del\bigl(\gamma,f'^\sharp\bigr) \quad  \text{for all}\quad \gamma\in S(F_0)_\rs .
\]
Furthermore, we may choose $f'^\sharp$ such that it transfers to $(0, 0) \in C_c^\infty(U_0) \times C_c^\infty(U_1) $. 
\item\label{exists wt phi'}
For any $\phi'\in C^\infty_c(\fks)$, there exists $ \phi'^\sharp\in C^\infty_c(\fks)$ such that 
\[
   \Orb(y,\phi') = \del\bigl(y, \phi'^\sharp\bigr) \quad\text{for all}\quad y\in \fks(F_0)_\rs .
\]
Furthermore, we may choose $\phi'^\sharp$ such that it transfers to $(0, 0) \in C_c^\infty(\fku_0) \times C_c^\infty(\fku_1)$.
\end{altenumerate}
\label{lem btoa}
\end{lemma}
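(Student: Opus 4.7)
The plan is to construct $f'^\sharp$ and $\phi'^\sharp$ explicitly by twisting the given functions using a suitable element of $H'(F_0)$ and then normalizing by a scalar. The entire argument is essentially a one-line consequence of Lemma \ref{twisted del}.

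For part \eqref{exists wt f'}, first I would fix an element $h_0\in H'(F_0) = \GL_{n-1}(F_0)$ with $\lvert\det h_0\rvert \neq 1$: for example $h_0 := \diag(\varpi, 1, \dotsc, 1)$, so that $c := \log\lvert\det h_0\rvert = -\log q \neq 0$. With the notation introduced just before Lemma \ref{twisted del}, set
\[
   f'^\sharp := c^{-1} \cdot \tensor*[^{\eta(h_0)h_0 - 1}]{f}{^\prime} \in C_c^\infty(S).
\]
This is manifestly a compactly supported locally constant function on $S(F_0)$, being a scalar multiple of the difference $\eta(h_0)\tensor*[^{h_0}]{f}{^\prime} - f'$. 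Lemma \ref{twisted del}\eqref{twisted del S} then yields, for every $\gamma\in S(F_0)_\rs$,
\[
   \Orb(\gamma, f'^\sharp) = 0
   \quad\text{and}\quad
   \del(\gamma, f'^\sharp) = \Orb(\gamma, f'),
\]
which is the desired identity. The vanishing of all regular semi-simple orbital integrals of $f'^\sharp$ says exactly, by the defining transfer relation, that $f'^\sharp$ is associated to the pair $(0, 0) \in C_c^\infty(U_0)\times C_c^\infty(U_1)$, proving both clauses of \eqref{exists wt f'}.

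Part \eqref{exists wt phi'} is entirely parallel: with the same choice of $h_0$, I would set $\phi'^\sharp := c^{-1}\tensor*[^{\eta(h_0)h_0 - 1}]{\phi}{^\prime} \in C_c^\infty(\fks)$ and invoke Lemma \ref{twisted del}\eqref{twisted del s} in place of \eqref{twisted del S}, together with the definition of transfer in the Lie algebra setting. There is no step here that poses a genuine obstacle; the only content is to exhibit a single $h_0 \in \GL_{n-1}(F_0)$ with $\log\lvert\det h_0\rvert \neq 0$, which is trivial.
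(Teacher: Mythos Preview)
Your proof is correct and is essentially identical to the paper's own argument: both choose an $h\in H'(F_0)$ with $\lvert\det h\rvert\neq 1$, set $f'^\sharp = (\log\lvert\det h\rvert)^{-1}\,\tensor*[^{\eta(h)h-1}]{f}{^\prime}$, and appeal to Lemma~\ref{twisted del}. The only difference is that you spell out an explicit choice of $h_0$ and the reason for the transfer to $(0,0)$, whereas the paper leaves these implicit.
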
 

\begin{proof} 
Again we just prove \eqref{exists wt f'}.  Choose any $h \in H'(F_0)$ with $\det h$ a non-unit, and set
\[
  f'^\sharp := \frac{\tensor*[^{\eta(h)h-1}]{f}{^\prime}}{\log\lvert \det h \rvert}.
\]
Then $f'^\sharp$ has all desired properties by Lemma \ref{twisted del}.
\end{proof}

\begin{proposition}\label{cor b to a}
Part (b) implies part (a) in each of Conjectures \ref{conj homog}, \ref{inhomconj}, and \ref{lieconj}.
\end{proposition}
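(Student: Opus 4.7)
The plan is to deduce part (a) from part (b) in the inhomogeneous Conjecture \ref{inhomconj} directly, and then transfer this implication to Conjecture \ref{conj homog} via Lemma \ref{hom=inhom} and to Conjecture \ref{lieconj} by the same argument carried out on the Lie algebra side. The core idea is to correct a given $f'$ by subtracting off an explicit function that absorbs the error term $\omega(\gamma)\Orb(\gamma,f'_\corr)$.

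Here is how I would carry it out for Conjecture \ref{inhomconj}. Assume part (b) holds, and let $f' \in C_c^\infty(S)$ be any function transferring to $(\mathbf{1}_{K_0},0)$ (such an $f'$ exists by the smooth transfer conjecture of Jacquet--Rallis, proved in \cite{Z14}, so part (a) is not vacuous). Part (b) provides a correction function $f'_\corr \in C_c^\infty(S)$ with
\[
   2\omega(\gamma)\del(\gamma,f') = -\Int(g)\log q + \omega(\gamma)\Orb(\gamma,f'_\corr)
\]
for all matched pairs $\gamma,g$. Apply Lemma \ref{lem btoa}\eqref{exists wt f'} to $f'_\corr$ to obtain $(f'_\corr)^\sharp \in C_c^\infty(S)$ with the twin properties that $\del(\gamma,(f'_\corr)^\sharp) = \Orb(\gamma,f'_\corr)$ for every $\gamma \in S(F_0)_\rs$, and that $(f'_\corr)^\sharp$ transfers to $(0,0)$.

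Now set
\[
   f'_\new := f' - \tfrac{1}{2}(f'_\corr)^\sharp.
\]
Because transfer is linear and $(f'_\corr)^\sharp$ transfers to $(0,0)$, the function $f'_\new$ still transfers to $(\mathbf{1}_{K_0},0)$. Computing,
\[
   2\omega(\gamma)\del(\gamma,f'_\new)
   = 2\omega(\gamma)\del(\gamma,f') - \omega(\gamma)\del\bigl(\gamma,(f'_\corr)^\sharp\bigr)
   = -\Int(g)\log q + \omega(\gamma)\Orb(\gamma,f'_\corr) - \omega(\gamma)\Orb(\gamma,f'_\corr),
\]
which equals $-\Int(g)\log q$. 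Thus $f'_\new$ witnesses part (a) of Conjecture \ref{inhomconj}. The analogous argument, using Lemma \ref{lem btoa}\eqref{exists wt phi'} in place of \eqref{exists wt f'}, gives the implication in the Lie algebra setting of Conjecture \ref{lieconj}; the artinian hypothesis on $\Delta \cap \Delta_x$ restricts the set of $x$ over which the identity is asserted, but plays no role in the manipulation of functions. Finally, Lemma \ref{hom=inhom} transports the implication from Conjecture \ref{inhomconj} to Conjecture \ref{conj homog}.

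There is really no obstacle here: the entire argument is formal once Lemma \ref{lem btoa} is in hand, since that lemma supplies the key mechanism of realizing any orbital integral function as the derivative of a weighted orbital integral function that is itself \emph{pure} in the sense of transferring to zero on the unitary side. The only point to watch is that $f'_\new$ must still have the prescribed transfer $(\mathbf{1}_{K_0},0)$, which is precisely why the ``furthermore'' clause of Lemma \ref{lem btoa} (that $(f'_\corr)^\sharp$ transfers to $(0,0)$) is essential.
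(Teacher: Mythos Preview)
Your proof is correct and follows essentially the same approach as the paper: start from a function with the required transfer (whose existence is guaranteed by \cite{Z14}), apply part (b) to obtain $f'_\corr$, and then use Lemma \ref{lem btoa} to convert the orbital-integral correction into a $\del$-term that can be subtracted off without disturbing the transfer. Your treatment of the homogeneous case via Lemma \ref{hom=inhom} is a clean shortcut; the paper instead invokes Lemma \ref{lem G' to S} directly, but this amounts to the same thing.
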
 

\begin{proof}
By the proof of the ST conjecture in \cite[Th.~2.6]{Z14} (resp.~its Lie algebra analog in \S4.5 of loc.~cit.), there exists a function in $C_c^\infty(S)$ transferring to $(\mathbf{1}_{K_0},0)$ (resp.\ in $C_c^\infty(\fks)$ transferring to $(\mathbf{1}_{\fkk_0},0)$).
Of course these functions needn't satisfy the conclusion of part (a) in Conjectures \ref{inhomconj} and \ref{lieconj}, respectively, but assuming part (b) in these conjectures, we may use Lemma \ref{lem btoa} to modify them into functions that do.  The case of  Conjecture \ref{conj homog} is handled in a similar way by using Lemma \ref{lem G' to S} to relate functions on $G'(F_0)$ to functions on $S(F_0)$, again using Lemma \ref{lem btoa}\eqref{exists wt f'} to control the needed modifications.
\end{proof}

\begin{remark}\label{remark b to a}
The proof shows that the existence of any one $f'\in C_c^\infty(S)$ satisfying the conclusion of part \eqref{inhomconj b} in Conjecture \ref{inhomconj} already implies part \eqref{inhomconj a}. The same remark applies to the other two conjectures. 
\end{remark}

For the converse direction from (a) to (b), we need the following. 

\begin{conjecture}[Density principle]
\label{conj density}
The orbital integrals $\Orb(\gamma,\,\cdot\,)$ for all regular semi-simple $\gamma$ span a weakly dense subspace in the space of $(H'(F_0),\eta)$-invariant distributions on $S(F_0)$. The same holds for $\fks(F_0)$. 
\end{conjecture}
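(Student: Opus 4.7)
My plan is to reduce the group statement to the Lie algebra statement by a Cayley transform, in the spirit of the paper's own reduction in Section \ref{reduction to LA}, and then attack the Lie algebra version by a descent argument modeled on Harish--Chandra's proof of density in the ordinary (untwisted) reductive $p$-adic setting. Concretely, given an $(H'(F_0), \eta)$-invariant distribution $T$ on $\fks(F_0)$, I want to show that if $f\in C_c^\infty(\fks)$ has $\Orb(y,f) = 0$ for every $y\in \fks(F_0)_\rs$, then $T(f) = 0$; by Hahn--Banach this is equivalent to the stated weak density. The regular semi-simple locus $\fks_\rs$ is $H'$-invariant, open, and dense in \fks, so the problem is a statement about the behavior of $T$ along the closed complement \fks\,\textbackslash\,\fks_\rs, stratified according to the categorical quotient $\fks/H'$ made explicit in \eqref{cat quot isom}.

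\textbf{Descent to centralizers.} The main tool would be Harish--Chandra's descent: around a semi-simple (but possibly not regular) point $y_0 \in \fks(F_0)$, a neighborhood of $y_0$ in \fks is $H'$-equivariantly isomorphic, after a slice, to a neighborhood of $0$ in a smaller symmetric pair $(\fks_{y_0}, H'_{y_0})$, where $\fks_{y_0}$ is the ``centralizer symmetric space'' and $H'_{y_0}$ is the stabilizer of $y_0$ in $H'$. Under this slicing, the invariant distribution $T$ restricts to an $(H'_{y_0}, \eta|_{H'_{y_0}})$-invariant distribution on $\fks_{y_0}$, and regular semi-simple orbital integrals on \fks pull back to regular semi-simple orbital integrals on $\fks_{y_0}$. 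This reduces the problem, by induction on the semi-simple rank, to the case $y_0 = 0$, i.e.~to $(H', \eta)$-invariant distributions supported in an arbitrarily small neighborhood of the nilpotent cone $\CN \subset \fks(F_0)$. Here one uses the dilation action $y \mapsto t \cdot y$ ($t\in F_0^\times$) to further reduce to distributions actually supported on \CN. One then wants to invoke a Shalika-type germ expansion to expand $T|_\CN$ as a finite sum of nilpotent orbital integrals, and to argue that each nilpotent orbital integral is itself a weak limit of regular semi-simple ones --- in analogy with Harish--Chandra's homogeneity results and the Jacquet--Rallis germ expansion of \cite{Z12b} (and its extension in Part~4 of the present paper).

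\textbf{The main obstacle.} The critical and delicate ingredient is the base case: controlling $(H', \eta)$-invariant distributions supported on the nilpotent cone \CN of \fks. Two features make this harder than the classical reductive case. First, the character $\eta$ is nontrivial, so one needs a \emph{signed} version of Howe's finiteness conjecture for the space $\CJ(\CN)^{H', \eta}$ of $(H', \eta)$-equivariant invariant distributions supported on \CN; in particular one must check that this space remains finite-dimensional modulo fixed compact-support constraints, which is not automatic from the known cases. Second, the $H'$-orbit structure of \CN is more intricate than the $G$-orbit structure of the nilpotent cone in a reductive Lie algebra, and one must verify that each nilpotent $H'$-orbital integral does in fact lie in the weak closure of the regular semi-simple ones; this would presumably follow from a careful analysis of the asymptotics of $\Orb(y,\,\cdot\,)$ as $y$ degenerates to a nilpotent element, matching the leading singular term of the germ expansion. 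The symmetric-space case is established in low rank in \cite{Z12b} (and this is what we use in the body of the paper for $n=3$), which is why Conjecture \ref{conj density} is formulated but left open: a general proof would require extending those germ expansions to all $n$, plus a signed Howe finiteness. Once these two ingredients are in hand, the descent and induction above should yield the result for both \fks and, via the Cayley transform, $S$.
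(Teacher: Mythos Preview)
The statement you are addressing is a \emph{conjecture} in the paper, not a theorem: the paper does not prove it and offers no argument to compare against. What the paper does record (Remarks~\ref{density remarks}) is that the $S(F_0)$ and $\fks(F_0)$ versions are equivalent, and that the case $n=3$ is known by \cite[Th.~1.1]{Z12b}; this $n=3$ case is restated as Theorem~\ref{densprince} and used to derive Corollary~\ref{cor density}, but the paper gives no independent proof even there and explicitly leaves $n\geq 4$ open.

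Your proposal is therefore not a proof but a research program, and you say as much in your final paragraph. The reduction from $S$ to $\fks$ via the Cayley transform matches what the paper asserts in Remarks~\ref{density remarks}(ii), so that step is fine. The descent-to-centralizers strategy is the natural Harish--Chandra template, and you have correctly isolated the two genuine missing ingredients: a twisted analogue of Howe finiteness for $(H',\eta)$-invariant distributions supported on the nilpotent cone of $\fks$, and germ expansions for general $n$ sharp enough to exhibit each nilpotent orbital integral as a weak limit of regular semi-simple ones. Neither is available in the literature beyond $n=3$, which is precisely why the paper states this as a conjecture. In short, there is no gap to flag in the sense of an error---your outline is coherent---but it is not a proof, and the paper does not purport to have one either.
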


\begin{remarks}
\begin{altenumerate}
\renewcommand{\theenumi}{\roman{enumi}}
\item An equivalent statement to the above density principle for $S(F_0)$ is as follows: if a function $f'\in  C^\infty_c(S)$ has vanishing orbital integrals $\Orb(\gamma,f')=0$ for all regular 
semi-simple $\gamma\in S(F_0)$, then it lies in the subspace of $C^\infty_c(S)$ spanned by functions of the form $\tensor*[^{\eta(h)h-1}]{f}{^{\prime\prime}}$ for $f'' \in  C^\infty_c(S)$  and $h\in H'(F_0)$.  (Of course the orbital integrals of $\tensor[^{\eta(h)h-1}]{f}{^{\prime\prime}}$ vanish by Lemma \ref{twisted del}.)
\item It is easy to see that the density principles for $S(F_0)$ and $\fks(F_0)$ are equivalent. 
\item The density principle holds for $\fks(F_0)$ when $n=3$ by \cite[Th.~1.1]{Z12b}; cf.~Theorem \ref{densprince} below. It is still open for $n\geq 4$.
\end{altenumerate}
\label{density remarks}
\end{remarks} 

\begin{lemma}\label{lem a to b}
Assume that Conjecture \ref{conj density} holds. Then part (a) implies part (b) in each of Conjectures \ref{conj homog},  \ref{inhomconj}, and \ref{lieconj}.
\end{lemma}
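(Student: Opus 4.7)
The plan is to handle all three conjectures in one stroke by reducing to the inhomogeneous case, where the density principle applies directly. First, I would dispatch Conjecture \ref{conj homog} by invoking Lemma \ref{hom=inhom}: that lemma asserts the equivalences (a)$\Leftrightarrow$(a) and (b)$\Leftrightarrow$(b) between the homogeneous and inhomogeneous versions, so once the inhomogeneous case is proved the homogeneous one follows automatically. The Lie algebra case (Conjecture \ref{lieconj}) runs along exactly the same lines as the inhomogeneous case, mutatis mutandis—substituting $\fks$, $\fkk_0$, $(\mathbf{1}_{\fkk_0},0)$, and Lemma \ref{twisted del}\eqref{twisted del s} for their group analogues—so I concentrate on Conjecture \ref{inhomconj}.

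The core step is to control any two transfers of $(\mathbf{1}_{K_0},0)$ by their difference. Fix a function $f'_0$ satisfying the conclusion of part (a), and let $f'\in C_c^\infty(S)$ be an arbitrary function transferring to $(\mathbf{1}_{K_0},0)$. Then $f'-f'_0$ transfers to $(0,0)$, so for every $\gamma\in S(F_0)_\rs$ matching some $g\in U_i(F_0)_\rs$ we have $\omega(\gamma)\Orb(\gamma,f'-f'_0)=\Orb(g,0)=0$; since $\omega$ is nowhere vanishing, $\Orb(\,\cdot\,,f'-f'_0)$ vanishes identically on $S(F_0)_\rs$. The density principle, in the equivalent form recorded in Remark \ref{density remarks}(i), then permits writing
\[
   f'-f'_0 \;=\; \sum_j c_j\, \tensor*[^{\eta(h_j)h_j-1}]{f_j}{^{\prime\prime}}
\]
as a finite sum with $c_j\in\BC$, $h_j\in H'(F_0)$, and $f''_j\in C_c^\infty(S)$.

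Next I would apply Lemma \ref{twisted del}\eqref{twisted del S} term by term: each summand has vanishing orbital integral and weighted orbital integral equal to $c_j\log\lvert\det h_j\rvert\,\Orb(\gamma,f''_j)$. Summing yields
\[
   \del(\gamma,f') - \del(\gamma,f'_0) \;=\; \Orb(\gamma,F'), \qquad F' := \sum_j c_j \log\lvert\det h_j\rvert\, f''_j \in C_c^\infty(S),
\]
for every $\gamma\in S(F_0)_\rs$. Combining this with the identity $2\omega(\gamma)\del(\gamma,f'_0) = -\Int(g)\log q$ supplied by part (a) and setting $f'_\corr := 2F'$ delivers the conclusion of part (b).

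Since the density principle is hypothesized outright, no serious obstacle remains; the only non-formal input is the passage from the vanishing of orbital integrals of $f'-f'_0$ to its explicit presentation as a sum of twisted functions, which is precisely what Conjecture \ref{conj density} provides. In that sense the lemma is essentially a bookkeeping consequence of the density principle together with Lemma \ref{twisted del}.
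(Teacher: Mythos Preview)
Your proof is correct and follows essentially the same approach as the paper: reduce the homogeneous case to the inhomogeneous one via Lemma \ref{hom=inhom}, observe that the difference $f'-f'_0$ has vanishing orbital integrals at all regular semi-simple elements, invoke the density principle to write it as a combination of functions of the form $\tensor*[^{\eta(h)h-1}]{f}{^{\prime\prime}}$, and then apply Lemma \ref{twisted del}\eqref{twisted del S} to express $\del(\gamma,f'-f'_0)$ as an orbital integral. The paper's proof is slightly more terse but identical in substance.
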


\begin{proof}
We show the implication \eqref{inhomconj a}$\implies$\eqref{inhomconj b} in Conjecture \ref{inhomconj}. The analogous implication for Conjecture \ref{lieconj} can be proved in a similar way, and that for Conjecture \ref{conj homog} follows by Lemma \ref{hom=inhom}. Suppose that $f_0'\in C_c^\infty(S)$ satisfies the conclusion of \eqref{inhomconj a}, and let $f'\in C_c^\infty(S)$ be any function transferring  to $(\mathbf{1}_{ K_0}, 0)$. Then the function $f'-f'_0$ has vanishing orbital integrals at all $\gamma\in S(F_0)_\rs$.  By Conjecture \ref{conj density}, we may assume that  $f'-f_0'$ is a sum of functions of the form $\tensor*[^{\eta(h)h-1}]{f}{^{\prime\prime}}$ for $f''\in  C^\infty_c(S)$  and $h\in H'(F_0)$. By Lemma \ref{twisted del}\eqref{twisted del S} (applied to $f''$), it follows that the function  $\del(\gamma, f'-f'_0) $ is of the form $\Orb(\gamma, f_\corr')$ for some $f_\corr'\in C_c^\infty(S)$, and hence \eqref{inhomconj b} holds. This completes the proof.
\end{proof}

\subsection{Uniqueness of the function in part (a) of the conjectures}
In this subsection we explain the extent to which the function $f'$ in Conjecture \ref{inhomconj}\eqref{inhomconj a} is unique, assuming the density principle (Conjecture \ref{conj density}).  An analogous statement holds for Conjecture \ref{conj homog}\eqref{conj homog a}.  The statement does not carry over to Conjecture \ref{lieconj}\eqref{lieconj a} as we have formulated it, owing to the additional  hypothesis (artinian intersection) that we impose, cf.~Remark \ref{lie conj artinian remark}.

Let $\epsilon := \eta(\det W_0^\flat) \in \{\pm 1\}.$ Then $-\epsilon=\eta(\det W_1^\flat)$. Here we assume as usual that the norm of the special vector $u_i\in W_i$ is $1$. Otherwise we may modify the definition of $\epsilon$ accordingly. 

We consider the action of the transpose on $S(F_0)$ and on $C_c^\infty(S)$. For $f'\in C_c^\infty(S)$, we define the function $f'^t$ by $f'^t(\gamma) := f'(\tensor*[^t]{\gamma}{})$ for $\gamma\in S(F_0)$. Clearly, each $f'$ can be written in a unique way as 
\[
   f' = f'_+ + f'_-
\]
where $f'^t_\pm=\pm \epsilon f'_\pm$; explicitly,
\[
   f'_+ = \frac 1 2(f' + \epsilon f'^t)
	\quad\text{and}\quad
	f'_- = \frac 1 2(f' - \epsilon f'^t).
\]
Now recall the decomposition $S(F_0)_\rs = S_{\rs,0} \amalg S_{\rs,1}$ from \eqref{Srs decomp}.

\begin{lemma}\label{f+ f- lem}
For any $f' \in C_c^\infty(S)$,
\[
   \Orb(\gamma, f'_+) = 0
	\quad\text{for all}\quad
	\gamma \in S_{\rs,1}
	\quad\text{and}\quad
	\Orb(\gamma, f'_-) = 0
	\quad\text{for all}\quad
	\gamma \in S_{\rs,0}.
\]
\end{lemma}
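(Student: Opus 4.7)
The plan is to reduce both vanishing statements to a single sign computation controlled by the discriminant. Begin by relating the orbital integrals of $f'^t$ and $f'$ at transpose-related arguments. The substitution $h \mapsto \tensor*[^t]{h}{^{-1}}$ in the defining integral for $\Orb(\gamma,f'^t)$, together with the unimodularity of $\GL_{n-1}(F_0)$ and $\eta(h^{-1}) = \eta(h)$, yields
\[
   \Orb(\gamma, f'^t) = \Orb(\tensor*[^t]{\gamma}{}, f'),
\]
whence
\[
   2\Orb(\gamma, f'_\pm) = \Orb(\gamma, f') \pm \epsilon\, \Orb(\tensor*[^t]{\gamma}{}, f').
\]

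Next I would show that $\gamma$ and $\tensor*[^t]{\gamma}{}$ lie in the same $H'(F_0)$-orbit. By the linear algebra characterization in \S\ref{ss:LAchar}, two regular semi-simple elements of $S(F_0)$ are $\GL_{n-1}(F_0)$-conjugate iff they share numerical invariants $\charac_\gamma(T)$ and $\tensor*[^t]{e}{}\gamma^i e$ ($1 \le i \le n-1$). The transpose preserves the characteristic polynomial, and since $\tensor*[^t]{e}{}\tensor*[^t]{\gamma}{^i}e$ is a $1\times 1$ matrix it equals its own transpose $\tensor*[^t]{e}{}\gamma^i e$. Regularity forces trivial stabilizer, so there is a unique $h_\gamma \in H'(F_0)$ with $\tensor*[^t]{\gamma}{} = h_\gamma^{-1}\gamma\, h_\gamma$, and the transformation rule for orbital integrals gives
\[
   \Orb(\tensor*[^t]{\gamma}{}, f') = \eta(\det h_\gamma)\, \Orb(\gamma, f'),
\]
so that $2\Orb(\gamma, f'_\pm) = (1 \pm \epsilon\, \eta(\det h_\gamma))\, \Orb(\gamma, f')$. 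The lemma therefore reduces to the identity $\eta(\det h_\gamma) = (-1)^i \epsilon$ whenever $\gamma \in S_{\rs,i}$.

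The heart of the argument—and the main obstacle—is this sign computation. The cleanest route uses the fact that $\omega$ is a transfer factor: $\omega(\tensor*[^t]{\gamma}{}) = \omega(h_\gamma^{-1}\gamma\, h_\gamma) = \eta(\det h_\gamma)\omega(\gamma)$, so $\eta(\det h_\gamma) = \omega(\tensor*[^t]{\gamma}{})/\omega(\gamma)$. From the explicit formula \eqref{sign Sn}, and the identity
\[
   \det(\gamma^i e)_{i=0}^{n-1} \cdot \det(\tensor*[^t]{\gamma}{^i}e)_{i=0}^{n-1} = \det(\tensor*[^t]{e}{}\gamma^{i+j}e)_{0\le i,j\le n-1} = \Delta(\gamma),
\]
obtained from the matrix product $(\tensor*[^t]{e}{}\gamma^i)_i \cdot (\gamma^j e)_j$, one computes
\[
   \eta(\det h_\gamma) = \wt\eta\!\left(\frac{\Delta(\gamma)}{\det(\gamma^i e)_i^{\,2}}\right).
\]
One then transfers to the unitary side: when $\gamma$ matches $g \in U_i(F_0)_\rs$ via a special embedding, writing $\gamma = k^{-1}gk$ with $k \in \GL_{n-1}(F)$ fixing $e$ gives $\Delta(\gamma) = \Delta(g)$ and $\det(\gamma^i e)_i = \det(k)^{-1}\det(g^i u)_i$, while the Gram-matrix calculation via the hermitian form $h$ on $W_i$ (exploiting $\bar u = u$, unitarity of $g$, and that $u$ has norm $1$) relates $\Delta(g)/\det(g^i u)_i^{\,2}$—up to squares in $F^\times$—to $(-1)^{n(n-1)/2}\det W_i^\flat$. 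Applying \eqref{det def} and using $\eta(\text{sq})=1$ on $F_0^\times$ yields $\eta(\det h_\gamma) = (-1)^i\epsilon$.

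Substituting back, $2\Orb(\gamma, f'_\pm) = (1 \pm (-1)^i)\Orb(\gamma, f')$, which vanishes precisely when $\gamma \in S_{\rs,1}$ for $f'_+$ and when $\gamma \in S_{\rs,0}$ for $f'_-$. The principal technical difficulty is the unitary-side sign computation, where one must carefully track the factor $\det(k)^2$ from the matching, the determinant of the hermitian matrix $J_i$, and the (non-order-$2$) extension $\wt\eta$; a viable alternative is to pass first to the Lie algebra analog through a Cayley transform (cf.\ the discussion in \S\ref{ss: Lie invars} and the later parts of the paper) and invoke Lemma \ref{lem delta=det} directly.
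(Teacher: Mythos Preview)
Your approach coincides with the paper's: both establish $\Orb(\gamma, f'^t) = \eta(\det h_\gamma)\,\Orb(\gamma, f')$ via the unique $h_\gamma \in H'(F_0)$ with $\tensor*[^t]{\gamma}{} = h_\gamma^{-1}\gamma h_\gamma$, and then reduce the lemma to the sign identity $\eta(\det h_\gamma) = (-1)^i\epsilon = \eta(\det W_i^\flat)$ for $\gamma \in S_{\rs,i}$. The only difference is in how that sign is obtained. The paper invokes \cite[Lem.~2.3]{Z12}, which shows directly that the symmetric matrix $h_\gamma$ arises from a hermitian form isometric to $W_i^\flat$, hence $\eta(\det h_\gamma)=\eta(\det W_i^\flat)$ at once.

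Your route through the transfer factor is sound in outline, but be aware that the discriminant factorization already gives $\Delta(\gamma)/\det(\gamma^i e)_i^{\,2} = \det(h_\gamma)^{-1}$ tautologically (just compute $\det(\tensor*[^t]{\gamma}{^i}e)_i$ using $\tensor*[^t]{\gamma}{^i}e = h_\gamma^{-1}\gamma^i e$), so all the content is pushed into the unitary-side Gram computation you sketch. There the phrase ``up to squares in $F^\times$'' does not suffice: $\wt\eta$ need not have order $2$ on $F^\times$, so a stray factor like $\det(k)^2 \in F^{\times 2}$ is not automatically killed. What you actually need is that the ambiguity lies in $\RN F^\times \subset F_0^\times$ (on which $\wt\eta = \eta$ is trivial). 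Arranging this carefully amounts to the same bookkeeping that \cite{Z12} handles by working with $h_\gamma$ over $F_0$ throughout, without ever passing through $\wt\eta$. Your suggested fallback via a Cayley transform and Lemma~\ref{lem delta=det} is viable but more circuitous than necessary for this statement.
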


\begin{proof}
This is a consequence of how $\Orb$ transforms with respect to the transpose operation. For $\gamma\in S(F_0)_{\rs}$, since $\tensor*[^t]{\gamma}{}$ and $\gamma$ have the same invariants, there exists a unique element $h_\gamma\in H'(F_0)$ such that
\begin{equation}\label{eqn t gamma}
   \tensor*[^t]{\gamma}{} = h_\gamma^{-1}\gamma h_\gamma.
\end{equation}
Moreover, the element $h_\gamma$ is symmetric, i.e.~$h_\gamma = \tensor*[^t]{h}{_\gamma}$; and if $\gamma \in S_{\rs,i}$, then $h_\gamma$ defines a hermitian space isometric to $W^\flat_i$, i.e.~$\eta(h_\gamma)=\eta(\det W_i^\flat)$ (cf.~the proof of \cite[Lem.~2.3]{Z12}).  Writing $\eta_s(h) := \eta_s(\det h)$ for $h \in H'(F_0)$, it follows from \eqref{eqn t gamma} and suitable substitutions that
\begin{equation*}
\begin{aligned}
  \Orb(\gamma,  f'^t, s)
     &= \int_{H'(F_0)} f'(\tensor*[^t]{h}{} \tensor*[^t]{\gamma}{} \tensor*[^t]{h}{^{-1}})\eta_s(h)\,dh\\
     &= \int_{H'(F_0)} f'(h h_{\gamma}^{-1}\gamma h_\gamma h^{-1})\eta_s(h)\,dh\\
     &= \int_{H'(F_0)} f'(h^{-1} \gamma  h)\eta_s(h^{-1} h_\gamma )\,dh\\
     &= \eta_s(h_\gamma)  \notag \Orb(\gamma, \,  f', -s).
\end{aligned}
\end{equation*}
In particular,
\[
   \Orb(\gamma,\, f'^t)=\eta(h_\gamma)\Orb(\gamma, f').
\]
The lemma follows from this.
\end{proof}

\begin{lemma}\label{lem uniqueness}
Assume that Conjecture \ref{conj density} holds. Let $f' \in C_c^\infty(S)$ be a function satisfying the conclusion of Conjecture \ref{inhomconj}(\ref{inhomconj a}).  Then $f'$ is unique up to adding a linear combination of functions of the form
\begin{altenumerate}
\item\label{i} $\tensor[^{\eta(h_1)h_1-1}]{(}{}\tensor[^{\eta(h_2)h_2-1}]{\theta}{})$;
\item\label{ii} $\tensor*[^{\eta(h)h-1}]{(\Theta + \epsilon \Theta^t)}{}$; and
\item\label{iii} $\sum_{i=1}^m \tensor*[^{\eta(h_i)h_i-1}]{{f_i'}}{}$ such that $\sum_{i=1}^m \log\lvert \det h_i \rvert  f_i'=0$,
\end{altenumerate}
where $h,h_i\in H'(F_0)$ and $\theta,\Theta,f_i'\in C_c^\infty(S)$. Conversely, adding any  function  of the form \eqref{i}, \eqref{ii}, or \eqref{iii} to $f'$ gives a new function satisfying the conclusion of Conjecture \ref{inhomconj}(\ref{inhomconj a}).
\end{lemma}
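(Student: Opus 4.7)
The plan is to recast the lemma as a characterization of the null space $V_1 \subset C_c^\infty(S)$ of functions $f'$ with $\Orb(\gamma, f') = 0$ for all $\gamma \in S(F_0)_\rs$ and $\del(\gamma, f') = 0$ for all $\gamma \in S_{\rs,1}$: any two solutions of Conjecture~\ref{inhomconj}\eqref{inhomconj a} differ by an element of $V_1$, and conversely adding any element of $V_1$ preserves the conclusion. The claim then becomes that $V_1$ is spanned by the three families \eqref{i}, \eqref{ii}, \eqref{iii}.

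The inclusion of each family into $V_1$ is a direct check using Lemmas~\ref{twisted del} and~\ref{f+ f- lem}. Each twisted function $\tensor*[^{\eta(h)h-1}]{\theta}{}$ has vanishing orbital integrals, so all three types transfer to $(0,0)$. For derivatives: type~\eqref{i} is $\tensor*[^{\eta(h_1)h_1-1}]{\psi}{}$ with $\psi$ already a twisted function, so $\Orb(\gamma,\psi) = 0$ and Lemma~\ref{twisted del}\eqref{twisted del S} gives $\del(\gamma,\cdot)=0$ everywhere; type~\eqref{ii} reduces via Lemma~\ref{twisted del}\eqref{twisted del S} to $\log|\det h|\cdot\Orb(\gamma,\Theta+\epsilon\Theta^t)$, which vanishes on $S_{\rs,1}$ by Lemma~\ref{f+ f- lem} since $(\Theta+\epsilon\Theta^t)^t = \epsilon(\Theta+\epsilon\Theta^t)$; type~\eqref{iii} gives $\Orb\bigl(\gamma, \sum_i \log|\det h_i| f_i'\bigr) = 0$ by the defining cancellation.

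For the uniqueness direction, given $f' \in V_1$, first invoke Conjecture~\ref{conj density} to write $f' = \sum_i \tensor*[^{\eta(h_i) h_i - 1}]{\theta_i}{}$ as a finite sum. Decompose each $\theta_i = \theta_{i,+} + \theta_{i,-}$ under the transpose-twisted splitting; then $\tensor*[^{\eta(h_i) h_i - 1}]{\theta_{i,+}}{}$ is of type~\eqref{ii} (take $\Theta_i := \theta_{i,+}/2$, using $\epsilon^2 = 1$), so after stripping these off we may assume $\theta_i^t = -\epsilon \theta_i$ for all $i$. Introduce the \emph{$F$-moment} $F := \sum_i \log|\det h_i|\,\theta_i$, which satisfies $F^t = -\epsilon F$. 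By Lemma~\ref{twisted del}\eqref{twisted del S}, $\del(\gamma, f') = \Orb(\gamma, F)$ for all $\gamma \in S(F_0)_\rs$, and this vanishes on $S_{\rs,1}$ by hypothesis and on $S_{\rs,0}$ automatically by Lemma~\ref{f+ f- lem}. Thus $F$ has vanishing orbital integrals everywhere, so by Conjecture~\ref{conj density} again, $F = \sum_j \tensor*[^{\eta(h_j') h_j' - 1}]{\theta_j'}{}$ as a finite sum.

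The endgame is a ``lifting'' step: fix any $h_* \in H'(F_0)$ with $c := \log|\det h_*| \neq 0$, and put
\[
   g' := \sum_j \tensor*[^{\eta(h_*) h_* - 1}]{\Bigl(\tensor*[^{\eta(h_j') h_j' - 1}]{(c^{-1} \theta_j')}{}\Bigr)}{},
\]
a sum of type~\eqref{i} functions whose $F$-moment equals $c \sum_j \tensor*[^{\eta(h_j') h_j' - 1}]{(c^{-1}\theta_j')}{} = F$; then $f' - g'$ is a finite sum of twisted functions with vanishing $F$-moment, hence of type~\eqref{iii}, and $f' = g' + (f' - g')$ lies in the span of \eqref{i}, \eqref{ii}, \eqref{iii}. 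The principal conceptual obstacle is recognizing that the three families correspond precisely to $F$-moments lying in $V_0$, in the $(+)$-eigenspace of transpose, and equal to zero, respectively, and that these exhaust the $F$-moments arising after the two applications of density; the auxiliary element $h_*$ is essential to realize a $V_0$-valued $F$-moment by a type~\eqref{i} function.
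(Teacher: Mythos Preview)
Your proof is correct and follows essentially the same strategy as the paper's: use the density principle once on $f'$, form the ``$F$-moment'' (which the paper calls $\Theta$), exploit the $\pm$ eigenspace decomposition together with Lemma~\ref{f+ f- lem} to force vanishing orbital integrals, apply density a second time, and subtract a suitable type~\eqref{i}/\eqref{ii} combination to leave a type~\eqref{iii} remainder. The only organizational differences are that you strip off the type~\eqref{ii} contributions early (decomposing each $\theta_i$) whereas the paper decomposes $\Theta$ itself at the end, and that you introduce a fresh auxiliary element $h_*$ whereas the paper reuses one of the $h_i$ already present in the first density expansion; neither difference is substantive. One notational slip: you refer to ``$V_0$'' in the closing sentence without having defined it (presumably the kernel of the orbital integral map), and after ``stripping off'' type~\eqref{ii} you continue to write $f'$ for what is now a modified function---harmless, but worth tidying.
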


\begin{proof} 
First consider the following two properties of a function $f'' \in C_c^\infty(S)$:
\begin{altenumerate}
\renewcommand{\theenumi}{\arabic{enumi}}
\item\label{f' Orb}
$\Orb(\gamma,f'')=0$ for all $\gamma\in S(F_0)_\rs$; and
\item \label{f' del}
$\del(\gamma,f'')=0$ for all $\gamma\in S_{\rs,1}$.
\end{altenumerate}
If $f''$ is of type \eqref{i}, \eqref{ii}, or \eqref{iii} above, then $f''$ satisfies \eqref{f' Orb} and \eqref{f' del} by Lemma \ref{twisted del}\eqref{twisted del S}, with an assist from Lemma \ref{f+ f- lem} for property \eqref{f' del} when $f''$ is of type \eqref{ii}.  For such $f''$, the function $f' + f''$ therefore satisfies the conclusion of Conjecture \ref{inhomconj}\eqref{inhomconj a} whenever $f'$ does.
	
Now suppose that both $f_1'$ and $f_2'$ satisfy the conclusion of Conjecture \ref{inhomconj}\eqref{inhomconj a}, and set
\[
   f'':=f_1'-f_2'. 
\]
Then $f''$ satisfies \eqref{f' Orb} and \eqref{f' del}. By \eqref{f' Orb}, it follows from Conjecture \ref{conj density} that
\[
   f'' = \sum_{i=1}^m \tensor*[^{\eta(h_i)h_i-1}]{{f_i''}}{}
	\quad\text{for some}\quad
	f_i'' \in C_c^\infty(S),
	\quad
	h_i\in H'(F_0).
\]
If $\log\lvert \det h_i \rvert = 0$ for all $i$, then $f''$ is of type \eqref{iii} above, and we're done. If not, then say $\log\lvert \det h_1 \rvert \neq 0$.  
By Lemma \ref{twisted del}\eqref{twisted del S},
\[
   \del(\gamma, f'')=\Orb(\gamma,  \Theta)
	\quad\text{for}\quad
	\Theta := \sum_{i=1}^m \log\lvert \det h_i \rvert f_i''.
\]
By \eqref{f' del}, we have $\Orb(\gamma,  \Theta)=0$  for all $\gamma\in S_{\rs,1}$. This and Lemma \ref{f+ f- lem} imply that $\Orb(\gamma,\Theta_-)=0$ for all $\gamma\in S(F_0)_\rs$. So by another application of Conjecture \ref{conj density}, we may write
\[
   \Theta_-=\sum _{j=1}^k \tensor[^{\eta(h'_j)h'_j-1}]{\theta}{_j}
	\quad\text{for some}\quad
	\theta_j \in C_c^\infty(S),
	\quad
	h_j' \in H'(F_0).
\]
It follows that $\tensor*[^{\eta(h_1)h_1-1}]{\Theta}{} = \tensor*[^{\eta(h_1)h_1-1}]{(\Theta_++\Theta_-)}{}$ is a sum of functions type \eqref{i} and \eqref{ii}.  Since
\[
   f'' - \frac{\tensor*[^{\eta(h_1)h_1-1}]{\Theta}{}}{\log \lvert \det h_1 \rvert}
		= \sum_{i=2}^m \biggl( \tensor*[^{\eta(h_i)h_i-1}]{{f_i''}}{} - \frac{\log \lvert \det h_i \rvert}{\log \lvert \det h_1 \rvert} \tensor*[^{\eta(h_1)h_1-1}]{{f_i''}}{} \biggr)
\]
is a function of type \eqref{iii}, we conclude that $f''$ is of the asserted form.
\end{proof}

\subsection{Statement of the main results}
Now we state our main results.
In the remainder of the paper, we will be concerned with the case $n = 3$,
with (apart from \S\ref{inputs section}) only occasional remarks about the case of general $n$. 
In this case, the following result gives a combined version of Remark \ref{g rs => Int(g) proper}, Remark \ref{Intfiniteforliealg}, and Conjecture \ref{conj Int(g) proper => g rs}. Recall from \eqref{Delta cycle} the notation $\Delta=\Delta_\CN(\CN_2)$, and from \eqref{Deltaghomog} (or from \eqref{Deltag} in the Lie algebra case) the notation $\Delta_g$.  Note that, by the identification \eqref{Aut(BX_n)=U(W_1)}, we may view $U_1(F_0)$ and $\fku_1(F_0)$ as subsets of $\End_{O_F}^\circ(\BX_n)$. Also, recall from the Introduction that, since the statements below involve the geometry of formal schemes, we are taking $F_0=\BQ_p$ throughout. 

\begin{theorem}\label{main prop}
Let $n=3$.  Then for any $x \in \fku_1(F_0)$, the intersection $\Delta\cap\Delta_x$ is non-empty if and only if $x \in \End_{O_F}(\BX_3)$, and the following three properties are equivalent. 
\begin{altenumerate}
\item $x\in \End_{O_F}(\BX_3)\cap\fku_1(F_0)_\rs$. 
\item The intersection of $\Delta$ and $\Delta_x$ is  a non-empty scheme proper over $\Spec O_{\breve F}$.
\item  The intersection of $\Delta$ and $\Delta_x$ is  non-empty artinian.
\end{altenumerate}
 The analog where $x\in \fku_1(F_0)$ is replaced by $g\in U_1(F_0)$ also holds true. 
\end{theorem}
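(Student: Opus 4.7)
The plan is to prove the Lie algebra version for $x$; the group version for $g$ is entirely analogous. I proceed in the order: first the non-emptiness characterization, then (i) $\Rightarrow$ (iii), then (iii) $\Rightarrow$ (ii) (which is trivial), and finally the converse (ii) $\Rightarrow$ (i).

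For the non-emptiness characterization, the ``if'' direction is immediate: if $x \in \End_{O_F}(\BX_3)$, the $\ov k$-point corresponding to the framing objects themselves, namely $(\BX_2, \BX_3)$ with identity framings and the compatible polarizations from \S\ref{framing obs} (where the scalar $c_Y=1$), belongs to $\Delta$ because by construction $\BX_3 = \BX_2 \times \ov\BE$, and it belongs to $\Delta_x$ because $x$ is already an honest endomorphism lifting tautologically. For ``only if'', starting from $(Y,X) \in (\Delta \cap \Delta_x)(\ov k)$, I would choose an isomorphism $\alpha\colon Y \times \ov\CE \isoarrow X$ in $\CN_3$ together with the honest lift $\phi\colon Y \times \ov\CE \to X$ of $x$, and translate back through the height-zero framings; exploiting that the polarization comparison constant in the definition of $\CN_3$ is a \emph{unit} in $O_{F_0}^\times$ and that $x \in \fku_1$ is skew-adjoint for the Rosati involution on $\End^\circ_{O_F}(\BX_3)$, one deduces that the lattice preserved by $x$ must coincide with $M_{\BX_3}$, hence $x \in \End_{O_F}(\BX_3)$.

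For (i) $\Rightarrow$ (iii), assume $x \in \End_{O_F}(\BX_3) \cap \fku_{1,\rs}(F_0)$. I combine the inclusion from Remark~\ref{Intfiniteforliealg},
\[
   \Delta \cap \Delta_x \subset \CZ(u) \cap \CZ(xu) \cap \CZ(x^2 u) \subset \CN_3,
\]
with the linear algebra characterization of regular semi-simplicity (\S\ref{ss:LAchar}), which says that $u, xu, x^2 u$ are $F$-linearly independent in $\BV_3$ and hence span this $3$-dimensional hermitian space. Since $\CN_3$ is formally smooth of total formal dimension $3$ (relative dimension $2$ over $\Spf O_{\breve F}$), and each $\CZ(v)$ is a closed formal subscheme of codimension one, a triple intersection against linearly independent special quasi-homomorphisms should be $0$-dimensional. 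To verify this rigorously in the ramified setting (where the $\CZ(v)$ need not be relative Cartier divisors), I would descend to an explicit affine chart coming from the local model of Proposition~\ref{LM smooth}, write down the three vanishing conditions attached to $u, xu, x^2 u$, and check that together they generate an $\fkm$-primary ideal. The implication (iii) $\Rightarrow$ (ii) is immediate, since an artinian scheme is automatically proper.

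For (ii) $\Rightarrow$ (i), assume $\Delta \cap \Delta_x$ is a non-empty proper scheme; by the non-emptiness step $x \in \End_{O_F}(\BX_3)$, and it remains to show $x$ is regular semi-simple. Assume for contradiction it is not, so $u, xu, x^2 u$ are $F$-linearly dependent and $x$ stabilizes a proper $F$-subspace $V' \subsetneq \BV_3$ containing $u$. I would then exhibit an explicit positive-dimensional family inside $\Delta \cap \Delta_x$ by exploiting this invariant structure: the lifting condition for $x$ on $Y \times \ov\CE$ decouples along $V'$ and its orthogonal complement, producing a formal curve of lifts parametrized by the ``free'' component of $\CN_2$, which contradicts the proper zero-dimensionality implicit in (ii).

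The main obstacle, both for Step 2 and dually for Step 4, is this dimension-theoretic control in the ramified setting: the special cycles $\CZ(v)$ lack the clean relative Cartier divisor structure available in the unramified case of \cite{KR-U1}, so one must descend to explicit charts of $M^\loc$ and carefully track the defining equations of the lifting conditions on each chart in order to verify both the artinian property in Step 2 and the non-artinian, positive-dimensional phenomenon in Step 4.
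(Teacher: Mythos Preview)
Your sketch diverges from the paper in all three places, and in two of them there is a genuine gap.

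For the ``only if'' direction of non-emptiness, your claim that ``the lattice preserved by $x$ must coincide with $M_{\BX_3}$'' is not justified. A $\ov k$-point of $\Delta$ lies over one of the two points $z_\pm\in\CN_2(\ov k)$, and when it lies over $z_-$ the Dieudonn\'e lattice of the corresponding $X$ is $\tau_3(M_{\BX_3})$, not $M_{\BX_3}$ (Remark~\ref{tau interchange}). Skew-adjointness and the unit polarization constant do not force this lattice to be $M_{\BX_3}$. The paper's argument (Lemma~\ref{intersection nonempty conds}) is instead that $x$ commutes with the operator $\tau_3$ on the rational Dieudonn\'e module, so $x$ preserves $M_{\BX_3}$ iff it preserves $\tau_3(M_{\BX_3})$; hence $\Delta\cap\Delta_x$ contains $z_+$ iff it contains $z_-$, which simultaneously gives the non-emptiness characterization, the ``two points'' statement, and the equivalence (ii)$\Leftrightarrow$(iii).

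For (i)$\Rightarrow$(iii) and the converse, the paper takes a completely different route and thereby avoids the dimension-theoretic obstacle you identify. The key reduction, which your sketch omits, is to pass to \emph{reduced} elements: Lemmas~\ref{rs for reduced u1}, \ref{lem cayley U1}, \ref{cayley rs iff rs}, and \ref{intersections =} show that $\Delta\cap\Delta_x=\Delta\cap\Delta_{x_\red}=\Delta\cap\Delta_{\fkc_\xi(x)}$ and that $x$ is regular semi-simple iff $x_\red$ is iff $\fkc_\xi(x)$ is. So both implications may be checked for $x\in\fkk_{1,\red}$ in the explicit coordinates \eqref{coordofred}. For (i)$\Rightarrow$(iii) the paper does no codimension count at all: it computes $\lInt(x)$ explicitly in \S\ref{calc of l-Int} by pulling back along the Serre isomorphism $\CN_{2,+}\cong\CM$, decomposing the relevant special divisor into quasi-canonical divisors (Proposition~\ref{sums of qcan divisors prop}), and applying Keating's formula (Proposition~\ref{keating}); the finite answer exhibits $\Delta\cap\Delta_x$ as artinian. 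For the contrapositive of (ii)$\Rightarrow$(i), rather than a vague ``decoupling along an invariant subspace'', the paper constructs a concrete $\Spf O_{\breve F}$-point: the non-rs condition on a reduced $x$ forces $b=0$ or $b^{-1}\alpha b\in F$, so $\alpha$ lies in the image of a conjugate embedding $\tensor[^b]{\iota}{_\BE}$ and lifts to an endomorphism of the corresponding canonical lift $\CE_\iota$; the Serre tensor $O_F\otimes_{O_{F_0}}\CE_\iota$ then gives a point of $\CN_2$ over $\Spf O_{\breve F}$ at which all entries of $x$ lift, so $\Delta\cap\Delta_x$ cannot be a scheme.

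Your proposed route via $\CZ(u)\cap\CZ(xu)\cap\CZ(x^2u)$ and local-model charts might in principle be salvageable, but you have correctly identified that the ramified special cycles lack the Cartier-divisor structure needed for a clean codimension argument, and you have not supplied the missing computation. The paper sidesteps this entirely with the explicit constructions above.
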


We complete the proof of Theorem \ref{main prop} in \S\ref{rel to int numbers}, modulo some explicit calculations which we carry out in \S\ref{sec expl Lie}.

We also prove the AT conjectures formulated above in the case $n=3$, namely the following theorems.

\begin{theorem}[Group version]\label{thm group}
 Let  $n=3$. Then for any $g\in U_1(F_0)_\rs$, the intersection of $\Delta$ and $\Delta_g$ is an artinian scheme with two points unless it is empty, and there are no higher Tor terms in the expression \eqref{Int(g) inhomog} for $\Int(g)$.  Furthermore, statements (\ref{inhomconj a}) and (\ref{inhomconj b}) of Conjecture \ref{inhomconj} hold true. 
\end{theorem}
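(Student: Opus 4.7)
The plan is to prove part (\ref{inhomconj b}) of Conjecture \ref{inhomconj} for $n = 3$, since Proposition \ref{cor b to a} then yields part (\ref{inhomconj a}); the assertions that $\Delta \cap \Delta_g$ is artinian with two points and that the higher Tor terms vanish in the expression for $\Int(g)$ will emerge from the explicit computation of $\Int(g)$. The first step is to reduce the problem from the symmetric space $S_3$ to the Lie algebra $\fks_3$ by means of the Cayley transform of \cite{Z14}, i.e.\ to deduce Conjecture \ref{inhomconj}(\ref{inhomconj b}) from Conjecture \ref{lieconj}(\ref{lieconj b}) in the case $n = 3$. Because the stabilizer $K_0^\flat$ has a symplectic reduction (Remark \ref{rem syp red}), the exceptional isomorphism $\Sp_2 \simeq \SL_2$ makes it possible to arrange the Cayley transform so that $\mathbf{1}_{K_0}$ on the unitary side and an associated test function on the symmetric side correspond, after a controlled shift, to $\mathbf{1}_{\fkk_0}$ and a function on $\fks_3$ respectively, with compatible transfer factors and Haar measures; this is the role of Lemma \ref{lem cayley U} and the surrounding material.

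For the geometric input, Theorem \ref{main prop} already asserts that $\Delta \cap \Delta_x$ is artinian whenever $x \in \fku_1(F_0)_\rs \cap \End_{O_F}(\BX_3)$, so what remains is to compute $\lInt(x) = \length(\Delta \cap \Delta_x)$ in closed form in terms of the invariants of $x$. The decisive geometric fact is the Serre tensor isomorphism between the Lubin--Tate deformation space $\CM$ and a connected component of $\CN_2$ (Proposition \ref{Serre isom}); applying the second projection, the problem is transported to an intersection calculation on $\CM$. The two connected components of $\CN_2$ account for the ``two points'' statement, while the regularity of $\CN_2 \times_{\Spf O_{\breve F}} \CN_3$ together with the one-dimensionality of $\CM$ accounts for the vanishing of higher Tor terms. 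Once the problem is placed on $\CM$, the intersection is expressed through quasi-canonical liftings, and the formulas of Gross--Keating \cite{ARGOS} deliver an explicit expression for $\lInt(x)$, following the strategy of \cite[\S8]{KR-U1} (with Zink's simplification) transposed from the unramified to the ramified setting.

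For the analytic side, I would use the germ expansion for weighted orbital integrals on $\fks_3$ based on \cite{Z12b} and completed in Part 4 of the paper. This expresses, in a neighborhood of each singular point of the categorical quotient $\fks_3 / H'$, the function $y \mapsto 2\omega(y)\del(y,\phi')$ (for any $\phi' \in C_c^\infty(\fks_3)$ with transfer $(\mathbf{1}_{\fkk_0}, 0)$) as an explicit ``principal germ'' plus a local orbital integral function. I would then verify, singular point by singular point, that the principal germ coincides with $-\lInt(x)\log q$ up to an orbital integral function; since this is a local property on the categorical quotient, the locality result of \cite{Z14} glues the local data into a global $\phi'_\corr \in C_c^\infty(\fks_3)$ with the required property, yielding Conjecture \ref{lieconj}(\ref{lieconj b}).

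I expect the main obstacle to be the analytic step: producing the germ expansion in a form explicit enough to match the Gross--Keating output constant by constant, and tracking the transfer factor $\omega$ and the normalizations of Haar measures consistently through the Cayley transform. The factor of $2$ discrepancy between the AFL equation \eqref{IntroAFL} and the AT equation \eqref{introAT} is symptomatic of how delicate these constants become in the ramified setting, and it is the meticulous interplay of the geometric computation in Part 2 and the harmonic-analytic germ expansion in Parts 3 and 4 that will close the proof.
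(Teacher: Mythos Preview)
Your proposal is correct and follows essentially the same route as the paper: reduce (b)$\Rightarrow$(a) via Proposition~\ref{cor b to a}; reduce the group statement to the Lie algebra statement via the Cayley transform (Proposition~\ref{lem1}, using Lemma~\ref{lem cayley U} and the density principle in the form of Corollary~\ref{cor density}); compute $\lInt(x)$ on $\CM$ through the Serre isomorphism and Gross--Keating; and then match against the germ expansion, invoking the locality result of \cite{Z14}. The artinian/two-points and Tor-vanishing claims are handled exactly as you indicate, via Lemma~\ref{intersection nonempty conds} and the Koszul argument in Proposition~\ref{char nondeg}. One refinement worth noting: the paper inserts an additional reduction to the \emph{reduced} subset $\fks_\red$ (Proposition~\ref{lem2}), and on the analytic side splits $\del = \del_1 + \del_2$ so that $\del_2$ is automatically an orbital integral function (Theorem~\ref{thm error term}) and only the locally constant behavior of $2\omega\del_1 + \lInt\cdot\log q$ needs to be checked (Theorem~\ref{locconsthm}); this organizes the ``singular point by singular point'' verification you describe.
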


By Lemma \ref{hom=inhom}, this theorem also implies  the  homogeneous group version, i.e.~where $g\in G_{W_1}(F_0)_\rs$, cf.~Conjecture \ref{conj homog}. 

\begin{theorem}[Lie algebra version]
\label{thm lie}
 Let $n=3$. Then for any $x\in \fku_{1}(F_0)_{\rs}$, the intersection of $\Delta$ and $\Delta_x$ is an artinian scheme with two points, unless it is empty.  Furthermore, statements (\ref{lieconj a}) and (\ref{lieconj b}) of Conjecture \ref{lieconj}
 hold true.
\end{theorem}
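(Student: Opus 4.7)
The plan is to reduce Theorem \ref{thm lie} to Conjecture \ref{lieconj}\eqref{lieconj b}: the two-point artinian structure of $\Delta\cap\Delta_x$ is supplied by Theorem \ref{main prop}, which removes the auxiliary artinian hypothesis in the conjecture, and then \eqref{lieconj a} follows from \eqref{lieconj b} by Proposition \ref{cor b to a}. Thus for any $\phi'\in C^\infty_c(\fks)$ transferring to $(\mathbf{1}_{\fkk_0},0)$ it suffices to produce $\phi'_\corr\in C^\infty_c(\fks)$ with
\[
   2\omega(y)\del(y,\phi')+\lInt(x_y)\cdot\log q=\omega(y)\Orb(y,\phi'_\corr), \qquad y\in \fks_{\rs,1}
\]
(where $x_y\in \fku_{1}(F_0)_\rs$ matches $y$). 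Both sides descend to $H'$-invariant functions on $\fks_{\rs,1}$, and the local-on-$\fks/H'$ criterion from the smooth transfer theorem in \cite{Z14} lets us verify that the left-hand side is an orbital integral function in a formal neighborhood of each point of the categorical quotient $\fks/H'$ described in \eqref{cat quot isom}. The argument then splits into a geometric step (computing $\lInt(x)$) and an analytic step (germ-expanding $\del(y,\phi')$), followed by a local matching.

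For the geometric side, I would first use Theorem \ref{main prop} to restrict to $x\in \fku_{1}(F_0)_\rs\cap\End_{O_F}(\BX_3)$ and exploit the description \eqref{Deltag} of $\Delta\cap\Delta_x$ as the locus of liftings of $x$. Via Proposition \ref{Serre isom}, which identifies a component of $\CN_2$ with the Lubin--Tate deformation space $\CM$ of $\BE$ (an incarnation of $\SL_2\cong \SU_2$), the intersection length reduces to counting deformations of a formal $O_{F_0}$-module endomorphism attached to $x$. The Gross--Keating formulas on quasi-canonical liftings from \cite{ARGOS} then give an explicit closed formula for $\lInt(x)$ in terms of the invariants \eqref{inv sec} of $x$. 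I would follow the simplification due to Zink described in the Introduction, which uniformly handles both the unramified case treated in \cite[\S8]{KR-U1} and the ramified case at hand; as a by-product the same calculation proves the artinian two-point structure used above, completing the proof of Theorem \ref{main prop}.

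For the analytic side, I would invoke the germ expansion theory for weighted orbital integrals on $\fks_3$ from \cite{Z12b} and Part 4, which around each point of $\fks_\rs/H'$ writes $\omega(y)\del(y,\phi')$ as an explicit finite linear combination of principal germs depending only on the invariants of $y$, modulo a local orbital integral function $\omega(y)\Orb(y,\phi'_\loc)$. Applying the Cayley transform of \cite{Z14} to align Lie algebra and geometric coordinates --- a step that also uses the second exceptional isomorphism $\Sp_2\cong \SL_2$ attached to the symplectic reduction of $K_0^\flat$ in Remark \ref{rem syp red} --- one compares these germs term-by-term with the Gross--Keating formula from the geometric step. Once the comparison shows agreement modulo an orbital integral function, gluing the local $\phi'_\corr$'s over $\fks/H'$ via the smooth transfer machinery of \cite{Z14} yields the global $\phi'_\corr$.

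The hard part will be precisely this matching. The germ expansion of \cite{Z12b} is not \emph{a priori} explicit enough to pin down the universal coefficients in the principal germs for a function transferring to $(\mathbf{1}_{\fkk_0},0)$, so substantial harmonic-analytic work in Part 4 is required to make those coefficients computable; and once both sides are explicit, translating between the ``Gross--Keating'' parameterization (by valuations of invariants of $x$) and the germ parameters (by invariants of the matching $y\in \fks$) must be carried out carefully, with the factor-of-$2$ discrepancy relative to the AFL (highlighted after \eqref{introAT}) emerging from the arithmetic of the ramified extension rather than from the analysis.
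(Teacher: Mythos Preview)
Your overall architecture matches the paper's: reduce to part~\eqref{lieconj b} via Proposition~\ref{cor b to a} and Theorem~\ref{main prop}, then show that $2\omega(y)\del(y,\phi')+\lInt\cdot\log q$ is an orbital integral function by the local criterion, computing $\lInt$ by Gross--Keating through the Serre isomorphism and matching against the germ expansion of Part~\ref{germ expansion part}. But you have transposed two tools from a neighboring proof. The Cayley transform and the isomorphism $\Sp_2\cong\SL_2$ play no role in proving Theorem~\ref{thm lie}; they are used in Proposition~\ref{lem1} to deduce the \emph{group} theorem from the Lie algebra theorem (the Cayley map carries $\fkk_i$ to $K_i$ compatibly with both intersection numbers, via Corollary~\ref{inttoliealg}, and orbital integrals, via Lemma~\ref{lem orb cayley}---the latter is where $\Sp_2\cong\SL_2$ enters through Lemma~\ref{lem cayley U}). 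There is nothing to ``align'' between Lie algebra and geometric coordinates: the invariants \eqref{fku_1 invariants} on $\fku_1$ are already the parameters that feed directly into the Gross--Keating computation of \S\ref{sec expl Lie}.

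What replaces this in the paper, and what your outline genuinely lacks, is the reduction from $\fks$ to the \emph{reduced} subset $\fks_\red$ (Proposition~\ref{lem2}), using $\lInt(x)=\lInt(x_\red)$ from Lemma~\ref{intersections =} together with the product decomposition~\eqref{s prod decomp} and the density principle (Corollary~\ref{cor density}). All of Part~\ref{germ expansion part} and all of \S\ref{sec expl Lie} are formulated on the reduced sets, so without this step you cannot invoke them. The comparison on $\fkb_\red$ is then organized by the Leibniz splitting $\del=\del_1+\del_2$ of~\eqref{eqn leibniz}: Theorem~\ref{thm error term} disposes of $\del_2$ as an orbital integral function outright, while the explicit germ coefficients in $\del_1$ (Theorems~\ref{thm expl x=0} and~\ref{thm expl x neq 0}, fed by the nilpotent orbital integrals computed in \S\ref{germspecial}) are matched case-by-case against the formulas of \S\ref{calc of l-Int}. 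The upshot is not merely ``agreement modulo an orbital integral function'' but that the resulting function $\varphi_1$ is \emph{locally constant} near each point of $\fkb_\red$ (Theorem~\ref{locconsthm}), which by the converse germ criterion Corollary~\ref{cor germ converse} is exactly what makes it an orbital integral function.
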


The proofs of Theorems \ref{thm group} and \ref{thm lie} will occupy essentially the entire rest of the paper.  We prove these theorems by a combination of methods from geometry and from local harmonic analysis, which in rough outline goes as follows.
In \S\ref{serre isom} and \S\ref{sums of qcan divisors section}, we relate the moduli space $\CN_2$ to the formal deformation space of formal $O_{F_0}$-modules, and the special divisors on $\CN_2$  (the analog of KR-divisors in the present setting of a ramified quadratic extension)  to quasi-canonical divisors on this formal deformation space. In \S\ref{reduction to LA}, using the Cayley transform, we reduce the computation of $\Int(g)$ to the calculation of $\lInt(x)$ in the Lie algebra, and even in the \emph{reduced subset} of the Lie algebra. The calculation of $\lInt(x)$ for a reduced element $x$ in the Lie algebra  is then carried out in \S\ref{sec expl Lie}. This uses the calculation of intersection numbers of special cycles, and is based on the Gross-Keating formulas for the intersection numbers of quasi-canonical divisors. 
At this point the inputs to the proof from geometry are in place.
The local harmonic analysis set-up used in our proof is explained in \S\ref{inputs section}. 
Using this, we show
in Proposition \ref{lem1} that Theorem \ref{thm group} follows from Theorem \ref{thm lie}.  We show  in Proposition \ref{lem2} that, in turn, Theorem \ref{thm lie} follows from Theorem \ref{mainforred}, which is its analog for the reduced sets in the Lie algebra setting (cf.~\S\ref{subsec inv u1}, \S\ref{subsred}, and \S\ref{subsec inv u0}).  We then obtain Theorem \ref{mainforred} from a comparison of the result of the geometry side with the germ expansion of the orbital integral side. This part of the proof, carried out in \S\ref{germspecial} and \S\ref{section:comparison}, is explained in \S\ref{sec:apllgerm}.   The general germ expansion is established in Part \ref{germ expansion part}, and has its own introduction in 
\S\ref{sec statementofgerm}.

\part{Geometric side}
In this part of the paper we address the geometric aspects of Theorems \ref{thm group} and \ref{thm lie}, including the geometric side of the identities to be proved in these theorems.  Along the way we also prove Theorem \ref{main prop}.  Throughout this part we take $n = 3$.

\section{The Serre map}\label{serre isom}

The main aim of this section is to describe the moduli space $\CN_2$ in terms of the Serre tensor construction (see \S\ref{serre const}), and to use it to analyze special cycles.  

\subsection{The Serre map} 

The first basic fact we need about $\CN_2$ is the following.

\begin{lemma}\label{two points}
$\CN_2\bigl(\ov k\bigr)$ consists of two points.
\end{lemma}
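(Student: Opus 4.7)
The plan is to translate the moduli problem into lattice theory via covariant Dieudonn\'e theory, exhibit two evident candidate lattices, and then carry out an enumeration to rule out others.

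By covariant Dieudonn\'e theory, an $\ov k$-point of $\CN_2$ should correspond to an $O_{\breve F}$-lattice $M$ in the rational Dieudonn\'e module $N$ of $\BX_2$ satisfying: (i) $\pi M \subset \tau M \subset \pi^{-1} M$, where $\tau := \zeta \pi \uV^{-1}$ is the operator from the proof of Proposition~\ref{k-bar isog classes} (this encodes stability of $M$ under $\uF$ and $\uV$); (ii) $M^\vee = \pi^{-1} M$ with respect to the hermitian form coming from $\lambda_{\BX_2}$ (this encodes the polarization condition $\Ker \lambda_X = X[\iota_X(\pi)]$, noting that rescaling $\lambda_X$ by an element of $O_{F_0}^\times$ does not enlarge the lattice class since such scalars are units); and (iii) $\tau M \not\subset M$ (the spin condition). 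The Kottwitz and wedge conditions are automatic for $n = 2$.

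Next I would compute with the tautological lattice $M_0 := O_F \otimes_{O_{F_0}}\BM$, the Dieudonn\'e module of $\BX_2 = O_F \otimes_{O_{F_0}}\BE$: a direct calculation in the obvious basis shows that $(M_0 + \tau M_0)/M_0$ has $O_{\breve F}$-length $1$. By Lemma~\ref{C parity lem}, this forces the hermitian space $C := N^{\tau = 1}$ to be non-split. Since a $2$-dimensional non-split $F/F_0$-hermitian space admits no $\pi$-modular $O_F$-lattice when $F/F_0$ is ramified, no valid $M$ can be $\tau$-invariant; in particular, condition (iii) is automatic. Two distinct valid lattices now present themselves: $M_0$ itself, and its Frobenius translate $M_1 := \tau M_0$, which satisfies the same conditions because $\tau$ preserves $\pi$-modularity (as $h(\tau x, \tau y) = \sigma h(x,y)$) and preserves the inclusion $\pi M \subset \tau M \subset \pi^{-1} M$ (being $O_F$-linear up to $\sigma$).

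The main obstacle will be to show there are no further solutions. I would work in a split $\breve F$-basis $f_1, f_2$ of $N$ (which exists since $C$ becomes split after the unramified base change to $\breve F_0$) and parameterize the $\pi$-modular $O_{\breve F}$-lattices in an affine chart around $M_0$ by a single scalar $s$; these form a single orbit under a special maximal parahoric subgroup of $\U(N)(\breve F_0)$. Imposing the Dieudonn\'e condition $\pi M \subset \tau M \subset \pi^{-1} M$ in this chart should then pin the parameter down to $s = 0$, forcing $M = M_0$. A symmetric analysis in a second chart (covering the other end of the $\pi$-modular locus in the Bruhat--Tits building) should give $M_1 = \tau M_0$ as the only remaining possibility. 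The crux of the enumeration is making explicit the tension between the $\sigma$-semilinear Dieudonn\'e condition and $\pi$-modularity: neither condition alone gives rigidity on $s$, but together they should force the answer.
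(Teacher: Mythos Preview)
Your Dieudonn\'e setup is correct and matches the paper: the lattice conditions, the identification of $C$ as non-split (forcing the spin condition to hold automatically), and the exhibition of $M_0$ and $\tau M_0$ as two distinct solutions all agree with the paper's argument (the last point is recorded as Remark~\ref{tau interchange}).

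Where you diverge is in the uniqueness step, and here your plan is both different from the paper's and not carried out. Your proposal to ``parameterize the $\pi$-modular $O_{\breve F}$-lattices in an affine chart around $M_0$ by a single scalar $s$'' is imprecise: the $\pi$-modular lattices form a discrete set (special vertices of one type in the Bruhat--Tits tree of $\U(N)$ over $\breve F_0$), not a variety with affine charts, and the claim that they ``form a single orbit under a special maximal parahoric'' is backwards (they form a single orbit under the full unitary group, with a special parahoric as stabilizer). More importantly, you have not actually executed the computation that would ``pin the parameter down''; as you acknowledge, this is the crux, and it is simply missing.

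The paper's uniqueness argument is structural rather than computational and bypasses this difficulty. Given any valid $L$, one forms the auxiliary lattice $L + \tau L$. An obvious variant of \cite[Prop.~2.17]{RZ} shows this is $\tau$-stable; a short duality calculation using $L^\vee = \pi^{-1}L$ shows it is self-dual. Now the key point: since $C$ is a \emph{non-split} $2$-dimensional hermitian space, it contains a \emph{unique} self-dual $O_F$-lattice $\Lambda$, so $L + \tau L = O_{\breve F}\cdot\Lambda$ is determined independently of $L$. The image of $L$ in $V := \Lambda \otimes_{O_F} \ov k$ is then an isotropic line for the symmetric form induced by $h$, and a $2$-dimensional quadratic space over $\ov k$ has exactly two isotropic lines. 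Thus the non-splitness of $C$, which you used only to dispose of the spin condition, is in fact doing the essential work in the count.
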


\begin{proof}
Let $N$ denote the rational covariant Dieudonn\'e module of the framing object $\BX_2$, and recycle the notation \aform, $h$, $\tau$, and $C$ from the proof of Proposition~\ref{k-bar isog classes}.  Since $\BX_2$ satisfies the spin condition, the hermitian space $C$ is non-split by Lemma~\ref{C parity lem}.  By Dieudonn\'e theory, $\CN_2(\ov k)$ identifies with the set of $O_{\breve F}$-lattices $L$ in $N$ such that
\begin{equation}\label{Dieu latt}
   \varpi L \subset^2 \pi\tau L \subset^2 L,
\end{equation}
where the superscripts indicate the $\ov k$-dimension of the corresponding quotients, and such that $L^\vee = \pi^{-1}L$, where $L^\vee$ denotes the dual lattice in $N$ with respect to \aform, or equivalently with respect to $h$; cf.~\cite[Eg.~4.14]{RV}, or \cite[Prop.~2.2]{RTW} for the variant where the polarization in the moduli problem is principal.  Note that, since $C$ is non-split, the $p$-divisible group corresponding to any such $L$ automatically satisfies the spin condition by Lemma~\ref{C parity lem}.  Given such $L$, we have
\begin{equation}\label{Lambdas}
   L \subset^1 L + \tau L \subset^1 \pi^{-1}L.
\end{equation}
By an obvious variant of \cite[Prop.~2.17]{RZ} (with $\breve F$ in place of $W_\BQ$, $\pi$ in place of $p$, etc.), the lattice $L + \tau L$ in the $2$-dimensional $\breve F$-vector space $N$ is $\tau$-stable.  This lattice is also self-dual, since the dual lattice
\[
   (L + \tau L)^\vee = L^\vee \cap (\tau L)^\vee = L^\vee \cap \tau (L^\vee) = \pi^{-1} (L \cap \tau L)
\]
contains $L + \tau L$ by \eqref{Dieu latt}, and both 
$L + \tau L$ and $(L + \tau L)^\vee$
are contained in $\pi^{-1}L$ with codimension $1$ by \eqref{Lambdas} (and the dual of the diagram \eqref{Lambdas}).  Now, since $C$ is non-split and $2$-dimensional, 
$C$ contains a \emph{unique} self-dual $O_F$-lattice $\Lambda$.  Hence $L + \tau L = O_{\breve F} \cdot \Lambda$ inside $N$.
The hermitian form $h$ induces a \emph{symmetric} form on $V := \Lambda \otimes_{O_{ F}} \ov k$, and the image of $L$ in $V$ is an isotropic line.  Since $V$ is $2$-dimensional, there are exactly two isotropic lines in $V$, and these correspond to the two possibilities for $L$.
\end{proof}

\begin{remark}\label{tau interchange}
We record for later use that, in the notation of the proof, $\tau$ interchanges the two lattices in $N$ corresponding to the two points in $\CN_2(\ov k)$.
\end{remark}

Since $\CN_2$ is formally locally of finite type over $\Spf O_{\breve F}$, we conclude that it consists of two connected components
\[
   \CN_{2,+}
	\quad\text{and}\quad
	\CN_{2,-},
\]
each reduced to a point topologically.  These components are distinguished as the respective loci in $\CN_2$ where the framing map $\rho$ is and is not pointwise an isomorphism.  They are interchanged under the group action of $U(\BX_2) \simeq U_1(F_0)$ by elements of nontrivial Kottwitz invariant, cf.~\eqref{Kottwitz}.  Thus to understand the structure of $\CN_2$, it remains to understand either one of these components.  We will do this in the rest of this subsection via the \emph{Serre map}.

Forgetting the $O_F$-action, consider $\BE$ as a connected $p$-divisible $O_{F_0}$-module of dimension $1$ and height $2$ over $\Spec \ov k$. Let $\CM_{O_{\breve F_0}}$ denote its formal deformation space over $\Spf O_{\breve F_0}$.  By Lubin--Tate theory, $\CM_{O_{\breve F_0}} \simeq \Spf O_{\breve F_0}[[t]]$.  
This formal scheme
also admits a moduli description as (a connected component of)
an RZ-space \cite[Prop.~3.79]{RZ}: for each scheme $S$ over $\Spf O_{\breve F_0}$, $\CM_{O_{\breve F_0}}(S)$ is the set of isomorphism classes of pairs $(Y_0,\beta)$, where $Y_0$ is a $p$-divisible $O_{F_0}$-module over $S$ and $\smash{\beta\colon Y_{0,\ov S} \to \BE_{\ov S}}$ is an $O_{F_0}$-linear quasi-isogeny of height $0$.  An isomorphism between such pairs is an isomorphism between $p$-divisible $O_{F_0}$-modules over $S$ which is compatible with the framings to $\BE_{\ov S}$ in the obvious sense.  Set
\[
   \CM := \CM_{O_{\breve F_0}} \times_{\Spf O_{\breve F_0}} \Spf O_{\breve F}.
\]

\begin{proposition}\label{Serre isom}
The Serre construction $Y_0 \mapsto O_F \otimes_{O_{F_0}} Y_0$ induces an isomorphism of formal schemes over $\Spf O_{\breve F}$,
\[
   \CM\isoarrow \CN_{2,+}.
\]
\end{proposition}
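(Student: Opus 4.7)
The plan is to first construct the map explicitly, then exhibit it as an isomorphism between two formally smooth formal schemes of the same relative dimension over $\Spf O_{\breve F}$ that agree at the level of $\ov k$-points and on tangent spaces.

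\textbf{Construction of the map.} Given $(Y_0,\beta) \in \CM(S)$, set $X := O_F \otimes_{O_{F_0}} Y_0$ with the tautological $O_F$-action $\iota_X$ on the left tensor factor, and framing $\rho_X := O_F \otimes_{O_{F_0}} \beta$ (which is manifestly $O_F$-linear to $\BX_2 = O_F \otimes_{O_{F_0}} \BE$). The polarization $\lambda_X$ is defined, in analogy with the definition of $\lambda_{\BX_2}$ in \S\ref{framing obs}, as the composition
\[
   O_F \otimes_{O_{F_0}} Y_0 \xra{\varphi \otimes \lambda_{Y_0}} \ov{O_F^\vee} \otimes_{O_{F_0}} Y_0^\vee \isoarrow \bigl(O_F \otimes_{O_{F_0}} Y_0\bigr)^\vee,
\]
where $\varphi$ is the map \eqref{varphi} and $\lambda_{Y_0}$ denotes the unique principal polarization on $Y_0$ lifting $\lambda_\BE$ (adopting the variant formulation of Remark~\ref{alt formulation}, which is cleaner for the Serre construction). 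This map is easily seen to be functorial in $S$.

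\textbf{Verification of the moduli conditions and landing in $\CN_{2,+}$.} Using the canonical isomorphisms $\Lie X \cong O_F \otimes_{O_{F_0}} \Lie Y_0$ as $O_F \otimes_{O_{F_0}} \CO_S$-modules, and the fact that $\Lie Y_0$ is locally free of rank $1$ over $\CO_S$, one checks the Kottwitz condition by computing the characteristic polynomial of $\pi \otimes 1$ on $O_F \otimes_{O_{F_0}} \Lie Y_0$: this is $T^2 - \varpi = (T-\pi)(T+\pi) \in \CO_S[T]$. The wedge and spin conditions likewise follow from this explicit description of $\Lie X$, using that $\pi \otimes 1 - 1 \otimes \pi$ has rank $1$ on $O_F \otimes_{O_{F_0}} \CO_S$ and is nowhere zero. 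Finally, $\rho_X$ has height zero since $\beta$ does, and the Kottwitz invariant of $\rho_X$ is trivial because the Serre construction preserves it: the image of $\rho_X$ in $\ov k$-points is represented by the lattice $O_F \otimes_{O_{F_0}} \BM$ in $\breve F \otimes_{F_0} N$, which is precisely the base-point lattice, so $\rho_X$ is pointwise an isomorphism and the image lies in $\CN_{2,+}$.

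\textbf{Isomorphism.} By Proposition~\ref{formally sm ess proper}, $\CN_{2,+}$ is formally smooth of relative formal dimension $1$ over $\Spf O_{\breve F}$; and $\CM$ is by Lubin--Tate theory isomorphic to $\Spf O_{\breve F}[[t]]$, also of relative formal dimension $1$. Both underlying topological spaces consist of a single point: $\CM$ tautologically, and $\CN_{2,+}$ by Lemma~\ref{two points} combined with the description of the two components. It therefore suffices to verify that the morphism induces an isomorphism on tangent spaces. This is a Grothendieck--Messing calculation: for a square-zero thickening $S \hookrightarrow \wt S$ over $\Spf O_{\breve F}$ with ideal of square zero, lifting $(X,\iota_X,\lambda_X,\rho_X)$ to $\wt S$ amounts to lifting the Hodge filtration on the crystal of $X$ compatibly with the $O_F$-action and the polarization, while lifting $(Y_0,\beta)$ amounts to lifting the Hodge filtration on the crystal of $Y_0$. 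The Serre tensor construction identifies the two lifting problems: an $O_F$-stable lift of the filtration on $O_F \otimes_{O_{F_0}} M(Y_0)$ automatically splits as $O_F \otimes_{O_{F_0}}(\text{lift on } M(Y_0))$, because in the Lie algebra the Kottwitz condition forces $\pi \otimes 1$ to act with distinct eigenvalues $\pm\pi$ that are preserved by any lift, pinning down the splitting uniquely; the polarization compatibility is automatic. Hence the tangent map is bijective, and the morphism is an isomorphism.

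\textbf{Main obstacle.} The step requiring the most care is the deformation-theoretic comparison in the last paragraph: one must verify that imposing the $O_F$-action and the polarization $\lambda_X$ on a lift of $X$ cuts out exactly the lifts of $Y_0$ under the Serre construction, with no extra deformations allowed by the additional structure and none obstructed by it. This is where the specific choice of $\lambda_X$ via $\varphi$, the signature $(1,1)$, and the wedge/spin conditions conspire to make the two deformation functors match term-for-term.
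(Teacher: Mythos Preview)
Your construction of the Serre map and the verification that it lands in $\CN_{2,+}$ are essentially correct and parallel the paper's, though you assert the existence of $\lambda_{Y_0}$ without justification; the paper supplies this by reducing to Artin local $S$ and invoking Serre--Tate to realize $Y_0$ as the $p$-divisible group of an elliptic curve.

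The genuine gap is in your tangent-space argument. The computation takes place at the unique $\ov k$-point, so the relevant square-zero thickening is $\Spec \ov k[\epsilon]$, over which $\pi = 0$ in the structure sheaf. Thus the ``distinct eigenvalues $\pm\pi$'' you invoke are both zero, and the Kottwitz condition gives no splitting. More seriously, your claim that an $O_F$-stable lift of the filtration automatically has the form $O_F \otimes_{O_{F_0}} (\text{lift of } \CF_0)$ is false: $O_F$-stable lifts are parametrized by $\Hom_{O_F \otimes \ov k}(O_F \otimes \CF_0, O_F \otimes M(Y_0)/\CF_0) \cong O_F \otimes_{O_{F_0}} \ov k$, which is $2$-dimensional over $\ov k$, whereas lifts of $\CF_0$ form a $1$-dimensional space. (The wedge and spin conditions are automatic here.) So ``polarization compatibility is automatic'' is exactly backwards: it is the polarization that cuts the $2$-dimensional space down to $1$, and this is the step your argument is missing.

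The paper sidesteps this computation. It shows directly that $\CM(S) \to \CN_{2,+}(S)$ is \emph{injective} for every $S$: under the $O_{F_0}$-linear splitting $O_F \otimes_{O_{F_0}} Y_0 \simeq Y_0 \times Y_0$, the framing becomes $\rho = \diag(\beta,\beta)$, and by rigidity $(\rho')^{-1}\rho$ lifts to an isomorphism if and only if $(\beta')^{-1}\beta$ does. Since $\CM$ and $\CN_{2,+}$ are both formally smooth of relative dimension $1$ with a single $\ov k$-point, injectivity on tangent spaces already forces the map to be an isomorphism.
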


\begin{proof}
Given $(Y_0,\beta\colon Y_{0,\ov S} \to \BE_{\ov S}) \in \CM(S)$, we first have to explain how to define the rest of the quadruple $(O_F \otimes_{O_{F_0}} Y_0,\iota,\lambda,\rho) \in \CN_{2,+}(S)$.  For simplicity, we use the version of the moduli problem for $\CN_2$ described in Remark~\ref{alt formulation}.
Of course,
the notation signifies that for $\iota$ we take
the tautological $O_F$-action on $O_F \otimes_{O_{F_0}} Y_0$.  
There is a canonical isomorphism
\[
   \Lie(O_F \otimes_{O_{F_0}} Y_0) \cong O_F \otimes_{O_{F_0}} \Lie Y_0
\]
as $O_F \otimes_{O_{F_0}} O_S$-modules, from which it easily follows that $(O_F \otimes_{O_{F_0}} Y_0, \iota)$ satisfies the Kottwitz and spin conditions.  We define the framing map $\rho$ to be the quasi-isogeny
\[
   \rho\colon O_F \otimes_{O_{F_0}} Y_{0,\ov S} \xra{\id_{O_F} \otimes \beta} O_F \otimes_{O_{F_0}} \BE_{\ov S} = \BX_{2, \ov S},
\]
which is an isomorphism at each point of $S$ since the same is true of $\beta$.

It remains to define the polarization $\lambda$.  Since $\CM$ is a formal scheme over $\Spf O_{\breve F}$ with a single $\ov k$-point, it suffices to assume that $S$ is the spectrum of an Artin local ring with residue field $\ov k$.
Let $\lambda_0$ be any principal polarization of $Y_0$; this exists, for example, because $\BE$ is isomorphic to the $p$-divisible group of an elliptic curve, and hence so is $Y_0$ by the Serre--Tate theorem.
Over $\ov S = \Spec \ov k$, the principal polarizations $\lambda_{0,\ov k}$ and $\beta^*(\lambda_\BE)$ of $Y_{0,\ov k}$ differ by an $O_{F_0}^\times$-multiple by our remarks in \S\ref{framing obs}.  Rescaling $\lambda_0$ as needed, we may assume that $\lambda_{0,\ov k} = \beta^*(\lambda_\BE)$.
Then we define $\lambda$ to be the polarization
\[
   \lambda\colon O_F \otimes_{O_{F_0}} Y_0 
      \xra{\varphi \otimes \lambda_0} 
	  \ov{O_F^\vee} \otimes_{O_{F_0}} Y_0^\vee \cong (O_F \otimes_{O_{F_0}} Y_0)^\vee,
\]
where $\varphi\colon O_F \to \ov{O_F^\vee}$ is the symmetric $O_F$-linear map defined in \eqref{varphi}.  Just as when we defined $\BX_2$, one readily verifies that $\Ker \lambda = (O_F \otimes_{O_{F_0}} Y_0)[\iota(\pi)]$.  Furthermore 
\[
   \lambda_{\ov k} = \varphi \otimes \lambda_{0,\ov k} = \varphi \otimes \beta^*(\lambda_\BE) = \rho^*(\varphi \otimes \lambda_\BE) = \rho^*(\lambda_{\BX_2}).
\]
Hence $(O_F \otimes_{O_{F_0}} Y_0, \iota, \lambda, \rho)$ gives a point in $\CN_{2,+}(S)$, and its isomorphism class is clearly well-defined in terms of the isomorphism class of $(Y_0,\beta)$.  This defines the map $\CM \to \CN_{2,+}$.

Now we show that this map is an isomorphism.  Since $\CM$ and $\CN_{2,+}$ both have a single $\ov k$-point and are formally smooth over $\Spf O_{\breve F}$ of relative formal dimension $1$ (using Proposition \ref{formally sm ess proper} in the case of $\CN_{2,+}$), it suffices to show that the induced map on tangent spaces is nonzero.  For this we might as well prove the a priori stronger fact that $\CM(S) \to \CN_{2,+}(S)$ is an injection for any $\Spf O_{\breve F}$-scheme $S$.  Let $(Y_0,\beta)$ and $(Y'_0,\beta')$ be $S$-points on $\CM$, and let $(O_F \otimes_{O_{F_0}}Y_0,\iota,\lambda,\rho)$ and $(O_F \otimes_{O_{F_0}}Y'_0,\iota',\lambda',\rho')$ be the corresponding quadruples as defined above.  With respect to the $O_{F_0}$-linear decompositions
\[
   O_F \otimes_{O_{F_0}}Y_0 = 1 \otimes Y_0 + \pi \otimes Y_0,
	\quad
	O_F \otimes_{O_{F_0}}Y'_0 = 1 \otimes Y'_0 + \pi \otimes Y'_0,
	\quad\text{and}\quad
	\BX_2 =  1 \otimes \BE + \pi \otimes \BE,
\]
the framings $\rho$ and $\rho'$ take the form
\[
   \rho = \diag(\beta,\beta)
	\quad\text{and}\quad
	\rho' = \diag(\beta',\beta').
\]
Thus by ridigity $(\rho')^{-1}\circ \rho$ lifts to an isomorphism $O_F \otimes_{O_{F_0}}Y_0 \isoarrow O_F \otimes_{O_{F_0}}Y'_0$ if and only if $(\beta')^{-1}\circ \beta$ lifts to an isomorphism $Y_0 \isoarrow Y'_0$, which completes the proof.
\end{proof}

\begin{remark}
Lemma~\ref{two points} and Proposition~\ref{Serre isom} make precise and supply details for the Claim for the formal scheme denoted $\CM_1$ in \cite[Eg.~4.14]{RV}.  Note however that loc.~cit.~uses the framing object described in \cite[\S5 d)]{KR-alt}; this framing object should be replaced with our $\BX_2$, as discussed in Remark \ref{KR framing object}.\footnote{Note also that the claim concerning $O_F^\vee$ at the end of \cite[Eg.~4.14]{RV} is obviously incorrect.}
\end{remark}

\subsection{Special divisors}\label{special divisors}
Recall the embedding $\iota_\BE\colon O_F \hookrightarrow \End(\BE) = O_D$ and the corresponding canonical lift $\CE$ of $\BE$ over $O_{\breve F}$, with its action $\iota_\CE \colon O_F\to \End(\CE)$ and principal polarization $\lambda_\CE$. Let $\CY_0$ denote the universal $p$-divisible group over $\CM$, and let $c\in\End_{O_{F_0}}(\BE) = \Hom_{O_{F_0}}(\BE,\ov\BE)$. Associated to $c$ is the closed sublocus $\CT_{F}(c)$ of $\CM$ where $c$ lifts to a homomorphism $\CY_0 \to \ov\CE$.  Note that the divisors $\CT_{F}(c)$ are different from the divisors considered by Gross--Keating, i.e.~the locus where $c$ deforms to an endomorphism of $\CY_0$, or in other words a divisor of the form $\Spf W[[t]]/J$ in \cite[top of p.~147]{Rapo}.

Now let \CY denote the universal $p$-divisible group over $\CN_2$, and let $b\in\Hom_{O_F}(\BX_2, \ov\BE)$. Associated to $b$ is the closed sublocus $\CZ(b)$ of $\CN_{2,+}$ where the $O_F$-linear homomorphism $b\colon \BX_2 \to \ov\BE$ lifts to 
a
homomorphism $\CY \to \ov\CE$ (the analog in our present ramified setting of a \emph{KR divisor} in the unramified setting \cite{KR-U1}\footnote{Note however that in loc.~cit.~deformations of homomorphisms $\ov\BE \to \BX_2$ are considered.}). By identifying, via Proposition~\ref{Serre isom}, the restriction of \CY to $\CN_{2,+}$ with $O_F \otimes_{O_{F_0}} \CY_0$, adjunction  implies the following lemma. 

\begin{lemma}\label{compwserre}
If $c$ corresponds to $b$ under the adjunction isomorphism
\[
   \Hom_{O_{F_0}}\bigl(\BE, \ov\BE\bigr) \cong \Hom_{O_F}\bigl(O_F \otimes_{O_{F_0}} \BE, \ov\BE\bigr) = \Hom_{O_F}\bigl(\BX_2, \ov\BE\bigr) ,
\]
then the Serre isomorphism $\CM \cong \CN_{2,+}$ identifies
\begin{flalign*}
	\phantom{\qed} & &
   \CT_{F}(c) \cong \CZ(b).
	& & \qed
\end{flalign*}
\end{lemma}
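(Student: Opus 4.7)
The plan is to reduce the equality of divisors to an entirely formal statement about adjunction for the Serre tensor construction, applied uniformly to the universal object and to the framing object.

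First I would unwind the definitions at the level of $S$-points, for $S$ a scheme over $\Spf O_{\breve F}$. An $S$-point $(Y_0,\beta)$ of $\CM$ lies in $\CT_F(c)$ if and only if the $O_{F_0}$-linear quasi-homomorphism $\beta^{-1}\circ c \circ \beta \colon Y_{0,\ov S} \to \ov\CE_{\ov S}$ lifts (necessarily uniquely) to an $O_{F_0}$-linear homomorphism $Y_0 \to \ov\CE_S$. Under the Serre isomorphism of Proposition \ref{Serre isom}, the corresponding $S$-point of $\CN_{2,+}$ is $(O_F \otimes_{O_{F_0}} Y_0, \iota, \lambda, \rho)$ with $\rho = \id_{O_F} \otimes \beta$, and it lies in $\CZ(b)$ if and only if $\rho^{-1} \circ b \circ \rho \colon O_F \otimes_{O_{F_0}} Y_{0,\ov S} \to \ov\CE_{\ov S}$ lifts to an $O_F$-linear homomorphism $O_F \otimes_{O_{F_0}} Y_0 \to \ov\CE_S$.

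Next I would invoke the Serre tensor adjunction (see \S\ref{serre const}), which for any $p$-divisible $O_{F_0}$-module $Y_0$ over $S$ and any $p$-divisible $O_F$-module $Z$ over $S$ gives a natural isomorphism
\[
   \Hom_{O_{F_0}}(Y_0, Z) \isoarrow \Hom_{O_F}(O_F \otimes_{O_{F_0}} Y_0, Z),
	\quad
	\varphi \mapsto \id_{O_F}\otimes \varphi
\]
(followed by the $O_F$-action map on $Z$). Applied to $Z = \ov\CE_S$ and its reduction $Z = \ov\BE_{\ov S}$, this gives a commutative square of natural isomorphisms
\[
   \xymatrix@C+2em{
      \Hom_{O_{F_0}}(Y_0, \ov\CE_S) \ar[r]^-\sim \ar[d]  &  \Hom_{O_F}(O_F \otimes_{O_{F_0}} Y_0, \ov\CE_S) \ar[d]\\
		\Hom_{O_{F_0}}(Y_{0,\ov S}, \ov\BE_{\ov S}) \ar[r]^-\sim  &  \Hom_{O_F}(O_F \otimes_{O_{F_0}} Y_{0,\ov S}, \ov\BE_{\ov S})
	}
\]
in which the vertical arrows are the reduction-and-frame maps. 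Specializing to $(Y_0, \beta) = (\BE, \id_\BE)$ recovers the adjunction identification $\Hom_{O_{F_0}}(\BE, \ov\BE) \cong \Hom_{O_F}(\BX_2, \ov\BE)$ under which $c \leftrightarrow b$.

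It then remains to observe that under these compatible identifications, the quasi-homomorphism $\beta^{-1}\circ c \circ \beta$ on the bottom left corresponds to $\rho^{-1}\circ b \circ \rho$ on the bottom right, and that a quasi-homomorphism lifts to a homomorphism on the top left if and only if its image lifts to a homomorphism on the top right, since the horizontal arrows are isomorphisms. Combining these two observations shows that $(Y_0,\beta) \in \CT_F(c)(S)$ iff $(O_F \otimes_{O_{F_0}} Y_0, \iota,\lambda,\rho) \in \CZ(b)(S)$, giving the desired equality of closed formal subschemes. I do not anticipate a real obstacle here: the only mildly subtle point is checking that the Serre adjunction is compatible with passage to quasi-homomorphisms (i.e.\ with tensoring Hom-groups by $\BQ$) and with reduction modulo $\pi$, but both are standard consequences of the fact that $O_F \otimes_{O_{F_0}} (\,\cdot\,)$ commutes with base change and with $-\otimes_\BZ \BQ$.
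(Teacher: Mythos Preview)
Your proposal is correct and follows exactly the approach the paper intends: the paper gives no separate proof but simply states, just before the lemma, that ``adjunction implies the following lemma'' after identifying $\CY|_{\CN_{2,+}}$ with $O_F\otimes_{O_{F_0}}\CY_0$ via Proposition~\ref{Serre isom}. You have spelled out precisely what that one word means---the naturality of the Serre tensor adjunction in the module variable, compatibly with reduction and framing---which is all that is needed. (One minor notational slip: your expression $\beta^{-1}\circ c\circ\beta$ should use the framing $\rho_{\ov\CE}$ on the target rather than $\beta$, but this does not affect the argument.)
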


From now on, we often drop the field $F$ from the notation $\CT_F(c)$. 

\begin{proposition}
If $b$ and $c$ are nonzero, then both $\CZ(b)$ and $\CT(c)$ are relative divisors.
\end{proposition}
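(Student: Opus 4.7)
By Lemma \ref{compwserre} the Serre isomorphism identifies $\CZ(b)$ with $\CT(c)$, so it suffices to show that $\CT(c) \subset \CM$ is a relative divisor. Recall $\CM \cong \Spf O_{\breve F}[[t]]$ is regular of formal dimension $2$ and formally smooth of relative dimension $1$ over $\Spf O_{\breve F}$, so being a relative Cartier divisor amounts to being locally cut out by a single element $f \in \CO_{\CM} = O_{\breve F}[[t]]$ with $f \notin \pi \CO_{\CM}$.

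My plan is to realize $\CT(c)$ as the zero locus of a section of a line bundle via Grothendieck--Messing theory. Let $E(\CY_0)$ and $E(\ov\CE)$ denote the Lie algebras of the universal vector extensions of $\CY_0$ and $\ov\CE$ over $\CM$; these are rank-$2$ locally free $\CO_{\CM}$-modules carrying the Hodge line subbundles $\Fil_{\CY_0} \subset E(\CY_0)$ and $\Fil_{\ov\CE} \subset E(\ov\CE)$. By crystalline functoriality the homomorphism $c \colon \BE \to \ov\BE$ extends canonically to an $\CO_{\CM}$-linear map $\wt c \colon E(\CY_0) \to E(\ov\CE)$, and by Grothendieck--Messing, $c$ lifts to a homomorphism $\CY_0 \to \ov\CE$ on a closed subscheme $S \subset \CM$ precisely when $\wt c|_S$ sends $\Fil_{\CY_0}|_S$ into $\Fil_{\ov\CE}|_S$. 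Hence $\CT(c)$ is the vanishing locus of the composite line bundle map
\[
\Fil_{\CY_0} \hookrightarrow E(\CY_0) \xrightarrow{\wt c} E(\ov\CE) \twoheadrightarrow E(\ov\CE)/\Fil_{\ov\CE},
\]
which is locally defined by a single equation $f$.

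To see $f \neq 0$, I would pass to the rigid generic fiber ${\CM}^{\mathrm{rig}}$, which is an open unit disk over $\breve F$. At a rigid point $\xi \in {\CM}^{\mathrm{rig}}$ lying in $\CT(c)$, the existence of a nonzero $O_{F_0}$-homomorphism $\CY_{0,\xi} \to \ov\CE_\xi$ forces $\CY_{0,\xi}$ to be isogenous to the $O_F$-CM group $\ov\CE_\xi$, hence itself to carry CM by an order in $F$; by Gross's theory of canonical and quasi-canonical liftings, see \cite{ARGOS}, such points form a countable discrete subset of ${\CM}^{\mathrm{rig}}$. For the non-divisibility $f \notin \pi \CO_{\CM}$, I would argue by a deformation-theoretic computation on the special fiber: the Kodaira--Spencer map of $\CY_0$ modulo $\pi$ is nonzero (the Lubin--Tate deformation being nontrivial in characteristic $p$), whereas the Hodge filtration $\Fil_{\ov\CE}$ is constant in $t$. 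Consequently $\wt c \bmod \pi$ cannot carry $\Fil_{\CY_0} \bmod \pi$ into $\Fil_{\ov\CE} \bmod \pi$ uniformly in $t$, so the reduction of $f$ modulo $\pi$ is nonzero.

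The technical heart is this last non-divisibility check; the Kodaira--Spencer argument above should suffice but requires explicit local computations in the Lubin--Tate deformation space, and alternatively one can invoke the classical length formulas of Keating and Gross for these divisors, which are finite and therefore prevent $\CT(c)$ from engulfing the entire special fiber.
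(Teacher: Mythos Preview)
Your approach differs from the paper's, which does not attempt to write down a defining equation for $\CT(c)$ directly.  Instead the paper passes to the product $\CS = \CM \times_{\Spf O_{\breve F}} \CM$: by \cite[Prop.~5.1]{Wewers} the locus $\CZ \subset \CS$ where $c$ lifts to a homomorphism between the two universal objects is a relative divisor, and $\CT(c)$ is recovered as the intersection $\CZ \cap \CD$ with the irreducible divisor $\CD = \{\CY_0' \simeq \ov\CE\}$, which is not contained in $\CZ$.

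There is a genuine gap in your Grothendieck--Messing setup.  You assert that crystalline functoriality produces a canonical $\CO_\CM$-linear map $\wt c\colon E(\CY_0) \to E(\ov\CE)$, but Grothendieck--Messing furnishes such a map only over a nilpotent PD-thickening of the locus where $c$ is defined.  Here $c$ lives over $\Spec \ov k$, and $\CM$ is \emph{not} a PD-thickening of $\Spec \ov k$: the maximal ideal $(\pi,t)$ carries no PD-structure, since the required divided power $\gamma_p(t) = t^p/p!$ would have $\pi$-adic valuation $-2$.  One can iterate through successive square-zero thickenings and use the one-dimensionality of the obstruction space to conclude that the ideal of $\CT(c)$ is principal, but that is precisely the content of the result of Wewers that the paper cites; your composite section captures only the first-order part of the actual equation.

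This is not merely a technicality: your Kodaira--Spencer argument for $\pi \nmid f$ cannot be correct as stated.  If $v_D(c) \geq 2$, write $c = p c'$ with $c' \in O_D$; any functorial assignment $c \mapsto \wt c$ would give $\wt c = p \cdot \wt{c'} \equiv 0 \pmod \pi$, so your composite section would vanish identically on $\ov\CM$.  Yet by Lemma~\ref{multspfib} the scheme $\CT(c) \cap \ov\CM$ has finite length, so $\CT(c) \not\supset \ov\CM$.  This shows concretely that your composite section does not cut out $\CT(c)$ for such $c$.  Your fallback to the length formulas does establish the ``relative'' part for the \emph{correct} $f$, but you still need an independent argument that a single $f$ exists, which is exactly what the paper imports from Wewers.
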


\begin{proof}
Of course it suffices to prove this for $\CT(c)$. Consider $\CS := \CM \times_{\Spf O_{\breve F}} \CM$, with its universal object $\CY_0\times\CY'_0$. Recall  from \cite[Prop.~5.1]{Wewers} that the locus inside $\CS$ where $c$ lifts to a homomorphism $\CY_0\to\CY'_0$ is a relative divisor $\CZ$ in $\CS$. Another divisor $\CD$ inside $\CS$ is given by the locus where $\CY'_0= \ov\CE$. Now $\CD\simeq \CM$ is an irreducible divisor that is not contained in $\CZ$ (otherwise $c$ would lift to a homomorphism $\CY_0\to \ov\CE$ over all of $\CM$, which is absurd). Hence $\CT(c)=\CZ\cap \CD$ is a divisor on $\CM$. It is a relative divisor because $c$ does not lift to a homomorphism $\CY_0\to \ov\CE$ over the whole special fiber $\overline{\CM}=\CM\times_{\Spf O_{\breve F}}\Spec \ov k$. 
\end{proof}

\begin{remark}\label{qcanon} There are also quasi-canonical variants of this construction. Let $j\geq 1$.  Let
\[
   O_j: = O_{F, j} := O_{F_0} + \pi^j O_{F}
\]
be the order of conductor $j$ in $O_F$.  Let $W_j$ be the ring of integers of the ring class field extension of $\breve F$ corresponding to $O_j$, and let $\CW_j := \Spec W_j$.  Let $\CE_j$ be the quasi-canonical lifting of level $j$ over $\CW_j$ \cite[Def. 3.1]{Wewers}.  In particular, $W_0 = O_{\breve F}$ and $\CE_0 = \CE$.  Put
\[
   \CM_j := \CM\times_{\Spec O_{\breve F}} \CW_j
	\quad\text{and}\quad
	\CN_{2,+, j} := \CN_{2,+}\times_{\Spec O_{\breve F}} \CW_j.
\]
Let $\ov\CE_j$ denote the same object as $\CE_j$, but where the $O_j$-action is precomposed by the nontrivial Galois automorphism.  Inside $\CM_j$, we have the locus $\CT_{F,j}(c)$ where the endomorphism $c\in O_D$ lifts to a homomorphism $\CY_0 \to \ov\CE_j$; and inside  $\CN_{2,+, j}$, we have the locus $\CZ_j(b)$ where the $O_F$-linear homomorphism $b\colon \BX_2 \to \ov\BE$ lifts to an $O_j$-linear homomorphism $\CY \to \ov\CE_j$.
 If $c$ corresponds to $b$ under the adjunction isomorphism
$
\Hom_{O_{F_0}}(\BE, \ov\BE) \cong \Hom_{O_F}(O_F \otimes_{O_{F_0}} \BE, \ov\BE)=\Hom_{O_F}(\BX_2, \ov\BE) ,
$
then
\[
   \CT_{F,j}(c) \cong \CZ_j(b)
\]
under the Serre isomorphism.  
\end{remark}

\section{Special divisors as sums of quasi-canonical divisors}\label{sums of qcan divisors section}

In this section we express the special divisor $\CT(c)$ defined in \S\ref{special divisors} as a sum of quasi-canonical divisors, where $c \in O_D$ is nonzero.  We identify $F$ with its image in $D$ via $\iota_\BE$, but we also consider the conjugate embedding $\iota := \tensor[^{\ov c}]{\iota}{_\BE} \colon F \inj D$ and its image $\tensor*[^{\ov c}]{F}{}$.  Recall that here the bar denotes the main involution on $D$, and the conjugate embedding is defined in \eqref{conjugate emb}. Corresponding to $\iota$, we have the quasi-canonical divisor $\CW_{\tensor*[^{\ov c}]{F}{}, j}$ on \CM and the quasi-canonical lift $\CE_{\iota,j}$ of level $j$ over $\CW_{\tensor*[^{\ov c}]{F}{}, j}$, as well as the canonical lift $\CE_\iota$ over $\Spf O_{\breve F}$.  

\begin{proposition}\label{sums of qcan divisors prop}
There is an equality of divisors on \CM,
\[
   \CT(c)=\sum_{0\leq j\leq v_D(c)}\CW_{\tensor*[^{\ov c}]{F}{}, j} . 
\]
\end{proposition}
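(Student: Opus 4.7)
The plan is to verify the equality as effective relative divisors on the regular $2$-dimensional formal scheme $\CM \simeq \Spf O_{\breve F}[[t]]$. Both sides are horizontal (they do not contain the special fiber), so it is enough to prove the inclusion $\CW_{\tensor*[^{\ov c}]{F}{}, j} \subseteq \CT(c)$ for each $0 \le j \le v_D(c)$ and then match $O_{\breve F}$-lengths.

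My first step would be to establish this inclusion. The universal $p$-divisible group over $\CW_{\tensor*[^{\ov c}]{F}{}, j}$ is the quasi-canonical lift $\CE_{\iota, j}$ carrying an action of the order $O_j \subset \tensor*[^{\ov c}]{F}{}$ via the conjugate embedding $\iota = \tensor*[^{\ov c}]{\iota}{_\BE}$. Applying the Grothendieck--Messing criterion, the lifting question for $c \colon \BE \to \ov\BE$ reduces to compatibility of Hodge filtrations on the associated crystals; this should follow from a direct manipulation in which one factors $c$ using an element of $O_j$ that acts on $\CE_{\iota, j}$ and an element of $O_D$ that lifts to $\ov\CE$ over all of $\Spf O_{\breve F}$. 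Conversely, for $j > v_D(c)$ I would show that $\CE_{\iota, j}$ does \emph{not} admit a lift of $c$: were such a lift to exist, postcomposing with the $\iota_{\ov\CE}(O_F)$-action on $\ov\CE$ would yield quasi-endomorphisms of $\CE_{\iota, j}$ outside the order $O_j$, contradicting the standard formula $\End(\CE_{\iota, j}) = O_j$.

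With these point-by-point statements in hand, the proof is completed by matching multiplicities. The $O_{\breve F}$-length of $\CW_{\tensor*[^{\ov c}]{F}{}, j}$ is $[W_j : O_{\breve F}]$, so the length of the right-hand side is an explicit finite sum. I would match this with the length of $\CT(c)$ either via the Gross--Keating formula \cite{ARGOS} transposed to the present deformation problem, or by writing down a defining equation of $\CT(c)$ in the parameter $t$ and factoring it into distinguished polynomials, each cutting out one of the $\CW_{\tensor*[^{\ov c}]{F}{}, j}$.

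The main technical obstacle, I expect, is the careful tracking of the \emph{conjugate} embedding $\iota = \tensor*[^{\ov c}]{\iota}{_\BE}$ throughout: the quasi-canonical divisors appearing on the right are attached not to $\iota_\BE(F)$ itself but to its $\ov c$-conjugate subfield of $D$, and one must verify this compatibility against the adjunction of Lemma \ref{compwserre}. A useful way to organize the bookkeeping is to translate to the equivalent divisor $\CZ(b)$ on $\CN_{2,+}$, where the unitary moduli problem makes the target embedding visible directly.
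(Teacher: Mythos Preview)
Your skeleton is the same as the paper's: first show $\CW_{\tensor*[^{\ov c}]{F}{}, j} \subset \CT(c)$ for each $0 \le j \le v_D(c)$, then match degrees over $O_{\breve F}$. Two points of comparison are worth noting.

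For the inclusion, the paper does not invoke Grothendieck--Messing but gives a direct factorization. Write $c = \pi^\gamma c_0$ with $\gamma = v_D(c)$ and $c_0 \in O_D^\times$. Then $\pi^{\gamma-j}c_0$ conjugates $\iota$ into $\iota_\BE$, so it lifts to a morphism $\CE_\iota \to \CE$ over all of $\Spf O_{\breve F}$; and $\iota(\pi^j)$ lifts to $\psi_j \colon \CE_{\iota,j} \to \CE_\iota$ over $\CW_{\tensor*[^{\ov c}]{F}{},j}$ by the very construction of the quasi-canonical lift. The composite lifts $c$. This is more elementary than a crystalline argument and makes the role of the conjugate embedding transparent. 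Your converse step for $j > v_D(c)$ is unnecessary once you compare degrees.

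For the degree comparison, what you call ``$O_{\breve F}$-length'' should be the degree (equivalently, the intersection number with the special fiber $\ov\CM$); the length is infinite. The paper computes $(\CT(c)\cdot\ov\CM)$ directly via the Kummer congruence (Lemma~\ref{multspfib}): in $\ov k[[t,t']]$, the locus where $c$ lifts between the two universal deformations is cut out by $\prod_{i=0}^{\gamma}(t^{q^i} - (t')^{q^{\gamma-i}})$, and setting $t'=0$ gives degree $\sum_{j=0}^{\gamma} q^j$, matching $\sum_j [W_j:O_{\breve F}]$. This is your ``explicit equation'' option made precise; the Gross--Keating route you mention is considerably heavier and not needed here.
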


\begin{proof} 
Note that in the definition of $\CT(c)$, it is harmless to replace $\ov\BE$ with $\BE$ and $\ov\CE$ with $\CE$, since in both cases the underlying $O_{F_0}$-modules are the same.  Set $\gamma := v_D(c)$, and write $c = \pi^\gamma c_0$ with $c_0 \in O_D^\times$. Let $j\leq \gamma$.  The element $\pi^{\gamma-j} c_0 \in O_D$ conjugates $\iota$ into $\iota_\BE$, and therefore lifts to a homomorphism $\CE_\iota \to \CE$.  Over the locus $\CW_{\tensor*[^{\ov c}]{F}{}, j}$, the endomorphism $\iota(\pi^j)$ of $\BE$ lifts to a homomorphism $\psi_j\colon \CE_{\iota, j} \to \CE_\iota$.  Since $\iota = \tensor[^{c_0^{-1}\!\!\!}]{\iota}{_\BE}$,  we thus obtain a diagram
\[
   \xymatrix@C+2ex{
	   \CE_{\iota, j} \ar[r]^-{\psi_j} \ar@{-}[d]  &  \CE_\iota \ar[r] \ar@{-}[d]  &  \CE \ar@{-}[d]\\
		\BE \ar[r]^-{c_0^{-1} \pi^j c_0}  &  \BE \ar[r]^-{\pi^{\gamma-j}c_0}  &  \BE,
	}
\]
where the vertical lines indicate reduction to $\ov k$, and where the bottom row evidently composes to $c$.
This shows that the divisor $\CW_{\tensor*[^{\ov c}]{F}{}, j}$ is a component of $\CT(c)$. We therefore obtain an inequality of divisors on $\CM$, 
\[
   \sum_{j=0}^\gamma \CW_{\tensor*[^{\ov c}]{F}{}, j} \leq \CT(c).
\]
The equality will follow by comparing the intersections of both sides with the special fiber $\ov\CM$. For the left-hand side, we note that $(\CW_{\tensor*[^{\ov c}]{F}{}, j}\cdot \ov\CM) = [W_j: O_{\breve F}]=q^j$. For the right-hand side, we apply the following lemma.
\end{proof}
\begin{lemma}\label{multspfib}
 The intersection multiplicity of the cycle $\CT(c)$ with the special fiber $\ov\CM$ is
\[
   \bigl(\CT(c)\cdot \ov\CM\bigr) = \sum\nolimits_{0\leq j\leq v_D(c)}q^j.
\]
\end{lemma}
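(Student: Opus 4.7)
The plan is to compute $(\CT(c)\cdot\ov\CM)$ by matching it with a sum of intersection multiplicities for quasi-canonical liftings, as computed by Gross--Keating \cite{ARGOS} and Wewers \cite{Wewers}. Since $\CM \simeq \Spf O_{\breve F}[[t]]$ is two-dimensional regular and $\CT(c)$ is a relative Cartier divisor meeting $\ov\CM$ in an artinian scheme supported at the closed point, $\CT(c)$ is locally cut out by a distinguished power series $f(t)\in O_{\breve F}[[t]]$, and $(\CT(c)\cdot\ov\CM)$ equals the degree $d$ of the distinguished polynomial associated to $f$. This $d$ also coincides with the generic degree of $\CT(c) \to \Spf O_{\breve F}$, so it suffices to identify $d$ with $\sum_{0\leq j\leq v_D(c)}q^j$.

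To set up the computation, I would pass to $\CS := \CM \times_{\Spf O_{\breve F_0}} \CM$ and consider the Wewers relative divisor $\CZ(c) \subset \CS$ cut out by the locus where $c$ lifts to a homomorphism $\CY_0 \to \CY'_0$. Then $\CT(c) = \CZ(c) \cap \CD$ with $\CD = \{\CY'_0 = \ov\CE\} \simeq \CM$, so $d = (\CZ(c)\cdot \CD \cdot \ov\CS)_\CS$. The flip automorphism on $\CS$ interchanging the two factors sends $\CZ(c)$ to $\CZ(\ov c)$ and $\CD$ to $\CD' := \{\CY_0 = \ov\CE\}$; since $v_D(\ov c) = v_D(c)$, the number $d$ is symmetric in this interchange, and either restriction can be used to evaluate it.

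The core computation then amounts to classifying the lifts directly: a finite-order $O_L$-valued point of $\CT(c)$ corresponds to a deformation $Y_0/O_L$ of $\BE$ together with a lift of $c$ to a homomorphism $Y_0 \to \CE_{O_L}$. Rigidity in characteristic zero, combined with the facts that $\CE$ has CM by $O_F$ via $\iota_\CE$ and that $c$ conjugates $\iota_\BE$ into $\tensor[^{\ov c}]{\iota}{_\BE}$, forces $Y_0$ to acquire CM by the conductor-$j$ order of $\tensor[^{\ov c}]{F}{}$ for some $0 \leq j \leq v_D(c)$. Such deformations are classified by the quasi-canonical lifting spaces $\CW_{\tensor[^{\ov c}]{F}{},j}$, each of which has degree $[W_j:O_{\breve F}]=q^j$ over $\Spf O_{\breve F}$; summing over $j$ yields the claimed total.

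The principal obstacle will be giving a rigorous derivation of the conductor bound and the resulting bijection between characteristic-zero lifts in $\CT(c)$ and quasi-canonical lifts of conductor $j \leq v_D(c)$. Wewers' results provide this in the endomorphism-lifting setting; transferring to the homomorphism-into-canonical-lift setting requires either an explicit deformation-theoretic argument (e.g.\ via the crystalline period map and the condition that $c$ preserve the Hodge filtration of $Y_0$ relative to that of $\CE_{O_L}$) or a reduction to the endomorphism case using the flip symmetry on $\CS$ described above combined with the classical Gross--Keating length formula for endomorphism lifts.
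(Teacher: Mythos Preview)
Your approach is genuinely different from the paper's, and it has a real gap.

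The paper's proof is a short direct computation in the special fiber via the Kummer congruence: identifying $\ov\CM \times \ov\CM$ with $\Spf \ov k[[t,t']]$, the isogeny locus is cut out by the explicit polynomial
\[
   g = \bigl(t-(t')^{q^\gamma}\bigr)\bigl(t^q-(t')^{q^{\gamma-1}}\bigr)\dotsm\bigl(t^{q^\gamma}-t'\bigr), \qquad \gamma = v_D(c),
\]
and the canonical-lift locus by $t'=0$; setting $t'=0$ gives $\ov k[[t]]/(t^{1+q+\dotsb+q^\gamma})$. No classification of characteristic-zero lifts is needed.

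Your route instead computes the generic degree of $\CT(c)$ by first decomposing it into quasi-canonical divisors $\CW_{\tensor[^{\ov c}]{F}{},j}$ for $0\le j\le v_D(c)$. But that decomposition is exactly Proposition~\ref{sums of qcan divisors prop}, and in the paper the logic runs the \emph{other} way: one proves the easy inequality $\sum_j \CW_{\tensor[^{\ov c}]{F}{},j}\le \CT(c)$ by an explicit construction, and then deduces equality from Lemma~\ref{multspfib}. So your strategy amounts to proving the harder statement first in order to obtain the easier one.

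The ``principal obstacle'' you flag is therefore the whole content: you must show that \emph{every} generic point of $\CT(c)$ lies on some $\CW_{\tensor[^{\ov c}]{F}{},j}$ with $j\le v_D(c)$, not just that those divisors are contained in $\CT(c)$. Your two suggested fixes do not close this. The flip on $\CS$ indeed sends $\CZ(c)\cap\CD$ to $\CZ(\ov c)\cap\CD'$, but $\CD'=\{\CY_0=\ov\CE\}$ is still a horizontal section, not the diagonal $\{\CY_0=\CY_0'\}$; you land on another homomorphism-lifting problem, not the Gross--Keating endomorphism-lifting one. And a direct deformation-theoretic argument (Hodge filtrations, crystalline period map) would have to reprove the full quasi-canonical classification with the conductor bound, which is considerably more work than the Kummer-congruence computation.
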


\begin{proof} 
We use the Kummer congruence, cf.~\cite[Th.~4.1]{Rapo}. Identify $\ov\CM$ with $\Spf \ov k [[t]]$, and the product of $\ov\CM$ with itself with $\Spf \ov k [[t, t']]$. Let $(\CY_0, \CY_0')$ be the universal $p$-divisible group over $\Spf \ov k [[t, t']]$.  Let $I$ be the ideal in $\ov k [[t, t']]$ describing the closed sublocus where $c\colon \BE\to \BE$ lifts to a homomorphism $\wt c: \CY_0 \to \CY_0'$. By loc.~cit.,\ the uniformizers $t$ and $t'$ may be chosen such that $I$ is generated by the element
\[
   g := \bigl(t-(t')^{q^\gamma}\bigr) \bigl(t^q-(t')^{q^{\gamma-1}}\bigr) \dotsb \bigl(t^{q^\gamma}-t'\bigr), 
\]
where as in the previous proof $\gamma = v_D(c)$.  
On the other hand, the locus where $\CY' = \CE$ is defined by $t'=0$. Hence the intersection multiplicity in question is equal to the length of the Artin ring
\[
   \ov k [[t, t']]/(t', g) = \ov k [[t]]/(t^{1+q+\dotsb+q^\gamma}) .
\]
The claim follows.
\end{proof}

\begin{remarks}
\begin{altenumerate}
\item
Note that our convention that $q = p$ when working with formal schemes is in force in Lemma \ref{multspfib}.  Strictly speaking, we need it to appeal to the Kummer congruence.
\item
The analog of Lemma \ref{multspfib} in the case when  $F/F_0$ is unramified is \cite[Prop.~8.2]{KR-U1}.\footnote{Note that the quantity $v$ in loc.~cit.\ should be replaced by half of its value.} The proof of this analog in loc.~cit., due to Th.~Zink, uses displays and is difficult.  The proof of Lemma \ref{multspfib} given here transposes in the obvious way to the unramified case, which gives a drastic simplification of loc.~cit. 

Conversely, one can reduce Lemma \ref{multspfib} to the unramified case in \cite{KR-U1} as follows.  Let $F'$ denote the unramified quadratic extension of $F_0$.  We first point out that the Serre isomorphism in Proposition \ref{Serre isom} also holds in the unramified setting (with $\CM_{O_{\breve F_0}}$ isomorphic to the entire space $\CN_{F'/F_0,2}$), as does the compatibility of special divisors in Proposition \ref{compwserre}. Now let $\CE'/\Spf O_{\breve F_0}$ denote the canonical lifting of $\BE$ for $F'/F_0$ (relative to any embedding of $F'$ in $D$).  Of course
\[
   \CE \times_{\Spf O_{\breve F}} \Spec \ov k \simeq \CE' \times_{\Spf O_{\breve F_0}} \Spec \ov k
\] 
as formal $O_{F_0}$-modules over $\Spec \ov k$.  Hence, identifying the special fibers of $\CM$ and $\smash{\CM_{O_{\breve F_0}}}$,
we get an identification of the corresponding divisors 
\[
   \CT_F(c) \times_{\Spf O_{\breve F}} \Spec \ov k = \CT_{F'}(c) \times_{\Spf O_{\breve F_0}} \Spec \ov k. 
\]
Therefore Lemma \ref{multspfib} follows from Proposition~8.2 of \cite{KR-U1}. 
\end{altenumerate}
\end{remarks}

\section{Reduction to the Lie algebra}\label{reduction to LA}

In this section we lay the framework to reduce the geometric calculations in Theorem~\ref{thm group} (the group setting) to those in Theorem~\ref{thm lie} (the Lie algebra setting).  Modulo these calculations, which will be carried out in the next section, we also prove Theorem \ref{main prop}.

\subsection{Coordinates on $U_1$ and $\fku_1$}\label{U_1 and fku_1 coords}
We begin by presenting the unitary group $U_1(F_0) \simeq U(\BX_3)$ and its Lie algebra $\fku_1(F_0)$ in terms of explicit coordinates.  First recall the embedding
\[
   \iota_{\ov\BE} \colon O_F\incl \End_{O_{F_0}}\bigl(\ov\BE\bigr) = O_D.
\]
Except where stated to the contrary, from now on we will tacitly regard $O_F$ as a subring of $O_D$ via $\iota_{\ov\BE}$, and likewise for $F \subset D$, and drop $\iota_{\ov\BE}$ from the notation.  Write 
\begin{equation}\label{Deigen}
   D = D_+ \oplus D_-
\end{equation}
for the decomposition of $D$ into its respective $+1$ and $-1$ eigenspaces under the conjugation action of $\pi$.  Then $F = D_+$.  For any $\alpha \in D$, we denote by $\alpha_+$ and $\alpha_-$ its respective components with respect to this decomposition.  Note that $\RN \alpha = \RN \alpha_+ + \RN\alpha_-$.
We also write $D^{\tr = 0}$ for the set of traceless elements in $D$, and we analogously define $O_D^{\tr = 0}$, $F^{\tr = 0} = F_0\pi$, and $O_F^{\tr = 0} = O_{F_0}\pi$.

Recall from \S\ref{framing obs} that we have defined the framing object
\begin{equation}\label{BX_3 prod decomp}
   \BX_3 = \BX_2\times\ov\BE=(O_F\otimes_{O_{F_0}}\BE)\times \ov\BE.
\end{equation}
Identifying $O_F\otimes_{O_{F_0}}\BE \simeq \BE \times \BE$ as in \eqref{BX_2 concrete}, we obtain
\begin{equation}\label{End(BX_3) = M_3(O_D)}
   \End_{O_{F_0}}(\BX_3) \simeq \M_3(O_D).
\end{equation}
In terms of this identification, the $O_F$-action $\iota_{\BX_3}$ sends
\[
   \pi \mapsto
   \begin{bmatrix}
	   0  &  \varpi  &  0\\
	   1  &  0  &  0\\
	   0  &  0  & \pi
   \end{bmatrix}.
\]
Hence we identify
\[
   \End_{O_F}(\BX_3) = 
   \left\{\left.
      \begin{bmatrix}
         \alpha  &  \beta\varpi  &  b\pi \\
         \beta  &  \alpha  & b\\
		 c  &  \pi c  &  d
      \end{bmatrix}\,\right|\, \alpha,\beta,b,c\in O_D,\ d\in O_F\right\}.
\]
We emphasize that here and below the symbol $\pi$ always means $\iota_{\ov\BE}(\pi)$, in accordance with our convention.

With regard to the decomposition $\BX_3 \simeq \BE \times \BE \times \ov\BE$, the polarization $\lambda_{\BX_3}$ is given by
\[
   \lambda_{\BX_3} = \diag (\lambda_\BE,  -\varpi\lambda_\BE,  \lambda_{\ov\BE});
\]
see \eqref{explicit lambda_BX_2}.  Since the Rosati involution on $D$ induced by $\lambda_\BE = \lambda_{\ov\BE}$ is the main involution $a \mapsto \ov a$, the Rosati involution $x \mapsto x^\dag = \lambda_{\BX_3}^{-1} \circ x^\vee \circ \lambda_{\BX_3}$ on $\End_{O_F}^\circ(\BX_3)$ is given by the formula
\[
   \begin{bmatrix}
      \alpha  &  \beta\varpi  &  b\pi \\
      \beta  &  \alpha  & b\\
      c  &  \pi c  &  d
   \end{bmatrix}^\dag
   =
   \begin{bmatrix}
	  \ov \alpha  &  - \ov \beta\varpi  &  \ov c \\
      -\ov \beta  &  \ov \alpha  &  \ov c \pi^{-1} \\
	  -\pi \ov b  &  -\ov b\varpi  &  \ov  d
   \end{bmatrix}.
\]

Attached to $\BX_3$ is the identification of unitary groups from \eqref{Aut(BX_n)=U(W_1)},
\begin{equation}
\begin{aligned}\label{U_1 coords}
   U_1(F_0) \simeq U(\BX_3) &= \bigl\{\, g\in\End^\circ_{O_F}(\BX_3) \bigm| gg^\dag = 1 \,\bigr\}\\
      &\qquad\qquad\qquad\qquad\subset
      \left\{\left.
         \begin{bmatrix}
            \alpha  &  \beta\varpi  &  b\pi \\
            \beta  &  \alpha  & b\\
   		 c  &  \pi c  &  d
         \end{bmatrix}\,\right|\, \alpha,\beta,b,c\in D,\ d\in F\right\}.
\end{aligned}
\end{equation}
Thus we get an identification of Lie algebras,
\begin{align}
   \fku_1(F_0) 
	   &\simeq \bigl\{\, x\in\End^\circ_{O_F}(\BX_3) \bigm| x + x^\dag = 0 \,\bigr\} \notag\\
      &= \left\{\left.
         \begin{bmatrix}
            \alpha  &  \beta\varpi  &  b\pi \\
            \beta  &  \alpha  & b\\
		    \pi \ov b  &  \ov b \varpi  &  d
         \end{bmatrix}\,\right|\, \alpha \in D^{\tr = 0},\ \beta \in F_0,\ b \in D,\ d\in F^{\tr = 0}\right\}. \label{fku_1 coords}
\end{align}
Intersecting with $\End_{O_F}(\BX_3)$ gives a natural compact open subgroup in each,
\[
   K_1 := U_1(F_0)\cap \End_{O_F}(\BX_3)
      = \bigl\{\, g \in \End_{O_F}(\BX_3) \bigm| gg^\dag = 1 \,\bigr\}
\]
and 
\begin{align}
   \fkk_1 := {}& \fku_1(F_0) \cap \End_{O_F}(\BX_3) \notag\\
      = {}& \bigl\{\, x \in \End_{O_F}(\BX_3) \bigm| x + x^\dag = 0 \,\bigr\} \notag\\
	  = {}& \left\{\left.
         \begin{bmatrix}
            \alpha  &  \beta\varpi  &  b\pi \\
            \beta  &  \alpha  & b\\
		    \pi \ov b  &  \ov b \varpi  &  d
         \end{bmatrix}\,\right|\, \alpha \in O_D^{\tr = 0},\ \beta \in O_{F_0},\ b \in O_D,\ d\in O_F^{\tr = 0}\right\}. \label{fkk_1 coords}
\end{align}
Note that $K_1$ is the stabilizer in $U_1(F_0)$ of the standard basepoint in $\CN_3$, i.e.~the point
\[
   (\BX_3, \iota_{\BX_3}, \lambda_{\BX_3}, \id_{\BX_3}) \in \CN_3\bigl(\ov k\bigr).
\]
Furthermore we set
\[
   K_{1,\rs} := K_1 \cap U_{1,\rs}(F_0)
	\quad\text{and}\quad
	\fkk_{1,\rs} := \fkk_1 \cap \fku_{1,\rs}(F_0).
\]

\subsection{Invariants on $\fku_1$ and reduced elements}\label{subsec inv u1}
We now make explicit the invariants on $\fku_1$ discussed in \S\ref{ss: Lie invars} in terms of the coordinates just introduced.  Let
\[
   x =
   \begin{bmatrix}
		\alpha  &  \beta \varpi  &  b \pi\\
		\beta  &  \alpha  &  b\\
		\pi \ov b  &  \ov b \varpi  &  d
	\end{bmatrix}
	\in \fku_1(F_0)
\]
be expressed in the form \eqref{fku_1 coords}.  Write
\begin{equation*}
   A' :=
	\begin{bmatrix}
		\alpha  & \beta \varpi \\ 
	   \beta  &  \alpha
	\end{bmatrix},
	\quad
   \mathbf b' := \begin{bmatrix}b\pi\\b\end{bmatrix}, 
	\quad
   \mathbf c' := \begin{bmatrix}\pi \ov b  &  \ov b \varpi \end{bmatrix},
\end{equation*}
so that
\[
   x =
	\begin{bmatrix}
		A'  &  \mathbf{b}'\\
		\mathbf{c}'  &  d
	\end{bmatrix}.
\]
Note that this block decomposition for $x$ is \emph{not the same} as the earlier one in \eqref{x block decomp}, since here we allow matrix entries in $D$.  With respect to the identifications \eqref{BX_3 prod decomp} and \eqref{End(BX_3) = M_3(O_D)}, we have
\begin{equation}\label{block identifications}
   A' \in \End_{O_F}^\circ(\BX_2),
	\quad
	\mathbf{b}' \in \Hom_{O_F}^\circ\bigl(\ov\BE, \BX_2\bigr) = \BV_2,
	\quad\text{and}\quad
	\mathbf{c}' \in \Hom_{O_F}^\circ\bigl(\BX_2, \ov\BE\bigr).
\end{equation}
Using these identifications, one sees that the quantities
\begin{equation}\label{fku_1 invariants}
   \lambda(x) := \operatorname{det}_F(A' \mid \BV_2),\quad
   u(x) := \varpi^{-1}\mathbf{c}' \mathbf{b}',\quad
   w(x) := \varpi^{-1}\mathbf{c}' A' \mathbf{b}',\quad
	\operatorname{tr}_F(A' \mid \BV_2),\quad
	d
\end{equation}
are the five polynomial generators of the invariant ring listed in \eqref{inv sec}, except for the factor $\varpi^{-1}$ in front of the second and third invariants, which we have inserted to give a more convenient normalization; see e.g.~Lemma \ref{intimage} below.

To make the first and fourth invariants in \eqref{fku_1 invariants} explicit, note that the map
\[
   \BV_2 = 
	\biggl\{
	\begin{bmatrix}
		b\pi\\
		b
	\end{bmatrix} 
	\biggm| b \in D \biggr\}
	\to
	D,
	\quad
	\begin{bmatrix}
		b\pi\\
		b
	\end{bmatrix}
	\mapsto b,
\]
is an $F$-linear isomorphism, where $F$ acts naturally on the right on source and target. In this way $A'$ acting on $\BV_2$ identifies with the $F$-linear map
\[
   b \mapsto \alpha b + b \beta \pi
\]
on $D$.  Hence 
\[
   \operatorname{tr}_F(A' \mid \BV_2) = 2\beta \pi
	\quad\text{and}\quad
	\lambda(x) = \operatorname{det}_F(A' \mid \BV_2) = \RN \alpha + \beta^2 \varpi.
\]

\begin{definition}\label{fku_1 reduced}
An element $x \in \fku_1(F_0)$ written as above is called \emph{reduced} if its invariants $\tr_F(A' \mid \BV_2)$ and $d$ are $0$, that is, if $\beta = d = 0$.  We denote by $\fku_{1,\red}(F_0)$ the subspace of reduced elements in $\fku_1(F_0)$.
\end{definition}

Of course, the invariants $\tr_F(A' \mid \BV_2)$ and $d$ as written here arise from regular functions on $\fku_1$, and their vanishing therefore defines $\fku_{1,\red}$ as a closed subscheme of $\fku_1$; hence the notation.  We also set
\[
   \fku_{1,\red,\rs} := \fku_{1,\red} \cap \fku_{1,\rs},
	\quad
	\fkk_{1,\red} := \fkk_1 \cap \fku_{1,\red}(F_0),
	\quad\text{and}\quad
	\fkk_{1,\red,\rs} :=	\fkk_{1,\red} \cap \fkk_{1,\rs}.
\]

There is a natural map $\fku_1(F_0) \to \fku_{1,\red}(F_0)$, which we denote by $x\mapsto x_\red$, defined by
\[
   \begin{bmatrix} 
		\alpha  &  \beta\varpi  &  b\pi\\
		\beta  &  \alpha  &  b\\
		\pi\ov b  &  \ov b \varpi  &  d
	\end{bmatrix}
	\mapsto 
	\begin{bmatrix}
		\alpha  &  0  &  b\pi\\
		0  &  \alpha  &  b\\
		\pi \ov b  &  \ov b \varpi  &  0
	\end{bmatrix}.
\] 
Taking this map together with the last two invariants in \eqref{fku_1 invariants} gives a product decomposition
\begin{equation}\label{u1 prod decomp}
	\begin{gathered}
   \xymatrix@R=0ex{
   \fku_1(F_0) \ar[r]^-\sim  &  \fku_{1, \red}(F_0) \times \fks_1(F_0) \times \fks_1(F_0)\\
	x \ar@{|->}[r]  &  (x_{\red},  2\beta \pi, d) . 
	}
	\end{gathered}
\end{equation}

In \S\ref{rel to int numbers} we are going to explain how to reduce the calculation of intersection numbers not just to the Lie algebra setting, but to \emph{reduced} elements in $\fku_1(F_0)$.  The first basic fact in this direction is the following.

\begin{lemma}\label{rs for reduced u1}
An element $x\in \fku_1(F_0)$ is regular semi-simple if and only if  $x_\red$ is.
\end{lemma}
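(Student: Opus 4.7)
The plan is to show that the difference $z := x - x_\red$ is centralized by the conjugation action of $H_1$ on $\fku_1$, so that the $H_1$-orbit of $x$ inside $\fku_1$ is a translate of the $H_1$-orbit of $x_\red$. Once this is in hand, the equivalence will follow quickly: the stabilizers of $x$ and $x_\red$ in $H_1$ will coincide, and since translation by a fixed element is a Zariski-homeomorphism of the affine $F_0$-scheme $\fku_1$, both closedness and dimension of the orbits will transfer. As recalled in \S\ref{ss:LAchar}, regular semi-simplicity in the present setting amounts to having a closed orbit with trivial stabilizer, so the two conditions will be equivalent.

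The key step will be to verify $hzh^{-1}=z$ for every $h\in H_1$. To do so, I would use the decomposition $\BX_3 = \BX_2\times\ov\BE$ from \eqref{BX_3 prod decomp}, under which the element
\[
z = \begin{bmatrix} 0 & \beta\varpi & 0 \\ \beta & 0 & 0 \\ 0 & 0 & d \end{bmatrix}
\]
becomes the block-diagonal operator $\beta\,\iota_{\BX_2}(\pi)\oplus d\cdot \id_{\ov\BE}$, with $d\in F^{\tr=0}$ acting on $\ov\BE$ via $\iota_{\ov\BE}$. Under the identifications of \eqref{group identifications}, the subgroup $H_1$ consists of the block-diagonal elements $\diag(h_2, 1_{\ov\BE})$ inside $U(\BX_3)$ with $h_2\in U(\BX_2)$. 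Since each such $h_2$ is $O_F$-linear, it will commute with $\iota_{\BX_2}(\pi)$, and $1_{\ov\BE}$ trivially commutes with the $F$-action on $\ov\BE$. Hence $hzh^{-1}=z$, giving the centralization.

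The only potential obstacle is bookkeeping across the various identifications in \S\ref{U_1 and fku_1 coords} and \S\ref{subsec inv u1}, but the argument is essentially formal. As a concise alternative, one may reformulate everything via the product decomposition \eqref{u1 prod decomp}: the map $x\mapsto(x_\red,2\beta\pi,d)$ will be $H_1$-equivariant when $H_1$ is taken to act trivially on the two $\fks_1$-factors, so $H_1$-orbits in $\fku_1(F_0)$ will be products of $H_1$-orbits in $\fku_{1,\red}(F_0)$ with points of $\fks_1(F_0)\times\fks_1(F_0)$, and regular semi-simplicity will be detected on the $\fku_{1,\red}$-component.
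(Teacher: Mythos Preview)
Your proof is correct, but it takes a different route from the paper's. The paper argues directly via the linear-algebra criterion of \S\ref{ss:LAchar}: writing $u$ for the canonical special vector \eqref{u def}, it computes
\[
   xu = x_\red u + ud,
   \qquad
   x^2u = x_\red^2 u + x_\red u\,(\beta\pi + d) + ud^2,
\]
so that the matrix $[u,\,xu,\,x^2u]$ differs from $[u,\,x_\red u,\,x_\red^2 u]$ by a unipotent upper-triangular change of basis; the analogous statement for the row vectors $\tensor*[^t]{u}{}x^i$ then finishes the argument. Your approach instead isolates the structural reason behind this calculation: the difference $z=x-x_\red=\beta\,\iota_{\BX_2}(\pi)\oplus d$ lies in the centre of the $H_1$-action, so the $H_1$-orbit of $x$ is a translate of that of $x_\red$, and closedness, dimension, and stabilizer all transfer. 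Both arguments are short; yours has the advantage of making the $H_1$-equivariance of the product decomposition \eqref{u1 prod decomp} transparent and of generalizing verbatim to $\fks$ and $\fku_0$ (compare the paper's separate Lemma~\ref{rs for reduced s}), while the paper's computation feeds directly into the later explicit formula~\eqref{Deltaonu1} for $\Delta(x)$.
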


\begin{proof}
By the linear algebra characterization of regular semi-simple elements in \S\ref{ss:LAchar}, relative to the canonical special vector $u \in \BV_3$ in \eqref{u def}, it suffices to show that the three vectors $u, xu, x^2u$ are linearly independent over $F$ if and only if the vectors  $u, x_\red u, x_\red^2u$ are, and analogously for $\tensor*[^t]{u}{}, \tensor*[^t]{u}{} x, \tensor*[^t]{u}{} x^2$ and $\tensor*[^t]{u}{}, \tensor*[^t]{u}{} x_\red, \tensor*[^t]{u}{} x_\red^2$. For clarity we denote the $F$-action on $\BV_3$ as a right action.  Expressing $x$ in terms of the coordinates \eqref{fku_1 coords}, the first of these equivalences follows from the easily verified relations
\[
   xu = x_\red u + ud
\]
and
\[
   x^2u = xx_\red u + xud = x_\red^2 u + x_\red u \beta \pi + x_\red u d + u d^2 = x_\red^2 u + x_\red u(\beta\pi + d) + ud^2.
\]
The second, ``transposed''  equivalence is proved in a similar way.
\end{proof}

We conclude this subsection by giving a simple characterization of regular semi-simplicity for reduced elements. Let
\begin{equation}\label{coordofred}
   x=
   \begin{bmatrix}
	  \alpha  &  0  &  b\pi \\
      0  &  \alpha  &  b\\
	  \pi\ov b  &  \ov b\varpi  &  0
   \end{bmatrix}
   \in \frak u_{1, \red}(F_0),
	\quad
	\alpha \in D^{\tr = 0},
	\quad
	b \in D.
\end{equation}
The first three invariants in \eqref{fku_1 invariants} take the values on $x$,
\begin{equation}\label{u w lambda}
\begin{aligned}
   \lambda(x) &= \RN \alpha=\RN \alpha_++\RN \alpha_-=\RN \alpha'_++\RN \alpha'_-,\\
   u(x) &= \varpi^{-1}(\pi\ov b b\pi+ \ov b\varpi b)= 2\RN b,\\
   w(x) &= \varpi^{-1}(\pi\ov b \alpha b\pi+ \ov b\varpi \alpha b)=\RN b\cdot (\pi^{-1}\alpha'\pi+\alpha')=2\RN b \cdot \alpha'_+.
\end{aligned}
\end{equation}
Here in the expressions involving $\alpha'$, we have assumed that $b \neq 0$ and set
\[
   \alpha' := b^{-1}\alpha b;
\]
recall that the subscripts $+$ and $-$ denote the components of an element with respect to the decomposition \eqref{Deigen}.  Of course, if $b = 0$, then $u(x) = w(x) = 0$.

Now recall from  \S\ref{ss:LAchar} that $x$ is regular semi-simple if and only if $\Delta(x)\neq 0$, where 
\begin{equation}\label{Delta rescaled}
	   \Delta(x) := -\varpi^{-2} \det(\tensor*[^t] e {} x^{i+j}e)_{0\leq i, j\leq 2}.
\end{equation}  
Note that here we have rescaled the discriminant defined in \eqref{Delta}, which will give us a more convenient normalization later on.  From now on we will always understand $\Delta$ in the sense of \eqref{Delta rescaled}.

\begin{lemma}\label{rs crit for reduced}
A reduced element $x\in\fku_{1,\red}(F_0)$ in the form \eqref{coordofred} is  regular semi-simple if and only if $b\neq 0$ and $\alpha'_-\neq 0$. 
\end{lemma}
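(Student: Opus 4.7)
The plan is to apply the linear-algebra characterization of regular semi-simplicity from \S\ref{ss:LAchar} together with (the proof of) Lemma \ref{lem delta=det}: for $x \in \fku_1(F_0)$, $x$ is regular semi-simple if and only if the three vectors $u,\,xu,\,x^2 u$ are linearly independent over $F$ in $\BV_3$, where $u$ is the canonical special vector \eqref{u def}. So the task reduces to checking this linear independence directly for a reduced $x$ written in the coordinates \eqref{coordofred}.

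First I would make $\BV_3 = \Hom^\circ_{O_F}(\ov\BE, \BX_3)$ explicit in the matrix language of \S\ref{U_1 and fku_1 coords}. Using the product decomposition $\BX_3 = \BX_2 \times \ov\BE$ and imposing $O_F$-linearity, one identifies
\[
\BV_3 = \biggl\{ \begin{bmatrix} b\pi \\ b \\ f \end{bmatrix} : b \in D,\ f \in F \biggr\},
\]
with $F$ acting by right multiplication on each coordinate and $\fku_1(F_0)$ acting by left matrix multiplication; under this identification $u$ corresponds to $(0,0,1)^t$. A short calculation, using that $\ov b b = \RN b$ is central in $D$, then gives
\[
xu = \begin{bmatrix} b\pi \\ b \\ 0 \end{bmatrix},
\qquad
x^2 u = \begin{bmatrix} \alpha b\pi \\ \alpha b \\ 2\varpi\RN b \end{bmatrix}.
\]

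The main step is to analyze when a relation $u\cdot f_0 + xu\cdot f_1 + x^2 u\cdot f_2 = 0$, with $f_0,f_1,f_2 \in F$, forces all three scalars to vanish. Because $F$ is commutative, the top two coordinates coincide up to the factor $\pi$, and (assuming $b \neq 0$) they collapse to the single $D$-equation $f_1 + \alpha' f_2 = 0$, with $\alpha' := b^{-1}\alpha b$; the bottom coordinate then pins down $f_0 = -2\varpi\RN b\cdot f_2$. Splitting against $D = F \oplus D_-$ and using that $D_-$ is a free right $F$-module of rank one, the $D_-$-part of the equation reads $\alpha'_- f_2 = 0$, which forces $f_2 = 0$ precisely when $\alpha'_- \neq 0$, after which $f_1 = f_0 = 0$ as well. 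Conversely, if $\alpha'_- = 0$ then $(f_0,f_1,f_2) = (-2\varpi\RN b,\,-\alpha',\,1)$ is a nontrivial relation, and if $b = 0$ then $xu = 0$ is already an obstruction. These cases together yield the asserted equivalence.

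No serious obstacle is expected here: once the coordinates on $\BV_3$ and the matrix formulas for $xu$ and $x^2 u$ are in hand, the argument is essentially book-keeping, the only delicate point being the passage from the matrix equation $bf_1 + \alpha b f_2 = 0$ to the intrinsic condition on $\alpha'_-$ via the direct-sum decomposition $D = F \oplus D_-$.
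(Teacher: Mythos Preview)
Your proof is correct, but it takes a somewhat different route from the paper's. The paper computes the (rescaled) discriminant directly via the invariants $\lambda(x)$, $u(x)$, $w(x)$ from \eqref{u w lambda}, obtaining the closed formula
\[
   \Delta(x) = \lambda u^2 + w^2 = 4(\RN b)^2 \RN \alpha'_-,
\]
from which the lemma follows at once since $x$ is regular semi-simple iff $\Delta(x)\neq 0$. Your argument instead bypasses the discriminant and checks the $F$-linear independence of $u$, $xu$, $x^2u$ in $\BV_3$ by hand, reducing the question to whether $\alpha'_- f_2 = 0$ forces $f_2 = 0$ in the rank-one right $F$-module $D_-$. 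This is a perfectly valid and arguably more hands-on route to the bare statement of the lemma. The trade-off is that the paper's calculation simultaneously produces the explicit formula \eqref{Deltaonu1} for $\Delta(x)$, which is used repeatedly later (e.g.\ in \eqref{remember valu} and throughout \S\ref{calc of l-Int}); your approach proves the lemma without supplying that formula, so it would need to be derived separately. One small point worth making explicit in your write-up: the reduction ``regular semi-simple $\Leftrightarrow$ $u,xu,x^2u$ linearly independent'' uses that for $x\in\fku_1$ the Gram matrix $\bigl(h(x^iu,x^ju)\bigr)$ differs from $\bigl(\tensor[^t]{e}{}x^{i+j}e\bigr)$ only by signs (as in the proof of Lemma~\ref{lem delta=det}), so that one-sided linear independence already forces $\Delta(x)\neq 0$; for a general element of $\Res_{F/F_0}\M_n$ one would need both the column and row conditions.
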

\begin{proof}
We calculate the discriminant as 
\begin{equation}\label{Deltaonu1}
\begin{aligned}
	\Delta(x) &= -\varpi^{-2}\det
	     \begin{bmatrix}
			1  &  0  &  \varpi u\\
			0  &  \varpi u  &  \varpi w\\
			\varpi u  &  \varpi w  &  -\lambda \varpi u+\varpi^2u^2
		 \end{bmatrix}\\
     &= \lambda u^2+w^2\\
	  &= 4(\RN b)^2 \RN \alpha'_-.
\end{aligned}
\end{equation}
\end{proof}

\subsection{The Cayley transform}\label{cayley u_1}
Our main tool in passing from the group setting to the Lie algebra setting will be the Cayley transform $x\mapsto (1+x)(1-x)^{-1}$ from $\frak u_1(F_0)$ to $U_1(F_0)$. 
More precisely, let 
\begin{equation}\label{u1circ}
   \fku_1^\circ(F_0) := \bigl\{\, x\in \fku_1(F_0) \bigm| 1-x \text{ is invertible} \,\bigr\}.
\end{equation}
Then the Cayley transform is defined on $\fku_1^\circ(F_0)$. In fact, we will need the variant
\[
   \fkc_{\xi}\colon \fku_1^\circ(F_0) \to  U_1({F_0})
\]
defined by
\begin{equation}\label{cayley fmla}
	x \mapsto \xi \frac{1+x}{1-x},
\end{equation}
where $\xi \in U_1(F_0)$ is a fixed element of the form $\diag(\pm 1_2, \pm 1)$, expressed in the presentation \eqref{U_1 coords} of $U_1(F_0)$.  We remark that in \S\ref{group setting} we will give a slightly more general definition of the Cayley transform for $\fku_1$, and also define it for $\fks$ and $\fku_0$.

\begin{lemma}[Cayley transform for $\fkk_1$]\label{lem cayley U1}
There is an inclusion $\fkk_1\subset \fku_1^\circ(F_0)$, and hence the restriction of the Cayley map $\fkc_\xi$ to $\fkk_1$ is well-defined, where $\xi = \diag(\pm 1_2,\pm1)$. Furthermore, this restriction factors through $K_1$, 
\[
   \fkc_\xi \colon \fkk_1 \to K_1,
\]
 and the images $\fkc_\xi(\fkk_1)$, as $\xi$  varies over these four elements, cover $K_1$.
\end{lemma}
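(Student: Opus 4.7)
The plan is to verify the three assertions of the lemma in order: (i) $\fkk_1 \subset \fku_1^\circ(F_0)$; (ii) $\fkc_\xi$ carries $\fkk_1$ into $K_1$; (iii) the four images cover $K_1$.

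For Step (i), I would reduce $1-x$ modulo the uniformizer $\pi$. For $x \in \fkk_1$ written in the form \eqref{fkk_1 coords}, the conditions $\alpha + \ov\alpha = 0$ and $d + \ov d = 0$, combined with the running assumption $p \neq 2$, force $\alpha \in \pi O_D$ and $d \in \pi O_F$. The entries in positions $(1,2), (1,3), (3,1), (3,2)$ are divisible by $\pi$ or $\varpi$ and so also vanish modulo $\pi$; only the entries $\beta$ at $(2,1)$ and $b$ at $(2,3)$ can survive the reduction. Thus the reduction of $1-x$ in $\M_3(O_D/\pi O_D) \simeq \M_3(k)$ has $1$'s on the diagonal and support only at $(2,1)$ and $(2,3)$, hence determinant $1$. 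Consequently $1-x$ is invertible in $\End_{O_F}(\BX_3)$, proving (i) and yielding integrality of $(1-x)^{-1}$.

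Step (ii) is essentially formal. Unitarity of $\fkc_\xi(x)$ is the classical identity $(\xi(1+x)(1-x)^{-1})^\dag \xi(1+x)(1-x)^{-1} = 1$, which uses only $x^\dag = -x$, $\xi^\dag = \xi$, and the commutation $(1+x)(1-x) = (1-x)(1+x)$. For integrality, one rewrites $\xi(1+x)(1-x)^{-1} = -\xi + 2\xi(1-x)^{-1}$ and invokes Step (i) together with $2 \in O_{F_0}^\times$. Since $-x \in \fkk_1$, the same reasoning applied to $-x$ shows $(1+x)^{-1}$ is integral, from which $\fkc_\xi(x)^{-1} = \fkc_\xi(x)^\dag = -\xi + 2(1+x)^{-1}\xi$ is integral, so $\fkc_\xi(x) \in K_1$.

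For Step (iii), I would invert the Cayley transform. Solving $g = \fkc_\xi(x)$ gives $x = (g + \xi)^{-1}(g - \xi)$, and a direct manipulation using $\xi^2 = 1$ and $gg^\dag = 1$ shows that this $x$ automatically satisfies $x + x^\dag = 0$; so it suffices to exhibit, for each $g \in K_1$, a choice of $\xi \in \{\diag(\pm 1_2, \pm 1)\}$ for which $g + \xi$ is integrally invertible. Writing $g \in K_1$ in the integral coordinates inherited from \eqref{U_1 coords}, the integrality of $g^\dag = g^{-1}$ forces the $(3,1)$ entry of $g$ to lie in $\pi O_D$ (so that the $(2,3)$ entry $\ov c \pi^{-1}$ of $g^\dag$ is integral). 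The reduction $\bar g \in \M_3(k)$ is therefore supported on the positions $(1,1), (2,1), (2,2), (2,3), (3,3)$. The identity $gg^\dag = 1$ reduced modulo $\pi$ gives, in particular, $\bar\alpha^2 = 1$ and $\bar d^2 = 1$, so $\bar\alpha = \epsilon_1 \in \{\pm 1\}$ and $\bar d = \epsilon_2 \in \{\pm 1\}$. Taking $\xi := \diag(\epsilon_1 \cdot 1_2, \epsilon_2)$ (one of the four allowed choices), expansion of $\bar g + \bar\xi$ along the first row gives $\det(\bar g + \bar\xi) = (\bar\alpha + \epsilon_1)^2(\bar d + \epsilon_2) = 8\epsilon_2 \in k^\times$. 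Hence $g + \xi$ is integrally invertible, and $x := (g+\xi)^{-1}(g-\xi) \in \fkk_1$ satisfies $\fkc_\xi(x) = g$, establishing the covering.

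The main obstacle is Step (iii): it requires unraveling how the Rosati involution $\dag$ and the $O_F$-linearity jointly constrain the reduction $\bar g$, in particular forcing the diagonal reductions into $\{\pm 1\}$. Once this structure is pinned down, the match between the four diagonal sign choices $\xi$ and the four possibilities $(\epsilon_1,\epsilon_2) \in \{\pm 1\}^2$ is what makes the covering work; Steps (i) and (ii) are then essentially bookkeeping with the coordinates of \eqref{fkk_1 coords}.
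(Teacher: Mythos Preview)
Your overall strategy---reduce modulo $\pi$ and exploit the block-triangular shape---matches the paper's.  But there is a genuine gap, coming from a misidentification of the residue ring: $O_D/\pi O_D$ is \emph{not} $k$ but the quadratic extension $k_D$ of $k$ (the quaternion division algebra over $F_0$ has ramification index $2$ and residue degree $2$, and $\pi$ is a uniformizer of $O_D$).  Moreover, the main involution on $O_D$ reduces to the Frobenius of $k_D/k$, not to the identity.

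This breaks two of your claims.  In Step~(i), the hypothesis $\alpha\in O_D^{\tr=0}$ only forces the reduction $\tilde\alpha\in k_D$ to satisfy $\tilde\alpha+\tilde\alpha^{q}=0$, which does \emph{not} give $\tilde\alpha=0$; there is a whole $k$-line of such elements.  (It does force $\tilde\alpha\notin k^\times$, since $\tilde\alpha\in k$ would give $2\tilde\alpha=0$ and hence $\tilde\alpha=0$; in particular $\tilde\alpha\neq 1$, and that is precisely what the paper uses to conclude $1-\alpha\in O_D^\times$.)  Your claim about $d$ is fine.  In Step~(iii), the identity $gg^\dag=1$ at the $(1,1)$ entry reduces to $\RN_{k_D/k}(\tilde\alpha)=\tilde\alpha^{q+1}=1$, not to $\tilde\alpha^{2}=1$; so $\tilde\alpha$ ranges over the $q+1$ norm-one elements of $k_D^\times$ and need not lie in $\{\pm 1\}$.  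Your choice ``$\epsilon_1=\tilde\alpha$'' therefore does not make sense.  The paper bypasses this by the one-line observation that $(1+\alpha)+(1-\alpha)=2\in O_D^\times$, so at least one of $1\pm\alpha$ is a unit; choosing the sign $\epsilon_1$ accordingly (and similarly for $d$, where your argument that $\bar d\in\{\pm1\}$ \emph{is} correct) makes $1+\xi^{-1}g$ invertible modulo $\pi$.  Your Step~(ii) is fine.
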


\begin{proof} 
Let
\[
   x=
   \begin{bmatrix}
	  \alpha  &  \beta\varpi  &  b\pi \\
      \beta  &  \alpha  &  b\\
	  \pi \ov b  &  \ov b \varpi  &  d
   \end{bmatrix}
   \in \fkk_1,
\] 
expressed in the form \eqref{fkk_1 coords}.  Then $x \bmod\pi$ is an upper triangular block matrix of the form 
\begin{equation}\label{eqn mod pi}
   \begin{bmatrix}
	  \alpha  &  0  &  0 \\
      \beta  &  \alpha  &  b\\
	  0  &  0  &  d
   \end{bmatrix}
   \mod \pi.
\end{equation}
Since $\alpha\in O_D^{\tr = 0}$ and $d\in O_F^{\tr = 0}$, we have $1 - \alpha$, $1 - d \in O_D^\times$. It follows that $1-x$ is invertible and that its inverse has entries in $O_D$. Hence $x \in \fku_1^\circ(F_0)$ and $\fkc_\xi(x) \in \End_{O_F}(\BX_3) \cap U_1(F_0) = K_1$.

To show that the images cover $K_1$, it suffices to show that for every $g \in K_1$, there exists $\xi = \diag(\pm 1_2,\pm1)$ such that $\fkc_{\xi}^{-1}(g)$ is well-defined and has integral entries.  Here
\begin{equation}\label{cayley inverse}
   \fkc_{\xi}^{-1}(g) = -\frac{1-\xi^{-1}g}{1+\xi^{-1}g}.
\end{equation}
From the equation $gg^\dagger=1$ it follows that $g\bmod \pi$ is also of the form \eqref{eqn mod pi}.  Since $1 + \alpha$ and $1-\alpha$ sum to $2 \in O_D^\times$, at least one of them is in $O_D^\times$ too, and likewise for $1 \pm d$. Thus the desired $\xi$ exists.
\end{proof}

\begin{remark}
Even though $\fkc_\xi(\fkk_1)\subset K_1$, there are  elements  $x\in \fku_1^\circ(F_0) \smallsetminus \fkk_1$ with $\fkc_\xi(x)\in K_1$. 
\end{remark}

In the case of $\fkc := \fkc_{\id_{\BX_3}}$, we also have the following.

\begin{lemma}\label{endom subalg =}
Let $x \in \fkk_1$.  Then there is an equality of subalgebras of $\End_{O_F}(\BX_3)$,
\[
   O_F[x] = O_F[\fkc(x)].
\]
\end{lemma}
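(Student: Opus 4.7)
The plan is to reduce the equality of subalgebras to showing $(1-x)^{-1} \in O_F[x]$ and $(1+\fkc(x))^{-1} \in O_F[\fkc(x)]$, using Cayley--Hamilton on the free $O_F$-module $\End_{O_F}(\BX_3)$. Setting $y = \fkc(x) = (1+x)(1-x)^{-1}$, a short manipulation (using that $x$ and $y$ commute) gives
\[
   \fkc(x) \;=\; -1 + 2(1-x)^{-1}, \qquad x \;=\; 1 - 2\bigl(1+\fkc(x)\bigr)^{-1}.
\]
Since $p$ is odd and hence $2 \in O_F^\times$, these two formulas immediately reduce the lemma to checking that the indicated inverses actually lie in the respective $O_F$-subalgebras.

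Next, I would apply Cayley--Hamilton. The $O_F$-algebra $\End_{O_F}(\BX_3)$ is free of rank $9$ as an $O_F$-module (via the explicit description in \S\ref{U_1 and fku_1 coords}, noting that $O_F$ is central), so left multiplication by $x$ has a monic characteristic polynomial $P_x(T) \in O_F[T]$ of degree $9$, and $P_x(x) = 0$. The key input is that by the proof of Lemma \ref{lem cayley U1}, $1-x$ is invertible \emph{in the order} $\End_{O_F}(\BX_3)$, so $P_x(1) = \det\bigl(L_{1-x}\bigr) \in O_F^\times$. Polynomial division then yields $Q \in O_F[T]$ with $P_x(T) - P_x(1) = (T-1)Q(T)$, and evaluating at $x$ gives $(1-x)Q(x) = P_x(1)$, i.e.
\[
   (1-x)^{-1} \;=\; \frac{Q(x)}{P_x(1)} \;\in\; O_F[x].
\]
Therefore $\fkc(x) \in O_F[x]$, proving $O_F[\fkc(x)] \subseteq O_F[x]$.

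For the reverse inclusion, I would run the identical argument with $y = \fkc(x)$ in place of $x$. By Lemma \ref{lem cayley U1}, $y \in K_1 \subset \End_{O_F}(\BX_3)$, and $1+y = 2(1-x)^{-1}$ is a unit in $\End_{O_F}(\BX_3)$ as a product of two units. Letting $P_y(T) \in O_F[T]$ be the characteristic polynomial of left multiplication by $y$, one has $P_y(-1) = \det(L_{-(1+y)}) = -\det(L_{1+y}) \in O_F^\times$, and dividing $P_y(T) - P_y(-1)$ by $T+1$ produces $Q' \in O_F[T]$ with $(1+y)^{-1} = Q'(y)/P_y(-1) \in O_F[y]$. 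Hence $x = 1 - 2(1+y)^{-1} \in O_F[\fkc(x)]$, which combined with the previous step proves $O_F[x] = O_F[\fkc(x)]$. There is no real obstacle once one grants the Cayley--Hamilton input and the already-established unit-ness of $1-x$ (and hence of $1+\fkc(x)$) in the integral order $\End_{O_F}(\BX_3)$.
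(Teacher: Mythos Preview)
Your proof is correct and follows essentially the same approach as the paper: both use Cayley--Hamilton to express $(1-x)^{-1}$ as an $O_F$-polynomial in $x$ (using that $1-x$ is a unit in the order $\End_{O_F}(\BX_3)$ by Lemma~\ref{lem cayley U1}), and then run the symmetric argument via the inverse Cayley formula. You are simply more explicit than the paper about which module Cayley--Hamilton is applied to (the left regular representation on the rank-$9$ free $O_F$-module $\End_{O_F}(\BX_3)$) and about the identities $\fkc(x) = -1 + 2(1-x)^{-1}$, $x = 1 - 2(1+\fkc(x))^{-1}$, which the paper leaves implicit.
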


\begin{proof}
Let $y := 1 - x$.  Then $y$ is an automorphism of $\BX_3$ by Lemma~\ref{lem cayley U1}.  It follows from the Cayley--Hamilton theorem that $y^{-1}$ is expressible as a polynomial with coefficients in $O_F$ in $y$, and hence in $x$.  Hence  $\fkc(x)=(1+x)(1-x)^{-1}$ is a polynomial in $x$.  Conversely, the same argument, using the inverse formula \eqref{cayley inverse}, shows that $x$ is a polynomial in $\fkc(x)$.
\end{proof}

\begin{lemma}\label{cayley rs iff rs}
Let $x \in \fkk_1$ and $\xi = \diag(\pm 1_2, \pm 1)$.  Then $x$ is regular semi-simple if and only if $\fkc_\xi(x)$ is.
\end{lemma}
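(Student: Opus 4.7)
The plan is to apply the linear-algebra criterion of \S\ref{ss:LAchar} after fixing a special embedding $U_1(F_0) \inj \Res_{F/F_0}\GL_3(F_0)$.  In block form with respect to the splitting $F^3 = F^2 \oplus Fe$, where $e$ is the special vector, write $x = \bigl[\begin{smallmatrix} A & \mathbf{b}\\ \mathbf{c} & d \end{smallmatrix}\bigr]$ with $A \in \M_2(F)$, $\mathbf{b}\in F^2$, $\mathbf{c}\in (F^2)^*$, $d\in F$; the element $\xi = \diag(\pm 1_2, \pm 1)$ retains the same form under any basis respecting the decomposition $W_1 = W_1^\flat \oplus F u_1$.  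By the criterion, $x$ (resp. $g := \fkc_\xi(x)$) is regular semi-simple iff the column triple $\{e, xe, x^2e\}$ (resp. $\{e, ge, g^2e\}$) and the corresponding row triple are both linearly independent.

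I would show that all of these vectors lie in a common subspace $U := F[A]\mathbf{b} \oplus Fe \subseteq F^3$, whose dimension equals $3$ if and only if $\{\mathbf{b}, A\mathbf{b}\}$ is linearly independent.  Concretely: $x$ preserves $U$, since for $(w', w_3) \in U$ with $w' \in F[A]\mathbf{b}$, one has $x(w', w_3) = (Aw' + w_3\mathbf{b},\, *) \in U$; $(1-x)^{-1}$ preserves $U$, since $1-x$ is invertible in $\End_{O_F}(\BX_3)$ by Lemma~\ref{lem cayley U1}, whence by Cayley--Hamilton $(1-x)^{-1} \in F[x]$; and $\xi$ visibly preserves $U$.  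Hence $g = \xi(1+x)(1-x)^{-1}$ preserves $U$.

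Since $e \in U$, the column vectors $x^k e$ and $g^k e$ all lie in $U$, so either triple of three such vectors is linearly independent only if $\dim U = 3$.  Conversely, when $\dim U = 3$: for $x$, the $F^2$-components of $e, xe, x^2e$ are $0$, $\mathbf{b}$, $(A+dI)\mathbf{b}$, giving linear independence since $\{\mathbf{b}, A\mathbf{b}\}$ is linearly independent.  For $g$, an explicit Schur-complement computation of $(1-x)^{-1}$ yields $F^2$-components $0$, $\epsilon' (2/r)(I-A)^{-1}\mathbf{b}$, and a vector with nonzero coefficient $(4/r)$ on $(I-A)^{-2}\mathbf{b}$, where $r := 1 - d - \mathbf{c}(I-A)^{-1}\mathbf{b}$ is the Schur complement of $I-A$ in $1-x$.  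This $r$ is nonzero because $\det(1-x) = \det(I-A)\cdot r \ne 0$, and $(I-A)^{-1}\mathbf{b}$ and $(I-A)^{-2}\mathbf{b}$ span $F[A]\mathbf{b} = F^2$.  The row triples are handled symmetrically, using the cyclic row subspace $\mathbf{c}F[A]$ in place of $F[A]\mathbf{b}$.

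The main obstacle is the Schur-complement computation for $g^2 e$; while elementary, it requires tracking several contributions from iterating $(1-x)^{-1}$, and the nonvanishing of the $(I-A)^{-2}\mathbf{b}$-coefficient in the $F^2$-component of $g^2 e$ is the essential numerical fact that makes the argument close.
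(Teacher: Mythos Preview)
Your argument is correct but takes a more hands-on route than the paper's.  The paper proceeds in two short steps: first, Lemma~\ref{endom subalg =} gives the equality of $F$-subalgebras $F[x] = F[\fkc(x)]$, so $F[x]\cdot e = F[\fkc(x)]\cdot e$ and the column (and row) spans for $x$ and for $\fkc(x)$ coincide on the nose; second, passing from $\fkc(x)$ to $\xi\fkc(x)$ is disposed of by an easy determinant calculation (made explicit later in \eqref{cayley det calc u_i}).  Your approach instead identifies the cyclic subspace $U = F[A]\mathbf{b}\oplus Fe$ and shows that \emph{both} regular-semisimplicity conditions are equivalent to the intrinsic condition $\dim F[A]\mathbf{b}=2$ (together with its row analog), treating $\xi$ and the Cayley transform simultaneously via the explicit Schur-complement computation.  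What you gain is a uniform treatment of all four $\xi$; what the paper gains is that no computation beyond the algebra identity $F[x]=F[\fkc(x)]$ is needed.

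One point you should address explicitly: your Schur-complement formula presupposes that the $2\times 2$ block $I-A$ is invertible over $F$, which is \emph{not} a formal consequence of $1-x$ being invertible.  It does hold for $x\in\fkk_1$, but this requires a short check.  Using the invariants from \S\ref{subsec inv u1}, one has $\tr_F A = 2\beta\pi$ and $\det_F A = \RN\alpha + \beta^2\varpi$ with $\alpha\in O_D^{\tr=0}$ and $\beta\in O_{F_0}$, so $\det_F(I-A) = 1 - 2\beta\pi + \RN\alpha + \beta^2\varpi$.  Vanishing of the $\pi$-component forces $\beta=0$, and then $\RN\alpha = -1$ would give $\alpha^2=1$, impossible for traceless $\alpha$ in the division algebra $D$.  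With this in hand your computation goes through as written.
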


\begin{proof}
Use the linear algebra characterization of regular semi-simple elements, as in the proof of Lemma~\ref{rs for reduced u1}, twice: first to deduce the lemma in the case $\xi = \id_{\BX_3}$ from Lemma~\ref{endom subalg =}, and then to see that $\fkc(x)$ is regular semi-simple if and only if $\fkc_\xi(x) = \xi \fkc(x)$ is (which is a simple exercise).
\end{proof}

\subsection{Relation to intersection numbers}\label{rel to int numbers}

We now apply the material in the previous subsections to intersection numbers.  In this subsection $\Delta = \Delta_\CN(\CN_2) \subset \CN_2 \times_{\Spf O_{\breve F}} \CN_3$ (not to be confused with the discriminant!).  We begin with a basic lemma on the geometry of intersections.  For any quasi-endomorphism $x \in \End_{O_F}^\circ(\BX_3)$, recall the subspace $\Delta_x$ of $\CN_2 \times_{\Spf O_{\breve F}} \CN_3$ defined by the condition \eqref{Deltag}.

\begin{lemma}\label{intersection nonempty conds}
For $x \in \End_{O_F}^\circ(\BX_3)$, the following are equivalent.  
\begin{altenumerate}
\item $\Delta \cap \Delta_x$ is nonempty.
\item $(\Delta \cap \Delta_x)_\red$ consists of two points.
\item $x \in \End_{O_F}(\BX_3)$.
\end{altenumerate}
\end{lemma}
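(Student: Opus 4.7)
The plan is to leverage the fact that $\CN_2$ has exactly two $\ov k$-points (Lemma~\ref{two points}) and reduce the question to a Dieudonn\'e module computation at each of these points.

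First, I would use Lemma~\ref{delta lem} to identify $\Delta \cap \Delta_x$ with a closed formal subscheme of $\CN_2$ via $\Delta_\CN$.  Since $\CN_2(\ov k) = \{Y_+, Y_-\}$ with each component $\CN_{2,\pm}$ topologically a single point, the reduced scheme $(\Delta \cap \Delta_x)_\red$ is a subset of $\{Y_+, Y_-\}$, and the essential properness of $\CN_2$ (Proposition~\ref{formally sm ess proper}) forces the intersection to be nonempty iff it contains at least one of these $\ov k$-points.  Hence it will suffice to show that, for each sign, the condition $\delta_\CN(Y_\pm) \in \Delta_x$ is equivalent to $x \in \End_{O_F}(\BX_3)$; this yields (i)$\Leftrightarrow$(iii) together with (iii)$\Rightarrow$(ii), while (ii)$\Rightarrow$(i) is trivial.

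Next, I would pass to rational covariant Dieudonn\'e modules.  Writing $N$, $N_{\ov\BE}$, and $N_3 = N \oplus N_{\ov\BE}$ for those of $\BX_2$, $\ov\BE$, and $\BX_3 = \BX_2 \times \ov\BE$, respectively, the two points $Y_\pm$ correspond to two $\pi$-modular lattices $L_+ = M_{\BX_2}$ and $L_-$ in $N$ (as in the proof of Lemma~\ref{two points}), and the framings identify the Dieudonn\'e lattice of $Y_\pm \times \ov\CE_{\ov k}$ with $L_\pm \oplus M_{\ov\BE}$ inside $N_3$.  Via the general principle that a quasi-endomorphism $x$ of $\BX_3$ lifts through the framing of a $p$-divisible group iff it preserves the image of that group's Dieudonn\'e lattice, the condition $\delta_\CN(Y_\pm) \in \Delta_x$ translates to: ``$x$ preserves $L_\pm \oplus M_{\ov\BE}$ inside $N_3$.''  For $L_+ = M_{\BX_2}$ this is exactly $x \in \End_{O_F}(\BX_3)$, handling the case of $Y_+$.

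To handle $Y_-$, I would invoke Frobenius equivariance.  Consider the $\sigma$-linear operator $\tau = \zeta \iota_{\BX_3}(\pi) \uV^{-1}$ on $N_3$, with $\zeta \in O_{\breve F_0}^\times$ satisfying $\zeta^2\varpi = -p$, as in the proof of Lemma~\ref{two points}.  Every $x \in \End^\circ_{O_F}(\BX_3)$ commutes with $\tau$, since $x$ commutes with $\uV$ and, by $O_F$-linearity, with $\iota_{\BX_3}(\pi)$; hence $x$ preserves any lattice $L \subset N_3$ iff it preserves $\tau L$.  By Remark~\ref{tau interchange}, $\tau$ interchanges $L_+$ and $L_-$, and a short direct computation shows that the $\ov\BE$-component of $\tau$ acts on $M_{\ov\BE}$ by a unit scalar (using $\zeta^2\varpi = -p$ together with the standard slope-one structure of the Dieudonn\'e module of $\ov\BE$ as a height-one formal $O_F$-module), so $\tau$ preserves $M_{\ov\BE}$.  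Thus $\tau(L_+ \oplus M_{\ov\BE}) = L_- \oplus M_{\ov\BE}$, and the two lifting conditions coincide, completing the equivalence.  The group version for $g \in U_1(F_0)$ is covered by the same argument, which uses only that $g \in \End^\circ_{O_F}(\BX_3)$.  The hard part will be the final Dieudonn\'e bookkeeping, in particular verifying the $\tau$-stability of $M_{\ov\BE}$ with the correct sign arising from the twisted embedding $\iota_{\ov\BE}(\pi) = -\iota_\BE(\pi)$; once that is in hand, the rest of the argument is purely formal.
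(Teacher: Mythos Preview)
Your proposal is correct and follows essentially the same route as the paper: reduce to the two $\ov k$-points of $\CN_2$, translate the lifting condition to stability of the corresponding Dieudonn\'e lattices in $N_3 = N_2 \oplus N_1$, and use that $\tau_3 = \tau_2 \oplus \tau_1$ commutes with $x$ while interchanging the two lattices (via Remark~\ref{tau interchange} on the $N_2$-factor). The one place the paper is shorter than you anticipate is the step you call ``hard'': rather than computing $\tau_1$ on $M_{\ov\BE}$ explicitly and tracking the sign from $\iota_{\ov\BE}$, the paper simply notes that $N_1$ is $1$-dimensional over $\breve F$, so (since $\tau_1$ has slope $0$) the Dieudonn\'e lattice of $\ov\BE$ is automatically $\tau_1$-stable---no direct computation or sign bookkeeping is needed.
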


\begin{proof}
By Lemma \ref{two points}, $\CN_2(\ov k)$ consists of two points, the standard basepoint
\[
   z_+ := (\BX_2,\iota_{\BX_2},\lambda_{\BX_2},\id_{\BX_2})
\]
and another point $z_-$.  Thus $\Delta_\red$ consists of the two points $(z_+, \delta_\CN(z_+))$ and $(z_-, \delta_\CN(z_-))$ in $\CN_2 \times \CN_3$.  According to the definitions, $\delta_\CN(z_+) = (\BX_3,\iota_{\BX_3},\lambda_{\BX_3},\id_{\BX_3})$.  Thus what we have to show is that $x$ either does or does not give an (honest) endomorphism of the framed $p$-divisible groups corresponding to $\delta_\CN(z_+)$ and $\delta_\CN(z_-)$ simultaneously.  For this we translate the problem to Dieudonn\'e modules.  Let $N_1$, $N_2$, and $N_3$ denote the respective rational covariant Dieudonn\'e modules of $\ov\BE$, $\BX_2$, and $\BX_3$.  We must show that the lattices in $N_3$ corresponding to $\delta_\CN(z_+)$ and $\delta_\CN(z_-)$ either are or are not simultaneously carried into themselves under the endomorphism of $N_3$ corresponding to $x$.  For $i = 1,2,3$, let $\tau_i$ be the $\tau$-operator on $N_i$ defined in \eqref{tau}.  Then
\[
   N_3 = N_2 \oplus N_1 \quad\text{and}\quad \tau_3 = \tau_2 \oplus \tau_1.
\]
Since $N_1$ is $1$-dimensional over $\breve F$, the Dieudonn\'e lattice in $N_1$ corresponding to $\ov\BE$ is $\tau_1$-stable.  By Remark~\ref{tau interchange}, $\tau_2$ interchanges the lattices in $N_2$ corresponding to $z_+$ and $z_-$.  Therefore $\tau_3$ interchanges the lattices in $N_3$ corresponding to $\delta_\CN(z_+)$ and $\delta_\CN(z_-)$.  Since the endomorphism of $N_3$ induced by $x$ commutes with $\tau_3$, the lemma follows.
\end{proof}

Thus nonzero intersection numbers can only occur for $g \in K_1$ in the group setting, and for $x \in \fkk_1$ in the Lie algebra setting.

\begin{lemma}\label{intersections =}
For $x\in \fkk_1$ and $\xi = \diag(\pm 1_2,\pm1)$, there are equalities of closed formal subschemes of $\CN_2 \times_{\Spf O_{\breve F}} \CN_3$,
\[
   \Delta\cap\Delta_x = \Delta \cap \Delta_{x_\red} = \Delta \cap \Delta_{\fkc_\xi(x)}.
\]
\end{lemma}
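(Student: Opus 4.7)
The plan is to establish both equalities by testing on $S$-points and invoking functoriality of the moduli problem. For any $S$-point $(Y, X)$ of $\Delta$ (so $X = Y\times\ov\CE$ via $\delta_\CN$) and any quasi-endomorphism $y$ of $\BX_3$, write that $y$ \emph{lifts at $(Y,X)$} when the quasi-endomorphism of $X$ obtained from $y$ by transport through the framings extends to an honest endomorphism of $X$. This condition is a closed condition on $S$ which cuts out $\Delta \cap \Delta_y$ inside $\Delta$, and the set
\[
   L = L_{(Y,X)} := \bigl\{\, y \in \End_{O_F}^\circ(\BX_3) \bigm| y \text{ lifts at } (Y,X) \,\bigr\}
\]
is an $O_F$-subalgebra of $\End_{O_F}^\circ(\BX_3)$. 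Thus it suffices to check, at every $S$-point of $\Delta$, the equivalences $x \in L \Longleftrightarrow x_\red \in L \Longleftrightarrow \fkc_\xi(x) \in L$.

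For the first equality, I would compute directly from \eqref{fkk_1 coords} that
\[
   x - x_\red
   = \bigl(\iota_{\BX_2}(\beta\pi) \oplus 0_{\ov\BE}\bigr) + \bigl(0_{\BX_2} \oplus \iota_{\ov\BE}(d)\bigr),
\]
i.e., a sum of two endomorphisms each of which acts only on one of the two factors of $\BX_3 = \BX_2 \times \ov\BE$, and then via the given $O_F$-module structure on that factor. Because the $O_F$-actions on $\BX_2$ and $\ov\BE$ tautologically lift to the $O_F$-actions on $Y$ and $\ov\CE$ (these actions are part of the moduli data), both summands lie in $L$, so $x - x_\red \in L$. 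Since $L$ is additive, $x \in L$ iff $x_\red \in L$.

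For the second equality, I would first note that $\xi = \diag(\pm 1_2, \pm 1)$ acts as $\pm\id$ on each factor of $\BX_3$, so it tautologically lifts to the corresponding $\pm\id$ automorphism of $Y \times \ov\CE$; hence $\xi \in L$, and since $\xi^2 = 1$, $\xi$ is a unit of $L$. Next, Lemma~\ref{endom subalg =} gives $O_F[x] = O_F[\fkc(x)]$ as $O_F$-subalgebras of $\End_{O_F}(\BX_3)$; since $L$ is an $O_F$-subalgebra, this yields $x \in L \Longleftrightarrow \fkc(x) \in L$. Multiplying by the unit $\xi$ converts this to $x \in L \Longleftrightarrow \fkc_\xi(x) = \xi\,\fkc(x) \in L$, completing the chain of equivalences.

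The only delicate point in this plan is the passage from pointwise equivalence to equality of closed formal subschemes, but it is automatic: the arguments above are natural in $S$, and the closed subschemes $\Delta \cap \Delta_x$, $\Delta \cap \Delta_{x_\red}$, and $\Delta \cap \Delta_{\fkc_\xi(x)}$ of $\Delta$ are each cut out by the corresponding lifting condition, which we have shown agrees functorially. No new geometric input is needed beyond Lemma~\ref{endom subalg =} and the basic fact that the $O_F$-actions and the automorphism $\xi$ always lift.
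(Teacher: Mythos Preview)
Your proof is correct and follows essentially the same approach as the paper's own proof: both argue that $x - x_\red$ is a sum of $O_F$-action endomorphisms on the two factors (which automatically lift), and both use Lemma~\ref{endom subalg =} together with the obvious lifting of $\xi$ to handle $\fkc_\xi(x)$. Your formulation in terms of the subalgebra $L$ of lifting quasi-endomorphisms makes the logic slightly more explicit, but the content is the same.
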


\begin{proof}
Let $(Y,\iota,\lambda,\rho)$ be a point on $\CN_2$.  It has to be shown that the endomorphism $x$ of $\BX_3$ lifts to an endomorphism of $Y \times \ov\CE$ (via the framing $\rho \times \rho_{\ov\CE}$) if and only if the endomorphism $x_\red$ lifts if and only if the endomorphism $\fkc_\xi(x)$ lifts.  Writing $x$ in terms of the usual coordinates \eqref{fkk_1 coords}, the first two of these conditions are equivalent because the endomorphism
\[
   \iota_{\BX_2}(\beta \pi) =
	\begin{bmatrix}
		0  &  \beta\varpi\\
		\beta  &  0
	\end{bmatrix}
	\quad
	(\beta \in O_{F_0})
\]
of $\BX_2$ and the endomorphism $d \in O_F$ of $\ov\BE$ automatically lift.
Furthermore, since $\xi$ obviously lifts, the equivalence of the first and third conditions is an immediate consequence of Lemma~\ref{endom subalg =}.
\end{proof}

Combined with Lemma \ref{intersection nonempty conds}, the following proves Theorem \ref{main prop}. 

\begin{proposition}\label{char nondeg}
The following three properties of $x\in \frak u_1(F_0)$ are equivalent.
\begin{altenumerate}
\item\label{integral rs} $x \in \fkk_{1,\rs}$.
\item\label{intersection proper} $\Delta \cap \Delta_x$ is a nonempty scheme, proper over $\Spec O_{\breve F}$.
\item\label{intersection artin} $\Delta \cap \Delta_x$ is artinian with two points.
\end{altenumerate}
The analog where $x\in \fku_1(F_0)$ is replaced by $g\in U_1(F_0)$ and $\fkk_{1,\rs}$ is replaced by $K_{1,\rs}$ is also true. Furthermore, in the group case, under these conditions, there are no higher Tor terms in the expression \eqref{Int(g) inhomog} defining $\Int(g)$.
\end{proposition}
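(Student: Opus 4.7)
The plan is to reduce everything to the case where $x$ is reduced, pass through the Serre isomorphism $\CM \simeq \CN_{2,+}$, and analyze via quasi-canonical divisors. The trivial implication (iii) $\Rightarrow$ (ii) is immediate. Lemma \ref{intersection nonempty conds} reduces the problem to proving the equivalence under $x \in \fkk_1$ (guaranteed by non-emptiness of the intersection in (ii) or (iii)), in which case the reduced intersection already consists of the two basepoints $(z_\pm, \delta_\CN(z_\pm))$. By Lemmas \ref{intersections =} and \ref{rs for reduced u1}, I would replace $x$ by $x_\red \in \fkk_{1,\red}$ and work with coordinates $(\alpha, b)$ as in \eqref{coordofred}.

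Via Proposition \ref{Serre isom}, the lifting condition cutting out $\Delta \cap \Delta_x$ on each component of $\CN_2$ decouples into two simultaneous conditions on $(Y_0, \beta) \in \CM(S)$: (a) $\alpha \in O_D$ lifts as an endomorphism of $Y_0$ -- the classical Gross--Keating situation -- and (b) $c := \pi \ov b \in O_D$ lifts to a homomorphism $Y_0 \to \ov\CE$, cutting out the divisor $\CT(c)$; the condition on $\mathbf{b}'$ is redundant by the anti-hermiticity relation $\mathbf{c}' = -(\mathbf{b}')^\dag$ built into $\fku_1$. Proposition \ref{sums of qcan divisors prop} decomposes $\CT(c) = \sum_j \CW_{\tensor*[^{\ov c}]{F}{}, j}$ into quasi-canonical divisors attached to the conjugate subfield $\tensor*[^{\ov c}]{F}{} = bFb^{-1}$, and Gross--Keating similarly decomposes the $\alpha$-locus into quasi-canonical divisors attached to $F_0(\alpha)$. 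Each $\CW_j = \Spf W_j$ is a formal scheme whose ideal of definition is \emph{not} nilpotent, so its persistence in the intersection obstructs (ii); thus $\Delta \cap \Delta_x$ is a scheme precisely when the two families share no common component, which happens precisely when they are attached to distinct quadratic subfields of $D$.

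For (i) $\Rightarrow$ (iii): Lemma \ref{rs crit for reduced} gives $b \neq 0$ and $\alpha'_- \neq 0$, and a direct computation shows $\alpha'_- \neq 0$ is equivalent to $\alpha \notin bFb^{-1} = \tensor*[^{\ov c}]{F}{}$. Hence $F_0(\alpha) \neq \tensor*[^{\ov c}]{F}{}$, the two quasi-canonical families meet only at the closed point of $\CM$, and the intersection is artinian. The contrapositive (ii) $\Rightarrow$ (i) proceeds case-by-case: if $b = 0$ then $\CT(c) = \CM$ and the $\alpha$-locus alone contains either all of $\CM$ (when $\alpha = 0$) or at least one non-artinian $\Spf W_j$; if $b \neq 0$ and $\alpha'_- = 0$, then $\alpha \in \tensor*[^{\ov c}]{F}{}$, so $\alpha$ lifts automatically along every $\CW_{\tensor*[^{\ov c}]{F}{}, j}$ appearing in $\CT(c)$, and the intersection swallows an entire $\Spf W_j$. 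In either degenerate case $\Delta \cap \Delta_x$ fails to be a scheme.

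The group version is obtained by Cayley transport: Lemma \ref{intersections =} gives $\Delta \cap \Delta_g = \Delta \cap \Delta_{\fkc_\xi^{-1}(g)}$ for an appropriate $\xi = \diag(\pm 1_2, \pm 1)$ covering all of $K_1$ by Lemma \ref{lem cayley U1}, and Lemma \ref{cayley rs iff rs} matches the regular semi-simple loci under the Cayley map. The vanishing of higher Tor terms in \eqref{Int(g) inhomog} is then automatic: $\Delta$ and $\Delta_g$ are formally smooth closed subschemes of codimension $2$ in the formally smooth $4$-dimensional $\CN_2 \times_{\Spf O_{\breve F}} \CN_3$, so a $0$-dimensional (artinian) intersection forces the local defining equations to form a regular sequence, and a Koszul argument kills the higher $\mathrm{Tor}$'s. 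The main obstacle I anticipate is the contrapositive (ii) $\Rightarrow$ (i): certifying that $\Delta \cap \Delta_x$ fails to be a scheme requires actually exhibiting a $W_j$-factor persisting in the local structure sheaf, which calls for the explicit decomposition of Proposition \ref{sums of qcan divisors prop} together with careful matching of the subfields $F_0(\alpha)$ and $\tensor*[^{\ov c}]{F}{}$, rather than a purely abstract dimension count.
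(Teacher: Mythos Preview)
Your approach is essentially the paper's: reduction to $\fkk_{1,\red}$ via Lemmas \ref{intersection nonempty conds}, \ref{rs for reduced u1}, and \ref{intersections =}; Cayley transport for the group version via Lemmas \ref{lem cayley U1} and \ref{cayley rs iff rs}; and a Koszul/regular-sequence argument for the vanishing of higher Tor terms. Your contrapositive $\neg$(i)$\Rightarrow\neg$(ii) is also the paper's argument in different clothing: the ``$\Spf W_j$ persists'' observation, specialized to $j=0$, is exactly the paper's explicit construction of a $\Spf O_{\breve F}$-point on $\Delta\cap\Delta_x$ via the canonical lift $\CE_\iota$ for the conjugate embedding $\iota={}^b\iota_\BE$.

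The one genuine difference is in (i)$\Rightarrow$(iii). You argue conceptually: decompose both the $\alpha$-endomorphism locus and $\CT(c)$ into quasi-canonical divisors, observe that $\alpha'_-\neq 0$ forces the attached quadratic subfields $F_0(\alpha)$ and $bFb^{-1}$ to differ, and conclude that the two divisor families share no component in the regular $2$-dimensional $\CM$. The paper instead forward-references \S\ref{calc of l-Int}, where Keating's formula (Proposition \ref{keating}) is applied on each component $\CW_{{}^bF,j}$ of $\CT(\ov b)$ to produce a finite length directly; the key input there is that $\alpha'_-\neq 0$ gives finite distance $\dist_j(\alpha')$. Your route is cleaner conceptually but imports the classical Gross decomposition of the endomorphism-deformation locus as an extra ingredient not proved in the paper; the paper's route is self-contained modulo Keating but defers finiteness to the explicit computation. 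Both are valid. (Two minor corrections: the relevant special divisor is $\CT(\ov b)$ rather than $\CT(\pi\ov b)$, cf.\ \S\ref{calc of l-Int}, though the conjugate subfield $bFb^{-1}$ is the same either way; and you do not actually need the full decomposition of the $\alpha$-locus---it suffices to check that no $\CW_{{}^bF,j}$ lies inside it, which follows from $\alpha\notin O_{{}^bF,j}$.)
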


\begin{proof}
Lemma \ref{intersection nonempty conds} immediately gives the equivalence of \eqref{intersection proper} and \eqref{intersection artin} in both the Lie algebra and group cases; and in the proof of the rest of the proposition, it also allows us to assume that $x \in \fkk_1$ in the Lie algebra case (resp.~$g \in K_1$ in the group case) and that the intersection in question is nonempty.  By Lemmas~\ref{rs for reduced u1}, \ref{lem cayley U1}, \ref{cayley rs iff rs}, and~\ref{intersections =}, the equivalence of \eqref{integral rs} and \eqref{intersection proper} in both the Lie algebra and group cases follows from their equivalence in the case that $x$ is a reduced element in the Lie algebra.  When $x \in \fkk_{1,\red}$ is regular semi-simple, we will show in \S\ref{calc of l-Int} that $\Delta \cap \Delta_x$ is an artinian scheme by explicitly computing its (finite) length.  Thus \eqref{integral rs} implies \eqref{intersection proper}.

To complete the proof of the equivalence of the three properties, we will show that if $x \in \fkk_{1,\red}$ is not regular semi-simple, then the intersection $\Delta \cap \Delta_x$ is not a scheme.  Write $x$ in the form \eqref{coordofred}, with $\alpha \in O_D^{\tr = 0}$ and $b \in O_D$.  Then $x$ is not regular semi-simple (if and) only if the elements $\alpha b$ and $b$ are linearly dependent over $F$ (which as before we take to act on $D$ on the right).

First suppose that $b \neq 0$.  Then $b^{-1}\alpha b \in F$ inside $D$.  Or in other words, $\alpha$ is in the image of the conjugate embedding $\iota := \tensor[^b]{\iota}{_\BE}$, in the notation of \eqref{conjugate emb}.
Since $F/F_0$ is ramified, 
$\iota$ makes $\BE$ into a formal $O_F$-module of height $1$, and we denote by $\CE_\iota$ the corresponding canonical lift of $\BE$ over $\Spf O_{\breve F}$.
Via the Serre construction, $O_F \otimes_{O_{F_0}} \CE_\iota$ gives a $\Spf O_{\breve F}$-point on $\CN_2$.  Since $\alpha$ lifts to an endomorphism of $\CE_\iota$, $\diag(\alpha,\alpha)$ lifts to an endomorphism of $O_F \otimes_{O_{F_0}} \CE_\iota$.  Since $b \in O_D$ conjugates $\iota_{\BE}$ into $\iota$, it lifts to a homomorphism $\CE \to \CE_\iota$.  Hence $\bigl[\begin{smallmatrix} b\pi\\ b \end{smallmatrix}\bigr]$ lifts to a homomorphism $\ov\CE \to O_F \otimes_{O_{F_0}} \CE_\iota$.  Similarly, since $\ov b = b^{-1} \RN b$ conjugates $\iota$ into $\iota_\BE$, $\begin{bmatrix} \pi\ov b  &  \ov b \varpi\end{bmatrix}$ lifts to a homomorphism $O_F \otimes_{O_{F_0}} \CE_\iota \to \ov\CE$.  This shows that $x$ lifts to an endomorphism of $(O_F \otimes_{O_{F_0}} \CE_\iota) \times \ov\CE$.  Thus we've constructed a $\Spf O_{\breve F}$-point on $\Delta \cap \Delta_x$, which shows that $\Delta \cap \Delta_x$ cannot be a scheme.

The case that $b = 0$ is even simpler: let $\CE_0$ be any formal $O_{F_0}$-module over $\Spf O_{\breve F}$ which lifts $\BE$ and for which the endomorphism $\alpha$ lifts.  Then as before $x$ lifts to an endomorphism of $(O_F \otimes_{O_{F_0}} \CE_0) \times \ov\CE$, so that we again obtain a $\Spf O_{\breve F}$-point on $\Delta \cap \Delta_x$.

Now let us prove the final assertion, i.e.~that for $g \in U_{1,\rs}(F_0)$ we have
\begin{equation}\label{nohigher}
   \Int(g) = \length(\Delta\cap \Delta_g).
\end{equation}
We follow \cite[Lem.~4.1]{Terstiege-HZ} and \cite[Prop.~11.6]{KR-U2}. Let $R$ be the local ring at a point  $x \in \Delta \cap \Delta_g$ of $\CN_2 \times \CN_3$. Then $\Delta$ is defined in $x$  by the ideal generated by a regular sequence $f_1, f_2$ of $R$. Hence the Koszul complex $K(f_1, f_2)$ is a free resolution of the $R$-module $\CO_{\Delta, x}$, and the complex $K(f_1, f_2)\otimes_R\CO_{\Delta_g, x}$ represents $(\CO_\Delta\otimes^\BL\CO_{\Delta_g})_x$. But 
\[
   K(f_1, f_2)\otimes_R\CO_{\Delta_g} = K\bigl(\ov f_1, \ov f_2\bigr) ,
\]
where $\ov f_i$ denotes the image of $f_i$ in $\CO_{\Delta_g}$, and where on the right-hand side appears the Koszul complex as $\CO_{\Delta_g, x}$-module. Since $\Delta$ and $\Delta_g$ intersect properly, $\ov f_1, \ov f_2$ forms a regular sequence in $\CO_{\Delta_g, x}$ which generates the ideal of $\Delta\cap{\Delta_g}$, we see that $ K(\ov f_1, \ov f_2)$ is a free resolution of $\CO_{\Delta\cap{\Delta_g}, x}$. Hence $(\CO_\Delta\otimes^\BL\CO_{\Delta_g})_x$ is represented by $\CO_{\Delta\cap{\Delta_g}, x}$. The asserted equality \eqref{nohigher} follows. 
\end{proof}

\begin{corollary}\label{inttoliealg}
For $x\in \fkk_{1,\rs}$ and $\xi = \diag(\pm 1_2,\pm1)$,
\[
  \lInt(x) = \lInt(x_\red) = \length\bigl(\Delta\cap\Delta_{\fkc_\xi(x)}\bigr) = \Int\bigr(\fkc_\xi(x)\bigl) .
\]
\end{corollary}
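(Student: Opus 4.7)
The proof will be a direct assembly of results already in place, with essentially no new content beyond keeping track of which hypothesis justifies which equality. The plan is to verify that each of the four quantities in the display is well-defined under the hypothesis $x \in \fkk_{1,\rs}$, and then read off the chain of equalities from Lemma~\ref{intersections =}.

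First I would check well-definedness. By Proposition~\ref{char nondeg}, the assumption $x \in \fkk_{1,\rs}$ guarantees that $\Delta \cap \Delta_x$ is an artinian scheme, so $\lInt(x) = \length(\Delta \cap \Delta_x)$ makes sense. Next, the map $x \mapsto x_\red$ preserves $\fkk_1$ (it only zeroes out two of the coordinates in \eqref{fkk_1 coords}), and Lemma~\ref{rs for reduced u1} gives $x_\red \in \fku_{1,\rs}(F_0)$, so $x_\red \in \fkk_{1,\red,\rs}$; thus Proposition~\ref{char nondeg} applies again to make $\lInt(x_\red)$ well-defined. On the group side, Lemma~\ref{lem cayley U1} gives $\fkc_\xi(x) \in K_1$, and Lemma~\ref{cayley rs iff rs} gives $\fkc_\xi(x) \in U_{1,\rs}(F_0)$, hence $\fkc_\xi(x) \in K_{1,\rs}$; so Proposition~\ref{char nondeg} provides both that $\Delta \cap \Delta_{\fkc_\xi(x)}$ is an artinian scheme and the identity $\Int(\fkc_\xi(x)) = \length(\Delta \cap \Delta_{\fkc_\xi(x)})$ (the latter using the final statement about the vanishing of higher $\Tor$ terms).

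Second, I would invoke Lemma~\ref{intersections =}, which yields the equalities of closed formal subschemes
\[
   \Delta \cap \Delta_x \;=\; \Delta \cap \Delta_{x_\red} \;=\; \Delta \cap \Delta_{\fkc_\xi(x)}
\]
inside $\CN_2 \times_{\Spf O_{\breve F}} \CN_3$. Taking lengths of artinian schemes is additive in equalities, so all three lengths coincide, which together with the three definitional identities above gives the required chain
$\lInt(x) = \lInt(x_\red) = \length(\Delta \cap \Delta_{\fkc_\xi(x)}) = \Int(\fkc_\xi(x))$.

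There is no real obstacle in this corollary — the substantive work (the Cayley-transform bookkeeping in Lemma~\ref{endom subalg =}, the characterization in Proposition~\ref{char nondeg}, and the artinian/no-higher-$\Tor$ analysis) is already done. The only minor point to be careful about is that the final equality involves comparing a group intersection number $\Int(\fkc_\xi(x))$ (which a priori is defined as an Euler characteristic of a derived tensor product) with a scheme-theoretic length; this is exactly the content of the last assertion of Proposition~\ref{char nondeg}, so the corollary should be stated and proved immediately after that proposition with no further machinery required.
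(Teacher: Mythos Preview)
Your proof is correct and follows exactly the paper's approach: Lemma~\ref{intersections =} for the equalities of the intersection schemes, and the vanishing of higher Tor terms in Proposition~\ref{char nondeg} for the last equality. The paper's proof is a two-line version of what you wrote; your added well-definedness checks (via Lemmas~\ref{rs for reduced u1}, \ref{lem cayley U1}, \ref{cayley rs iff rs} and Proposition~\ref{char nondeg}) are accurate but left implicit there.
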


\begin{proof}
Lemma~\ref{intersections =} gives the first two equalities, and the vanishing of higher Tor terms asserted in Proposition~\ref{char nondeg} gives the last one.
\end{proof}

\section{Explicit calculations for the Lie algebra}\label{sec expl Lie}

By the results of the previous section, the calculation of intersection numbers in the situations of interest to us reduces to the calculation of $\lInt(x)$ for a reduced, regular semi-simple element $x \in \fkk_1$.  
In this section we effect this calculation.

\subsection{Keating invariants}
To begin, we briefly recall the theorem of Keating \cite[Th.~2.1]{V1} in the case that $F/F_0$ is ramified. Fix any $F_0$-embedding of $F$ into $D$, and let $\psi\in O_D$.  Let $\dist_{j}(\psi)$ be the ``distance'' of $\psi$ to the order $O_j$ of conductor $j$ in $F$, i.e.
\[
   \dist_{j}(\psi) := \max \bigl\{\, v_D(x+\psi)\bigm| x\in O_j\, \bigr\}.
\] 
Equivalently, $\dist_j(\psi)$ is the positive integer $\ell$ such that
\[
   \psi \in \bigl(O_j+\pi^{\ell}O_D\bigr) \smallsetminus \bigl( O_j+\pi^{1+\ell}O_D\bigr).
\]
(Recall that we use the uniformizer $\pi$ of $F$ as the uniformizer of $O_D$.) We may also describe the distance as the minimum 
\begin{equation}\label{distance fmla}
   \dist_j(\psi) = \min\bigl\{ \ell(\psi_-), \ell_j(\psi_+)\bigr\} ,
\end{equation}
where $\psi_+$ and $\psi_-$ are the components of $\psi$ with respect to the decomposition \eqref{Deigen}, and
\[
   \ell(\psi_-) = v_D(\psi_-)
   \quad\text{and}\quad
   \ell_j(\psi_+) = 
      \begin{cases}
         v_D\bigl(\Im(\psi_+)\bigr),  &  v_D\bigl(\Im(\psi_+)\bigr) < 2j;\\
         +\infty,  &  v_D\bigl(\Im(\psi_+)\bigr) \geq 2j .
      \end{cases}
\]
Here $\Im(\psi_+) = \bigl(\psi_+-\ov \psi_+\bigr)/2 \in F^{\tr=0}$ is the imaginary part; note that $v_D(\Im(\psi_+))$ is always odd. We adopt the usual convention that $v_D(0)=+\infty$.

\begin{proposition}[Keating]\label{keating}
Assume that $F/F_0$ is ramified.
For $j\geq 0$, let $\ell={\rm dist}_j(\psi)$ be defined as above. Then the length $n_j(\psi)$ of the locus inside the quasi-canonical divisor $\CW_{F, j}$ where $\psi$ lifts to an endomorphism of the corresponding quasi-canonical lifting is given by
\[
   n_{j}(\psi) = 
   \begin{dcases}
	  2\sum_{i=0}^{\ell/2} q^i - q^{\ell/2},  &  \ell\leq 2j \text{ is even;}\\
      2\sum_{i=0}^{(\ell-1)/2} q^i = 2\frac{q^{(\ell+1)/2} - 1}{q-1},  &  \ell\leq 2j \text{ is odd;}\\
      2\sum_{i=0}^{j-1} q^i + (\ell-2j+1)q^j,  &  \ell > 2j .
   \end{dcases}
\]
\end{proposition}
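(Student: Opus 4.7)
The plan is to reduce the statement to Keating's original deformation-theoretic computation, which is reproduced in the ramified setting in \cite{V1}. The key inputs are: (i) the explicit description of the quasi-canonical lifting $\CE_{F,j}$ over $W_j$ as a Lubin--Tate-type formal group with $O_j$-action; (ii) the decomposition $D = D_+ \oplus D_-$ from \eqref{Deigen}, which yields $\psi = \psi_+ + \psi_-$ with $\psi_+ \in F = D_+$ commuting with $\pi$ and $\psi_- \in D_-$ anticommuting with $\pi$; and (iii) the characterization of the lifting locus of a quasi-endomorphism in terms of the vanishing of certain coefficients in the formal group law, as in the proof of Lemma~\ref{multspfib}.

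First I would dispose of the trivial cases: if $\ell=\dist_j(\psi)=0$ (which happens exactly when $\psi\notin O_j+\pi O_D$) the lifting locus is empty (length $0$), matching the formula $2\cdot 1 - 1 = 1$... wait, one must check the base cases carefully, as the formula gives $n_j(\psi)=1$ for even $\ell=0\le 2j$, reflecting the fact that the locus meets the special fiber with multiplicity one when $\psi$ is precisely $O_j$-linear. In general, the argument proceeds by studying the finite-length locus inside $\CW_{F,j}=\Spec W_j$ (a trait ring with uniformizer $\pi_j$, residue field $\ov k$, and ramification index computable from $[W_j:O_{\breve F}]=q^j$) where $\psi$ extends as an endomorphism modulo $\pi_j^{N+1}$ for increasing $N$.

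Next I would use the following structural input, which is the content of Keating's analysis: writing $\psi_+ = a + b$ with $a \in O_j$ realising the best approximation and $b = \Im(\psi_+)-$component of $\psi_+$ relative to $O_j$, the obstructions to lifting $\psi$ are governed separately by $v_D(\psi_-)$ (which controls the anticommutative obstruction) and by $\ell_j(\psi_+)$ (which measures how closely $\psi_+$ is $O_j$-linear). Via \eqref{distance fmla}, both obstructions feed into a single invariant $\ell$. Using the explicit form of the universal deformation over $W_j$, one shows the lifting condition modulo $\pi_j^{N+1}$ imposes a Newton-polygon-type constraint that, for $N \leq j\cdot \ell$ (appropriately normalized), is equivalent to the vanishing of a power of the uniformizer, giving rise to the geometric progression $\sum q^i$.

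Finally I would assemble the three cases by case analysis on $\ell$ and $j$:
\begin{altitemize}
\item When $\ell$ is odd and $\ell\leq 2j$, the two branches of the obstruction (from $\psi_-$ and $\Im(\psi_+)$) give independent contributions, each summing as $\sum_{i=0}^{(\ell-1)/2} q^i$, yielding $n_j(\psi)=2\sum_{i=0}^{(\ell-1)/2}q^i$.
\item When $\ell$ is even and $\ell\leq 2j$, the two branches overlap at the ``middle'' level $\ell/2$, and inclusion–exclusion produces the correction $-q^{\ell/2}$.
\item When $\ell>2j$, we have $\ell_j(\psi_+)=+\infty$, so $\psi_+\in O_j$; the obstruction is purely from $\psi_-$, but beyond level $2j$ each further level contributes $q^j$ rather than $q^i$ (since the residue extension has saturated), producing the linear tail $(\ell-2j+1)q^j$.
\end{altitemize}

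The hard part will be the explicit bookkeeping in step three: verifying that the Newton-polygon analysis of lifting obstructions gives \emph{exactly} the claimed geometric/linear sum, including the overlap correction in the even case. This is precisely the content of Keating's theorem in \cite[Th.~2.1]{V1}; a clean modern proof in the ramified case uses the rigid-analytic interpretation of $\CW_{F,j}$ developed by Wewers \cite{Wewers}, together with the Gross--Hopkins period map. Since the statement is a direct transcription of Keating's result, the proof in the paper will reduce to citing \cite{V1} after matching conventions between the distance function and Keating's invariants.
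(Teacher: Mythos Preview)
The paper does not prove this proposition at all: it is stated with the attribution ``(Keating)'' and introduced by the sentence ``we briefly recall the theorem of Keating \cite[Th.~2.1]{V1} in the case that $F/F_0$ is ramified,'' after which the text immediately proceeds to use the formula. Your final sentence correctly anticipates this, so in that sense your proposal matches the paper exactly.

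The preceding sketch you give is therefore extraneous for the purposes of comparison. As a standalone outline of Keating's argument it is reasonable in spirit but wobbly in places: the discussion of the $\ell=0$ base case is confused (you assert the locus is empty and then note the formula gives $1$ without resolving this), and the heuristic about ``two branches overlapping at the middle level'' with an inclusion--exclusion correction is suggestive but not how the actual computation in \cite{V1} is organized. If you want a genuine proof, you should work directly with the deformation-theoretic setup in \cite{V1} (or Gross's original notes) rather than the impressionistic Newton-polygon language here.
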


We refer to the third alternative in the statement as the \emph{stable range for $j$ relative to  $\ell$}, and the first two alternatives as the \emph{unstable range for $j$ relative to $\ell$}.

\subsection{Calculation of $\lInt(x)$ for $x\in\fkk_{1,\red}$}\label{calc of l-Int}
Now let us return to our convention that $F$ is embedded in $D$ via $\iota_{\ov\BE}$, as in \S\ref{reduction to LA}.  Let $x \in \fkk_{1,\red}$ be regular semi-simple.
By definition,
\begin{equation}\label{lInt n=3}
   \lInt(x)={\rm length}\bigl(\text{\emph{locus in $\CN_2$ where $x$ lifts to an endomorphism of $\CY \times \ov\CE$}} \bigr),
\end{equation}
where $\CY$ denotes the universal $p$-divisible group over $\CN_2$. We are going to obtain an explicit expression for this length by pulling the calculation back to \CM via the Serre map (as in Proposition~\ref{Serre isom}) and using Keating's theorem.  Of course, to do so we have to account for the fact that $\CN_2$ has two connected components, only one of which is identified with \CM under the Serre map.  As in the proof of Lemma~\ref{intersection nonempty conds}, let $z_\pm$ denote the two points in $\CN_2(\ov k)$, with $z_+$ the standard basepoint.  Write $\lInt_\pm(x)$ for the length of the locus occurring in \eqref{lInt n=3} supported at $z_\pm$, so that $\lInt(x) = \lInt_+(x) + \lInt_-(x)$.  If $g \in H_1(F_0) = U(\BX_2)$ interchanges $z_+$ and $z_-$, then via the inclusion $H_1(F_0) \subset U_1(F_0)$,
\begin{equation}\label{lInt 2 terms}
   \lInt(x) = \lInt_+(x) + \lInt_+(gxg^{-1}).
\end{equation}

We first consider the term $\lInt_+(x)$ in \eqref{lInt 2 terms}.  Write
\[
   x = 
   \begin{bmatrix}
      \alpha  &  0  &  b\pi\\
	  0  &  \alpha  &  b\\
	  \pi\ov b  &  \varpi \ov b  &  0
   \end{bmatrix},
   \quad \alpha\in O_D^{\tr = 0}, \quad b\in O_D,
\]
in the coordinates \eqref{fkk_1 coords}.  
Recall from \S\ref{U_1 and fku_1 coords} that the matrix entries are with respect to the $O_{F_0}$-linear decomposition of the framing object
\[
   \BX_3 
	   = \BX_2 \times \ov\BE 
		= (O_F \otimes_{O_{F_0}} \BE) \times \ov\BE 
		= (1 \otimes \BE + \pi \otimes \BE) \times \ov\BE 
		\simeq \BE \times \BE \times \ov\BE.
\]
By Lemma~\ref{rs crit for reduced}, since $x$ is regular semi-simple, we have $b \neq 0$ and $\alpha'_- \neq 0$, where $\alpha' = b^{-1}\alpha b$ and the minus denotes the component of $\alpha'$ with respect to the decomposition \eqref{Deigen} of $D$.  Now, inside \CM is the special divisor $\CT(\ov b)$ where $\ov b$ lifts to a homomorphism $\CY_0 \to \ov\CE$, where we recall that $\CY_0$ denotes the universal formal $O_{F_0}$-module over \CM, cf.~\S\ref{special divisors}.  Since $\lambda_\BE$ lifts to the principal polarizations $\lambda_{\CY_0}$ of $\CY_0$ and $\lambda_{\ov\CE}$ of $\ov\CE$, and since the Rosati involution on $D$ is the main involution, this is the same as the locus where $b$ lifts to a homomorphism $\ov\CE \to \CY_0$.  Over the connected component $\CN_{2,+} \subset \CN_2$, the Serre map identifies $\CY$ with $O_F \otimes_{O_{F_0}} \CY_0 = 1 \otimes \CY_0 + \pi \otimes \CY_0$.  Since $\pi \in O_D$ of course lifts to an endomorphism of $\ov\CE$, we conclude that $\CT(\ov b)$ identifies with the locus in $\CN_{2,+}$ where $\bigl[\begin{smallmatrix} b\pi \\ b \end{smallmatrix}\bigr]$ lifts to a homomorphism $\ov\CE \to \CY$ and $\begin{bmatrix} \pi \ov b  &  \varpi \ov b \end{bmatrix}$ lifts to a homomorphism $\CY \to \ov\CE$; and we further conclude that the locus in $\CT(\ov b)$ where $\alpha$ lifts to an endomorphism of $\CY_0$ identifies with the locus in $\CN_{2,+}$ where $x$ lifts to an endomorphism of $\CY \times \ov\CE$.  By Proposition~\ref{sums of qcan divisors prop}, we can write the divisor $\CT(\ov b)$ as a sum of quasi-canonical divisors
\[
  \CT\bigl(\ov b\bigr) = \sum_{0\leq j\leq v_D(b)} \CW_{\tensor*[^b]{F}{}, j},
\]
where we recall that $\tensor*[^b]{F}{}$ denotes the image of $F$ in $D$ under the conjugate embedding $\tensor[^b]{\iota}{_\BE}$.  We obtain from Proposition~\ref{keating}
\[
   \lInt_+(x) = \sum_{0\leq j\leq v_D(b)} n_j (\alpha') ,
\]
where the length $n_j(\alpha')$ depends, via Keating's formula, on the distance of $\alpha'$ to the original order in $F$ (not to the conjugate order!). 

Now consider the term $\lInt_+(gxg^{-1})$ in \eqref{lInt 2 terms}. In terms of the coordinates \eqref{U_1 coords} for $U_1(F_0)$, let us explicitly take
\[
   g :=
	\begin{bmatrix}
		0  &  \pi  &  0\\
	   \pi^{-1}  &  0  &  0\\
		0  &  0  &  1
	\end{bmatrix},
\]
which indeed has nontrivial Kottwitz invariant. Then
\[
   gxg^{-1} =
	\begin{bmatrix}
		\pi \alpha \pi^{-1}  &  0  &  \pi b\\
		0  &  \pi^{-1} \alpha \pi  &  \pi^{-1} b \pi\\
		\varpi \ov b \pi^{-1}  &  \pi \ov b \pi  &  0
	\end{bmatrix}
	=
	\begin{bmatrix}
		\tensor*[^\pi]{\alpha}{}  &  0  &  \tensor*[^\pi]{b}{} \pi\\
		0  &  \tensor*[^\pi]{\alpha}{}  &  \tensor*[^\pi]{b}{}\\
		\pi \ov{\tensor*[^\pi]{b}{}}  &  \varpi \ov{\tensor*[^\pi]{b}{}}  &  0
	\end{bmatrix},
\]
where the superscript $\pi$ denotes conjugation of the base by $\pi$ (which is the same as conjugation by $\pi^{-1}$, since $\pi$ is traceless).  Thus to compute $\lInt_+(gxg^{-1})$, we can run through exactly the same analysis as above, with $\tensor*[^\pi]{b}{}$ in place of $b$ and $\tensor*[^\pi]{\alpha}{}$ in place of $\alpha$.  Since the elements $\tensor*[^\pi]{b}{^{-1}} \tensor*[^\pi]{\alpha}{} \tensor*[^\pi]{b}{} = \tensor*[^\pi]{(\alpha')}{}$ and $\alpha'$ have the same distance to $O_j$, we conclude that $\Int_+(gxg^{-1}) = \lInt_+(x)$. Hence
\[
   \lInt(x) = 2 \lInt_+(x).
\]

We now explicitly calculate this length via Keating's formula.  Since $\tr (\alpha) = 0$, we also have $\tr (\alpha') = \tr (\alpha'_\pm) = 0$. Therefore the formula \eqref{distance fmla} for the distance gives
\[
   \dist_j(\alpha') = 
   \begin{cases}
	   \min \bigl\{v_D(\alpha'_-), v_D(\alpha'_+)\bigr\},  &  v_D(\alpha'_+) < 2j;\\
       v_D(\alpha'_-),  &  v_D(\alpha'_+) \geq 2j .
   \end{cases}
\]
To lighten notation, set
\begin{equation*}
   \ell_- := v_D(\alpha'_-),\quad \ell_+ := v_D(\alpha'_+),\quad\text{and}\quad m := v_D(b) . 
\end{equation*}
Note that $\ell_+ = v_D(\alpha'_+)$ is odd, since $\alpha'_+ \in F$ is purely imaginary. Also note that these quantities depend only on the invariants $u(x)$, $w(x)$, and $\Delta(x)$ (which are given explicitly in \eqref{u w lambda} and \eqref{Deltaonu1}), via
\begin{equation}\label{remember valu}
   v_D(u) = 2m,\quad v_D(w) = 2m+\ell_+,\quad\text{and}\quad v_D(\Delta)=4m+2\ell_- .
\end{equation}
We will make use of the following ancillary calculation at several points below: for any $r \geq 0$, since
\[
   \sum_{j=0}^r jq^j
      = \frac{rq^{r+1}}{q-1} - \frac{q^{r+1}-q}{(q-1)^2}
	  = \frac{rq^{r+2}-(r+1)q^{r+1}+q}{(q-1)^2},
\]
we have
{\allowdisplaybreaks
\begin{equation}\label{ancillary}
\begin{aligned}
	&\sum_{j=0}^{r}\biggl( 2\frac{q^{j}-1}{q-1} + (\ell_- - 2j + 1)q^j\biggr)\\
	   &\qquad = 2\frac{\frac{q^{r+1}-1}{q-1}-(r+1)}{q-1} + (\ell_- + 1)\frac{q^{r+1} - 1}{q - 1}
		                - 2 \frac{rq^{r+2}-(r+1)q^{r+1}+q}{(q-1)^2}\\
		&\qquad = q^{r+1}\frac{2 + (\ell_- + 1)(q-1) - 2(rq - r - 1)}{(q-1)^2} - \frac{\ell_- + 1 + 2(r+1)}{q-1} - \frac{2q + 2}{(q-1)^2}\\
		&\qquad = q^{r+1} \frac{2(q+1) + (\ell_- -2r - 1)(q-1)}{(q-1)^2}  - \frac{\ell_- + 2r + 1}{q-1} - \frac{4q}{(q-1)^2}.
\end{aligned}
\end{equation}}
Finally we set
\[
   t := q^{-1}.
\]

For the main calculation we now distinguish cases.

\smallskip
\paragraph{\textbf{Case I}: $\ell_-\leq\ell_+$.}
 
Then for any $j$, 
\[
   \dist_{j}(\alpha') = \ell_-.
\]
We divide this case further into three subcases.
\begin{altenumerate}
\renewcommand{\theenumi}{\arabic{enumi}}
\item\label{ell_- > 2m}
$\ell_->2m$. Then all $j$ with $0\leq j\leq m$ are in the stable range for $\dist_{j}(\alpha')$. Hence
\[
   \lInt_+(x) = \sum_{j=0}^{m}\biggl( 2\frac{q^{j}-1}{q-1} + (\ell_- - 2j + 1)q^j\biggr).
\]
Taking $r = m$ in \eqref{ancillary}, replacing $q$ with $t^{-1}$, and multiplying by $2$, we obtain 
\[
   \lInt(x) = 2t^{-m} \frac{2(1+t)+(\ell_--2m-1)(1-t)}{(1-t)^2} 
	            - \frac{2(\ell_-+2m+1)t}{1-t} - \frac{8t}{(1-t)^2} .
\]
\item\label{ell_- =< 2m and odd}  $\ell_-\leq 2m$  and $\ell_-$  odd.  In this case, there are some $j$ in the stable range and some in the unstable range. We get 
\begin{equation}\label{Int(x) Case I(2)}
   \lInt_+(x) = \sum_{j=0}^{(\ell_- - 1)/2} \biggl( 2\frac{q^{j}-1}{q-1} + (\ell_- - 2j + 1)q^j\biggr)+ \sum_{j = (\ell_-+1)/2}^{m} 2\frac{q^{(\ell_- + 1)/2} - 1}{q-1}.
\end{equation}
Multiplying by $2$ and using \eqref{ancillary} with $r = (\ell_- - 1)/2$, we get
\[
\begin{aligned}
	\lInt(x) 
	   &= 2q^{(\ell_- + 1)/2}\frac{2(q+1)}{(q-1)^2} - 2\frac{2\ell_-}{q-1} - 2\frac{4q}{(q-1)^2} + 2\biggl(m-\frac{\ell_-+1}{2}+1\biggr) \frac{2(q^{(\ell_-+1)/2}-1)}{q-1}\\
	   &= 2t^{-(\ell_--1)/2}\frac{\bigl(2m-\ell_-+3)-(2m-\ell_--1)t}{(1-t)^2} 
             - \frac{2(\ell_-+2m+1)t}{1-t} - \frac{8t}{(1-t)^2} .
\end{aligned}
\]
\item  $\ell_-\leq 2m$ and $\ell_-$  even. Again, there are stable and unstable $j$. Similarly to the previous subcase, we get
{\allowdisplaybreaks
\[
\begin{aligned}
   \lInt(x) 
	   &= 2\sum_{j=0}^{\ell_-/2-1} \biggl( 2\frac{q^{j}-1}{q-1} + (\ell_-+1-2j)q^j\biggr)
             + 2\sum_{j=\ell_-/2}^{m} \biggl(2\frac{q^{\ell_-/2+1}-1}{q-1} - q^{\ell_-/2}\biggr)\\
		&= 2q^{\ell_-/2} \frac{2(q+1) + q-1}{(q-1)^2} 
		        - 2 \frac{2\ell_- - 1}{q-1} - 2\frac{4q}{(q-1)^2} 
				  + 2\biggl(m - \frac{\ell_-}2 + 1 \biggr) \frac{q^{\ell_-/2}(q+1)-2}{q-1}\\
		&= 2t^{-\ell_-/2}\frac{(m-\ell_-/2+1)(1-t^2)+t(t+3)}{(1-t)^2} 
		     - \frac{2(\ell_-+2m+1)t}{1-t} - \frac{8t}{(1-t)^2}. 
\end{aligned}
\]
}
\end{altenumerate}

\smallskip

\paragraph{\textbf{Case II}: $\ell_->\ell_+$.}  In this case we have for the distance 
\[
   \dist_j(\alpha') =
   \begin{cases}
      \ell_+,  &  \ell_+<2j;\\
      \ell_-,  &  \ell_+\geq 2j.
   \end{cases}
\]
We consider the following two subcases.
\begin{altenumerate}
\renewcommand{\theenumi}{\arabic{enumi}}
\item $\ell_+\geq 2m.$   Then for all $j$ with $0\leq j\leq m$, we have $\dist_j(\alpha') = \ell_- > 2m$. Hence all $j$ lie in the stable range, and we get 
the same answer as in Case I\eqref{ell_- > 2m},
\[
   \lInt(x) = 2t^{-m} \frac{2(1+t)+(\ell_--2m-1)(1-t)}{(1-t)^2} 
	            - \frac{2(\ell_-+2m+1)t}{1-t} - \frac{8t}{(1-t)^2} .
\]

\item $\ell_+<2m$.  In this subcase, the relevant $j$ can be in the stable range as well as in the unstable range. Note that $\ell_+$ is odd. Hence we get, similarly to Case I\eqref{ell_- =< 2m and odd},
{\allowdisplaybreaks
\[
\begin{aligned}
	\lInt(x) 
	   &= 2\sum_{j=0}^{(\ell_+-1)/2}\biggl( 2\frac{q^{j}-1}{q-1} + (\ell_-+1-2j)q^j\biggr)
            + 2\sum_{j=(\ell_++1)/2}^{m} 2\frac{q^{(\ell_++1)/2}-1}{q-1}\\
		&= 2q^{(\ell_+ + 1)/2}\frac{2(q+1) + (\ell_- - \ell_+)(q-1)}{(q-1)^2}\\
		&\phantom{=}\qquad - 2\frac{\ell_- + \ell_+}{q-1} - 2\frac{4q}{(q-1)^2} + 2\biggl(m-\frac{\ell_+ + 1}{2}+1\biggr) \frac{2(q^{(\ell_+ + 1)/2}-1)}{q-1}\\
		&= 2t^{-(\ell_+-1)/2}\frac{(\ell_- - 2\ell_++2m+3)(1-t)+4t}{(1-t)^2} 
                  - \frac{2(\ell_- + 2m+1)t}{1-t} - \frac{8t}{(1-t)^2}. 
\end{aligned}
\]}
\end{altenumerate}

\part{Analytic side}\label{analtyic side part}
In this part of the paper we turn to the analytic side of the identities to be proved in Theorems \ref{thm group} and \ref{thm lie}, and, modulo the material in Part \ref{germ expansion part} on germ expansions of orbital integrals, we complete the proofs of these theorems.

\section{Inputs from local harmonic analysis}\label{inputs section}

In this section we formulate some basic facts about harmonic analysis on the spaces in play in the Lie algebra and group settings. Except where noted to the contrary, we allow $n$ to be arbitrary.

\subsection{Lie algebra setting}\label{lie setting}
Let 
\begin{equation}\label{mapfks}
   \pi_\fks\colon \fks\to \frak b
\end{equation}
be the categorical quotient of $\fks$ by $H'$ as discussed in \S\ref{ss: Lie invars}, say by taking either set of invariants \eqref{inv gen} or \eqref{inv sec}.  Thus $\frak b$ is an affine space over $F_0$ of dimension $2n-1$ (given explicitly in \eqref{cat quot isom} in the case of the invariants \eqref{inv sec}).  Let $\fkb_\rs$ be the image of $\fks_\rs$ in $\fkb$. Then  
\[
   \fkb_\rs = \bigl\{\, x\in \fkb \bigm| \Delta(x)\neq 0 \,\bigr\},
\]
where $\Delta$ denotes the discriminant \eqref{Delta rescaled}. Since this is a global function on $\fks$ which is $H'$-invariant, it descends to a global function on $\fkb$.

For $\phi' \in C_c^\infty(\fks)$, we note that the function $y\mapsto \omega(y)\Orb(y,\phi')$ descends to a function $\varphi$ on $\fkb_\rs(F_0)$. Let $C^\infty_\rc(\fkb_\rs)$ denote the space of locally constant functions on $\fkb_\rs(F_0)$ whose support has compact closure in $\fkb(F_0)$ (functions with \emph{relatively compact support}). By \cite[Lem.~3.12]{Z14} the function $\varphi$ lies in $C^\infty_\rc(\fkb_\rs)$. By a slight variant of \cite[Prop.~3.8]{Z14}, we have the following.
\begin{theorem}\label{locoforblie}
Let $\varphi$ be a function in $C_\rc^\infty(\fkb_{\rs})$.  The following properties are equivalent.
\begin{altenumerate}
\item\label{orb int fcn}
There exists a function $\phi'\in C^\infty_c(\fks)$ such that
\[
   \varphi\bigl(\pi_\fks(y)\bigr) = \omega(y)\Orb(y,\phi')
	\quad\text{for all}\quad 
	y \in \fks_\rs(F_0) .
\]
\item\label{loc orb int fcn}
For every $x_0\in \frak b(F_0)$, there exists an open neighborhood $V_{x_0}$ of $x_0$ and a function $\phi_{x_0}'\in C^\infty_c(\fks)$  such that
\begin{flalign*}
   \varphi\bigl(\pi_\fks(y)\bigr) = \omega(y)\Orb(y,\phi_{x_0}')
	\quad\text{for all}\quad
	y \in \pi_\fks^{-1}\bigl(V_{x_0} \cap \fkb_\rs(F_0)\bigr) = \pi_\fks^{-1}(V_{x_0}) \cap \fks_\rs(F_0).
\end{flalign*}
\qed
\end{altenumerate}
\end{theorem}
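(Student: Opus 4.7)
\textbf{Proof plan for Theorem \ref{locoforblie}.}
The implication \eqref{orb int fcn}$\Rightarrow$\eqref{loc orb int fcn} is immediate (take $V_{x_0}=\fkb(F_0)$ and $\phi_{x_0}'=\phi'$), so all the content is in the converse. My plan is a standard partition-of-unity glueing argument, carried out carefully in the non-archimedean setting so that everything stays in $C_c^\infty$.

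First, I would use that $\varphi\in C_\rc^\infty(\fkb_\rs)$ to fix a compact subset $K\subset\fkb(F_0)$ containing the closure of $\supp\varphi$. Applying hypothesis \eqref{loc orb int fcn} at each point of $K$ produces an open cover $\{V_{x_0}\}_{x_0\in K}$ of $K$ by sets carrying local solutions $\phi_{x_0}'\in C_c^\infty(\fks)$. Since $F_0$ is $p$-adic and $\fkb(F_0)$ therefore has a basis of compact open neighborhoods, after passing to a finite subcover I can refine it to a \emph{disjoint} finite covering $W_1,\dotsc,W_N$ of $K$ by compact open subsets of $\fkb(F_0)$, with each $W_j$ contained in some $V_{x_{i(j)}}$; set $\phi_j':=\phi_{x_{i(j)}}'$.

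Next, I would define
\[
   \phi' := \sum_{j=1}^{N} \bigl(\mathbf{1}_{W_j}\circ\pi_\fks\bigr)\cdot \phi_j' \,\in\, C_c^\infty(\fks),
\]
noting that the product is compactly supported because each $\phi_j'$ is, even though $\pi_\fks^{-1}(W_j)$ itself is not compact in $\fks$. Because $\pi_\fks$ is $H'$-invariant, $\mathbf{1}_{W_j}\circ\pi_\fks$ is constant on $H'$-orbits and hence pulls out of the orbital integral: for $y\in\fks_\rs(F_0)$,
\[
   \Orb\bigl(y,(\mathbf{1}_{W_j}\circ\pi_\fks)\cdot\phi_j'\bigr) \;=\; \mathbf{1}_{W_j}\bigl(\pi_\fks(y)\bigr)\,\Orb(y,\phi_j').
\]
If $\pi_\fks(y)\in W_j\subset V_{x_{i(j)}}$, then the local hypothesis gives $\omega(y)\Orb(y,\phi_j')=\varphi(\pi_\fks(y))$, and summing over $j$ (only one term is nonzero, as the $W_j$ are disjoint) yields $\omega(y)\Orb(y,\phi')=\varphi(\pi_\fks(y))$ for every $y$ with $\pi_\fks(y)\in\bigcup_j W_j\supset K$. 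For $y$ with $\pi_\fks(y)\notin K$ every summand vanishes, so $\Orb(y,\phi')=0=\varphi(\pi_\fks(y))$ since $\pi_\fks(y)\notin\supp\varphi$.

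The main conceptual point, and the only place where one has to be a little careful, is the bookkeeping around the relatively compact (but not compact in $\fkb_\rs$) support of $\varphi$: one has to cover the closure of $\supp\varphi$ in all of $\fkb(F_0)$, not just in $\fkb_\rs(F_0)$, because hypothesis \eqref{loc orb int fcn} is formulated at arbitrary $x_0\in\fkb(F_0)$ (including non-regular-semi-simple points); the $p$-adic refinement to a disjoint compact open cover is what allows us to use characteristic functions as a partition of unity without introducing any analytic difficulties. Everything else is formal, and no quantitative information beyond compactness of the support region is needed.
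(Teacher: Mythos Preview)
Your argument is correct and is precisely the standard partition-of-unity glueing that underlies the cited result: the paper does not give its own proof here but simply invokes \cite[Prop.~3.8]{Z14}, whose proof proceeds exactly along the lines you describe (refine to a finite disjoint compact-open cover of the closure of $\supp\varphi$ in $\fkb(F_0)$, multiply the local test functions by the pulled-back characteristic functions, and sum). Your emphasis on covering the closure in all of $\fkb(F_0)$ rather than just in $\fkb_\rs(F_0)$ is exactly the right point.
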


A function $\varphi\in C_\rc^\infty(\fkb_{\rs})$ satisfying property \eqref{orb int fcn} is called an \emph{orbital integral function}; a function $\varphi$ on $\fkb_\rs(F_0)$ satisfying property \eqref{loc orb int fcn}  is called a \emph{local orbital integral function} \cite[Def.~3.7]{Z14}. More precisely, if $\varphi$ satisfies  property \eqref{loc orb int fcn} locally around $x_0$, then $\varphi$ will be called an orbital integral function \emph{locally around $x_0$}.

%
%

We note that the map $\pi_\fks$ in \eqref{mapfks} induces a surjection on $F_0$-rational points, and we have a decomposition 
\begin{equation}\label{declie}
   \fkb_\rs(F_0) = \fkb_{\rs, 0}\amalg\fkb_{\rs, 1}
\end{equation}
into a disjoint union of two open (for the $p$-adic topology) subsets. Here $\fkb_{\rs,i}$ is the image under $\pi_\fks$ of the set $\fks_{\rs,i}$ of elements in $\fks_\rs(F_0)$ which match with elements in $\fku_{i,\rs}(F_0)$.

This decomposition can also be explained by a similar picture on the unitary  side.  Taking the same invariants as used in realizing the quotient map \eqref{mapfks}, by \S\ref{ss: Lie invars} we obtain for $i \in \{0,1\}$ a categorical quotient map
\[
   \pi_{\fku_i}\colon \fku_i\to \fkb.
\]
Then $\fkb_{\rs,i}$ is the image under $\pi_{\fku_i}$ of $\fku_{i,\rs}(F_0)$.  To be quite clear,
the maps $\pi_{\fku_0}$ and $\pi_{\fku_1}$ do \emph{not} induce surjections on $F_0$-rational points, cf.~the remark after Lem.~3.1 in \cite{Z14}.

\begin{proposition}
\label{eta b01}
Let $n=3$. For $i\in\{0,1\}$, an element $x \in \fkb_\rs(F_0)$ lies in $\fkb_{\rs, i}$ if and only if $\eta(-\Delta(x))=(-1)^{i}$.
\end{proposition}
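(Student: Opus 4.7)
The plan is to deduce this directly from the discriminant calculation in Lemma~\ref{lem delta=det}, together with the observation that $\Delta$ is an invariant polynomial and hence descends to $\fkb$.

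First I would verify that $\Delta$ (in the rescaled sense of \eqref{Delta rescaled}) descends to a function on $\fkb$ and is compatible with matching. Indeed, the entries $\tensor[^t]{e}{} x^{k} e$ for $0 \leq k \leq 2n-2$ belong to the generating set of invariants \eqref{inv gen}, so $\Delta$ is a polynomial in the invariants and descends to $\fkb$. In particular, if $y \in \fks_{\rs,i}(F_0)$ matches $\tilde x \in \fku_{i,\rs}(F_0)$, then $\Delta(y) = \Delta(\tilde x)$ as elements of $F_0$, and both equal the value of $\Delta$ at the common image point in $\fkb(F_0)$.

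Next I would translate Lemma~\ref{lem delta=det} to the rescaled $\Delta$. That lemma is stated for the unrescaled discriminant from \eqref{Delta}; call it $\Delta_\text{orig}$. For $n = 3$ we have $\Delta(x) = -\varpi^{-2}\Delta_\text{orig}(x)$, so
\[
   \eta\bigl(-\Delta(x)\bigr) = \eta\bigl(\varpi^{-2}\Delta_\text{orig}(x)\bigr) = \eta\bigl(\Delta_\text{orig}(x)\bigr),
\]
because $\eta$ is quadratic. Hence for $\tilde x \in \fku_{i,\rs}(F_0)$, Lemma~\ref{lem delta=det} gives $\eta(-\Delta(\tilde x)) = (-1)^i$.

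Combining the two points gives the forward implication: if $x \in \fkb_{\rs,i}$, then $x = \pi_{\fku_i}(\tilde x)$ for some $\tilde x \in \fku_{i,\rs}(F_0)$, and $\eta(-\Delta(x)) = \eta(-\Delta(\tilde x)) = (-1)^i$. The converse is then automatic from the disjoint decomposition \eqref{declie}: since the condition $\eta(-\Delta(x)) = (-1)^i$ singles out $\fkb_{\rs,i}$ on the one hand and $\fkb_\rs(F_0) = \fkb_{\rs,0} \amalg \fkb_{\rs,1}$ on the other, an element on which $\eta(-\Delta)$ takes value $(-1)^i$ must lie in $\fkb_{\rs,i}$. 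There is no real obstacle here; the only subtle point is keeping track of the rescaling factor $-\varpi^{-2}$ between the two versions of the discriminant.
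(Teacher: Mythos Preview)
Your proof is correct and follows the same approach as the paper: invoke Lemma~\ref{lem delta=det} and adjust for the rescaling $\Delta = -\varpi^{-2}\Delta_{\mathrm{orig}}$. One small imprecision: not all of the entries $\tensor[^t]{e}{} x^k e$ for $0 \le k \le 2n-2$ literally appear in the generating set \eqref{inv gen} (only $k = 1,\dots,n-1$ do), but they are all $H'$-invariant and hence polynomial in the generators, which is all you need for the descent to $\fkb$.
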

\begin{proof}This follows from Lemma \ref{lem delta=det}, noting that we rescaled $\Delta(x)$ by the factor $-\varpi^{-2}$ in \eqref{Delta rescaled}.  (One can also use \eqref{Deltaonu1} for $i=1$, taking note that $\eta(-\RN \alpha'_-)=-1$, and the explicit coordinates given in \eqref{coord u0} below for $i=0$.)
\end{proof}

Of course, Lemma \ref{lem delta=det} works perfectly well to distinguish between the two summands in \eqref{declie} for any $n$; we have stated the proposition for $n = 3$ only because we are now working with the rescaled version of $\Delta$.

We conclude the subsection by noting the following.

\begin{lemma}\label{del rc lie} 
Let $\phi'\in C^\infty_c(\frak s)$ be a function with transfer $(\phi, 0) \in C_c^\infty(\fku_0) \times C_c^\infty(\fku_1)$.
Then the function
\[
   y \mapsto
	\begin{cases}
		\omega(y) \del(y,\phi'),  &  y\in \fks_{\rs,1};\\ 
      0,  &  y \in \fks_{\rs,0}
   \end{cases}
\]
descends to a function on $\fkb_\rs(F_0)$ which lies in $C_\rc^\infty(\fkb_\rs)$.
\end{lemma}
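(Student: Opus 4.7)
The plan is to verify three properties in turn: (i) that the function is $H'$-invariant and hence descends to $\fkb_\rs(F_0)$, (ii) that the descended function has support with compact closure in $\fkb(F_0)$, and (iii) that it is locally constant.

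For descent: on $\fks_{\rs,0}$ the function vanishes identically, so $H'$-invariance is trivial. On $\fks_{\rs,1}$, the standard substitution $h \mapsto h'h$ in the integral defining $\Orb(y,\phi',s)$ yields $\Orb(h'^{-1}yh',\phi',s) = |\det h'|^{-s}\eta(h')\Orb(y,\phi',s)$. Differentiating at $s=0$ gives
\[
   \del(h'^{-1}yh',\phi') = -\log|\det h'|\,\eta(h')\Orb(y,\phi') + \eta(h')\del(y,\phi').
\]
Combined with the transfer factor identity $\omega(h'^{-1}yh') = \eta(h')\omega(y)$, one finds
\[
   \omega(h'^{-1}yh')\del(h'^{-1}yh',\phi') = \omega(y)\del(y,\phi') - \log|\det h'|\,\omega(y)\Orb(y,\phi').
\]
The transfer hypothesis supplies exactly the identity $\Orb(y,\phi')=0$ on $\fks_{\rs,1}$ (since $\phi'$ associates with $0 \in C_c^\infty(\fku_1)$), and so the correction term vanishes and $H'$-invariance holds on $\fks_{\rs,1}$.

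For the support: the integrand defining $\del(y,\phi')$ is nonzero only when $y$ belongs to the $H'$-orbit of $\supp(\phi')$. Hence the invariants $\pi_\fks(y)$ of any such $y$ lie in the compact set $\pi_\fks(\supp(\phi'))\subset \fkb(F_0)$, from which the descended function has relatively compact support.

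Local constancy follows by a standard argument: regular semisimplicity of $y$ entails trivial $H'$-stabilizer, so for $y$ ranging in a small neighborhood of a fixed $y_0 \in \fks_\rs(F_0)$, the set of $h \in H'(F_0)$ for which $\phi'(h^{-1}yh)\neq 0$ is confined to a common compact subset of $H'(F_0)$. On this compact set the weight $\log|\det h|$ is bounded, and uniform local constancy of $\phi'$ then yields local constancy of $\del(\,\cdot\,,\phi')$ as a function of $y$, which descends through $\pi_\fks$. This essentially adapts \cite[Lem.~3.12]{Z14}, which handles the analogous assertion for $\Orb$ itself, the only new input being control of the logarithmic weight on a compact set. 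I expect no serious obstacle; the key conceptual point is that the transfer hypothesis is precisely what is needed to cancel the unwanted $\log|\det h'|$ term in the $H'$-invariance check.
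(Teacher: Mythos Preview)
Your proposal is correct and follows essentially the same approach as the paper: the paper cites its earlier Remark on $\del$-invariance (which is exactly your computation for part (i)), invokes continuity of $\pi_\fks$ for part (ii), and refers to \cite[Lem.~3.12]{Z14} for part (iii). You have simply unpacked each of these references explicitly.
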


\begin{proof}
By Remark \ref{rem pro-u}\eqref{DOrb invar} (or rather, its Lie algebra analog), the function  descends to a function $\varphi$ on $\fkb_\rs(F_0)$. The map $\pi_\fks\colon\fks(F_0)\to \fkb(F_0)$ is continuous and hence sends the support of $\phi'$ (a compact set) to a compact set in $\fkb(F_0)$.  The support of $\varphi$ lies in the image of the support of $\phi'$ under $\pi_\fks$ and is therefore relatively compact. The local constancy of $\varphi$ follows from the same argument as in \cite[Lem.~3.12]{Z14} (which is about the case $y\mapsto \omega(y)\Orb(y,\phi')$).
\end{proof}

\subsection{Group setting}\label{group setting}
We now translate the contents of \S\ref{lie setting} to the group setting.  Let $B$ denote the categorical quotient of $S$ by $H'$, and $B_{U_i}$ the categorical quotient of $U_i$ by $H_i$ for $i = 0,1$. These are affine varieties with rings of global functions given by the ring of invariants. We denote by
\[
   \pi_S\colon S\to B 
	\quad\text{and}\quad
	\pi_{U_i}\colon U_i\to B_{U_i}
\]
the corresponding quotient morphisms, and by $B_\rs$, resp.~$B_{U_i, \rs}$, the images of $S_\rs$, resp.~$U_{i, \rs}$, under these morphisms. All of these are open subschemes defined by the non-vanishing of the discriminant function (which by equivariance drops to the categorical quotients).  On the level of $F_0$-rational points, we write $B_\rs(F_0)$ and $B(F_0)_\rs$, resp.~$B_{U_i,\rs}(F_0)$ and $B_{U_i}(F_0)_\rs$, interchangeably.

We are going to see that, in analogy with the Lie algebra case, the quotients $B$, $B_{U_0}$, and $B_{U_1}$ can all be naturally identified.  To facilitate the precise statement and proof of this result, we introduce the Cayley transform on each of our Lie algebra spaces (cf.~\S\ref{cayley u_1} for the case of $\fku_1$ when $n=3$).  Let\footnote{Here by $\det$ we mean the usual $n \times n$ determinant $\Res_{F/F_0} \M_n \to \Res_{F/F_0}\BA$, relative to any choice of basis for the hermitian space $W_i$ in the case of $\fku_i$. This definition of $\fku_1^\circ$ is consistent with the notation \eqref{u1circ} for $\fku_1^\circ(F_0)$, but note that the matrix representations used in \S\ref{reduction to LA} involve entries in $D$, and therefore care must be taken to correctly interpret their determinant. An analogous remark applies in the definition of $U_{1,\xi}$ below.}
\begin{equation}\label{s^circ u_i^circ}
   \fks^\circ := \bigl\{\, y\in \fks \bigm| \det(1-y)\neq 0 \,\bigr\}
	\quad\text{and}\quad
	\fku_i^\circ := \bigl\{\, x\in \fku_i \bigm| \det(1-x)\neq 0 \,\bigr\} ,\quad  i=0 \text{ or } 1.
\end{equation}
Then under the respective quotient maps $\pi_\fks$, $\pi_{\fku_0}$, and $\pi_{\fku_1}$, these open subschemes descend to a common open subscheme $\fkb^\circ$ of $\fkb$.
Next recall from the Introduction the norm $1$ subgroup $F^1=\{\xi\in F\mid \RN\xi=1\}$, and define
\[
   S^1 := \bigl\{\, \diag( \xi_1 \cdot 1_{n-1}, \xi_2) \bigm| \xi_1,\xi_2 \in F^1  \,\bigr\} \subset S(F_0).
\]
Then $S^1$ canonically identifies with a subgroup in both $U_0(F_0)$ and $U_1(F_0)$ (upon choosing any special embeddings of $U_0$ and $U_1$ as in \S\ref{homog case}).  

For $\xi \in S^1$, we define the Cayley transform for $\fku_0$ and $\fku_1$ via the same formula as in \eqref{cayley fmla},
\begin{equation}\label{cayley u_i}
	\begin{gathered}
   \fkc_\xi\colon 
	\xymatrix@R=0ex{
	   \fku_i^\circ \ar[r]  &  U_i,\\
	   x \ar@{|->}[r]  &  \xi \dfrac{1+x}{1-x}
	}
	\quad
	i = 0 \text{ or } 1.
	\end{gathered}
\end{equation}
Then $\fkc_\xi$ is $H_i$-equivariant, and, abusing notation, we continue to denote by $\fkc_\xi$ the induced map on the quotients
\[
   \fkc_\xi\colon \fkb^\circ \to B_{U_i},
	\quad
	i = 0 \text{ or } 1.
\]

\begin{remark}
This definition of $\fkc_\xi$ on $\fku_1^\circ$ generalizes the definition in \S\ref{cayley u_1} when $n = 3$ and $\xi = \diag(\pm 1_2, \pm 1)$.  But note that for more general $\xi$ there is a small subtlety between the matrix notation we are currently using and the coordinates in \S\ref{reduction to LA}: the element $\diag(\xi_1,\xi_1,\xi_2) \in S^1$ is expressed as
\[
   \begin{bmatrix}
		\alpha  &  \beta \varpi  &  0\\
		\beta  &  \alpha  &  0\\
		0  &  0  &  \xi_2
	\end{bmatrix}
	\in U_1(F_0)
\]
in the coordinates \eqref{U_1 coords}, where $\xi_1 = \alpha + \beta\pi$ with $\alpha,\beta \in F_0$.
\end{remark}

To define the Cayley transform on $\fks^\circ$, note that the formula in \eqref{cayley u_i} only gives a map into $S$ when $\xi$ is of the form $\xi_1 \cdot 1_n$.  To define $\fkc_\xi$ for an arbitrary $\xi = \diag(\xi_1\cdot 1_{n-1}, \xi_2) \in S^1$, first choose $\nu_1,\nu_2 \in F^\times$ such that $\ov \nu_1/\nu_1 = \xi_1$ and $\ov \nu_2/\nu_2 = \xi_2$, and set
\[
   \nu := \diag(\nu_1 \cdot 1_{n-1}, \nu_2).
\]
Then we define
\begin{equation}\label{cayley s}
	\begin{gathered}
   \fkc_\xi\colon 
	\xymatrix@R=0ex{
	   \fks^\circ \ar[r]  &  S\\
	   y \ar@{|->}[r]  &  \ov \nu \cdot \dfrac{1+y}{1-y} \cdot \nu^{-1}.
	}
	\end{gathered}
\end{equation}
As before, $\fkc_\xi$ is $H'$-equivariant.  Note that this definition depends on the choice of $\nu_1$ and $\nu_2$, but the induced map on the quotients (which we again abusively denote by $\fkc_\xi$)
\[
   \fkc_\xi\colon \fkb^\circ \to B
\]
depends only on $\xi$.  When $\xi$ is a scalar matrix, by convention we always take $\nu_1 = \nu_2$; then the Cayley transform into $S$ is given by the usual formula $y \mapsto \xi (1+y)(1-y)^{-1}$.

\begin{lemma}[{\cite[Lem.~3.4]{Z14}}]\label{cayley cover lem}
\begin{altenumerate}
For $\xi\in S^1$, define the open subschemes $S_\xi^\circ \subset S$ and $U_{i, \xi}^\circ \subset U_i$ by
\[
   S_\xi^\circ :=\bigl\{\, \gamma\in S \bigm| \det(\xi + \gamma)\neq 0  \,\bigr\}
	\quad\text{and}\quad 
	U_{i, \xi}^\circ := \bigl\{\, g\in U_i \bigm| \det (\xi + g)\neq 0  \,\bigr\},
	\quad 
	i = 0 \text{ or } 1. 
\]
\item\label{cayley cov lem S part}
The Cayley transform $\fkc_{\xi}$ induces  an $H'$-equivariant isomorphism  
\[
   \fks^\circ \isoarrow S_\xi^\circ . 
\]
Furthermore, let $\xi_1,\xi_2,\dotsc,\xi_{n+1}$ be $n+1$ distinct elements of $F^1$. Then,  as $j$ varies,  the open subschemes $S_{\xi_j \cdot 1_n}^\circ$ cover $S$.
\item
Analogous statement for $\fku_0$ and $U_0$ (and $H_0$-equivariance) in place of $\fks$ and $S$.
\item
Analogous statement for $\fku_1$ and $U_1$ (and $H_1$-equivariance) in place of $\fks$ and $S$.\qed
\end{altenumerate}
\end{lemma}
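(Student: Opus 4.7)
The plan is to verify directly that the Cayley transform $\fkc_\xi$ is a morphism from $\fks^\circ$ (resp.\ $\fku_i^\circ$) to $S$ (resp.\ $U_i$), that it lands in the open subscheme $S_\xi^\circ$ (resp.\ $U_{i,\xi}^\circ$), that it is equivariant for the $H'$-action (resp.\ $H_i$-action), and that an inverse map can be written down explicitly. The covering statement is then a separate, elementary pigeonhole argument on eigenvalues.

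For part \eqref{cayley cov lem S part}, I would start by writing $\fkc_\xi(y) = \ov\nu \cdot z(y) \cdot \nu^{-1}$ with $z(y) := (1+y)(1-y)^{-1}$ for $y \in \fks^\circ$. Since $\ov y = -y$, one computes $\ov{z(y)} = (1-y)(1+y)^{-1} = z(y)^{-1}$, and hence $\fkc_\xi(y)\cdot\ov{\fkc_\xi(y)} = \ov\nu z(y) \nu^{-1}\cdot \nu z(y)^{-1} \ov\nu^{-1} = 1$, so $\fkc_\xi(y) \in S$. To see $\fkc_\xi(y) \in S_\xi^\circ$, compute
\[
   \xi + \fkc_\xi(y) = \ov\nu(1 + z(y))\nu^{-1} = \ov\nu \cdot \frac{2}{1-y} \cdot \nu^{-1},
\]
which is invertible. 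Conversely, for $\gamma \in S_\xi^\circ$ set $w := \ov\nu^{-1}\gamma\nu$ and $y := (w-1)(w+1)^{-1}$; the condition $\gamma \in S_\xi^\circ$ guarantees $w+1$ is invertible, and the relation $\ov\gamma \gamma = 1$ translates into $\ov w = (\ov\nu^{-1}\nu)w^{-1}(\ov\nu\nu^{-1})$, which after a short manipulation gives $\ov y = -y$, i.e.\ $y \in \fks$; moreover $1-y = 2(w+1)^{-1}$ is invertible, so $y\in \fks^\circ$. A direct substitution shows this inverts $\fkc_\xi$. Equivariance follows because $\nu = \diag(\nu_1\cdot 1_{n-1},\nu_2)$ commutes with the diagonal embedding of $\GL_{n-1}$ into $\GL_n$, so conjugation of $y$ by $h \in H'(F_0)$ commutes with multiplication by $\ov\nu$ on the left and $\nu^{-1}$ on the right.

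Parts (b) and (c) are essentially identical after replacing $\ov{(-)}$ by the adjoint relative to the hermitian form on $W_i$. Concretely, pick a special embedding $U_i \inj \Res_{F/F_0}\GL_n$ so that $\xi \in S^1$ lies in $U_i(F_0)$, write $\fkc_\xi(x) = \xi\cdot z(x)$ with $z(x) = (1+x)(1-x)^{-1}$, and use that $x \in \fku_i$ satisfies $x^* = -x$ (where $*$ denotes the hermitian adjoint) to verify $z(x)^* = z(x)^{-1}$, hence $\fkc_\xi(x)\in U_i$. The inverse on $U_{i,\xi}^\circ$ is given by $g \mapsto (\xi^{-1}g - 1)(\xi^{-1}g + 1)^{-1}$, and $H_i$-equivariance is again immediate from the centrality of $\xi$ with respect to $H_i$.

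For the covering statement in part (a), let $\gamma \in S(F_0) \subset \GL_n(F)$. Then $\gamma \in S_{\xi_j \cdot 1_n}^\circ$ is equivalent to $-\xi_j$ not being a root of the characteristic polynomial $\charac_\gamma(T) \in F[T]$. Since $\charac_\gamma$ has degree $n$, it has at most $n$ distinct roots in $\ov F$, so among any $n+1$ distinct elements $\xi_1,\dotsc,\xi_{n+1} \in F^1$, at least one $-\xi_j$ fails to be a root; for this $j$ we have $\gamma \in S_{\xi_j\cdot 1_n}^\circ$. The same argument applies to $U_0$ and $U_1$ via a special embedding. The main potential obstacle is being careful with the non-scalar case of $\xi$ in the $S$-version, where the choice of auxiliary element $\nu$ enters; but since we only claim an isomorphism on the level of varieties (not a canonical one), any choice of $\nu$ works, and different choices produce isomorphisms differing by the (harmless) $H'$-equivariant action of conjugation by a scalar-block matrix.
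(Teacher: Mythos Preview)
The paper does not give its own proof of this lemma: it is stated with a \qed and attributed to \cite[Lem.~3.4]{Z14}. So there is no in-paper argument to compare against; your direct verification stands on its own, and it is essentially correct.

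One small correction: in your inverse construction you write that $\ov\gamma\gamma=1$ translates into $\ov w = (\ov\nu^{-1}\nu)w^{-1}(\ov\nu\nu^{-1})$, but in fact from $w=\ov\nu^{-1}\gamma\nu$ one gets directly $\ov w = \nu^{-1}\ov\gamma\,\ov\nu = \nu^{-1}\gamma^{-1}\ov\nu = w^{-1}$, with no extra conjugation factors. This actually simplifies your computation of $\ov y=-y$: from $\ov w=w^{-1}$ one has $\ov y=(w^{-1}-1)(w^{-1}+1)^{-1}=(1-w)(1+w)^{-1}=-y$. Everything else (landing in $S_\xi^\circ$, the explicit inverse, $H'$-equivariance via commutation with the block-scalar $\nu$, and the pigeonhole covering argument on eigenvalues) is fine.
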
 

The following generalizes Lemma \ref{cayley rs iff rs}.

\begin{lemma}
For any $\xi \in S^1$, an element $y$ in any of $\fks^\circ(F_0)$, $\fku_0^\circ(F_0)$, or $\fku_1^\circ(F_0)$ is regular semi-simple if and only if $\fkc_\xi(y)$ is.
\end{lemma}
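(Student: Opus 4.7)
The plan is to factor $\fkc_\xi$ as the composition of the untwisted Cayley transform $\fkc := \fkc_{1_n}$, given by $y \mapsto (1+y)(1-y)^{-1}$, with a ``twist'': $\fkc_\xi = m_\xi \circ \fkc$ in the unitary cases, where $m_\xi(g) := \xi g$, and $\fkc_\xi = m_\nu \circ \fkc$ in the $\fks$ case, where $m_\nu(z) := \bar\nu z \nu^{-1}$ with $\nu \in \GL_n(F)$ as in \eqref{cayley s}. Each factor will be shown separately to preserve regular semi-simplicity.

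For the untwisted factor, we repeat the argument of Lemmas \ref{endom subalg =} and \ref{cayley rs iff rs}. The Cayley--Hamilton theorem applied to $1-y$ gives $(1-y)^{-1} \in F[y]$, whence $\fkc(y) \in F[y]$; and the inverse formula $y = (\fkc(y)-1)(\fkc(y)+1)^{-1}$ yields the reverse inclusion. Thus $F[y] = F[\fkc(y)]$ as subalgebras of $\End_F(F^n)$. In particular, the $F$-spans $\mathrm{span}\{y^i e\}_{i=0}^{n-1}$ and $\mathrm{span}\{\fkc(y)^i e\}_{i=0}^{n-1}$ coincide, and analogously for $\{\tensor*[^t]{e}{} y^i\}$ and $\{\tensor*[^t]{e}{}\fkc(y)^i\}$. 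By the linear algebra characterization of regular semi-simplicity recalled in \S\ref{ss:LAchar}, $y$ is regular semi-simple if and only if $\fkc(y)$ is.

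For the twist factor, a direct computation shows that $m_\xi$ defines an automorphism of $U_i$ (since $\xi \in U_i(F_0)$ under any special embedding), and that $m_\nu$ defines an automorphism of $S$ (since $(\bar\nu y \nu^{-1})\overline{(\bar\nu y \nu^{-1})} = \bar\nu y \bar y \bar\nu^{-1} = 1$ whenever $y \in S$). Moreover, each of $\xi$, $\nu$, and $\bar\nu$ is block-diagonal of the form $\diag(\ast \cdot 1_{n-1}, \ast)$, and so commutes with the embedded acting group: $H_i$ in the unitary cases, and $H' = \GL_{n-1}$ (embedded as $h \mapsto \diag(h,1)$) in the $\fks$ case. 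Hence $m_\xi$ is $H_i$-equivariant and $m_\nu$ is $H'$-equivariant. Any equivariant automorphism of the ambient symmetric space permutes the group orbits, preserves their dimensions and stabilizers, and preserves Zariski-closedness; consequently it preserves the regular semi-simple locus, which by definition (\S\ref{homog case}) consists of the elements with closed orbit and trivial stabilizer. Combining the two steps yields the lemma. The only delicate point, which one may regard as the main obstacle, is the verification of the equivariance of the twist, and this rests squarely on the particular block-diagonal shape forced by membership in $S^1$.
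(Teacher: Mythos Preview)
Your proof is correct. Both you and the paper handle the untwisted Cayley transform $\fkc = \fkc_{1_n}$ identically, via the algebra equality $F[y] = F[\fkc(y)]$ and the linear-algebra criterion from \S\ref{ss:LAchar}. The difference lies in the twist step. You observe that left multiplication by $\xi$ (resp.\ conjugation by $\bar\nu(\,\cdot\,)\nu^{-1}$) is an $H_i$-equivariant (resp.\ $H'$-equivariant) automorphism of the target variety, because $\xi$ and $\nu$ have the special block-diagonal shape $\diag(\ast\cdot 1_{n-1},\ast)$ and hence commute with the embedded acting group; an equivariant isomorphism preserves orbit closures and stabilizers, so it preserves regular semi-simplicity by definition. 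The paper instead carries out an explicit calculation: writing $\zeta = \diag(1_{n-1},\xi_2/\xi_1)$, it shows via an upper-triangular unipotent change of basis that $\det(e,\fkc_\xi(y)e,\dotsc,\fkc_\xi(y)^{n-1}e)$ differs from $\det(e,\fkc(y)e,\dotsc,\fkc(y)^{n-1}e)$ by an explicit nonzero scalar, and similarly for the row vectors. Your argument is cleaner and more conceptual; the paper's computational route, however, produces the explicit determinant identities \eqref{cayley det calc u_i} and \eqref{cayley det calc s}, and the latter is reused in the proof of Lemma~\ref{lem transfer factor gp2lie} on the compatibility of transfer factors under the Cayley transform.
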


\begin{proof}
We show that the sets of vectors $\{y^ie\}_{i=0}^{n-1}$ and $\{\tensor*[^t] e {} y^i\}_{i=0}^{n-1}$ are linearly independent if and only if $\{\fkc_\xi(y)^ie\}_{i=0}^{n-1}$ and $\{\tensor*[^t] e {} \fkc_\xi(y)^i\}_{i=0}^{n-1}$ are; see \S\ref{ss:LAchar}.  Set $\fkc := \fkc_{1_n}$.  First note that the same argument as in the proof of Lemma \ref{endom subalg =} shows that there is an equality of $F$-algebras $F[y] = F[\fkc(y)]$, which proves the desired equivalence when $\xi = 1_n$.

For an arbitrary $\xi = \diag(\xi_1 \cdot 1_{n-1}, \xi_2)$, we next claim that $\{\fkc_\xi(y)^ie\}_{i=0}^{n-1}$ is linearly independent if and only if $\{\fkc(y)^ie\}_{i=0}^{n-1}$ is.  To show this we make a little calculation which will also be useful later. Let $\zeta := \diag(1_{n-1}, \xi_2/\xi_1)$. By an easy induction argument, for any $A \in \M_n(F)$, there is an equality of matrices
\[
   \begin{bmatrix}
		e  &  \zeta A e  &  \dots  &  (\zeta A)^{n-1} e
	\end{bmatrix}
	=
	\begin{bmatrix}
		e  &  A e  &  \dots  &  A^{n-1} e
	\end{bmatrix}
	\begin{bmatrix}
		1 &  &  
		        \hspace{-3.5ex}\raisebox{0ex}[0ex][0ex]{\raisebox{-1.7ex}{\huge $*$}}\\
		  &  \ddots\\
		  &  &  1
	\end{bmatrix}
\]
for some upper triangular unipotent matrix on the right.  Hence for the Cayley transform on $\fku_0^\circ$ and $\fku_1^\circ$,
\begin{equation}\label{cayley det calc u_i}
	\begin{aligned}
   \det
   \begin{bmatrix}
		e  &  \fkc_\xi(y) e  &  \dots  &  \bigr(\fkc_\xi(y)\bigr)^{n-1} e
	\end{bmatrix}
	   &= \det
		   \begin{bmatrix}
				e  &  \xi \fkc(y) e  &  \dots  &  \bigr(\xi\fkc(y)\bigr)^{n-1} e
			\end{bmatrix}\\
	   &= \xi_1^{n(n-1)/2} \det
		   \begin{bmatrix}
				e  &  \zeta\fkc(y) e  &  \dots  &  \bigr(\zeta\fkc(y)\bigr)^{n-1} e
			\end{bmatrix}\\
		& = \xi_1^{n(n-1)/2} \det
		   \begin{bmatrix}
				e  &  \fkc(y) e  &  \dots  &  \fkc(y)^{n-1} e
			\end{bmatrix}.
	\end{aligned}
\end{equation}
For the Cayley transform on $\fks^\circ$, taking $\nu = \diag(\nu_1 \cdot 1_{n-1}, \nu_2)$ as in the definition of $\fkc_\xi$, we have
\[
   \nu^{-1} \cdot 
	\begin{bmatrix} 
		e  &  \fkc_\xi(y) e  &  \dots  &  \bigr(\fkc_\xi(y)\bigr)^{n-1} e
	\end{bmatrix}
	=
	\begin{bmatrix}
		\nu_2^{-1} e  &  \xi \fkc(y) \nu_2^{-1} e  &  \dots  &  \bigr(\xi\fkc(y)\bigr)^{n-1} \nu_2^{-1} e
	\end{bmatrix}.
\]
Hence, calculating as in \eqref{cayley det calc u_i},
\begin{equation}\label{cayley det calc s}
\begin{aligned}
   &\det
	\begin{bmatrix} 
		e  &  \fkc_\xi(y) e  &  \dots  &  \bigr(\fkc_\xi(y)\bigr)^{n-1} e
	\end{bmatrix}\\
	&\qquad = \det (\nu) \cdot \nu_2^{-n} \cdot \xi_1^{n(n-1)/2} \det
	   \begin{bmatrix}
			e  &  \fkc(y) e  &  \dots  &  \fkc(y)^{n-1} e
		\end{bmatrix}\\
	&\qquad = (\nu_1/\nu_2)^{n-1} \xi_1^{n(n-1)/2} \det
	   \begin{bmatrix}
			e  &  \fkc(y) e  &  \dots  &  \fkc(y)^{n-1} e
		\end{bmatrix}.
\end{aligned}
\end{equation}
The equalities \eqref{cayley det calc u_i} and \eqref{cayley det calc s} prove the claim in all cases.  A similar calculation shows that $\{\tensor*[^t] e {} \fkc_\xi(y)^i\}_{i=0}^{n-1}$ is linearly independent if and only if $\{\tensor*[^t] e {} \fkc(y)^i\}_{i=0}^{n-1}$ is, which completes the proof.
\end{proof}

For $\xi\in S^1$, let $B_\xi^\circ$ denote the image of $S_\xi^\circ$ in $B$, and let $B_{U_i, \xi}^\circ$ denote the image of $U_{i, \xi}^\circ$ in $B_{U_i}$. Then the Cayley transforms drop to isomorphisms
\[
   \fkb^\circ\isoarrow B_\xi^\circ
	\quad\text{and}\quad \fkb^\circ\isoarrow B_{U_i, \xi}^\circ, \quad i=0\text{ or } 1 . 
\]
Thus we obtain isomorphisms
\begin{equation}\label{Bgluing}
   \varphi_\xi\colon B_\xi^\circ\isoarrow B_{U_i, \xi}^\circ, \quad i=0 \text{ or } 1 .
\end{equation}

\begin{lemma}\label{B isom}
Let $i=0$ or $i=1$. 
There is a unique isomorphism $B\isoarrow B_{U_i}$ which induces for each $\xi\in S^1$ the isomorphism \eqref{Bgluing}. This isomorphism induces an identification of open subschemes $B_\rs \isoarrow B_{U_i, \rs}$. 
\end{lemma}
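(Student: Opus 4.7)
The plan is to construct the asserted isomorphism by gluing the maps $\varphi_\xi$ from \eqref{Bgluing}, using that they are compatible on overlaps because on the regular semi-simple locus they all reduce to the canonical identification afforded by the matching relation.

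First I would record that, by Lemma \ref{cayley cover lem}, as $\xi$ ranges over $S^1$ the open subschemes $B_\xi^\circ$ cover $B$ and the $B_{U_i,\xi}^\circ$ cover $B_{U_i}$ (it suffices to take $n+1$ distinct scalar $\xi_j \cdot 1_n$). Since $B_{U_i}$ is separated, any morphism $B \to B_{U_i}$ is determined by its restriction to any open cover and by its behavior on the dense open subscheme $B_\rs$; this immediately gives the uniqueness assertion, and reduces the existence assertion to a check of compatibility of the $\varphi_\xi$ on pairwise intersections.

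The heart of the matter is the following observation. Fix special embeddings $\fku_i \inj \Res_{F/F_0}\M_n$ as in \S\ref{Lie algebra setting}, so that $\fks$ and each $\fku_i$ sit inside the common ambient variety $\Res_{F/F_0}\M_n$, where the Cayley transform $y \mapsto \xi(1+y)(1-y)^{-1}$ is defined by a single formula. In particular, if $y \in \fks^\circ(F_0)_\rs$ and $x \in \fku_i^\circ(F_0)_\rs$ have the same image in $\fkb^\circ(F_0)$ (equivalently, are $\GL_{n-1}(F)$-conjugate in $\M_n(F)$, cf.~\S\ref{ss:LAchar}), then $\fkc_\xi(y)$ and $\fkc_\xi(x)$ are $\GL_{n-1}(F)$-conjugate in $\M_n(F)$ as well, so that they match in the sense of \S\ref{setup} and thus have the same image in $B_\rs \simeq B_{U_i,\rs}$ on the level of numerical invariants (characteristic polynomial coefficients together with $\tensor*[^t]{e}{}(-)^je$, $1 \leq j \leq n-1$). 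It follows that on $B_\xi^\circ \cap B_\rs$, the map $\varphi_\xi$ sends a regular semi-simple orbit to the orbit in $B_{U_i,\xi}^\circ \cap B_{U_i,\rs}$ with the same numerical invariants, a prescription manifestly independent of $\xi$. Hence $\varphi_\xi$ and $\varphi_{\xi'}$ agree on $B_\xi^\circ \cap B_{\xi'}^\circ \cap B_\rs$; by density of $B_\rs$ and separatedness of $B_{U_i}$, they agree on $B_\xi^\circ \cap B_{\xi'}^\circ$, and the $\varphi_\xi$ glue to a morphism $B \to B_{U_i}$ which is an isomorphism because each $\varphi_\xi$ is. The claim that this isomorphism carries $B_\rs$ onto $B_{U_i,\rs}$ is then immediate from the corresponding property of each $\varphi_\xi$, since the Cayley transform commutes with the discriminant up to a unit and hence preserves the regular semi-simple locus.

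The main obstacle is the compatibility check on overlaps, i.e.~verifying that the Cayley transform on $\fks$ and on $\fku_i$ is ``the same algebraic operation'' in a precise enough sense to preserve matching of orbits. This is where the uniform formula $y \mapsto \xi(1+y)(1-y)^{-1}$ (which makes sense in $\M_n(F)$ regardless of whether $y$ sits in $\fks(F_0)$ or $\fku_i(F_0)$) is essential; once one sees that the Cayley transform descends to a single map $\fkb^\circ \to B_\xi^\circ \cap B_{U_i,\xi}^\circ$ coming from the inclusions of both $\fks^\circ$ and $\fku_i^\circ$ into $\Res_{F/F_0}\M_n$, the rest of the argument is formal.
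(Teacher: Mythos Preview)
Your approach is viable and leads to the same conclusion as the paper, but the justification in your final paragraph contains an inaccuracy that should be repaired. You assert that the Cayley transform is given by the single formula $y \mapsto \xi(1+y)(1-y)^{-1}$ on both $\fks^\circ$ and $\fku_i^\circ$; this is not so for $\fks$ when $\xi$ is non-scalar, where by definition \eqref{cayley s} one has $\fkc_\xi(y) = \ov\nu(1+y)(1-y)^{-1}\nu^{-1}$ with $\nu = \diag(\nu_1\cdot 1_{n-1},\nu_2)$. What saves your argument is that $\nu = \nu_2 \cdot \diag((\nu_1/\nu_2)\cdot 1_{n-1},1)$ lies in $F^\times \cdot \GL_{n-1}(F)$, so conjugation by $\nu$ preserves $\GL_{n-1}(F)$-conjugacy classes in $\M_n(F)$; once this is said, your claim that matching $y$ and $x$ yield matching $\fkc_\xi(y)$ and $\fkc_\xi(x)$ is correct. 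A second, more routine point you should make explicit is why agreement of $\varphi_\xi$ and $\varphi_{\xi'}$ on $F_0$-points of $B^\circ_\xi \cap B^\circ_{\xi'} \cap B_\rs$ forces agreement as morphisms of schemes (density of $F_0$-points in the open of affine space $\fkb^\circ$, together with reducedness and separatedness).

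The paper takes a different and somewhat cleaner route: it base-changes to $F$, where the standard isomorphisms $S\otimes_{F_0}F \cong U_i\otimes_{F_0}F \cong \GL_{n,F}$ and $\fks\otimes_{F_0}F \cong \fku_i\otimes_{F_0}F \cong \M_{n,F}$, together with flat base change for categorical quotients, give a canonical identification $B\otimes_{F_0}F \cong B_{U_i}\otimes_{F_0}F$. Under these identifications the two Cayley maps become $\GL_{n-1}(F)$-conjugate morphisms (this is exactly the $\nu$-conjugation observation above, now phrased scheme-theoretically), so $\varphi_\xi\otimes_{F_0}F$ is the identity for every $\xi$, whence the $\varphi_\xi$ agree over $F_0$. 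This avoids any appeal to density of rational points and handles all $\xi \in S^1$ uniformly.
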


\begin{proof} What has to be seen is that for any $\xi, \eta\in S^1$,  the isomorphisms $\varphi_\xi$ and $\varphi_\eta$ coincide on the intersection $B_\xi^\circ \cap B_\eta^\circ$.  Now, after base extension from $F_0$ to $F$, there are standard isomorphisms of algebraic varieties
\[
   S\otimes_{F_0}F \cong U_i\otimes_{F_0}F \cong \GL_{n,F}
	\quad\text{and}\quad
	\fks \otimes_{F_0} F \cong \fku_i \otimes_{F_0} F \cong \M_{n,F}.
\]
The latter identification is compatible with the quotient maps $\pi_\fks$ and $\pi_{\fku_i}$ to $\fkb \otimes_{F_0} F$, and it also identifies $\fks^\circ \otimes_{F_0} F \cong \fku_i^\circ \otimes_{F_0} F$.  Similarly, $H'\otimes_{F_0}F \cong H_i\otimes_{F_0}F \cong \GL_{n-1,F}$.  Since formation of the categorical quotient commutes with flat base change, the first isomorphism in the display induces an isomorphism of algebraic varieties over $F$,
$$
B\otimes_{F_0}F \cong B_{U_i}\otimes_{F_0}F .
$$
Under the above identifications, the Cayley transforms $\fkc_\xi$ on $\fks^\circ \otimes_{F_0} F$ and on $\fku_i^\circ \otimes_{F_0} F$ need not coincide (indeed the Cayley transform for $\fks$ is not even well-defined in terms of $\xi$), but one readily checks that they are $\GL_{n-1}(F)$-conjugate. In other words, under these identifications, the base change to $F$ of the isomorphism $\varphi_\xi$ becomes simply the identity morphism. Now the assertion is obvious.
\end{proof}

Via the lemma, we regard $B$ as the common categorical quotient of $S$ by $H'$, of $U_0$ by $H_0$, and of $U_1$ by $H_1$.  Analogously to \eqref{declie}, we obtain a disjoint union decomposition
\begin{equation}\label{B_rs(F_0) decomp}
   B_\rs(F_0)=B_{\rs, 0}\amalg B_{\rs, 1}  ,
\end{equation}
where $B_{\rs, i}$ is the image under $\pi_{U_i}$ of $U_{i, \rs}(F_0)$. Equivalently, $B_{\rs, i}$ is the image under $\pi_S$ of the set $S_{ \rs, i}$ of elements in $S_\rs(F_0)$ which match with elements in $U_{i, \rs}(F_0)$. 
We have
\[
   S_\rs = \pi_S^{-1}(B_\rs), 
	\quad 
	S_{\rs, 0} = \pi_S^{-1}(B_{\rs, 0}), 
	\quad\text{and}\quad 
	S_{\rs, 1} = \pi_S^{-1}(B_{\rs, 1}) .
\]
Of course, setting
\[
   \fkb_\rs^\circ := \fkb_\rs \cap \fkb^\circ,
	\quad
	\fkb_{\rs,0}^\circ := \fkb^\circ(F_0) \cap \fkb_{\rs,0},
	\quad\text{and}\quad
	\fkb_{\rs,1}^\circ := \fkb^\circ(F_0) \cap \fkb_{\rs,1},
\]
the decomposition
\[
   \fkb_\rs^\circ(F_0) = \fkb_{\rs,0}^\circ \amalg \fkb_{\rs,1}^\circ
\]
is compatible with the decomposition \eqref{B_rs(F_0) decomp} under the Cayley transform $\fkc_\xi$ for any $\xi \in S^1$.

For $f' \in C_c^\infty(S)$, we note that  the function $\gamma\mapsto \omega(\gamma)\Orb(\gamma,f')$ descends to a function $\varphi$ on $B_\rs(F_0)$. Just as in the Lie algebra case, $\varphi$ is locally constant with relatively compact support on $B_\rs(F_0)$. We denote the space of such functions by  $C_\rc^\infty(B_\rs)$. 
By Prop.~3.8 and the remarks before Lem.~3.6 in \cite{Z14}, we have the following.

\begin{theorem}\label{local pro of orb fun}
Let $\varphi$ be a function in $C_\rc^\infty(B_\rs)$. The following properties are equivalent.
\begin{altenumerate}
\item	
There exists a function $f'\in C^\infty_c(S)$  such that 
\[
   \varphi\bigl(\pi_S(\gamma)\bigr) = \omega(\gamma)\Orb(\gamma,f')
	\quad\text{for all}\quad
	\gamma\in S_{\rs}(F_0) .
\] 
\item\label{loc orb int fcn B}
For every point in $B(F_0)$, there exists an open neighborhood $V$ of the point and a function $f'\in C^\infty_c(S)$ (both depending on the point) such that 
\begin{flalign*}
  \varphi\bigl(\pi_S(\gamma)\bigr) = \omega(\gamma)\Orb(\gamma,f')
  \quad\text{for all}\quad
  \gamma\in \pi_S^{-1}\bigl(V\cap  B_\rs(F_0)\bigr) = \pi_S^{-1}(V) \cap S_\rs(F_0).
\end{flalign*}
\qed
\end{altenumerate}
\end{theorem}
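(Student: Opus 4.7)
\textbf{Proof plan for Theorem \ref{local pro of orb fun}.}

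The implication (1) $\Rightarrow$ (2) is immediate: given an $f'$ satisfying (1), the same $f'$ together with any open neighborhood $V$ of the chosen point satisfies (2). The content of the theorem is the converse (2) $\Rightarrow$ (1), which I will prove by a partition-of-unity argument on $B(F_0)$, mirroring the Lie algebra proof (Theorem \ref{locoforblie}, i.e.\ \cite[Prop.~3.8]{Z14}).

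The key observation is that the quotient map $\pi_S \colon S(F_0) \to B(F_0)$ is continuous and $H'(F_0)$-invariant. Consequently, for any locally constant function $\rho$ on $B(F_0)$, the pullback $\rho \circ \pi_S$ is locally constant and $H'(F_0)$-invariant on $S(F_0)$. It therefore commutes with the orbital integral: for any $f' \in C_c^\infty(S)$, any $\gamma \in S_\rs(F_0)$, and any locally constant $\rho$ on $B(F_0)$,
\[
   \Orb\bigl(\gamma, (\rho\circ\pi_S)\cdot f'\bigr) = \rho\bigl(\pi_S(\gamma)\bigr) \cdot \Orb(\gamma, f') .
\]
Also, if $\rho$ has compact support, then $(\rho\circ\pi_S)\cdot f'$ still lies in $C_c^\infty(S)$, since its support is contained in the support of $f'$.

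Now assume (2) holds. Set $K := \overline{\supp \varphi} \subset B(F_0)$, which is compact by hypothesis. For each $x_0 \in K$, hypothesis (2) furnishes an open neighborhood $V_{x_0}$ and a function $f'_{x_0} \in C_c^\infty(S)$ with $\varphi \circ \pi_S = \omega\,\Orb(\,\cdot\,, f'_{x_0})$ on $\pi_S^{-1}(V_{x_0}) \cap S_\rs(F_0)$. By compactness of $K$, select a finite subcover $V_1,\dotsc,V_N$ with associated functions $f'_1,\dotsc,f'_N$. In the non-archimedean setting one may choose a locally constant partition of unity $\{\rho_j\}_{j=1}^N$ with $\rho_j$ compactly supported in $V_j$ and $\sum_j \rho_j \equiv 1$ on $K$. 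I then set
\[
   f' := \sum_{j=1}^N (\rho_j \circ \pi_S) \cdot f'_j \in C_c^\infty(S).
\]
For any $\gamma \in S_\rs(F_0)$, the displayed identity gives $\omega(\gamma)\Orb(\gamma, f') = \sum_j \rho_j(\pi_S(\gamma))\cdot \omega(\gamma)\Orb(\gamma, f'_j)$. Whenever $\rho_j(\pi_S(\gamma)) \neq 0$ we have $\pi_S(\gamma) \in V_j$, so the local identity from (2) yields $\omega(\gamma)\Orb(\gamma, f'_j) = \varphi(\pi_S(\gamma))$; thus the sum collapses to $\varphi(\pi_S(\gamma))\sum_j \rho_j(\pi_S(\gamma))$. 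If $\pi_S(\gamma) \in K$ this equals $\varphi(\pi_S(\gamma))$, while if $\pi_S(\gamma) \notin K$ both sides vanish. This establishes (1).

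There is no serious obstacle: the only ingredients beyond formal manipulation are the existence of a locally constant partition of unity subordinate to a finite open cover of a compact set in the totally disconnected space $B(F_0)$ (standard), and the $H'$-invariance of $\pi_S$ (trivial). One could alternatively deduce the group statement from its Lie algebra counterpart Theorem \ref{locoforblie} via the Cayley covers of Lemma \ref{cayley cover lem}, by pulling back $\varphi$ along each $\fkc_{\xi_j}$ and gluing; but the direct partition-of-unity argument above is shorter and avoids bookkeeping the transfer factor under Cayley transform.
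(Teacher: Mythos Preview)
Your argument is correct. The paper gives no proof of its own, citing instead \cite[Prop.~3.8]{Z14} (the Lie algebra analog, Theorem~\ref{locoforblie}) together with ``the remarks before Lem.~3.6'' in \cite{Z14}; the latter reference suggests the intended route is to reduce the group statement to the Lie algebra statement via the Cayley covers of Lemma~\ref{cayley cover lem}, exactly the alternative you sketch at the end. Your direct partition-of-unity argument on $B(F_0)$ is simply the proof of \cite[Prop.~3.8]{Z14} transported verbatim to the group quotient---it uses nothing specific to the Lie algebra setting---and is therefore equivalent in spirit but cleaner, since it bypasses the bookkeeping of transfer factors under $\fkc_\xi$ that the Cayley reduction would require.
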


As in the Lie algebra setting, we call a function $\varphi \in C_{\rc}^\infty(B_\rs)$ satisfying \eqref{loc orb int fcn B} a \emph{local orbital integral function}, and if $\varphi$ satisfies \eqref{loc orb int fcn B} locally around a given point, then we call $\varphi$ an orbital integral function \emph{locally around} the point.

Similarly  to Lemma \ref{del rc lie}, we have the following.

\begin{lemma}\label{del rc group}
Let $f'\in C^\infty_c(S)$ be a function with transfer $(f, 0) \in C_c^\infty(U_0) \times C_c^\infty(U_1)$.
Then the function
\[
   \gamma\mapsto
	\begin{cases}
		\omega(\gamma) \del(\gamma,f'),  &  \gamma\in S_{\rs,1};\\ 
		0,  &  \gamma \in S_{\rs,0}
   \end{cases}
\]
descends to a function on $B_\rs(F_0)$ which lies in $C_\rc^\infty(B_\rs)$.
\end{lemma}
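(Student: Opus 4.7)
The plan is to mirror the argument for the Lie algebra version in Lemma \ref{del rc lie}, handling separately the three features of the claim: that the function descends through $\pi_S$, that its support is relatively compact in $B(F_0)$, and that it is locally constant.

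First, I would check descent. By Remark \ref{rem pro-u}\eqref{DOrb invar}, the quantity $\omega(\gamma)\del(\gamma,f')$ is constant on $H'(F_0)$-orbits in $S_{\rs,1}$: indeed, since $f'$ transfers to $(f,0)$, all ordinary orbital integrals $\Orb(\gamma,f')$ vanish on $S_{\rs,1}$, so when one differentiates at $s=0$ the character transformation $\Orb(h^{-1}\gamma h, f', s) = \eta_s(h)\Orb(\gamma, f', s)$ the contribution from differentiating $\eta_s(h)$ kills off because of the prefactor $\Orb(\gamma,f')=0$ at $s=0$, leaving $\omega(\gamma)\del(\gamma,f')$ genuinely invariant (the transfer factor $\omega$ is itself constant on orbits up to the character $\eta$, which cancels). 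The value $0$ on $S_{\rs,0}$ is manifestly orbit-invariant. Using the decomposition $S_\rs = \pi_S^{-1}(B_{\rs,0}) \amalg \pi_S^{-1}(B_{\rs,1})$, the function descends unambiguously to a well-defined function $\varphi$ on $B_\rs(F_0)$.

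Second, for relative compactness of the support: the map $\pi_S\colon S(F_0)\to B(F_0)$ is continuous in the $p$-adic topology and carries the compact set $\supp(f')$ to a compact subset $K$ of $B(F_0)$. The support of $\varphi$ lies inside the intersection $K\cap B_{\rs,1}$, because for $\gamma$ outside $\supp(f')$ the integrand in \eqref{eqn def inhom} vanishes identically in $h$ on small enough neighborhoods (one needs a uniform argument here, but this is exactly the point handled in \cite[Lem.~3.12]{Z14} for $\Orb$, and the derivative version follows by the same compactness/continuity argument applied to the integrand $f'(h^{-1}\gamma h)|\det h|^s\eta(h)$ and the fact that the $s$-derivative only introduces a factor $\log|\det h|$, which is locally bounded on the compact subset of $H'(F_0)$ contributing to a given $\gamma$ with $h^{-1}\gamma h \in \supp(f')$). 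Hence $\supp(\varphi)$ is relatively compact in $B_\rs(F_0) \subset B(F_0)$.

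Third, for local constancy: the same local-constancy argument used in \cite[Lem.~3.12]{Z14} for $\Orb(\gamma,f')$ goes through for $\del(\gamma,f')$. Indeed, fixing $\gamma_0 \in S_\rs(F_0)$, one chooses a small enough compact-open neighborhood $U$ of $\gamma_0$ in $S_\rs(F_0)$ so that the integrand $f'(h^{-1}\gamma h)|\det h|^s \eta(h)$, viewed as a function of $h\in H'(F_0)$ and $\gamma \in U$, is supported in a fixed compact set in $h$ and is locally constant in $\gamma$ uniformly in $h$ and in $s$ near $0$; integrating and then differentiating in $s$ preserves this local constancy in $\gamma$. Since $\omega$ is also locally constant on $S_\rs(F_0)$, the product is locally constant, and hence so is its descent $\varphi$ on $B_\rs(F_0)$.

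The main (small) obstacle I expect is justifying that the $s$-derivative interchanges with the requisite compactness/local-constancy arguments, i.e.\ making sure that passing from $\Orb$ to $\del$ does not spoil the reasoning of \cite[Lem.~3.12]{Z14}; as noted in the proof of Lemma \ref{del rc lie}, this follows because differentiation in $s$ introduces only the bounded factor $\log|\det h|$ on the (fixed) compact set of $h$'s that contribute, so the arguments transpose verbatim from the Lie algebra case.
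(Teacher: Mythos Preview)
Your proof is correct and follows the paper's approach for the descent and relative-compactness parts (both invoke Remark \ref{rem pro-u}\eqref{DOrb invar} and the continuity of $\pi_S$, exactly as in Lemma \ref{del rc lie}). For local constancy, however, you take a different route: you argue directly on $S$ by transposing the argument of \cite[Lem.~3.12]{Z14}, noting that differentiation in $s$ only introduces the bounded factor $\log\lvert\det h\rvert$ on the relevant compact set of $h$'s. The paper instead \emph{reduces} local constancy in the group case to the already-established Lie algebra case (Lemma \ref{del rc lie}) via a partition-of-unity argument using the Cayley transforms $\fkc_\xi$ to cover $S$ (cf.\ the proof of \cite[Lem.~3.6]{Z14}). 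Your direct approach is valid and more self-contained, avoiding the Cayley machinery; the paper's reduction trades a small amount of extra setup for the convenience of not re-running the compactness argument in the group setting.
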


\begin{proof}
The same argument as in the proof Lemma \ref{del rc lie} shows that the function descends to a function with relatively compact support. The local constancy of the descended function reduces to Lemma \ref{del rc lie} by a partition of unity argument (cf.~the proof of \cite[Lem.~3.6]{Z14}).
\end{proof}

\subsection{Reduced Lie algebra setting}\label{subsred}
In this subsection, we formulate a variant of the Lie algebra version which eliminates ``trivial'' factors from $\frak s$. 

For $y$ a point on $\fks$, write $y$ in the block form
\[
   y =
	\begin{bmatrix}
		A  &  \mathbf{b} \\
      \mathbf{c}  &  d
   \end{bmatrix}
   \in \Res_{F/F_0}\M_n,
\]
as in \eqref{x block decomp}.  In analogy with Definition \ref{fku_1 reduced}, let $\fks_\red$ denote the closed subscheme of \emph{reduced} points in $\fks$, defined by 
\[
   \fks_\red := \{\, y\in \fks \mid \tr A=0 \text{ and } d=0 \,\} . 
\]
Then $\fks_\red$ is an $H'$-invariant subscheme of $\fks$.  Define 
$\fkb_\red$ to be the product of the middle $2n-3$ factors in the target in \eqref{cat quot isom},
\[
   \fkb_\red := 
	   \overbrace{\BA \times \fks_1 \times \dotsb}^{\substack{n-2\\ \text{alternating}\\ \text{factors}}} \times
		\overbrace{\BA \times \fks_1 \times \dotsb}^{\substack{n-1\\ \text{alternating}\\ \text{factors}}} \, .
\]
Then the composite map
\[
   \pi_\red\colon
	\xymatrix@R=0ex{
	    \fks_\red\, \ar@{^{(}->}[r]  &  \fks \ar[r]^{\pi'_\fks}   &  \fkb_\red\\
		   &  y \ar@{|->}[r]   &  (\tr \wedge^2 A, \dotsc, \tr \wedge^{n-1} A, \mathbf{c}\mathbf{b}, \dotsc, \mathbf{c} A^{n-2} \mathbf{b})
	}
\]
is a categorical quotient for $\fks_\red$ by $H'$.  In terms of this notation, we also realize the quotient map \eqref{mapfks} for $\fks$ by taking $\fkb = \fkb_\red \times \fks_1 \times \fks_1$ and
\[
   \pi_\fks\colon
	\xymatrix@R=0ex{
	   \fks \ar[r]  &  \fkb_\red \times \fks_1 \times \fks_1\\
		y \ar@{|->}[r]  &  \bigl(\pi'_\fks(y), \tr A, d\bigr)
	}
\]
(of course this is nothing but a reordering of the factors in \eqref{cat quot isom}).

There is a natural $H'$-equivariant map $\fks \to \fks_\red$, $y \mapsto y_\red$, sending
\[
   \begin{bmatrix}
		A  &  \mathbf{b}\\
		\mathbf{c}  &  d
	\end{bmatrix}
	\mapsto
	\begin{bmatrix}
		A - \frac{\tr A}{n-1} \cdot 1_{n-1}  &  \mathbf{b}\\
		\mathbf{c}  & 0
	\end{bmatrix}.
\]
This induces an evident $H'$-equivariant product decomposition $\fks \cong \fks_\red \times \fks_1 \times \fks_1$, where the map onto the last two factors is given by taking $\tr A$ and $d$.  We obtain a commutative diagram
\begin{equation}\label{s prod decomp}
	\begin{gathered}
   \xymatrix{ 
      \fks \ar[r]^-\sim \ar[d]_-{\pi_\fks}  &  \fks_\red\times \fks_1 \times \fks_1 \ar[d]^-{\pi_\red \times \id \times \id}\\
      \fkb \ar@{=}[r]  &  \fkb_\red\times \fks_1 \times \fks_1.
	}
	\end{gathered}
\end{equation}
We also denote by $x \mapsto x_\red$ the natural projection $\fkb \to \fkb_\red$, and we regard $\fkb_\red$ as a closed subscheme of $\fkb$ via the $0$ section.

The following extends both the statement and proof of Lemma \ref{rs for reduced u1}
to the case of $\fks$.

\begin{lemma}\label{rs for reduced s}
An element $y \in \fks(F_0)$ is regular semi-simple if and only if $y_\red$ is.
\end{lemma}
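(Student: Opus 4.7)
The plan is to mimic the argument of Lemma \ref{rs for reduced u1}, using the linear-algebra characterization of regular semi-simplicity from \S\ref{ss:LAchar}: $y \in \fks(F_0)$ is regular semi-simple if and only if both $\{y^i e\}_{i=0}^{n-1}$ and $\{\tensor*[^t]{e}{}\, y^i\}_{i=0}^{n-1}$ are linearly independent over $F$, and similarly for $y_\red$. It therefore suffices to prove that each of these two sequences spans the same $F$-subspace as its ``reduced'' counterpart; equivalently, that the change-of-basis matrix from $\{y_\red^i e\}_i$ to $\{y^i e\}_i$ is upper-triangular with $1$'s on the diagonal, and likewise on the transposed side.

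Set $c := \tr(A)/(n-1)$ and $M := \diag(c \cdot 1_{n-1}, d)$, so that $y = y_\red + M$ in $\M_n(F)$. The two key computations are the ``base cases''
\[
   Me = de, \qquad \tensor*[^t]{e}{}\, M = d\, \tensor*[^t]{e}{},
\]
together with the observation that for every $k \geq 0$, writing $y_\red^k e = (\mathbf{b}'_k, d'_k)^{t}$ in the block decomposition used to define $\fks_\red$, one has
\[
   M\, y_\red^k e \;=\; c\, y_\red^k e + (d-c)\, d'_k\, e,
\]
and symmetrically $\tensor*[^t]{e}{}\, y_\red^k\, M = c\, \tensor*[^t]{e}{}\, y_\red^k + (d-c)\, \bigl(\tensor*[^t]{e}{}\, y_\red^k\bigr)_n \cdot \tensor*[^t]{e}{}$, where $(\,\cdot\,)_n$ denotes the last coordinate. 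Using $y = y_\red + M$ and these identities, a straightforward induction on $k$ yields
\[
   y^k e \;=\; y_\red^k e \;+\; \sum_{j=0}^{k-1} \mu_{k,j}\, y_\red^j e, \qquad
   \tensor*[^t]{e}{}\, y^k \;=\; \tensor*[^t]{e}{}\, y_\red^k \;+\; \sum_{j=0}^{k-1} \nu_{k,j}\, \tensor*[^t]{e}{}\, y_\red^j
\]
for suitable scalars $\mu_{k,j}, \nu_{k,j} \in F$ (depending polynomially on $c$, $d$, and the $d'_j$). The resulting change-of-basis matrices are unitriangular, hence invertible, and the lemma follows.

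As for the main obstacle: the calculation is completely elementary, and I do not expect any genuine difficulty. The only mildly delicate point is bookkeeping the inductive step, which is handled cleanly by isolating the identities $Me = de$ and $M y_\red^k e \equiv c\, y_\red^k e \pmod{F\cdot e}$ before running the induction; the transposed case is parallel and needs no separate idea. This approach also makes manifest why the same statement held for $\fku_1$ (Lemma \ref{rs for reduced u1}): both proofs rest on the fact that the ``correction'' term $y - y_\red$ acts on $e$, and on $\tensor*[^t]{e}{}$, by a scalar modulo lower-order terms in the cyclic filtration.
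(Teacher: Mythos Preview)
Your proof is correct and follows essentially the same approach as the paper: both establish by a straightforward induction that the change-of-basis matrix between $\{y^i e\}_{i=0}^{n-1}$ and $\{y_\red^i e\}_{i=0}^{n-1}$ is unitriangular (and likewise on the transposed side), then invoke the linear-algebra characterization of regular semi-simplicity from \S\ref{ss:LAchar}. The paper states this more tersely as the matrix identity \eqref{y y_red matrix eqn} with an upper-triangular unipotent factor, while you spell out the key identity $M\,y_\red^k e = c\,y_\red^k e + (d-c)d'_k\,e$ that drives the induction; the content is the same.
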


\begin{proof}
By an easy induction argument, there is an equality of matrices
\begin{equation}\label{y y_red matrix eqn}
   \begin{bmatrix}
		e  &  y_\red e  &  \dots  &  y_\red^{n-1} e
	\end{bmatrix}
	=
	\begin{bmatrix}
		e  &  y e  &  \dots  &  y^{n-1} e
	\end{bmatrix}
	\begin{bmatrix}
		1 &  &  
		        \hspace{-3.5ex}\raisebox{0ex}[0ex][0ex]{\raisebox{-1.7ex}{\huge $*$}}\\
		  &  \ddots\\
		  &  &  1
	\end{bmatrix}
\end{equation}
for some upper triangular unipotent matrix on the right.  Similarly, the matrices
\[
   \begin{bmatrix}
		\tensor[^t]{e}{} \\
		\tensor[^t]{e}{} y_\red\\
		\vdots\\
		\tensor[^t]{e}{} y_\red^{n-1}
	\end{bmatrix}
	\quad\text{and}\quad
   \begin{bmatrix}
		\tensor[^t]{e}{} \\
		\tensor[^t]{e}{} y\\
		\vdots\\
		\tensor[^t]{e}{} y^{n-1}
	\end{bmatrix}
\]
differ by left multiplication by a lower triangular unipotent matrix.  The conclusion now follows from the linear algebra characterization of regular semi-simple elements in \S\ref{ss:LAchar}.
\end{proof}

All concepts introduced in \S\ref{lie setting} in the Lie algebra context have obvious analogs in the ``reduced'' setting. The analog of Theorem \ref{locoforblie} for the reduced set is the following. The proof is essentially the same.  Set
\[
   \fks_{\red,\rs} := \fks_\red \cap \fks_\rs
	\quad\text{and}\quad
	\fkb_{\red,\rs} := \fkb_\red \cap \fkb_\rs.
\]

\begin{theorem}\label{thm stab const red}
Let $\varphi$ be a function in $C_\rc^\infty(\frak b_{{\rm red, rs}})$.  The following properties are equivalent.
\begin{altenumerate}
\item
There exists a function $\phi'\in C^\infty_c(\fks_\red)$  such that 
\[
   \varphi\bigl(\pi_\red(y)\bigr) = \omega(y)\Orb(y,\phi')
	\quad\text{for all}\quad
	 y\in \fks_{\red, \rs}(F_0).
\]
\item
For every $x_0\in \fkb_\red(F_0)$, there exists an open neighborhood $V_{x_0}$ of $x_0$ and a function $\phi_{x_0}'\in C^\infty_c(\fks_\red)$ such that 
\[
   \varphi\bigl(\pi_\red(y)\bigr) = \omega(y)\Orb(y,\phi_{x_0}')
	\quad\text{for all}\quad
	y \in \pi_\red^{-1}\bigl(V_{x_0} \cap \fkb_{\red,\rs}(F_0)\bigr) = \pi_\red^{-1}(V_{x_0}) \cap \fks_{\red,\rs}(F_0).
\]
\qed
\end{altenumerate}
\end{theorem}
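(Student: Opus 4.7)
The implication (a) $\Rightarrow$ (b) is immediate by taking $V_{x_0} = \fkb_\red(F_0)$ and $\phi'_{x_0} = \phi'$. For the converse, the plan is to reduce to Theorem \ref{locoforblie} by exploiting the $H'$-equivariant product decomposition \eqref{s prod decomp}: we have $\fks \cong \fks_\red \times \fks_1 \times \fks_1$ and correspondingly $\fkb \cong \fkb_\red \times \fks_1 \times \fks_1$, and $H'$ acts trivially on the last two factors. Two compatibilities will make this reduction clean. First, by Lemma \ref{rs for reduced s}, $y \in \fks(F_0)$ is regular semi-simple if and only if $y_\red$ is. Second, the transfer factor on $\fks$ satisfies $\omega(y) = \omega(y_\red)$: indeed, the matrices $\begin{bmatrix} e & ye & \cdots & y^{n-1}e \end{bmatrix}$ and $\begin{bmatrix} e & y_\red e & \cdots & y_\red^{n-1}e \end{bmatrix}$ differ by the upper triangular unipotent transformation \eqref{y y_red matrix eqn}, which preserves determinants.

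Given $\varphi \in C_\rc^\infty(\fkb_{\red, \rs})$ satisfying (b), I would fix characteristic functions $g_1 := \mathbf{1}_{V_1}, g_2 := \mathbf{1}_{V_2} \in C_c^\infty(\fks_1)$ of small compact open neighborhoods $V_i$ of $0 \in \fks_1(F_0)$, and form the tensor product
\[
   \tilde\varphi(x_\red, t_1, t_2) := \varphi(x_\red) g_1(t_1) g_2(t_2) \in C_\rc^\infty(\fkb_\rs);
\]
relative compactness of $\supp \tilde\varphi$ in $\fkb(F_0)$ is inherited from relative compactness of $\supp \varphi$ and compactness of $\supp g_i$, and local constancy is inherited from its factors. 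The main step will be to verify that $\tilde\varphi$ satisfies condition \eqref{loc orb int fcn} of Theorem \ref{locoforblie}. For any $x = (x_{\red, 0}, t_1^0, t_2^0) \in \fkb(F_0)$, I would take the neighborhood $V_{x_{\red, 0}}$ and witness $\phi'_{x_{\red, 0}} \in C_c^\infty(\fks_\red)$ supplied by (b), enlarge trivially to $V_x := V_{x_{\red, 0}} \times \fks_1(F_0) \times \fks_1(F_0)$ (an open neighborhood of $x$ in $\fkb(F_0)$), and take as witness the product $\tilde\phi_x := \phi'_{x_{\red, 0}} \otimes g_1 \otimes g_2 \in C_c^\infty(\fks)$. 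Triviality of the $H'$-action on the last two factors gives the factorization $\Orb(y, \tilde\phi_x) = g_1(s_1) g_2(s_2) \Orb(z, \phi'_{x_{\red, 0}})$ for $y = (z, s_1, s_2)$, and combined with $\omega(y) = \omega(z)$ this produces the desired identity $\omega(y) \Orb(y, \tilde\phi_x) = \tilde\varphi(\pi_\fks(y))$ on $\pi_\fks^{-1}(V_x) \cap \fks_\rs(F_0)$.

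Applying Theorem \ref{locoforblie} then yields a global $\tilde\phi \in C_c^\infty(\fks)$ with $\omega(y) \Orb(y, \tilde\phi) = \tilde\varphi(\pi_\fks(y))$ for all $y \in \fks_\rs(F_0)$. The final step is to extract a witness on $\fks_\red$: I would define $\phi' \in C_c^\infty(\fks_\red)$ by $\phi'(z) := \tilde\phi(z, 0, 0)$, viewing $\fks_\red \subset \fks$ via the $H'$-equivariant inclusion $z \mapsto (z, 0, 0)$ coming from \eqref{s prod decomp}. Since this slice is $H'$-stable with trivial $H'$-action on the last two coordinates, one has $\Orb((z, 0, 0), \tilde\phi) = \Orb(z, \phi')$ for $z \in \fks_{\red, \rs}(F_0)$, where the orbital integrals are taken in $\fks$ and $\fks_\red$ respectively. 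Specializing the global identity at $y = (z, 0, 0)$ and using $\tilde\varphi(\pi_\red(z), 0, 0) = \varphi(\pi_\red(z)) \cdot g_1(0) g_2(0) = \varphi(\pi_\red(z))$ (since $g_i(0) = 1$), I obtain $\omega(z) \Orb(z, \phi') = \varphi(\pi_\red(z))$, completing the reduction. The argument amounts to a straightforward separation of variables along \eqref{s prod decomp}, and I do not foresee any serious obstacle; the only points requiring real care are the support bookkeeping for $\tilde\varphi$ and the verification at points of $\fkb(F_0)$ off the reduced locus, both of which are handled uniformly by taking the full $\fks_1(F_0)$-slices in the neighborhoods $V_x$.
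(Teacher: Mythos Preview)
Your argument is correct. The paper does not give a detailed proof of this statement; it simply notes that the proof is ``essentially the same'' as that of Theorem~\ref{locoforblie} (which in turn is a variant of \cite[Prop.~3.8]{Z14}), i.e.\ one reruns the partition-of-unity/localization argument of loc.~cit.\ directly on $\fks_\red$. Your route is genuinely different: rather than repeat that argument, you \emph{deduce} the reduced statement from the already-established Theorem~\ref{locoforblie} by exploiting the $H'$-equivariant product decomposition $\fks \cong \fks_\red \times \fks_1 \times \fks_1$ of \eqref{s prod decomp}, tensoring $\varphi$ with auxiliary bump functions on the $\fks_1$-factors, and then restricting the resulting global witness back to the reduced slice. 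The key compatibilities you invoke---that regular semi-simplicity and the transfer factor pass between $y$ and $y_\red$---are exactly Lemma~\ref{rs for reduced s} and (the content of) Lemma~\ref{red omega compat}, and they imply in particular that $\fkb_\rs(F_0) = \fkb_{\red,\rs}(F_0) \times \fks_1(F_0)^2$, which is what makes your definition of $\tilde\varphi$ land in $C_\rc^\infty(\fkb_\rs)$. Your approach is slightly more economical since it treats Theorem~\ref{locoforblie} as a black box; the paper's approach is more direct but would require reopening that box.
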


\begin{remark}\label{last one}
A useful class of orbital integral functions  in $C_\rc^\infty(\frak b_{{\rm red, rs}})$ is provided by Corollary \ref{cor germ converse}. Similar results hold for $\frak b_{{\rm rs}}$ and $B_{{\rm rs}}$.
\end{remark}

\section{Reduction to the Lie algebra}

The main aim of this section is to show that Theorem \ref{thm group} (the main group theorem) follows from Theorem \ref{thm lie} (the main Lie algebra theorem).  Except where noted to the contrary, from now on we specialize to the case $n = 3$.

\subsection{Renormalized invariants on $\fks_3$}\label{subsec inv s} 
We first fix a slight renormalization of the categorical quotient map $\pi_\fks$ in \S\ref{subsred} when $n = 3$, in analogy with the renormalization of the invariants on $\fku_1$ given in \eqref{fku_1 invariants}.  As in \S\ref{subsred}, we take
\[
   \fkb = \BA \times \BA \times \fks_1 \times \fks_1 \times \fks_1
	\quad\text{and}\quad
	\fkb_\red = \BA \times \BA \times \fks_1
\]
over $F_0$.  Then we realize the quotient maps $\pi_\fks$ and $\pi_\red$ as
\[
   \begin{gathered}
	\pi_\fks\colon
	\xymatrix@R=0ex{
	   \fks \ar[r]  &  \fkb\\
		y \ar@{|->}[r]  &  \bigl(\lambda(y), u(y), w(y), \tr A, d\bigr)
	}
	\end{gathered}
	\qquad\text{and}\qquad
   \begin{gathered}
	\pi_\red\colon
	\xymatrix@R=0ex{
	   \fks_\red \ar[r]  &  \fkb_\red\\
		y \ar@{|->}[r]  &  \bigl(\lambda(y), u(y), w(y) \bigr),
	}
	\end{gathered}
\]
where we write the point $y$ in the usual block form
\[
   y =
	\begin{bmatrix}
		A  &  \mathbf{b} \\
      \mathbf{c}  &  d
   \end{bmatrix}
   \in \Res_{F/F_0}\M_3,
\]
and where
\begin{equation}\label{u,w,lambda}
   \lambda(y) := \det A,
	\quad
	u(y) := \varpi^{-1} \mathbf{c} \mathbf{b},
	\quad\text{and}\quad
	w(y) := \varpi^{-1} \mathbf{c} A \mathbf{b}.
\end{equation}

Of course, the invariants used in this definition of $\pi_\fks$, regarded as defined on $\Res_{F/F_0} \M_3$ in the obvious way, give exactly the invariants \eqref{fku_1 invariants} on $\fku_1$, relative to any special embedding of $\fku_1$ in the sense of \S\ref{Lie algebra setting}.  From now on we realize the quotient map $\pi_{\fku_1} \to \fkb$ via these invariants.

\subsection{Coordinates on $U_0$ and $\fku_0$}\label{subsec inv u0}

In parallel with \S\ref{U_1 and fku_1 coords}, we now describe the unitary group $U_0$ and its Lie algebra $\fku_0$ in terms of explicit coordinates.  Define the $F/F_0$-hermitian matrices
\[
   J_0^\flat :=
	\begin{bmatrix}
		0  &  \pi\\
	   -\pi  &  0
	\end{bmatrix}
	\in \M_2(F)
	\quad\text{and}\quad
	J_0 := 
	\begin{bmatrix}
		J_0^\flat  &  0\\
		0  &  1
	\end{bmatrix}
	\in \M_3(F).
\]
Then $J_0^\flat$ and $J_0$ determine non-degenerate split $F/F_0$-hermitian spaces of dimensions $2$ and $3$, respectively.  We take
\[
   H_0 = \U(J_0^\flat)
	\quad\text{and}\quad
	U_0 = \U(J_0).
\]
Explicitly,
\[
   U_0(F_0) = \bigl\{\, g \in \M_3(F) \bigm| gg^\dag = 1 \,\bigr\},
\]
where the adjoint $g^\dag = J_0^{-1} \tensor*[^t]{\ov g}{} J_0$ is given in coordinates by
\begin{equation}\label{U_0 dagger}
   \begin{bmatrix}
		a_1  &  a_2  &  b_1\\
		a_3  &  a_4  &  b_2\\
		c_1  &  c_2  &  d
	\end{bmatrix}^\dag
	=
	\begin{bmatrix}
		\ov a_4  &  -\ov a_2  &  -\pi^{-1} \ov c_2\\
		-\ov a_3  &  \ov a_1  &  \pi^{-1} \ov c_1\\
		-\pi\ov b_2  &  \pi\ov b_1  &  \ov d
	\end{bmatrix}.
\end{equation}
Of course, $H_0$ is described similarly in terms of $J_0^\flat$, and it embeds in $U_0$, via the rule $h \mapsto \diag(h,1)$, as the stabilizer of the special vector $e = (0,0,1)$.  We also note that under these coordinates, the tautological embedding of $\SL_2$ into $\Res_{F/F_0} \M_2$ identifies
\begin{equation}\label{SL2 = SU2}
	\SL_2 \isoarrow \SU(J_0^\flat).
\end{equation}
The Lie algebra $\fku_0$ is given in these coordinates by
\begin{align*}
   \fku_0(F_0)
	   &= \bigl\{\, x \in \M_3(F) \bigm| x + x^\dag = 0 \,\bigr\}\\
	   &= \left\{\left.
         \begin{bmatrix}
            a_1  &  a_2  &  b_1 \\
            a_3  &  - \ov a_1  &  b_2\\
		      \pi\ov b_2  &  -\pi\ov b_1  &  d
         \end{bmatrix}
			\,\right|\, 
			a_1,b_1,b_2 \in F,\ a_2,a_3 \in F_0,\ d\in F^{\tr = 0}\right\}.
\end{align*}

Recall that our formulation of the AT conjecture in \S\ref{conj and results} involved the choice of a $\pi$-modular lattice $\Lambda_0^\flat \subset W_0^\flat = F^2$.  We now take for $\Lambda_0^\flat$ the standard lattice $O_F^2 \subset F^2$, which is indeed $\pi$-modular for $J_0^\flat$.  As in \eqref{Lambda_0}, we then take $\Lambda_0 = O_F^3 \subset F^3$, and $K_0$ and $\fkk_0$ are the respective subgroups of $U_0(F_0)$ and $\fku_0(F_0)$ stabilizing $\Lambda_0$:
\[
   K_0 = U_0(F_0) \cap \M_3(O_F) = \bigl\{\, g \in \M_3(O_F) \bigm| gg^\dag = 1 \,\bigr\}
\]
and
\begin{align*}
   \fkk_0
	   &= \fku_0(F_0) \cap \M_3(O_F)\\
		&= \left\{\left.
         \begin{bmatrix}
            a_1  &  a_2  &  b_1 \\
            a_3  &  - \ov a_1  &  b_2\\
		      \ov b_2\pi  &  -\ov b_1\pi  &  d
         \end{bmatrix}
			\,\right|\, 
			a_1,b_1,b_2 \in O_F,\ a_2,a_3 \in O_{F_0},\ d\in O_F^{\tr = 0}\right\}.
\end{align*}
We also set
\[
   \fkk_{0,\rs} := \fkk_0 \cap \fku_{0,\rs}(F_0).
\]

Given a point $x$ in $\fku_0$, write $x$ in the block form
\[
   x =
	\begin{bmatrix}
		A  &  \mathbf{b}\\
		\mathbf{c}  &  d
	\end{bmatrix}.
\]
We realize the categorical quotient of $\fku_0$ by $H_0$ by taking the same invariants as in the previous subsection,
\begin{equation}\label{invaronu0}
\begin{aligned}
   \pi_{\fku_0}\colon 
	\xymatrix@R=0ex{
	   \fku_0 \ar[r]  &  \fkb = \BA \times \BA \times \fks_1 \times \fks_1 \times \fks_1\\
		x \ar@{|->}[r]  &  \bigl( \lambda(x),u(x), w(x), \tr A, d\bigr),
	}
	\end{aligned}
\end{equation}
where $\lambda$, $u$, and $w$ are as defined in \eqref{u,w,lambda}.

As in the cases of $\fku_1$ and \fks, we say that $x$ is \emph{reduced} if $\tr A = d = 0$.  We write $\fku_{0,\red}$ for the closed subscheme of reduced points in $\fku_0$.  In terms of explicit coordinates,
\begin{align}\label{coord u0}
   \fku_{0,\red}(F_0) =
	\left\{\left.
	\begin{bmatrix}
	   a_1  &  a_2  &  b_1 \\
	   a_3  &  - a_1  &  b_2\\
		\ov b_2\pi  &  -\ov b_1\pi  &  0
	\end{bmatrix}
	\,\right|\, 
	a_1,a_2,a_3 \in F_0,\ b_1,b_2 \in F \right\}.
\end{align}
As in previous cases, there is a natural map $\fku_0 \to \fku_{0,\red}$, $x \mapsto x_\red$, sending
\[
   \begin{bmatrix}
		A  &  \mathbf{b}\\
		\mathbf{c}  &  d
	\end{bmatrix}
	\mapsto
	\begin{bmatrix}
		A - \frac 1 2 (\tr A) \cdot 1_2  &  \mathbf{b}\\
		\mathbf{c}  & 0
	\end{bmatrix},
\]
and this gives rise to an $H_0$-equivariant product decomposition
\begin{equation}\label{u0 prod decomp}
	\begin{gathered}
   \xymatrix@R=0ex{
      \fku_0 \ar[r]^-\sim  &  \fku_{0,\red} \times \fks_1 \times \fks_1\\
   	x \ar@{|->}[r]  &  (x_\red, \tr A, d).
	}
	\end{gathered}
\end{equation}
Just as in Lemma \ref{rs for reduced u1} for $\fku_1$ and Lemma \ref{rs for reduced s} for $\fks$, an element $x \in \fku_0(F_0)$ is regular semi-simple if and only if $x_\red$ is.  We set
\[
   \fku_{0,\red,\rs} := \fku_{0,\red} \cap \fku_{0,\rs},
	\quad
	\fkk_{0,\red} := \fkk_0 \cap \fku_{0,\red}(F_0),
	\quad\text{and}\quad
	\fkk_{0,\red,\rs} :=	\fkk_{0,\red} \cap \fkk_{0,\rs}.
\]

\subsection{Integral Cayley transform on $\fku_0$}
In this subsection we prove an analog for $\fku_0$ of Lemma \ref{lem cayley U1}, which pertained to the Cayley transform on $\fku_1$.  Let $\xi \in S^1$.  Recall from \eqref{s^circ u_i^circ} the open subscheme $\fku_0^\circ \subset \fku_0$, which is the locus where the Cayley transform $\fkc_\xi$ is defined, and recall from Lemma \ref{cayley cover lem} its image $U_{0,\xi}^\circ \subset U_0$.  Define the sets of $F_0$-rational points
\[
   \fku_0^{\circ\circ} := \bigl\{\, x\in\fku_0^\circ(F_0) \bigm| \det(1-x) \in O_{F}^\times \,\bigr\}
	\quad\text{and}\quad
	U_{0,\xi}^{\circ\circ} := \bigl\{\, g\in U_{0,\xi}^\circ(F_0) \bigm| \det(\xi + g) \in O_{F}^\times \,\bigr\}.
\]
It is trivial to verify that $\fkc_\xi$ carries $\fku_0^{\circ\circ}$ isomorphically onto $U_{0,\xi}^{\circ\circ}$, and we then have the following.

\begin{lemma}[Cayley transform for $\fkk_0$]\label{lem cayley U}
For any $\xi \in S^1$, the restriction of the Cayley map to $\fkk_0\cap\fku_0^{\circ\circ}$ induces an isomorphism 
\[
   \fkc_{\xi}\colon
   \xymatrix@R=0ex{
      \fkk_0\cap\fku_0^{\circ\circ} \ar@{->}[r]^-\sim  &  K_0 \cap U_{0,\xi}^{\circ\circ}\\
      x \ar@{|->}[r]  &  \xi \dfrac{1+x}{1-x}.
   }
\]
Furthermore, the sets $K_0 \cap U_{0,\xi}^{\circ\circ}$, as $\xi$ varies over the four elements $\diag(\pm 1_2,\pm1)$, cover $K_0$.
\end{lemma}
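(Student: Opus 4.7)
The proof splits into two parts: (i) establishing for each fixed $\xi \in \{\diag(\pm 1_2, \pm 1)\}$ that $\fkc_\xi$ restricts to a bijection $\fkk_0 \cap \fku_0^{\circ\circ} \isoarrow K_0 \cap U_{0,\xi}^{\circ\circ}$; and (ii) verifying that these four sets cover $K_0$ as $\xi$ varies.

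Part (i) is a formal integrality check, following the template of Lemma \ref{lem cayley U1}. The bijectivity of $\fkc_\xi$ on the appropriate $F_0$-rational points is already Lemma \ref{cayley cover lem}. For integrality, if $x \in \fkk_0$ with $\det(1-x) \in O_F^\times$, then the adjugate formula gives $(1-x)^{-1} \in \M_3(O_F)$, hence $\fkc_\xi(x) = \xi(1+x)(1-x)^{-1} \in \M_3(O_F) \cap U_0(F_0) = K_0$.  The elementary identity $\xi + \fkc_\xi(x) = 2\xi \cdot (1-x)^{-1}$ yields $\det(\xi + \fkc_\xi(x)) = 8\det(\xi)\det(1-x)^{-1} \in O_F^\times$, so $\fkc_\xi(x) \in U_{0,\xi}^{\circ\circ}$.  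The reverse direction, using the formula $\fkc_\xi^{-1}(g) = -(1-\xi^{-1}g)(1+\xi^{-1}g)^{-1}$ from \eqref{cayley inverse}, is entirely analogous.

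For part (ii), I would work modulo $\pi$.  Given $g \in K_0$, the explicit adjoint formula \eqref{U_0 dagger} combined with the identity $g^{-1} = g^\dag \in \M_3(O_F)$ forces the entries $c_1, c_2$ of $g$ to lie in $\pi O_F$, reflecting that $\Lambda_0$ is only \emph{nearly} $\pi$-modular.  Hence the reduction $g \bmod \pi$ is block upper-triangular with $2 \times 2$ block $\ov A$ and $(3,3)$-entry $\ov d$.  Reducing $gg^\dag = 1$ mod $\pi$ — where the Galois conjugation on the residue field $k$ is trivial — the $(1,1)$-block yields $\ov A \ov A^\# = 1_2$, i.e.~$\det \ov A = 1$ in $k$, and the $(3,3)$-entry gives $\ov d^{\,2} = 1$.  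The determinant-one condition on $\ov A$ is the symplectic-reduction shadow of the exceptional isomorphism $\SL_2 \simeq \SU(J_0^\flat)$ from \eqref{SL2 = SU2} (cf.~Remark \ref{rem syp red}), and it is the crucial structural input.

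To conclude, since the eigenvalues of $\ov A$ multiply to $1$, they cannot equal $\{1,-1\}$, so at most one of $\pm 1$ is an eigenvalue of $\ov A$.  Pick $\xi_1 \in \{\pm 1\}$ with $-\xi_1$ not an eigenvalue of $\ov A$, and $\xi_2 \in \{\pm 1\}$ with $\xi_2 \neq -\ov d$; both choices are possible.  Then
\[
   \det(\xi + g) \equiv \det(\xi_1 \cdot 1_2 + \ov A)\cdot(\xi_2 + \ov d) \not\equiv 0 \pmod \pi,
\]
so $\det(\xi + g) \in O_F^\times$ and $g \in K_0 \cap U_{0,\xi}^{\circ\circ}$.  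The main technical point is the extraction of the $\SL_2$-structure on $\ov A$ from the hermitian relations via \eqref{SL2 = SU2}; this is the step that makes the argument different in character from Lemma \ref{lem cayley U1}, where the analogous determinant-one condition was immediate from the matrix coordinates on $\End_{O_F}(\BX_3)$.
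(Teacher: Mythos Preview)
Your proof is correct and follows essentially the same approach as the paper's: both establish the integrality of $\fkc_\xi$ and its inverse directly, then reduce mod $\pi$ to get a block upper-triangular form with $\ov A \in \SL_2(k)$ and $\ov d = \pm 1$, and conclude by the eigenvalue argument that one of $\pm 1_2 + \ov A$ is invertible. One minor slip: in Part (i) you restrict to $\xi \in \{\diag(\pm 1_2,\pm 1)\}$, but the lemma asserts the isomorphism for all $\xi \in S^1$; your argument works verbatim in that generality since any $\xi \in S^1$ has entries in $F^1 \subset O_F^\times$.
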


\begin{proof} 
Clearly $\fkc_\xi( \fkk_0\cap\fku_0^{\circ\circ} ) \subset K_0 \cap U_{0,\xi}^{\circ\circ}$, and it is also clear from the inverse formula
\[
   \fkc_{\xi}^{-1}(g) = \frac{\xi^{-1}g-1}{\xi^{-1}g+1}
\]
that $\fkc_\xi^{-1}(K_0 \cap U_{0,\xi}^{\circ\circ}) \subset \fkk_0\cap\fku_0^{\circ\circ}$.  This proves the first assertion.  It remains to show that if $g \in K_0$, then $\det(\xi + g) \in O_F^\times$ for some $\xi = \diag(\pm 1_2,\pm 1)$.  In terms of the coordinates in the previous subsection, write $g$ in the block form
\[
   g =
	\begin{bmatrix}
		A  &  \mathbf{b}\\
		\mathbf{c}  &  d
	\end{bmatrix}.
\]
Since $g \in K_0 = U_0(F_0) \cap \GL_3(O_F)$, we may reduce the entries of $g$ mod $\pi$.
Since $g^\dag = g^{-1}$ also has integral entries, \eqref{U_0 dagger} shows that that $g_k$ is of the form
\[
   g_k =
	\begin{bmatrix}
		A_k  &  \mathbf{b}_k \\
		0  &  \pm 1
	\end{bmatrix},
\]
where the subscript $k$ everywhere denotes reduction mod $\pi$.  Since $K_0$ has symplectic reduction in the sense of Remark \ref{rem syp red}, $A_k \in \Sp_2(k) = \SL_2(k)$.  Hence $A_k + 1_2$ or $A_k - 1_2$ is invertible, since otherwise $A_k$ has eigenvalues $1$ and $-1$, contradicting $A_k \in \SL_2(k)$.  The lemma follows.
\end{proof} 

\begin{remark}\label{fku_1 circ circ}
 Note that $\fkk_0$ is not contained in $\fku_0^{\circ\circ}$, nor even in $\fku_0^\circ(F_0)$, i.e.~$\fkc_\xi$ is not defined on all of $\fkk_0$. This differs from the situation for $\fkk_1$. Indeed, defining $\fku_1^{\circ\circ} \subset \fku_1(F_0)$ in the obvious way (by\footnote{Meaning the determinant of the $F$-linear endomorphism $1-x$ acting on the hermitian space $W_1$; as before, care is required when working with the coordinates for $\fku_1$ in \S\ref{reduction to LA}.} $\det(1-x)\in O_F^\times$), the proof of Lemma \ref{lem cayley U1} shows that $\fkk_1\subset \fku_1^{\circ\circ}$. 
\end{remark}

\subsection{Integral points on $\fkb$}
We now collect some facts related to integral points on the quotient space $\fkb = \BA \times \BA \times \fks_1 \times \fks_1 \times \fks_1$. Note that this space is naturally defined over $O_{F_0}$, and
\[
   \fkb(O_{F_0}) = O_{F_0} \times O_{F_0} \times O_F^{\tr = 0} \times O_F^{\tr = 0} \times O_F^{\tr = 0}.
\]
We claim that 
\begin{equation}\label{fkb(O_F_0) union}
   \fkb(O_{F_0}) = \pi_{\fku_0} ( \fkk_0)\cup \pi_{\fku_1}  (\fkk_1).
\end{equation}
Indeed, the reverse inclusion is obvious from the explicit form of the invariants \eqref{invaronu0} and \eqref{fku_1 invariants} on $\fku_0$ and $\fku_1$, respectively.  For the forward inclusion, we give the following more precise lemma. Recall the decomposition $\fkb_\rs(F_0) = \fkb_{\rs,0} \amalg \fkb_{\rs,1}$ from \eqref{declie}, and for $i \in \{0,1\}$ set
\[
   \fkb(O_{F_0})_{\rs, i} := \fkb(O_{F_0})\cap\fkb_{\rs, i},
	\quad
	\fkb_{\red,\rs,i} := \fkb_\red(F_0) \cap \fkb_{\rs,i},
	\quad
	\fkb(O_{F_0})_{\red,\rs,i} :=  \fkb(O_{F_0}) \cap \fkb_{\red,\rs,i}.
\]

\begin{lemma}
\begin{altenumerate}
\item\label{integral b rs}
For $i\in \{ 0, 1\}$, 
\[
   \fkb(O_{F_0})_{\rs, i}=\pi_{\fku_i}(\fkk_{i, \rs}) . 
\]
\item\label{integral b non rs}
Let $x\in \fkb(O_{F_0})\smallsetminus \fkb_\rs(F_0)$. Then \smallskip
\begin{altenumerate2}
\makeatletter
\renewcommand{\p@enumii}{}
\makeatother
\item\label{nonexceptional}
$x\in \pi_{\fku_0}(\fkk_0)$.
\item\label{exceptional} 
$x\in \pi_{\fku_1}(\fkk_1)$ unless $ x_\red=( \lambda, u, w)$  with $-\lambda\in F_0^{\times, 2}$.\smallskip
\end{altenumerate2}
Furthermore, if $x \in \fkb(F_0)$ lies in the closure of $\fkb_{\rs, 1}$, then $x$ is not exceptional in the sense of \eqref{integral b non rs}\eqref{exceptional}.
\end{altenumerate}
\label{intimage}
\end{lemma}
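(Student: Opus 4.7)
The plan is to treat the four claims by a separate case analysis for $i = 0$ and $i = 1$, using the explicit coordinates of \S\ref{U_1 and fku_1 coords} and \S\ref{subsec inv u0}. The forward inclusion in part \eqref{integral b rs} is immediate from inspection: given $X \in \fkk_{i,\rs}$, each invariant displayed in \eqref{fku_1 invariants} and \eqref{invaronu0} is a polynomial with integer coefficients in the matrix entries of $X$, so $\pi_{\fku_i}(X) \in \fkb(O_{F_0})_{\rs, i}$.

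Next I would construct an integral preimage under $\pi_{\fku_i}$ of an arbitrary $x = (\lambda, u, w, \tr A, d) \in \fkb(O_{F_0})$ in the matching set for $\fku_i$, thereby handling both the reverse inclusion in \eqref{integral b rs} and part \eqref{integral b non rs}\eqref{nonexceptional}. For $i = 0$, I would set $d$ and $a_1$ directly from $d$ and $\tr A$, take $a_3 = 1$, and solve for $a_2 \in O_{F_0}$ from $\lambda = -\RN(a_1) - a_2 a_3$; the remaining entries $b_1, b_2 \in O_F$ can then be solved from the $u$- and $w$-equations, after exploiting the $H_0(F_0)$-action to put $\mathbf b$ in a convenient form. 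For $i = 1$, I would set $\beta = \tr A/(2\pi) \in O_{F_0}$ and $d$ directly; then $\RN b = u/2 \in O_{F_0}$ forces $b \in O_D$ by the valuation identity $v_D(b) = v_{F_0}(\RN b)$. The essential point is the existence of $\alpha \in O_D^{\tr=0}$ with the prescribed norm $\RN\alpha = \lambda - \beta^2\varpi$: since $D$ is a division algebra, the identity $(\alpha - s)(\alpha + s) = 0$ forces $\alpha = 0$ and $s = 0$, so $-\RN(D^{\tr=0})$ misses $F_0^{\times,2}\setminus\{0\}$; conversely any other $t \in O_{F_0}$ arises as $\RN\alpha$ for $\alpha$ in the embedding $F_0(\sqrt{-t}) \hookrightarrow D$, automatically integral because $v_D(\alpha) = v_{F_0}(t) \geq 0$. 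In the rs case, Proposition \ref{eta b01} provides $\eta(-\Delta(x)) = -1$, and an inspection of the formula $\Delta = \lambda u^2 + w^2 + \cdots$ (cf.\ \eqref{Deltaonu1}) rules out the bad value of $\lambda - \beta^2\varpi$, so the construction succeeds.

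For part \eqref{integral b non rs}, the same construction works verbatim for $i = 0$, giving \eqref{nonexceptional}. For $i = 1$, the construction fails precisely when every admissible $\beta \in O_{F_0}$ (determined by $\tr A$) forces $\lambda - \beta^2\varpi \in -F_0^{\times,2}\setminus\{0\}$; a routine case analysis shows that this failure is contained in the stated exceptional condition $-\lambda \in F_0^{\times,2}$.

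The hardest part will be the final ``furthermore'' claim. The plan here is to combine the explicit characterization above of $\pi_{\fku_1}(\fku_1(F_0)) \cap \fkb(O_{F_0})$ (which avoids the exceptional locus) with a continuity argument: any $x$ in the $p$-adic closure of $\fkb_{\rs, 1}$ lies in the closure of the image of $\pi_{\fku_1}$, and I must verify that this closure still avoids $\{-\lambda \in F_0^{\times,2}\}$. For this, I would again use Proposition \ref{eta b01} together with the formula for $\Delta$: for $y_n \in \fkb_{\rs, 1}$ with $y_n \to x$, we have $\eta(-\Delta(y_n)) = -1$, so that $-\Delta(y_n)$ is a non-square. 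Then I would show, via a local expansion of $\Delta$ around $x$ and Hensel-type estimates, that the non-square class of $-\Delta(y_n)$ propagates via the identity $\Delta = \lambda u^2 + w^2 + \cdots$ to force $-\lambda(x) \notin F_0^{\times,2}$. The key delicate step is controlling the leading term of the expansion of $\Delta$ and verifying that the non-square condition on $-\Delta$ cannot be reconciled with $-\lambda$ being a square in the limit.
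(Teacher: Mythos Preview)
Your construction for $i=1$ in part \eqref{integral b rs} has a genuine gap: you determine $b$ via $\RN b = u/2$ and $\alpha$ via $\RN\alpha = \lambda - \beta^2\varpi$, but the invariant $w$ is never addressed. By \eqref{u w lambda}, in the reduced case $w = 2\RN b \cdot \alpha'_+$ where $\alpha' = b^{-1}\alpha b$; merely prescribing $\RN\alpha$ does not control $\alpha'_+$, so your element need not have the correct $w$. The paper (after immediately reducing to $\fkb_\red$) instead constructs $\alpha'$ directly: set $\alpha'_+ := w/u \in F^{\tr=0}$ and choose $\alpha'_- \in D_-$ with $\RN\alpha'_- = \Delta/u^2$; then $\alpha := b\alpha' b^{-1}$. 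The nontrivial step is integrality of $\alpha'_+$ and $\alpha'_-$, and this is exactly where $\eta(-\Delta) = -1$ is used: if $|w/u| > 1$ then $|w^2| > |\lambda u^2|$, whence $\eta(-\Delta) = \eta(-w^2) = 1$, contradicting Proposition~\ref{eta b01}; the same argument handles $\alpha'_-$. So your invocation of $\eta(-\Delta) = -1$ is in the right spirit, but it is being applied to the wrong question (existence of $\alpha$ with given norm) rather than to the integrality of the components of $\alpha'$.

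For the ``furthermore'' claim your plan is workable but unnecessarily laborious. The cleaner argument, already implicit in what you wrote for part \eqref{integral b non rs}\eqref{exceptional}, is that the condition $-\lambda \notin F_0^{\times,2}$ holds on \emph{all} of $\pi_{\fku_1}(\fku_{1,\red}(F_0))$, simply because $\lambda = \RN\alpha$ with $\alpha \in D^{\tr=0}$ and $D$ a division algebra. In particular it holds on $\fkb_{\red,\rs,1}$. If $x$ lies in the closure of $\fkb_{\red,\rs,1}$ with $\lambda(x)\neq 0$, then for $x'\in\fkb_{\red,\rs,1}$ sufficiently close, $\lambda(x')$ lies in the same class as $\lambda(x)$ in $F_0^\times/F_0^{\times,2}$; hence $-\lambda(x)\notin F_0^{\times,2}$ by one step of continuity. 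No expansion of $\Delta$ or Hensel-type estimates are needed.
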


\begin{proof} 
The statements are immediately reduced to the corresponding ones for the reduced sets.
 
To prove \eqref{integral b rs}, first let $x=(\lambda, u, w)\in\fkb(O_{F_0})_{\red,\rs, 1}$.  We will show the existence of an element in $\fkk_{1,\red,\rs}$ in terms of the explicit coordinates \eqref{coordofred} whose image is $x$.  We use the formulas \eqref{u w lambda} for the invariants. By Lemma \ref{rs crit for reduced}, $u \neq 0$ since $x \in \fkb_{\rs,1}$.  Choose any $b\in D$ such that $\RN b=u/2$. Then $b \in O_D$, since $p\neq 2$.  Let $\alpha'_+ := w/u$. We claim that $\alpha_+'$ is integral. Indeed, if instead $|w/u|>1$, then $|w^2|>|\lambda u^2|$, since $\lambda$ is integral.  Then by \eqref{Deltaonu1} and the fact that $w$ is traceless,
\[
   \eta(-\Delta) = \eta\bigl(-(\lambda u^2+w^2)\bigr) = \eta(-w^2) = 1,
\]
a contradiction to Proposition \ref{eta b01}. Finally choose $\alpha'_- \in D$ of norm $\Delta/u^2$.  Then the same argument shows that $\alpha_-'$ is integral, and \eqref{Deltaonu1} shows that this suffices to solve our problem.
A similar analysis, using the coordinates in \eqref{coord u0},  shows that $\fkb(O_{F_0})_{\red,\rs, 0}=\pi_{\fku_0}(\fkk_{0,\red, \rs})$. 

To prove \eqref{integral b non rs}, part \eqref{nonexceptional} follows from the explicit construction of elements in \S\ref{orbitsinfksminusnull} below. Part \eqref{exceptional} is straightforward, again using the explicit coordinates \eqref{coordofred} and the formulas for the invariants \eqref{u w lambda}.  
For example, the condition $-\lambda\notin F_0^{\times, 2}$ holds on all of $\pi_{\fku_1}(\fku_{1,\red}(F_0))$ by virtue of the facts $\lambda=\RN\alpha$ and $\alpha\in D^{\tr=0}$ 
(recall that $D$ is the quaternion division algebra).

Finally, suppose $x=(\lambda, u, w)\in\fkb_\red(F_0)$ lies in the closure of $\fkb_{\red,\rs, 1}$, with $\lambda\neq 0$.  Then for $x'=(\lambda', u', w')\in \fkb_{\red,\rs, 1}$ sufficiently close to $x$, $\lambda$ and $\lambda'$ will lie in the same class in $F_0^{\times}/ F_0^{\times, 2}$.  So $-\lambda \notin F_0^{\times, 2}$ by the fact just cited.
\end{proof}

\begin{remark}
Although $\fks$ is also naturally defined over $O_{F_0}$, the map $\fks(O_{F_0}) \to \fkb(O_{F_0})$ is not a surjection, in contrast to the map $\fks(F_0) \twoheadrightarrow \fkb(F_0)$ on $F_0$-rational points.
\end{remark}

For the next statement, note that the function $x \mapsto \det(1-x)$ descends from each of $\fks$, $\fku_0$, and $\fku_1$ to a common function on $\fkb$, and define
\[
   \fkb^{\circ\circ} := \bigl\{\, x\in\fkb^\circ(F_0) \bigm| \det(1-x)\in O_{F}^\times \,\bigr\}.
\]

\begin{lemma}\label{b circ circ lemma}
Let $x \in \fkb(O_{F_0})$, and suppose that $x$ lies in the closure of $\fkb_{\rs,1}$.  Then $x \in \fkb^{\circ\circ}$.
\end{lemma}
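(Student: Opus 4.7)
The plan is to reduce the assertion $\det(1-x)\in O_F^\times$ to the corresponding statement on $\fkk_1$, where it has essentially already been verified in the proof of Lemma \ref{lem cayley U1}. The function $\det(1-x)$ on $\fkb$ is the image of the polynomial invariant $\det(1-\,\cdot\,)$ on $\Res_{F/F_0}\M_3$ under the quotient map, hence for any element $\tilde x\in\fku_1(F_0)$ with $\pi_{\fku_1}(\tilde x)=x$, we have $\det(1-x)=\det(1-\tilde x)$, where on the right the determinant is that of the $F$-linear operator $1-\tilde x$ acting on $W_1$.

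The first step is to produce such a lift $\tilde x\in\fkk_1$. I distinguish two cases. If $x\in\fkb_{\rs,1}$, Lemma \ref{intimage}\eqref{integral b rs} provides a lift $\tilde x\in\fkk_{1,\rs}\subset\fkk_1$. If instead $x\in\fkb(O_{F_0})\smallsetminus\fkb_\rs(F_0)$, then the hypothesis that $x$ lies in the closure of $\fkb_{\rs,1}$ combined with the ``furthermore'' part of Lemma \ref{intimage}\eqref{integral b non rs} ensures that $x$ is not exceptional in the sense of \eqref{integral b non rs}\eqref{exceptional}, so again $x\in\pi_{\fku_1}(\fkk_1)$ and a lift $\tilde x\in\fkk_1$ exists.

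The second step is to apply Remark \ref{fku_1 circ circ}, which records the observation from the proof of Lemma \ref{lem cayley U1} that $\fkk_1\subset\fku_1^{\circ\circ}$: indeed, writing $\tilde x$ in the coordinates \eqref{fkk_1 coords}, the reduction $\tilde x\bmod\pi$ is the block upper triangular matrix displayed in \eqref{eqn mod pi} with diagonal blocks $\alpha$, $\alpha$, and $d$, where $\alpha\in O_D^{\tr=0}$ and $d\in O_F^{\tr=0}$. Because $p$ is odd and these elements are traceless, $1-\alpha$ and $1-d$ are units in $O_D$ and $O_F$ respectively, so $1-\tilde x$ is invertible modulo $\pi$ and hence $\det(1-\tilde x)\in O_F^\times$. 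Combined with the identification $\det(1-x)=\det(1-\tilde x)$ above, this gives $x\in\fkb^{\circ\circ}$.

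The only genuine subtlety is the non-regular-semi-simple case, where one must confirm that a point $x$ in the closure of $\fkb_{\rs,1}$ is automatically non-exceptional; this is precisely the content of the final clause of Lemma \ref{intimage}, so it poses no additional obstacle. Everything else is bookkeeping: the invariant-theoretic equality $\det(1-x)=\det(1-\tilde x)$ and the integrality of $\tilde x$ do the rest.
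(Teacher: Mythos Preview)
Your proof is correct and follows exactly the same approach as the paper's: use Lemma~\ref{intimage} to lift $x$ to an element of $\fkk_1$, then invoke Remark~\ref{fku_1 circ circ} (i.e., $\fkk_1\subset\fku_1^{\circ\circ}$) to conclude. The paper's proof is simply a two-line compression of what you wrote, without the explicit case distinction or the recap of the argument from Lemma~\ref{lem cayley U1}.
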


\begin{proof}
By Lemma \ref{intimage}, $x \in \pi_{\fku_1}(\fkk_1)$. The conclusion then follows from Remark \ref{fku_1 circ circ}.
\end{proof}

Now recall our general discussion of the Cayley transform from \S\ref{group setting}, and that we regard $B$ as the common categorical quotient of $S$, $U_0$, and $U_1$ via Lemma \ref{B isom}.

\begin{lemma}\label{pi(K1) notin pi(K0)}
There is an inclusion of subsets of $B(F_0)$,
\[
   \pi_{U_1}(K_1) \smallsetminus B_\rs(F_0) \subset \pi_{U_0}(K_0).
\]
\end{lemma}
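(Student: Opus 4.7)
The strategy is to transport the problem from the unitary groups to their Lie algebras via the Cayley transform, where an analogous integrality statement is already available in Lemma \ref{intimage}\eqref{integral b non rs}\eqref{nonexceptional}. Let $g \in K_1$ with $b := \pi_{U_1}(g) \notin B_\rs(F_0)$. First, by Lemma \ref{lem cayley U1} there exist $\xi = \diag(\pm1_2,\pm1)$ and $x \in \fkk_1$ with $g = \fkc_\xi(x)$, and by Remark \ref{fku_1 circ circ} we further have $x \in \fku_1^{\circ\circ}$, i.e.\ $\det(1-x) \in O_F^\times$. Since $g$ is not regular semi-simple, neither is $x$, by Lemma \ref{cayley rs iff rs}. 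Hence, setting $y := \pi_{\fku_1}(x)$, the integrality of the coordinates of $x$ yields $y \in \fkb(O_{F_0}) \smallsetminus \fkb_\rs(F_0)$.

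The next step is to apply Lemma \ref{intimage}\eqref{integral b non rs}\eqref{nonexceptional} on the Lie algebra side to produce $x' \in \fkk_0$ with $\pi_{\fku_0}(x') = y$. The key point is to verify that $x'$ belongs to $\fku_0^{\circ\circ}$, so that we may Cayley transform it back into $K_0$ via Lemma \ref{lem cayley U}. For this observe that the function $\det(1-\,\cdot\,)$ descends from each of $\fku_0$ and $\fku_1$ to a common regular function on $\fkb$ (as in the remarks preceding Lemma \ref{b circ circ lemma}). Consequently
\[
   \det(1-x') = \det(1-y) = \det(1-x) \in O_F^\times,
\]
so $x' \in \fkk_0 \cap \fku_0^{\circ\circ}$, and Lemma \ref{lem cayley U} gives $g' := \fkc_\xi(x') \in K_0$.

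Finally, one verifies $\pi_{U_0}(g') = b$. By the equivariance of the Cayley transform and the identification of the categorical quotients $B_{U_0} \cong B \cong B_{U_1}$ in Lemma \ref{B isom}, both $\pi_{U_0} \circ \fkc_\xi$ and $\pi_{U_1} \circ \fkc_\xi$ coincide with the common map $\fkc_\xi \colon \fkb^\circ \to B$. Applying this to $x'$ and $x$ respectively,
\[
   \pi_{U_0}(g') = \fkc_\xi(y) = \pi_{U_1}(g) = b,
\]
which completes the proof.

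The plan is essentially routine once one recognizes the correct diagram; the only subtle step, and the main thing to get right, is the verification that $x' \in \fku_0^{\circ\circ}$. This rests on the observation that the integrality condition defining $\fku_i^{\circ\circ}$ depends only on the image in the common quotient $\fkb$, which is what allows integrality to pass from the $\fku_1$-side (where it is given by $x \in \fkk_1$) to the $\fku_0$-side (where it is needed in order to invoke Lemma \ref{lem cayley U}). No appeal to the finer statement of Lemma \ref{b circ circ lemma} is needed here, since we already have $\det(1-x)$ a unit ``for free'' from $x \in \fkk_1$.
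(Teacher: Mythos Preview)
Your argument is correct and follows essentially the same route as the paper: pull back to the Lie algebra via Lemma~\ref{lem cayley U1}, use Lemma~\ref{intimage}\eqref{integral b non rs}\eqref{nonexceptional} to pass from the $\fkk_1$-side to the $\fkk_0$-side, and push forward via Lemma~\ref{lem cayley U}. The only cosmetic difference is that the paper works directly on the quotient $\fkb$ and cites Lemma~\ref{b circ circ lemma} for the $\fkb^{\circ\circ}$ step, whereas you lift to explicit elements and invoke the descent of $\det(1-\,\cdot\,)$ to $\fkb$ directly; these amount to the same thing (indeed the proof of Lemma~\ref{b circ circ lemma} is exactly Remark~\ref{fku_1 circ circ}).
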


\begin{proof}
Suppose that $x^\nat \in \pi_{U_1}(K_1)$ is not regular semi-simple.  By Lemma \ref{lem cayley U1}, we may choose $\xi = \diag(\pm 1_2, \pm 1)$ such that $x := \fkc_\xi^{-1}(x^\nat)$ is defined and contained in $\pi_{\fku_1}(\fkk_1) \subset \fkb(O_{F_0})$.  Then $x \in \pi_{\fku_0}(\fkk_0)$ by Lemma \ref{intimage}\eqref{integral b non rs}, and $x \in \fkb^{\circ\circ}$ by Lemma \ref{b circ circ lemma}.  Hence $x^\nat = \fkc_\xi(x) \in  \pi_{U_0}(K_0)$ by Lemma \ref{lem cayley U}.
\end{proof}

\subsection{Intersection numbers as a function on the quotient}\label{intersection numbers quot section}

In this subsection we consider intersection numbers as a function on the categorical quotient in  both the group and Lie algebra settings.  Recall the decomposition $B_\rs(F_0) = B_{\rs,0} \amalg B_{\rs,1}$ from \eqref{B_rs(F_0) decomp}.  For any odd $n$, by Remark \ref{indepoforb} the function $g\mapsto \Int(g)$ on $U_1(F_0)_\rs$ descends to a function on $B_{\rs, 1}$. We extend it by zero to $B_{\rs, 0}$, and still denote by $\Int$ the resulting function on $B_\rs(F_0)$. 
By Remark \ref{intersection conjugation invar},
the same applies to the function  $x\mapsto \lInt(x)$ on $\fku_{1, \rs}$, which we consider as a function on $\fkb_\rs(F_0)$.  Note that, by Remark \ref{g rs => Int(g) proper} and Remark \ref{Intfiniteforliealg} respectively, or by Lemma \ref{intersection nonempty conds} and Proposition \ref{char nondeg} when $n = 3$, these are finite-valued functions.

\begin{proposition}\label{Intrelcomp}
 Let $n=3$. 
The function $\Int$ belongs to $C^\infty_\rc(B_\rs)$; similarly, the function $\lInt$ belongs to    $C^\infty_\rc(\fkb_\rs)$.
\end{proposition}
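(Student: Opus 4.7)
The plan is to handle the two properties---relative compactness of support and local constancy---separately, first for $\lInt$ using the explicit computations of \S\ref{calc of l-Int}, and then for $\Int$ by transferring those results via the Cayley transform. For the support, I would invoke Theorem \ref{main prop}: since $\Delta\cap\Delta_x$ is nonempty only for $x\in\fkk_{1,\rs}$, the support of $\lInt$ is contained in $\pi_{\fku_1}(\fkk_{1,\rs})\subset\pi_{\fku_1}(\fkk_1)$, and the latter is the continuous image of the compact set $\fkk_1$, hence compact; its closure in the Hausdorff space $\fkb(F_0)$ is therefore compact. The analogous argument, using the group-case part of Theorem \ref{main prop}, shows that the support of $\Int$ is contained in the compact set $\pi_{U_1}(K_1)\subset B(F_0)$.

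For the local constancy of $\lInt$, I would first reduce to reduced elements: by Lemma \ref{intersections =} we have $\lInt(x)=\lInt(x_\red)$, and so---using the product decomposition \eqref{u1 prod decomp}, mirrored at the level of quotients by $\fkb=\fkb_\red\times\fks_1\times\fks_1$---the function $\lInt$ factors through the projection $\fkb\to\fkb_\red$. On the open subset $\fkb_{\red,\rs,0}$ of $\fkb_{\red,\rs}$ (open by \eqref{declie}), $\lInt$ vanishes. On $\fkb_{\red,\rs,1}$, the explicit formulas of \S\ref{calc of l-Int} express $\lInt$ as an integer-valued function of
\[
   m=\tfrac{1}{2}v(u),\qquad \ell_-=\tfrac{1}{2}\bigl(v(\Delta)-2v(u)\bigr),\qquad \ell_+=v(w)-v(u),
\]
via the identities \eqref{remember valu}, where $v$ denotes the normalized $\pi$-adic valuation on $F$ and we set $\ell_+=+\infty$ when $w=0$. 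Since $u$ and $\Delta$ are both nonvanishing on $\fkb_{\rs,1}$ (the former by Lemma \ref{rs crit for reduced}), the quantities $m$ and $\ell_-$ are locally constant. Where $w\neq 0$ the invariant $\ell_+$ is likewise locally constant, while near a point with $w=0$ (where $\ell_+=+\infty$) one remains in Case I of the analysis of \S\ref{calc of l-Int} on a sufficiently small neighborhood---this is the subtle step, and it is resolved by the observation that in Case I the explicit formula for $\lInt$ has no dependence on $\ell_+$. Together with the fact that $\lInt$ vanishes on the open set $\fkb_\rs(F_0)\smallsetminus\pi_{\fku_1}(\fkk_{1,\rs})$, this yields local constancy on all of $\fkb_\rs(F_0)$.

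For $\Int$, the Cayley transform provides the bridge. By Lemma \ref{lem cayley U1}, the four maps $\fkc_\xi$ with $\xi=\diag(\pm 1_2,\pm 1)$ carry $\fkk_1$ into $K_1$ and their images cover $K_1$; on the quotient level, each $\fkc_\xi\colon\fkb^\circ\to B^\circ_\xi$ is an isomorphism of varieties (Lemma \ref{cayley cover lem}). By Corollary \ref{inttoliealg}, $\Int\circ\fkc_\xi=\lInt$ on the image of $\fkk_{1,\rs}$ in $\fkb^\circ$, so local constancy of $\Int$ on the support $\pi_{U_1}(K_{1,\rs})$ is pulled back chart-by-chart from that of $\lInt$; and on the open complement of the compact set $\pi_{U_1}(K_1)$, $\Int\equiv 0$, so local constancy there is automatic. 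The main obstacle throughout is the aforementioned handling of the locus $w=0$ in the Lie algebra formula, which, as explained, is surmounted by a direct inspection of the case distinction in \S\ref{calc of l-Int}.
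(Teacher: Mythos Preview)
Your proof is correct and follows essentially the same approach as the paper: compact support via the containment of the support in $\pi_{\fku_1}(\fkk_1)$ (resp.\ $\pi_{U_1}(K_1)$), local constancy of $\lInt$ via reduction to the reduced set and the explicit formulas of \S\ref{calc of l-Int}, and local constancy of $\Int$ via the Cayley transform and Corollary \ref{inttoliealg}. Your treatment of the locus $w=0$---observing that near such a point one remains in Case I, where the formula is independent of $\ell_+$---is more explicit than the paper's terse assertion, and this is a genuine (if minor) improvement in rigor.
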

\begin{proof}  To prove that the closure of the support of $\Int$ is compact, it suffices to prove that the closure inside $U_1(F_0)$  of the support of the function  $g\mapsto \Int(g)$ on $U_1(F_0)_\rs$ is compact. But by Lemma \ref{intersection nonempty conds}, this support is contained in the compact subgroup $K_1$, and hence the assertion is clear. A similar argument applies to $\lInt$. 

Now we prove that the function $\lInt$ on $\fkb_{\rs, 1}$ is locally constant. By Corollary \ref{inttoliealg}, this assertion follows from  the corresponding statement for the function $\lInt$ on $\fkb_{ \red, \rs, 1}$. But this   follows in turn from the  expressions  for this function in \S\ref{calc of l-Int} in terms of the quantities $\ell_-, \ell_+, m$ or, equivalently by \eqref{remember valu}, in terms of the functions $u, w, \Delta$ on $\fkb_\red(F_0)$, all of which are locally constant  on $\fkb_{ \red, \rs}(F_0)$. 

The fact that $\Int$ is locally constant on $B_{\rs,1}$ now follows from Lemma \ref{lem cayley U1}, Corollary \ref{inttoliealg}, and the fact that the Cayley transform $\fkc_\xi$, for any $\xi \in S^1$, is a local homeomorphism (on its domain of definition). 
\end{proof}

\begin{remark}
We conjecture that the preceding assertion regarding the function $\Int$ on $B_\rs(F_0)$ continues to  hold for arbitrary odd $n$. 
\end{remark}

\subsection{The reduction step} 
In this subsection we complete the proof that Theorem \ref{thm group} follows from  Theorem \ref{thm lie}. 
We begin by noting the following compatibility between transfer factors under the Cayley transform for $\fks$, which holds for all odd $n$.  Set $\fks_\rs^\circ := \fks_\rs \cap \fks^\circ$.

\begin{lemma}[{\cite[Lem.~3.5]{Z14}\footnote{The proof of loc.~cit.~contains some miscalculations and should be corrected accordingly. This does not affect the results in loc.~cit.}}]\label{lem transfer factor gp2lie}
Let $n$ be odd. Then for any $y\in \fks_\rs^\circ(F_0)$ and $\xi\in S^1$, 
\begin{equation*}
   \omega\bigl(\fkc_{\xi}(y)\bigr) = \wt{\eta} \bigl( 2^{n(n-1)/2} \bigr)\omega(y).
\end{equation*}
\end{lemma}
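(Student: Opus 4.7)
The plan is to reduce the identity to the special case $\xi = 1_n$, where we write $\fkc := \fkc_{1_n}$, and then compute $\omega(\fkc(y))/\omega(y)$ via a universal change-of-basis determinant.

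\emph{Reduction to $\xi = 1_n$.} The identity \eqref{cayley det calc s} already derived in the excerpt gives
$$\det\bigl[\, e \;\; \fkc_\xi(y)e \;\; \dotsb \;\; \fkc_\xi(y)^{n-1}e \,\bigr] = (\nu_1/\nu_2)^{n-1}\, \xi_1^{n(n-1)/2}\, \det\bigl[\, e \;\; \fkc(y)e \;\; \dotsb \;\; \fkc(y)^{n-1}e \,\bigr],$$
and conjugation by $\ov\nu$ on the left and $\nu^{-1}$ on the right yields $\det\fkc_\xi(y) = (\det\ov\nu/\det\nu)\cdot\det\fkc(y) = \xi_1^{n-1}\xi_2\cdot\det\fkc(y)$. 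Substituting into the definition \eqref{sign Sn} of $\omega$ and collecting the exponents, the extra factor (beyond what appears for $\fkc(y)$) is
$$(\xi_1^{n-1}\xi_2)^{-(n-1)/2}\,\xi_1^{n(n-1)/2}\,(\nu_1/\nu_2)^{n-1} \;=\; \xi_1^{(n-1)/2}\xi_2^{-(n-1)/2}(\nu_1/\nu_2)^{n-1} \;=\; \RN(\nu_1/\nu_2)^{(n-1)/2},$$
using $\xi_j = \ov\nu_j/\nu_j$ and $(n-1)/2\in\BZ$. This lies in $\RN F^\times \subset F_0^\times$, and since $\wt\eta$ restricted to $F_0^\times$ is $\eta$, which is trivial on norms by local class field theory, the extra factor contributes $1$ under $\wt\eta$. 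It therefore suffices to prove $\omega(\fkc(y)) = \wt\eta\bigl(2^{n(n-1)/2}\bigr)\,\omega(y)$.

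\emph{The main determinant.} Since $1\pm y$ commute, $\fkc(y)^i = (1-y)^{-(n-1)}\,p_i(y)$, where $p_i(y) := (1-y)^{n-1-i}(1+y)^i$. Pulling the factor $(1-y)^{-(n-1)}$ out of each column,
$$\det\bigl(\fkc(y)^i e\bigr)_{i=0}^{n-1} \;=\; \det(1-y)^{-(n-1)} \cdot (\det A) \cdot \det\bigl(y^i e\bigr)_{i=0}^{n-1},$$
where $A$ is the universal matrix expressing $(p_0,\dotsc,p_{n-1})$ in the monomial basis $(1,y,\dotsc,y^{n-1})$. I claim $\det A = 2^{n(n-1)/2}$. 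To see this, substitute $u = (1+y)/(1-y)$, so that the identity $(u+1)^{n-1}p_i(y) = 2^{n-1}u^i$ (an equality of polynomials in $u$) translates, in the matrix language, into $2^{n-1}I = QA$, where $Q$ expresses $q_k(u):=(u-1)^k(u+1)^{n-1-k} = (u+1)^{n-1}y^k$ in the basis $\{u^j\}$. Then substitute $u = 2t+1$: one has $q_k(u) = 2^{n-1}\tilde q_k(t)$ with $\tilde q_k(t) := t^k(t+1)^{n-1-k}$, whose expansion in $\{t^j\}$ is lower triangular with $1$'s on the diagonal (determinant $1$), while the transition from $\{t^j\}$ back to $\{u^j\}$ via $t = (u-1)/2$ is upper triangular with diagonal entries $(2^{-j})_j$ (determinant $2^{-n(n-1)/2}$). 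Combining, $\det Q = (2^{n-1})^n\cdot 2^{-n(n-1)/2} = 2^{n(n-1)/2}$, hence $\det A = 2^{n(n-1)}/\det Q = 2^{n(n-1)/2}$.

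\emph{Conclusion.} Using $\det\fkc(y) = \det(1+y)/\det(1-y)$ and the fact that $\ov{1-y} = 1+y$ for $y\in\fks$ (since $\ov y = -y$), one obtains
$$\det\fkc(y)^{-(n-1)/2}\,\det\bigl(\fkc(y)^i e\bigr)_i \;=\; \frac{2^{n(n-1)/2}}{\det(1-y^2)^{(n-1)/2}}\cdot\det\bigl(y^i e\bigr)_i.$$
Since $\det(1-y^2) = \det(1-y)\det(1+y) = \det(1-y)\,\ov{\det(1-y)} = \RN\bigl(\det(1-y)\bigr)\in \RN F^\times$, the denominator lies in $\RN F^\times$ and is killed by $\wt\eta$. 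Therefore $\omega(\fkc(y)) = \wt\eta(2^{n(n-1)/2})\,\omega(y)$, which combined with the reduction step proves the lemma. The only nontrivial step is the determinant computation $\det A = 2^{n(n-1)/2}$, which is cleanly handled by the two-stage substitution $u=(1+y)/(1-y)$, $u = 2t+1$ reducing everything to triangular changes of basis.
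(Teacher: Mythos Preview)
Your proof is correct. The reduction to $\xi = 1_n$ matches the paper's argument essentially verbatim (both use \eqref{cayley det calc s} and show the extra factor is a norm).

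For the case $\xi = 1_n$, however, the paper takes a shorter route than your transition-matrix computation. It writes $\gamma = \fkc(y) = -1 + T$ with $T := 2(1-y)^{-1}$; then $\det(\gamma^i e)_i = \det(T^i e)_i$ by the same unipotent-column-operation trick, and since $T^i = 2^i(1-y)^{-i}$, the factor $2^{n(n-1)/2}$ falls out immediately from the scalar powers of $2$, leaving $\det(1-y)^{1-n}\det((1-y)^ie)_i = \det(1-y)^{1-n}\det(y^ie)_i$. This avoids introducing the polynomials $p_i(y)$, the auxiliary matrix $Q$, and the two substitutions $u=(1+y)/(1-y)$, $u=2t+1$. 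Your method is more labor-intensive but has the minor advantage of making the provenance of $2^{n(n-1)/2}$ fully explicit as a transition determinant between two natural polynomial bases; the paper's trick is quicker but relies on spotting the substitution $T = 2(1-y)^{-1}$.
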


\begin{proof}
Let $\gamma := \fkc_{\xi}(y)\in S_\rs(F_0)$.  Write $\xi = \diag(\xi_1 \cdot 1_{n-1}, \xi_2)$, and choose $\nu = \diag(\nu_1 \cdot 1_{n-1},\nu_2)$ with $\ov\nu \nu^{-1} = \xi$ as in the definition \eqref{cayley s} of $\fkc_\xi$.  By the definition \eqref{sign Sn} of $\omega$ on $S_\rs(F_0)$,
\begin{alignat*}{2}
	\omega(\gamma) 
	   &= \wt\eta \bigl( \det(\gamma)^{-(n-1)/2} \det(\gamma^ie)_{i=0,\dotsc,n-1}\bigr)\\
		&= \wt\eta \Bigl( \det\bigl(\ov\nu \fkc_{1_n}(y) \nu^{-1}\bigr)^{-(n-1)/2} 
		         (\nu_1/\nu_2)^{n-1} \xi_1^{n(n-1)/2} \det(\fkc_{1_n}(y)^ie)_{i=0,\dotsc,n-1} \Bigr)  
		             & \quad\text{(by \eqref{cayley det calc s})}\\
		&= \wt\eta \bigl( (\xi_1/\xi_2)^{(n-1)/2} (\nu_1/\nu_2)^{n-1} \bigr) \omega\bigl(\fkc_{1_n}(y)\bigr).
\end{alignat*}
Since $\xi_1 \nu_1^2= \ov\nu_1 \nu_1$ and $\xi_2 \nu_2^2 = \ov\nu_2 \nu_2$ are both norms, we conclude that $\omega(\gamma) = \omega(\fkc_{1_n}(y))$. Therefore we reduce to the case $\xi=1$. Then
\[
   \gamma=(1+y)(1-y)^{-1}=-1+T,
\]
where we set $T := 2(1-y)^{-1}$. We compute
\begin{align*}
	\det(\gamma^i e)_{i=0,1,\dotsc,n-1}
	   &= \det\bigl((-1+T)^i e\bigr)_{i=0,1,\dotsc,n-1}\\
		&= \det(T^i e)_{i=0,1,\dotsc,n-1}\\
		&= 2^{n(n-1)/2} \det(1-y)^{1-n} \det\bigl((1-y)^i e\bigr)_{i = n-1,n-2,\dotsc,0}\\
		&= 2^{n(n-1)/2}\det(1-y)^{1-n}\det(y^i e)_{i=0,1,\dotsc,n-1}.
\end{align*}
Note that $1-y=1+\ov y$, since $y \in \fks(F_0)$. Hence $\det(1+y)\det(1-y)\in\RN F^\times$, and 
\[
   \wt{\eta}\bigl(\det(1+y)\det(1-y)\bigr) = 1.
\]
Recalling the definition \eqref{omega lie} of $\omega$ on $\fks_\rs(F_0)$, we conclude that 
\begin{align*}
   \omega(\gamma)
	   &= \wt{\eta} \Bigl( \det\bigl((1+y)(1-y)^{-1}\bigr)^{-\frac{n-1}{2}} 2^{n(n-1)/2} \det(1-y)^{1-n}  \det(y^i e)_{i=0,1,\dotsc,n-1}\Bigr)\\
      &= \wt{\eta}\bigl( 2^{n(n-1)/2} \bigr) \wt{\eta}\bigl(  \det(1+y)^{-\frac{n-1}{2}}\det(1-y)^{-\frac{n-1}{2}}  \bigr) \omega(y)\\
		&= \wt{\eta}\bigl( 2^{n(n-1)/2} \bigr) \omega(y).\qedhere
\end{align*}
\end{proof}

Now we return to $n = 3$.  The main part in the reduction step is given by the following lemma.  Recall from Lemma \ref{del rc lie} that if a function $\phi' \in C_c^\infty(\fks)$ has vanishing orbital integrals $\Orb(y,\phi') = 0$ for all $y \in \fks_{\rs,1}$, then the function $y \mapsto \omega(y) \del(y,\phi')$ on $\fks_{\rs,1}$ descends to a function on $\fkb_{\rs,1}$ which, when extended by zero to $\fkb_{\rs,0}$, lies in $C_\rc^\infty(\fkb_\rs)$; and recall from Lemma \ref{del rc group} that if $f' \in C_c^\infty(S)$ is such that $\Orb(\gamma,f') = 0$ for all $\gamma \in S_{\rs,1}$, then the function $\gamma\mapsto \omega(\gamma)\del(\gamma,f')$ on $S_{\rs,1}$ analogously descends to an element of $C_\rc^\infty(B_\rs)$.

\begin{lemma}\label{lem orb cayley}
Let $f' \in C_c^\infty(S)$  transfer to $(\mathbf{1}_{K_0}, 0) \in C_c^\infty(U_0) \times C_c^\infty(U_1)$, and let $\phi' \in C_c^\infty(\fks)$ transfer to $(\mathbf{1}_{\fkk_0}, 0) \in C_c^\infty(\fku_0) \times C_c^\infty(\fku_1)$. Fix 
$\xi \in S^1$.
Let $x_0\in \fkb^\circ(F_0)$  be an element with the property that if $x_0\notin \fkb(O_{F_0})$, then $\fkc_\xi(x_0) \notin \pi_{U_0}({K_0})$. Then the difference function
\[
   x\mapsto 
	\begin{cases} 
		\omega\bigl(\fkc_\xi(y)\bigr)\del\bigl(\fkc_\xi(y),f'\bigr) - \omega(y)\del(y,\phi'),  &  x=\pi_\fks(y)\in \fkb_{\rs, 1}^\circ;\\
		0,  &  x\in \fkb_{\rs, 0}
   \end{cases}
\]
is locally around $x_0$ an orbital integral function.\footnote{Note that this function is not defined on 
all of $\fkb_\rs(F_0)$,
but this raises no issue since the conclusion concerns only the local behavior of the function near the point $x_0 \in \fkb^\circ(F_0)$.}
\end{lemma}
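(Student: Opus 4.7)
The plan is to transport $f'$ from $S$ to $\fks$ via the Cayley transform $\fkc_\xi$, compare the resulting Lie algebra function with $\phi'$, and exploit the hypothesis on $x_0$ to conclude that their difference has locally vanishing orbital integrals near $x_0$. Once that is established, the density principle for $\fks$ (valid for $n=3$ by Remark \ref{density remarks}(iii)) together with Lemma \ref{twisted del} converts the difference into an orbital integral function locally, and Theorem \ref{locoforblie} finishes the job.

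First I would choose a small compact open neighborhood $V\subset\fkb^\circ(F_0)$ of $x_0$, and after multiplying $f'$ by a bump function pulled back from $B$ I may assume that the support of $f'$ meets $\pi_S^{-1}(\fkc_\xi(V))$ only inside $S_\xi^\circ$; this does not alter the orbital integrals over $\fkc_\xi(V)$. Pulling back along the $H'$-equivariant isomorphism $\fkc_\xi\colon\fks^\circ\isoarrow S_\xi^\circ$ yields $f'_\xi\in C_c^\infty(\fks^\circ)$ with $\Orb(y,f'_\xi,s)=\Orb(\fkc_\xi(y),f',s)$, and hence $\del(y,f'_\xi)=\del(\fkc_\xi(y),f')$, for $y$ with $\pi_\fks(y)\in V$. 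Combined with Lemma \ref{lem transfer factor gp2lie} ($\omega(\fkc_\xi(y))=\wt\eta(8)\omega(y)$), the difference function in the statement becomes $\omega(y)\del(y,\psi)$ on $\pi_\fks^{-1}(V)\cap\fks_{\rs,1}$, where $\psi:=\wt\eta(8)\,f'_\xi-\phi'\in C_c^\infty(\fks)$.

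The main step is to show that, after possibly shrinking $V$, $\Orb(y,\psi)=0$ for every $y\in\pi_\fks^{-1}(V)\cap\fks_\rs(F_0)$. Using Lemma \ref{lem cayley U} to identify $\fkc_\xi^{-1}(K_0\cap U^{\circ\circ}_{0,\xi})=\fkk_0\cap\fku_0^{\circ\circ}$, together with the transfer-factor compatibility above to absorb $\wt\eta(8)$, the function $\wt\eta(8)\,f'_\xi$ transfers, locally over $V$, to $(\mathbf{1}_{\fkk_0\cap\fku_0^{\circ\circ}},0)$ on $(\fku_0,\fku_1)$, while $\phi'$ transfers to $(\mathbf{1}_{\fkk_0},0)$. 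Hence $\psi$ transfers locally over $V$ to $(-\mathbf{1}_{\fkk_0\setminus\fku_0^{\circ\circ}},0)$, and I need its image in $\fkb(F_0)$ to miss a neighborhood of $x_0$. If $x_0\in\fkb(O_{F_0})$, then $x_0\in\fkb^\circ(F_0)\cap\fkb(O_{F_0})$ combined with Lemma \ref{b circ circ lemma} and the characterization of $\fku_0^{\circ\circ}$ as the locus where $\det(1-x)$ is a unit forces $x_0\notin\pi_{\fku_0}(\fkk_0\setminus\fku_0^{\circ\circ})$, a closed subset of $\fkb(F_0)$; shrinking $V$ removes it. If $x_0\notin\fkb(O_{F_0})$, then by the inclusion $\pi_{\fku_0}(\fkk_0)\subset\fkb(O_{F_0})$ from \eqref{fkb(O_F_0) union} the transfer of $\phi'$ vanishes in a neighborhood of $x_0$, and by the hypothesis $\fkc_\xi(x_0)\notin\pi_{U_0}(K_0)$, so via Cayley the transfer of $f'_\xi$ also vanishes near $x_0$. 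Either way the local transfer of $\psi$ is zero on $V$.

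Finally, pick a locally constant, compactly supported $\chi$ on $\fkb(F_0)$ equal to $1$ on a smaller neighborhood $V'\subset V$ of $x_0$ and supported in $V$. Then $\psi_\chi:=(\chi\circ\pi_\fks)\cdot\psi\in C_c^\infty(\fks)$ is an $H'$-invariantly cut-off of $\psi$, and $\Orb(y,\psi_\chi)=\chi(\pi_\fks(y))\,\Orb(y,\psi)=0$ for all $y\in\fks_\rs(F_0)$. By the density principle (Conjecture \ref{conj density}, valid for $n=3$), I may write $\psi_\chi=\sum_i\tensor*[^{\eta(h_i)h_i-1}]{\psi_i}{}$ for some $\psi_i\in C_c^\infty(\fks)$ and $h_i\in H'(F_0)$, and Lemma \ref{twisted del} gives
\[
   \del(y,\psi_\chi)=\sum_i\log|\det h_i|\,\Orb(y,\psi_i)=\Orb(y,\phi_0),\qquad \phi_0:=\sum_i\log|\det h_i|\,\psi_i.
\]
Since $\chi(\pi_\fks(y))=1$ for $y$ with $\pi_\fks(y)\in V'$, we have $\del(y,\psi)=\del(y,\psi_\chi)=\Orb(y,\phi_0)$ there, producing the local orbital integral function representation via Theorem \ref{locoforblie}. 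The hard part will be the verification of local transfer vanishing in the previous paragraph: it rests on a careful analysis of the Cayley-singular locus $\fkk_0\setminus\fku_0^{\circ\circ}$ and on the integrality results \eqref{fkb(O_F_0) union}, Lemma \ref{intimage}, Lemma \ref{b circ circ lemma}, and Lemma \ref{lem cayley U}; the two cases in the hypothesis on $x_0$ are calibrated precisely to dispose of the two possible obstructions.
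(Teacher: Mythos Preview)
Your approach is essentially the paper's: pull $f'$ back to $\fks$ via $\fkc_\xi$, compare with $\phi'$, show the difference has vanishing orbital integrals near $x_0$, and invoke the density principle (packaged in the paper as Corollary~\ref{cor density}). The paper organizes the bookkeeping slightly differently---it first multiplies all four functions $f',\mathbf{1}_{K_0},\phi',\mathbf{1}_{\fkk_0}$ by the characteristic function of the preimage of $V_{x_0}$ and then proves the clean identity $\phi_{x_0}=\fkc_\xi^*(f_{x_0})$ of functions on $\fku_0$---but the substance is the same.

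Two small points to fix. First, your appeal to Lemma~\ref{b circ circ lemma} in the integral case requires that $x_0$ lie in the closure of $\fkb_{\rs,1}$; you should note at the outset (as the paper does) that this may be assumed without loss of generality, since otherwise the difference function vanishes identically near $x_0$. Second, your claim that $\psi$ transfers locally to $(-\mathbf{1}_{\fkk_0\setminus\fku_0^{\circ\circ}},0)$ is not literally correct: the Cayley pullback $\fkc_\xi^*(\mathbf{1}_{K_0})$ agrees with $\mathbf{1}_{\fkk_0}$ on $\fku_0^{\circ\circ}$ by Lemma~\ref{lem cayley U}, but outside $\fku_0^{\circ\circ}$ it is $\mathbf{1}_{\fkc_\xi^{-1}(K_0\setminus U_{0,\xi}^{\circ\circ})}$, which need not be $\mathbf{1}_{\fkk_0\setminus\fku_0^{\circ\circ}}$. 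This does not affect your argument, since all you actually use is that the transfer of $\psi$ is supported outside $\pi_{\fku_0}^{-1}(\fkb^{\circ\circ})$, and that still follows; but the paper's device of localizing to $V_{x_0}\subset\fkb^{\circ\circ}$ first sidesteps this bookkeeping entirely.
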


\begin{proof}
Let $f := \mathbf{1}_{K_0}$ and $\phi := \mathbf{1}_{\fkk_0}$. We may assume that $x_0$ lies in the closure of $ \fkb_{ \rs, 1}$. 
 Choose an open and compact (hence closed) neighborhood $V_{x_0}$ of $x_0$ contained in $\fkb^\circ(F_0)$. Consider the functions
\[
   \phi'_{x_0} := \phi'\cdot \mathbf{1}_{\pi_{\fks}^{-1}(V_{x_0})}\in C_c^\infty(\fks)
	\quad\text{and}\quad
	\phi_{x_0} := \phi\cdot \mathbf{1}_{\pi_{\fku_0}^{-1}(V_{x_0})}\in C_c^\infty(\fku_0),
\]
and similarly
\[
   f'_{x_0} := f'\cdot \mathbf{1}_{\pi_{S}^{-1}(\fkc_\xi (V_{x_0}))}\in C_c^\infty(S)
	\quad\text{and}\quad
	f_{x_0} := f\cdot \mathbf{1}_{\pi_{U_0}^{-1}(\fkc_\xi(V_{x_0}))}\in C_c^\infty(U_0) .
\]
Then $\phi'_{x_0}(y) = \phi'(y)$ for all $y \in \pi_{\fks}^{-1}(V_{x_0})$, so that $\Orb(y, \phi'_{x_0}, s) = \Orb(y, \phi', s)$ for all such $y$; and similarly for $f'_{x_0}$ and $f'$.
The assertion of the lemma is therefore reduced to the same assertion for $f'_{x_0}$ and $\phi'_{x_0}$. 

We claim that, after possibly shrinking $V_{x_0}$, we have with respect to the Cayley transform $\fkc_\xi \colon \fku_0^\circ \to U_0$,
\begin{equation}\label{eqn phi=f}
   \phi_{x_0} = \fkc_\xi^* (f_{x_0})
\end{equation}
(where the right-hand side is extended by $0$ from $\fku_0^\circ(F_0)$ to $\fku_0(F_0)$).
Indeed, first suppose that $x_0\in \fkb(O_{F_0})$ is integral.  We have
\[
   \supp \phi_{x_0} = \fkk_0\cap\pi_{\fku_0}^{-1}(V_{x_0})
	\quad\text{and}\quad
   \supp f_{x_0} = K_0\cap\pi_{U_0}^{-1}\bigl(\fkc_\xi(V_{x_0})\bigr) .
\]
Since $x_0$ lies in the closure of $ \fkb_{\rs, 1}$, we have $x_0 \in \fkb^{\circ\circ}$ by Lemma \ref{b circ circ lemma}.
Shrinking $V_{x_0}$ if necessary, we may therefore assume that $V_{x_0}\subset \fkb(O_{F_0})\cap\fkb^{\circ\circ}$.  
Then by Lemma \ref{lem cayley U}, $\fkc_\xi$ carries the left-hand set in the display isomorphically onto the right-hand set, which proves \eqref{eqn phi=f}.

If $x_0$ is not integral, then by hypothesis $\fkc_\xi(x_0)\notin \pi_{U_0}(K_0)$. Hence, after   possibly shrinking $V_{x_0}$,  both functions $\phi_{x_0}$ and $ f_{x_0}$   vanish identically,  so that again  identity \eqref{eqn phi=f} is satisfied. 

By \eqref{eqn phi=f}, for all $y \in \fks_\rs^\circ(F_0)$ matched with an element $x \in \fku_0^\circ(F_0)$ , we have
\[
   \omega(y)\Orb(y,\phi'_{x_0})
      = \Orb(x,\phi_{x_0})
	  = \Orb\bigl(\fkc_\xi(x),f_{x_0}\bigr)
	  = \omega\bigl(\fkc_\xi(y)\bigr) \Orb\bigl(\fkc_\xi(y),f'_{x_0}\bigr).
\]
By Lemma \ref{lem transfer factor gp2lie}, we have $\omega(\fkc_\xi(y))=c\cdot \omega(y)$ for the constant $c := \wt{\eta}( 2^{n(n-1)/2})$. Hence the difference function $c\cdot \fkc_\xi^\ast ( f'_{x_0}) - \phi'_{x_0}$  (viewed as an element in $C^\infty_c(\fks)$) has identically vanishing orbital integrals on $\fks_{\rs,0}$.  The same trivially holds on $\fks_{\rs,1}$, since $\phi'_{x_0}$ transfers to $(\phi_{x_0},0)$ and $f'_{x_0}$ transfers to $(f_{x_0},0)$.
Now the assertion follows from  Corollary \ref{cor density} below.
\end{proof}

The proof of Corollary \ref{cor density}
is based on the following theorem, which is the $n = 3$ case of Conjecture \ref{conj density} (see also Remarks \ref{density remarks}).
Recall from \S\ref{sec relate a b} that for $\phi' \in C_c^\infty(\fks)$ and $h\in H'(F_0) = \GL_{n-1}(F_0)$, we define
\[
   \tensor*[^h]{\phi}{^\prime}(y) = \phi'(h^{-1}yh)
	\quad\text{and}\quad
	\tensor*[^{\eta(h)h-1}]{\phi}{^\prime}(y) = \eta(h)\phi'(h^{-1}y h) - \phi'(y).
\]

\begin{theorem}[Density principle]\label{densprince} 
Let $n=3$. 
Let $\phi'\in C^\infty_c(\fks)$ be such that $\Orb(y,\phi')=0$ for all $y\in \fks_\rs(F_0)$. 
Then $\phi'$ is in the kernel of the natural projection $C^\infty_c(\fks)\to C^\infty_c(\fks)_{H',\eta}$, i.e.\ $\phi'$ is a linear combination of functions of the form $\tensor*[^{\eta(h)h-1}]{\phi}{^{\prime\prime}}$ for $\phi''\in C^\infty_c(\fks)$ and $h\in H'(F_0)$.
\end{theorem}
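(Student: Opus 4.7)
The plan is to recast the statement in its dual form. Let $V := C^\infty_c(\fks)_{H',\eta}$ denote the space of $(H',\eta)$-coinvariants, and let $V^\ast$ denote the space of $(H',\eta)$-equivariant distributions on $\fks(F_0)$. Then $\phi'$ lies in the kernel of $C^\infty_c(\fks) \to V$ if and only if $T(\phi')=0$ for every $T\in V^\ast$. Thus the theorem is equivalent to the assertion that the orbital integral distributions $y\mapsto \Orb(y,\,\cdot\,)$ for $y\in \fks_\rs(F_0)$ span a weak-$\ast$ dense subspace of $V^\ast$. My proof would verify this dual statement by a standard Bernstein-style localization and semisimple descent, combined with the explicit germ expansion developed in Part \ref{germ expansion part}.

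First I would use a partition of unity subordinate to the stratification of $\fks(F_0)$ by semisimple orbit types. Since $\phi'$ has compact support, we may assume $\phi'$ is supported arbitrarily close to the $H'$-orbit through a single semisimple element $y_0 \in \fks(F_0)$. A Luna-type slice theorem, adapted to the symmetric pair in question (as in \cite[\S5]{JR} and \S\ref{Lie algebra setting}), identifies an $H'$-invariant neighborhood of the orbit of $y_0$ with an induced space $H' \times^{Z_{H'}(y_0)} N_{y_0}$, where $N_{y_0}$ is the normal slice and $Z_{H'}(y_0)$ is the stabilizer. Any $(H',\eta)$-equivariant distribution supported near the orbit then descends to a $(Z_{H'}(y_0),\eta|_{Z_{H'}(y_0)})$-equivariant distribution on $N_{y_0}$. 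This reduces the density statement to an analogous statement on each slice.

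Next I would enumerate the possibilities. For $y_0\in \fks_\rs(F_0)$ the centralizer $Z_{H'}(y_0)$ is trivial, $N_{y_0}$ is just the transversal to the orbit, and the claim reduces to the fact that a compactly supported function on the regular semisimple categorical quotient is determined by its values, together with the hypothesis on $\phi'$. For a non-regular semisimple $y_0$, the centralizer is described combinatorially by which of the invariants \eqref{inv sec} vanish at $y_0$; when $n=3$ this leaves only a short list of strata — essentially those where either $\lambda(y_0)=0$, or $u(y_0)=0$, or both, together with constraints on $\tr A$ and $d$ in the block decomposition. In each such stratum the slice is a symmetric space of strictly smaller dimension, and one finishes by descent: either by induction from the analogous density statement on the smaller slice, or by direct verification using the product decomposition \eqref{s prod decomp}, which for $n=3$ reduces the non-trivial part of the analysis to $\fks_{\red}$ and the two auxiliary $\fks_1$-factors.

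The main obstacle will be the fully degenerate stratum $y_0=0$, where the slice is all of $\fks$ and one must classify $(H',\eta)$-invariant distributions supported on the null cone $\{y\in \fks \mid \pi_\fks(y)=0\}$. This is exactly the content of a Shalika-type germ expansion: every such distribution is a finite linear combination of "nilpotent" orbital integrals, twisted appropriately by $\eta$. Using the explicit germ expansion for $n=3$ that is established in Part \ref{germ expansion part} (whose statement and consequences, such as Theorem \ref{thm stab const red}, rest on direct computation), together with the homogeneity of each nilpotent orbital integral under the scaling action $y\mapsto \varpi\, y$ (which twists by a known power of $|\varpi|$ and by $\eta$), one obtains a finite-dimensional system whose solution space is spanned by the nilpotent orbital integrals themselves. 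The assumption that all regular semisimple orbital integrals of $\phi'$ vanish, combined with the boundary behavior of these integrals approaching the null cone as controlled by the germ expansion, then forces the corresponding nilpotent germ coefficients of $\phi'$ to vanish as well, completing the argument. The genuinely delicate point — and the reason the result is currently available only for $n\leq 3$ — is the independence and explicit identification of these nilpotent germs, for which the low rank is essential.
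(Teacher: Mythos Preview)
The paper's proof is a one-line citation: it invokes \cite[Th.~1.1]{Z12b}, which already establishes that the regular semi-simple orbital integrals span a weak-$*$ dense subspace of the $(H',\eta)$-invariant distributions on $\fks(F_0)$ for $n=3$, and then applies the standard duality between coinvariants and invariant distributions. Your proposal instead sketches an independent proof of that cited result via Bernstein localization, slices, and the germ expansion of Part~\ref{germ expansion part}. That is a genuinely different route, and it is in spirit the route that \cite{Z12b} itself takes; but your sketch has a real gap at the decisive step.

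The gap is in the null-cone paragraph. You assert that ``every such distribution is a finite linear combination of nilpotent orbital integrals,'' and attribute this to ``a Shalika-type germ expansion.'' But this is not what the germ expansion in Part~\ref{germ expansion part} provides: Theorems~\ref{thm germ x=0} and \ref{thm germ ss s} express \emph{regular semi-simple} orbital integrals as combinations of $x_0$-nilpotent ones, not the converse classification of all invariant distributions supported on the degenerate fibre. That classification is a separate Harish-Chandra/Howe-type input, and it is precisely the substance of \cite{Z12b}. Moreover, for $n=3$ the nilpotent orbits over $x_0=0$ include the \emph{continuous} family $n(\mu)$, $\mu\in F_0$ (see \eqref{nilpins}), so the space in question is not finite-dimensional and your homogeneity/scaling argument does not suffice to separate them. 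Even granting the classification, deducing that each $\Orb(n,\phi')$ vanishes from the vanishing of all regular orbital integrals requires linear independence of the germ functions $\Gamma_n(x,0)$ as functions of $x$, which for the continuous family is delicate and is not established in Part~\ref{germ expansion part} (nor is it implied by Theorem~\ref{thm stab const red}, which is about local orbital integral functions, not density of distributions). In short, your outline re-derives the architecture of \cite{Z12b} but leaves the core analytic work --- exactly the work the paper outsources by citation --- unproven.
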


\begin{proof}
By \cite[Th.~1.1]{Z12b}, the set of orbital integrals of regular semi-simple elements spans a weakly dense subspace of the space of all $(H'(F_0),\eta)$-invariant distributions on $\fks(F_0)$.\footnote{In \cite{Z12b} this is proved for $\fks_\red$, but it is trivial to extend the result to $\fks$.}  Therefore if a test function $\phi'\in C^\infty_c(\fks)$ is such that $\Orb(y,\phi')=0$ for all $y\in \fks_\rs(F_0)$, then $\phi'$ is annihilated by all  $(H'(F_0),\eta)$-invariant distributions on $\fks(F_0)$. This implies that $\phi'$ lies in the kernel of the natural  projection $C^\infty_c(\fks)\to C^\infty_c(\fks)_{H',\eta}$.
\end{proof}

\begin{corollary}\label{cor density}
Let $\phi'\in C^\infty_c(\fks)$ be such that $\Orb(y,\phi')=0$ for all $y\in \fks_\rs(F_0)$. 
Then there exists a function $\phi'^\flat \in C^\infty_c(\fks)$ such that
\[
   \omega(y)\del(y,\phi') = \omega(y)\Orb\bigl(y, \phi'^\flat \bigr) \quad\text{for all}\quad y\in \fks_\rs(F_0) . 
\]
In other words, $y \mapsto \omega(y)\del(y,\phi')$ is an orbital integral function on $\fks_\rs(F_0)$.
\end{corollary}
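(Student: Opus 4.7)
The plan is to combine the density principle (Theorem \ref{densprince}) with the computation of weighted orbital integrals of functions of the form $\tensor*[^{\eta(h)h-1}]{\phi}{^{\prime\prime}}$ given by Lemma \ref{twisted del}\eqref{twisted del s}. The hypothesis that $\phi'$ has vanishing orbital integrals at every regular semi-simple element triggers Theorem \ref{densprince}, so we can write
\[
   \phi' = \sum_{i=1}^m \tensor*[^{\eta(h_i)h_i-1}]{\phi}{_i''}
\]
for some $\phi_1'',\dotsc,\phi_m'' \in C_c^\infty(\fks)$ and $h_1,\dotsc,h_m \in H'(F_0)$.

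Next I would apply $\del(y,\,\cdot\,)$ term by term. By Lemma \ref{twisted del}\eqref{twisted del s}, each summand satisfies
\[
   \del\bigl(y,\tensor*[^{\eta(h_i)h_i-1}]{\phi}{_i''}\bigr) = \log\lvert\det h_i\rvert \Orb(y,\phi_i''),
\]
for all $y \in \fks_\rs(F_0)$. Summing and setting
\[
   \phi'^\flat := \sum_{i=1}^m \log\lvert \det h_i \rvert \,\phi_i'' \;\in\; C^\infty_c(\fks),
\]
we obtain $\del(y,\phi') = \Orb(y,\phi'^\flat)$ for every $y \in \fks_\rs(F_0)$. Multiplying both sides by the transfer factor $\omega(y)$ yields the desired identity.

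Since the only steps are invoking Theorem \ref{densprince} and the linear identities of Lemma \ref{twisted del}, there is no real obstacle: the density principle does all the work, and the Cayley-type weighting by $\log\lvert\det h_i\rvert$ absorbs cleanly into the definition of $\phi'^\flat$. The only mild subtlety is that the decomposition provided by Theorem \ref{densprince} is not canonical, but any such decomposition gives an admissible $\phi'^\flat$, so this causes no issue.
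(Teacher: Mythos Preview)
Your proof is correct and follows essentially the same approach as the paper: invoke the density principle (Theorem~\ref{densprince}) to write $\phi'$ as a linear combination of functions $\tensor*[^{\eta(h_i)h_i-1}]{\phi}{_i''}$, then apply Lemma~\ref{twisted del}\eqref{twisted del s} termwise and collect the constants $\log\lvert\det h_i\rvert$ into $\phi'^\flat$. The paper merely phrases this as ``we may assume $\phi'$ is a single such term,'' which is the same argument by linearity.
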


\begin{proof}

By the density principle, we may assume that $\phi'$ is of the form $\tensor*[^{\eta(h)h-1}]{\phi}{^{\prime\prime}}$ for some $\phi''\in C^\infty_c(\fks)$ and $h\in H'(F_0)$. By Lemma \ref{twisted del}\eqref{twisted del s},
\[
   \del\bigl(\gamma, \tensor*[^{\eta(h)h-1}]{\phi}{^{\prime\prime}}\bigr) 
	   = \log\lvert \det h \rvert \Orb(\gamma, \phi'').
\]
Setting $\phi'^\flat := \log\lvert \det h \rvert \cdot \phi''$ completes the proof. 
\end{proof}

\begin{remark} 
Note that this corollary is essentially  a converse to Lemma \ref{lem btoa}\eqref{exists wt phi'}. Indeed, in Corollary \ref{cor density} above, we are given $\phi'$, and are writing $\del(y,\phi')$ as an orbital integral; in Lemma \ref{lem btoa}\eqref{exists wt phi'}, we are given $\phi'$, and are writing $\Orb(y,\phi')$ as a derivative of an orbital integral. 
\end{remark}

\begin{proposition}\label{lem1}
Theorem \ref{thm lie} implies Theorem \ref{thm group}. 
\end{proposition}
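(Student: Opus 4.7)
The plan is to use the Cayley transform to localize the group identity and pull each local piece back to a statement covered by Theorem \ref{thm lie}\eqref{lieconj b}. First I would note that the geometric assertions in Theorem \ref{thm group}---that $\Delta\cap\Delta_g$ is artinian with two points unless empty, and that $\Int(g)$ has no higher Tor contributions---are independent of the Lie algebra theorem and already follow from Proposition \ref{char nondeg}. By Proposition \ref{cor b to a} and Remark \ref{remark b to a}, it suffices to establish Conjecture \ref{inhomconj}\eqref{inhomconj b}. So fix $f'\in C^\infty_c(S)$ transferring to $(\mathbf{1}_{K_0},0)$; we must produce a corresponding $f'_\corr$.

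Fix an auxiliary $\phi'\in C^\infty_c(\fks)$ transferring to $(\mathbf{1}_{\fkk_0},0)$ (such a $\phi'$ exists by the ST theorem \cite{Z14}), and invoke Theorem \ref{thm lie}\eqref{lieconj b} to obtain $\phi'_\corr\in C^\infty_c(\fks)$ realizing the Lie algebra AT identity on $\fks_{\rs,1}$. Define
\[
\Psi\bigl(\pi_S(\gamma)\bigr) :=
\begin{cases}
2\omega(\gamma)\del(\gamma,f') + \Int(g)\log q,  &  \gamma\in S_{\rs,1} \text{ matches } g;\\
0,  &  \gamma\in S_{\rs,0},
\end{cases}
\]
which lies in $C^\infty_\rc(B_\rs)$ by Lemma \ref{del rc group} and Proposition \ref{Intrelcomp}. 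The sought $f'_\corr$ exists if and only if $\Psi$ is an orbital integral function, which by Theorem \ref{local pro of orb fun} is equivalent to $\Psi$ being locally an orbital integral function at every $x^\natural_0\in B(F_0)$.

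The core step is the local verification at an arbitrary $x^\natural_0\in B(F_0)$. By Lemma \ref{cayley cover lem} one can choose $\xi\in S^1$ with $x^\natural_0=\fkc_\xi(x_0)$ for some $x_0\in\fkb^\circ(F_0)$, and the choice is engineered so that the hypothesis of Lemma \ref{lem orb cayley} is satisfied. Concretely, if $x^\natural_0\in\pi_{U_1}(K_1)$ one picks $\xi$ of the form $\diag(\pm 1_2,\pm 1)$ as provided by Lemma \ref{lem cayley U1} (forcing $x_0\in\pi_{\fku_1}(\fkk_1)\subset\fkb(O_{F_0})$), while otherwise any $\xi$ with $x^\natural_0\in B^\circ_\xi$ works, because Lemma \ref{pi(K1) notin pi(K0)} then ensures that non-integral $x_0$ satisfy $\fkc_\xi(x_0)\notin\pi_{U_0}(K_0)$. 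Lemma \ref{lem orb cayley} applied to this $\xi$, $f'$, and $\phi'$ implies that
\[
x \longmapsto 2\omega\bigl(\fkc_\xi(y)\bigr)\del\bigl(\fkc_\xi(y),f'\bigr) - 2\omega(y)\del(y,\phi')
\]
is locally around $x_0$ an orbital integral function on $\fkb$. Adding the identity $2\omega(y)\del(y,\phi')+\lInt(x)\log q=\omega(y)\Orb(y,\phi'_\corr)$ from Theorem \ref{thm lie}\eqref{lieconj b}, and invoking Corollary \ref{inttoliealg} to replace $\lInt(x)$ by $\Int(\fkc_\xi(x))$ wherever the intersection number is nonzero, we conclude that $\Psi\circ\fkc_\xi$ is locally an orbital integral function at $x_0$. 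Since $\fkc_\xi$ is an $H'$-equivariant local isomorphism, this transports back to show $\Psi$ is locally an orbital integral function at $x^\natural_0$, finishing the proof.

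The hard part is the coordination of $\xi$: Corollary \ref{inttoliealg} equates $\lInt(x)$ with $\Int(\fkc_\xi(x))$ only for the four special Cayley transforms $\xi\in\{\diag(\pm 1_2,\pm 1)\}$, whose images under $\fkc_\xi$ exhaust $\pi_{U_1}(K_1)$ via Lemma \ref{lem cayley U1}; yet at points $x^\natural_0\notin\pi_{U_1}(K_1)$ one may need to employ other $\xi\in S^1$ merely to land in the domain $\fkb^\circ$ of the Cayley transform. It is precisely the compatibility Lemma \ref{pi(K1) notin pi(K0)} that guarantees these two sets of constraints can be met simultaneously and that the local conclusions patch consistently over all of $B(F_0)$.
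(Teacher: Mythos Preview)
Your overall strategy matches the paper's: reduce to Conjecture \ref{inhomconj}\eqref{inhomconj b}, localize via Theorem \ref{local pro of orb fun}, pull back along a Cayley transform, and combine Lemma \ref{lem orb cayley} with the Lie algebra identity. The gap is in how you choose $\xi$ and justify the hypothesis of Lemma \ref{lem orb cayley}. You split cases on whether $x_0^\natural\in\pi_{U_1}(K_1)$ and, in the negative case, assert that ``Lemma \ref{pi(K1) notin pi(K0)} then ensures that non-integral $x_0$ satisfy $\fkc_\xi(x_0)\notin\pi_{U_0}(K_0)$.'' But Lemma \ref{pi(K1) notin pi(K0)} gives the inclusion $\pi_{U_1}(K_1)\smallsetminus B_\rs(F_0)\subset\pi_{U_0}(K_0)$, which points in the \emph{opposite} direction: knowing $x_0^\natural\notin\pi_{U_1}(K_1)$ tells you nothing about membership in $\pi_{U_0}(K_0)$. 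And for an arbitrary $\xi\in S^1$ there is no integrality control, so one cannot exclude the scenario $x_0\in\fkb(O_{F_0})$ with $x_0^\natural\notin\pi_{U_1}(K_1)$; there $\lInt(x)$ may be nonzero near $x_0$ while $\Int(\fkc_\xi(x))=0$, and your identification of the two intersection terms collapses.

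The paper instead splits on $\pi_{U_0}(K_0)$. If $x_0^\nat\in\pi_{U_0}(K_0)$, the covering part of Lemma \ref{lem cayley U} supplies $\xi=\diag(\pm1_2,\pm1)$ with $x_0\in\fkb(O_{F_0})$; if not, any $\xi$ is taken, and the isomorphism part of Lemma \ref{lem cayley U}---which, unlike Lemma \ref{lem cayley U1}, is stated for \emph{every} $\xi\in S^1$---forces $x_0\notin\fkb(O_{F_0})$ (using Lemmas \ref{intimage}\eqref{integral b non rs} and \ref{b circ circ lemma}). Only then is Lemma \ref{pi(K1) notin pi(K0)} invoked, in its actual direction, to conclude $x_0^\nat\notin\pi_{U_1}(K_1)$, so that both intersection numbers vanish locally. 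The asymmetry between the two Cayley lemmas---integrality preserved by $\fkc_\xi$ on the $K_0$ side for all $\xi$, but on the $K_1$ side only for the four special $\xi$---is exactly why the case split must hinge on $K_0$, not $K_1$.
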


\begin{proof} 
What we need to show is that Conjecture \ref{lieconj} implies Conjecture \ref{inhomconj}\eqref{inhomconj b} when $n = 3$; Proposition \ref{cor b to a}, Lemma \ref{intersection nonempty conds}, and Proposition \ref{char nondeg} then take care of the rest.  Suppose that $f'\in C^\infty_c(S)$ transfers to $(\mathbf{1}_{K_0}, 0)$, and let $\phi' \in C_c^\infty(\fks)$ be a function satisfying the conclusion of Conjecture \ref{lieconj}\eqref{lieconj a}.

As in \S\ref{intersection numbers quot section}, we consider $\Int$ as a function in $C_\rc^\infty(B_\rs)$ which vanishes identically on $B_{\rs,0}$, and $\lInt$ as a function in $C_\rs^\infty(\fkb_\rs)$ which vanishes identically on $\fkb_{\rs,0}$.
As in Lemmas \ref{del rc group} and \ref{del rc lie}, respectively, we consider the function $\gamma\mapsto \omega(\gamma)\del(\gamma,f')$ 
as an element in $C_\rc^\infty(B_\rs)$ which vanishes identically on $B_{\rs, 0}$, 
and the function $y \mapsto \omega(y) \del(y,\phi')$ 
as an element in $C_\rc^\infty(\fkb_\rs)$ which vanishes identically on $\fkb_{\rs,0}$.
Our task is to prove that the sum
\begin{equation}\label{sum fcn}
   \Int(g) \cdot \log q + 2 \omega(\gamma)\del(\gamma,f'),
\end{equation}
regarded in this way as a function on $B_\rs(F_0)$, is an orbital integral function on $B_\rs(F_0)$.  By Theorem \ref{local pro of orb fun}, it suffices to show that \eqref{sum fcn} is an orbital integral function locally around each $x_0^\nat\in B(F_0)$. 
If $x_0^\nat \in B_{\rs,1}$, then \eqref{sum fcn} is constant in a neighborhood of $x_0^\nat$ since both terms are, the first by Proposition \ref{Intrelcomp} and the second by Lemma \ref{del rc group}. If $x_0^\nat$ is outside the closure of $B_{\rs,1}$, then \eqref{sum fcn} is identically $0$ in a neighborhood of $x_0^\nat$. 
Hence in these two cases the conclusion follows from Corollary \ref{cor germ converse} below (or rather its analog for $B_{\rs}$, cf.\ Remark \ref{last one}). In the rest of the proof we assume that $x_0^\nat$ lies in the closure of $B_{\rs,1}$ but is not itself regular semi-simple.

By Lemma \ref{cayley cover lem}\eqref{cayley cov lem S part}, we may choose $\xi \in S^1$ such that the inverse Cayley transform is defined at $x_0^\nat$.  Set $x_0 := \fkc_\xi^{-1}(x_0^\nat) \in \fkb^\circ(F_0)$.  If $x_0^\nat \in \pi_{U_0}(K_0)$, then by Lemma \ref{lem cayley U} and \eqref{fkb(O_F_0) union}, we may furthermore choose $\xi = \diag(\pm 1_2,\pm1)$ such that $x_0 \in \fkb(O_{F_0})\cap\fkb^\circ(F_0)$.  Note that this in fact gives us $x_0 \in \fkb(O_{F_0}) \iff x_0^\nat \in \pi_{U_0}(K_0)$, since if $x_0 \in \fkb(O_{F_0})$, then $x_0 \in \pi_{\fku_0}(\fkk_0) \cap \fkb^{\circ\circ}$ by Lemmas \ref{intimage}\eqref{integral b non rs} and \ref{b circ circ lemma}, and hence $x_0^\nat \in \pi_{U_0}(K_0)$ by Lemma \ref{lem cayley U}.

We claim that for all $x \in \fkb_\rs^\circ(F_0)$ contained in a sufficiently small neighborhood of $x_0$,
\begin{equation}\label{eqn int=}
   \lInt(x) = \Int\bigl(\fkc_\xi(x)\bigr) .
\end{equation}
Of course this holds trivially for $x \in \fkb_{\rs,0}$.  If $x_0 \in \fkb(O_{F_0})$, then any $x \in \fkb_{\rs,1}$ which is sufficiently near $x_0$ will be contained in $\fkb(O_{F_0})_{\rs,1}$, which equals $\pi_{\fku_1}(\fkk_{1,\rs})$ by Lemma \ref{intimage}\eqref{integral b rs}.  Hence \eqref{eqn int=} holds for such $x$ by Corollary \ref{inttoliealg}.  If $x_0 \notin \fkb(O_{F_0})$, then $x_0^\nat \notin \pi_{U_0}(K_0)$ by our choice of $\xi$.  Since $x_0^\nat$ is not regular semi-simple, $x_0^\nat \notin \pi_{U_1}(K_1)$ by Lemma \ref{pi(K1) notin pi(K0)}.  Hence by Lemma \ref{intersection nonempty conds} both sides of \eqref{eqn int=} vanish for $x \in \fkb_{\rs,1}$ sufficiently near $x_0$.  This proves the claim.

We conclude that for all $x \in \fkb_\rs(F_0)$ near $x_0$,
\begin{multline*}
   \Int\bigl(\fkc_\xi(x)\bigr)\cdot\log q + 2 \omega\bigl(\fkc_\xi(x)\bigr)\del\bigl(\fkc_\xi(x),f'\bigr)\\
	= \lInt(x)\cdot\log q + 2\omega(x) \del(x,\phi') + 2\Bigl( \omega\bigl(\fkc_\xi(x)\bigr)\del\bigl(\fkc_\xi(x),f'\bigr) -  \omega(x) \del(x,\phi') \Bigr).
\end{multline*}
By the choice of $\phi'$, the first two terms on the right-hand side cancel. By Lemma \ref{lem orb cayley}, the remaining expression on the right is an orbital integral function on $\fkb_\rs(F_0)$ locally around $x_0$. It follows that \eqref{sum fcn} is an orbital integral function locally around $x_0^\nat=\fkc_\xi(x_0)$ since the Cayley transform preserves the property of ``being an orbital integral function.'' The proposition follows.
\end{proof}

\section{Reduction to the reduced set}
We continue to take $n = 3$.  In this section we enact a further reduction step: we reduce the main Lie algebra result Theorem \ref{thm lie} to an analog for the reduced sets $\fks_\red(F_0)$, $\fku_{0,\red}(F_0)$, and $\fku_{1, \red}(F_0)$.  Recall from \eqref{s prod decomp}, \eqref{u0 prod decomp}, and \eqref{u1 prod decomp} that we have product decompositions\footnote{Strictly speaking, \eqref{u1 prod decomp} only gives the product decomposition for $\fku_1$ on the level of $F_0$-rational points, but this obviously extends to an isomorphism of schemes.}
\begin{equation}\label{prod decomps}
   \fks \cong \fks_\red \times \fks_1 \times \fks_1,
	\quad
	\fku_0 \cong \fku_{0,\red} \times \fks_1 \times \fks_1,
	\quad\text{and}\quad
	\fku_1 \cong \fku_{1,\red} \times \fks_1 \times \fks_1,
\end{equation}
all of which are compatible with the categorical quotient maps to $\fkb = \fkb_\red \times \fks_1 \times \fks_1$.
The matching relation for regular semi-simple elements in \S\ref{Lie algebra setting} obviously respects reducedness on both sides.  Since each of the above reduced sets is stable under the group action on the ambient space, the formulas for orbital integrals in \S\ref{lie conj sec} make sense in the obvious way for reduced regular semi-simple elements and functions on the reduced set (and of course we keep the same normalizations).  The transfer relation for functions then extends in the obvious way to the reduced setting: a function $\phi_\red' \in C_c^\infty(\fks_\red)$ and a pair $(\phi_{0,\red},\phi_{1,\red}) \in C_c^\infty(\fku_{0,\red}) \times C_c^\infty(\fku_{1,\red})$ are \emph{transfers} of each other if for each $i \in \{0,1\}$ and each $x \in \fku_{i,\red,\rs}(F_0)$,
\[
   \Orb(x,\phi_{i,\red}) = \omega(y) \Orb(y,\phi'_\red)
\]
whenever $y \in \fks_{\red,\rs}(F_0)$ matches $x$.  Here the transfer factor $\omega$ is the obvious one, namely the restriction of \eqref{omega lie} to $\fks_{\red,\rs}(F_0)$.  We are going to reduce Theorem  \ref{thm lie} to the following statement.

\begin{theorem}\label{mainforred}
Let $n = 3$.  Then for any function $\phi'_\red\in C^\infty_c(\frak s_\red)$  which transfers to the pair $(\mathbf{1}_{\fkk_{0,\red}}, 0) \in C_c^\infty(\fku_{0,\red}) \times C_c^\infty(\fku_{1,\red})$, there exists a function $\phi'_{\red, \corr}\in C^\infty_c(\fks_\red)$ such that for any 
$y\in \fks_{\red, \rs}(F_0)$ matched with an element  $x\in \fku_{1,\red,\rs}(F_0)$,
\[
  2 \omega(y)\del(y,\phi'_\red) = - \lInt(x) \cdot\log q+ \omega(y)\Orb(y,\phi'_{\red, \corr}) .
\]
\end{theorem}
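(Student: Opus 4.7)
The plan is to reformulate the desired identity as a statement on the categorical quotient and then apply the local-to-global principle of Theorem \ref{thm stab const red}. Define $\varphi \colon \fkb_{\red,\rs}(F_0) \to \BC$ by
\[
   \varphi(x) :=
   \begin{cases}
      2\omega(y)\del(y,\phi'_\red) + \lInt(x)\log q, & x = \pi_\red(y) \in \fkb_{\red,\rs,1},\\
      0, & x \in \fkb_{\red,\rs,0}.
   \end{cases}
\]
Both summands descend (the first by the Lie algebra analog of Remark \ref{rem pro-u}\eqref{DOrb invar}, the second by Remark \ref{intersection conjugation invar}), and the conclusion of Theorem \ref{mainforred} is equivalent to the assertion that $\varphi$ is an orbital integral function in the sense of Theorem \ref{thm stab const red}. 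First, I would check that $\varphi \in C^\infty_\rc(\fkb_{\red,\rs})$: relative compactness of support and local constancy of the first summand come from the reduced analog of Lemma \ref{del rc lie}, while for the second summand they follow from Proposition \ref{Intrelcomp} combined with Corollary \ref{inttoliealg} and Lemma \ref{rs for reduced u1}.

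By Theorem \ref{thm stab const red}, it then suffices to show that $\varphi$ is \emph{locally} an orbital integral function around every point $x_0 \in \fkb_\red(F_0)$. If $x_0 \in \fkb_{\red,\rs}(F_0)$, then $\varphi$ is locally constant there and the claim is immediate (cf.\ Remark \ref{last one}). If $x_0$ lies outside the closure of $\fkb_{\red,\rs,1}$, then $\varphi \equiv 0$ in a neighborhood of $x_0$ and there is again nothing to prove. The substantive case is when $x_0$ belongs to $\overline{\fkb_{\red,\rs,1}} \smallsetminus \fkb_{\red,\rs,1}$, i.e.\ $x_0$ is a non-regular-semi-simple point lying in the closure of the $U_1$-relevant locus; such points are concentrated on a small number of explicit strata cut out by the vanishing of $u$, $w$, and/or $\Delta$ (in the notation of \S\ref{subsec inv s}).

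For each such $x_0$, I would carry out an explicit local comparison. On the analytic side, the germ expansion of the weighted orbital integral $\del(y,\phi'_\red)$ around $x_0$---to be established in Part \ref{germ expansion part} for any $\phi'_\red$ transferring to $(\mathbf{1}_{\fkk_{0,\red}},0)$---should have the form
\[
   2\omega(y)\del(y,\phi'_\red) = \sum_j c_j \cdot \kappa_j\bigl(\pi_\red(y)\bigr) + \omega(y) \Orb(y, \phi'^\sharp),
\]
where $\phi'^\sharp \in C_c^\infty(\fks_\red)$ and the functions $\kappa_j$ are explicit (typically piecewise-polynomial-in-valuation) singular contributions attached to the strata through $x_0$, indexed by the data $(v_D(u), v_D(w), v_D(\Delta))$. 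On the geometric side, the explicit formulas of \S\ref{calc of l-Int}---broken into the three subcases of Case I and the two subcases of Case II according to the relative sizes and parities of $\ell_-$, $\ell_+$, $m$---express $\lInt(x)\log q$ as a comparable combination of such functions via \eqref{remember valu}. The heart of the argument is then a term-by-term matching verifying that the singular parts of $\sum_j c_j \kappa_j$ and of $-\lInt(x)\log q$ agree on a neighborhood of $x_0$, so that their sum extends to a locally constant function, which by Corollary \ref{cor germ converse} (or its reduced analog) is an orbital integral function locally around $x_0$; the correction $\phi_{x_0}'$ is then obtained from $\phi'^\sharp$ adjusted by the function realizing this constant remainder.

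The main obstacle is this final matching, since it simultaneously demands a sufficiently precise germ expansion on the analytic side at \emph{each} singular stratum and an algebraic identification of its leading coefficients with the Gross--Keating-type expressions computed in \S\ref{calc of l-Int}. The explicit Case-I/Case-II split, depending on whether $\ell_-$ is in the stable range relative to $m$ and on the relative sizes of $\ell_\pm$, must be mirrored on the harmonic-analysis side by a stratification of the singular locus of $\fkb_\red$; keeping track of the signs and powers of $q$ introduced by the transfer factor $\omega$ and the renormalization \eqref{Delta rescaled} of the discriminant is where one expects the delicate bookkeeping to reside. Once this matching is achieved at every $x_0$, the local functions $\phi'_{x_0}$ patch (via Theorem \ref{thm stab const red}) to the desired global $\phi'_{\red,\corr}$, completing the proof.
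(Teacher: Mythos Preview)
Your proposal is correct and follows essentially the same strategy as the paper: define $\varphi$ on $\fkb_{\red,\rs}$, reduce via Theorem \ref{thm stab const red} to a local statement at each $x_0$, and match the germ expansion of $\del$ against the explicit $\lInt$ formulas of \S\ref{calc of l-Int} to see that the difference is locally constant, hence (by Corollary \ref{cor germ converse}) locally an orbital integral function.

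The one organizational point you have compressed is worth naming explicitly, because it is what makes your ``$\phi'^\sharp$'' term tractable. The paper does \emph{not} write down a germ expansion of $\del(y,\phi'_\red)$ directly; rather, Theorem \ref{thm germ s} gives a germ expansion of $\Orb(\sigma(x),\phi',s)$ as $\sum_n \Gamma_n(x,s)\,\Orb(n,\phi',s)$ (with an integral over the one-parameter nilpotent family $n(\mu)$ when $x_0=0$), and differentiating at $s=0$ via the Leibniz rule produces the splitting $\del = \del_1 + \del_2$ of \eqref{eqn leibniz}. Your term $\omega(y)\Orb(y,\phi'^\sharp)$ is precisely $\omega(\sigma(x))\del_2(\sigma(x),\phi')$, and the fact that it is an orbital integral function is a standalone result (Theorem \ref{thm error term}) that genuinely uses the hypothesis $\phi_1=0$. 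Your ``$\sum_j c_j\,\kappa_j$'' is $\del_1(\sigma(x),\phi') = \sum_n \partial\Gamma_n(x)\,\Orb(n,\phi',0)$; the crucial feature (Remark \ref{rem del1}) is that the coefficients $\Orb(n,\phi',0)$ are determined by the transfer data alone via the matching Theorems \ref{thm match nil Orb} and \ref{thm match x non0}, so they can be computed explicitly from $\mathbf{1}_{\fkk_{0,\red}}$ (this is \S\ref{germspecial}). The actual comparison (Theorem \ref{locconsthm}) then shows, case by case over the strata you describe, that $\varphi_1 := 2\omega\del_1 + \lInt\cdot\log q$ is locally \emph{constant}---not merely that the singular parts cancel---and this is exactly the hypothesis of Corollary \ref{cor germ converse}. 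At $x_0=0$ the presence of the continuous family $n(\mu)$ means the ``sum'' is an integral, and evaluating it (Theorem \ref{thm expl x=0}) is where most of the computational work lies.
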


We will prove Theorem \ref{mainforred} in \S\ref{subsection pf 12.1}. To see that Theorem \ref{mainforred} implies Theorem \ref{thm lie}, let us first note the following straightforward lemma.

\begin{lemma}\label{phired}
If $\phi'_\red\in C^\infty_c(\fks_\red)$ transfers to $(\mathbf{1}_{\fkk_{0, \red}}, 0)$, then the function
\[
   \phi'_\red\otimes \mathbf{1}_{\fks_1(O_{F_0}) \times \fks_1(O_{F_0})}\in C^\infty_c(\fks) \cong C_c^\infty(\fks_\red \times \fks_1 \times \fks_1)
\]
transfers to $(\mathbf{1}_{\fkk_0}, 0) \in C_c^\infty(\fku_0) \times C_c^\infty(\fku_1)$. \qed
\end{lemma}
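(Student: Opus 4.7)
The plan is to exploit the product decompositions \eqref{prod decomps}, which are compatible with the action of $H'$ (respectively $H_i$) on $\fks$ (respectively $\fku_i$) in the sense that these groups act trivially on the two $\fks_1$ factors. Indeed, under the decomposition $y = (y_\red, \tr A, d)$, the quantities $\tr A$ and $d$ are conjugation invariants, and the projection $y \mapsto y_\red$ is $H'$-equivariant by construction, so for any $h \in H'(F_0)$ one has $h^{-1} y h = (h^{-1} y_\red h, \tr A, d)$. Consequently, writing $\phi' := \phi'_\red \otimes \mathbf{1}_{\fks_1(O_{F_0})} \otimes \mathbf{1}_{\fks_1(O_{F_0})}$ and evaluating directly, I obtain the factorization
\[
   \Orb(y, \phi', s) = \mathbf{1}_{\fks_1(O_{F_0})}(\tr A)\, \mathbf{1}_{\fks_1(O_{F_0})}(d)\, \Orb(y_\red, \phi'_\red, s),
\]
and analogously for the orbital integrals on $\fku_0$ and $\fku_1$.

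Next I would check two consistency points. First, under the identification $\fku_0 \cong \fku_{0,\red} \times \fks_1 \times \fks_1$, we have $\mathbf{1}_{\fkk_0} = \mathbf{1}_{\fkk_{0,\red}} \otimes \mathbf{1}_{\fks_1(O_{F_0})} \otimes \mathbf{1}_{\fks_1(O_{F_0})}$; this is a direct inspection of the explicit coordinates in \S\ref{subsec inv u0}, using $p \neq 2$ so that integrality of an entry $a_1 \in F$ is equivalent to integrality of both $(a_1 + \ov a_1)/2$ and $(a_1 - \ov a_1)/2$. Second, the transfer factor satisfies $\omega(y) = \omega(y_\red)$ for all $y \in \fks_\rs(F_0)$: by the matrix identity \eqref{y y_red matrix eqn} (or rather its analog from the proof of Lemma \ref{rs for reduced s}), the matrices $[e, y e, y^2 e]$ and $[e, y_\red e, y_\red^2 e]$ differ by right-multiplication by a unipotent matrix, and hence have equal determinants, so the formula \eqref{omega lie} gives $\omega(y) = \omega(y_\red)$.

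Finally, the matching relation is manifestly compatible with the decomposition: by the linear-algebra characterization in \S\ref{ss:LAchar}, $y \in \fks_\rs(F_0)$ matches $x \in \fku_{i,\rs}(F_0)$ if and only if they share the same invariants, which by \eqref{cat quot isom} is equivalent to $\tr A(y) = \tr A(x)$, $d(y) = d(x)$, and $y_\red$ matches $x_\red$; and by Lemma \ref{rs for reduced s} the regular semi-simple locus is preserved by $(-)_\red$. Combining these observations, for $y \in \fks_\rs(F_0)$ matching $x \in \fku_{0,\rs}(F_0)$ we get
\[
   \omega(y) \Orb(y,\phi') = \mathbf{1}_{\fks_1(O_{F_0})}(\tr A)\, \mathbf{1}_{\fks_1(O_{F_0})}(d)\, \omega(y_\red) \Orb(y_\red, \phi'_\red) = \Orb(x, \mathbf{1}_{\fkk_0}),
\]
and for $y \in \fks_{\rs,1}$ the vanishing on the $\fku_{1,\red}$ side forces $\Orb(y,\phi') = 0$. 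Since everything here is a routine bookkeeping exercise, the only minor obstacle I anticipate is carefully verifying the factorization of $\mathbf{1}_{\fkk_0}$ in explicit coordinates (and confirming the analogous statement for the ambient structure on $\fku_1$ plays no role, since the $U_1$-side transfer is to zero).
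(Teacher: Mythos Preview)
Your proof is correct and is precisely the routine verification the paper has in mind; in fact the paper gives no proof at all (the lemma carries only a \qed\ and is called ``straightforward''), so you have simply filled in the details the authors omitted. Note that your transfer-factor step $\omega(y)=\omega(y_\red)$ is exactly Lemma~\ref{red omega compat}, which the paper records separately right after this lemma.
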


We also record the following fact about the transfer factor $\omega$, which holds for any odd $n$.  It is an immediate consequence of the equation \eqref{y y_red matrix eqn}.

\begin{lemma}\label{red omega compat}
Let $n$ be odd. Then for any $y \in \fks_\rs(F_0)$,
\begin{flalign*}
	\phantom{\qed} & &
   \omega(y) = \omega(y_\red).
	& & \qed
\end{flalign*}
\end{lemma}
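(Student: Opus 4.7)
The proof will be essentially immediate from equation \eqref{y y_red matrix eqn}, which was already established in the proof of Lemma \ref{rs for reduced s} (and which is recalled in the statement of the present lemma as the promised input). My plan is to simply take determinants on both sides of that matrix identity and observe that the transfer factor $\omega$ on $\fks_\rs(F_0)$, as defined by \eqref{omega lie}, depends on $y$ only through the quantity $\det(y^i e)_{i=0,\dots,n-1} \in F$.

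More precisely, the first step is to recall that
\[
   \begin{bmatrix}
      e  &  y_\red e  &  \dots  &  y_\red^{n-1} e
   \end{bmatrix}
   =
   \begin{bmatrix}
      e  &  y e  &  \dots  &  y^{n-1} e
   \end{bmatrix} \cdot U,
\]
where $U$ is an upper triangular unipotent matrix (with entries in $F$) depending polynomially on the coordinates of the bottom-right entry $d$ of $y$ and the components of $\tr A$ that are absorbed in passing from $y$ to $y_\red$. Taking determinants, since $\det U = 1$, yields the equality
\[
   \det(y_\red^i e)_{i=0,\dots,n-1} = \det(y^i e)_{i=0,\dots,n-1}
\]
in $F$. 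Applying the character $\wt\eta$ to both sides and invoking the defining formula \eqref{omega lie} for $\omega$ immediately gives $\omega(y_\red) = \omega(y)$.

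There is really no obstacle here: the content is entirely in the fact, already used for the characterization of regular semi-simplicity, that the Krylov-type matrix $[\,e\,|\,ye\,|\,\dots\,|\,y^{n-1}e\,]$ only changes by a unipotent right multiplication when one replaces $y$ by $y_\red$. The only care needed is to note that this matrix identity is what was recorded in \eqref{y y_red matrix eqn}, rather than the analogous one for $\tensor[^t]{e}{}, \tensor[^t]{e}{}y, \dots$, but the latter would have worked equally well (it is the transposed identity used in the proof of Lemma \ref{rs for reduced s}). The result holds for arbitrary odd $n$ because the statement of \eqref{y y_red matrix eqn} and the definition of $\omega$ do.
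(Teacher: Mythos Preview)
Your proof is correct and follows exactly the paper's approach: the paper simply states that the lemma is an immediate consequence of equation \eqref{y y_red matrix eqn}, and you have spelled out precisely why---taking determinants of that identity gives $\det(y_\red^i e)_i = \det(y^i e)_i$, whence $\omega(y_\red) = \omega(y)$ by the definition \eqref{omega lie}.
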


Now we return to the case $n = 3$.

\begin{proposition}\label{lem2}
Theorem \ref{mainforred} implies Theorem \ref{thm lie}. 
\end{proposition}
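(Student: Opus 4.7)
The claim about $\Delta\cap\Delta_x$ being an artinian scheme with two points (unless empty) for $x\in\fku_{1,\rs}(F_0)$ is independent of Theorem \ref{mainforred}: it follows from combining Lemma \ref{intersection nonempty conds} with Proposition \ref{char nondeg} and the explicit calculations of \S\ref{sec expl Lie}. Moreover, by Proposition \ref{cor b to a}, to prove Conjecture \ref{lieconj}\eqref{lieconj a} and \eqref{lieconj b} it suffices to prove \eqref{lieconj b}. So the plan is to prove \eqref{lieconj b} assuming Theorem \ref{mainforred}, by passing to the reduced Lie algebra via the product decomposition \eqref{prod decomps}.

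Let $\phi'\in C_c^\infty(\fks)$ transfer to $(\mathbf{1}_{\fkk_0},0)$. I will first construct a ``product'' replacement $\phi'_0$ which transfers to the same pair and for which the identity is explicit, and then argue that the correction incurred by replacing $\phi'$ with $\phi'_0$ is itself an orbital integral function. To produce $\phi'_0$, set
\[
   \phi'_\red(y_\red) := \phi'(y_\red, 0, 0) \in C_c^\infty(\fks_\red)
\]
using the identification $\fks\cong\fks_\red\times\fks_1\times\fks_1$. For $y_\red\in\fks_{\red,\rs}(F_0)$ matching $x_\red$ in either $\fku_{0,\red,\rs}(F_0)$ or $\fku_{1,\red,\rs}(F_0)$, the $H'$-invariance of the $\fks_1\times\fks_1$ factor gives $\Orb(y_\red,\phi'_\red,s)=\Orb((y_\red,0,0),\phi',s)$; combined with Lemma \ref{red omega compat} and the factorization $\mathbf{1}_{\fkk_0}=\mathbf{1}_{\fkk_{0,\red}}\otimes\mathbf{1}_{\fks_1(O_{F_0})}\otimes\mathbf{1}_{\fks_1(O_{F_0})}$, this shows that $\phi'_\red$ transfers to $(\mathbf{1}_{\fkk_{0,\red}},0)$. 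Applying Theorem \ref{mainforred} to $\phi'_\red$ produces $\phi'_{\red,\corr}\in C_c^\infty(\fks_\red)$ with the desired identity on the reduced sets. Now set
\[
   \phi'_0 := \phi'_\red \otimes \mathbf{1}_{\fks_1(O_{F_0})} \otimes \mathbf{1}_{\fks_1(O_{F_0})}
   \quad\text{and}\quad
   \phi'_{0,\corr} := \phi'_{\red,\corr} \otimes \mathbf{1}_{\fks_1(O_{F_0})} \otimes \mathbf{1}_{\fks_1(O_{F_0})}.
\]
By Lemma \ref{phired}, $\phi'_0$ transfers to $(\mathbf{1}_{\fkk_0},0)$.

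To verify that $\phi'_0$ and $\phi'_{0,\corr}$ satisfy the identity of Conjecture \ref{lieconj}\eqref{lieconj b}, let $y=(y_\red,t_1,t_2)\in\fks_\rs(F_0)$ match $x=(x_\red,t_1,t_2)\in\fku_{1,\rs}(F_0)$; here regular semi-simplicity of the reduced parts is guaranteed by Lemmas \ref{rs for reduced u1} and \ref{rs for reduced s}. Using the $H'$-invariance of the $\fks_1\times\fks_1$ coordinates, both $\del(y,\phi'_0)$ and $\Orb(y,\phi'_{0,\corr})$ factor as their reduced counterparts times $\mathbf{1}_{\fks_1(O_{F_0})}(t_1)\mathbf{1}_{\fks_1(O_{F_0})}(t_2)$, and Lemma \ref{red omega compat} gives $\omega(y)=\omega(y_\red)$. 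On the geometric side, one checks from \S\ref{U_1 and fku_1 coords} that $x\in\fkk_1$ if and only if $x_\red\in\fkk_{1,\red}$ together with $t_1,t_2\in O_F^{\tr=0}$; by Lemma \ref{intersection nonempty conds} applied to both $x$ and $x_\red$, this yields
\[
   \lInt(x) = \mathbf{1}_{\fks_1(O_{F_0})}(t_1)\mathbf{1}_{\fks_1(O_{F_0})}(t_2) \cdot \lInt(x_\red)
\]
in all cases (when the right-hand factor is $0$, both sides vanish; when it is $1$, equality comes from Corollary \ref{inttoliealg}, or rather its evident analog $\lInt(x)=\lInt(x_\red)$ for $x\in\fkk_{1,\rs}$). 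The identity of Theorem \ref{mainforred} applied to $(y_\red,x_\red)$ now yields
\[
   2\omega(y)\del(y,\phi'_0) = -\lInt(x)\cdot\log q + \omega(y)\Orb(y,\phi'_{0,\corr})
\]
for all such matched pairs.

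Finally, the difference $\phi'-\phi'_0$ transfers to $(0,0)$, so by Corollary \ref{cor density} (which rests on the density principle Theorem \ref{densprince}), the function $y\mapsto\omega(y)\del(y,\phi'-\phi'_0)$ on $\fks_\rs(F_0)$ is of the form $\omega(y)\Orb(y,\psi')$ for some $\psi'\in C_c^\infty(\fks)$. Adding twice this correction yields
\[
   2\omega(y)\del(y,\phi') = -\lInt(x)\cdot\log q + \omega(y)\Orb\bigl(y,\phi'_{0,\corr}+2\psi'\bigr),
\]
so $\phi'_\corr := \phi'_{0,\corr}+2\psi'$ completes the argument. The hard part of this reduction is not the bookkeeping above but rather the input from Theorem \ref{mainforred}, together with ensuring that the case analysis for $\lInt(x)$ versus $\lInt(x_\red)$ behaves compatibly with the support of the auxiliary function $\mathbf{1}_{\fks_1(O_{F_0})}\otimes\mathbf{1}_{\fks_1(O_{F_0})}$; both points have been verified above.
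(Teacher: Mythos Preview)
Your proof is correct and follows essentially the same approach as the paper's: restrict $\phi'$ to $\fks_\red$, apply Theorem \ref{mainforred}, tensor back up with $\mathbf{1}_{\fks_1(O_{F_0})}\otimes\mathbf{1}_{\fks_1(O_{F_0})}$ to obtain a product function for which the identity holds by the case split on integrality of the $\fks_1$-components, and then absorb the discrepancy $\phi'-\phi'_0$ via Corollary \ref{cor density}. The paper's argument is organized identically (with your $\phi'_0$ called $\phi''$), including the same appeals to Lemmas \ref{phired}, \ref{red omega compat}, \ref{intersection nonempty conds}, and Corollary \ref{inttoliealg}.
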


\begin{proof}
The setup is parallel to the proof of Proposition \ref{lem1}: on account of Proposition \ref{cor b to a}, Lemma \ref{intersection nonempty conds}, and Proposition \ref{char nondeg}, what we need to show is that Theorem \ref{mainforred} implies Conjecture \ref{lieconj}\eqref{lieconj b} when $n = 3$.  Suppose that $\phi' \in C_c^\infty(\fks)$ transfers to $(\mathbf{1}_{\fkk_0},0)$.  Then 
$\phi_\red' := \phi'|_{\fks_\red(F_0)}$ transfers to $(\mathbf{1}_{\fkk_{0,\red}},0)$.  Let $\phi'_{\red,\corr} \in C_c^\infty(\fks_\red)$ be a function which satisfies the conclusion of Theorem \ref{mainforred} for $\phi'_\red$.  Set
\[
   \phi'' := \phi'_\red \otimes \mathbf{1}_{\fks_1^2(O_{F_0})}
	\quad\text{and}\quad
	\phi_{\corr}'' := \phi'_{\red,\corr} \otimes \mathbf{1}_{\fks_1^2(O_{F_0})}.
\]
We claim that for any $y\in \fks(F_0)_\rs$ matched with an element $x\in \fku_1(F_0)_\rs$,
\begin{equation}\label{phi'' eqn}
   2\omega(y) \del(y,\phi'') = -\lInt(x) \cdot \log q + \omega(y) \Orb(y,\phi''_\corr).
\end{equation}

Before proving the claim, let us explain how it implies the proposition.  By Lemma \ref{phired}, $\phi''$ transfers to $(\mathbf{1}_{\fkk_0},0)$.  Hence $\phi' - \phi''$ has vanishing orbital integrals at all regular semi-simple elements.  Hence the conclusion of Conjecture \ref{lieconj}\eqref{lieconj b} for $\phi'$ follows from the claim and from Corollary \ref{cor density}.

Now we prove the claim. Since the matching elements $y$ and $x$ have the same invariants, their last two components with respect to the respective product decompositions 
in \eqref{prod decomps}
are the same.  If either of these two common components does not lie in $\fks_1(O_{F_0})$, then $\phi''$ and $\phi''_\corr$ vanish identically on the $H'(F_0)$-orbit of $y$, and $x \notin \fkk_1$.  Hence, using Lemma \ref{intersection nonempty conds} for the $\lInt$ term, every term in \eqref{phi'' eqn} vanishes, and the claim holds trivially.

Now suppose that the last two components of $y$ and $x$ do lie in $\fks_1(O_{F_0})$.  Then
\[
   \del(y,\phi'') = \del(y_\red,\phi'_\red)
	\quad\text{and}\quad
	\Orb(y,\phi''_\corr) = \Orb(y_\red, \phi'_{\red,\corr}).
\]
Furthermore, in this case $x \in \fkk_1$ if and only if $x_\red \in \fkk_1$.  Hence by Lemma \ref{intersection nonempty conds} and Corollary \ref{inttoliealg},
\[
   \lInt(x) = \lInt(x_\red).
\]
Since $y_\red$ and $x_\red$ match, and since $\omega(y) = \omega(y_\red)$ by Lemma \ref{red omega compat}, we conclude that \eqref{phi'' eqn} holds because $\phi'_{\red,\corr}$ satisfies the conclusion of Theorem \ref{mainforred} for the function $\phi'_\red$.  This completes the proof.
\end{proof}

\begin{remark}\label{changeoftr}
We have formulated the notion of transfer above with respect to the particular transfer factor $\omega$, but this notion of course make sense relative to any transfer factor, as in Definition \ref{homog transfer} in the homogeneous group setting.  In particular, for $c \in \BC^\times$, note that a function $\phi_\red' \in C_c^\infty(\fks_\red)$ transfers to $(\mathbf{1}_{\fkk_{0,\red}},0)$ with respect to $\omega$ if and only if the function $c^{-1} \phi_\red'$ transfers to $(\mathbf{1}_{\fkk_{0,\red}},0)$ with respect to the transfer factor $c\omega$.  It is easy to see from this that the truth of  Theorem   \ref{mainforred}  is unaffected when we replace $\omega$ by a nonzero constant multiple. 
\end{remark}

\section{Application of  the germ expansion principle}\label{sec:apllgerm}
\label{sec 13}
In the rest of  Part \ref{analtyic side part} of the paper, we suppress the subscript in  the notation $\phi'_\red$, and simply call this function $\phi'$.  Also, from now on we systematically 
abuse notation and suppress the expression $(F_0)$ when referring to sets of $F_0$-rational points,
so that $\fkb$ means $\fkb(F_0)$, $\fku_i$ means $\fku_i(F_0)$, etc.

In Part \ref{germ expansion part} we  prove a germ expansion of orbital integrals around each element $x_0\in\fkb_{\red}\smallsetminus\fkb_{\red,\rs}$. The rough form of this germ  expansion is as in Theorem \ref{thm germ s}.  Taking the derivative at $s=0$ of both sides, we obtain  a sum decomposition as in  (\ref{eqn leibniz}),
\begin{align*}
   \omega\bigl(\sigma(x)\bigr)\del\bigl(\sigma(x),\phi'\bigr) 
	   =\omega\bigl(\sigma(x)\bigr)\del_1\bigl(\sigma(x),\phi'\bigr) + \omega\bigl(\sigma(x)\bigr)\del_2\bigl(\sigma(x),\phi'\bigr) .
\end{align*}
Here $\sigma(x)$ is an explicit element in $\fks_\red$ above $x\in\fkb_{\red,\rs}$, defined by \eqref{eqn sigma} and \eqref{eqn sigma 1} below. We have
\begin{equation}\label{germofdel1}
   \del_1\bigl(\sigma(x),\phi'\bigr) = \sum_{n\in \pi_\red^{-1}(x_0)/H'} \partial\Gamma_n(x) \Orb(n,\phi',0),
\end{equation}
where 
$$
\partial\Gamma_n(x):=\frac d{ds} \Big|_{s=0} \Gamma_n(x, s) .
$$
 The explicit forms of the germ functions $\Gamma_n(x, s)$, and the definitions of the orbital integrals $\Orb(n,\phi',s)$ associated to elements which are not regular semi-simple, are given in Part \ref{germ expansion part}. 

Now we assume that $\phi'$ transfers to $(\phi_0,\phi_1)$ where $\phi_i\in C_c^\infty(\fku_{i,\red})$. Throughout the rest of Part \ref{analtyic side part}, we will assume that $\phi_1=0$. 
We will be concerned with the function on $\fkb_{\red, \rs}$, 
\[
   x\mapsto 
	\begin{cases}
		\del(\sigma(x),\phi'),  &  x\in\fkb_{\red,\rs, 1};\\
      0,  &  x\in \fkb_{\red,\rs, 0}.
\end{cases}
\]
By Theorem \ref{thm error term}, the term $\omega(\sigma(x))\del_2(\sigma(x),\phi')$ is an orbital integral function.  We will calculate $\del_1(\sigma(x),\phi')$. To use the  formula \eqref{germofdel1}, we need to calculate $\partial\Gamma_n(x)$ and $\Orb(n,\phi',0)$ for all $n\in \pi_\red^{-1}(x_0)/H'$. Their values are summarized by the following table.
Let us explain some of the notation we use. 
\begin{enumerate}
\item
Here $F'=F_0[X]/(X^2+\lambda_0)$ is a quadratic $F_0$-algebra  for $x_0=(\lambda_0,u_0,w_0)\in \fkb_\red$.
\item $\Delta=\Delta(x)$ for $x\in \fkb_{\red,\rs}$.
\item
The $\club$ values are omitted since the corresponding values of $\Orb(n,\phi',0)$ vanish in our case.
\item The $\spade$ values are not needed in our case since we will only need those $x_0=(\lambda_0,u_0,w_0)\in \fkb_\red\smallsetminus \fkb_{\red,\rs}$ which are also in the closure of $\fkb_{\red,\rs,1}$ (cf.~Lemma \ref{intimage}).
\end{enumerate}

{\setlongtables
\renewcommand{\arraystretch}{1.25}
\begin{longtable}{|c|c|c|c|c|}
\hline
\begin{varwidth}{\linewidth}
   \centering
   Element \\
	$x_0 \in \fkb_\red \smallsetminus \fkb_{\red,\rs}$
\end{varwidth} 
   &  \begin{varwidth}{\linewidth}
         \centering
   	   Orbit\strut\\ 
	      representative\\
	      $n \in \fks_\red$ over $x_0$\strut
      \end{varwidth}
   &  \begin{varwidth}{\linewidth}
         \centering
      	Reference\\ 
      	for orbit\\ 
      	representative
      \end{varwidth}  
   &  Value of $\partial\Gamma_n(x)$  
   &  \begin{varwidth}{\linewidth}
         \centering 
	      Reference for\\ 
      	$\Orb(n,\phi',0)$ 
      \end{varwidth}
   \\
\hline
\hline
   &  $n(\mu)$, $\mu \in F_0$  
   &  \eqref{nilpins} 
	& Theorem \ref{thm germ x=0}
   &  Lemma \ref{lem nil orb}
   \\
$0$  
   &  $n_{0,+}$  
	&  \multirow{2}{*}{\eqref{nilpins pm}} 
	&  0
   &  \multirow{2}{*}{Lemma~\ref{nilpintreg}}
	\\
   &  $n_{0,-}$
	&  
	&  $\log|\Delta/\varpi|$
	&
	\\
\hline
\multirow{2}{*}{
   \begin{varwidth}{\linewidth}
	  \centering
      $(\lambda_0,0,0)$ for $\lambda_0 \neq 0$\\
      and $F' \not\simeq F, F_0\times F_0$
   \end{varwidth}}  
  &  $y_+$ 
  &  \multirow{2}{*}{\eqref{eqn case 0i}}
  &  0  
  &  \multirow{2}{*}{Lemma \ref{lem orb ss u 0}\eqref{lem orb ss part i}}
  \\
  &  $y_-$  
  & 
  &  $\eta(-\lambda_0 )\log|\Delta/\lambda_0 |$ 
  &
  \\
\hline
\multirow{2}{*}{
   \begin{varwidth}{\linewidth}
	   \centering
      $(\lambda_0,0,0)$ for $\lambda_0 \neq 0$\\ 
      and $F' \simeq F_0\times F_0$
	\end{varwidth}}  
   &  $y_0$ 
	&  \eqref{eqn case 0i y0} 
	&  \multirow{2}{*}{Corollary \ref{cor F'=F}}
	&  $\spade$
	\\
   &  $y_\pm$ 
	&  \eqref{eqn case 0i} 
	&  
	&  $\spade$\\
\hline
\multirow{4}{*}{
   \begin{varwidth}{\linewidth}
	  \centering
      $(\lambda_0,0,0)$ for $\lambda_0 \neq 0$\\ 
      and $F' \simeq F$
   \end{varwidth}}  
  &  $y_{++}$ 
  &  \multirow{2}{*}{\eqref{eqn case 0ii}}
  &  0 
  &  \multirow{4}{*}{Lemma \ref{lem orb ss u 0}\eqref{lem orb ss part ii}}
  \\
  &  $y_{+-}$ 
  & 
  &  $\club$ 
  &  
  \\
  &  $y_{--}$  
  &  \multirow{2}{*}{\eqref{eqn case 0ii y--}}
  &  $\club$ 
  &
  \\
  &  $y_{-+}$  
  &
  & $\eta(-1)\log|\Delta/\lambda_0 |$ 
  &
  \\
\hline
\multirow{2}{*}{
   \begin{varwidth}{\textwidth}
		\centering
		$(\lambda_0,u_0,w_0)$ for\\
		$u_0 \neq 0$
	\end{varwidth}}  
	&  $y_+$  
	&  \multirow{2}{*}{\eqref{eqn case 1}}
	&  0  
	&  \multirow{2}{*}{Lemma \ref{lem orb ss u 1}}
	\\
   &  $y_-$  
	&
	&  $\log\bigl|\Delta/(u_0^2\varpi) \bigr|$
	&
	\\
\hline
\end{longtable}
}

\section{The germ expansion of a function with special transfer}\label{germspecial}

Throughout this section, we fix a function $\phi'\in C_c^\infty(\fks_\red)$ with transfer $(\mathbf{1}_{\fkk_{0,\red}}, 0)$. We set 
\[
   \phi := \mathbf{1}_{\fkk_{0,\red}}\in C_c^\infty(\fku_{0,\red}) .
\]
In this section, we will  calculate $\del_1 (\sigma(x),\phi')$. Note that, by Remark \ref{rem del1}, the result is independent of the choice of the non-unique matching function $\phi'$.

\subsection{Nilpotent orbital integrals} In this section, we determine the nilpotent integrals of $\phi'$. The nilpotent orbits are listed in \S\ref{germzeropart4}.1. We start with the two regular nilpotent orbits. We denote $\zeta(s)=\zeta_{F_0}(s)=(1-q^{-s})^{-1}$.

\begin{lemma}\label{nilpintreg}
We have
\[
   \Orb(n_{0,-},\phi') = -q^{-1}\zeta(1)
	\quad\text{and}\quad
   \Orb(n_{0,+},\phi') = -\eta(-1)q^{-1}\zeta(1).
\]
\end{lemma}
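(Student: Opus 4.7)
The plan is to read off the two nilpotent orbital integrals from the germ expansion at the origin (Theorem \ref{thm germ x=0}), combined with the explicit evaluation of $\Orb(y,\phi',s)$ via the transfer relation to the lattice characteristic function $\mathbf{1}_{\fkk_{0,\red}}$ on the $\fku_{0,\red}$ side. More concretely, I would choose two one-parameter families $x_\epsilon^\pm\in\fkb_{\red,\rs}$ specializing to $0$, with $x_\epsilon^-$ running inside $\fkb_{\red,\rs,0}$ and $x_\epsilon^+$ inside $\fkb_{\red,\rs,1}$, lift each to $y_\epsilon^\pm=\sigma(x_\epsilon^\pm)\in\fks_{\red,\rs}$, and then expand $\omega(y_\epsilon^\pm)\Orb(y_\epsilon^\pm,\phi',s)$ in two independent ways.

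On the one hand, by the transfer hypothesis $\phi'\leftrightarrow(\mathbf{1}_{\fkk_{0,\red}},0)$ one has $\omega(y_\epsilon^-)\Orb(y_\epsilon^-,\phi',0)=\Orb(x_\epsilon^-,\mathbf{1}_{\fkk_{0,\red}})$ with $x_\epsilon^-\in\fku_{0,\red,\rs}$ matching $y_\epsilon^-$, while $\omega(y_\epsilon^+)\Orb(y_\epsilon^+,\phi',0)=0$ since the transfer to $\fku_{1,\red}$ is $0$. Using the explicit coordinates \eqref{coord u0} of $\fku_{0,\red}$ and the fact that $\fkk_{0,\red}$ is the intersection with $\M_3(O_F)$, the integral $\Orb(x_\epsilon^-,\mathbf{1}_{\fkk_{0,\red}})$ is a lattice-point count and can be evaluated in closed form; the measure of the relevant piece of the conjugating torus in $H_0$ produces precisely the factor $\zeta(1)=(1-q^{-1})^{-1}$, while an $\eta(-1)$ appears from the shape of the conjugate stabilizer on the $\fku_1$-family, exactly as needed.

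On the other hand, by Theorem \ref{thm germ x=0} one has an expansion
\[
\omega(y_\epsilon^\pm)\Orb(y_\epsilon^\pm,\phi',s)=\sum_{n\in\pi_\red^{-1}(0)/H'}\Gamma_n(x_\epsilon^\pm,s)\,\Orb(n,\phi',s)+E^\pm(x_\epsilon^\pm,s),
\]
with $E^\pm$ the explicit error term of the germ expansion. Plugging in the known values of $\Gamma_{n_{0,\pm}}$ (and the already-computed $\Orb(n(\mu),\phi')$ from Lemma~\ref{lem nil orb}, which contribute through $\Gamma_{n(\mu)}$ to the $s=0$ value) yields a linear system in the two unknowns $\Orb(n_{0,+},\phi')$ and $\Orb(n_{0,-},\phi')$. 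Solving that $2\times 2$ system — one equation from a family inside $\fkb_{\red,\rs,0}$ where the left-hand side is the explicit $\fku_0$ lattice count, and one from a family inside $\fkb_{\red,\rs,1}$ where the left-hand side vanishes — gives the two claimed values.

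The routine parts are the lattice-point evaluation of $\Orb(x_\epsilon^-,\mathbf{1}_{\fkk_{0,\red}})$ and the bookkeeping of the transfer factor $\omega$ along each degenerating family; these produce the arithmetic factors $-q^{-1}\zeta(1)$ and $-\eta(-1)q^{-1}\zeta(1)$ respectively. The main obstacle is isolating the regular nilpotent contributions cleanly, i.e.\ checking that the germ functions $\Gamma_{n(\mu)}$ for the sub-regular nilpotent orbits, together with the error term $E^\pm$, contribute nothing spurious at $s=0$ to the two chosen families. This requires the explicit control of the germ functions $\Gamma_n(x,s)$ from Part \ref{germ expansion part} (in particular the vanishing or known value of $\Gamma_{n_{0,+}}$ along an $\fkb_{\red,\rs,0}$-family, and of $\Gamma_{n_{0,-}}$ along an $\fkb_{\red,\rs,1}$-family, up to the $\log|\Delta/\varpi|$ which will not contribute to the \emph{value} at $s=0$), so that the linear system decouples and the claimed formulas follow directly.
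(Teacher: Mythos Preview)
Your approach can be made to work, but it is far more indirect than the paper's two-line argument. The paper simply applies Theorem~\ref{thm match nil Orb}, which already records the matching identities for the \emph{nilpotent} orbital integrals themselves:
\[
-\Orb(0,\phi_0)=\eta(-1)\Orb(n_{0,+},\phi')+\Orb(n_{0,-},\phi'),\qquad
\Orb(0,\phi_1)=\eta(-1)\Orb(n_{0,+},\phi')-\Orb(n_{0,-},\phi').
\]
With $\phi_1=0$ and $\phi_0(0)=\mathbf{1}_{\fkk_{0,\red}}(0)=1$, formula~\eqref{eqn def orb 0} (and $e_{F/F_0}=2$) gives $\Orb(0,\phi_0)=2q^{-1}\zeta(1)$; solving this $2\times 2$ system finishes the proof.

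Your route instead passes through \emph{regular semi-simple} orbital integrals and the germ expansion at $x_0=0$, which forces you to compute two extra, nontrivial pieces. First, the ``lattice-point evaluation'' $\Orb(x_\epsilon^-,\mathbf{1}_{\fkk_{0,\red}})$ for $x_\epsilon^-\in\fkb_{\red,\rs,0}$ is a genuine computation of the type carried out later in Lemmas~\ref{lem orb ss u 0}--\ref{lem orb ss u 1}, not a triviality. Second, on the $\fkb_{\red,\rs,0}$-family the continuous contribution $\int_{F_0}\Gamma_{n(\mu)}(x_\epsilon^-,0)\,\Orb(n(\mu),\phi')\,d\mu$ does \emph{not} vanish (Corollary~\ref{cor germ=0} only gives vanishing on $\fkb_{\red,\rs,1}$), so you must feed in Lemma~\ref{lem nil orb}. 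But the proof of Lemma~\ref{lem nil orb} already rests on Theorem~\ref{thm match nil Orb}; you are therefore using the same key input as the paper, only wrapped in an unnecessary detour through regular semi-simple elements. Finally, note that neither $\Gamma_{n_{0,+}}(x,0)=\eta(-1)$ nor $\Gamma_{n_{0,-}}(x,0)=\eta(\Delta/\varpi)=\pm1$ actually vanishes on either family, so the $2\times 2$ system does not literally ``decouple'' as you suggest; what emerges from your two equations is precisely the pair $\eta(-1)\Orb(n_{0,+},\phi')\pm\Orb(n_{0,-},\phi')$, i.e.\ the content of Theorem~\ref{thm match nil Orb} recovered the hard way.
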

\begin{proof}
Since $\phi'$ transfers to $(\phi,0)$, this follows  from Theorem \ref{thm match nil Orb} and (\ref{eqn def orb 0}). 
\end{proof}
Next we calculate the nilpotent orbital integrals  $\Orb(n(\mu),\phi')$ where $n(\mu)\in\fks_\red$ is the family of  nilpotent elements parametrized by $\mu\in F_0$, given by \eqref{nilpins}. When we consider  $\Orb(n(\mu),\phi')$ as a function on $F_0$, we denote it by $\Orb_{\phi'}$,
\[
\Orb_{\phi'}(\mu) := \Orb\bigl(n(\mu),\phi'\bigr) .
\]

\begin{lemma}
\label{lem nil orb}
$$
\Orb\bigl(n(\mu),\phi'\bigr)=q\frac{\eta(-1)}{\log q}\cdot\begin{dcases}0 ,& |\mu|\leq 1;\\ \frac{\eta(\mu)\log |\mu|}{|\mu|},&  |\mu|> 1.
\end{dcases}
$$
\end{lemma}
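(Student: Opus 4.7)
The plan is to compute $\Orb(n(\mu),\phi')$ by combining the transfer relation $\phi' \leftrightarrow (\mathbf{1}_{\fkk_{0,\red}},0)$ with an explicit analysis of how the nilpotent family $n(\mu) \in \fks_\red$ is approached by regular semi-simple elements, paralleling the strategy of Lemma~\ref{nilpintreg} but now handling the one-parameter family explicitly.

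First, I would write down $n(\mu)$ explicitly from \eqref{nilpins} and choose a one-parameter regularizing family $y_t = y(\mu,t) \in \fks_{\red,\rs}$ with $y_t \to n(\mu)$ as $t \to 0$. A convenient choice makes $y_t$ match with an element $x_t \in \fku_{0,\red,\rs}$, so the transfer relation collapses to
\[
   \omega(y_t)\Orb(y_t,\phi') = \Orb(x_t,\mathbf{1}_{\fkk_{0,\red}}),
\]
and there is no contribution from the $\fku_1$-side. The orbital integral of $n(\mu)$ is recovered as the finite part of $\Orb(y_t,\phi')$ as $t \to 0$, or equivalently by the standard $s$-regularization used throughout Part~\ref{germ expansion part} (this is how $\Orb(n,\phi',s)$ is defined for non-regular $n$). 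Thus the task reduces to computing $\Orb(x_t,\mathbf{1}_{\fkk_{0,\red}})$ asymptotically in $t$.

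Second, the orbital integral on the unitary side is an integral over $H_0 \simeq \U(J_0^\flat)$ of $\mathbf{1}_{\fkk_{0,\red}}$ against the conjugation action on $x_t$. Using the coordinates of \S\ref{subsec inv u0} and the symplectic reduction of $\fkk_{0,\red}$ (Remark~\ref{rem syp red}), the integral reduces to a one-dimensional integral of the form
\[
   \int_{F_0^\times} \mathbf{1}_{\fkk_{0,\red}}(h^{-1}x_t h)\, \eta(a)|a|^s\, d^\times a
\]
evaluated at $s=0$ (or its residue/derivative in the regularization). The indicator constraint translates into an explicit condition of the form $|a| \cdot |\mu| \leq 1$, which produces a sharp cutoff: for $|\mu| \le 1$ the integral of $\eta(\cdot)|\cdot|^s$ over the full range $|a|\le |\mu|^{-1}$ is a symmetric sum that vanishes at $s=0$ by oddness of $\eta$; for $|\mu| > 1$, the cutoff is nontrivial, and a Tate-type computation yields a boundary contribution proportional to $\eta(\mu)|\mu|^{-1}\log|\mu|$. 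Multiplying by the transfer factor $\omega(y_t)$, which carries the character value $\eta(-1)$ at the relevant base point, and by the constant $q/\log q = q\zeta(1)/\log q$ coming from normalizing the volume of $\fkk_{0,\red}^\flat \cap H_0(F_0)$ (cf.\ the volume constants appearing in Lemma~\ref{nilpintreg}), one arrives at the formula of the lemma.

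The main obstacle is bookkeeping rather than conceptual: matching the Haar-measure normalizations, the transfer-factor conventions \eqref{omega lie}, and the $s$-regularized definition of $\Orb(n,\phi',s)$ of Part~\ref{germ expansion part} with sufficient precision to obtain the exact constant $q\eta(-1)/\log q$ in front. In particular, the cancellation giving the vanishing for $|\mu| \le 1$ must be shown not just formally but taking into account the explicit transfer factor along the family $y_t$, and the appearance of $\log|\mu|$ must be traced to the derivative at $s=0$ of a Tate-type zeta integral rather than to a spurious logarithmic singularity of $\phi'$ itself.
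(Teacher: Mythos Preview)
Your approach has a conceptual gap. The relationship between the nilpotent orbital integrals $\Orb_{\phi'}(\mu)$ on the $\fks$-side and $\Orb_\phi(\beta)$ on the $\fku_0$-side is \emph{not} a direct matching obtained by taking limits along regular semi-simple families; it is the extended Fourier transform of Theorem~\ref{thm match nil Orb}, namely $\Orb_{\phi_0} = 2\eta(-1)|\varpi|^{-1}\kappa_{F/F_0}^{-1}\,\wt{\Orb}_{\phi'}$. If you approximate $n(\mu)$ by $y_t \in \fks_{\red,\rs}$ and apply the transfer identity, the limiting behaviour of $\Orb(y_t,\phi')$ involves \emph{all} nilpotent orbits via the germ expansion (Theorem~\ref{thm germ x=0}), not just $n(\mu)$; similarly on the unitary side. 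Comparing these germ expansions is precisely how the Fourier-transform relation is established in \cite{Z12b}, so your method would at best re-derive Theorem~\ref{thm match nil Orb} rather than bypass it. There is also a confusion in your second step: the integral you write over $F_0^\times$ with the twist $\eta(a)|a|^s$ is the form of an $\fks$-side orbital integral, not an $\fku_0$-side one---the latter has no $\eta$.

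The paper proceeds differently and more directly. It first computes the \emph{unitary-side} nilpotent orbital integral $\Orb(n(\beta),\phi)$ for $\phi = \mathbf{1}_{\fkk_{0,\red}}$ explicitly via the Iwasawa decomposition~\eqref{eqn iwasawa}, obtaining $\Orb_\phi = q\zeta(1)(\phi_0 + \phi_2)$ in terms of the standard functions $\phi_i$ from \cite[Prop.~4.3]{Z12b}. It then invokes Theorem~\ref{thm match nil Orb} and the tabulated extended Fourier transforms $\wt\phi_i$ (noting $F/F_0$ is ramified, so $\kappa_{F/F_0}=2$ and $\gamma(1,\eta)^2 = \eta(-1)q^{-1}$) to read off $\Orb_{\phi'} = q\eta(-1)(\log q)^{-1}\phi_3$, which is exactly the claimed formula.
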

\begin{proof}
We first calculate the orbital integrals of $\phi$ over the family of nilpotent elements in $\fku_0$ given by \eqref{eqn n(mu) u}. 
As in \cite[\S2.1]{Z12b}, we use the Iwasawa decomposition $H_0(F_0)=KAN$ for $K$ the special parahoric subgroup (hence $K$ contains $\SL_2(O_{F_0})$ and all diagonal elements 
$\diag(a,\ov a^{-1})$ for $a\in O_F^\times$, and $\vol(K)=1$).  Write
\begin{align}\label{eqn iwasawa}
h=
k\begin{bmatrix}z & \\
  &  \ov z^{-1}\end{bmatrix}
\begin{bmatrix}1 & t\\
  &  1\end{bmatrix},\quad dh=\zeta(1) \,dk\, dz\,dt.
  \end{align}
  By \eqref{eqn def orb n}  we then have
\begin{align*}
   \Orb\bigl(n(\mu),\phi\bigr) 
      &= \zeta(1)\bigints_{F}\phi_K\left(\pi \begin{bmatrix}0&\mu \pi z\ov z& z\\ 0&0 &0 \\
0&\pi \ov z&0\end{bmatrix} \right) d z\\
      &=\zeta(1)|\pi|^{-1}_F\bigints_{F}\phi_K\left( \begin{bmatrix}0&-\mu z\ov z& z\\ 0&0 &0 \\
0&-\pi \ov z&0\end{bmatrix} \right) d z\\
      &=q\zeta(1)\cdot \vol\bigl\{\,z\in F \bigm| |z|\leq 1 \text{ and } |\mu z\ov z|\leq 1\,\bigr\}.
\end{align*}
Note now that $F/F_0$ is \emph{ramified}.  We obtain
$$
\Orb\bigl(n(\mu),\phi\bigr) = q\zeta(1)\cdot \begin{cases}1, &|\mu|\leq 1;\\
1/|\mu|,& |\mu|>1.\end{cases}
$$

Recall from the proof of \cite[Prop.~4.3]{Z12b} the functions 
\begin{alignat*}{2}
\phi_0 &:= {\bf 1}_{O_{F_0}}, &
\phi_1(x)&:=\begin{dcases}\frac{\eta(x)}{|x|},&  |x|> 1;\\ 0
,& |x|\leq 1,
\end{dcases}\\
\phi_2(x) &:= \begin{dcases}\frac{1}{|x|}, & |x|> 1;\\ 0 ,&
|x|\leq 1,
\end{dcases}\qquad &
\phi_3(x) &:= \begin{dcases}\frac{\eta(x)\log |x|}{|x|},& |x|> 1;\\
0 ,& |x|\leq 1.
\end{dcases}
\end{alignat*}
Then we have the following  table for their extended Fourier transforms, cf.~loc.~cit., 
\begin{alignat*}{2}
\wt\phi_0(v) &= \begin{dcases}0,&
|v|\leq 1;\\ \frac{\eta(-v)}{|v|},&  |v|>1.
\end{dcases} & \wt\phi_1(v) &= \begin{dcases}q^{-1},&
|v|\leq 1;\\ 0,& |v|>1.
\end{dcases}\\
\wt\phi_2(v) &= \begin{dcases}0,&
|v|\leq 1;\\
\frac{\eta(-1)}{\log q\,\zeta(1)}\frac{\eta(v)\log|v|}{|v|}-\frac{\eta(-v)}{|v|},&
 |v|>1.
\end{dcases}\qquad & \wt\phi_3(v) &= \begin{dcases}-\frac{\zeta'(1)}{\zeta(1)},&
|v|\leq 1;\\ -\frac{\zeta'(1)}{\zeta(1)}\frac{1}{|v|},& 
|v|>1.
\end{dcases}
\end{alignat*}
In terms of the four fundamental functions $\phi_i$, we may
rewrite the nilpotent orbital integral as
$$
\Orb_\phi=q\zeta(1)\cdot(\phi_0+\phi_2).
$$
Note that $-\frac{\zeta'(1)}{\zeta(1)}=\zeta(1)q^{-1}\log\,q$.
Hence, by Theorem \ref{thm match nil Orb}, we obtain for the orbital integral of $\phi'$, 
$$
\Orb_{\phi'}=q\frac{\eta(-1)}{\log q}\phi_3.
$$
This completes the proof.
\end{proof}

\subsection{Germ expansion of $\del_1(\sigma(x),\phi')$  around $x_0=0$}

We now calculate the germ expansion of $\del_1(\sigma(x),\phi')$  around $x_0=0$. In the sequel, we denote by $v$ the normalized valuation for $F_0$. 

We are using the section $\sigma$ of $\pi_{\fks_{\red}}|_{\fks_{\red, \rs}}$ in a neighborhood of $x_0=0$ introduced in  \eqref{eqn sigma}. 
\begin{theorem}\label{thm expl x=0}

For  $x=(\lambda,u,w)\in\fkb_{\red,\rs,1}$  near zero,
\[
   \del_1\bigl(\sigma(x),\phi'\bigr) = \Phi(x)-q^{-1}\zeta_{}(1)\log|\Delta(x)/\varpi|,
\]
where the function $\Phi(x)$ is as follows. Set $t=q^{-1}$ below.
\smallskip

{\bf Case I}: $|\Delta|\geq |w|^2$

\smallskip

\begin{enumerate}
\item If $v(\Delta)> 4v(u)$, then 
\begin{align*}
\Phi(x)=- t^{-v(u)} \frac{2 (1+ t) + \bigl(v(\Delta/u^4)-1\bigr)(1- t)}{(1 - t)^2}\cdot \log q.
\end{align*}
\item If $v(\Delta)\leq 4v(u)$ and $v(\Delta)$ is odd, then 
\begin{align*}
\Phi(x)=-t^{\frac{2v(u)-v(\Delta)+1}{2}} \frac{ \bigl(4v(u)-v(\Delta)+3\bigr)-\bigl(4v(u)-v(\Delta)-1\bigr) t}{(1 - t)^2}\cdot \log q.
\end{align*}
\item If $v(\Delta)\leq 4v(u)$ and $v(\Delta)$ is even, then 
\begin{align*}
   \Phi(x)=-t^{\frac{2v(u)-v(\Delta)}{2}} \frac{\bigl( 2v(u)-\frac{v(\Delta)}{2}+1\bigr) (1-t^2)+ t(3+ t)}{(1 - t)^2}\cdot \log q.
\end{align*}
\end{enumerate}

\smallskip

{\bf Case II}: $|\Delta|< |w|^2$

\smallskip

\begin{enumerate}
\item If $v(w/\pi) \geq 2v(u)$, then 
\begin{align*}
\Phi(x)=-  t^{-v(u)}  \frac{2 (1+ t) + \bigl(v(\Delta/u^4)-1\bigr)(1- t)}{(1 - t)^2}\cdot \log q.
\end{align*}

\item If $v(w/\pi)<  2v(u)$,  then  
   \begin{align*}&
  \Phi(x)= -  t^{v(u)-v(w/\pi)}\frac{4t + \bigl(v(\Delta)+4v(u)-4v(w/\pi)+1\bigr) (1 -t)}{(1-t)^2}\cdot \log q.
\end{align*}
\end{enumerate}
\end{theorem}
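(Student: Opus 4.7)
The plan is to apply the germ expansion \eqref{germofdel1} at $x_0=0$ and substitute the values of $\partial\Gamma_n(x)$ and $\Orb(n,\phi',0)$ from the table in \S\ref{sec 13}. There are three packages of orbits contributing at $x_0=0$: the isolated orbits $n_{0,+}$ and $n_{0,-}$, and the one-parameter family $\{n(\mu)\}_{\mu\in F_0}$. The orbit $n_{0,+}$ contributes nothing because $\partial\Gamma_{n_{0,+}}(x)=0$. The orbit $n_{0,-}$ contributes, using Lemma \ref{nilpintreg} and $\partial\Gamma_{n_{0,-}}(x)=\log|\Delta/\varpi|$, exactly the term $-q^{-1}\zeta(1)\log|\Delta(x)/\varpi|$ that appears on the right-hand side. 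Thus the entire theorem reduces to identifying the remaining piece
\[
   \Phi(x) \;=\; \int_{F_0} \partial\Gamma_{n(\mu)}(x)\,\Orb\bigl(n(\mu),\phi'\bigr)\,d\mu
\]
with the case-by-case expression stated.

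First I would insert Lemma \ref{lem nil orb}, which supplies
\[
   \Orb\bigl(n(\mu),\phi'\bigr) \;=\; \frac{q\,\eta(-1)}{\log q}\cdot\frac{\eta(\mu)\log|\mu|}{|\mu|}\cdot\mathbf{1}_{|\mu|>1}(\mu),
\]
and the germ formula for $\partial\Gamma_{n(\mu)}(x)$ provided by Theorem \ref{thm germ x=0}. The support condition $|\mu|>1$ truncates the $\mu$-integration from below; moreover, because the integrand involves $\eta(\mu)$ times a function that depends only on $|\mu|$, only the ``norm shells'' $\{v(\mu)=-2j\}$ survive after integrating over $O_F^\times$-cosets and the factor $\eta(-1)$ in front produces a clean, real contribution. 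The key structural observation is that $\partial\Gamma_{n(\mu)}(x)$ depends on $\mu$ only through valuation comparisons with the three quantities $v(u)$, $v(w/\pi)$, and $v(\Delta)/2$. Consequently the $\mu$-integral reduces to a sum of finite geometric series indexed by $j\in\BZ$, and these can be summed in closed form.

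Secondly, I would organize the bookkeeping along exactly the case division stated in the theorem. The split between Case I, $|\Delta|\geq |w|^2$, and Case II, $|\Delta|<|w|^2$, tracks which of $v(\Delta)/2$ and $v(w/\pi)$ is the active threshold in Theorem \ref{thm germ x=0}. Within each case, the further subcases (parity of $v(\Delta)$ in Case I; comparison with $2v(u)$ in both cases) correspond to whether the threshold lies inside or outside the range of $\mu$ over which $\Orb(n(\mu),\phi')$ is nonvanishing, and whether additional boundary contributions appear. Summing the relevant geometric series in each regime, using the ancillary identity \eqref{ancillary}, and simplifying gives precisely the six stated formulas; the $\log q$ that appears uniformly on the right emerges from cancellation of the factor $\eta(-1)/\log q$ in $\Orb_{\phi'}$ against the corresponding factor inside Theorem \ref{thm germ x=0}.

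The hard part is not the algebra of the geometric series, but making the case division in Theorem \ref{thm germ x=0} mesh cleanly with the one here: one must check that the thresholds governing the behavior of $\partial\Gamma_{n(\mu)}(x)$ are expressible purely in terms of $v(u),\,v(w/\pi),\,v(\Delta)$ in the way our partition requires, and that the integration picks up no exceptional boundary terms at $|\mu|=1$ beyond what is absorbed into the stated formulas. Modulo that bookkeeping, the theorem is a direct combinatorial computation; a useful sanity check is that in the overlap between Case I.(1) and Case II.(1) the two expressions coincide, which forces a consistency condition between the two branches of Theorem \ref{thm germ x=0} and provides a convenient correctness test as the cases are verified one by one.
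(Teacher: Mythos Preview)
Your overall decomposition is exactly the paper's: $n_{0,+}$ drops out, $n_{0,-}$ gives $-q^{-1}\zeta(1)\log|\Delta/\varpi|$ via Lemma~\ref{nilpintreg}, and $\Phi(x)$ is the contribution of the one-parameter family. The issue is in how you propose to evaluate $\Phi(x)$.

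You plan to integrate $\partial\Gamma_{n(\mu)}(x)\cdot\Orb(n(\mu),\phi')$ directly over $\mu$, using the explicit formula for $\Gamma_{n(\mu)}(x,s)$ in Theorem~\ref{thm germ x=0}. But that formula carries the support condition $(u^2\mu-2w/\pi)^2-4\Delta/\varpi\in F_0^{\times,2}$ and the auxiliary root $\nu$, and your assertion that the integrand ``depends on $\mu$ only through valuation comparisons'' is not correct: whether that discriminant is a square in $F_0^\times$ is not a valuation condition, so the shell-by-shell picture you sketch does not organize the integral as claimed. The paper bypasses this by reverting to the intermediate form \eqref{eqn germ mu} obtained inside the proof of Theorem~\ref{thm germ x=0}, which expresses the same quantity as a $t$-integral
\[
   \Phi(x)=-\eta(-1)|u|^{-1}\int_{F_0}\Orb_{\phi'}\bigl(u^{-2}(t+t^{-1}\Delta/\varpi+2w/\pi)\bigr)\,\eta(t)\log|t|\,\frac{dt}{|t|},
\]
where $t$ plays the role of your $\nu$ and the square-root obstruction has disappeared. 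After inserting Lemma~\ref{lem nil orb} and rescaling, a pairing $t\leftrightarrow \Delta'/(\varpi t)$ (with $\Delta'=\Delta/u^4$, $w'=w/u^2$) kills the diagonal and reduces to $|t|>|\Delta'/(\varpi t)|$; in Case~II one factors $t^2+2tw'/\pi+\Delta'/\varpi=(t-t_0)(t-t_1)$ with $|t_0|=|w'/\pi|>|t_1|$ and splits according to $|t|$ versus $|w'/\pi|$, using that $\eta$ is ramified to annihilate several pieces. The resulting sums are evaluated by the elementary identities for $\sum_{i\ge n} it^{i-1}$ and $\sum_{i\ge n} i^2 t^i$ recorded after \eqref{eqn log1}, not by \eqref{ancillary}, which belongs to the geometric-side computation in \S\ref{sec expl Lie}. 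Your route is not unsalvageable, but the key simplification is the passage to the $t$-variable via \eqref{eqn germ mu}; without it the structural claim you rely on fails and the bookkeeping is considerably worse than you suggest.
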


The proof of this theorem will occupy the entire subsection. The term $\del_1(\sigma(x),\phi')$ is a sum of two parts, one from the two regular nilpotent orbits $n_{0\pm}$ and the other from the one-dimensional nilpotent family $n(\mu)$.

We first determine the contribution of the two regular nilpotents. We use Lemma \ref{nilpintreg}. Noting that $\eta(\Delta/\varpi)=-1$ when $x\in\fkb_{\red,\rs,1}$ (cf.~Proposition \ref{eta b01}), and taking into account the values of $\partial\Gamma(n_{0\pm})$ (see table above), the total contribution of the two regular nilpotents to $\del_1(\sigma(x),\phi')$ is equal to 
\[
-\eta(\Delta/\varpi)\log|\Delta/\varpi| \Orb(n_{0,-},\phi')
   = \eta(\Delta/\varpi) q^{-1}\zeta(1)\log|\Delta/\varpi|
   = -q^{-1}\zeta(1)\log|\Delta/\varpi|.
\]

We now calculate the contribution from the one-dimensional family $n(\mu)$, which we denote by
$$
\Phi(x)=\int_{F_0}\partial\Gamma_{n(\mu)}(x)\Orb\bigl(n(\mu),\phi',0\bigr)\,d\mu.
$$
It is easier to use an equivalent formula, namely \eqref{eqn germ mu}:
\begin{align}\label{eqn mu log}
\Phi(x)=
 -\eta(-1)|u|^{-1}
\bigintssss_{F_0} \Orb\Bigl(n\bigl(u^{-2}(t+t^{-1}\Delta/\varpi+2w/\pi)\bigr),\phi'\Bigr)\eta(t)\log|t|\,\frac{dt}{|t|}.
\end{align} 
Set
\[
   w'=w/u^2,\quad \lambda'=\lambda/u^2,\quad \Delta'=\Delta/u^4=\lambda'+w'^2.
\]
Note that $\eta(-\Delta)=-1$ (cf.~Proposition \ref{eta b01}). Denote
\begin{equation*}
\eta_1(t)=\eta(t)|t|^{-1},\quad t\in F_0^\times.
\end{equation*}
By the formula for $\Orb_{\phi'}$ in Lemma \ref{lem nil orb}, and substituting $t\mapsto tu^2$, the integral \eqref{eqn mu log} is equal to
\begin{align}\label{eqn mu log0}
\Phi(x)=- q^{-1}(\log\,q)^{-1}|u|^{-1} \cdot \Xi(x) ,
 \end{align}
 where
 \begin{align}\label{eqn mu log1}
\Xi(x)= \int_{F}\log|t+\Delta'/(\varpi t)+2w'/\pi| \,\eta_1(t^2+\Delta'/\varpi+2w't/\pi)\log|t|\,dt,
\end{align}
where the integrand is subject to the condition
$$
|t+\Delta'/(\varpi t)+2w'/\pi|>1.
$$

A simple observation is that the contribution of $t$ with $|t|=|\Delta'/(\varpi t)|$ is always zero, since we may pair $t$ with $\Delta'/(\varpi t)$ to see that the sum is canceled (use $\eta(-\varpi)=1$,  $\eta(-\Delta')=-1$ by Proposition \ref{eta b01}).
Hence the integral in \eqref{eqn mu log1} is equal to
\begin{equation}\label{eqn log0}
   \Xi(x) = \int \log|t+\Delta'/(\varpi t)+2w'/\pi| \, \eta_1(t^2+\Delta'/\varpi+2w't/\pi) (2\log|t|-\log|\Delta'/\varpi|)\,dt,
\end{equation}
where the integrand is subject to the conditions
$$
|t|>|\Delta'/\varpi t|,\quad |t+\Delta'/\varpi t+2w'/\pi|>1.
$$

We distinguish two cases.

\smallskip

\paragraph{{\bf Case I}: $|\Delta'|\geq |w'|^2$}
Then the last integral \eqref{eqn log0} is equal to
\begin{align*}
   \Xi(x) &= \int \log|t+\Delta'/(\varpi t)|\,\eta_1(t^2)(2\log|t|-\log|\Delta'/\varpi|)\,dt\\
          &=\int \log|t|\,\eta_1(t^2)(2\log|t|-\log|\Delta'/\varpi|)\,dt,
\end{align*}
where the integrand is subject to the conditions
\[
   t|>|\Delta'/\varpi t|
   \quad\text{and}\quad
   |t+\Delta'/\varpi t+2w'/\pi|=|t|>1.
\]
The integral is equal to
\begin{align*}
\Xi(x)=\int_{|t| >\max\{1,|\Delta'/\varpi|^{1/2}\}  }\log|t|\,\eta_1(t^2)(2\log|t|-\log|\Delta'/\varpi|)\,dt.
\end{align*}
Making the substitution  $t\mapsto t^{-1}$, we obtain 
$$
\Xi(x)=\int _{|t| <\min\{1,|\Delta'/\varpi|^{-1/2}\}  } \log|t|(2\log|t|+\log|\Delta'/\varpi|)\,dt.
$$
Set $n:=1+\max \{0,[-v(\Delta'/\varpi)/2]\}>0$. Then the integral is equal to
\begin{align}\label{eqn log1}
\Xi(x)=\zeta(1)^{-1}\biggl(\sum_{i\geq  n}\bigl(-2i-v(\Delta'/\varpi)\bigr)(-i) q^{-i}\biggr)\cdot (\log q)^2.
\end{align}
For later use we tabulate the following elementary formulas:

\begin{align*}
\sum_{i\geq n}i t^{i-1} &= \frac{nt^{n-1}-(n-1)t^n}{(1-t)^2},
\\
\sum_{i\geq n}i(i+1) t^i &= \frac{t\bigl(n(n-1)t^{n+1}-2(n^2-1)t^n+n(n+1)t^{n-1}\bigr)}{(1-t)^{3}},
\\
\sum_{i\geq n}i^2 t^{i} &= \frac{(n-1)^2t^{n+2}-(2n^2-2n-1)t^{n+1}+n^2t^n}{(1-t)^3}.
\end{align*}

\smallskip

We now see that 
the integral \eqref{eqn log1} is given as follows. 

\smallskip

\begin{enumerate}
\item If $v(\Delta')> 0$, then $n=1$ and
\[
   \Xi(x) = \zeta(1)^{-1}\biggl(\sum_{i=1}^\infty\bigl(2i+v(\Delta'/\varpi)\bigr)it^i\biggr)\cdot (\log q)^2
      =  \frac{t \bigl(2 (1+ t) + v(\Delta'/\varpi)(1- t)\bigr)}{(1 - t)^2}\cdot (\log q)^2.
\]
\item If $v(\Delta')\leq 0$ is odd, then $-v(\Delta')=2n-3$ and
\[
   \Xi(x) = \zeta(1)^{-1}\biggl(\sum_{i=n}^\infty(2i-2n+2)it^i\biggr)\cdot (\log q)^2
      =  \frac{2 t^n (-n - 2 t + n t)}{(1 - t)^2}\cdot (\log q)^2.
\]
\item If $v(\Delta')\leq 0$ is even, then $-v(\Delta')=2n-2$ and
\[
   \Xi(x) = \zeta(1)^{-1} \biggl(\sum_{i=n}^\infty(2i-2n+1)it^i\biggr)\cdot (\log q)^2
      =  \frac{t^n (n + 3 t + t^2 - n t^2)}{(1 - t)^2}\cdot (\log q)^2.
\]
\end{enumerate}

\smallskip

\paragraph{{\bf Case II: $|\Delta'|<|w'|^2$.}} 

In this case  $-\lambda'/w'^2\in 1+\pi O_F$ (note that $-\Delta'=-w'^2-\lambda'$ is not a norm). Since $w'\in \pi F_0$, it follows that $-\lambda'/\varpi$ is a square and hence
the following equation has two roots
$$
t^2+2tw'/\pi+\Delta'/\varpi=0,\quad t=-w'/\pi \pm\sqrt{-\lambda'/\varpi}\in F_0.
$$
We  label $t_0$ as the unique root such that 
\begin{align}\label{eqn eq}
|t_0|=|w'/\pi|=|\lambda'/\varpi|^{1/2},
\end{align}
and label $t_1$ the other root. Then $|t_0|>|t_1|$ and
the integral (\ref{eqn log0}) is equal to
\[
   \Xi(x) = \int \log\bigl\lvert(t-t_0)(t-t_1)/t\bigr\rvert\, \eta_1\bigl((t-t_0)(t-t_1)\bigr)(2\log|t|-\log|\Delta'/\varpi|)\,dt,
\]
where the integrand is subject to 
$$
|t|>|\Delta'/\varpi t|, \quad \bigl|(t-t_0)(t-t_1)/t\bigr|>1.
$$ When $|t|>|\Delta'/\varpi t|$, we always have $|t|>|t_1|$. Hence the integral is reduced to
\begin{align}\label{eqn log3}
\Xi(x)=\int \log\lvert t-t_0\rvert \, \eta_1\bigl((t-t_0)t\bigr)(2\log|t|-\log|\Delta'/\varpi|)\,dt,
\end{align}
subject to 
\begin{align}\label{eqn ineq}
|t|>|\Delta'/\varpi t|,\quad |t-t_0|>1.
\end{align}
We break the integral up as a sum of three pieces according to whether $|t|$ is less than, greater than, or equal to $|w'/\pi|$.

\begin{lemma}
When $|t|<|w'/\pi|$, the contribution to \eqref{eqn log3} is zero.
\end{lemma}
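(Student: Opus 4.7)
The plan is to show that the integrand of \eqref{eqn log3} restricted to the region $|t|<|w'/\pi|$ is, up to multiplicative constants, of the form $\eta(t)$ times a function depending only on $|t|$. The vanishing will then follow from the standard fact that $\int_{|t|=q^{-k}}\eta(t)\,dt=0$ on each annulus, since $\eta$ restricted to $O_{F_0}^\times$ is a nontrivial character.

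First I would analyze the two $t_0$-dependent factors. Since $|t|<|t_0|=|w'/\pi|$, the ultrametric inequality gives $|t-t_0|=|t_0|$, so $\log|t-t_0|=\log|t_0|$ is constant in $t$. Moreover $t-t_0=-t_0(1-t/t_0)$ with $t/t_0\in\varpi O_{F_0}$, so $1-t/t_0\in 1+\varpi O_{F_0}$. Since $p$ is odd, Hensel's lemma makes every element of $1+\varpi O_{F_0}$ a square in $F_0^\times$, hence a norm from $F^\times$, hence in $\ker\eta$. Therefore $\eta(t-t_0)=\eta(-t_0)$ is also constant in $t$. Combining these observations with $\eta_1((t-t_0)t)=\eta(t-t_0)\eta(t)/(|t-t_0|\cdot|t|)$, the integrand on this subregion becomes
\[
\log|t_0|\cdot\frac{\eta(-t_0)}{|t_0|}\cdot\frac{\eta(t)}{|t|}\cdot\bigl(2\log|t|-\log|\Delta'/\varpi|\bigr)=C\cdot\eta(t)\cdot g(|t|),
\]
where $C$ is a constant and $g$ is a function of $|t|$ alone.

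Next, I would note that the constraint $|t|>|\Delta'/(\varpi t)|$ becomes $|t|>|\Delta'/\varpi|^{1/2}$, and $|\Delta'/\varpi|^{1/2}<|t_0|$ by the Case II hypothesis (since $|\Delta'|<|w'|^2$ gives $|\Delta'/\varpi|<|w'|^2/|\varpi|=|t_0|^2$), so the subregion in question is the (possibly empty) union of finitely many annuli $\{|t|=q^{-k}\}$. On each such annulus $g(|t|)$ is constant, and we conclude using
\[
\int_{|t|=q^{-k}}\eta(t)\,dt=0,
\]
valid because $\eta|_{O_{F_0}^\times}$ is the unique nontrivial quadratic character trivial on $1+\varpi O_{F_0}$, and hence descends to a nontrivial character of the residue field units $k^\times$, whose character sum vanishes. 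There is no real obstacle: the argument is a direct integration once one observes that $\eta(t-t_0)$ is locally constant on $|t|<|t_0|$, which is the only nontrivial input.
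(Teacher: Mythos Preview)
Your proof is correct and takes essentially the same approach as the paper: reduce the integrand on this subregion to the form $C\cdot\eta(t)\cdot g(|t|)$ and then use that $\eta$ is ramified to kill the integral over each annulus. One small omission: you address only the first constraint from \eqref{eqn ineq}, but the second constraint $|t-t_0|>1$ also needs to be checked to depend only on $|t|$; since you have already shown $|t-t_0|=|t_0|$, this constraint becomes $|t_0|=|w'/\pi|>1$, which is independent of $t$ (the paper makes this explicit by noting the contribution is trivially zero unless $|w'/\pi|>1$).
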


\begin{proof}
Now we have $|t-t_0|=|t_0|$ and hence the contribution is zero unless $|w'/\pi|>1$, which we assume now. Then the contribution to (\ref{eqn log3}) is
\begin{align*}
\log|w'/\pi|\int_{|w'/\pi|>|t|>|\Delta'/\varpi t|} \,\eta_1(t_0 t)\,(2\log|t|-\log|\Delta'/\varpi|)\,dt=0,
\end{align*}
since $\eta$ is \emph{ramified}!
\end{proof}

\begin{lemma}
When $|t|>|w'/\pi|$,  the contribution to \eqref{eqn log3} is
 \begin{align}\label{eqn log4}
\int_{|t|<\min\{1,|w'/\pi|^{-1}\}}(2\log|t|+\log|\Delta'/\varpi|) \,\log|t|\,dt.
\end{align}
\end{lemma}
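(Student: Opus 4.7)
The plan is to exploit the ultrametric triangle inequality to absorb $t - t_0$ into $t$ on the region $|t| > |w'/\pi|$, and then to apply the inversion $t \mapsto t^{-1}$ to bring the integral into the claimed form.

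First I would note that $|t_0| = |w'/\pi|$ by \eqref{eqn eq}, so whenever $|t| > |w'/\pi|$ the nonarchimedean triangle inequality gives $|t - t_0| = |t|$. In particular $\log|t - t_0| = \log|t|$, and the constraint $|t - t_0| > 1$ reduces to $|t| > 1$; the constraint $|t| > |\Delta'/(\varpi t)|$ is automatic here since $|\Delta'/\varpi|^{1/2} < |w'/\pi|$ under the Case II hypothesis $|\Delta'| < |w'|^2$.

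The main point is to trivialize the character factor. Writing $t - t_0 = t(1 - t_0/t)$ with $|t_0/t| < 1$, one has $1 - t_0/t \in 1 + \varpi O_{F_0}$. Since $F/F_0$ is ramified and $p \neq 2$, the character $\eta$ has conductor $\varpi O_{F_0}$ and is therefore trivial on principal units, so $\eta((t - t_0)t) = \eta(t^2) = 1$ and $\eta_1((t - t_0)t) = |t|^{-2}$. The contribution of the region to \eqref{eqn log3} thus simplifies to
\[
\int_{|t| > \max\{|w'/\pi|,\, 1\}} \log|t| \cdot |t|^{-2} \cdot \bigl(2\log|t| - \log|\Delta'/\varpi|\bigr)\, dt.
\]

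To finish, a routine change of variables $t \mapsto t^{-1}$ absorbs the factor $|t|^{-2}$ against the Jacobian of the inversion and sends $\log|t| \mapsto -\log|t|$; the two sign flips in the logarithmic factors combine to produce exactly the integrand of \eqref{eqn log4}, while the region $|t| > \max\{|w'/\pi|, 1\}$ transforms into $|t| < \min\{|w'/\pi|^{-1}, 1\}$. The only substantive input beyond bookkeeping is the triviality of $\eta$ on $1 + \varpi O_{F_0}$, which is what makes the ramified case work so cleanly here; I would not expect any further obstacle in the argument.
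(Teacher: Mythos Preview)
Your proof is correct and follows essentially the same approach as the paper's own proof: simplify $|t-t_0|$ to $|t|$ via the ultrametric inequality, reduce $\eta_1((t-t_0)t)$ to $|t|^{-2}$, and then substitute $t\mapsto t^{-1}$. The paper's argument is terser and leaves implicit the justification you spell out for why $\eta((t-t_0)t)=1$; your explanation via $1-t_0/t\in 1+\varpi O_{F_0}$ and the conductor of $\eta$ is exactly what is needed to fill that gap.
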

\begin{proof}

When $|t|>|w'/\pi|$, we have
\begin{align*}
\int_{|t|>\max\{1,|w'/\pi|\}}\log|t|\,(2\log|t|-\log|\Delta'/\varpi|)\, |t|^{-2}\,dt.
\end{align*}
Substituting $t\to 1/t$, this becomes
\begin{align*}
\int_{|t|^{-1}>\max\{1,|w'/\pi|\}}\log|t|\,(2\log|t|+\log|\Delta'/\varpi|) \,dt.
\end{align*}This completes the proof.
\end{proof}

\begin{lemma}
When $|t|=|w'/\pi|$,  the contribution to \eqref{eqn log3} is
 \begin{align*}
(\log|w'/\pi| )\,(\log|\Delta'/w'^2|)\, |w'/\pi|^{-1}\,q^{-1},
\end{align*}
when $|w'/\pi|>1$, and zero otherwise.
\end{lemma}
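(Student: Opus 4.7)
The plan is to change variables $t=t_0 u$ on the sphere $|t|=|w'/\pi|$ and exploit the fact that, since $F/F_0$ is ramified with $p\neq 2$, the quadratic character $\eta|_{O_{F_0}^\times}$ is nontrivial but has conductor $\varpi$.

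First, on $\{t\in F_0:|t|=|t_0|\}$ the ultrametric inequality gives $|t-t_0|\leq|t_0|=|w'/\pi|$, so the side condition $|t-t_0|>1$ from \eqref{eqn ineq} is vacuous whenever $|w'/\pi|\leq 1$, yielding the vanishing claim. Now assume $|w'/\pi|>1$ and substitute $t=t_0 u$ with $u\in O_{F_0}^\times$. Using $\eta(t_0^2)=1$ and the Case II identity $|t_0|^2=|\lambda'/\varpi|$ (which follows from $|\lambda'|=|w'|^2$ when $|\Delta'|<|w'|^2$), one rewrites the integrand of \eqref{eqn log3}: the $\log|t-t_0|$ factor becomes $\log|t_0|+\log|u-1|$, the $\eta_1((t-t_0)t)$ factor becomes $\eta(u(u-1))/(|t_0|^2|u-1|)$, and $2\log|t|-\log|\Delta'/\varpi|=\log|\lambda'/\Delta'|=-\log|\Delta'/w'^2|$. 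Combined with $dt=|t_0|\,du$, the contribution becomes
\[
-\log|\Delta'/w'^2|\cdot|t_0|^{-1}\int\frac{\log|t_0|+\log|u-1|}{|u-1|}\,\eta\bigl(u(u-1)\bigr)\,du,
\]
the integral taken over $\{u\in O_{F_0}^\times:|u-1|>|t_0|^{-1}\}$.

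Next I stratify by $j:=v(u-1)\in\{0,1,\dots,v(t_0)-1\}$. For $j\geq 1$, write $u=1+\varpi^j r$ with $r\in O_{F_0}^\times$; then $u\in 1+\varpi O_{F_0}$ lies in the kernel of $\eta$, so $\eta(u(u-1))=\eta(\varpi)^j\eta(r)$, and the inner $r$-integral $\int_{O_{F_0}^\times}\eta(r)\,dr$ vanishes because $\eta|_{O_{F_0}^\times}$ is a nontrivial character. Hence only the stratum $j=0$ survives, in which $\log|u-1|=0$ and $|u-1|=1$, contributing
\[
-\log|\Delta'/w'^2|\cdot|t_0|^{-1}\cdot\log|t_0|\cdot c,\qquad c:=\int_{u,\,u-1\in O_{F_0}^\times}\eta\bigl(u(u-1)\bigr)\,du.
\]

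It remains to prove $c=-q^{-1}$. Since $\eta|_{O_{F_0}^\times}$ factors through the unique nontrivial quadratic character $\bar\eta\colon k^\times\to\{\pm 1\}$, partitioning $O_{F_0}^\times$ into residue classes modulo $\varpi$ gives $c=q^{-1}\sum_{a\in k\smallsetminus\{0,1\}}\bar\eta\bigl(a(a-1)\bigr)$. Using $\bar\eta(a(a-1))=\bar\eta(a^2)\bar\eta(1-1/a)=\bar\eta(1-1/a)$ and the self-bijection $a\mapsto 1-1/a$ of $k^\times\smallsetminus\{1\}$, this sum equals $\sum_{b\in k^\times,\,b\neq 1}\bar\eta(b)=-\bar\eta(1)=-1$. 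Thus $c=-q^{-1}$, and substituting $|t_0|=|w'/\pi|$ yields the claimed formula. The main obstacle is the careful bookkeeping in the change of variables (especially identifying $2\log|t|-\log|\Delta'/\varpi|$ with $-\log|\Delta'/w'^2|$ via the Case II relation $|\lambda'|=|w'|^2$) and the sign tracking through the Jacobi-type sum; the conductor argument ruling out $j\geq 1$ is routine once one notes that $F/F_0$ ramified and $p\neq 2$ forces the conductor of $\eta$ to equal $\varpi O_{F_0}$.
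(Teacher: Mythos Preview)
Your proof is correct and follows essentially the same route as the paper's. Both arguments split the region $|t|=|w'/\pi|$ according to the size of $|t-t_0|$: your stratification into $j=v(u-1)\geq 1$ versus $j=0$ corresponds exactly to the paper's dichotomy $|t-t_0|<|t_0|$ versus $|t-t_0|=|t_0|$; both kill the first piece using that $\eta$ is ramified (the paper via $\int_{1<|x|<|t_0|}\log|x|\,\eta_1(x)\,dx=0$, you via $\int_{O_{F_0}^\times}\eta(r)\,dr=0$ on each shell); and both evaluate the surviving $j=0$ piece by the same quadratic character sum over $k\setminus\{0,1\}$. Your Jacobi-sum computation of $c=-q^{-1}$ simply spells out what the paper records as $\int_{|t-t_0|=|t|=|w'/\pi|}\eta(1-t_0/t)\,dt=-q^{-1}|w'/\pi|$.

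A few cosmetic points worth cleaning up: (i) your use of ``vacuous'' in the $|w'/\pi|\leq 1$ case is backwards---you mean the constraint $|t-t_0|>1$ is \emph{never} satisfied, so the domain is empty; (ii) you should note, as the paper does, that the other constraint $|t|>|\Delta'/(\varpi t)|$ in \eqref{eqn ineq} is automatic on $|t|=|w'/\pi|$ by the Case~II hypothesis $|\Delta'|<|w'|^2$; (iii) the range $\{0,\dots,v(t_0)-1\}$ for $j$ has a sign slip, since $|t_0|>1$ forces $v(t_0)<0$---you mean $\{0,\dots,-v(t_0)-1\}$.
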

\begin{proof} 
The constraint $|t|>|\Delta'/\varpi t|$ in (\ref{eqn ineq}) is now superfluous. Under the assumption $|t|=|w'/\pi|=|t_0|$  (cf.~(\ref{eqn eq})), we divide the integral (\ref{eqn log3}) into two cases: $|t-t_0|<|t_0|$ and $|t-t_0|=|t_0|$.

First we show that when $|t-t_0|<|t_0|$, the contribution to (\ref{eqn log3})  is zero. Let $x=t-t_0$ and we see that the integral (\ref{eqn log3}) becomes
\begin{align*}
\int\log|x|\,\eta_1(xt_0)\,(2\log|t_0|-\log|\Delta'/\varpi|)\,dx,
\end{align*}
where $x$ satisfies conditions coming from (\ref{eqn ineq})
$$
|x|<|t_0|,\quad |x|>1.
$$
This integral is equal to
\begin{align*}
\eta_1(t_0)(2\log|t_0|-\log|\Delta'/\varpi|)\int_{1<|x|<|t_0|}\log|x|\,\eta_1(x)\,dx.
\end{align*}
Since $\eta$ is ramified, we have
$$
\int_{1<|x|<|t_0|}\log|x|\,\eta_1(x)\,dx=0.
$$

It remains to consider the contribution when $|t-t_0|=|t_0|$. We hence have $|t-t_0|=|t_0|=|t|=|w'/\pi|$ (cf.~(\ref{eqn eq})). The integral (\ref{eqn log3}) becomes
\begin{align*}
\log|w'/\pi|\,(2\log|w'/\pi|-\log|\Delta'/\varpi|) \,\int \eta_1\bigl((t-t_0)t\bigr)\,dt,
\end{align*}
subject to 
$$
|t-t_0|=|t|=|t_0|,\quad |t-t_0|=|t_0|>1.
$$The integral is zero unless $|t_0|=|w'/\pi|>1$, in which case we have
\[
   \int_{|t-t_0|=|t|=|w'/\pi|} \eta_1\bigl((t-t_0)t\bigr)\,dt
      = |w'/\pi|^{-2}\int_{|t-t_0|=|t|=|w'/\pi|} \eta(1-t_0/t)\,dt 
		= -q^{-1}|w'/\pi|^{-1}.
\]
This completes the proof.
\end{proof}

Set $n:=1+\max\{0,-v(w'/\pi)\}$.
From the last three lemmas, we find that 
the integral (\ref{eqn log3}) is given as follows.

\smallskip

\begin{altenumerate}
\item[(1)] $\,$ If $v(w'/\pi)\geq 0$, then $n=1$ and the integral \eqref{eqn log3} is equal to
\begin{align*}
   \Xi(x)
      &= \zeta(1)^{-1} \biggl(\sum_{i=1}^\infty \bigl(-2i-v(\Delta'/\varpi)\bigr)(-i)t^i\biggr)\cdot (\log q)^2 \\
      &= |u|^{-1} \frac{t \bigl(2 (1+ t) + v(\Delta'/\varpi)(1- t)\bigr)}{(1 - t)^2}\cdot (\log q)^2.
\end{align*}
\item[(2)] $\,$  If $v(w'/\pi)< 0$, then $n=-v(w'/\pi)+1$. The integral \eqref{eqn log4} is equal to $(\log q)^2$ times the factor
\begin{multline*}
   \zeta(1)^{-1}\sum_{i=n}^\infty(2i+v(\Delta'/\varpi))it^i\\
      = -\frac{t^n (-d n - 2 n^2 - 2 t - d t - 4 n t + 2 d n t + 4 n^2 t - 2 t^2 + 
         d t^2 + 4 n t^2 - d n t^2 - 2 n^2 t^2)}{(1 - t)^2},
\end{multline*}
where $d=v(\Delta'/\varpi)$. The contribution from the last lemma is 
\[
   (\log|w'/\pi| )(\log|\Delta'/w'^2|)|w'/\pi|^{-1}q^{-1}\\
      =- (n-1)(d+2n-2)t^{n}\cdot (\log q)^2.
\]
It follows that the sum (\ref{eqn log3}) is given by
   \begin{align*}&
\Xi(x)=     t^n\frac{4t + (d + 4 n-2) (1 -t)}{(1-t)^2}\cdot (\log q)^2 .
\end{align*}
\end{altenumerate}

Note that $d + 4 n-2=v(\Delta)+4v(u)-4v(w/\pi)+1$. Hence, by (\ref{eqn mu log0}),  (\ref{eqn mu log1}) and  {\bf Case I} (1), (2), (3), {\bf Case II} (1), (2), we complete the proof of Theorem \ref{thm expl x=0}.

\subsection{$x_0$-nilpotent integrals for $x_0\neq 0$}

In this subsection, we consider $x_0\in \fkb_\red\smallsetminus\fkb_{\red, \rs}$ with $x_0\neq 0$, and calculate the orbital integrals of the $x_0$-nilpotent elements in $\fks_\red$, i.e., of the elements in $\fks_\red$ mapping to $x_0$ under $\pi_\fks$, cf.~\S\ref{sec statementofgerm}. These elements are listed in \S\ref{sec germ nonzero}.1.
\begin{lemma}\label{lem orb ss u 0}
Let $x_0=(\lambda_0,0,0)\in\fkb_\red$, $\lambda_0\in F_0^\times$. Assume that $-\lambda_0\notin F_0^{\times, 2}$. Set $F'=F_0[X]/(X^2+\lambda_0)$. 
\begin{altenumerate}
\renewcommand{\theenumi}{\alph{enumi}}
\item\label{lem orb ss part i} In case $(0i)$ (i.e., $F'\not\simeq F$), 
$$
\Orb(y_+,\phi')=\eta(-\lambda_0)\Orb(y_-,\phi')=\frac{1}{2}\zeta(1)\cdot \begin{dcases}-2q^{-1}+q^{\frac{v(\lambda_0)}{2}}(1+q^{-1}),  & 2\mid v(\lambda_0),
\\
2q^{-1}(q^{\frac{v(\lambda_0)+1}{2}}-1), & 2\nmid v(\lambda_0),
\end{dcases}
$$
when $\lambda_0\in O_{F_0}$, and $\Orb(y_+,\phi')=\Orb(y_-,\phi')=0$ when $\lambda_0\notin O_{F_0}$.
\item\label{lem orb ss part ii} In case $(0ii)$ (i.e., $F'\simeq F$),  
$$\Orb(y_{-+},\phi')=\eta(-1)\Orb(y_{+-},\phi')=0 .$$
Furthermore,  
$$
\Orb(y_{++},\phi')=\eta(-1)\Orb(y_{--},\phi')=\frac{1}{2}\eta(-\alpha)\zeta(1)\cdot  \begin{dcases}-2q^{-1}+q^{\frac{v(\lambda_0)}{2}}(1+q^{-1}),  & 2\mid v(\lambda_0),
\\
2q^{-1}(q^{\frac{v(\lambda_0)+1}{2}}-1), & 2\nmid v(\lambda_0),
 \\
\end{dcases}$$ when $\lambda_0\in O_{F_0}$, and $\Orb(y_{++},\phi')=\Orb(y_{--},\phi')=0$ when $\lambda_0\notin O_{F_0}$.
\end{altenumerate}
\end{lemma}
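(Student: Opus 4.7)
The plan is to reduce each $\Orb(y,\phi')$ to an orbital integral of $\phi_0 = \mathbf{1}_{\fkk_{0,\red}}$ (or to $0$) via the transfer relation, extended from the regular semi-simple setting to these semi-simple but non-regular orbits. Each of $y_\pm$ and $y_{\pm\pm}$ has closed $H'(F_0)$-orbit and trivial-enough stabilizer that the defining integral $\int_{H'/Z_{H'}(y)} \phi'(h^{-1}yh)\,dh$ converges absolutely; simultaneously the transfer factor $\omega$ extends continuously from $\fks_{\red,\rs}$ to these closed orbits. Consequently, the identity $\omega(y)\Orb(y,\phi') = \Orb(x,\phi_i)$ which defines transfer at regular semi-simple elements continues to hold by an approximation argument: approach $y$ from within $\fks_{\red,\rs,i}$, match each approximant in $\fku_{i,\red,\rs}$, and pass to the limit using the local constancy provided by Lemma \ref{del rc lie} and its analogue for ordinary orbital integrals.

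First I would carry out the matching bookkeeping. For each of the six elements $y_+, y_-, y_{++}, y_{+-}, y_{-+}, y_{--}$ I would determine whether it sits in the closure of $\fks_{\red,\rs,0}$ or of $\fks_{\red,\rs,1}$ (or both), by examining the sign of $\eta(-\Delta)$ for small regular semi-simple perturbations (Proposition \ref{eta b01}) and the hermitian invariants of the generalized eigenspace under $\sqrt{-\lambda_0}$. In case $(0i)$, where $F' = F_0(\sqrt{-\lambda_0})$ is the unramified quadratic extension, both $y_\pm$ are expected to match with semi-simple elements of $\fku_{0,\red}$. In case $(0ii)$ the extension splits as $F' \simeq F$, the centralizer on the symmetric side becomes larger, and exactly two of the four $y_{\pm\pm}$ (namely $y_{-+}$ and $y_{+-}$, with the convention consistent with \eqref{eqn case 0ii} and \eqref{eqn case 0ii y--}) are expected to lie in the closure of $\fks_{\red,\rs,1}$, forcing their orbital integrals against $\phi'$ to vanish, while $y_{++}, y_{--}$ sit over $\fku_{0,\red}$.

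Next I would execute the direct computation of $\Orb(x,\mathbf{1}_{\fkk_{0,\red}})$ on the unitary side. Using the coordinates \eqref{coord u0}, a matching semi-simple $x \in \fku_{0,\red}$ has its upper $2 \times 2$ block with eigenvalues $\pm\sqrt{-\lambda_0}$ and vanishing row/column for the special vector. Its centralizer $Z_{H_0}(x)$ is the image of $F'^\times$, and applying the Iwasawa decomposition $H_0 = KAN$ as in \eqref{eqn iwasawa} the integral collapses to a count of cosets $a \in F_0^\times / O_{F_0}^\times$ of bounded valuation, governed by $v(\lambda_0)$; summing the resulting geometric series yields exactly the bracketed expression, with the prefactor $\tfrac{1}{2}\zeta(1)$ arising from $\vol(Z_{H_0}(x) \cap K_0^\flat)$. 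When $\lambda_0 \notin O_{F_0}$ no conjugate of $x$ lies in $\fkk_{0,\red}$, so the integral vanishes. Finally, the transfer factor $\omega(y) \in \{\pm 1, \pm\eta(-\lambda_0), \pm\eta(-\alpha)\}$ is computed directly from \eqref{omega lie} by evaluating $\det(y^i e)_{i=0,1,2}$ on each explicit representative, and multiplying $\omega(y)^{-1}$ into the unitary computation produces the $\eta(-\lambda_0)$ and $\eta(-1)$ (respectively $\eta(-\alpha)$) prefactors in the statement.

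The main obstacle will be Step 1, namely the extension of the matching relation to the semi-simple but non-regular locus and the precise sign bookkeeping in case $(0ii)$. The subtlety here is that in case $(0ii)$, $F' \simeq F$ means $\sqrt{-\lambda_0}$ already lies in $F$, which both enlarges the symmetric-side centralizer and creates ambiguity in pairing the four $y_{\pm\pm}$ orbits with unitary-side orbits; verifying the asserted vanishing $\Orb(y_{-+},\phi') = \eta(-1)\Orb(y_{+-},\phi') = 0$ is what pins down these sign conventions and ultimately forces the $\eta(-\alpha)$ in the nonvanishing formula. The approximation argument justifying the extended transfer relation at closed orbits is essentially standard (cf.\ the manipulations in the proof of Lemma \ref{lem orb cayley}), but it must be written out carefully to handle the fact that the discriminant vanishes along these orbits.
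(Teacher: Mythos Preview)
Your Step 2 (the direct Iwasawa computation of $\Orb(y_0,\mathbf{1}_{\fkk_{0,\red}})$ on the unitary side) is correct and is exactly what the paper does. The gap is in Step 1.

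The approximation argument you sketch cannot work as stated. At a non-regular semi-simple $x_0$, the fiber $\pi_\red^{-1}(x_0)$ on the $\fks$-side contains \emph{several} relevant orbits (two in case $(0i)$, four in case $(0ii)$), while the fiber on each $\fku_i$-side contains a different number (one in case $(0i)$, two in case $(0ii)$). There is no bijection between them, so there is no single identity $\omega(y)\Orb(y,\phi')=\Orb(x,\phi_i)$ to be obtained in the limit. What actually happens is that $\Orb(\sigma(x),\phi',s)$ for $x$ regular semi-simple near $x_0$ admits a germ expansion as a \emph{linear combination} of all the $x_0$-nilpotent orbital integrals $\Orb(n,\phi',s)$ with explicit germ coefficients $\Gamma_n(x,s)$ (Theorem \ref{thm germ ss s}), and likewise on the unitary side (Theorem \ref{thm germ ss u}). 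Comparing these two expansions yields a linear system (Theorem \ref{thm match x non0}) relating the $\fks$-side and $\fku$-side orbital integrals at $x_0$; inverting it with $\phi_1=0$ is where the factor $\tfrac12$ in part (a) comes from (both $y_+$ and $y_-$ feed into the same $y_0$), and is how the vanishing $\Orb(y_{-+},\phi')=0$ in part (b) is deduced --- it requires the additional computation that $\Orb(y_+,\phi_0)=\Orb(y_-,\phi_0)$ for the two unitary-side orbits, not a closure argument on the $\fks$-side.

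Concretely: your claim that $y_{-+},y_{+-}$ lie in the closure of $\fks_{\red,\rs,1}$ and that this forces vanishing is not the mechanism; all four $y_{\pm\pm}$ lie over the same $x_0$, and regular semi-simple elements in \emph{both} $\fks_{\red,\rs,0}$ and $\fks_{\red,\rs,1}$ specialize to combinations of all four. Your attribution of the $\tfrac12$ to a centralizer volume is also off --- the $\zeta(1)$ is the Iwasawa Jacobian, and the $\tfrac12$ is the $2\times 2$ (resp.\ $4\times 4$) change-of-basis between germ bases. To fix the proof, replace the limit argument by an appeal to Theorem \ref{thm match x non0} and then carry out your Step 2 unchanged.
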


\begin{proof}Note that $\phi'$ matches the pair $(\phi_0=\phi,\phi_1=0)$.
By Theorem \ref{thm match x non0}, we have case by case the following.  
\begin{altenumerate}
\item[(a) ]
In case $(0i)$, we have
\[
   \Orb(y_+,\phi') = \eta(-\lambda_0) \Orb(y_-,\phi')= \frac{1}{2}\Orb(y_0,\phi_0),
\]
where $y_0$ is any representative of the unique semi-simple orbit in $\fku_{0,\red}$ above $x_0\in\fkb_\red$.
\item[(b) ] In case $(0ii)$, since $ \Orb(y_\pm,\phi_1)=0$, we have 
\[
   \Orb(y_{++},\phi')=\eta(-1)\Orb(y_{--},\phi')=\frac{1}{4}\eta(-\alpha) \bigl(\Orb(y_+,\phi_0)+\Orb(y_-,\phi_0)\bigr)
\]
and
\[
   \Orb(y_{-+},\phi')=\eta(-1)\Orb(y_{+-},\phi')=\frac{1}{4}\eta(-\alpha) \bigl(\Orb(y_+,\phi_0)-\Orb(y_-,\phi_0)\bigr),
\]
where $y_\pm\in\fku_{0,\red}$ are representatives of the two semi-simple orbits in $\fku_{0,\red}$ above $x_0\in\fkb_\red$.
\end{altenumerate}

Therefore it suffices to show the following in both cases: \emph{let $y_0\in \fku_\red$ be any semi-simple element with  invariants $x_0$.
Then we have
\[
\Orb(y_0,\phi)=\zeta(1)\cdot \begin{dcases}-2q^{-1}+q^{\frac{v(\lambda_0)}{2}}(1+q^{-1}),  & 2\mid v(\lambda_0),
\\
2q^{-1}\bigl(q^{\frac{v(\lambda_0)+1}{2}}-1\bigr), & 2\nmid v(\lambda_0),
\end{dcases}
\]
when $\lambda_0\in O_{F_0}$, and $\Orb(y_0,\phi)=0$ when $\lambda_0\notin O_{F_0}$.} In particular, the independence on the choice of $y_0$ implies that $\Orb(y_+,\phi_0)=\Orb(y_-,\phi_0)$ in case $(0ii)$.

We may assume that $y_0$ is of the form   
$$
y_0=\begin{bmatrix}  0 & -\lambda_0/ \epsilon &0 \\   \epsilon   & 0 &  0\\
0& 0 & 0
\end{bmatrix},\quad  |\epsilon|=1.
$$
We use the Iwasawa decomposition \eqref{eqn iwasawa} as in the proof of Lemma \ref{lem nil orb}.
Note that $\phi$ is $K$-invariant. Then we have by definition
\[
   \Orb(y_0,\phi)=\zeta(1)\bigints_{z\in F,t\in F_0}\phi\left(
\begin{bmatrix} \epsilon t & z\ov z (-\lambda_0/ \epsilon - \epsilon  t^2)&0 \\   \epsilon  /z\ov z & -  \epsilon  t&  0\\
0& 0 & 0
\end{bmatrix}\right)dz\,dt,
\]
where the integrand is constrained by
$$
|t|\leq 1,\quad  | (z\ov z)^{-1}|\leq 1, \quad |z\ov z(-\lambda_0 - \epsilon^2  t^2)|\leq 1.
$$
Therefore we see that $|\lambda_0|\leq 1$ or the integral vanishes. Note that $-\lambda_0$ is not a square by assumption. The integral is then equal to
\begin{align}\label{eqn int1}
\int_{
|t^2|\leq |\lambda_0|,1\leq |z\ov z|\leq |\lambda_0^{-1}|}dz\,dt+\int_{
 |\lambda_0|< |t^2|\leq1,1\leq| z\ov z| \leq |t^{-2}|}dz\,dt.
\end{align}
The first integral is equal to
$$
q^{-\lfloor\frac{v(\lambda_0)+1}{2}\rfloor}(|\lambda_0|^{-1}-q^{-1}),
$$
and the second one is equal to
\begin{align*}
&\int_{|\lambda_0|< |t^2|\leq1}(|t|^{-2}-q^{-1})dt
\\=& \int_{ |\lambda_0|< |t^{-2}|\leq1} dt-q^{-1} \int_{ |\lambda_0|< |t^2|\leq1 }dt.
\end{align*}
We distinguish two cases.
\begin{altenumerate}
\item \emph{$v(\lambda_0)$ is even.} Then the integral (\ref{eqn int1}) is equal to
\[
   q^{-\frac{v(\lambda_0)}{2}}(q^{v(\lambda_0)}-q^{-1}) + ((q^{\frac{v(\lambda_0)-2}{2}}-q^{-1})-q^{-1}(1-q^{-\frac{v(\lambda_0)}{2}}))
      = -2q^{-1}+q^{\frac{v(\lambda_0)}{2}}(1+q^{-1}).
\]
\item \emph{$v(\lambda_0)$ is odd.} Then the integral (\ref{eqn int1}) is equal to
\[
   q^{-\frac{v(\lambda_0)+1}{2}}(q^{v(\lambda_0)}-q^{-1}) + ((q^{\frac{v(\lambda_0)-1}{2}}-q^{-1}) - q^{-1}(1-q^{-\frac{v(\lambda_0)+1}{2}}))
      = 2q^{-1}(q^{\frac{v(\lambda_0)+1}{2}}-1).
\]
\end{altenumerate}
Noting the factor $\zeta(1)$, 
this completes the proof.
\end{proof}

\begin{lemma}\label{lem orb ss u 1} Let $x_0=(\lambda_0,u_0,w_0)\in \fkb_\red\smallsetminus\fkb_{\red, \rs}$  with $u_0\neq 0$. Then 
$$
 \Orb(y_+,\phi')=\Orb(y_-,\phi')= \frac{1}{2}\zeta(1)\cdot\begin{dcases}2q^{-1}(q^{v(u_0)+1  }-1), & |\lambda_0|<|u_0|^2,
\\
2q^{-1}\bigl(q^{\frac{v(\lambda_0)+1}{2}}-1\bigr), & |\lambda_0|>|u_0|^2,
\end{dcases}
$$
when $\lambda_0,u_0\in O_{F_0},w_0\in O_F$, and $ \Orb(y_+,\phi')=\Orb(y_-,\phi')=0$ otherwise.

\end{lemma}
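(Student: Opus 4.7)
The plan is to mimic the strategy of Lemma~\ref{lem orb ss u 0}: reduce the computation of $\Orb(y_\pm,\phi')$ on the symmetric-space side to an ordinary orbital integral on $\fku_{0,\red}$ via the matching theorem, and then evaluate this unitary orbital integral explicitly using the Iwasawa decomposition of $H_0(F_0)$.

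First I would recall from the case analysis in \S\ref{sec germ nonzero} (equation \eqref{eqn case 1}) the two representatives $y_+,y_-\in\fks_\red$ lying over $x_0=(\lambda_0,u_0,w_0)$ with $u_0\neq 0$; these are distinguished by a sign analogous to the $y_\pm$ in the previous lemma, and both match the same (unique) semi-simple $H_0$-orbit in $\fku_{0,\red}$ (in contrast to case $(0ii)$ there are no separate $++,--,+-,-+$ subcases here, because the $u_0\neq 0$ component already breaks whatever symmetry gave rise to the splitting). The matching assertion in Theorem~\ref{thm match x non0}, together with the vanishing $\phi_1=0$, then yields the identity
\[
   \Orb(y_+,\phi') = \Orb(y_-,\phi') = \tfrac{1}{2}\Orb(y_0,\phi),
\]
for any chosen representative $y_0\in\fku_{0,\red}$ over $x_0$, matching the first displayed formula of Lemma~\ref{lem orb ss u 0}\eqref{lem orb ss part i}. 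In particular the common value is independent of the chosen representative $y_0$.

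Next I would choose an explicit $y_0$ using the coordinates \eqref{coord u0} of $\fku_{0,\red}$. A convenient choice (exploiting $H_0$-conjugation to move $\mathbf{b}$ into a standard position and using the invariants $\lambda(y_0)=\lambda_0$, $u(y_0)=u_0$, $w(y_0)=w_0$ from \eqref{u,w,lambda}) reduces $y_0$ to a matrix involving entries $\epsilon,\,\lambda_0/\epsilon,\,b_1,\,b_2$ with $|\epsilon|=1$ and $b_1,b_2$ determined by $u_0,w_0$; crucially the bottom-right entry $d$ vanishes. Then apply the Iwasawa decomposition \eqref{eqn iwasawa}
$h=k\diag(z,\ov z^{-1})\cdot\bigl[\begin{smallmatrix}1 & t\\ & 1\end{smallmatrix}\bigr]$ with $dh=\zeta(1)\,dk\,dz\,dt$, use $K$-invariance of $\phi=\mathbf{1}_{\fkk_{0,\red}}$ to drop the $k$-integration, and translate the condition $h^{-1}y_0h\in\fkk_{0,\red}$ into explicit inequalities on $(z,t)$ involving $v(\lambda_0),v(u_0),v(w_0)$. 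This is where the dichotomy $|\lambda_0|\lessgtr|u_0|^2$ enters: the two different inequalities govern which of $\lambda_0$ or $u_0^2$ controls the valuation of the $(2,1)$-entry after conjugation, and hence which range for $|z\ov z|$ and $|t|$ is cut out.

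After this, the integration is a direct (if slightly tedious) count analogous to the one in \eqref{eqn int1}, splitting according to the parity/relative size of the valuations. I would then verify that in the regime $|\lambda_0|<|u_0|^2$ one obtains the volume $2q^{-1}(q^{v(u_0)+1}-1)$, while in the regime $|\lambda_0|>|u_0|^2$ one recovers exactly the expression $2q^{-1}(q^{(v(\lambda_0)+1)/2}-1)$ already computed in Lemma~\ref{lem orb ss u 0}\eqref{lem orb ss part i} (the odd $v(\lambda_0)$ case; note that $-\lambda_0\notin F_0^{\times,2}$ holds automatically on the closure of $\fkb_{\red,\rs,1}$ by Lemma~\ref{intimage}\eqref{integral b non rs}\eqref{exceptional}, so only the ``odd'' branch arises). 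The final vanishing outside $\lambda_0,u_0\in O_{F_0}$, $w_0\in O_F$ follows from the support condition $y_0\in\fkk_{0,\red}$ by simply inspecting the diagonal and off-diagonal entries of $y_0$, which are constrained by integrality of the invariants.

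The main obstacle I anticipate is the careful bookkeeping in the Iwasawa integration when $|\lambda_0|$ and $|u_0|^2$ are of comparable (but unequal) size, and in particular verifying the uniform non-vanishing/vanishing dichotomy (the $-\lambda_0\in F_0^{\times,2}$ exceptional locus of Lemma~\ref{intimage} is excluded on the closure of $\fkb_{\red,\rs,1}$, but one still needs to confirm that the Iwasawa computation is insensitive to the isomorphism class of the quadratic extension $F'=F_0[X]/(X^2+\lambda_0)$, so that one recovers a single formula with no $\eta(-\lambda_0)$ prefactor distinguishing $y_+$ from $y_-$). Once this is done the stated formula follows by combining the reduction identity with the explicit volume count and the factor $\tfrac12\zeta(1)$.
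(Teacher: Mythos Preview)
Your plan is essentially the paper's own argument: reduce via Theorem~\ref{thm match x non0}(c) to $\Orb(y_\pm,\phi')=\tfrac12\Orb(y_0,\phi)$, then compute $\Orb(y_0,\phi)$ by Iwasawa decomposition, splitting on $|\lambda_0|\lessgtr|u_0|^2$ and reusing Lemma~\ref{lem orb ss u 0} in the first regime. Two small corrections are worth making.

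First, your justification that only the odd-$v(\lambda_0)$ branch arises is misdirected: Lemma~\ref{intimage} concerns points in the closure of $\fkb_{\rs,1}$, but the present lemma is stated for \emph{all} $x_0$ with $u_0\neq 0$. The correct (and simpler) reason is intrinsic: since $x_0\notin\fkb_{\red,\rs}$ and $u_0\neq 0$, we have $\Delta(x_0)=\lambda_0 u_0^2+w_0^2=0$, whence $-\lambda_0/\varpi=(w_0/\pi u_0)^2\in F_0^{\times,2}$ whenever $\lambda_0\neq 0$. The paper builds this into the choice of representative \eqref{eqn r=1 u}, taking $\alpha\in F_0^\times$ with $\alpha^2=-\lambda_0/\varpi$ and $b\in F$ with $-2b\ov b\,\alpha=u_0$; this is not quite the ``$|\epsilon|=1$'' form you describe but rather has $(2,1)$-entry equal to $1$. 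With this representative the dichotomy $|\lambda_0|\lessgtr|u_0|^2$ is exactly $|\alpha|\gtrless|u_0|$, i.e.\ $|b|\leq 1$ versus $|b|>1$; in the former case the $\mathbf b$-column constraints become redundant and one literally recovers the integral of Lemma~\ref{lem orb ss u 0} (odd branch), while in the latter one performs a short direct volume count over $|b|\leq|z|\leq|b\pi\alpha|^{-1}$.

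Second, don't forget the degenerate case $\lambda_0=0$ (forcing $w_0=0$), which is not covered by \eqref{eqn r=1 u}; the paper uses \eqref{eqn r=1 lambda=0} instead and notes that the computation is the same as the $|\lambda_0|<|u_0|^2$ case.
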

\begin{proof} By Theorem \ref{thm match x non0} (item ${\rm (c)}$, i.e., case $(1)$), we have
\begin{align*}
 \Orb(y_+,\phi')=\Orb(y_-,\phi')= \frac{1}{2}\Orb(y_0,\phi_0) ,
\end{align*}
where $y_0$ is any representative of the unique semi-simple orbit in $\fku_{0,\red}$ above $x_0$. It remains to show that 
$$
\Orb(y_0,\phi)=\zeta(1)\cdot\begin{dcases}2q^{-1}(q^{v(u_0)+1  }-1), & |\lambda_0|<|u_0|^2,
\\
2q^{-1}(q^{\frac{v(\lambda_0)+1}{2}}-1), & |\lambda_0|>|u_0|^2,
\end{dcases}
$$
when $\lambda_0,u_0\in O_{F_0},w_0\in O_F$, and $\Orb(y_0,\phi)=0$ otherwise.

First we assume $\lambda_0\neq 0$. We use the representative with invariants $(\lambda_0,u_0,w_0)$ (cf.~(\ref{eqn r=1 u}) and (\ref{eqn r=1 lambda=0}))
$$
y_0=\begin{bmatrix}0&-\lambda_0 &\pi\alpha b\\1  & 0&b\\ \cdots&\cdots& 0
\end{bmatrix},
$$
where  $\alpha^2=- \lambda_0/ \varpi, \alpha\in F_0^\times,$ and $-2b\ov b \alpha=u_0$. 
Again by the Iwasawa decomposition we arrive at 
$$
\Orb(y_0,\phi)=\zeta(1)\bigints_{z\in F,t\in F_0}\phi\left(
\begin{bmatrix}  t & z\ov z (-\lambda_0 -  t^2)& bz(\pi\alpha +t) \\  1   /z\ov z & -   t&  b/\ov z\\
\cdots& \cdots & 0
\end{bmatrix}\right)\,dz\,dt.
$$
It is easy to see that $\Orb(y,\phi)=0$ unless $\lambda_0,u_0,w_0\in O_F$ which we assume from now on. 
The integrand is constrained by
$$
|t|\leq 1,\quad  | (z\ov z)^{-1}|\leq 1,\quad  |z\ov z(-\lambda_0 -  t^2)|\leq 1,
$$
and 
$$|bz(\pi\alpha +t)|\leq 1,\quad |b/\ov z|\leq 1.$$
Note that  $-\lambda_0=\alpha^2\varpi$ is not a square in $F_0$.  We distinguish two subcases:
\begin{altitemize}
\item \emph{Case $|\alpha|\geq |u_0|$, $($i.e., $|\lambda_0|>|u_0|^2)$.} Then we have $|b|\leq 1$.
The conditions $|bz(\pi\alpha +t)|\leq 1$ and $|b/\ov z|\leq 1$  are redundant by $|b|\leq 1$, $ |z\ov z(\lambda_0 -  t^2)|\leq 1$, $ | (z\ov z)^{-1}|\leq 1$.
In this case we may simply apply the previous lemma, noting that $v(\lambda_0)$ is odd. 

\item \emph{Case $|\alpha|< |u_0|$, $($i.e., $|\lambda_0|<|u_0|^2).$} Then we have $|b|>1$. In this case the constraints are reduced to
$$
|bzt|\leq 1,\quad | z|\geq |b|, \quad|bz\pi\alpha|\leq 1.
$$
Let $v_F$ denote the normalized valuation on $F$. We calculate the orbital integral according to  $v_F(z)$, which varies from $v_F(b)$ to $-v_F(b\pi\alpha)$:
 \begin{align}\label{doubleint}
\Orb(y_0,\phi)&=\zeta(1)\int_{|b|\leq |z|\leq 1/|b\pi\alpha|} \biggl(\int_{|t|_F\leq 1/|bz|}dt\biggr)dz.
\end{align}
The double integral in \eqref{doubleint} is equal to
 \begin{align*}
&(1-q^{-1})\bigl(q^{v_F(b)}  q^{-v_F(b)}+q^{v_F(b)-1}  (q^{-v_F(b)+1} +q^{-v_F(b)+2})+ \dotsb +q^{-v_F(\alpha)/2-1}q^{v_F(b)+1+v_F(\alpha)}\bigr).
\end{align*}
We hence arrive at
 \begin{align*}
\Orb(y_0,\phi)&=\zeta(1)(1-q^{-1})\Bigl(1+1+q+q+\dotsb+q^{v_F(b)+\frac{v_F(\alpha)}{2}}+q^{v_F(b)+\frac{v_F(\alpha)}{2}}\Bigr)
\\&=2\zeta(1)(1-q^{-1})\frac{q^{v_F(b)+\frac{v_F(\alpha)}{2}+1}-1}{q-1}
\\&=2\zeta(1)q^{-1}(q^{v(u_0)+1}-1).
\end{align*}
Here we used that $v_F(u_0)=v_F(\alpha)+2v_F(b)$ and $v_F(u_0)=2v(u_0)$.
\end{altitemize}

The case $\lambda_0=0$ is similar to the case $|\lambda_0|<|u_0|^2$ above, and we omit the details. This completes the proof.
\end{proof}

\subsection{Germ expansion of $\del_1\bigl(\sigma(x),\phi'\bigr)$  around $x_0\neq 0$}
The following theorem together with the values in Lemma \ref{lem orb ss u 0} and Lemma \ref{lem orb ss u 1} give the germ expansion around nonzero elements $x_0\in\fkb_\red\smallsetminus\fkb_{\red, \rs}$. We use the classification of such elements in \S\ref{sec germ nonzero}.1. 
\begin{theorem}\label{thm expl x neq 0}
Let $x_0=(\lambda_0,u_0,w_0)\in\fkb_\red\smallsetminus\fkb_{\red, \rs}$ be a nonzero element. Assume that, if $u_0=w_0=0$ then $-\lambda_0\notin F_0^{\times, 2}$, and set $F'=F_0[X]/(X^2+\lambda_0)$.
Let  $x=(\lambda,u,w)\in\fkb_{\red,\rs,1}$ be in a small neighborhood of $x_0$. 
\begin{altenumerate}
\renewcommand{\theenumi}{\alph{enumi}}
\item In case $(0i)$ (i.e., $u_0=w_0=0$ and $F'\not\simeq F$), 
$$
   \del_1\bigl(\sigma(x),\phi'\bigr)=\eta(-\lambda)\Orb(y_-,\phi')\log|\Delta(x)|+C_1;
   $$
\item In case $(0ii)$ (i.e., $u_0=w_0=0$ and  $F'\simeq F$),  
$$
  \del_1\bigl(\sigma(x),\phi'\bigr)=\eta(-1)\Orb(y_{--},\phi')\log|\Delta(x)|+C_2;
 $$
\item In case $(1)$ (i.e., $u_0\neq 0$), 
$$
  \del_1\bigl(\sigma(x),\phi'\bigr)=\Orb(y_{-},\phi')\log|\Delta(x)/u(x)^2|+C_3;
 $$
\end{altenumerate}
where $C_1,C_2, C_3$ are constants (depending on $x_0$ only, independent of the choice of $\phi'$). Here  $\sigma(x)$ is the section defined by \eqref{eqn sigma} in cases $(0i)$ and $(1)$, and by \eqref{eqn sigma 1} in case $(0ii)$.

\end{theorem}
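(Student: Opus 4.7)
The strategy is to apply the germ expansion formula
\[
\del_1\bigl(\sigma(x),\phi'\bigr) = \sum_{n\in \pi_\red^{-1}(x_0)/H'} \partial\Gamma_n(x)\,\Orb(n,\phi',0)
\]
of \eqref{germofdel1}, and simply plug in the explicit values assembled in the table of \S\ref{sec 13}, together with the values of the nilpotent orbital integrals $\Orb(n,\phi',0)$ for $x_0$-nilpotent $n$ computed in Lemmas \ref{lem orb ss u 0} and \ref{lem orb ss u 1}. Since $x_0\ne 0$, the orbit set $\pi_\red^{-1}(x_0)/H'$ is finite and has been enumerated in \S\ref{sec germ nonzero}.1, with representatives $y_\pm$ (case $(0i)$, case $(1)$), or $y_{++}, y_{+-}, y_{-+}, y_{--}$ (case $(0ii)$).

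The second step is a case-by-case matching. In case $(0i)$, the table gives $\partial\Gamma_{y_+}(x)=0$ and $\partial\Gamma_{y_-}(x)=\eta(-\lambda_0)\log|\Delta(x)/\lambda_0|$; hence
\[
\del_1\bigl(\sigma(x),\phi'\bigr) = \eta(-\lambda_0)\Orb(y_-,\phi')\,\log|\Delta(x)/\lambda_0|.
\]
Splitting $\log|\Delta(x)/\lambda_0|=\log|\Delta(x)|-\log|\lambda_0|$ absorbs the second piece into a constant depending only on $x_0$, and local constancy of the ramified character $\eta$ at $-\lambda_0\in F_0^\times$ (valid because $\lambda_0\ne 0$) lets us replace $\eta(-\lambda_0)$ by $\eta(-\lambda)$ for $x$ in a sufficiently small neighborhood of $x_0$. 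The other two cases are handled identically. In case $(1)$, one uses $\partial\Gamma_{y_+}=0$ and $\partial\Gamma_{y_-}(x)=\log|\Delta(x)/(u_0^2\varpi)|$ together with $u_0\ne 0$ (so $u(x)$ is nonvanishing near $x_0$ and $\log|u(x)^2|-\log|u_0^2|$ is a local constant). In case $(0ii)$, two of the four orbits (namely $y_{+-}$ and $y_{-+}$) have vanishing orbital integral by Lemma \ref{lem orb ss u 0}\eqref{lem orb ss part ii}, and $\partial\Gamma_{y_{++}}(x)=0$, so only the $y_{--}$ term survives, with the corresponding germ function $\partial\Gamma_{y_{--}}$ supplying the factor $\eta(-1)\log|\Delta(x)/\lambda_0|$ (this is the content of Corollary \ref{cor F'=F}, the $\club$ entry in the table being dual to the vanishing $\Orb(y_{-+},\phi')=0$ via the symmetry between $y_{-+}$ and $y_{--}$).

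The one step that is not purely mechanical is the verification in case $(0ii)$ that all the $\club$-marked germs either pair with a vanishing orbital integral or collapse, after the symmetries of Corollary \ref{cor F'=F}, into the single surviving term displayed in the theorem. This is where one must be careful, because there are a priori four nilpotent orbits over $x_0$, and the bookkeeping mixes the action of the Galois automorphism of $F'/F_0$ with the sign conventions built into the labelling $y_{\pm\pm}$. I expect this to be the main obstacle; the remaining cases $(0i)$ and $(1)$ follow by the direct substitution sketched above. Once case $(0ii)$ is reconciled, the three identities of the theorem follow at once, with $C_1,C_2,C_3$ equal to the constants produced from $-\Orb(y_*,\phi')\cdot\log|\lambda_0|$ or $-\Orb(y_-,\phi')\cdot\log|u_0^2\varpi|$, which manifestly depend only on $x_0$ and are independent of the choice of $\phi'$ once one fixes the pair $(\mathbf{1}_{\fkk_{0,\red}},0)$ to which $\phi'$ transfers (this independence being built into Remark \ref{rem del1}).
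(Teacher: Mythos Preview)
Your proposal is essentially correct and follows the same route as the paper: differentiate the explicit germ expansion of Theorem~\ref{thm germ ss s} at $s=0$, discard the terms whose orbital integrals vanish by Lemma~\ref{lem orb ss u 0}, and absorb the locally constant pieces ($|\lambda|$, $|u|$, resp.\ $|z_1z_2/\Delta|$) into the constants $C_i$.

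Two points need correction. First, your appeal to Corollary~\ref{cor F'=F} in case $(0ii)$ is misplaced: that corollary treats case $(0i)$ with $F'\simeq F_0\times F_0$, which is precisely the situation \emph{excluded} by the hypothesis $-\lambda_0\notin F_0^{\times,2}$. In case $(0ii)$ there is no pole and no regularization; the germ function $\Gamma_{y_{--}}(x,s)=\eta(z_1z_2)|z_1z_2|^{-s}$ is read off directly from Theorem~\ref{thm germ ss s}(b), and since $z_1z_2=\Delta/(4\lambda)$ with $\eta(4\lambda)=1$ (here $\lambda=-\alpha^2\varpi$ with $\alpha\in F_0^\times$), one gets $\eta(z_1z_2)=\eta(\Delta)=-\eta(-1)$ and $|z_1z_2/\Delta|=|4\lambda_0|^{-1}$ locally constant. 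This yields $\partial\Gamma_{y_{--}}(x)=\eta(-1)\log|\Delta/\lambda_0|$ and the result follows at once. No Galois-symmetry bookkeeping is involved.

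Second, and relatedly, case $(0ii)$ is not an obstacle at all; once $\Orb(y_{+-},\phi')=\Orb(y_{-+},\phi')=0$ is invoked, it is exactly as mechanical as cases $(0i)$ and $(1)$. The paper's proof dispatches all three cases in four lines.
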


\begin{proof}We simply denote $\Delta$ for $\Delta(x)$. Note that $\eta(-\Delta)=-1$ for $x\in \fkb_{\red,\rs,1}$ (cf.~Proposition \ref{eta b01}). We apply Theorem \ref{thm germ ss s} and use the notation in its statement. Case $(0i)$ follows from the fact $|\lambda|$ is a constant when $x$ is near $x_0$. In case $(0ii)$, by Lemma \ref{lem orb ss u 0}, we have $\Orb(y_{-+},\phi')=\Orb(y_{+-},\phi')=0$. This case then follows easily by $\eta(z_1z_2)=\eta(\Delta)$ and since  $|z_1z_2/\Delta|$ is a constant when $x$ is near $x_0$.  Case $(1)$ follows from the fact that $|u(x)|$ is a constant when $x$ is near $x_0$.
\end{proof}

\section{Comparison}\label{section:comparison}

\subsection{Statement of the theorem}
We denote by $\varphi$ the function on $\fkb_{\red,\rs}$, 
\begin{equation}\label{eqn varphi4 12.1}
   \varphi(x):= 
	\begin{cases}
		2\omega(y)\del(y,\phi')+\lInt(x)\cdot\log q,  &  y\in \fks_\red,\ \pi_{\fks}(y)=x\in \fkb_{\red,\rs,1};\\
		0,  &  x\in \fkb_{\red,\rs,0}.
   \end{cases}
\end{equation}
We define $\varphi_1$ to be the analogous function where we replace $\del$ by $\del_1$, and we define  $\varphi_2$ to be the function
\[
   \varphi_2(x):= 
	\begin{cases}
		2\omega(y)\del_2(y,\phi'),  &  y\in \fks_\red,\ \pi_\fks(y)=x\in \fkb_{\red,\rs,1};\\
      0,  &  x\in \fkb_{\red,\rs,0}.
\end{cases}
\]
Hence
\begin{equation}\label{eqn varphi=1+2}
\varphi=\varphi_1+\varphi_2.
\end{equation}
To prove Theorem \ref{mainforred}, it suffices to show that both $\varphi_1$ and $\varphi_2$ are orbital integral functions.
We now show that  $\varphi_1$ is an orbital integral function. By Corollary \ref{cor germ converse}, it suffices to show the following result.

\begin{theorem}\label{locconsthm}
For every $x_0\in \fkb_\red$, there exists an open neighborhood $V_{x_0}$ of $x_0$ such that $\varphi_1|_{V_{x_0}\cap \fkb_{\red,\rs, 1}}$ is a constant function. 
\end{theorem}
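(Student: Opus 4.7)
The plan is to treat four cases according to the location of $x_0 \in \fkb_\red$: when $x_0$ is regular semi-simple; when $x_0$ lies outside the closure of $\fkb_{\red,\rs,1}$; when $x_0 = 0$; and when $x_0 \in \fkb_\red \setminus (\fkb_{\red,\rs} \cup \{0\})$ lies in the closure of $\fkb_{\red,\rs,1}$. In the first two cases we will find $V_{x_0}$ making $\varphi_1$ constant (or making its domain vacuous) by topological considerations alone; in the remaining two, the argument is a direct comparison of the explicit formulas for $\del_1(\sigma(x),\phi')$ and $\lInt(x)$ coming respectively from \S\ref{germspecial} and \S\ref{calc of l-Int}.

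If $x_0 \in \fkb_{\red,\rs,0}$ or if $x_0$ lies outside the closure of $\fkb_{\red,\rs,1}$, we shrink $V_{x_0}$ so that $V_{x_0} \cap \fkb_{\red,\rs,1} = \emptyset$; then $\varphi_1$ vanishes identically on $V_{x_0} \cap \fkb_{\red,\rs,1}$. If instead $x_0 \in \fkb_{\red,\rs,1}$, then $\lInt$ is locally constant at $x_0$ by Proposition~\ref{Intrelcomp}; the function $y \mapsto \omega(y)\del(y,\phi')$ is locally constant at $\sigma(x_0)$ by differentiation of an absolutely convergent orbital integral together with the local constancy of $\omega$ on the regular semi-simple set; and $\varphi_2$ is locally constant at $x_0$ by virtue of being an orbital integral function (Theorem~\ref{thm error term}). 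Shrinking $V_{x_0}$ to a common neighborhood of local constancy renders $\varphi_1 = \varphi - \varphi_2$ constant on $V_{x_0} \cap \fkb_{\red,\rs,1}$.

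For $x_0 = 0$, I would combine Theorem~\ref{thm expl x=0} with the explicit calculation of $\lInt(x)$ in \S\ref{calc of l-Int}. Under the dictionary $v_D(u) = 2m$, $v_D(w) = 2m + \ell_+$, $v_D(\Delta) = 4m + 2\ell_-$ from \eqref{remember valu}, the five subcases of Theorem~\ref{thm expl x=0} (Cases I(1)--I(3) and II(1)--II(2)) correspond bijectively to the five subcases of \S\ref{calc of l-Int}, and in each subcase one must verify a rational-function identity in $t = q^{-1}$ of the form
\[
   2\omega(\sigma(x)) \Phi(x) - 2\omega(\sigma(x)) q^{-1}\zeta(1) \log|\Delta/\varpi| + \lInt(x) \cdot \log q = C,
\]
where $C$ depends only on the nilpotent orbital integrals $\Orb(n_{0,\pm},\phi')$ and $\Orb(n(\mu),\phi')$ from Lemmas~\ref{nilpintreg} and~\ref{lem nil orb}. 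The crucial cancellation is between the $\ell_-$-linear piece of $\lInt(x) \cdot \log q$ and the logarithmic term $-2\omega(\sigma(x)) q^{-1}\zeta(1) \log|\Delta/\varpi|$; the remaining $v(u)$- and $v(w)$-dependence of $\Phi(x)$ and of $\lInt(x)$ cancels separately in each subcase.

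For the fourth case ($x_0 \neq 0$, not regular semi-simple, in the closure of $\fkb_{\red,\rs,1}$), I apply Theorem~\ref{thm expl x neq 0}: in each of its cases (a)--(c), $\del_1(\sigma(x),\phi')$ is a constant plus a multiple of $\log|\Delta(x)|$ (using that $v(u_0), v(w_0), v(\lambda_0)$ whichever are nonzero are constant near $x_0$), with multiplier an explicit orbital integral $\Orb(y_\pm,\phi')$ or $\Orb(y_{\pm\pm},\phi')$ supplied by Lemmas~\ref{lem orb ss u 0} and~\ref{lem orb ss u 1}. For $x$ near $x_0$ in $\fkb_{\red,\rs,1}$, $\lInt(x)$ varies with $x$ only through its $\ell_-$-linear piece in the applicable branch of \S\ref{calc of l-Int}, with $\ell_- = v(\Delta(x)) - 2v(u_0)$. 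A sample check, for case (c) with $|\lambda_0| < |u_0|^2$: Lemma~\ref{lem orb ss u 1} gives $\Orb(y_-,\phi') = (t^{-m} - t)/(1-t)$ with $m = v(u_0)$, matching (up to the sign $\omega(\sigma(x))$) the $\log|\Delta(x)|$-coefficient $2(t^{-m}-t)/(1-t)$ that arises from the corresponding branch of \S\ref{calc of l-Int} after multiplication by $\log q$, so that the $\log|\Delta(x)|$ contributions cancel. The main obstacle in the last two cases is the combinatorial bookkeeping: each individual identity reduces to an elementary manipulation of rational functions in $t$, but many subcases must be checked, and the sign $\omega(\sigma(x))$ must be tracked via Lemma~\ref{red omega compat} and \eqref{sign Sn} to ensure both that it is locally constant on $V_{x_0} \cap \fkb_{\red,\rs,1}$ and that it enters each coefficient match with the correct orientation.
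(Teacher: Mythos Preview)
Your proposal is correct and follows essentially the same approach as the paper: reduce to the case where $x_0$ lies in the closure of $\fkb_{\red,\rs,1}$ but not in $\fkb_{\red,\rs,1}$ itself, then compare Theorem~\ref{thm expl x=0} with the formulas of \S\ref{calc of l-Int} when $x_0=0$, and Theorem~\ref{thm expl x neq 0} together with Lemmas~\ref{lem orb ss u 0}, \ref{lem orb ss u 1} with (reorganized forms of) \S\ref{calc of l-Int} when $x_0\neq 0$. Your sample check in case~(c) is correct.

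Two minor remarks. First, for $x_0\in\fkb_{\red,\rs,1}$ you can argue more directly: the germ expansion around a regular semi-simple $x_0$ is trivial ($\Gamma_{\sigma(x_0)}(x,s)\equiv 1$), so $\del_1(\sigma(x),\phi')=0$ identically and $\varphi_1(x)=\lInt(x)\cdot\log q$, which is locally constant by Proposition~\ref{Intrelcomp}; invoking Theorem~\ref{thm error term} here is unnecessary. Second, the transfer factor used in Part~\ref{germ expansion part} is normalized (see~\eqref{eqn omega n=3}) so that $\omega(\sigma(x))=1$ for the section~\eqref{eqn sigma}, and $\omega(\sigma(x))=\eta(-\alpha)$ for the section~\eqref{eqn sigma 1} in case~$(0ii)$; this removes most of the sign-tracking you flag as an obstacle. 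The paper also packages the reorganization of $\lInt(x)$ near $x_0\neq 0$ into two preparatory lemmas (Lemmas~\ref{lem int case r=0} and~\ref{lem int case r=1}), which makes the final comparison cleaner than working directly from the raw subcases of \S\ref{calc of l-Int}.
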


The only non-trivial case is when $x_0$ lies in the closure of $ \fkb_{\red,\rs, 1}$, but not in $ \fkb_{\red,\rs, 1}$ itself. So, let us assume this. The proof of Theorem \ref{locconsthm} will occupy the rest of this section. We first treat the case $x_0=0$ and then move on to the case where $x_0\neq 0$, in the order appearing in the table in \S\ref{sec 13}. 

Before proceeding, we recall from \eqref{remember valu} that
$$
v_D(u)=2m,\quad v_D(w)=2m+\ell_+,\quad v_D(\Delta)=4m+2\ell_- ,
$$
and
\begin{equation*}
v(\lambda)=\min\{\ell_+,\ell_-\}.
\end{equation*}
The last identity follows because in the expression $\lambda={\rm N}\alpha'_+ +{\rm N}\alpha'_-$, when the valuations of both summands are identical, there can be no cancellation.

\subsection{The case $x_0=0$}
Theorem \ref{thm expl x=0} calculates $\omega(y)\del_1(y,\phi')$ for $y=\sigma(x)$ the explicit section introduced in \eqref{eqn sigma} below. Recall  that in Theorem \ref{thm expl x=0}, the valuation is taken to be the normalized valuation for $F_0$ and therefore $v_D(\,\cdot\,)=2v(\,\cdot\,)$.

We compare the formulas in  Theorem \ref{thm expl x=0} and the formulas for $\Int(x)$ in \S\ref{sec expl Lie}. We see that the case distinctions in both formulas are identical; we furthermore see case-by-case that, when $x\in \fkb_{\red,\rs,1}$ is close to $0$,
$$
 \varphi_1(x)=  \frac{4t(t-3)}{(1-t)^2}\cdot\log q
$$
is  constant. Here $t=q^{-1}$ for the rest of this section. Hence Theorem \ref{locconsthm} follows in this case.

\subsection{The case $x_0\neq 0$.}
The theorem holds trivially if $x_0$ is not integral, so we assume from now on that $x_0$ is integral. We first consider the case $x_0=(\lambda_0,0,0)$, $\lambda_0\neq 0$, and derive from \S\ref{sec expl Lie} a convenient expression for the quantity $\lInt(x)$ in a small neighborhood of $x_0$.

\begin{lemma}
\label{lem int case r=0}
Let $x_0=(\lambda_0,0,0)$, $\lambda_0\neq 0$. For $x\in \fkb_{\red,\rs,1}$ in a small neighborhood of $x_0$, 
\[
   \lInt(x) =
	\begin{dcases}v(\Delta/\lambda)\frac{t^{-v(\lambda_0)/2}(1+t)-2t}{1-t}+C_1\bigl(v(\lambda_0)\bigr),& 2\mid v(\lambda_0), \\
   v(\Delta/\lambda)  \frac{2t(t^{-(v(\lambda_0)+1)/2}-1)}{1-t}+C_2\bigl(v(\lambda_0)\bigr),& 2 \nmid v(\lambda_0),
   \end{dcases}
\]
where $C_1(n)$ and $C_2(n)$  are explicit polynomials in $n$. 
\end{lemma}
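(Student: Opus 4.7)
The strategy is to specialize the explicit case-by-case formulas for $\lInt(x)$ computed in \S\ref{sec expl Lie} to the regime where $x=(\lambda,u,w)$ lies in a small neighborhood of $x_0=(\lambda_0,0,0)$ with $\lambda_0\neq 0$, and then collect the dependence on $v(\Delta/\lambda)$.

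First I would analyze which of the sub-cases of \S\ref{sec expl Lie} can arise. Using the identities $v_D(u)=2m$, $v_D(w)=2m+\ell_+$, $v_D(\Delta)=4m+2\ell_-$, and $v(\lambda)=\min\{\ell_+,\ell_-\}$, proximity of $x$ to $x_0$ forces $m=v(u)$ to be large while $v(\lambda)=v(\lambda_0)$ is fixed, so at least one of $\ell_+,\ell_-$ equals $v(\lambda_0)$. Since $\ell_+$ is always odd, the parity of $v(\lambda_0)$ divides the analysis:
\begin{altitemize}
\item If $v(\lambda_0)$ is even, then necessarily $\ell_-=v(\lambda_0)$ and $\ell_+>\ell_-$, and for $m$ large we are in Case I(3) of \S\ref{sec expl Lie} (namely $\ell_-\le 2m$ with $\ell_-$ even).
\item If $v(\lambda_0)$ is odd, two possibilities occur: either $\ell_-=v(\lambda_0)$ and $\ell_+\ge\ell_-$ (Case I(2), $\ell_-\le 2m$ odd), or $\ell_+=v(\lambda_0)<\ell_-$ (Case II(2), $v(w/\pi)<2v(u)$).
\end{altitemize}
The remaining cases from \S\ref{sec expl Lie} do not arise because they require $m$ to be comparable to $\ell_\pm$, whereas $m\to\infty$ here.

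Next I would substitute. Since $v(\Delta)=2m+\ell_-$, the quantity $v(\Delta/\lambda)$ becomes $2m$ in Case I (where $v(\lambda)=\ell_-$) and $2m+\ell_--\ell_+$ in Case II (where $v(\lambda)=\ell_+$). Plugging the appropriate $m$ into the formula from \S\ref{sec expl Lie} and regarding $\ell_\pm$ (one of which is fixed equal to $v(\lambda_0)$, and any other appearing $\ell_-$ in Case II combined linearly with $m$) gives an expression that is manifestly affine in $v(\Delta/\lambda)$ plus terms depending only on $v(\lambda_0)$. The routine but essential algebraic check is that in Case II, the coefficient of $m$ is exactly twice the coefficient of $\ell_-$, so that the answer collapses into a linear function of $2m+\ell_--\ell_+=v(\Delta/\lambda)$; a direct computation from the Case II(2) formula in \S\ref{sec expl Lie} confirms that both coefficients reduce to $\frac{4(t^{-(v(\lambda_0)-1)/2}-t)}{1-t}$ and $\frac{2(t^{-(v(\lambda_0)-1)/2}-t)}{1-t}$ respectively. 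A similar check in Case I(2) yields the same coefficient, so for odd $v(\lambda_0)$ the answer is uniform across Cases I(2) and II(2); for even $v(\lambda_0)$ the single Case I(3) formula gives the stated even-parity expression directly.

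Finally, the coefficient of $v(\Delta/\lambda)$ that emerges can be simplified to the two fractions appearing in the lemma by the identity $t\cdot t^{-(v(\lambda_0)+1)/2}=t^{-(v(\lambda_0)-1)/2}$, and what remains (all terms involving $\ell_+$, $\ell_-$, or $v(\lambda_0)$ but not $v(\Delta/\lambda)$) defines the explicit polynomials $C_1(n)$ and $C_2(n)$ in $n=v(\lambda_0)$. The main (minor) obstacle is bookkeeping through the different sub-cases and confirming the uniformity in Case II, i.e.\ that the $m$- and $\ell_-$-dependencies conspire into the single variable $v(\Delta/\lambda)$; once this is verified, the claimed closed form follows mechanically.
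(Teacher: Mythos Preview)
Your approach is essentially identical to the paper's: identify the relevant sub-cases of \S\ref{sec expl Lie} according to the parity of $v(\lambda_0)$, then rewrite each formula as an affine function of $v(\Delta/\lambda)$ with remainder depending only on $v(\lambda_0)$.

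One point deserves emphasis. In the odd case, you correctly observe that the coefficient of $v(\Delta/\lambda)$ coming from Case~I(2) equals that from Case~II(2), and you conclude ``the answer is uniform across Cases I(2) and II(2).'' But matching leading coefficients is not enough: you must also check that the \emph{constant terms} agree, i.e.\ that the $C_2$ produced by Case~I(2) (where $\ell_-=v(\lambda_0)$) coincides with the $C_2$ produced by Case~II(2) (where $\ell_+=v(\lambda_0)$). The paper carries this out explicitly and flags it as ``crucial,'' since a priori the two sub-cases could yield the same slope but different intercepts, and then no single formula would hold on the whole neighborhood of $x_0$. The verification is a straightforward comparison once both expressions are rewritten in terms of $n=v(\lambda_0)$, but it should not be skipped.
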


\begin{proof}
For $x\in\fkb_{\red,\rs,1}$ near $x_0$, we may assume that $|\lambda|=|\lambda_0|$ is fixed, while $v_D(u)=2m$ and $v_D(w)=2m+\ell_+$ are very large. Hence we may assume that $m$ is larger than at least one of $\ell_+$ or $\ell_-$. To make the comparison, 
we will rewrite $\lInt(x)$ from {\bf Case I} $(2),(3)$ and  {\bf Case II} $(2)$ in \S\ref{sec expl Lie}.

\begin{altitemize}
\item If $v(\lambda_0)$ is even (and so is $v(\lambda)$),  then the minimum among $\ell_+,\ell_-$ must be $\ell_-$ since $\ell_+$ is odd. Hence we may assume $\ell_-=\ell_{-0}=v(\lambda_0)$, and we are in case {\bf Case I} $(3)$. Hence  the intersection number is given by
\begin{align*}
   \lInt(x) 
	   &=2 t^{-\ell_-/2}\frac{(m-\ell_-/2+1)(1-t^2)+t(t+3)}{(1-t)^2}
                 +\frac{-2(\ell_-+2m+1)t}{1-t} +\frac{-8t}{(1-t)^2}\\
      &= 2m \frac{t^{-\ell_{-}/2}(1+t)-2t}{1-t}\\
		&\phantom{=}\qquad +2t^{-\ell_-/2}\frac{(-\ell_-/2+1)(1-t^2)+t(t+3)}{(1-t)^2}-\frac{2(\ell_-+1)t}{1-t} +\frac{-8t}{(1-t)^2}\\
		   &= v(\Delta/\lambda)\frac{t^{-v(\lambda_0)/2}(1+t)-2t}{1-t}+C_1\bigl(v(\lambda_0)\bigr) ,
\end{align*}
where $C_1(n)$ is an explicit function of $n$ defined by the last equality.

\item If $v(\lambda_0)$ is odd  (and so is $v(\lambda)$), we are in  {\bf Case I} $(2)$ or {\bf Case II} $(2)$, depending on whether $\ell_-\leq \ell_+$ or not (equivalently, depending on whether $|\Delta|\geq |w|^2$ or not). In other words,  this gives a partition of the intersection of  a neighborhood of $x_0$ with the regular semi-simple set as a disjoint union of two sets.

\smallskip

In {\bf Case I} $(2)$ ($\ell_-\leq \ell_+$, $\ell_-\leq 2m$ and $\ell_-$  odd), we have 
$$
v_D(\lambda)=2\ell_-,\quad v_D(\Delta/\lambda)=4m ,
$$
and we may assume that $\ell_-=\ell_{-0}=v(\lambda_0)$. Then we have
\begin{align*}
\lInt(x)&=2t^{-(\ell_--1)/2}\frac{(2m-\ell_-+3)-(2m-\ell_--1)t}{(1-t)^2}+\frac{-2(2m+\ell_-+1)t}{1-t}+\frac{-8t}{(1-t)^2} 
\\
&= 4m \frac{t(t^{-(\ell_-+1)/2}-1)}{1-t}+2t^{-(\ell_--1)/2}\frac{(-\ell_-+3)+(\ell_-+1)t}{(1-t)^2}-\frac{2(\ell_-+1)t}{1-t}+\frac{-8t}{(1-t)^2}
\\&=v(\Delta/\lambda)  \frac{2t(t^{-(v(\lambda_0)+1)/2}-1)}{1-t}+C_2\bigl(v(\lambda_0)\bigr),
\end{align*}
where $C_2(n)$ is an explicit function of $n$ defined by the last equality.

\smallskip

In {\bf Case II} $(2)$ ($\ell_->\ell_+$, $\ell_+<2m$),
we have 
$$
v_D(\lambda)=2\ell_+,\quad v_D(\Delta/\lambda)=4m+2\ell_--2\ell_+.
$$
Hence 
\begin{align*}
   \lInt(x)
	   &= 2t^{-(\ell_+-1)/2}\frac{(\ell_--2\ell_++2m+3)(1-t)+4t}{(1-t)^2} + \frac{-2(\ell_-+2m+1)t}{1-t} + \frac{-8t}{(1-t)^2}\\
      &= 2(2m+\ell_--\ell_+) \frac{t(t^{-(\ell_++1)/2}-1)}{1-t}\\
		&\phantom{=}\qquad + 2t^{-(\ell_+-1)/2}\frac{(-\ell_++3)(1-t)+4t}{(1-t)^2} - \frac{2(\ell_++1)t}{1-t} + \frac{-8t}{(1-t)^2}\\
		&= 2(2m+\ell_--\ell_+) \frac{t(t^{-(\ell_++1)/2}-1)}{1-t}\\
		&\phantom{=} \qquad + 2t^{-(\ell_+-1)/2}\frac{(-\ell_++3)+(\ell_++1)t}{(1-t)^2}-\frac{2(\ell_++1)t}{1-t}+\frac{-8t}{(1-t)^2}
\\&=v(\Delta/\lambda)  \frac{2t(t^{-(v(\lambda_0)+1)/2}-1)}{1-t}+C_2\bigl(v(\lambda_0)\bigr),
\end{align*}
where $C_2(n)$ is the same function of $n$ as in the last case (this is crucial).\qedhere
\end{altitemize}
\end{proof}

We return to the proof of Theorem \ref{locconsthm} for $x_0=(\lambda_0,0,0)$, $\lambda_0\neq 0$.  Note that $-\lambda_0\notin F_0^{\times, 2}$ since $x_0$ is in the image of $\fku_{1,\red}$, cf.~Lemma \ref{intimage}. Hence we can apply Theorem \ref{thm expl x neq 0}.
We have the two subcases (a) and (b) of that theorem. 
\begin{altenumerate}
\item[(a) ]
\emph{ Case $(0i)$}: $F'\not\simeq F$. 
By Theorem \ref{thm expl x neq 0} and Lemma \ref{lem orb ss u 0}, we have the values of the orbital integrals $\del_1(\sigma(x),\phi')$. 
By comparison with Lemma \ref{lem int case r=0}, we find that 
\[
   \varphi_1(x)= \Bigl(C_1+C_i\bigl(v(\lambda_0)\bigr)\Bigr)\cdot\log q,\quad x\in\fkb_{\red,\rs,1},
\]
is a constant,
where $C_1$ is the constant in Theorem \ref{thm expl x neq 0}, and $C_i(n)$ is the polynomial of $n$ for $i$ with the same parity as $v(\lambda_0)$.

\item[(b) ]
\emph{Case $(0ii)$: $F'\simeq F$} (so that $v(\lambda_0)$ is odd by $F'=F_0[\sqrt{-\lambda_0}]\simeq F_0[\sqrt{\varpi}]$). Similarly we find by Theorem \ref{thm expl x neq 0} and Lemma  \ref{lem orb ss u 0}, and by comparing with   Lemma \ref{lem int case r=0}, that 
$$
\varphi_1(x)=\Bigl(C_2+C_1\bigl(v(\lambda_0)\bigr)\Bigr)\cdot\log q,\quad x\in\fkb_{\red,\rs,1},
$$is a constant,
where $C_2$ is the constant in Theorem \ref{thm expl x neq 0},  and $C_1(n)$ is the polynomial of $n$  in Lemma \ref{lem int case r=0}.

 Hence Theorem \ref{locconsthm} is proved for $x_0=(\lambda_0, 0, 0)$. 
\end{altenumerate}

We now consider the case $x_0=(\lambda_0,u_0,w_0)$, $u_0\neq 0$. Again, we first derive a convenient expression for the quantity $\lInt(x)$ in a small neighborhood of $x_0$.

\begin{lemma}
\label{lem int case r=1}
Let $x_0=(\lambda_0,u_0,w_0)$, where $u_0\neq 0$. Then for $x\in \fkb_{\red,\rs,1}$ in a small neighborhood of $x_0$, 
\begin{align*}
   \lInt(x) =
	\begin{dcases} 2v(\Delta/u^2)  \frac{t(t^{-v(u_0)-1}-1)}{(1-t)} + C_1\bigl(v(u_0)\bigr),  &  |\lambda|<|u|^2, \\
2v(\Delta/u^2)\frac{t(t^{-(v(\lambda_{0})+1)/2}-1)}{(1-t)}   +C_2\bigl(v(\lambda_{0}),v(u_0)\bigr) ,  &  |\lambda|>|u|^2,
   \end{dcases}
\end{align*}
where $C_1,C_2$  are explicit polynomials.
\end{lemma}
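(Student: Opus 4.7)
The proof parallels that of the preceding lemma (for $x_0 = (\lambda_0, 0, 0)$), namely: identify which cases of \S\ref{sec expl Lie} apply for $x \in \fkb_{\red,\rs,1}$ near $x_0$, and then isolate the part of $\lInt(x)$ that depends on the varying parameter $\ell_-$ from the part that remains constant near $x_0$. Recall that, writing an element $x$ via its $\alpha$, $b$ coordinates (as in \eqref{coordofred}) and setting $m := v_D(b)$, $\ell_+ := v_D(\alpha'_+)$, $\ell_- := v_D(\alpha'_-)$, one has by \eqref{remember valu} that $v_D(u) = 2m$, $v_D(w) = 2m + \ell_+$, and $v_D(\Delta) = 4m + 2\ell_-$. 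Hence for $x$ in a small neighborhood of $x_0$ with $\Delta(x_0) = 0$, the quantity $m = v(u_0)$ is fixed while $\ell_- \to \infty$; in particular $v(\Delta/u^2) = \ell_-$.

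In the case $|\lambda| < |u|^2$ (so $\lambda_0 = 0$ or $v(\lambda_0) > 2v(u_0)$), the identity $v(\lambda) = \min\{\ell_+,\ell_-\}$ forces $\ell_- > 2m$, which places us in \textbf{Case I}(1) of \S\ref{sec expl Lie} (the stable range). Starting from the explicit formula there, I would separate the terms linear in $\ell_-$ from those independent of it; a short rewriting shows that the $\ell_-$-part equals $2\ell_- \cdot t(t^{-m-1}-1)/(1-t)$, and the remainder is an explicit polynomial $C_1(m)$ in $m = v(u_0)$. Substituting $\ell_- = v(\Delta/u^2)$ yields the first alternative of the lemma.

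In the case $|\lambda| > |u|^2$ (so $\lambda_0 \neq 0$ and $v(\lambda_0) < 2m$), the quantity $v(\lambda) = v(\lambda_0)$ remains fixed near $x_0$; since $\ell_-$ is large, this forces $\ell_+ = v(\lambda_0)$ (which is therefore odd), putting us in \textbf{Case II}(2) of \S\ref{sec expl Lie}. Again I would isolate the linear-in-$\ell_-$ piece of the explicit formula; the coefficient of $\ell_-$ reduces to $2 t(t^{-(\ell_++1)/2}-1)/(1-t) = 2t(t^{-(v(\lambda_0)+1)/2}-1)/(1-t)$, while the leftover terms collect into an explicit polynomial $C_2(v(\lambda_0), v(u_0))$.

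The argument is essentially a mechanical expansion of the piecewise formulas from \S\ref{sec expl Lie}, and no conceptual obstacle arises; the only care needed is bookkeeping with the parities (in particular that $\ell_+$ is always odd, which pins down the parity of $v(\lambda_0)$ in the second case) and with the normalizations $v_D = 2v$ on $F_0$. The result will then be used in the next subsection to compare $\lInt(x)$ with the germ expansion of $\del_1(\sigma(x),\phi')$ computed in Theorem~\ref{thm expl x neq 0}, just as Lemma~\ref{lem int case r=0} was used for the $u_0 = w_0 = 0$ case.
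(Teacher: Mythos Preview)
Your approach is essentially the same as the paper's and is correct in outline, but there is one imprecision in the case identification for the first alternative. When $|\lambda|<|u|^2$ and $\lambda_0\neq 0$, the valuation $\ell_+=v(\lambda_0)$ is fixed while $\ell_-\to\infty$, so in fact $\ell_->\ell_+$ and you are in \textbf{Case II}(1), not \textbf{Case I}(1); when $\lambda_0=0$, both $\ell_+$ and $\ell_-$ are large and either of \textbf{Case I}(1) or \textbf{Case II}(1) may occur depending on $x$. This does not affect your computation, since the paper records in \S\ref{sec expl Lie} that \textbf{Case II}(1) yields literally the same formula as \textbf{Case I}(1), but you should say so explicitly. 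With that adjustment your proof matches the paper's: isolate the $\ell_-$-linear piece of the relevant formula (Case II(1)$=$Case I(1) when $|\lambda|<|u|^2$; Case II(2) when $|\lambda|>|u|^2$), rewrite its coefficient as $2t(t^{-m-1}-1)/(1-t)$ resp.\ $2t(t^{-(v(\lambda_0)+1)/2}-1)/(1-t)$, and absorb the remainder into $C_1$, $C_2$.
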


\begin{proof} First we assume that $\lambda_0\neq 0$. Then for $x$ near $x_0$, we have $|\lambda|=|\lambda_0|$, $|u|=|u_0|,|w|=|w_0|$, and $\ell_-$ is very large. 
\begin{itemize}
\item
In {\bf Case II} $(1)$( $\ell_->\ell_+$, $\ell_+>2m$, i.e., $|\lambda|<|u|^2$) we have 
\begin{align*}
\lInt(x)&=2t^{-m}\frac{2(1+t)+(\ell_--2m-1)(1-t)}{(1-t)^2}+\frac{-2(\ell_-+2m+1)t}{1-t}+\frac{-8t}{(1-t)^2}
\\&= 2\ell_- \frac{t(t^{-m-1}-1)}{(1-t)} +2\left(t^{-m}\frac{2(1+t)+(-2m-1)(1-t)}{(1-t)^2}+\frac{-(2m+1)t}{1-t}+\frac{-4t}{(1-t)^2}\right)
\\&= 2\ell_- \frac{t(t^{-m_0-1}-1)}{(1-t)}+C_1(m_0).
\end{align*}

\item

In {\bf Case II} $(2)$ ($\ell_->\ell_+$, $\ell_+<2m$, i.e., $|\lambda|>|u|^2$) we have
\begin{align*}
\lInt(x)
&=2t^{-(\ell_+-1)/2}\frac{(\ell_--2\ell_++2m+3)(1-t)+4t}{(1-t)^2}+\frac{-2(\ell_-+2m+1)t}{1-t}+\frac{-8t}{(1-t)^2}
\\&=2\ell_- \left(t^{-(\ell_{+0}-1)/2}\frac{1}{(1-t)}-\frac{t}{1-t} \right)+C_2(\ell_{+0},m_0)
\\&=2\ell_-\frac{t(t^{-(\ell_{+0}+1)/2}-1)}{(1-t)}   +C_2(\ell_{+0},m_0) .
\end{align*}

\end{itemize}
Now we assume that $\lambda_0=0$. Then, since $0\neq x_0\notin\fkb_{\red, \rs}$, it follows that $x_0$ has the form $x_0=(0,u_0,0)$, $u_0\neq 0$. Then for $x$ near $x_0$ we have $|u|=|u_0|$ and $\ell_+,\ell_-$ are very large. Then the asymptotic behavior of $\lInt(x)$ follows from {\bf Case I} $(1)$ and {\bf Case II} $(1)$ in \S\ref{calc of l-Int}, 
\[
\lInt(x) = 2\ell_- \frac{t(t^{-m_0-1}-1)}{(1-t)} +C_1(m_0).\qedhere
\]
\end{proof}
Now we can finish the proof of Theorem  \ref{locconsthm} for $x_0=(\lambda_0, u_0, w_0)$ with $u_0\neq 0$. We are in case (c) of Theorem \ref{thm expl x neq 0}.  Noting that $v(\Delta/u^2)=\ell_-$,
by comparing Lemma \ref{lem int case r=1} with Theorem \ref{thm expl x neq 0} and Lemma \ref{lem orb ss u 1},  we find that
$
\varphi_1(x)
$ is a constant (explicitly depending on $x_0$) when $x\in \fkb_{\red,\rs,1}$ is near $x_0$.

\subsection{Completion of the proofs of Theorem \ref{locconsthm} and Theorem \ref{mainforred}}\label{subsection pf 12.1}
In view of the table in \S\ref{sec 13} (and the explanation), we have considered all $x_0\in \fkb_{\red}\smallsetminus \fkb_{\red,\rs}$  in the closure of $ \fkb_{\red,\rs, 1}$. This completes the proof of Theorem \ref{locconsthm}.  
This implies that the function $\varphi_1$ in \eqref{eqn varphi=1+2} is an orbital integral function. By Theorem \ref{thm error term}, $\varphi_2$ is an orbital integral function. It follows that the function $\varphi$ defined by \eqref{eqn varphi4 12.1} is an orbital integral function. This completes the proof of Theorem \ref{mainforred}.

\part{Germ expansion}\label{germ expansion part}

In this part of the paper, we assume that $n=3$ and that $F/F_0$ is any quadratic extension of non-archimedean local fields of characteristic not equal to $2$ (not necessarily ramified nor of odd residue characteristic). We write $F=F_0[\pi]$ for $\pi=\sqrt{\varpi}$, $\varpi\in F_0^\times$.

\section{Statement of the germ expansion}\label{sec statementofgerm}
Recall from \S\ref{subsec inv s} we have
\[
   \begin{gathered}
	\pi_\red\colon
	\xymatrix@R=0ex{
	   \fks_\red \ar[r]  &  \fkb_\red = \BA\times\BA\times\fks_1\\
	y \ar@{|->}[r]  &  \bigl(\lambda(y), u(y), w(y)\bigr)
	}
	\end{gathered}
\]	
where we write 
$y$ in the block form
$$
y=\begin{bmatrix}A & {\bf b} \\
 {\bf c}   & 0
\end{bmatrix},
$$and $$ \lambda(y)=\det A ,\quad u(y)=\varpi^{-1}{\bf c} {\bf b},\quad \text{and}\quad w(y)=\varpi^{-1}{\bf c} A{\bf b} .$$
We  also have 
$$
\Delta(y)=\lambda(y)u(y)^2+w(y)^2.
$$

Let $y$ be \emph{any} element in $\fks_\red$ (not necessarily semi-simple nor regular). We say that $y$ is \emph{relevant} if its stabilizer $H'_y$ is contained in $\SL_2$. For a relevant element $y$, the determinant $\det$ is well-defined on $H_y'\bs H'$, and so is $\lvert\det\rvert^s$ for $s\in\BC$.
 Let $\phi'\in C^\infty_c(\fks_\red)$,  and let $y\in \fks_\red$ be relevant. We consider the integral 
\begin{equation}
\label{eqn def O(s)}
   \Orb(y,\phi',s)= \tau(H_y')\int_{H_y'\bs H'}\phi'(h^{-1} yh)\eta(\det h)\lvert \det h \rvert^s \,dh,
\end{equation} 
where $ \tau(H_y')=\vol(H_y')$ if $H_y'$ is compact and $ \tau(H_y')=1$ otherwise. 
In all cases except the one in Lemma \ref{lem orb F' split}, the integral \eqref{eqn def O(s)} is absolutely convergent when $\Re(s)\gg 0$ and extends to a meromorphic function of $s\in\BC$. Even in the exceptional case,  Lemma \ref{lem orb F' split} defines $\Orb(y,\phi',s)$ as a meromorphic function. When the integral has no pole at $s=0$, we use the notation  
\[
\Orb(y,\phi'):=\Orb(y,\phi',0).
\]

For $x_0\in\fkb_\red$, the  elements of $\fks_\red$ in $ \pi_\fks^{-1}(x_0)$ will be called   \emph{$x_0$-nilpotent}. When $x_0=0$, we use the term  \emph{nilpotent} instead of $0$-nilpotent.  

\begin{theorem}\label{thm germ s}
Let $\phi'\in C^\infty_c(\fks_\red)$ and $x_0\in \frak b_\red$. There exist an open neighborhood $V_{x_0}$ of $x_0$, a section $\sigma\colon\fkb_{\red,\rs}\to \fks_\red$ defined on $V_{x_0}\cap \frak b_{\rm red, rs}$, and (explicit) continuous functions $\Gamma_n(x, s)$   on $V_{x_0}\cap \frak b_{\rm red, rs}$ such that, as meromorphic functions in the complex variable $s\in \BC$,
$$
\Orb\bigl(\sigma(x),\phi',s\bigr) = \sum_{n\in \pi_\fks^{-1}(x_0)/H'}\Gamma_n(x, s)\Orb(n,\phi',s),
$$
where the sum runs over an explicit set of representatives of relevant $H'$-orbits of $x_0$-nilpotent elements, and where the sum should be replaced by an integral with respect to a suitable measure when there is a continuous family of orbits $n(\mu)$, $\mu\in F_0$ (which only occurs when $x_0=0$).
\end{theorem}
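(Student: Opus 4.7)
The plan is to prove the germ expansion by a Harish-Chandra/Shalika-type descent, treating separately the case $x_0 = 0$ (where the continuous nilpotent family $n(\mu)$ appears) and the case $x_0 \neq 0$ (where the classification of orbits from \S\ref{sec germ nonzero} applies), and in both cases using an explicit parametrization of a neighborhood of $x_0$ together with a Luna-type slice around a chosen semi-simple representative.

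First I would fix $x_0 \in \fkb_\red$ and use the classification in \S\ref{germzeropart4} and \S\ref{sec germ nonzero} to write down an explicit finite list (or a finite list together with the one-parameter family $\{n(\mu)\}_{\mu \in F_0}$ when $x_0=0$) of representatives of the relevant $H'$-orbits in $\pi_\fks^{-1}(x_0)$. ``Relevant'' cuts out exactly those orbits for which the integral \eqref{eqn def O(s)} makes sense (the condition $H'_y \subset \SL_2$ guarantees that $|\det h|^s$ descends to $H'_y\backslash H'$). I would then choose a semi-simple representative $y_0 \in \pi_\fks^{-1}(x_0)$ and describe a Luna-type slice $\Sigma \subset \fks_\red$ transverse at $y_0$ to the $H'$-orbit of $y_0$, together with a section $\sigma\colon V_{x_0} \cap \fkb_{\red,\rs} \to \Sigma$ of $\pi_\fks$. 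The explicit sections used in the applications are given in \eqref{eqn sigma} and \eqref{eqn sigma 1}; one verifies that they are well-defined sections by comparing invariants in the coordinates \eqref{u,w,lambda}.

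Second, I would perform the Harish-Chandra-type descent: writing $h \in H'$ as $h = h_y \cdot h'$ with $h_y$ ranging over the centralizer $H'_{y_0}$ and $h'$ over a transversal, the integral \eqref{eqn def O(s)} decomposes into an integral over $H'_{y_0} \backslash H'$ of an integral along $\Sigma_{y_0}$, the latter being the ``Lie algebra of the centralizer.'' On $\Sigma_{y_0}$, the residual $H'_{y_0}$-action reduces the problem to a nilpotent germ expansion on a strictly smaller group (either a torus, $\SL_2$, or a $1$-parameter group, depending on the isomorphism class of $F' = F_0[X]/(X^2+\lambda_0)$). For the smaller group one invokes (and makes explicit) the classical Shalika germ expansion with the twisted character $\eta \cdot |\,\cdot\,|^s$; this provides the germs $\Gamma_n(x,s)$ as meromorphic functions in $s$ that are continuous (indeed locally constant, outside the explicit poles) in $x$ on $V_{x_0}\cap\fkb_{\red,\rs}$. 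The $x_0 = 0$ case gives the integrated form in \eqref{eqn germ mu}, where the sum over $\{n(\mu)\}$ becomes an integral against the Haar measure on $F_0$ because the family is a single $1$-dimensional $H'$-saturation.

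Third, I would verify meromorphic continuation of the left-hand side and match it with the right-hand side. Convergence of \eqref{eqn def O(s)} for $\Re(s) \gg 0$ is standard since $\phi'$ has compact support and $\sigma(x)$ has bounded orbit; meromorphic continuation follows either from the slice decomposition (which exhibits $\Orb(\sigma(x),\phi',s)$ as a finite sum of products of the germs $\Gamma_n(x,s)$, themselves meromorphic by explicit computation, and the nilpotent orbital integrals $\Orb(n,\phi',s)$) or, in the exceptional case of Lemma \ref{lem orb F' split}, by the regularization given there. Matching the two sides at $\Re(s) \gg 0$ and then extending by meromorphic continuation completes the proof.

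The main obstacle will be the case $x_0 = 0$: both because the nilpotent cone decomposes into a continuous family $\{n(\mu)\}$ plus the two ``regular'' nilpotents $n_{0,\pm}$, which forces the ``sum'' to become an integral against an explicit measure on $F_0$, and because the slice $\Sigma$ at the origin meets every nilpotent orbit, so the descent argument no longer reduces to a smaller group — instead one must directly compute Fourier-type integrals over $F$ to extract the germ coefficients, as will be done concretely in Theorem \ref{thm germ x=0}. A secondary subtlety is the need to keep track of the transfer factor $\omega$ and the sign character $\eta$ throughout the descent, so that the resulting germs $\Gamma_n(x,s)$ are compatible with the normalizations used in Part \ref{analtyic side part}.
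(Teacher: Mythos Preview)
Your proposal frames the proof via abstract Harish-Chandra/Shalika descent and Luna-type slices, but the paper takes a quite different, purely computational route. The paper's proof of Theorem~\ref{thm germ s} itself is just a forward reference: the regular semi-simple case is immediate from local constancy of the integrand in \eqref{eqn orb s3}, and the remaining cases are deferred to the explicit Theorems~\ref{thm germ x=0} (for $x_0=0$) and~\ref{thm germ ss s} (for $x_0\neq 0$). Those theorems are proved not by descent to a centralizer but by writing out the orbital integral in Iwasawa coordinates $h = k \cdot \mathrm{diag}(b,a^{-1}b)\cdot n(t)$, as in \eqref{eqn orb s3}, and then partitioning the domain of integration in $(a,b,t)$ according to the sizes of the matrix entries. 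For $x_0=0$ this follows \cite[\S3]{Z12b} almost verbatim, with a sequence of lemmas showing that boundary pieces cancel in pairs via the Tate-type identity $\int_{|y|\leq 1}|y|^{2s-1}\,dy + \int_{|y|>1}|y|^{2s-1}\,dy = 0$; the surviving pieces are identified by direct substitution with $\Orb(n_{0,\pm},\phi',s)$ and the integral \eqref{eqn germ mu} over the continuous family. For $x_0\neq 0$ the argument is similar but simpler: one splits the $b$-integral (or the $a$-integral) at a threshold, and the two halves become $\Orb(y_+,\phi',s)$ and a scalar multiple of $\Orb(y_-,\phi',s)$ after explicit changes of variable.

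Your abstract approach is plausible in outline, but it would not directly yield what the paper actually needs: the statement promises \emph{explicit} germ functions $\Gamma_n(x,s)$, and the applications in \S\ref{germspecial}--\S\ref{section:comparison} require their precise form (e.g.\ the formula in Theorem~\ref{thm germ x=0} and the factors $\eta(\lambda^{-1}\Delta)|\lambda^{-1}\Delta|^{-s}$ etc.\ in Theorem~\ref{thm germ ss s}). An abstract Shalika-germ argument would establish existence of \emph{some} germ expansion but would leave you having to compute the germs anyway, at which point you are back to the paper's direct method. Moreover, the ``descent to a smaller group'' you describe does not quite match the structure here: in cases $(0i)$ and $(1)$ the centralizer of the semi-simple part is a torus, so there is no nontrivial residual nilpotent theory to invoke---the germ expansion comes entirely from the way the off-diagonal entries degenerate, which is exactly what the Iwasawa-coordinate splitting captures.
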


\begin{proof}
When $x_0\in \fkb_{\red, \rs}$,
by (\ref{eqn orb s3}) below it is easy to see that we have 
$$
\Orb\bigl(\sigma(x),\phi',s\bigr) = \Orb\bigl(\sigma(x_0),\phi',s\bigr),
$$
when $x$ is near $x_0$, which proves Theorem \ref{thm germ s} in this case. 
When $x_0\in \fkb_\red\smallsetminus\fkb_{\red, \rs}$, Theorem \ref{thm germ s} will  follow from the explicit germ expansion given by Theorem \ref{thm germ x=0}   for the case $x_0=0$, and by Theorem \ref{thm germ ss s} for $x_0\neq 0$.
\end{proof}

We also need a converse to the theorem above, specialized to $s=0$, proved in \S \ref{sec proof}. Let $\CC_1(F_0)$ be the space defined in \S\ref{sec 19.3}.
\begin{theorem}
\label{thm germ converse}
Let $\varphi\in C_\rc^\infty(\fkb_{\red,\rs})$. Then $\varphi$ is an orbital integral function if and only if, for every $x_0 \in \fkb_{\red} $, there exists an open neighborhood $V_{x_0}$ of $x_0$, such that
\begin{align}
 \label{eqn varphi}
\varphi(x)= \omega\bigl(\sigma(x)\bigr)\sum_{n\in \pi_\red^{-1}(x_0)/H'}\Gamma_n(x, 0) \varphi_{x_0}(n)
\quad\text{for all}\quad
 x\in V_{x_0}\cap  \fkb_{\red,\rs},
\end{align}
where $\varphi_{x_0}(n)\in\BC$;
 when $x_0=0$, the sum is to be interpreted as an integral for the one-dimensional family of nilpotent orbits $n(\mu),\mu\in F_0$, and the function $\mu\mapsto \varphi_{x_0}(n(\mu))$ is required to define an element in $\CC_1(F_0)$; when $x_0=(\lambda_0,0,0)$ with $F_0[\sqrt{-\lambda_0}]\simeq F$, this is understood as a function of the form $\phi_0\log|\Delta(x)|+\phi_1$ for constants $\phi_0$ and $\phi_1$ (cf.~(\ref{eqn F' split})).
\end{theorem}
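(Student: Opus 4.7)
The forward implication is essentially Theorem \ref{thm germ s} evaluated at $s=0$. If $\varphi(x) = \omega(\pi_\red^{-1}(x))\Orb(\sigma(x),\phi')$ for some $\phi'\in C_c^\infty(\fks_\red)$, then applying Theorem \ref{thm germ s} in a neighborhood $V_{x_0}$ of $x_0$ and specializing $s=0$ yields the desired expansion with
\[
   \varphi_{x_0}(n) := \Orb(n,\phi',0).
\]
The needed regularity constraints on $n\mapsto \varphi_{x_0}(n)$ (membership in $\CC_1(F_0)$ for the nilpotent family at $x_0=0$, and the $\phi_0\log|\Delta|+\phi_1$ shape at points with $F_0[\sqrt{-\lambda_0}]\simeq F$) will follow from the explicit computations of orbit integrals of general test functions carried out in the explicit germ expansions (Theorem \ref{thm germ x=0} and Theorem \ref{thm germ ss s}), since those computations produce precisely these classes of functions. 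I would dispose of this direction first, as it is formal once the germ expansion is in hand.

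For the converse, the plan is to reduce to a local construction and invoke the local-to-global principle Theorem \ref{thm stab const red}. Thus I would fix $x_0\in\fkb_\red$ and construct a function $\phi'_{x_0}\in C_c^\infty(\fks_\red)$ such that
\[
   \varphi\bigl(\pi_\red(y)\bigr) = \omega(y)\Orb(y,\phi'_{x_0})
	\quad\text{for all}\quad y\in\pi_\red^{-1}(V_{x_0})\cap\fks_{\red,\rs}
\]
for some (possibly smaller) open neighborhood of $x_0$. Once such local $\phi'_{x_0}$ are produced for every $x_0$, Theorem \ref{thm stab const red} upgrades $\varphi$ to a global orbital integral function.

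The heart of the argument is the local construction, and it splits into cases matching those in Theorem \ref{thm germ x=0} and Theorem \ref{thm germ ss s}. When $x_0\in\fkb_{\red,\rs}$, a local test function supported on a small open $H'$-saturated neighborhood of any element of $\pi_\red^{-1}(x_0)$ suffices, since $\varphi$ is locally constant there. When $x_0\notin\fkb_{\red,\rs}$, by the forward direction the map
\[
   C_c^\infty(\fks_\red) \longrightarrow \prod_{n\in\pi_\red^{-1}(x_0)/H'} \BC\,(\text{or }\CC_1(F_0)\text{ for the family at $0$}), \quad \phi'\longmapsto (\Orb(n,\phi',0))_n,
\]
produces the values appearing in the germ expansion. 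I would show this map is surjective onto the tuples allowed by the constraints of the theorem by building explicit test functions: for isolated relevant orbits, characteristic functions of transverse slices to each orbit (scaled appropriately and cut off with bump functions in the invariant directions) realize any prescribed complex value on that orbit while vanishing on the others; for the one-parameter family $n(\mu)$ at $x_0=0$, functions of the form $\eta\otimes(\text{slice cutoff})$, where $\eta\in C_c^\infty(F_0)$ is chosen to hit a prescribed element of $\CC_1(F_0)$ via the integral defining $\Orb(n(\mu),\phi',0)$, do the job; for $x_0=(\lambda_0,0,0)$ with $F_0[\sqrt{-\lambda_0}]\simeq F$, the $\log|\Delta|$ term arises from the unique orbit with non-compact stabilizer isomorphic to $F'^\times/F_0^\times$, and scaling its contribution independently of the other orbits matches the $(\phi_0,\phi_1)$ degrees of freedom.

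The main obstacle will be the surjectivity of the nilpotent-integral map at $x_0=0$ onto the space $\CC_1(F_0)$: one must verify that for any prescribed element of $\CC_1(F_0)$ there is a test function whose family of nilpotent orbital integrals realizes it. This reduces, via the explicit formula
\[
   \Orb\bigl(n(\mu),\phi',0\bigr) = (\text{constant})\int_{F_0} \phi'\bigl(\text{slice}(\mu,z)\bigr)\cdot(\ldots)\,dz
\]
made explicit in the computation of Lemma \ref{lem nil orb}, to a standard Fourier-analytic surjectivity statement on $F_0$; the definition of $\CC_1(F_0)$ in \S\ref{sec 19.3} is designed precisely so that this is an isomorphism after accounting for the extended Fourier transform. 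Once this local realization is established, one combines the local test functions via a partition of unity subordinate to the cover $\{V_{x_0}\}$ (passing to $H'$-invariant partitions after averaging, as in the proof of Theorem \ref{thm stab const red}), and the resulting global $\phi'$ satisfies $\varphi = \omega\cdot\Orb(-,\phi')$ on all of $\fkb_{\red,\rs}$, completing the converse.
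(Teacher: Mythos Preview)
Your proposal is correct and follows essentially the same route as the paper: the forward direction via the germ expansion Theorem~\ref{thm germ s} specialized to $s=0$ (with membership of $\mu\mapsto\Orb(n(\mu),\phi')$ in $\CC_1(F_0)$ supplied by \cite[Lem.~2.3]{Z12b}), and the converse via the local-to-global principle Theorem~\ref{thm stab const red} reduced to surjectivity of the map $\phi'\mapsto(\Orb(n,\phi'))_n$ onto the allowed data, which the paper packages as two short realization lemmas. Your final partition-of-unity gluing is already built into Theorem~\ref{thm stab const red}, so no separate step is needed there.
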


\begin{corollary}
\label{cor germ converse}
Let $\varphi\in C_\rc^\infty(\fkb_{\red,\rs})$ be such that the restriction of $\varphi$ to $V_{x_0}\cap  \fkb_{\red,\rs,0}$ is zero. Assume that, for every $x_0 \in \fkb_{\red} $, there exists an open neighborhood $V_{x_0}$ of $x_0$, such that
the restriction of $\varphi$ to $V_{x_0}\cap  \fkb_{\red,\rs,1}$ is  constant. Then $\varphi$ is an orbital integral function.
\end{corollary}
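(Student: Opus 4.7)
The plan is to apply Theorem \ref{thm germ converse}: for each $x_0 \in \fkb_\red$, I must exhibit coefficients $\varphi_{x_0}(n)$ realizing the germ expansion \eqref{eqn varphi} for $\varphi$ on some neighborhood of $x_0$. I split into three sub-cases. First, if $x_0 \in \fkb_{\red,\rs}$, the index set $\pi_\red^{-1}(x_0)/H'$ reduces to the single regular semi-simple orbit through $\sigma(x_0)$, with $\Gamma_{\sigma(x_0)}(x,0) \equiv 1$ near $x_0$. Since $\omega\circ\sigma$ is locally constant and $\varphi$ is locally constant by hypothesis, taking $\varphi_{x_0}(\sigma(x_0)) := \omega(\sigma(x_0))^{-1}\varphi(x_0)$ verifies \eqref{eqn varphi}. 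Second, if $x_0 \in \fkb_\red \setminus \fkb_{\red,\rs}$ does not lie in the closure of $\fkb_{\red,\rs,1}$, then $\varphi$ vanishes identically on a neighborhood of $x_0$, so one may take $\varphi_{x_0}(n) = 0$ for all $n$.

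The essential case is when $x_0 \in \fkb_\red \setminus \fkb_{\red,\rs}$ lies in the closure of $\fkb_{\red,\rs,1}$, so that $\varphi \equiv c_{x_0}$ on $V_{x_0} \cap \fkb_{\red,\rs,1}$ for some constant $c_{x_0}$, while $\varphi \equiv 0$ on $V_{x_0} \cap \fkb_{\red,\rs,0}$. The idea is to realize this piecewise constant function as an honest orbital integral function locally, and then to read off the required germ expansion from Theorem \ref{thm germ s}. Concretely, pick a preimage $\wt x_0 \in \fku_{1,\red}$ of $x_0$, which exists by an argument in the spirit of Lemma \ref{intimage}: the product decomposition $\fku_1 \cong \fku_{1,\red} \times \fks_1 \times \fks_1$ reduces the question to $\fku_1$, and the assumption that $x_0$ lies in the closure of $\fkb_{\red,\rs,1}$ rules out the exceptional case in Lemma \ref{intimage}\eqref{integral b non rs}\eqref{exceptional}. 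Let $K \subset \fku_{1,\red}$ be a small open compact neighborhood of $\wt x_0$, and use Zhang's smooth transfer theorem \cite{Z14} in its reduced Lie algebra form to produce a test function $\phi'_{x_0} \in C_c^\infty(\fks_\red)$ transferring to $(0, c'_{x_0} \mathbf{1}_K)$ for a normalizing constant $c'_{x_0}$ tuned so that $\omega(\sigma(x))\Orb(\sigma(x),\phi'_{x_0}) = c_{x_0}$ on a neighborhood of $x_0$ in $\fkb_{\red,\rs,1}$. The transfer condition forces the orbital integrals of $\phi'_{x_0}$ to vanish on $\fks_{\red,\rs,0}$, so the normalized orbital integral function of $\phi'_{x_0}$ agrees with $\varphi$ on a neighborhood of $x_0$.

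Applying Theorem \ref{thm germ s} to $\phi'_{x_0}$ then provides a germ expansion for $\Orb(\sigma(x), \phi'_{x_0}, s)$ about $x_0$; specializing at $s=0$ and setting $\varphi_{x_0}(n) := \Orb(n, \phi'_{x_0}, 0)$ yields coefficients verifying \eqref{eqn varphi} for $\varphi$. The main obstacle I expect is verifying the regularity hypotheses built into Theorem \ref{thm germ converse} for these coefficients: that $\mu \mapsto \Orb(n(\mu), \phi'_{x_0}, 0)$ defines an element of $\CC_1(F_0)$ when $x_0 = 0$, and that the prescribed $\phi_0 \log|\Delta| + \phi_1$ structure holds when $x_0 = (\lambda_0,0,0)$ with $F' \simeq F$. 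These regularity properties should follow from the general analysis of nilpotent orbital integrals in the germ expansion sections (compare the explicit calculations of Lemma \ref{lem nil orb} and the orbital integral computations of \S \ref{germspecial}), applied to the specific transferred test function $\phi'_{x_0}$.
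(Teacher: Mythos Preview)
Your approach is workable in outline but takes an unnecessarily circuitous route compared to the paper. The paper's argument is direct: rather than manufacturing an auxiliary test function via smooth transfer, it simply reads off from the explicit germ formulas in Theorems~\ref{thm germ x=0} and~\ref{thm germ ss s} that, at $s=0$ (and with $\omega(\sigma(x))=1$ for the chosen section), the two ``discrete'' germ functions already span the two-dimensional space of functions constant on each of $\fkb_{\red,\rs,0}$ and $\fkb_{\red,\rs,1}$. For instance, around $x_0=0$ one has $\Gamma_{n_{0,+}}(x,0)=\eta(-1)$, a nonzero constant on all of $\fkb_{\red,\rs}$, while $\Gamma_{n_{0,-}}(x,0)=\eta(\Delta/\varpi)$ equals $+1$ on $\fkb_{\red,\rs,0}$ and $-1$ on $\fkb_{\red,\rs,1}$. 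So $\varphi_{x_0}(n_{0,\pm})$ are obtained by solving a $2\times 2$ linear system, and one sets $\varphi_{x_0}(n(\mu))\equiv 0$ for the continuous family---the zero function being trivially in $\CC_1(F_0)$, which disposes of the regularity obstacle you flagged. The cases $x_0\neq 0$ are handled identically using the germ functions from Theorem~\ref{thm germ ss s}.

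By contrast, your route invokes the full smooth transfer theorem of \cite{Z14} and needs the unproved claim that $\Orb(\,\cdot\,,\mathbf{1}_K)$ is constant on a punctured neighborhood of $x_0$ in $\fkb_{\red,\rs,1}$; this can be salvaged (since $H_1=\U(W_1^\flat)$ is compact one may take $K$ to be $H_1$-invariant), but you did not justify it. Your approach has the conceptual appeal of avoiding explicit formulas, but it deploys heavy machinery for what is, in the paper, a two-line linear-algebra observation.
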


\begin{proof}
It suffices to verify that such $\varphi$ is of the form (\ref{eqn varphi}) in Theorem \ref{thm germ converse} for every $x_0\in \fkb_{\red}$. We first consider $x_0=0$. In this case the nilpotent orbits in (\ref{eqn varphi}) are as in Theorem \ref{thm germ x=0}. By Theorem \ref{thm germ x=0}  the function $\Gamma_{n_{0,-}}(x,0)|_{V_{x_0}\cap  \fkb_{\red,\rs,0}}=-\Gamma_{n_{0,-}}(x,0)|_{V_{x_0}\cap  \fkb_{\red,\rs,1}}$ is a (nonzero) constant and $\Gamma_{n_{0,+}}(x,0)|_{V_{x_0}\cap  \fkb_{\red,\rs}}$ is a (nonzero) constant. Therefore, by suitably choosing $\varphi_{x_0}(n_{0,-})$ and $\varphi_{x_0}(n_{0,+})$, we see that $\varphi$ is of the form 
$$
\Gamma_{n_{0,+}}(x,0)\varphi_{x_0}(n_{0,+})+\Gamma_{n_{0,-}}(x,0)\varphi_{x_0}(n_{0,-}).
$$
Setting $\varphi_{x_0}(n)$ to be zero for $n$ in the one-dimensional family of nilpotents, we see that $\varphi$ is of the form (\ref{eqn varphi}) for $x_0=0\in \fkb_{\red}$. 

For $x_0\neq 0$, the proof is similar using Theorem \ref{thm germ ss s}.
\end{proof}

We will be interested in the first derivative of the orbital integral $\partial \Orb(\sigma(x),\phi') $ at $s=0$ in a neighborhood of $x_0$. We have a decomposition according to the Leibniz rule
\begin{align}\label{eqn leibniz}
\del\bigl(\sigma(x),\phi'\bigr)=\del_1\bigl(\sigma(x),\phi'\bigr)+\del_2\bigl(\sigma(x),\phi'\bigr),
\end{align}
where we define the two terms as
$$\del_1\bigl(\sigma(x),\phi'\bigr) := \sum_{n\in \pi_\red^{-1}(x_0)/H'} \biggl(\frac d{ds} \Big|_{s=0} \Gamma_n(x, s) \biggr)\Orb(n,\phi',0)$$
and
$$\del_2\bigl(\sigma(x),\phi'\bigr) := \sum_{n\in \pi_\red^{-1}(x_0)/H'}\Gamma_n(x, 0)\biggl(\frac d{ds} \Big|_{s=0} \Orb(n,\phi',s)\biggr).$$
\begin{remark}\label{rem del1}
We point out that the first term $\del_1(\sigma(x),\phi')$ depends only on the quantities $\Orb(n,\phi')=\Orb(n,\phi',0)$ (and the intrinsically defined functions $\Gamma_n(x,s)$). In particular, the values of $\Orb(\sigma(x),\phi'),$ for regular semi-simple $x$ already determine $\del_1(\sigma(x),\phi')$. This observation does  not hold for $\del_2(\sigma(x),\phi')$.
\end{remark}
The explicit germ expansion also shows the following result, proved in \S \ref{sec proof} below.

\begin{theorem}
\label{thm error term}
Fix $i\in \{0,1\}$.  Let $\phi'\in C^\infty_c(\fks_\red)$ be a smooth transfer of $(\phi,0)$ where $\phi\in C^\infty_c(\fku_{i,\rm red})$.
Then the function $$\varphi(x)=\begin{cases} \omega\bigl(\sigma(x)\bigr)\del_2\bigl(\sigma(x),\phi'\bigr), & x\in \fkb_{\red,\rs, 1-i};
\\ 0,& x\in \fkb_{\red,\rs, i}
\end{cases}
$$ is an orbital integral function.
\end{theorem}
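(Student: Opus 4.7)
The plan is to deduce this from the explicit germ expansion together with its converse. By Theorem \ref{thm stab const red} it suffices, for every $x_0\in \fkb_\red$, to exhibit an open neighborhood $V_{x_0}$ of $x_0$ and a function $\phi'_{x_0}\in C_c^\infty(\fks_\red)$ whose descended orbital integral agrees with $\varphi$ on $V_{x_0}\cap \fkb_{\red,\rs}$. The argument splits according to the position of $x_0$ relative to $\fkb_{\red,\rs,1-i}$ and its closure, and only the case where $x_0$ lies in that closure but is not itself regular semi-simple is nontrivial.

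For the easy cases: if $x_0\in \fkb_{\red,\rs}$ then $\del_2(\sigma(x),\phi')$ is locally constant near $x_0$ (the germ sum collapses to a single term with trivial $\Gamma$), and a locally constant function is trivially an orbital integral function (take $\phi'_{x_0}$ supported near $\sigma(x_0)$); if $x_0\notin\fkb_{\red,\rs}$ lies outside the closure of $\fkb_{\red,\rs,1-i}$, then $\varphi\equiv 0$ on a neighborhood of $x_0$ and we take $\phi'_{x_0}=0$.

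Now assume $x_0\in\fkb_\red\smallsetminus\fkb_{\red,\rs}$ lies in the closure of $\fkb_{\red,\rs,1-i}$. Apply the germ expansion of Theorem~\ref{thm germ s}:
$$
\Orb\bigl(\sigma(x),\phi',s\bigr) \;=\; \sum_{n\in \pi_\fks^{-1}(x_0)/H'}\Gamma_n(x,s)\,\Orb(n,\phi',s),
$$
and differentiate at $s=0$. The ``$\del_2$'' part is, by definition, the piece where the derivative falls on $\Orb(n,\phi',s)$:
$$
\omega\bigl(\sigma(x)\bigr)\del_2\bigl(\sigma(x),\phi'\bigr) \;=\; \omega\bigl(\sigma(x)\bigr)\sum_{n}\Gamma_n(x,0)\,c_n,\qquad c_n:=\tfrac{d}{ds}\big|_{s=0}\Orb(n,\phi',s).
$$
This is exactly the shape of the right-hand side of \eqref{eqn varphi}, so by Theorem \ref{thm germ converse} the local function $\varphi$ will be an orbital integral function provided the collection $\{c_n\}$ satisfies the admissibility conditions of that theorem: namely that, when $x_0=0$, the map $\mu\mapsto c_{n(\mu)}$ defines an element of $\CC_1(F_0)$, and when $x_0=(\lambda_0,0,0)$ with $F_0[\sqrt{-\lambda_0}]\simeq F$, the relevant coefficient has the prescribed form $\phi_0\log|\Delta(x)|+\phi_1$. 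For all other nilpotent orbits $c_n$ is just a number and nothing has to be checked. The vanishing of $\varphi$ on $V_{x_0}\cap\fkb_{\red,\rs,i}$ will be built into the construction by arranging that the chosen $\phi'_{x_0}$ transfers to $(0,0)$, which is possible because the original $\phi'$ transfers to $(\phi,0)$ and the additional ``$i$-side'' contribution can be absorbed into the inductive-style corrections afforded by Lemma~\ref{twisted del}.

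The main obstacle is the verification of these admissibility conditions on the $c_n$. For the continuous family $n(\mu)$ at $x_0=0$ one must show that $\mu\mapsto \tfrac{d}{ds}\big|_{s=0}\Orb(n(\mu),\phi',s)$ lies in $\CC_1(F_0)$; this will be carried out by writing $\Orb(n(\mu),\phi',s)$ as a Mellin-type integral in the style of \eqref{eqn mu log}--\eqref{eqn mu log1}, justifying differentiation under the integral sign using the compact support of $\phi'$, and reading off the analytic behavior of the result near the boundary of its support. An analogous analysis, exploiting that $\phi'$ transfers to $(\phi,0)$ (and hence that $\Orb(\,\cdot\,,\phi',0)$ vanishes on $\fks_{\red,\rs,1-i}$), forces the required cancellations that yield the prescribed logarithmic form in the $F'\simeq F$ case. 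Once these verifications are in hand, Theorem~\ref{thm germ converse} supplies $\phi'_{x_0}$, completing the local argument and hence the proof.
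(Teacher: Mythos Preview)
Your overall plan---reduce to the form \eqref{eqn varphi} and invoke Theorem~\ref{thm germ converse}---is the paper's route, and the case splitting is right. But the execution has two real gaps.

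First, you do not separate $i=0$ from $i=1$ at $x_0=0$, and they behave differently. For $i=0$ the relevant half is $\fkb_{\red,\rs,1}$, where Corollary~\ref{cor germ=0} gives $\Gamma_{n(\mu)}(x,0)=0$; so the continuous family does not enter, and one may take $\varphi_{x_0}(n(\mu))=0$ and solve a $2\times 2$ linear system in the two regular-nilpotent coefficients. For $i=1$ one genuinely needs $\mu\mapsto \del_{\phi'}(\mu):=\tfrac{d}{ds}\big|_{s=0}\Orb(n(\mu),\phi',s)$ to lie in $\CC_1(F_0)$. Your sketch (``Mellin-type integral, differentiate under the integral sign'') is not enough: without further input $\del_{\phi'}$ is \emph{not} in $\CC_1(F_0)$. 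The paper's argument writes, for $|\mu|\gg 0$,
\[
\Orb\bigl(n(\mu),\phi',s\bigr)=\eta(\mu)|\mu|^{-1}\bigl(|\mu|^{-s}A(s)+|\mu|^{s}B(s)\bigr),
\]
with $A,B$ meromorphic having at worst simple poles at $s=0$ with $A_{-1}+B_{-1}=0$. The key is that the transfer hypothesis (via Theorem~\ref{thm match nil Orb}) forces $\Orb_{\phi'}\equiv 0$ on $F_0$; this gives $A_0+B_0=0$ and $A_{-1}=B_{-1}=0$, and only then does the degree-one Laurent term reduce to $\eta(\mu)|\mu|^{-1}\bigl((A_1+B_1)+(-A_0+B_0)\log|\mu|\bigr)\in\CC_1(F_0)$. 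You invoke the transfer condition only for the $F'\simeq F$ case, not here where it is the decisive input.

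Second, your mechanism for the vanishing of $\varphi$ on $\fkb_{\red,\rs,i}$---``arranging that $\phi'_{x_0}$ transfers to $(0,0)$'' via Lemma~\ref{twisted del}---mixes two different strategies and does not work as stated. Theorem~\ref{thm germ converse} is an abstract characterization: you are not asked to produce $\phi'_{x_0}$, only to exhibit coefficients $\varphi_{x_0}(n)$ making \eqref{eqn varphi} hold on \emph{all} of $V_{x_0}\cap\fkb_{\red,\rs}$. The way to enforce the vanishing on $\fkb_{\red,\rs,i}$ is to note that the germ functions $\Gamma_n(x,0)$ take different values on the two halves (e.g.\ $\Gamma_{n_{0,-}}(x,0)=\eta(\Delta/\varpi)=\pm 1$), so you have enough linear freedom in the discrete coefficients to match both the $\del_2$-values on $\fkb_{\red,\rs,1-i}$ and zero on $\fkb_{\red,\rs,i}$. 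Lemma~\ref{twisted del} plays no role.
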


\section{Germ expansion around $x_0=0$}\label{germzeropart4}
In this section, we give the explicit germ expansion around $x_0\in\fkb_\red$ when $x_0=0$. 
\subsection{Statement of the theorem}
The nilpotent orbits in $\fks_\red$ are classified in \cite[\S2.1]{Z12b}. We  list here only the relevant orbits. 
\begin{altitemize}
\item
a continuous family with representatives 
\begin{align}\label{nilpins}
 n(\mu):=\pi \begin{bmatrix}0 & \mu & 1 \\
 0& 0 &   0\\  0 & 1 & 0
\end{bmatrix}, \quad \mu\in F_0
\end{align}
with stabilizer $N$, the upper-triangular unipotent matrices, and

\item
 two regular (i.e., with trivial stabilizer) nilpotents with representatives
\begin{align}\label{nilpins pm}
n_{0,+}=\pi
\begin{bmatrix} 0  &1  & 0\\
 0 &   0 &1\\ 0&0 &0
\end{bmatrix}, \quad n_{0,-}=^t\!\!n_{0,+}.\end{align}
\end{altitemize}
The nilpotent orbital integrals for the continuous family $n(\mu)$ and for the regular nilpotent orbits $n_{0,\pm}$ are defined by \eqref{eqn def O(s)} and are both   holomorphic
at $s=0$, cf.\ \cite[Lem.~2.1]{Z12b}. Note that we  choose a Haar measure on $N(F_0)$ by transporting the one on $F_0$.

  We define a section 
$$\sigma\colon\fkb_{\red}\to\fks_{\red}$$
by
\begin{align}\label{eqn sigma}
\sigma(x)=\pi\begin{bmatrix}0 &- \lambda/\pi^2 & 1\\
1 &  0 &0 \\ u& w/\pi& 0 \end{bmatrix},\quad  x=(\lambda,u,w)\in \fkb_\red .
\end{align}

\begin{theorem}\label{thm germ x=0}
For $x_0=0$ and $x=(\lambda,u,w)\in \fkb_{\red, \rs}$ near $x_0$, there is a germ expansion of $\Orb\bigl(\sigma(x),\phi',s\bigr)$ as
\[
\int_{F_0}\Gamma_{n(\mu)}(x, s)\Orb\bigl(n(\mu),\phi',s\bigr)\, d\mu
 +\eta(\Delta/\varpi)|\Delta/\varpi|^s    \Orb(n_{0,-},\phi',s)+\eta(-1)    \Orb(n_{0,+},\phi',s),
\]
where 
\[
   \Gamma_{n(\mu)}(x,s) =
   \begin{dcases}
      0,   &\text{ if } (u^2\mu-2w/\pi)^2-4\Delta /\varpi\notin F_0^{\times, 2};\\
      \frac{\eta(-\nu)\bigl(|\nu|^{-s}+\eta(\Delta/\varpi) |\Delta/\varpi|^{-s}\nu^{s}\bigr)}
	          {\bigl|(u^2\mu-2w/\pi)^2-4\Delta /\varpi \bigr|^{1/2}},
           &\text{ if } (u^2\mu-2w/\pi)^2-4\Delta /\varpi\in  F_0^{\times, 2} .
   \end{dcases}
\] 
Here $\nu$
denotes one of the two roots of 
$$
u^2\mu=\nu+\frac{\Delta/\varpi}{\nu}+2w/\pi 
$$
( $\Gamma_{n(\mu)}(x,s)$ is independent of the choice of $\nu$).
\end{theorem}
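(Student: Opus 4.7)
The plan is to unfold $\Orb(\sigma(x), \phi', s)$ as an integral over $H'(F_0) = \GL_2(F_0)$ and perform an explicit change of variables adapted to the nilpotent family $n(\mu)$. Writing $h = \bigl[\begin{smallmatrix} a & b \\ c & d \end{smallmatrix}\bigr]$ and computing the conjugate $\tilde y := \diag(h,1)^{-1} \sigma(x) \diag(h,1)$ explicitly, the shape of $n(\mu)$ suggests parametrizing the $\GL_2(F_0)$-orbit of $\sigma(x)$ by the $(3,1)$-entry $\nu$ of $\pi^{-1} \tilde y$ (suitably normalized) together with the $(1,2)$-entry $\mu$ of $\pi^{-1} \tilde y$; the transverse direction in $\GL_2(F_0)$ then integrates out against $\phi'$ to produce $\Orb(n(\mu), \phi', s)$ on the fiber over $\mu$.

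The quadratic constraint $\nu + (\Delta/\varpi)/\nu + 2w/\pi = u^2 \mu$ arises from the requirement that $\tilde y$ and $\sigma(x)$ have the same invariants $(\lambda, u, w)$: computing these invariants in the $(\nu,\mu)$-coordinates translates exactly into this equation, whose two roots $\nu_\pm \in F_0$ (when they exist) correspond to the two preimages of a given $\mu$ in the orbit parametrization. The Jacobian of the substitution is $|\nu_+ - \nu_-|^{-1} = |(u^2\mu - 2w/\pi)^2 - 4\Delta/\varpi|^{-1/2}$, the weight $|\det h|^s$ becomes $|\nu|^{-s}$, and the transfer factor $\eta(\det h)$ evaluates to $\eta(-\nu)$. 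Summing over the two roots via $\nu_+ \nu_- = \Delta/\varpi$ (which gives $\eta(-\nu_-) = \eta(\Delta/\varpi) \eta(-\nu_+)$ and $|\nu_-|^{-s} = |\Delta/\varpi|^{-s} |\nu_+|^{s}$) produces the asserted formula for $\Gamma_{n(\mu)}(x, s)$; when the discriminant is not a square in $F_0^\times$, no $\nu \in F_0^\times$ realizes the substitution, hence $\Gamma_{n(\mu)}(x, s) = 0$.

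The terms attached to $n_{0, \pm}$ come from the boundary of the $(\mu, \nu)$-parametrization, namely the loci $\nu \to 0$ and $\nu \to \infty$. At these loci, $\tilde y$ degenerates toward the regular nilpotents $n_{0, +}$ (for $\nu \to 0$) and $n_{0, -}$ (for $\nu \to \infty$), and a residue-type extraction of the limiting behavior of the integrand yields the coefficients $\eta(-1)$ and $\eta(\Delta/\varpi) |\Delta/\varpi|^s$ respectively: the latter reflects that $\nu_-|_{\nu_+ \to \infty} = \Delta/(\varpi \nu_+) \to 0$ carries the weight $|\nu|^{-s}$ to $|\Delta/\varpi|^{-s} |\nu_+|^{s}$, producing in the limit the factor $|\Delta/\varpi|^{s}$ together with the sign $\eta(\Delta/\varpi)$ from $\eta(-\nu_-)$.

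The main obstacle will be the careful bookkeeping of the change of variables and the separation of the integration domain into the three regions contributing to $n(\mu)$, $n_{0, +}$, and $n_{0, -}$, together with the verification that the formal identity derived for $\Re s \gg 0$ extends meromorphically in $s$. Since the nilpotent orbital integrals are holomorphic at $s = 0$ by \cite[Lem.~2.1]{Z12b}, the pole tracking reduces to checking internal consistency of the decomposition; the delicate point is handling the case distinction on whether $(u^2\mu - 2w/\pi)^2 - 4\Delta/\varpi$ is a square in $F_0^\times$, since this controls which parts of $\GL_2(F_0)$ actually contribute to each nilpotent and therefore the precise geometry of the change of variables.
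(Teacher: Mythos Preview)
Your proposal contains a genuine conceptual error and glosses over the main technical difficulty.

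First, the error: you claim the quadratic constraint $u^2\mu = \nu + (\Delta/\varpi)/\nu + 2w/\pi$ ``arises from the requirement that $\tilde y$ and $\sigma(x)$ have the same invariants.'' But $\tilde y = \diag(h,1)^{-1}\sigma(x)\diag(h,1)$ is a conjugate of $\sigma(x)$, so it \emph{automatically} has the same invariants for every $h$; this imposes no constraint at all. In the paper the variable $\nu$ is not a matrix entry of the conjugated element. Rather, after writing the integral in Iwasawa coordinates $(a,b,t)$ as in \eqref{eqn orb s3} and carrying out the partition from \cite[\S3]{Z12b}, one specific piece (term (3.4) there) can be rewritten, after substitutions in $(a,b,t)$, as
\[
   \eta(-1)|u|^{-1}\int_{F_0}\Orb\Bigl(n\bigl(u^{-2}(t + t^{-1}\Delta/\varpi + 2w/\pi)\bigr),\phi',s\Bigr)\eta(t)|t|^{-s}\,\frac{dt}{|t|}.
\]
Here $t$ is an auxiliary integration variable left over after the $(a,b)$-integral has been recognized as a nilpotent orbital integral; the quadratic relation is simply the change of variables $t \mapsto \mu$ in this one-dimensional outer integral. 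Your Jacobian and root-product observations are correct once one is at this point, but the route to it is not what you describe.

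Second, and more seriously, your outline omits the actual work. The paper's proof imports the decomposition of the integration domain into roughly a dozen pieces (equations (3.2)--(3.20) of \cite{Z12b}), and the bulk of the argument consists of lemmas showing that most of these pieces cancel in pairs or groups, leaving only three survivors: term (3.4) gives the $n(\mu)$-integral above, and terms (3.7) and (3.9) give the $n_{0,+}$ and $n_{0,-}$ contributions respectively, via explicit substitutions that identify each with a regular nilpotent orbital integral times the stated scalar. Your ``residue-type extraction at $\nu\to 0,\infty$'' is a heuristic for why such boundary terms should exist, but it is not a proof; the cancellations rely on the identity $\int_{|y|\leq 1}|y|^{2s-1}dy + \int_{|y|>1}|y|^{2s-1}dy = 0$ (as meromorphic functions), and on several ad hoc matrix substitutions that do not fit into a single clean change of variables. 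Without this partition-and-cancel structure you have no mechanism to isolate the three contributions, and your proposal as written does not prove the theorem.
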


\begin{corollary}\label{cor germ=0}
If $x\in\fkb_{\red,\rs,1}$, then
$$\Gamma_{n(\mu)}(x, 0)=0.
$$ 
\end{corollary}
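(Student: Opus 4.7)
The plan is to read off the vanishing directly from the explicit formula for $\Gamma_{n(\mu)}(x,s)$ in Theorem \ref{thm germ x=0}, once we verify that $\eta(\Delta/\varpi)=-1$ for every $x\in \fkb_{\red,\rs,1}$. Inspecting the formula, when the discriminant $(u^2\mu-2w/\pi)^2-4\Delta/\varpi$ is not a nonzero square in $F_0$, the germ is identically zero by definition, so there is nothing to prove. In the remaining case $\nu\in F_0^{\times}$, and specializing $s=0$ in the second branch of the formula yields
\[
\Gamma_{n(\mu)}(x,0)=\frac{\eta(-\nu)\bigl(1+\eta(\Delta/\varpi)\bigr)}{\bigl|(u^2\mu-2w/\pi)^2-4\Delta/\varpi\bigr|^{1/2}},
\]
so the corollary will follow from the identity $\eta(\Delta/\varpi)=-1$ on $\fkb_{\red,\rs,1}$.

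First I would observe the general fact that $\eta(-\varpi)=1$: indeed $\pi\in F^\times$ satisfies $\pi\ov\pi=-\pi^{2}=-\varpi$, so $-\varpi=\RN(\pi)\in \RN F^\times$. Equivalently, $\eta(\varpi)=\eta(-1)$, and this holds regardless of whether $F/F_0$ is ramified or unramified. Next I would invoke Proposition \ref{eta b01}, which characterizes $\fkb_{\red,\rs,1}$ by the condition $\eta(-\Delta(x))=-1$, i.e.\ $\eta(\Delta)=-\eta(-1)$. Multiplying these two relations gives
\[
\eta(\Delta/\varpi)=\eta(\Delta)\,\eta(\varpi)^{-1}=\bigl(-\eta(-1)\bigr)\eta(-1)=-1,
\]
as desired.

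Substituting $\eta(\Delta/\varpi)=-1$ into the displayed expression makes the numerator $\eta(-\nu)(1+(-1))=0$, so $\Gamma_{n(\mu)}(x,0)=0$ in this branch as well. Combining the two cases yields $\Gamma_{n(\mu)}(x,0)=0$ for every $x\in\fkb_{\red,\rs,1}$, completing the proof. There is no real obstacle here beyond tracking signs: the only step requiring minor care is the computation of $\eta(\varpi)$ in terms of $\eta(-1)$ using $\RN(\pi)=-\varpi$, and the consistent use of the rescaled discriminant \eqref{Delta rescaled} that underlies Proposition \ref{eta b01}.
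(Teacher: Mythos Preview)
Your proof is correct and follows the same approach as the paper: both reduce to showing $\eta(\Delta/\varpi)=-1$ on $\fkb_{\red,\rs,1}$ via Proposition \ref{eta b01}, after which the vanishing is immediate from the explicit formula in Theorem \ref{thm germ x=0}. You simply unpack the one-line argument in more detail, in particular making explicit the step $\eta(-\varpi)=\eta(\RN(\pi))=1$ that the paper leaves to the reader.
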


\begin{proof}
	When $x\in\fkb_{\red,\rs,1}$ we have 
$\eta(\Delta/\varpi)=-1$, cf.~Proposition \ref{eta b01}. The result follows from the formula above.
\end{proof}

The proof of this theorem will occupy the rest of this section. We will rely on \cite[\S3]{Z12b}.
 At this point we warn the reader that we will use slightly different notation from \cite{Z12b}.
Let $\fks\fkl_\red$ be the subspace $\pi^{-1}\fks_\red$ of $\fkg\fkl_3=\M_{3,F_0}$.
We fix an isomorphism as representations of $H'=\GL_{2}$,
\begin{equation*}
	\vcenter{
	\xymatrix@R=0ex{
	   \fks_\red \ar[r]^-\sim  &  \fks\fkl_\red\\
	   x \ar@{|->}[r]  &  \pi^{-1}x.
	} 
	}
\end{equation*}

\subsection{Proof of Theorem \ref{thm germ x=0}}
 The proof of Theorem \ref{thm germ x=0} is essentially the same as that of Theorem 2.7 in \cite{Z12b} except for some notational changes. We often write $h\cdot x=h^{-1}xh$, for $h\in H'$ and $x\in\fks_\red$.
 To be consistent with \cite{Z12b}, for $x\in\fkb_{\rs}$ we  rewrite (\ref{eqn def O(s)}) as
\[
\label{eqn def O(s)1}
\Orb(x,\phi',s)= \int_{H'/H_x'}\phi'(h xh^{-1})\eta(\det h)\lvert \det h \rvert^{-s} \,dh.
\]
 We use the Iwasawa decomposition $H'(F_0)=KAN$ for $K=\SL_2(O_{F_0})$,
$$h=
k\begin{bmatrix}b& \\
  &  a^{-1}b\end{bmatrix}
\begin{bmatrix}1 & t\\
  &  1\end{bmatrix},\quad dg=dk\, \frac{da\,db\,dt}{|b|}.$$
   We may write $\Orb\bigl(\sigma(x),\phi',s\bigr)$ as (cf.~\cite[(3.1)]{Z12b}\footnote{We note the following notational changes: 1)  a sign difference in $\lambda$; 2) $a,b$ in \cite{Z12b} become $u,w/\pi$; and 3) the measure $dx\,dy\,du$ becomes $da\,db\,dt$.})
\begin{align}\label{eqn orb s3}
\bigints_{a, b,t\in F_0}\phi'_{K}\left(\pi \begin{bmatrix} t &
a(-\lambda/\varpi-t^2 )& b\\
  1/a & -t &0\\b^{-1}u& (w/\pi-ut)ab^{-1}&0 \end{bmatrix}\right)\eta(a)|a^{-1}b^2|^{-s}\,\frac{da\,db\,dt}{|b|}.
\end{align}
Here we used the $K$-invariant function $$\phi'_K(x):=\int_{K}\phi'(kxk^{-1})dk.
$$
Without loss of generality, we may assume that $\phi'$, and hence $\phi'_K$, is invariant under translation by $\fks_\red(O_{F_0}):=\pi \gl_3(O_{F_0})\cap \fks$.

Now all the equations from (3.1) to (3.20) in \cite{Z12b} hold for the orbital integral $\Orb(\sigma(x),\phi',s)$ (instead of simply its value at $s=0$). By this we mean to interpret them as the integral against $\eta(a)|a^{-1}b^2|^s\frac{da\,db\,dt}{|b|}$. Indeed,  all of these equations are derived from partitioning the domain of integration into various pieces,  and the fact that the function $\phi_K$ is invariant under translation by $\fks_\red(O_{F_0})$ with compact support. It is never used in \cite[\S3]{Z12b} that $s=0$.

\begin{lemma}
The term (3.4) in \cite{Z12b} is equal to\
$$\int_{F_0}\Gamma_{n(\mu)}(x, s)\Orb\bigl(n(\mu),\phi',s\bigr)\,d\mu.$$
\end{lemma}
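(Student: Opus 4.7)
The plan is to carry out the same partition of the integral \eqref{eqn orb s3} as in \cite[\S3]{Z12b}, but keeping track of the weight $|a^{-1}b^2|^{-s}\eta(a)$ throughout, and then to identify the resulting term (3.4) as the desired fibration over the parameter $\mu$ labelling the nilpotent family $n(\mu)$.

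First I would recall that term (3.4) in \cite{Z12b} arises from the sub-piece of \eqref{eqn orb s3} where the integration variables $(a,b,t)$ are restricted to the locus on which the matrix argument of $\phi'_K$, after an appropriate substitution, becomes conjugate to a regular point near the family $n(\mu)$. Concretely, one makes the change of variables that parametrizes this locus by $(\mu,\nu,\,\ldots)$, where $\mu\in F_0$ runs over the family and $\nu\in F_0^\times$ is the auxiliary coordinate coming from solving the quadratic $u^2\mu=\nu+\Delta/(\varpi\nu)+2w/\pi$ for fixed $x=(\lambda,u,w)$. Under this substitution, the matrix $\sigma(x)$ becomes $H'(F_0)$-conjugate to $n(\mu)$ precisely on the locus where the discriminant $(u^2\mu-2w/\pi)^2-4\Delta/\varpi$ is a square in $F_0$, which explains the case distinction in $\Gamma_{n(\mu)}(x,s)$.

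Next I would read off from this substitution the Jacobian and the transformation of the character $\eta(a)|a^{-1}b^2|^{-s}$. The two solutions $\nu$ and $\Delta/(\varpi\nu)$ of the quadratic contribute the two summands inside the numerator of $\Gamma_{n(\mu)}(x,s)$: one gives the weight $\eta(-\nu)|\nu|^{-s}$ and the other, after applying the symmetry $\nu\mapsto \Delta/(\varpi\nu)$ together with $\eta(\Delta/\varpi)$, gives $\eta(-\nu)\eta(\Delta/\varpi)|\Delta/\varpi|^{-s}|\nu|^s$. The denominator $|(u^2\mu-2w/\pi)^2-4\Delta/\varpi|^{1/2}$ is the Jacobian of the change of variables $\nu\leftrightarrow\mu$. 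Integrating out the remaining variables (those corresponding to the unipotent radical $N$ stabilizing $n(\mu)$) then reproduces $\Orb(n(\mu),\phi',s)$ by its very definition \eqref{eqn def O(s)}.

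The routine but bookkeeping-heavy step is collecting the characters and absolute values correctly across the change of variables; this is a direct transcription of the $s=0$ computation in \cite[\S3]{Z12b}, inserting the additional factor $|a^{-1}b^2|^{-s}$ and tracking how it pulls back. The main obstacle is verifying that the case distinction (whether the quadratic has roots in $F_0$ or not) is respected: when the discriminant is a non-square, the locus over which one would integrate is empty over $F_0$, and both the contribution to term (3.4) and $\Gamma_{n(\mu)}(x,s)$ vanish. Once this case analysis is done carefully and the symmetrization over the two roots $\nu$ and $\Delta/(\varpi\nu)$ is performed, the identity falls out, and integration in $\mu$ produces exactly the claimed formula.
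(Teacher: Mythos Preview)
Your proposal is correct and follows essentially the same route as the paper. The paper is slightly more explicit about the intermediate step: after the substitution it first writes term (3.4) as
\[
\eta(-1)|u|^{-1}\int_{F_0}\Orb\Bigl(n\bigl(u^{-2}(t+t^{-1}\Delta/\varpi+2w/\pi)\bigr),\phi',s\Bigr)\eta(t)|t|^{-s}\,\frac{dt}{|t|},
\]
so that the inner $(a,b)$-integral is recognized directly as $\Orb(n(\mu),\phi',s)$ with $\mu=u^{-2}(t+t^{-1}\Delta/\varpi+2w/\pi)$, and only then performs the change of variables $t\mapsto\mu$ (this is the ``rest is the same as \cite[Lem.~3.1]{Z12b}''). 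Your variable $\nu$ is exactly the paper's $t$, and your discussion of the two roots, the Jacobian, and the square/non-square case distinction is precisely what that change of variables entails. The one practical advantage of the paper's presentation is that it records the intermediate $t$-integral form \eqref{eqn germ mu}, which is the form actually used in the explicit computations of \S\ref{germspecial}.
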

\begin{proof}
After a suitable substitution,  we may write \cite[(3.4)]{Z12b} as
\[ 
   \eta(-1)|u|^{-1}
\bigints \phi'_K\left(\pi\begin{bmatrix} 0 &
\frac{(t+w/\pi)^2 +u^2\lambda/\varpi}{u^2t}ab& b\\
 0 & 0 &0\\ 0& a&0 \end{bmatrix}\right)\eta(abt)|a^{-1}bt|^{-s} \,\frac{da\,db\,dt}{|t|}.
\]
This is equal to
\[ 
\eta(-1)|u|^{-1}
\bigintsss_{F_0} \Orb\biggl(n\left(\frac{(t+w/\pi)^2 +u^2\lambda/\varpi}{u^2t}\right),\phi',s\biggr)\eta(t)|t|^{-s} \, \frac{dt}{|t|}.
 \]
We rewrite this as 
\begin{equation}\label{eqn germ mu}
 \eta(-1)|u|^{-1}
\bigintsss_{F_0} \Orb\Bigl(n\bigl(u^{-2}(t+t^{-1}\Delta/\varpi+2w/\pi)\bigr),\phi',s\Bigr)\eta(t)|t|^{-s} \, \frac{dt}{|t|}.
\end{equation}
This is the form of the germ expansion we will use to do calculations later on. The rest is the same as the proof of \cite[Lem.~ 3.1]{Z12b}.
\end{proof}

Lemmas 3.2 and 3.3 of \cite{Z12b} remain unchanged. We recall them and indicate the necessary changes in the proof.

\begin{lemma}
The sum of (3.3) and (3.5) in \cite{Z12b} is zero.
The sum of (3.6) and (3.15) in \cite{Z12b}  is zero.
\end{lemma}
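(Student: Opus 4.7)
The plan is to adapt the arguments of \cite[Lem.~3.2, Lem.~3.3]{Z12b} to the meromorphic setting with the twist $\eta(a)\lvert a^{-1}b^2\rvert^{-s}$. As the preceding lemma already noted, all the decompositions (3.1)--(3.20) from \cite{Z12b} carry over verbatim to integrals against this measure, since they only rely on the support properties of $\phi'_K$ and its invariance under $\fks_\red(O_{F_0})$-translations. Therefore the proof reduces to checking that the \emph{pairwise cancellations} in \cite[Lem.~3.2, Lem.~3.3]{Z12b} survive the insertion of the twisting factor.

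For each of the two asserted cancellations, I would proceed as in loc.\ cit.: identify an explicit involutive substitution on the variables $(a,b,t)$ (or on a suitable subset, after partitioning the domain) which exchanges the two integrands being summed and carries $\phi'_K$ applied to one matrix to $\phi'_K$ applied to the other (using the $K$-invariance of $\phi'_K$ whenever conjugation by an element of $K$ is required to match the matrices). Then I would simply track what happens to the twist $\eta(a)\lvert a^{-1}b^2\rvert^{-s}\frac{da\,db\,dt}{\lvert b\rvert}$ under the substitution. The hope -- and this is how the sign cancellations work at $s=0$ in \cite{Z12b} -- is that the change of variables introduces an overall sign $\eta(-1)$ or $\eta(\alpha)$ for some $\alpha \notin \RN F^\times$, matching the opposite sign already present in the two terms and producing an exact cancellation once the measures are compared.

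The key point that makes the $s$-dependence harmless is that the substitutions in \cite{Z12b} are of the form $(a,b,t)\mapsto (\lambda a, \mu b, \nu t)$ with $\lambda,\mu,\nu$ fixed elements of $F_0^\times$ (or involutions preserving $\lvert a\rvert$ and $\lvert b\rvert$ individually); consequently $\lvert a^{-1}b^2\rvert$ is either preserved or multiplied by a constant, so that after extracting this constant as an overall factor the cancellation with the opposite sign still holds identically in $s$. Equivalently, both sides of each identity define meromorphic functions of $s$ which agree on the open half-plane $\Re(s) \gg 0$ where the integrals converge absolutely, hence agree everywhere by analytic continuation.

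The main obstacle I anticipate is purely notational bookkeeping: verifying carefully that the substitutions chosen in \cite{Z12b} -- which were designed to cancel the orbital integrals at $s=0$ against one another -- do not, in our slightly renormalized setup (with $\pi$-factors moved around as in \eqref{eqn orb s3} versus \cite[(3.1)]{Z12b}), introduce an extra factor of $\lvert\pi\rvert^{s}$ or similar scalar that would spoil the equality for general $s$. Once each substitution is written out explicitly and the Jacobian of the measure $\frac{da\,db\,dt}{\lvert b\rvert}$ is matched against the $s$-twist, the cancellation becomes a formal computation identical in structure to the $s=0$ case of \cite{Z12b}.
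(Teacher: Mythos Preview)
Your plan rests on the expectation that the pairs $(3.3)+(3.5)$ and $(3.6)+(3.15)$ cancel via an involutive substitution on $(a,b,t)$ that swaps the two integrands and introduces a sign through $\eta$. That is not the mechanism here. In the paper, these pairs arise from splitting a single integrand over the complementary regions $\lvert b\rvert\le 1$ and $\lvert b\rvert>1$; the integrand is the same in both pieces, the domains are disjoint, and no $\eta$-sign is involved. So there is no change of variables ``preserving $\lvert a\rvert$ and $\lvert b\rvert$ individually'' that could ever produce a cancellation, and your analytic-continuation remark (agreement on a common half-plane of absolute convergence) does not apply either, since the two pieces converge in \emph{opposite} half-planes.

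The paper's argument is shorter and different in kind: both cancellations reduce to the single Tate-type identity
\[
   \int_{|y|\le 1}|y|^{2s-1}\,dy \;+\; \int_{|y|>1}|y|^{2s-1}\,dy \;=\; 0,
\]
where each integral is computed in its own half-plane of convergence (as a geometric series), meromorphically continued, and then the two continuations are seen to be negatives of one another. This replaces whatever the original proof in \cite{Z12b} did and works uniformly in $s$; it is the ingredient you are missing.

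\bigskip

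\hrule
\bigskip

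\textbf{Editor's note.} The assessment above is correct in identifying the paper's mechanism and the divergence from the student's plan, but overstates the case slightly in calling it a ``gap.'' The student was proposing to follow the \emph{original} argument in \cite{Z12b}, which the paper explicitly says can be simplified; so the student's route may well have worked with more effort, just via a more circuitous path than the Tate identity. The honest summary is: the paper found a cleaner one-line reduction, and the student's sketch, while plausible in spirit, misidentifies the cancellation mechanism for these particular pairs and would need to be rethought rather than merely ``bookkept.''
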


\begin{proof}

The proof in \cite{Z12b} can actually be simplified. In both sums, it suffices to show that 
\begin{align}
\label{eqn key id}
\int_{|y|\leq 1}|y|^{2s-1}dy+\int_{|y|> 1}|y|^{2s-1}dy=0,
\end{align}
where we understand both integrals as meromorphic functions of $s\in\BC$ obtained by analytic continuation as follows. The first integral converges when $\Re(s)>0$,
$$\int_{|y|\leq 1}|y|^{2s-1}dy=\frac{1}{1-q^{-s}}\int_{|y|=1}\frac{dy}{|y|}  ,
$$
and the second one converges when $\Re(s)<0$, 
$$\int_{|y|>1}|y|^{2s-1}dy= \frac{q^s}{1-q^{s}}\int_{|y|=1}\frac{dy}{|y|} .
$$These equalities give the claimed analytic continuation.
The sum of the two terms is then obviously zero.
\end{proof}

\begin{lemma}
The sum of the following terms in \cite{Z12b} is zero: (3.2), (3.8), (3.10), (3.16), (3.17), (3.20).
\end{lemma}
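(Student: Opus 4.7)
The plan is to mimic the strategy of the previous lemma: each of the six terms \cite[(3.2), (3.8), (3.10), (3.16), (3.17), (3.20)]{Z12b} is an iterated integral obtained by restricting the master integral \eqref{eqn orb s3} to a locally closed subset of the Iwasawa domain, and after the substitutions performed in loc.\ cit.\ each such integral factors as a polynomial-in-$K$-translate of $\phi'$ times a one-dimensional distribution in the variable $|a^{-1}b^2|^{-s}$ coming from the transfer factor weight. We interpret each of the six integrals as a meromorphic function of $s$ via analytic continuation, exactly as in the identity \eqref{eqn key id}, and then show that the sum is identically zero.

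First I would match up the six terms into pairs (or possibly triples) whose supports differ only by whether a single valuation is $\leq 0$ or $>0$, so that after a change of variable the inner integrand takes the form $|y|^{2s-1}$ (or $\eta(y)|y|^{2s-1}$) integrated over complementary regions $\{|y|\leq 1\}$ and $\{|y|>1\}$. The pairing should be read off from the partitioning scheme in \cite[\S3]{Z12b}: (3.2) pairs naturally with one of the ``boundary'' terms (3.16) or (3.17); the remaining four split similarly. On each complementary pair we then invoke the identity
\[
\int_{|y|\leq 1}|y|^{2s-1}\,dy+\int_{|y|>1}|y|^{2s-1}\,dy=0
\]
(interpreted by meromorphic continuation exactly as in the previous lemma, using $\tfrac{1}{1-q^{-s}}+\tfrac{q^s}{1-q^s}=0$). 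When the character $\eta$ is ramified and appears in the inner integrand, the analogous identity
\[
\int_{|y|\leq 1}\eta(y)|y|^{2s-1}\,dy+\int_{|y|>1}\eta(y)|y|^{2s-1}\,dy=0
\]
holds on the nose (both sides are zero) by vanishing of the relevant Tate local integral, which takes care of any ``mixed'' pair.

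The main obstacle I expect is bookkeeping: the change of variables identifying the inner slices of two different terms of \cite{Z12b} up to a factor $|a^{-1}b^2|^{-s}$ needs to be done carefully, since the substitutions in \cite[\S3]{Z12b} were written only for $s=0$, and any discrepancy in Jacobian would spoil the cancellation in the $s$-variable. This is purely routine once the pairing scheme is fixed, but requires checking that after the substitution the character and absolute-value weight that go with $s$ are identical in the two terms being paired. Once this is verified, the outer integrals in the remaining $H'(F_0)$-variables are common to the paired terms, and so the cancellation at the level of inner $|y|^{2s-1}$-integrals propagates to the full sum. This gives the vanishing of the six-term sum as a meromorphic function of $s$, and in particular at $s=0$ it recovers the identity proved in \cite{Z12b}.
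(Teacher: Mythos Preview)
Your approach is essentially the same as the paper's: the proof simply says that Lemma~3.3 of \cite{Z12b} carries over verbatim, the only ingredient being the identity \eqref{eqn key id} (the $\eta$-free cancellation $\int_{|y|\le1}|y|^{2s-1}\,dy+\int_{|y|>1}|y|^{2s-1}\,dy=0$). Your $\eta$-twisted variant is not needed---the paper explicitly notes that \eqref{eqn key id} alone suffices---but otherwise your plan of pairing terms over complementary valuation regions and invoking this identity is exactly what the reference does.
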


\begin{proof}The same proof as that of Lem.~3.3 in \cite{Z12b} still works,  noting that
the only ingredient is the identity (\ref{eqn key id}) above (for instance,  in loc.~cit.~one only uses (\ref{eqn key id}), when splitting the term $(II)$ into (3.12) plus the term following  it). 
\end{proof}

\begin{lemma}The sum of (3.12) and (3.19)  in \cite{Z12b} is zero.

 \end{lemma}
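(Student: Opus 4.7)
The plan is to mirror the strategy of the three preceding lemmas in this subsection. Recall that each of those cancellation statements was obtained by taking the corresponding equation from \cite[\S3]{Z12b}, reinterpreting it in the $s$-dependent form (i.e., as the same partition of the domain of integration in \eqref{eqn orb s3}, but now integrated against $\eta(a)|a^{-1}b^2|^{s}\frac{da\,db\,dt}{|b|}$ rather than evaluated at $s=0$), and then reducing the resulting cancellation to a single analytic-continuation identity, most importantly
\[
\int_{|y|\leq 1}|y|^{2s-1}\,dy+\int_{|y|> 1}|y|^{2s-1}\,dy=0,
\]
understood as meromorphic functions of $s$. This identity is precisely what replaces the direct cancellation of divergent integrals in \cite{Z12b} at $s=0$.

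First I would write out the $s$-dependent versions of the terms (3.12) and (3.19) of \cite{Z12b}, i.e., take the partition of the domain of integration that produces each of these terms from \eqref{eqn orb s3}, and attach the weight $\eta(a)|a^{-1}b^2|^s\,\frac{da\,db\,dt}{|b|}$. Since the argument of $\phi'_K$ in each term of \cite{Z12b} is independent of some variable (after suitable substitutions of the kind used to derive (3.12) and (3.19) in loc.\ cit.), the weighted integrals split off a one-variable integral of the form $\int |y|^{2s-1}\,dy$ over $|y|\leq 1$ in one term and over $|y|>1$ in the other, multiplied by a common factor involving $\phi'_K$ and the remaining variables.

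Next I would verify that this common factor agrees in the two terms. This is exactly the matching performed at $s=0$ in \cite{Z12b}, and no new calculation is needed: the pairing between (3.12) and (3.19) that makes them cancel at $s=0$ in loc.\ cit.\ comes from an explicit change of variables (of the form $b\mapsto b^{-1}$ or $t\mapsto t^{-1}$ after rescaling) on the shared $\phi'_K$-factor, and this change of variables is manifestly compatible with the weight $|a^{-1}b^2|^s$ up to converting $\int_{|y|\leq 1}$ to $\int_{|y|>1}$. Once the common factor is identified, applying the analytic-continuation identity displayed above yields the desired cancellation as meromorphic functions of $s$.

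The main obstacle I expect is purely bookkeeping: keeping track of the substitutions that were made in \cite{Z12b} to arrive at (3.12) and (3.19) so that the $s$-dependent weight $|a^{-1}b^2|^s$ is correctly transported. Once that is done, no new analytic input beyond the basic identity is needed, and the proof concludes exactly as in the previous three lemmas of this subsection.
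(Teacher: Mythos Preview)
Your proposal assumes that (3.12) and (3.19) have the same structure as the pairs in the preceding lemmas: a common $\phi'_K$-factor times complementary one-variable integrals $\int_{|y|\leq 1}|y|^{2s-1}\,dy$ and $\int_{|y|>1}|y|^{2s-1}\,dy$. That assumption is wrong for this pair, and the analytic-continuation identity plays no role here.

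In fact (3.12) and (3.19) are each \emph{full} integrals over $H'$, not pieces cut out by an inequality on one variable. The paper identifies (3.12) with
\[
\int_{H'}\phi'\left(h\begin{bmatrix}0 &0& 0\\ 1 &0&0\\0& w &0\end{bmatrix}h^{-1}\right)\eta(\det h)\lvert \det h \rvert^{-s}\,dh
\]
and, after substitutions, (3.19) with the negative of the same integral. The cancellation is effected by two changes of variable of the form $h\mapsto hg$ with $g\in\SL_2(F_0)$, namely
\[
g=\begin{bmatrix}0&-1\\1&0\end{bmatrix}
\quad\text{and}\quad
g=\begin{bmatrix}w/u&0\\0&u/w\end{bmatrix}.
\]
Because both lie in $\SL_2$, the factor $\eta(\det h)|\det h|^{-s}$ is unchanged, so the $s$-dependent integrals match on the nose and sum to zero. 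No splitting of the domain and no meromorphic continuation identity is needed. Your bookkeeping worry is well placed, but the actual obstacle is that the mechanism of cancellation for this pair is qualitatively different from the earlier ones.
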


\begin{proof}
We work with the corresponding function $\phi'$ on $\fks\fkl_\red$.
The integral (3.12) is equal to
\begin{multline*}
 \bigints_{H'}\phi'\left(h\begin{bmatrix}0 &0& 0\\
 1 &0&0\\u& w &0
 \end{bmatrix}h^{-1}\right)\eta(\det h)\lvert \det h \rvert^{-s} \,dh\\
 = \bigints_{H'}\phi'\left(h\begin{bmatrix}0 &0& 0\\
 1 &0&0\\0& w &0
 \end{bmatrix}h^{-1}\right)\eta(\det h)\lvert \det h \rvert^{-s} \,dh.
\end{multline*}
After a substitution, the integral (3.19) is equal to 
{\allowdisplaybreaks
\begin{multline*}
 - \bigints_{H'}\phi'\left(h\begin{bmatrix} 0 &
-(w/u)^2 & 0\\
0& 0&0\\u& 0&0
 \end{bmatrix}h^{-1}\right)\eta(\det h)\lvert \det h \rvert^{-s} \,dh \\
 \begin{aligned}
  &= -\bigints_{H'}\phi'\left(h\begin{bmatrix} 0 &
0 & 0\\
(w/u)^2& 0&0\\0& u&0
 \end{bmatrix}h^{-1}\right)\eta(\det h)\lvert \det h \rvert^{-s} \,dh \\
   &= -\bigints_{H'}\phi'\left(h\begin{bmatrix} 0 &
0 & 0\\
1& 0&0\\0& w&0
 \end{bmatrix}h^{-1}\right)\eta(\det h)\lvert \det h \rvert^{-s} \,dh,
 \end{aligned}
\end{multline*}}
where the first and the last equality follow, respectively, from the substitutions
\[
   h\mapsto h 
	\begin{bmatrix} 
		0  &  -1\\
		1  &  0
 \end{bmatrix}
 \quad\text{and}\quad
 h\mapsto h
 \begin{bmatrix} 
	 w/u  &  0 \\
    0  &  u/w
 \end{bmatrix}. \qedhere
\]
\end{proof}

To complete the proof,  it suffices to treat the remaining terms: (3.7) and (3.9) in \cite{Z12b}. The term (3.7) in \cite{Z12b} yields, after a suitable substitution,
\begin{multline*}
   \bigints \phi'_K\left(\pi\begin{bmatrix} t &
-at^2 & b\\
  1/a& -t &0\\ 0 & 0&0 \end{bmatrix}\right)\eta(a)|a^{-1}b^2|^{-s} \,\frac{da\,db\,dt}{|b|} \\
   \begin{aligned}
		&= \bigints_{H'}\phi'\left(h\cdot\pi \begin{bmatrix} 0 &
0 & 1\\
1& 0&0\\0& 0&0
 \end{bmatrix}h^{-1}\right)\eta(\det h)\lvert \det h \rvert^{-s} \,dh \\
      &= \eta(-1)   \Orb(n_{0,+},\phi',s).
	\end{aligned}
\end{multline*}
The term (3.9) in \cite{Z12b} yields, after suitable substitutions,
\begin{multline*}
   \bigints \phi'_K\left(\pi\begin{bmatrix} 0 &
(-\lambda/\varpi-(w/u\pi)^2 )a& 0\\
 0 & 0&0\\b^{-1}u& (w/\pi-at)ab^{-1}&0
 \end{bmatrix}\right)  \eta(a)|a^{-1}b^2|^{-s} \,\frac{da\,db\,dt}{|b|}\\
   \begin{aligned}
		&= \bigints_{H'}\phi'\left(h\cdot \pi\begin{bmatrix} 0 &
-\lambda/\varpi-(w/u\pi)^2  & 0\\
0& 0&0\\ u& 0&0
 \end{bmatrix} h^{-1}\right)\eta(\det h)\lvert \det h \rvert^{-s} \,dh \\
      &=\eta(\Delta/\varpi)|\Delta/\varpi|^{-s}    \Orb(n_{0,-},\phi',s).
	\end{aligned}
\end{multline*}
This completes the proof of Theorem \ref{thm germ x=0}.

\section{Germ expansion around $x_0\neq 0$}\label{sec germ nonzero}

In this section we consider the germ expansion near a non-zero element $x_0\in \fkb_\red\smallsetminus \fkb_{\red, \rs}$.

\subsection{Orbits in $\fks_\red\smallsetminus \fks_{\red, \rs}$}\label{orbitsinfksminusnull}
We need to classify the  orbits in the fiber of $x_0=(\lambda_0,u_0,w_0)\in\fkb_\red\smallsetminus \fkb_{\red, \rs}$. 
Let 
$$
y_0=
\begin{bmatrix}A & {\bf b} \\
 {\bf c}   & 0
\end{bmatrix}\in\fks_\red
$$ be a semi-simple element in the fiber of $x_0$. It follows that
the dimension $r$ of the subspace spanned by ${\bf b},A{\bf b}$ is then either zero or one.

\medskip
\noindent\emph{Case $r=0$.} Then ${\bf b}=0$ and $x_0=(\lambda_0,0,0)$ where $\lambda_0\neq 0.$ We introduce the semi-simple quadratic $F_0$-algebra, 
\begin{equation*}
F'=F_0[X]/(X^2+\lambda_0) .
\end{equation*}
 
In the fiber of $x_0=(\lambda_0,0,0)$, there is one semi-simple orbit, and there  are two or four non-semi-simple orbits, depending on whether $F'$ is isomorphic to $F$ or not.

\smallskip

\noindent\emph{Subcase 0i: $F'\not\simeq F$.} In the fiber of such $x_0$, there is one  semi-simple orbit with representative
\begin{align}\label{eqn case 0i y0}
y_0=\pi\begin{bmatrix} 0&-\lambda_0/\varpi &0\\1&0  &  0
\\0&0&0
\end{bmatrix}.\end{align}
There are two non-semi-simple orbits with representatives
\begin{align}\label{eqn case 0i}
y_+=\pi\begin{bmatrix}0&-\lambda_0/\varpi &1\\ 1  & 0&0\\ 0&0& 0
\end{bmatrix},\quad y_-=\pi\begin{bmatrix}0&-\lambda_0/\varpi &0\\ 1  & 0&0\\ 1&0& 0
\end{bmatrix},
\end{align}
with trivial stabilizer. 
\smallskip

\noindent\emph{Subcase 0ii: $F'\simeq F$.} In this case there is one  semi-simple orbit with representative
$$
y_0=\pi\begin{bmatrix}\alpha&0&0\\0  & - \alpha&0\\ 0&0& 0
\end{bmatrix},\quad \alpha^2=-\lambda_0/\varpi ,
$$ and
there are four non-semi-simple with  representatives
\begin{align}\label{eqn case 0ii}
y_{++}=\pi\begin{bmatrix}\alpha&0&1\\0  & - \alpha&1\\ 0&0& 0
\end{bmatrix},\quad y_{+-}=\pi\begin{bmatrix}\alpha&0&1\\0  & - \alpha&0\\ 0&1& 0
\end{bmatrix},
\end{align}
and 
\begin{align}\label{eqn case 0ii y--}
y_{--}=^t\!\!y_{++},\quad  y_{-+}=^t\!\!y_{+-},
\end{align}
with trivial stabilizer. 

\smallskip

\noindent\emph{Case  $r=1$.}  Then ${\bf b}\neq 0$ and $A{\bf b}$ is a multiple of ${\bf b}$, i.e.,  ${\bf b}$ is an eigenvector of $A$, and $x_0=(\lambda_0, u_0, w_0)$ with $u_0\neq 0$. We may choose a semi-simple  representative as
$$
y_0=\pi\begin{bmatrix}\alpha&0&1\\0  & - \alpha&0\\ u_0&0& 0
\end{bmatrix},\quad \alpha\in F_0,\quad u_0\neq 0,\quad \alpha^2=-\lambda_0/\varpi.
$$
The stabilizer of $y_0$ is $\GL_1$ sitting inside $\GL_2$ in the upper left corner. However, the character $\eta\circ\det$ is nontrivial on the stabilizer and hence it is not a relevant orbit. The non-semi-simple  representatives are
\begin{align}\label{eqn case 1}
y_+=\pi\begin{bmatrix}\alpha&0&1\\ 1  & - \alpha&0\\ u_0&0& 0
\end{bmatrix},\quad y_-= \pi\begin{bmatrix}\alpha&1&1\\ 0  & - \alpha&0\\ u_0&0& 0
\end{bmatrix},
\end{align}
and they have trivial stabilizer.

\subsection{Orbital integrals}

We first define the orbital integrals of the $x_0$-nilpotent elements.  All of them are defined by \eqref{eqn def O(s)} with one exceptional case:
\begin{lemma} \label{lem orb F' split} 
Let $x_0=(\lambda_0, 0, 0)$ with $\lambda_0\neq 0$. 
  Assume that $F'\simeq F$, i.e., Case 0ii. Let $y=y_{\pm\pm}$ be a non-semi-simple element on $\fks_\red$ mapping to $x_0$. 
 Then for  $\phi'\in C_c^\infty(\fks_\red)$, the integral 
$$
\Orb(y,\phi,s_1,s_2)= \bigintsss_{a,b\in F_0^\times,t\in F_0} \phi'_K\left(\begin{bmatrix}1 & t\\
  &  1\end{bmatrix} \begin{bmatrix}a& \\
  &  b\end{bmatrix}\cdot y\right)\eta(ab)|a|^{s_1}|b|^{s_2}\, d^\times\! a \,d^\times\! b \,dt
$$is absolutely convergent when $\Re(s_1)\gg 0$, $\Re(s_2)\gg 0$ for $y=y_{++}$; when $\Re(s_1)\gg 0$, $\Re(s_2)\ll 0$ for $y=y_{+-}$; etc. It  
 has a meromorphic continuation to $(s_1,s_2)\in\BC^2$; its restriction to the diagonal $s_1=s_2$ is meromorphic  and is holomorphic at $(s_1,s_2)=(0,0)$.
\end{lemma}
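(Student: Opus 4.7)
The plan is an explicit Iwasawa-coordinate computation reducing the two-variable orbital integral to a product of one-variable Tate zeta integrals against smooth compactly supported functions. For each of the four non-semi-simple representatives $y = y_{\pm\pm}$ listed in \eqref{eqn case 0ii}--\eqref{eqn case 0ii y--}, I would write $y = \pi \left[\begin{smallmatrix} A & \mathbf{b}\\ \mathbf{c} & 0\end{smallmatrix}\right]$ in $2+1$ block form with $A = \alpha\,\diag(1,-1)$ and expand
\[
h \cdot y
= \pi\begin{bmatrix} h^{-1}Ah & h^{-1}\mathbf{b}\\ \mathbf{c}\,h & 0\end{bmatrix}
\]
using $h = \left[\begin{smallmatrix} 1 & t\\ 0 & 1\end{smallmatrix}\right]\left[\begin{smallmatrix} a & 0\\ 0 & b\end{smallmatrix}\right] = \left[\begin{smallmatrix} a & tb\\ 0 & b\end{smallmatrix}\right]$. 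For $y_{++}$ the three non-fixed entries come out to $\pi\cdot 2\alpha\,tb/a$, $\pi(1-t)/a$, and $\pi/b$; analogous monomial expressions (differing only in which powers of $a^{\pm1},b^{\pm1}$ appear and whether the entries lie above or below the diagonal block) hold for $y_{+-}, y_{-+}, y_{--}$. Compactness of the support of $\phi'_K$ then constrains $(a,b,t)$ to a semi-algebraic region bounded in precisely two of the four asymptotic directions $\lvert a\rvert \to 0, \infty$ and $\lvert b\rvert \to 0, \infty$, and unbounded in the other two, with $t$ always confined to a bounded set. Absolute convergence in the half-planes stated in the lemma then follows by a direct estimate against $\lvert a\rvert^{s_1}\lvert b\rvert^{s_2}\,d^\times\!a\,d^\times\!b\,dt$, the four orbit-representative cases being distinguished by which of $\Re(s_1), \Re(s_2)$ must be sufficiently positive and which sufficiently negative.

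For the meromorphic continuation I would change variables from $(a,b,t)$ to the three non-fixed matrix entries $(X, Y, Z)$ of $h \cdot y$. The Jacobian is a monomial in $a, b, t, 1-t$ whose re-expression in the new coordinates converts $\lvert a\rvert^{s_1}\lvert b\rvert^{s_2}\eta(ab)$ into a factor of the form $\lvert X\rvert^{-s_1}\lvert Y\rvert^{-s_2}\eta(XY)\cdot\lvert 1-t\rvert^{s_1}\eta(1-t)$ times a unit. The integrand thereby decouples as a product of two one-variable Tate zeta integrals
\[
\int_{F_0^\times}\!\Phi(X)\,\eta(X)\,\lvert X\rvert^{-s_1}\,d^\times\!X
\quad\text{and}\quad
\int_{F_0^\times}\!\Psi(Y)\,\eta(Y)\,\lvert Y\rvert^{-s_2}\,d^\times\!Y
\]
with $\Phi, \Psi \in C_c^\infty(F_0)$, multiplied by a $t$-integral against $\lvert 1-t\rvert^{s_1}\eta(1-t)$ that is entire in $s_1$ by compactness of the $t$-support. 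A direct geometric-series expansion (decomposing $F_0^\times$ into the sheets $\lvert x\rvert = q^{-n}$) shows that each Tate factor is in fact entire on $\BC$, regardless of whether $\eta$ is ramified or unramified: in the unramified case because $\eta(\varpi) = -1$ converts the tail of the series at $s_i = 0$ into $1/(1 + q^{-s_i})$, which has no pole; in the ramified case because the units integral $\int_{O_{F_0}^\times}\eta\,d^\times\!u$ vanishes and the tail series is identically zero. Hence the product is entire on all of $\BC^2$, and in particular its restriction to the diagonal $s_1 = s_2$ is entire and a fortiori holomorphic at the origin.

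The main technical obstacle is performing the change of variables cleanly across the proper subvariety where its Jacobian degenerates; the most visible such locus is $t = 1$ in the $y_{++}$ case, where the entry $(1-t)/a$ vanishes and the map $(a,b,t)\leftrightarrow(X,Y,Z)$ is not a diffeomorphism. I would treat this via a partition of unity subordinate to a neighborhood of the exceptional locus: on the bulk piece the coordinate change is a diffeomorphism and the above factorization applies directly, while on each thin boundary piece a direct estimate against Haar measure shows that the contribution is an entire function of $(s_1, s_2)$. Combining the pieces then yields the claim.
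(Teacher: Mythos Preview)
Your overall strategy---reduce to Tate-type integrals in the torus variables---is the right one and is exactly what the paper means by ``the method of Tate's thesis.'' But the specific change of variables you propose does not produce the factorization you claim. If you set $X = 2\alpha t b/a$, $Y = (1-t)/a$, $Z = 1/b$ for $y_{++}$ and try to invert, you find that $a$, $b$, $t$ are \emph{not} monomial in $X, Y, Z$ (for instance $a = 2\alpha/(XZ + 2\alpha YZ)$ after eliminating $t$), so the weight $\eta(ab)\lvert a\rvert^{s_1}\lvert b\rvert^{s_2}$ does not become a product of powers of $\lvert X\rvert, \lvert Y\rvert, \lvert Z\rvert$. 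Consequently the integral does not decouple into a product of one-variable Tate integrals times a separate $t$-integral; in fact there is no separate $t$-integral left, since you have passed from three variables to three variables. Your remark that ``the $t$-support is compact'' is also not quite right: for fixed $a, b$ the $t$-support is a ball of radius $\asymp \lvert a\rvert/\lvert b\rvert$, which is unbounded as $\lvert a\rvert \to \infty$.

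The fix is to integrate out the unipotent variable \emph{first}. Because $\alpha \neq 0$ (this is exactly where $\lambda_0 \neq 0$ enters), the $(1,2)$-entry $2\alpha t b/a$ being bounded forces $t$ into a compact set once $a, b$ are fixed, and one checks that
\[
\phi'_{KN}(\mathbf b,\mathbf c)
:= \int_{K\times N}\phi'\bigl(kn\cdot y\bigr)\,dk\,dn,
\qquad
y = \pi\begin{bmatrix}A & \mathbf b\\ \mathbf c & 0\end{bmatrix},\quad A = \diag(\alpha,-\alpha),
\]
defines an element of $C_c^\infty(F_0^2 \times F_0^2)$. After this step the remaining integral over the diagonal torus is
\[
\int_{F_0^\times\times F_0^\times}
\phi'_{KN}\bigl(\diag(a,b)^{-1}\mathbf b,\ \mathbf c\,\diag(a,b)\bigr)\,
\eta(ab)\,\lvert a\rvert^{s_1}\lvert b\rvert^{s_2}\,d^\times a\,d^\times b,
\]
which after the obvious substitution is a genuine two-variable Tate integral against a Schwartz function on $F_0^2$. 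Writing that Schwartz function as a finite sum of pure tensors and applying your (correct) observation that one-variable Tate integrals against the nontrivial quadratic character $\eta$ are entire then gives the meromorphic continuation and holomorphy at the origin. This is precisely the route the paper takes in the proof of Theorem~\ref{thm germ ss s}, case~(0ii), where $\phi'_{KN}$ appears explicitly.
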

\begin{proof}
The proof is analogous to that of \cite[Lem.~2.1]{Z12b} by the method of Tate's thesis. We omit the details.
\end{proof}
In this case we  define the orbital integral $\Orb(y,\phi',s)$ to be the value at $s_1=s_2=s$. By Lemma \ref{lem orb F' split}, this is  a meromorphic function of $s$.

\begin{theorem} \label{thm germ ss s}
Fix $x_0=(\lambda_0,u_0,w_0)\in \fkb_\red\smallsetminus\fkb_{\red, \rs}$ with $x_0\neq 0$.  For 
$x\in \fkb_{\red, \rs}$  in a small neighborhood of $x_0$, the orbital integral $\Orb(\sigma(x),\phi',s)$ is equal
\begin{altenumerate}
\renewcommand{\theenumi}{\alph{enumi}}
 \item in  case $(0i)$ to 
 $$\Orb(y_+,\phi',s)+\eta(\lambda^{-1}\Delta)|\lambda^{-1}\Delta|^{-s} \Orb(y_-,\phi',s),$$
where $\sigma(x)$ is the section defined in \eqref{eqn sigma};

\item  in  case $(0ii)$ to
\begin{multline*}
 \Orb(y_{++},\phi',s)+\eta(z_1)|z_1|^{-s}\Orb(y_{-+},\phi',s)\\
 + \eta(z_2)|z_2|^{-s}\Orb(y_{+-},\phi',s)+\eta(z_1z_2)|z_1z_2|^{-s}\Orb(y_{--},\phi',s),
\end{multline*}
 where the section $\sigma\colon \fkb_{\red, \rs}\to\fks_{\red, \rs}$ in a neighborhood of $x_0$  is defined by
\begin{align}\label{eqn sigma 1}
\sigma(x)=\pi\begin{bmatrix}\alpha &0& 1\\0 & -\alpha & 1 \\  z_1&z_2&0
\end{bmatrix} ,
\end{align}
with entries defined by the identities
$$
\lambda=-\alpha^2\varpi,\quad u=z_1+z_2, \quad w=\alpha(z_1-z_2)\pi ,\quad\Delta=\lambda u^2+w^2=-4\alpha^2\varpi z_1z_2;
$$

 \item in case $(1)$ to
 $$
  \Orb(y_+,\phi',s) +\eta(\Delta/\varpi)|u^{-2}\Delta/\varpi|^{-s}\Orb(y_-,\phi',s).$$ 
where $\sigma(x)$ is the section defined in \eqref{eqn sigma}.

\end{altenumerate}
Moreover, all the orbital integrals above are holomorphic at $s=0$ except in case $(0i)$ when $F'=F_0\times F_0$, in which case $\Orb(y_+,\phi',s)$ and $\Orb(y_-,\phi',s)$ both  have a simple pole at $s=0$.
\end{theorem}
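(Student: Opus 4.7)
The strategy is parallel to the proof of Theorem \ref{thm germ x=0}: use the Iwasawa decomposition on $H'$ to write $\Orb(\sigma(x),\phi',s)$ as a concrete triple integral, partition the domain of integration into pieces according to the valuation/size of certain coordinate entries, and identify each piece, after a suitable change of variable, as (a scalar multiple of) the defining integral of an $x_0$-nilpotent orbital integral $\Orb(y,\phi',s)$ for one of the representatives listed in \S\ref{orbitsinfksminusnull}. The scalars that appear are precisely the germ factors asserted in the statement.

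In more detail, I would first treat Case $(0i)$. Inserting $\sigma(x)$ from \eqref{eqn sigma} into \eqref{eqn orb s3} and specializing the entries to be close to $x_0=(\lambda_0,0,0)$, observe that $\sigma(x_0)=y_+$ of \eqref{eqn case 0i}. The region of the $(a,b,t)$-integral where the lower left entries $u$, $w/\pi$ remain ``negligible'' yields, after the trivial change of variable, the integral $\Orb(y_+,\phi',s)$. In the complementary region one inverts the $\GL_1$-factor: substituting $b \mapsto b\cdot(\lambda^{-1}\Delta)^{\pm 1}$ (or, equivalently, swapping the roles of the two rank-one vectors $\mathbf{b}$ and $\mathbf{c}$ of $\sigma(x)$ by multiplying by a diagonal matrix whose entries are read off from the invariants) matches the remaining contribution with the defining integral of $\Orb(y_-,\phi',s)$. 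The Jacobian of this substitution contributes the factor $|\lambda^{-1}\Delta|^{-s}$, and the sign character applied to the same scalar contributes $\eta(\lambda^{-1}\Delta)$. Case $(1)$ is entirely analogous: with $\sigma(x)$ as in \eqref{eqn sigma} near $x_0=(\lambda_0,u_0,w_0)$ with $u_0\ne 0$, $\sigma(x_0)=y_+$ of \eqref{eqn case 1}, and the ``flipped'' region produces $\eta(\Delta/\varpi)|u^{-2}\Delta/\varpi|^{-s}\Orb(y_-,\phi',s)$ after a similar diagonal substitution.

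For Case $(0ii)$ the section $\sigma(x)$ of \eqref{eqn sigma 1} is adapted to the two independent ``small parameters'' $z_1,z_2$ (with $\Delta = -4\alpha^2\varpi z_1 z_2$, so $x\in\fkb_{\red,\rs}$ iff $z_1,z_2\ne 0$). Here one has to split the Iwasawa integral along \emph{two} axes, according as $a$ is large/small \emph{and} $b$ is large/small; each of the four resulting regions, after a diagonal substitution that rescales one or both of the left column and the bottom row, matches one of the four integrals $\Orb(y_{\pm\pm},\phi',s)$ of \eqref{eqn case 0ii}--\eqref{eqn case 0ii y--}. The bookkeeping of the two independent substitutions produces the factors $|z_1|^{-s}$, $|z_2|^{-s}$, and $|z_1z_2|^{-s}$ together with the corresponding $\eta$-characters. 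That the relevant integrals converge (after analytic continuation) to meromorphic functions of $s$ and that they fit together follows from Lemma \ref{lem orb F' split} applied to each $y_{\pm\pm}$.

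Finally, the holomorphy claim at $s=0$ follows in cases $(0i)$ (with $F'$ a field), $(0ii)$, and $(1)$ from the absolute convergence of the relevant integrals at $s=0$ (using that the stabilizers of $y_\pm$, $y_{\pm\pm}$ are trivial, and that for the semi-simple orbit in $(0ii)$ the weight function $\eta\circ\det$ is nontrivial on the stabilizer), exactly as in \cite[Lem.~2.1]{Z12b}. In the remaining subcase of $(0i)$ when $F'\simeq F_0\times F_0$, the analogue of Lemma \ref{lem orb F' split} fails to give holomorphy: in the two-variable integral $\Orb(y_\pm,\phi',s_1,s_2)$ the restriction to the diagonal develops a simple pole at $s=0$, which appears because the $\eta$-twisted $(s_1,s_2)$-zeta integral attached to the split algebra has a pole along the anti-diagonal.

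The main obstacle I anticipate is the careful verification that in Case $(0ii)$ the four regions of the Iwasawa integral exactly match, after the diagonal substitutions, the defining integrals $\Orb(y_{\pm\pm},\phi',s)$ (and in particular that no ``mixed'' boundary contributions survive); this is the step where the identity \eqref{eqn key id} used in the proof of Theorem \ref{thm germ x=0} must be generalized to handle two independent scaling variables, and where one must carefully track the assignment of the regularized meromorphic continuations. Everything else is a routine change of variables.
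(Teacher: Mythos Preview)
Your overall strategy---Iwasawa decomposition, split the domain, identify each piece with an $x_0$-nilpotent orbital integral---is exactly the paper's, but several of your specific claims need correction.

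In Case $(0i)$, the key substitution is \emph{not} a diagonal rescaling of $b$. The paper splits on $|b|>1$ versus $|b|\leq 1$; in each region one first subtracts/adds the contribution from the semi-simple orbit $y_0$, and these cross terms cancel via $\zeta(-2s)+q^{-2s}\zeta(2s)=0$ (this is exactly identity \eqref{eqn key id}, which you flagged only for $(0ii)$ but which is equally essential here and in Case $(1)$). What remains from the $|b|\leq 1$ piece is matched to $\Orb(y_-,\phi',s)$ by the substitution $h\mapsto h_0 h$, where $h_0=(\det h_1)^{-1}h_1$ with
\[
   h_1=\begin{bmatrix} u & -w/\pi \\ w\pi/\lambda & u\end{bmatrix},\qquad \det h_1=\lambda^{-1}\Delta.
\]
This $h_1$ centralizes the $A$-block of $\sigma(x)$, so conjugation affects only the $\mathbf b$ and $\mathbf c$ parts; but $h_1$ is not diagonal unless $w=0$.

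In Case $(0ii)$ the paper takes a cleaner route than your four-region split. Since the section \eqref{eqn sigma 1} has diagonal $A$-block, one first integrates out $K$ and the unipotent $N$ to form $\phi'_{KN}$, reducing to a two-variable integral over the diagonal torus. A toy-model lemma (for $F_0^\times$ acting on $F_0\times F_0$ by $a\cdot(x,y)=(a^{-1}x,ay)$) applied independently in each variable then produces the four terms. Your direct four-region approach could work too, but the $KN$-averaging sidesteps the boundary bookkeeping you anticipate.

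Your holomorphy argument is also off. In Case $(0i)$ the stabilizers of $y_\pm$ are trivial, but the integrals $\Orb(y_\pm,\phi',s)$ do \emph{not} converge absolutely at $s=0$. The paper writes $\Orb(y_+,\phi',s)$ as a holomorphic term plus $\zeta(-2s)Q(s)$, where $Q(s)$ is the $\eta$-twisted integral over the semi-simple orbit of $y_0$. When $F'$ is a field not isomorphic to $F$, the character $\eta\circ\det$ is nontrivial on the anisotropic stabilizer torus of $y_0$, forcing $Q(0)=0$ and cancelling the pole of $\zeta(-2s)$; when $F'\simeq F_0\times F_0$ this fails and the simple pole survives.
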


\subsection{Proof of Theorem \ref{thm germ ss s}}

Let $x_0=(\lambda_0,u_0,w_0)\neq 0\in \fkb_\red$. We distinguish three cases, labeled by $(0i)$, $(0ii)$ and $(1)$, according to the case distinction in subsection \ref{orbitsinfksminusnull}. Fix a real number
$R$ such that the support of $\phi'$ is contained in the set
  \begin{equation}\label{eqn def R}
\bigl\{y=\pi \cdot (y_{ij})\in\fks_{\red}\bigm| |y_{ij}|\leq R\bigr\}.
\end{equation}

\noindent\emph{Case $(0i)$} (i.e., $r=0$, $\lambda_0\neq 0$ and $F'\neq F$):
Then $x_0=(\lambda_0,0,0)$ with $\lambda_0\neq 0$. 
We use (\ref{eqn orb s3}) to express the integral $\Orb(\sigma(x),\phi',s)$ for $x=(\lambda,u,v)\in\fkb_\red$ in a small neighborhood of $(\lambda_0,0,0)$. We therefore assume that $|\lambda|=|\lambda_0|\neq 0$ and hence the integrands have the following constraints,
$$
|t|\leq R,\quad  |a|\leq R/|\lambda_0|.
$$
 We split the integral over $b$ as a sum of two pieces, according as $|b|>1$ or $|b|\leq 1$. The contribution for  $|b|>1$ is equal, when $(u,w)$ is small enough,  to
\[
   \bigints_{|b|>1,a,t\in F_0}\phi'_{K}\left(\pi\begin{bmatrix} t &
a(-\lambda/\varpi-t^2 )& b\\
  1/a & -t &0\\0& 0&0 \end{bmatrix}\right)\eta(a)|a^{-1}b^2|^{-s}\,\frac{da\,db\,dt}{|b|}.
\]
This can be written as 
$$
\int_{a,b \in F,t\in F}\cdots\lvert \det h \rvert^sdh- \int_{|b|\leq 1,a \in F,t\in F}\cdots\lvert \det h \rvert^s\,dh,
$$
where both integrals converge absolutely when $\Re(s)>0$.
The first term extends to a meromorphic function 
\[
	\Orb(y_+,\phi',s)= 
   \bigints_{a,b,t\in F_0}\phi'_{K}\left(\pi\begin{bmatrix} t &
a(-\lambda/\varpi-t^2 )& b\\
  1/a & -t &0\\0& 0&0 \end{bmatrix}\right)\eta(a) |a^{-1}b^2|^{-s} \,\frac{da\,db\,dt}{|b|},
\]
while the second term is
\begin{equation}\label{eqn 1st term}
\begin{aligned}
   &\bigints_{|b|\leq 1,a,t\in F_0}\phi'_{K}\left(\pi\begin{bmatrix} t &
a(-\lambda/\varpi-t^2 )& 0\\
  1/a & -t &0\\0& 0&0 \end{bmatrix}\right)\eta(a) |a^{-1}b^2|^{-s} \,\frac{da\,db\,dt}{|b|}\\
  &\qquad =
   \biggl(\bigintssss_{|b|\leq 1}|b|^{2s}\frac{db}{|b|}\biggr)
   \bigints_{a ,t\in F_0}\phi'_{K}\left(\pi\begin{bmatrix} t &
a(-\lambda/\varpi-t^2 )& 0\\
  1/a & -t &0\\0& 0&0 \end{bmatrix}\right)\eta(a) |a|^{s}\,da\,dt\\
  &\qquad =\zeta(-2s)Q(s) , 
\end{aligned}
\end{equation} where we denote by $Q(s)$ the second integral in the next-to-last equation. From $F'\not\simeq F$ it follows that $-\lambda_0/\varpi$ (hence $-\lambda/\varpi$) is not a square. The norms $|t|, |a|$ and $|a|^{-1}$ are all bounded independent of $s$ and hence $Q(s)$ is an entire function.  Again by $F_0[\sqrt{-\lambda_0}]\not\simeq F$, the quadratic character $\eta$ is nontrivial on the stabilizer of the semi-simple representative $y_0$. It follows easily that
$
Q(0)=0 ,
$
and hence the second term \eqref{eqn 1st term} is holomorphic at $s=0$ and hence so is $\Orb(y_+,\phi',s)$.

Now we consider the contribution from the piece $|b|\leq 1$, 
\[
\bigints_{|b|\leq 1,a,t\in F_0}\phi'_{K}\left(\pi\begin{bmatrix} t &
a(-\lambda/\varpi-t^2 )& 0\\
  1/a & -t &0\\b^{-1}u& (w/\pi-ut)ab^{-1}&0\end{bmatrix}\right)\eta(a)\lvert \det h \rvert^{-s}\,\frac{da\,db\,dt}{|b|}.
\]
This can be written as 
$$
\int_{a,b \in F,t\in F}\cdots\lvert \det h \rvert^{-s}\,dh- \int_{|b|> 1,a \in F,t\in F}\cdots\lvert \det h \rvert^{-s}\,dh,
$$
where both integrals converge absolutely when $\Re(s)<0$.
The second term is equal to 
\begin{align*}
   &\bigints_{|b|> 1,a,t\in F_0}\phi'_{K}\left(\pi\begin{bmatrix} t &
a(-\lambda/\varpi-t^2 )& 0\\
  1/a & -t &0\\0& 0&0\end{bmatrix}\right)\eta(a)|a^{-1}b^2|^{-s}\,\frac{da\,db\,dt}{|b|}\\
   &\hspace{55ex} =\biggl(\bigintssss_{|b|>1}|b|^{-2s}\frac{db}{|b|}\biggr)Q(s)\\
   &\hspace{55ex} =q^{-2s}\zeta(2s)Q(s).
\end{align*}
Now note that  $
\zeta(-2s)+q^{-2s}\zeta(2s)=0.
$ We see that this last term cancels \eqref{eqn 1st term}.
The first term can be rewritten as
$$
\Orb\left( \begin{bmatrix} 0 &
-\lambda/\varpi& 0\\
  1 & 0&0\\u& w/\pi&0\end{bmatrix},\phi',s\right)=  \bigints_{H'} \phi'\left(h^{-1} \cdot \pi \begin{bmatrix} 0 &
-\lambda/\varpi& 0\\
  1 & 0&0\\u& w/\pi&0\end{bmatrix}h\right)\eta(\det h)\lvert \det h \rvert^s\,dh.
$$
Since $u$ and $w$ cannot be simultaneously zero, we may make a change of  variables 
$
h\mapsto h_0h
$, where 
$$
h_0:=\frac{1}{\det h_1}h_1 \quad \text{with}\quad h_1:=\begin{bmatrix} u &-
w/\pi\\
  w\pi /\lambda& u \end{bmatrix}.
$$
The integral becomes 
\begin{multline*}
\eta(h_0)\lvert \det h_0 \vert^{s}
 \bigints_H \phi'\left(h^{-1} \cdot \pi \begin{bmatrix} 0 &-
\lambda/\varpi& 0\\
  1 & 0&0\\1& 0&0\end{bmatrix}h\right)\eta(h)\lvert \det h \rvert^s\,dh\\
=
\eta(h_0)\lvert \det h_0 \rvert^{s}\Orb(y_-,\phi',s).
\end{multline*}
Now note that $\det h_1 = \lambda^{-1}w^2+u^2=\lambda^{-1}\Delta$, and $\det h_0 = \det(h_1)^{-1}$.
In summary we have
\begin{align}\label{eqn key 1}
\Orb(y,\phi',s)=\Orb(y_+,\phi',s)+ 
\eta(\lambda^{-1}\Delta)|\lambda^{-1}\Delta|^{-s} \Orb(y_-,\phi',s) ,
\end{align}
which proves Theorem \ref{thm germ ss s} in this case. 

\smallskip

\emph{Case $(0ii)$} (i.e., $r=0$ and $F'= F$):
In this case we may assume that $$-\lambda_0/\varpi=\alpha_0^2\neq 0.$$ We use the section $\sigma(x)$ defined by \eqref{eqn sigma 1}.
When $x=(\lambda,u,w)$ is in a small neighborhood of  $(\lambda_0,0,0)$, we may assume $|\alpha|=|\alpha_0|$. We use a variant of the Iwasawa decomposition
$$h=
k
\begin{bmatrix}1 & t\\
  &  1\end{bmatrix} \begin{bmatrix}a& \\
  &  b\end{bmatrix},\quad dg=dk\, \frac{da\,db\,dt}{|a||b|}.$$
Consider
$$
\phi'_{KN}( y)=\int_{KN}\phi'(kn\cdot x)\,dk\,dn,\quad y=\begin{bmatrix}A& {\bf b}\\ {\bf c} & 0
\end{bmatrix}, \quad A=\begin{bmatrix}\alpha & 0\\0 & -\alpha
\end{bmatrix},
$$
which may be viewed as a function of $({\bf b},{\bf c})$ on $M_{1,2}(F_0)\times M_{2,1}(F_0)$. For $\alpha\neq 0$, we see that the integral is absolutely convergent.
Then we have
  $$
  \Orb(y,\phi',s)=
  \bigints \phi'_{KN}\left(\pi\begin{bmatrix}\alpha & 0& a\\0 & -\alpha & b \\z_1a^{-1}&z_2 b^{-1}&0
\end{bmatrix}\right)\eta(ab)|ab|^{-s}\,\frac{da\,db}{|a||b|}.
  $$

Before we proceed, we consider a toy model: let $F_0^\times$ act on $F_0\times F_0$ by $a\cdot (x,y)=(a^{-1}x,ay)$ and let $\eta$ be a quadratic character (possibly trivial). 
\begin{lemma}Let
$\phi\in  C^\infty_c(F_0\times F_0)$, and define for $x\in F_0^\times$,
$$
\Phi(x,s)=\int_{F_0^\times}\phi(x/a,a)\eta(a)|a|^s\, d^\times\! a,
$$
which is absolutely convergent when $\Re(s)\gg 0$. For $x$  in a small neighborhood of $0$ (depending on $\phi$), 
$$
\Phi(x,s)=\int_{F_0^\times}\phi(0,a)\eta(a)|a|^s \,d^\times\! a+\eta(x)|x|^s \int_{F_0^\times}\phi(1/a,0)\eta(a)|a|^{s} \,d^\times\! a.
$$
The identity is understood after an analytic continuation of each term on the right-hand side as a meromorphic function of $s\in\BC$ (without pole at $s=0$, if $\eta$ is non-trivial).
\end{lemma}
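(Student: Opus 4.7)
My plan is to exploit the local constancy of $\phi$ to reduce $\Phi(x,s)$, for $|x|$ small, to a sum of two boundary contributions: one from $|a|$ large, where the first argument $x/a$ of $\phi$ is forced into a neighborhood of $0$, and one from $|a|$ small, where the second argument is. First I would choose $N$ large enough (depending on $\phi$) so that $\phi(x',y') = \phi(0,y')$ whenever $|x'| < q^{-N}$ and $\phi(x',y') = \phi(x',0)$ whenever $|y'| < q^{-N}$. For $|x| < q^{-2N}$ I split the defining integral of $\Phi(x,s)$ at $|a| = q^{-N}$. On the piece $|a| \geq q^{-N}$ one has $|x/a| \leq q^{-N}$, so $\phi(x/a,a) = \phi(0,a)$, and the contribution is $\int_{|a| \geq q^{-N}} \phi(0,a)\eta(a)|a|^s\, d^\times\! a$. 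On the complementary piece, $\phi(x/a,a) = \phi(x/a,0)$; the change of variable $a \mapsto x/a$ followed by $a \mapsto 1/a$ (using $\eta^2=1$) rewrites this as $\eta(x)|x|^s \int_{|b| < q^{-N}/|x|} \phi(1/b,0)\eta(b)|b|^s\, d^\times\! b$. This decomposition is valid for $\Re(s) \gg 0$, the domain on which $\Phi(x,s)$ converges absolutely.

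Next I would compare these two partial integrals with the unrestricted integrals $I_1(s) := \int_{F_0^\times}\phi(0,a)\eta(a)|a|^s\, d^\times\! a$ and $I_2(s) := \int_{F_0^\times}\phi(1/a,0)\eta(a)|a|^s\, d^\times\! a$, meromorphically continued from their respective half-planes $\Re(s)>0$ and $\Re(s)<0$; these are the two integrals appearing on the right-hand side of the lemma. The difference between each partial integral and its unrestricted counterpart is supported on a region where $\phi(0,a)$ or $\phi(1/b,0)$ is constantly equal to $\phi(0,0)$, so the two corrections reduce to explicit integrals of $\eta(a)|a|^s$ over half-infinite ranges in the value group. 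When $\eta$ is ramified, each sphere $|a|=q^{-k}$ integrates to zero and both corrections vanish identically. When $\eta$ is unramified, the corrections are geometric series in $r := q^{-s}\eta(\varpi)$ of the shapes $\phi(0,0)\,r^{N+1}/(1-r)$ and $\phi(0,0)\,r^{N-v(x)+1}/(r-1)$; since $\eta(x)|x|^s = r^{v(x)}$, a one-line check shows that the second correction, once multiplied by $\eta(x)|x|^s$, cancels the first, yielding the desired identity as a meromorphic identity in $s$.

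The hardest part is not the combinatorics but the bookkeeping of convergence domains: $I_1$ and $I_2$ converge on disjoint half-planes, while $\Phi(x,s)$ converges only for $\Re(s) \gg 0$, so the stated identity makes sense only after analytic continuation, and the cancellation of correction terms described above holds a priori only as a statement about meromorphically continued functions. In the unramified case $I_1$ and $I_2$ each acquire a simple pole at $s=0$ when $\eta$ is trivial, and the cancellation of these poles is precisely the content of the geometric-series identity; this also delivers the final holomorphy assertion at $s=0$ when $\eta$ is nontrivial, which is in any case the standard pole structure from Tate's thesis.
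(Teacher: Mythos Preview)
Your argument is correct and is precisely the Tate-thesis-style computation the paper has in mind; the paper itself omits the proof, merely remarking that it is analogous to \cite[Lem.~2.1]{Z12b}. One minor point of bookkeeping: for fixed $x\neq 0$ the original integral $\Phi(x,s)$ is actually entire (both arguments of $\phi$ are bounded away from infinity and from each other), so your two partial integrals are entire as well, and the analytic-continuation issue only enters when you pass from them to the unrestricted $I_1,I_2$; your treatment of the geometric-series corrections handles this correctly.
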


\begin{proof}The proof is again analogous to that of \cite[Lem.~2.1]{Z12b}. We omit the details.
\end{proof}

Using this lemma, we obtain that  $ \Orb(\sigma(x),\phi',s)$ is, in a neighborhood of $x_0$, the value of the following sum when $s_1=s_2=s$,
\begin{equation}\label{eqn key 2}
\begin{aligned}
&\Orb(y_{++},\phi',s_1,s_2)+\eta(z_1)|z_1|^{s}\Orb(y_{-+},\phi',s_1,s_2)\\
&\hspace{10ex}+\eta(z_2)|z_2|^{s}\Orb(y_{+-},\phi',s_1,s_2)+\eta(z_1z_2)|z_1z_2|^{s}\Orb(y_{--},\phi',s_1,s_2).
\end{aligned}
\end{equation}
 Theorem \ref{thm germ ss s} in case $(0ii)$ now follows easily from this equality.

\smallskip

\noindent\emph{Case $(1)$} (i.e., $r=1$):
In this case  $u_0\neq 0$.
We now use the expression (\ref{eqn orb s3}). We first observe that the integrand is constrained by
$$
|t|\leq R,\quad |u|/R\leq |b|\leq R ,
$$ 
where $R$ is as in (\ref{eqn def R}).
We only consider the case $\lambda_0=0$ (hence $w_0=0$). Otherwise, the proof of the previous cases  still applies. 

We break the integral over $a$ up into two pieces: $|a|$ is large or small.  Choose a constant $C$. Note that we will consider $\lambda$, $w$ close to zero, and $|u|=|u_0|\neq 0$.
When $|a|\leq C$,  so that $|a\lambda|\leq 1$ and $|aw|R\leq 1$, we have
\begin{equation}\label{eqn orb s3 1}
\bigints_{|a|\leq C, b ,t\in F_0}\phi'_{K}\left(\pi\begin{bmatrix} t &-at^2 
& b\\
  1/a & -t &0\\b^{-1}u& -utab^{-1}&0 \end{bmatrix}\right)\eta(a)|a^{-1}b^2|^{-s}\,\frac{da\,db\,dt}{|b|}.
\end{equation}
Now consider $|a|>C$. Substitute $t\to t+w/u$, and note that we may assume $|w/u|<1$:
\begin{align*}
\bigints_{|a|>C, b,t\in F_0}\phi'_{K}\left(\pi\begin{bmatrix} t&
a(-\lambda/\varpi-(t-w/\pi u)^2)& b\\
 1/a & -t&0\\b^{-1}u& -utab^{-1}&0 \end{bmatrix}\right)\eta(a)|a^{-1}b^2|^{-s}\,\frac{da\,db\,dt}{|b|}.
\end{align*}
The condition $\lvert-utab^{-1}\rvert < R$ implies that
\[
   |at|<\frac{R^2}{|u|}
	\quad\text{and}\quad
	\bigl|at^2\bigr| = \biggl|\frac{(at)^2}{a}\biggr| < \frac{R^4}{|u|^2C}.
\]
Furthermore, for $C$ sufficiently large, we have $|at^2|<\frac{R^2}{|u|}<1$. Hence the last integral becomes
\begin{equation}\label{eqn orb s3 3}
\bigints_{|a|>C, b,t\in F_0}\phi'_{K}\left(\pi\begin{bmatrix} 0 &
a(-\lambda/\varpi-(w/\pi u)^2)& b\\
 0 & 0&0\\b^{-1}u& -utab^{-1}&0 \end{bmatrix}\right)\eta(a)|a^{-1}b^2|^{-s}\,\frac{da\,db\,dt}{|b|}.
\end{equation}
A similar argument as in case $(0i)$ shows that the sum of (\ref{eqn orb s3 1}) and (\ref{eqn orb s3 3}) can be written as
\begin{equation}\label{almostdone}
\Orb\left(\pi\begin{bmatrix}0&0&1\\ 1  & 0&0\\ u&0& 0
\end{bmatrix},\phi',s\right)+\Orb\left(\pi\begin{bmatrix}0&-\lambda/\varpi -(w/\pi u)^2&1\\ 0  & 0&0\\ u&0& 0
\end{bmatrix},\phi',s\right).
\end{equation}
Note that $\Delta=\lambda u^2+w^2$. It follows that the second summand in \eqref{almostdone} is equal to
$$
\eta(\Delta/\varpi) |\Delta/\varpi u^2|^{-s}\Orb\left(\pi\begin{bmatrix}0&1&1\\ 0  & 0&0\\ u&0& 0
\end{bmatrix},\phi',s\right) .
$$
In summary we have proved that in the case $r=1$, when $x=(\lambda,u,w)$ is close to $(\lambda_0,u_0,w_0)$, the integral
 $\Orb(\sigma(x),\phi',s)$ is equal to
 \begin{align}\label{eqn key 3}
 \Orb(y_+,\phi',s) +\eta(\Delta/\varpi)|u^{-2}\Delta/\varpi|^{-s}\Orb(y_-,\phi',s) .
\end{align}
The equations \eqref{eqn key 1}, \eqref{eqn key 2}, and \eqref{eqn key 3} together complete the proof of Theorem \ref{thm germ ss s} in this case.

\subsection{The exceptional case}
Theorem \ref{thm germ ss s}  gives the germ expansion for $\Orb(\sigma(x),\phi',s)$ at $s=0$, except in case $(0i)$ when $F'\simeq F_0\oplus F_0$, in which case both  $\Orb(y_+,\phi',s)$ and $\Orb(y_-,\phi',s)$ have a pole at $s=0$.

\begin{corollary}\label{cor F'=F}
Fix $x_0=(\lambda_0,0,0)\in \fkb_\red\smallsetminus\fkb_{\red, \rs}$ with $\lambda_0\neq 0$.  Assume that $F'\simeq F_0\times F_0$. 
\begin{altenumerate}
\renewcommand{\theenumi}{\alph{enumi}}
\item The sum 
$\Orb(y_+,\phi',s)+\Orb(y_-,\phi',s)$ is holomorphic at $s=0$. Denote by  $\Orb(y_\pm,\phi')$  its value at $s=0$.
\item
For 
$x\in \fkb_{\red, \rs}$  in a small neighborhood of $x_0$, the orbital integral $\Orb\bigl(\sigma(x),\phi',0\bigr)$ (for $\sigma(x)$ defined by (\ref{eqn sigma})) is equal to
\begin{align}\label{eqn F' split}
\Orb(y_\pm,\phi')-\frac{\log| \lambda^{-1}\Delta  |}{2\log q} \Orb(y_0,\phi'),
\end{align} and 
\[
   \Orb(y_0,\phi')
	   =\int_{\PGL_2(F_0)}\phi'(h^{-1}y_0h)\eta(h)\,dh,\quad y_0= \pi \begin{bmatrix} 0 &
-\lambda_0/\varpi& 0\\
  1 &0  &0\\0&0 &0 \end{bmatrix},
\]
where the measure on $\PGL_2(F_0)$ is the quotient measure on $\GL_2(F_0)$ divided by that of $F_0^\times$ with $\vol(O_{F_0}^\times)=1$.
\end{altenumerate}
\end{corollary}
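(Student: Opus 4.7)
The plan is to revisit the end of the proof of Theorem~\ref{thm germ ss s} in case~$(0i)$ and extract precise Laurent expansions of $\Orb(y_\pm,\phi',s)$ at $s=0$ in the subcase $F'\simeq F_0\times F_0$.  First I would verify that, in a small enough neighborhood of $x_0=(\lambda_0,0,0)$ with $-\lambda_0\in F_0^{\times,2}$, every regular semi-simple $x$ lies automatically in $\fkb_{\red,\rs,0}$, so that $\eta(\lambda^{-1}\Delta(x))\equiv 1$ and the formula of Theorem~\ref{thm germ ss s} simplifies to
\[
   \Orb(\sigma(x),\phi',s)=\Orb(y_+,\phi',s)+|\lambda^{-1}\Delta|^{-s}\Orb(y_-,\phi',s).
\]
This uses Lemma~\ref{intimage} (exceptional $x_0$ lies outside the closure of $\fkb_{\rs,1}$) together with the ramified-case identity $-\varpi=\RN\pi\in\RN F^\times$, i.e.\ $\eta(-\varpi)=1$: for $(u,w)$ small, $-\Delta=-\lambda u^2-w^2$ has leading term in $F_0^{\times,2}\cup(-\varpi)F_0^{\times,2}\subset\RN F^\times$.

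Next I would unwind the intermediate identities from the proof of Theorem~\ref{thm germ ss s}, case~$(0i)$, writing
\[
  \Orb(y_+,\phi',s)=E_+(s)+\zeta(-2s)Q(s),\qquad |\lambda^{-1}\Delta|^{-s}\Orb(y_-,\phi',s)=E_-(s)+q^{-2s}\zeta(2s)Q(s),
\]
with $E_\pm(s)$ entire (they arise as bounded integrals over the support of $\phi'_K$) and with $Q(s)$ the ``central'' reduced integral which, by local constancy of $\phi'_K$, depends on $\lambda$ only through $\lambda_0$.  Since $\zeta(-2s)\sim-1/(2s\log q)$ and $q^{-2s}\zeta(2s)\sim+1/(2s\log q)$ at $s=0$, the residues of $\Orb(y_+,\phi',s)$ and $\Orb(y_-,\phi',s)$ are $-Q(0)/(2\log q)$ and $+Q(0)/(2\log q)$ respectively, hence opposite.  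This yields the holomorphy asserted in~(a), and the value $\Orb(y_\pm,\phi')$ is by definition the constant term of $\Orb(y_+,\phi',s)+\Orb(y_-,\phi',s)$ at $s=0$.

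For part~(b) I would Laurent-expand the displayed germ identity.  Writing $\Orb(y_\pm,\phi',s)=\mp\tfrac{Q(0)}{2s\log q}+B_\pm+O(s)$ with $B_++B_-=\Orb(y_\pm,\phi')$, and using $|\lambda^{-1}\Delta|^{-s}=1-s\log|\lambda^{-1}\Delta|+O(s^2)$, the polar parts cancel and one reads off
\[
   \Orb(\sigma(x),\phi',0)=(B_++B_-)-\frac{Q(0)}{2\log q}\log|\lambda^{-1}\Delta|.
\]
It remains to identify $Q(0)$ with $\Orb(y_0,\phi')$ as defined by the $\PGL_2$-integral in the statement: unfolding the definition of $Q(s)$ against the $KAN$-parametrization of $\GL_2(F_0)$ and descending to $\PGL_2(F_0)=\GL_2(F_0)/F_0^\times$, one uses that the stabilizer $F^\times$ of $y_0$ has compact image $F^1$ in $\PGL_2(F_0)$, on which $\eta\circ\det$ is trivial, so that the $\PGL_2$-integral converges absolutely and the measures match with the normalization $\vol(O_{F_0}^\times)=1$.

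The main obstacle will be the first step---ruling out $\fkb_{\red,\rs,1}$ points arbitrarily close to the exceptional $x_0$---which is delicate because it relies on the particular arithmetic of ramified extensions (specifically $\eta(-\varpi)=1$) rather than being a formal semicontinuity statement; once this is in hand, both~(a) and~(b) follow by elementary Laurent-series expansion at $s=0$ and the identity $\zeta(-2s)+q^{-2s}\zeta(2s)=0$ already exploited in the proof of Theorem~\ref{thm germ ss s}.  The remaining bookkeeping (identifying $Q(0)$ with $\Orb(y_0,\phi')$ under the precise measure conventions) is routine but must be carried out carefully.
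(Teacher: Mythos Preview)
Your proposal is correct and follows essentially the paper's own argument: both establish $\eta(\lambda^{-1}\Delta)=1$ near $x_0$ via Lemma~\ref{intimage}, compute the residues of $\Orb(y_\pm,\phi',s)$ at $s=0$ as $\mp\Orb(y_0,\phi')/(2\log q)$, and then Laurent-expand the identity from Theorem~\ref{thm germ ss s}(a). The paper obtains the residues by a direct Tate-style computation $\Orb(y_+,\phi',s)=\int_{F_0}\Phi_s(b)\,|b|^{-2s}\,db/|b|$ with $\Phi_0(0)=(1-q^{-1})^{-1}\Orb(y_0,\phi')$, whereas you read them off the decomposition $E_+(s)+\zeta(-2s)Q(s)$ already appearing in the proof of Theorem~\ref{thm germ ss s}; since $Q(s)$ is essentially $\Phi_s(0)$ up to a measure-normalization constant, these are the same calculation in different packaging. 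One minor remark: your auxiliary ``leading term'' argument for $\eta(-\Delta)=1$ is both unnecessary (Lemma~\ref{intimage} together with Proposition~\ref{eta b01} already gives it) and not quite complete as stated---when $(-\lambda)u^2$ and $-\varpi(w')^2$ have equal valuation their sum need not be a norm---though in the ramified case their valuations always have opposite parity, so no collision actually occurs.
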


\begin{proof}
We first claim that \emph{the meromorphic functions $\Orb(y_+,\phi',s)$, resp.~$\Orb(y_-,\phi',s)$, have a simple pole at $s=0$ with residue}
$$
\mp\Orb(y_0,\phi')\frac{1}{2\log q}.
$$
We now prove the corollary assuming the claim. The claim immediately implies  part (a). To show (b),
we note that the assumption $F'\simeq F_0\times F_0$ is equivalent to $-\lambda_0\in F_0^{\times, 2}$. In this case, for  all $x\in \fkb_{\red,\rs}$ near $x_0$, we always have $x\in\fkb_{\red,\rs,0}$ (cf.~Lemma \ref{intimage}), and hence $\eta(-\Delta(x))=1$ (cf.~Proposition  \ref{eta b01}). Hence $\eta(\Delta(x)/\lambda(x))=\eta(-\Delta(x))/\eta(-\lambda_0)=1$.
 By Theorem \ref{thm germ ss s}, we have  for regular semi-simple $x$ near $x_0$,
\[
	\Orb\bigl(\sigma(x),\phi',s\bigr)
	   =\bigl(\Orb(y_+,\phi',s)+\Orb(y_-,\phi',s)\bigr)+(|\lambda^{-1}\Delta|^{-s}-1) \Orb(y_-,\phi',s),
\]
where both terms in the right-hand side are holomorphic at $s=0$ by the claim. Hence
\[
	\Orb\bigl(\sigma(x),\phi',0\bigr)
      =\bigl(\Orb(y_+,\phi',s)+\Orb(y_-,\phi',s)\bigr)\big|_{s=0}+(|\lambda^{-1}\Delta|^{-s}-1) \Orb(y_-,\phi',s)\big|_{s=0}.
\]
The first term is now  $\Orb(y_\pm,\phi')$ by (a). The second term is given by 
$-\log|\lambda^{-1}\Delta|$ times the residue of $\Orb(y_-,\phi',s)$ at $s=0$. By the claim we complete the proof of (b).

We now prove the claim. We only treat $\Orb(y_+,\phi',s)$, since the other case is similar. By (\ref{eqn orb s3}),  $\Orb(y_+,\phi',s)$ is equal to
\begin{align*} 
\bigints_{a, b,t\in F_0}\phi'_{K}\left(\pi \begin{bmatrix} t &
a(-\lambda_0/\varpi-t^2 )& b\\
  1/a & -t &0\\0&0 &0 \end{bmatrix}\right)\eta(a)|a^{-1}b^2|^{-s}\,\frac{da\,db\,dt}{|b|}.
\end{align*}
We may view this as an integral of the form
\begin{align}\label{eqn Phi(s)}
\Orb(y_+,\phi',s)=\bigintssss_{b\in F_0}\Phi_s(b)|b|^{-2s}\,\frac{db}{|b|},
\end{align}
where $\Phi_s(b)$ extends to an entire function in $s\in\BC$. We may find the value
\begin{align*} 
   \Phi_0(0) 
	   &= \bigints_{a,t\in F_0}\phi'_{K}\left(\pi \begin{bmatrix} t &
a(-\lambda_0/\varpi-t^2 )& b\\
  1/a & -t &0\\0&0 &0 \end{bmatrix}\right)\eta(a)\,da\,dt\\
      &= (1-q^{-1})^{-1}\bigints_{h\in \PGL_2(F_0)}\phi'\left(h^{-1} \pi \begin{bmatrix} 0 &
-\lambda_0/\varpi& 0\\
  1 &0  &0\\0&0 &0 \end{bmatrix}h\right)\eta(\det h)\,dh\\
      &=(1-q^{-1})^{-1}\Orb(y_0,\phi'),
\end{align*}
where the extra factor is due to the different choice of measures.

By Tate's thesis, the integral \eqref{eqn Phi(s)} has a simple pole at $s=0$ with residue given by $\Phi_0(0)$ times the residue of 
$$
\bigintssss_{|b|\leq 1}|b|^{-2s}\,\frac{db}{|b|}=\sum_{i\geq 0} q^{2is} (1-q^{-1})=\frac{1-q^{-1}}{1-q^{2s}}.
$$
This last term has residue $-(1-q^{-1})/2\log q$. This shows that the function $\Orb(y_+,\phi',s)$ has a simple pole at $s=0$ with residue 
$$
-\Orb(y_0,\phi')\frac{1}{2\log q}.
$$
This completes the proof of the claim.
\end{proof}

\section{Germ expansion for $\fku_\red$, and matching}

In this section we present the germ expansion for the orbital integrals on $\fku_\red$ for $\fku=\fku(W)$, where $W$ is either $W_0$ or $W_1$, i.e., $\fku=\fku_0$ or $\fku=\fku_1$. 

We consider the invariants (cf.~\S \ref{subsec inv u1} and \S \ref{subsec inv u0})
\[
	\pi_\fku\colon \fku_\red \to  \fkb_\red = \BA\times\BA\times\fks_1
\]
 given by the  formulas \eqref{u,w,lambda} (in the case of $\fks_\red$, but the cases of $\fku_{0, \red}$ and $\fku_{1, \red}$ are the same, comp.~\eqref{fku_1 invariants} for $\fku_{1, \red}$ and \eqref{invaronu0} for $\fku_{0, \red}$), 
 $$
\begin{bmatrix}A & {\bf b} \\
 {\bf c}   & d
\end{bmatrix}\mapsto (\lambda,u,w).
$$

\subsection{Germ expansion around $x_0=0$}
The germ expansion around $x_0=0$ for $\fku_\red$ is stated in \cite[Th.~2.8]{Z12b}. Since we will not use it directly,  let us not repeat it here. 
We only recall the classification of the nilpotent orbits and their orbital integrals, which are used in our  calculations in Part \ref{analtyic side part}.

The nilpotent orbits for $\fku_{0,\red}$ are classified in \cite[\S2.1]{Z12b}. For our purposes we only need $\{0\}$ and the continuous family
\begin{align}\label{eqn n(mu) u}
n(\beta):=\pi\begin{bmatrix}0 & \beta\pi & 1 \\ 0 & 0 &   0\\
0 &\pi  & 0
\end{bmatrix}\in\fku_{0,\red},\quad\beta\in F_0 .
\end{align}
 The stabilizer of $n(\beta)$ is the standard unipotent subgroup $N$ sitting inside $\SL_2=\SU(J^\flat_0)$ (cf.~\eqref{SL2 = SU2}). We define the corresponding nilpotent orbital integrals by
\begin{align}\label{eqn def orb n}
\Orb(n(\beta),\phi)=\int_{H/N}\phi\bigl(h ^{-1} n(\beta) h\bigr)\, d\ov h,
\end{align}
and 
\begin{align}\label{eqn def orb 0}
\Orb(0,\phi)=e_{F/F_0}q^{-1}\zeta_{F_0}(1)\phi(0),
\end{align}
where $e_{F/F_0}$ is the ramification index of $F/F_0$. It is easy to see that both expressions  converge absolutely. \emph{It is important to note here that the measure on $H=U(W^\flat)$ is chosen such that $\vol (K)=1$ for the special parahoric subgroup $K$ (the hyperspecial one when $F/F_0$ is unramified)}.

We also  define the orbital integral for any $x\in \fku_{0,\red}$ with compact stabilizer or any $x\in \fku_{1,\red}$ by 
\[
\Orb(x,\phi)= \int_{H}\phi(h^{-1} xh) \,dh ,
\]
whenever the integral is absolutely convergent (this will always be true in the cases of interest to us).

\subsection{Germ expansion around $x_0\neq 0$}

Let $x_0\in\fkb_\red\smallsetminus \fkb_{\red, \rs}$. We first classify the $H$-orbits (semi-simple or not) in $\fku_\red$ in the fiber $\pi_{\fku}^{-1}(x_0)$.
Unlike the case of $\fks_\red$, two issues will affect the semi-simple orbits on $\fku$: stability, and whether $H=U(W^\flat)$ is quasi-split or non-split.

Similar to the case $\fks_\red$, we distinguish two cases according to the rank $r$ of the space spanned by ${\bf b}, A{\bf b}$.
\begin{altitemize}
\item $r=0$. Then $x_0$ is of the form $x_0=(\lambda_0,0,0)$ with $\lambda_0\in F_0^\times$. A semi-simple element in $\fku_\red$ mapping to  $x_0$ must be of the form 
\begin{equation*}
y_0=\begin{bmatrix}A &0 \\
0  & 0
\end{bmatrix} .
\end{equation*}
 We need the following lemma
concerning the stability issue. 
 \begin{lemma}\label{lem stability}
 Let $\lambda_0\in  F_0\smallsetminus\{0\}$. 
 Then the set
\[
   X_{\lambda_0} := \bigl\{\,x\in\fks\fku(W^\flat)\bigm| \det x =\lambda_0\,\bigr\}
\]
forms one  orbit under $U(W^\flat)(F_0)$, unless  $F'=F_0[X]/(X^2+\lambda_0)$ is isomorphic to $F$, in which case $X_{\lambda_0}$ 
 decomposes into  two such orbits.
\end{lemma}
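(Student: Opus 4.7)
The plan is to reduce the orbit count to a Galois-cohomology computation involving the centralizer of a base point $x \in X_{\lambda_0}$, together with a case-by-case analysis on the structure of the étale $F_0$-algebra $F'$. Since $x$ has trace $0$ and determinant $\lambda_0$, Cayley--Hamilton gives $x^2 = -\lambda_0\cdot 1_{W^\flat}$, so $F_0[x]\cong F'$, and $x$ endows $W^\flat$ with the structure of a module over $E := F\otimes_{F_0} F' = F[x]$. The condition $x^\dagger = -x$ together with the hermitian form $h$ induces a natural $F_0$-linear involution $*$ on $E$ extending $\bar{\,\cdot\,}$ on $F$ with $x^*=-x$, and the centralizer $Z_{U(W^\flat)}(x)$ is naturally identified with the $F_0$-torus $E^1 := \{e\in E^\times\mid e^*e = 1\}$. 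Because $U(W^\flat)_{\bar F_0}$ acts transitively on $X_{\lambda_0,\bar F_0}$, the set of $U(W^\flat)(F_0)$-orbits on $X_{\lambda_0}$ is in bijection with $\ker\bigl(H^1(F_0,E^1)\to H^1(F_0,U(W^\flat))\bigr)$, where the target is $\mathbb{Z}/2$ (classifying $2$-dimensional $F/F_0$-hermitian spaces).

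I would then treat the three cases separately. If $F'\cong F_0\times F_0$, one finds $E\cong F\times F$ with $*$ acting as the conjugate-swap $(c,d)\mapsto(\bar d,\bar c)$, whence $E^1\cong\operatorname{Res}_{F/F_0}\mathbb{G}_m$ and Hilbert~90 gives $H^1(F_0,E^1)=0$, yielding one orbit. If $F'\cong F$, the eigenspace decomposition shows that $W^\flat$ is the \emph{orthogonal} direct sum of its two $1$-dimensional $x$-eigenspaces, so $E\cong F\times F$ with $*$ now acting as component-wise conjugation and $E^1\cong F^1\times F^1$; the induced map $H^1(F_0,E^1)=(\mathbb{Z}/2)^2 \to H^1(F_0,U(W^\flat))=\mathbb{Z}/2$ sends $(c_+,c_-)$ to the discriminant class $c_+c_-$ of the twisted hermitian form and has kernel of order $2$, giving two orbits. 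Finally, if $F'$ is a field not isomorphic to $F$, then $E$ is a quartic field and its $*$-fixed subring $E^*$ is a quadratic extension of $F_0$, so $H^1(F_0,E^1)\cong (E^*)^\times/N_{E/E^*}(E^\times)\cong \mathbb{Z}/2$; the induced map to $H^1(F_0,U(W^\flat))$ turns out to be \emph{injective} because $F\neq F'$ forces the corresponding twist to change the split/non-split type of the hermitian space, yielding a single orbit.

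The main technical obstacle will be the identification of the map $H^1(F_0,E^1)\to H^1(F_0,U(W^\flat))$ in cases (b) and (c), and in particular the verification of its injectivity in case (c). A clean route is via local class field theory: the map can be computed in terms of the Hilbert symbol $(-\lambda_0,N_{F/F_0}\alpha)$ on $F_0^\times$, which is trivial on $N_{F/F_0}(F^\times)$ exactly when $F'\cong F$, so its kernel behaviour correctly distinguishes cases (b) and (c). Alternatively, one can work more concretely by choosing a splitting special basis (available whenever $X_{\lambda_0}\neq\emptyset$ and $F'$ is not a field $\ne F$) and tracking the Hasse invariant of the auxiliary $F_0$-quadratic form $Q_x(v) := \det(v,xv)$ on $F_0^2$ under $U(W^\flat)(F_0)$-conjugation; this recovers the same orbit count case by case and in particular reduces the delicate step to the explicit Hilbert-symbol identity above.
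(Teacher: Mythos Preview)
Your proposal is correct and follows the same overall architecture as the paper: reduce the orbit count to the kernel of $H^1(F_0,T)\to H^1(F_0,U(W^\flat))$ for the centralizer torus $T$, and then analyze case by case according to the structure of $F'=F_0[X]/(X^2+\lambda_0)$. Your identifications of $T$ in each case are accurate; in particular your description of $E^1$ as the norm-$1$ torus for $E/E^*$ (with $E^*=F_0[\pi x]$ the third quadratic subfield of the biquadratic $E$) is the correct centralizer in case~(c).

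The paper's execution differs in one useful way. Rather than computing the map into $H^1(F_0,U(W^\flat))$ directly via discriminants or Hilbert symbols, the paper observes that the determinant $U(W^\flat)\to\underline{F}^1$ induces a bijection on $H^1$ over a $p$-adic field, so that the whole question reduces to analyzing the map of \emph{tori} $T\to\underline{F}^1$ given by $\det_F=\Nm_{E/F}$. In case~(c) this yields, via Shapiro's lemma and local class field theory, the norm map $(E^*)^\times/\Nm_{E/E^*}(E^\times)\to F_0^\times/\Nm_{F/F_0}(F^\times)$, whose injectivity is immediate from the identity $\eta_{E/E^*}=\eta_{F/F_0}\circ\Nm_{E^*/F_0}$ (valid because $E=F\cdot E^*$). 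This is cleaner than tracking the Hasse invariant of an auxiliary quadratic form, though your route through $\det J=4\lambda_0\cdot\Nm_{E^*/F_0}(c)$ (for $c=H(v_0,v_0)\in (E^*)^\times$) arrives at exactly the same identity and is perfectly valid. Either way, the crux in case~(c) is that the $E/E^*$-hermitian class of the $1$-dimensional space determines the $F/F_0$-isometry class of $W^\flat$, which is precisely the injectivity statement.
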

\begin{proof} Obviously, $X_{\lambda_0}$ is one \emph{geometric} orbit (i.e., after passing to the algebraic closure $\ov F_0$,  all elements of $X_{\lambda_0}$ are conjugate). Let $x_0\in X_{\lambda_0}$, and let $T=T_{x_0}$ be the stabilizer subgroup of $x_0$. Then $T$ is a maximal torus in $U(W^\flat)$. By general principles, the number of orbits under  $U(W^\flat)(F_0)$ in a geometric orbit is in one-to-one correspondence with 
\begin{equation}\label{kernelgalois}
   \ker \bigl[H^1(F_0, T)\to H^1\bigl(F_0, U(W^\flat)\bigr)\bigr].
\end{equation}
Let $\underline {F}^1$ be the algebraic subtorus of $\underline {F}^\times:=\Res_{F/F_0}(\BG_m)$ defined by $\Nm_{F/F_0}=1$. Then $\underline {F}^1$  is the maximal torus quotient of $U(W^\flat)$, and \eqref{kernelgalois} is identified with 
\begin{equation}\label{kernelgaloisab}
   \ker \bigl[H^1(F_0, T)\to H^1(F_0, \underline {F}^1) \bigr] .
\end{equation}
Now, for $T$ there are the following possibilities, up to isomorphism.
\begin{altenumerate}
\renewcommand{\theenumi}{\arabic{enumi}}
\item $T=\underline {F}^\times$, mapping via $a\mapsto a/\ov a$ to $\underline {F}^1$.
\item $T=\underline {F}^1\times \underline {F}^1$, mapping via multiplication to $\underline {F}^1$.
\item $T=\underline {K}^1$, mapping via $\Nm_{K/F}$ to $\underline {F}^1$. Here $K=F'.  F$ is a bi-quadratic extension of $F_0$, and $\underline {K}^1$ is the algebraic subtorus of $\underline {K}^\times:=\Res_{K/F_0}(\BG_m)$, defined by $\Nm_{K/F'}=1$.\smallskip
\end{altenumerate}
Furthermore, case (1) corresponds to the case when $F'\simeq F_0\oplus F_0$, case (2) to the case when $F'\simeq F$, and case (3) to the remaining possibilities. 

Let $T'=\ker(T\to \underline {F}^1)$.  In case (1), $T'=\BG_m$ and \eqref{kernelgaloisab} is trivial. In case (2), $T'=\underline {F}^1$, and \eqref{kernelgaloisab} is identified with $H^1(F_0, \underline {F}^1)=F_0^\times/\Nm_{F/F_0}(F^\times)=\BZ/2$; in case (3), the map $H^1(F_0, T)\to H^1(F_0, \underline {F}^1)$ is identified with
$$ 
F'^\times/\Nm_{K/F'}(K^\times)\xra{\Nm_{F'/F_0}} F_0^\times/\Nm_{F/F_0}(F^\times) ,
$$
which is injective, and hence \eqref{kernelgaloisab} is trivial. 
The lemma is proved. 
\end{proof}

\smallskip

\emph{Subcase 0i.} 
\label{sec S 19 ss}
When $F'$ is not isomorphic to $F$, then by Lemma \ref{lem stability}, there is a unique semi-simple orbit with invariants  $x_0=(\lambda_0,0,0), \lambda_0\in F_0^\times$ and we fix a choice of representative $y_0\in\fku_\red$. A non-semi-simple orbit exists only when $W^\flat$ is split, and the
quadratic algebra $F'=F_0[X]/(X^2+\lambda_0)$ is split as $F_0\times F_0$. We exclude in the sequel the case when $F'$ is isomorphic to $F_0\times F_0$. The reason is that the closure of $\fkb_{\red,\rs,1}$ in $\fkb_\red$ does not contain such elements and therefore we do not need this case in Part \ref{analtyic side part}.

\emph{Subcase 0ii.} When $F'$ is isomorphic to $F$, by Lemma \ref{lem stability},  there are two semi-simple orbits  mapping to  $x_0=(\lambda_0,0,0)$,  and we fix the representatives $y_+, y_-\in\fku_\red$. We will label the two orbits $y_\pm$ as follows. Consider a regular semi-simple element $y$ in $\fku_\red$ near $y_\pm$. Choose a basis such that $y$ may be written in the form  
$$y=\begin{bmatrix}A &\ast\\\ast  &  0
\end{bmatrix}\text{ with  \emph{diagonal} $A=\pi\begin{bmatrix}\alpha &\\ & -\alpha 
\end{bmatrix}$, $\alpha\in F_0^\times$.}
$$
 Then we choose $y_+$,    resp.~$y_-$, such that all regular semi-simple elements near $y_+$ have the following property: $z_1:=\frac{1}{2}(u+\frac{w}{\pi\alpha})$ is a norm, resp.~a non-norm.  Here we are using  the coordinates $z_1, z_2$ from Theorem \ref{thm germ ss s}(b). An easy calculation shows that this is possible and we will choose a small open neighborhood $V_{x_0,\pm}$ of $x_0$ such that, for all $(\lambda,u,w)\in V_{x_0,\pm}\cap\fkb_{\red,\rs}$, we have $\eta(z_1)=\pm 1$.
Moreover, there are no non-semi-simple orbits in the fiber of such $x_0\in\fkb_\red$.

\item $r=1$, then ${\bf b}\neq 0$ and $A{\bf b}$ is a multiple of ${\bf b}$. It is not hard to show that there is a unique orbit (which therefore has to be semi-simple) mapping to $x_0$. For our calculation in Part \ref{analtyic side part}, we give an explicit representative when $W^\flat$ is split. If $\lambda_0\neq 0$ we choose
\begin{align}
\label{eqn r=1 u}
y_0=\begin{bmatrix}0&-\lambda_0 &b_1 \\1  & 0&b_2\\ \cdots&\cdots& 0
\end{bmatrix},
\end{align}
where  $
b_1=\pi\alpha b_2$ with $\alpha^2=- \lambda_0/ \varpi$ ($\alpha\in F_0^\times$) and $(b_1\ov b_2-b_2\ov b_1)/\pi=u_0
$. If $\lambda_0=0$, we choose any 
\begin{align}
\label{eqn r=1 lambda=0}
y_0=
\begin{bmatrix}0&0&b_1 \\0  & 0&1\\ \cdots&\cdots& 0
\end{bmatrix}, \quad \Im(b_1)\neq 0.
\end{align}In either case, the stabilizer is an anisotropic torus.  
\end{altitemize}

Having  classified the orbits in $\pi_\fku^{-1}(x_0)$, it is easy to prove the following explicit germ expansion.
\begin{theorem} \label{thm germ ss u}
Let $x_0=(\lambda_0,u_0,w_0)\neq (0,0,0)\in\fkb_\red$, and let $\phi\in  C_c^\infty(\fku_\red)$.  For $x$  in a small neighborhood of $x_0\in \pi_\fku(\fku_\red)$, let $\sigma(x)$ be any element in $\fku_\red$ mapping to $x\in \fkb_\red$.
\begin{altenumerate}
\renewcommand{\theenumi}{\alph{enumi}}
\item If $F'=F_0[X]/(X^2+\lambda_0)\neq F, F_0\times F_0$,
 then the orbital integral $\Orb\bigl(\sigma(x),\phi\bigr)$ is equal to
$
\Orb(y_0,\phi),$ where $y_0\in \fku_\red$ is any representative of the unique orbit $\pi_\fku^{-1}(x_0)$.
\item If $F'=F_0[\sqrt{-\lambda_0}]\simeq F$, then the orbital integral $\Orb(\sigma(x),\phi)$ is equal to
$$
\Orb(y_+,\phi) \,\mathbf{1}_{V_{x_0,\rs,+}}+\Orb(y_-,\phi)\, \mathbf{1}_{V_{x_0,\rs,-}}.
$$
\end{altenumerate}

\end{theorem}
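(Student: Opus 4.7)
The plan is to reduce Theorem \ref{thm germ ss u} to a standard continuity statement for orbital integrals at semi-simple elements with compact stabilizer, together with the classification of $H$-orbits in $\pi_\fku^{-1}(x_0)$ already assembled immediately preceding the theorem. First I would verify that in every case covered by the theorem, the stabilizer of each listed semi-simple representative is a compact anisotropic torus. This uses Lemma \ref{lem stability}: case (1) ($F' \simeq F_0 \times F_0$, yielding $T = \underline{F}^\times$ non-compact) is explicitly excluded in the hypothesis of part (a); the remaining cases yield $T \simeq \underline{K}^1$ compact. In part (b) one has $T \simeq \underline{F}^1 \times \underline{F}^1$, and in the $r=1$ range the stabilizer is explicitly noted to be anisotropic.

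The second ingredient is the auxiliary continuity statement: for a semi-simple element $y_\bullet \in \fku_\red$ with compact stabilizer $T = H_{y_\bullet}$, the function $y \mapsto \Orb(y,\phi)$ is continuous on a neighborhood $U$ of $y_\bullet$, where $\Orb(y,\phi) = \tau(T)\int_{T\backslash H} \phi(h^{-1}yh)\,d\dot h$ for the semi-simple $y$ and the usual definition for regular semi-simple $y\in U$. The argument is a standard submersion-plus-dominated-convergence computation: the orbit $H\cdot y_\bullet$ is closed (because $y_\bullet$ is semi-simple), and compactness of $T$ together with compactness of $\supp\phi$ forces the coset $hT$ to lie in a fixed compact subset of $T\backslash H$, independent of $y$, whenever $h^{-1}yh \in \supp\phi$ and $y$ is close enough to $y_\bullet$.

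With these ingredients the theorem is nearly immediate. In case (a), $\pi_\fku^{-1}(x_0)$ consists of a single $H$-orbit through $y_0$, so, since $\Orb(\sigma(x),\phi)$ depends only on the $H$-orbit of $\sigma(x)$, we may deform $\sigma(x)$ within its fiber so that $\sigma(x) \to y_0$. The continuity statement then gives $\Orb(\sigma(x),\phi) \to \Orb(y_0,\phi)$; since the orbital integral is locally constant on $\fku_{\red,\rs}/H$, it equals $\Orb(y_0,\phi)$ on a whole neighborhood of $x_0$ in $\fkb_\red$. In case (b) the two semi-simple orbits $H\cdot y_\pm$ split the fiber; the same argument applied separately on each of the open subsets $V_{x_0,\pm}$ gives $\Orb(\sigma(x),\phi) = \Orb(y_\pm,\phi)$ on $V_{x_0,\rs,\pm}$, using that the labeling by the sign of $\eta(z_1)$ fixed before the theorem is precisely the criterion for $\sigma(x)$ to approach the orbit of $y_+$ versus $y_-$ (an explicit check in the coordinates of \eqref{eqn sigma 1}, since $z_1 = \tfrac{1}{2}(u + w/\pi\alpha)$ and its norm class controls which of the two $H(F_0)$-orbits inside $X_{\lambda_0}$ one lands in).

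The main place where one must be careful is exactly this last identification in case (b): checking that the open decomposition $V_{x_0,+} \cup V_{x_0,-}$ induced by the sign of $\eta(z_1)$ matches the $H$-rational splitting of $\pi_\fku^{-1}(x_0)$ predicted by Lemma \ref{lem stability}. This is a finite computation with the explicit Hermitian form on $W^\flat$ and the chosen representatives $y_\pm$; once performed, the three-step outline above completes the proof.
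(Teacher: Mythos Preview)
Your approach is correct and complete in outline, but it differs from what the paper does. The paper's proof is a one-line reference: ``This is proved using the same argument as (and is easier than) the case $x_0=0$ in \cite{Z12b}.'' That argument (visible in the proof of Theorem~\ref{thm germ ss s} for the $\fks$-side) is an explicit computation via the Iwasawa decomposition: one writes out the orbital integral for $\sigma(x)$ in $KAN$-coordinates, observes that for $x$ close to $x_0$ certain matrix entries can be set to their limiting values without changing the integrand (using the support condition on $\phi$ and invariance under translation by integral elements), and reads off the result directly. For $\fku_\red$ with $x_0\neq 0$ this is genuinely easier than the $x_0=0$ case because there is no continuous nilpotent family to split off.

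Your route via the general continuity of orbital integrals at semi-simple points with compact stabilizer is a legitimate alternative and is in some sense the ``right'' conceptual explanation: once the orbit classification and compactness of stabilizers are in hand (Lemma~\ref{lem stability} and the $r=1$ discussion), the theorem is forced. What your approach buys is that no further computation is needed; what the paper's approach buys is exact parallelism with the $\fks$-side proof and an explicit identification of the germ functions (here just constants or indicator functions), which matters for the matching statements in Theorem~\ref{thm match x non0}. Your final ``finite computation'' in case~(b)---matching the sign $\eta(z_1)$ to the rational orbit of $y_\pm$---is not really extra work: the labeling convention for $y_\pm$ fixed just before the theorem is \emph{defined} precisely so that this holds, so there is nothing left to check.
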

\begin{proof}This is proved using the same argument as (and is easier than) the case $x_0=0$ in \cite{Z12b}.
\end{proof}

\subsection{Matching orbital integrals around $x_0=0$}\label{sec 19.3}
As in \cite[Def.\ 2.6]{Z12b}, we let
$\CC_1(F_0)$ be the space of locally constant functions
$f$ on $F_0$ such that, when
 $|x|$ is large enough, $f(x)$ is a linear combination of the following functions, 
$$ \eta(x)|x|^{-1},\quad\eta(x)\log\lvert x\rvert |x|^{-1}.$$
Let $\CC_2(F_0)$ be the space similarly defined by requiring that, when $|x|$ is large enough, $f(x)$ is a linear combination of
$$
|x|^{-1},\quad\eta(x)|x|^{-1}.$$
Let $\CC(F_0)=\CC_1(F_0)\cup \CC_2(F_0)$.
For $f\in \CC(F_0)$, we define the \emph{extended Fourier transform} by \cite[\S 4.1]{Z12b}:
\begin{equation*}
\wt{f}(v):= \bigintssss_{F_0}f(v+x)\eta(x) \,\frac{dx}{|x|},
\end{equation*}
which is understood in the sense of analytic continuation (cf.~loc.~cit.). We have
$$\widetilde{\widetilde{f\,}}\!(v)=\gamma(1,\eta)^{2}f(v),$$
where the square of the gamma factor is equal to 
\[
   \gamma(1,\eta)^{2}=
	\begin{dcases} 
		\left(\frac{L(0,\eta)}{L(1,\eta)}\right)^2=\frac{(1+q^{-1})^2}{4}
                    &  \eta \mbox{  unramified};\\
      \eta(-1)q^{-1}, &  \eta \mbox{  ramified}.
\end{dcases}
\]
 
\begin{definition} Let $(\phi_0,\phi_1)$ with $\phi_i\in C_c^\infty(\fku_{\red,i})$, and let $\phi'\in C_c^\infty(\fks_\red)$.  Then $(\phi_0,\phi_1)$ and  $\phi'$ are \emph{local transfers} around $x_0\in \fkb_\red$ if there exists  a small neighborhood $V_{x_0}$ of $x_0$ in $\fkb_\red$ such that
 $$
 \omega(y')\Orb(y',\phi')=\Orb(y,\phi_i)
 $$
 for any $y'\in \fks_\red$ with invariants $x\in V_{x_0,\rs}$, and any $y\in\fku_{\red,i}$ matching $y'$.
 \end{definition}
 We will use this definition with  the following transfer factor (we are allowed to do so by Remark \ref{changeoftr}, since this transfer factor differs from our original transfer factor by a constant multiple):
\begin{equation}\label{eqn omega n=3}
   \omega(y') = \eta\bigl(\det(\wt y^ie)_{i=0,1,...,n-1}\bigr), \quad \wt y'=y'/\pi,\quad y'\in\fks_\red .
\end{equation}
This transfer factor is  chosen such that $\omega(\sigma(x))=1$ for the section $\sigma(x)$ defined by (\ref{eqn sigma}), which we use frequently.
 
 For $\phi\in C_c^\infty(\fku_\red)$, resp.~$\phi'\in C_c^\infty(\fks_\red)$, we
define
\begin{equation*}
   \Orb_\phi(\beta) := \Orb\bigl(n(\beta),\phi\bigr), 
	\quad\text{resp.}\quad
	\Orb_{\phi'}(\mu) := \Orb\bigl(n(\mu),\phi'\bigr),
	\quad \beta, \mu\in F_0.
\end{equation*}
Then the functions $\Orb_\phi$, resp.~$\Orb_{\phi'}$ lie in $\CC(F_0)$ (cf.~loc.~cit.). Then the matching conditions around zero are essentially given by the extended Fourier transform between nilpotent orbital integrals. Indeed, set (cf.~loc.~cit.)
$$
\kappa_{F/F_0}= e_{F/F_0}L(1,\eta)^{-1}=\begin{cases} 1+q^{-1}, & \text{$F/F_0$ unramified};
\\2, &
\text{$F/F_0$ ramified}.
\end{cases}
$$

\begin{theorem}
\label{thm match nil Orb}The functions $(\phi_0,\phi_1),\phi_i\in C_c^\infty(\fku_{\red,i})$ and $\phi'\in C_c^\infty(\fks_\red)$ are local transfers around zero if and only if
$$
\Orb_{\phi_0}=2\eta(-1) |\varpi|^{-1}\kappa_{F/F_0}^{-1} \wt\Orb_{\phi'},
$$
and 
\[
\begin{aligned}
   -\Orb(0,\phi_0)&=\eta(-1)\Orb(n_{0,+},\phi')+\Orb(n_{0,-},\phi');\\
   \Orb(0,\phi_1)&=\eta(-1)\Orb(n_{0,+},\phi')-\Orb(n_{0,-},\phi').
\end{aligned}
\]
\end{theorem}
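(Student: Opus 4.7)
The plan is to compare the explicit germ expansion around $x_0 = 0$ on the $\fks_\red$ side (Theorem \ref{thm germ x=0}) with the analogous germ expansion on the $\fku_{\red,i}$ side (the Lie algebra germ expansion for the unitary side, as recorded in \cite[Th.~2.8]{Z12b}) and read off the matching conditions from the coefficients of the nilpotent orbital integrals. By definition of local transfer, there is a neighborhood $V_0$ of $0 \in \fkb_\red$ such that, for each $i \in \{0,1\}$, one has the identity $\omega(\sigma(x)) \Orb(\sigma(x), \phi') = \Orb(\sigma_i(x), \phi_i)$ for all $x \in V_{0,\rs,i}$, where $\sigma_i$ is an appropriate section into $\fku_{\red,i}$ near $0$. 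Substituting the two germ expansions into this identity and letting $x$ approach $0$ inside $V_{0,\rs,i}$ turns the identity into an equality of asymptotic expansions whose building blocks are (a) the two regular nilpotent orbital integrals $\Orb(n_{0,\pm},\phi')$ on the $\fks$-side, the single nilpotent term $\Orb(0,\phi_i)$ on the $\fku$-side, and (b) the one-parameter family integrals $\int \Gamma_{n(\mu)}(x)\Orb(n(\mu),\cdot)\,d\mu$ on both sides.

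Matching the regular pieces gives equations (2) and (3) directly: at $s=0$ the coefficient of $\Orb(n_{0,-},\phi')$ in Theorem \ref{thm germ x=0} is $\eta(\Delta/\varpi)$ (which is $+1$ for $x \in V_{0,\rs,0}$ and $-1$ for $x \in V_{0,\rs,1}$ by Proposition \ref{eta b01}), while the coefficient of $\Orb(n_{0,+},\phi')$ is $\eta(-1)$. On the $\fku$-side the regular-constant piece of the germ expansion is a scalar multiple of $\Orb(0,\phi_i)$. Solving the resulting two-by-two linear system (one equation for $i=0$ and one for $i=1$) in the two unknowns $\Orb(n_{0,\pm},\phi')$ produces (2) and (3); the sign pattern is exactly what one expects from $\eta$ being the nontrivial character on $F_0^\times/\RN F^\times$. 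Matching the one-parameter family pieces gives the identity (1). Here one extracts the coefficient of $\Orb_{\phi'}$ versus $\Orb_{\phi_0}$ by viewing both germs as distributions on the nilpotent parameter space $F_0$ and using the explicit form of $\Gamma_{n(\mu)}(x,0)$ in Theorem \ref{thm germ x=0} together with its unitary counterpart. The change of variables inside the $\mu$-integral produces an integral transform whose kernel is precisely that of the extended Fourier transform $f \mapsto \wt f$ of \cite[\S4.1]{Z12b}, and the constant $2\eta(-1)|\varpi|^{-1}\kappa_{F/F_0}^{-1}$ is just the normalization this transform picks up when one passes from the natural measures on $\fks_\red$ and $\fku_{\red,0}$ to the parameter $\mu \in F_0$.

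Conversely, assume that conditions (1), (2), (3) hold. Then the two germ expansions, both evaluated at $s=0$, agree term by term on the common underlying space of nilpotent data. Since both sides are, by Theorem \ref{thm germ x=0} and its unitary analog, continuous (indeed smooth) functions of $x$ on a punctured neighborhood of $0$ whose germ at $0$ is completely determined by the coefficients just matched, the identity $\omega(\sigma(x))\Orb(\sigma(x),\phi') = \Orb(\sigma_i(x),\phi_i)$ must hold on $V_{0,\rs,i}$ for some sufficiently small $V_0$, which is exactly the condition that $(\phi_0,\phi_1)$ and $\phi'$ be local transfers around $0$.

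The main obstacle I anticipate is the bookkeeping in the middle step: one must unwind $\Gamma_{n(\mu)}(x,0)$ of Theorem \ref{thm germ x=0} (which is nontrivial only when the discriminant quantity $(u^2\mu - 2w/\pi)^2 - 4\Delta/\varpi$ is a square), perform the substitution $\mu \leftrightarrow \nu$ suggested by the formula, and match the resulting integral against the integral coming from the $\fku$-side germ expansion. It is in this substitution that the extended Fourier transform emerges and that the explicit factor $2\eta(-1)|\varpi|^{-1}\kappa_{F/F_0}^{-1}$ is forced. Care must be taken with the transfer factor $\omega$ used here, namely \eqref{eqn omega n=3}, since it is normalized so that $\omega(\sigma(x)) = 1$ for the section \eqref{eqn sigma}; this normalization is what makes the scalar $\eta(-1)$ appear on the right in (2) and (3) rather than some other sign.
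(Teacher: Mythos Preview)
Your approach is essentially the same as the paper's: both argue by comparing the germ expansion on $\fks_\red$ (Theorem~\ref{thm germ x=0}) with the germ expansion on $\fku_{\red,i}$ from \cite[Th.~2.8]{Z12b}, using Proposition~\ref{eta b01} to separate the cases $i=0,1$. The paper simply cites the end result of this comparison, namely \cite[Prop.~4.4, 4.7]{Z12b}, and notes that the translation from $\fks\fkl_\red$ to $\fks_\red$ (via multiplication by $\pi$) together with a correction to \cite[Th.~2.8(1)(i)]{Z12b} accounts for the factor $|\varpi|^{-1}$; your sketch unwinds this same comparison by hand.
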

\begin{proof} The statement is easily reduced to the corresponding one for $\pi \fks_\red=\fks\fkl_\red$ and $\pi\fku_\red$ which are given by \cite[Prop.~4.4, 4.7]{Z12b}, by comparing the germ expansions on $\fku_\red$ and $\fks_\red$ (and note $\eta(\Delta(x)/\varpi)=(-1)^i$ if $x\in\fkb_{\red,\rs,i}$, cf.~Proposition \ref{eta b01}). Here we note that there is an error in the germ expansion \cite[Th.~2.8(1)(i)]{Z12b}: $\tau$ should be $\tau^{-1}$ ($\tau$ being $\varpi$ in the current notation). This leads to the correction factor $|\varpi|^{-1}$ in the  statement above. 
\end{proof}

\subsection{Matching orbital integrals around $x_0\neq 0$}

\begin{theorem}
\label{thm match x non0}
Let $x_0=(\lambda_0,u_0,w_0)\in \fkb_\red$, where $ x_0\neq 0$. The functions $(\phi_0,\phi_1)$, $\phi_i\in C_c^\infty(\fku_{\red,i})$, and $\phi'\in C_c^\infty(\fks_\red)$ are local transfers around $x_0$ if and only if
the following identities hold:

\begin{altenumerate}
\renewcommand{\theenumi}{\alph{enumi}}
 \item In case $(0i)$, and when $F'\neq F_0\times F_0$, 
\begin{align*}
 \Orb(y_0,\phi_0) &= \Orb(y_+,\phi')+\eta(-\lambda) \Orb(y_-,\phi'),\\
  \Orb(y_0,\phi_1) &= \Orb(y_+,\phi')-\eta(-\lambda) \Orb(y_-,\phi').
\end{align*}
Here $y_0\in \fku_{i, \red}$ is any representative of the unique orbit in $\pi_{\fku_{i}}^{-1}(x_0)$, and $y_\pm\in \fks_\red$ are the representatives given by \eqref{eqn case 0i} of the two non-semi-simple orbits in $\pi_{\fks}^{-1}(x_0)$.

\item  In case $(0ii)$, 
\begin{align*}
\eta(-\alpha) \Orb(y_+,\phi_0)= \Orb(y_{++},\phi')+\Orb(y_{-+},\phi')+\eta(-1)\Orb(y_{+-},\phi')+\eta(-1)\Orb(y_{--},\phi'),\\
\eta(-\alpha)  \Orb(y_-,\phi_0)= \Orb(y_{++},\phi')-\Orb(y_{-+},\phi')-\eta(-1)\Orb(y_{+-},\phi')+\eta(-1)\Orb(y_{--},\phi'),
\end{align*}
 and 
 \begin{align*}
 \eta(-\alpha)\Orb(y_+,\phi_1)= \Orb(y_{++},\phi')+\Orb(y_{-+},\phi')- \eta(-1)\Orb(y_{+-},\phi')-\eta(-1)\Orb(y_{--},\phi'),\\
 \eta(-\alpha) \Orb(y_-,\phi_1)= \Orb(y_{++},\phi')-\Orb(y_{-+},\phi')+\eta(-1)\Orb(y_{+-},\phi')-\eta(-1)\Orb(y_{--},\phi').
\end{align*}
Here $y_+,y_-\in \fku_{i, \red}$ are any representatives of the two semi-simple orbits in $\pi_{\fku_{i}}^{-1}(x_0)$ labeled in \S\ref{sec S 19 ss},  and $y_{\pm\pm}\in \fks_\red$ are the representatives given by \eqref{eqn case 0ii} and \eqref{eqn case 0ii y--} of the four non-semi-simple orbits in $\pi_{\fks}^{-1}(x_0)$.

  \item In  case $(1)$,
\begin{align*}
 \Orb(y_0,\phi_0)&= \Orb(y_+,\phi') +\Orb(y_-,\phi'), \\
  \Orb(y_0,\phi_1)&=\Orb(y_+,\phi') -\Orb(y_-,\phi').
\end{align*}
Here  $y_0\in \fku_{i, \red}$ is any representative of the unique orbit in $\pi_{\fku_{i}}^{-1}(x_0)$, and $y_\pm\in \fks_\red$ are the representatives given by \eqref{eqn case 1} of the two non-semi-simple orbits in $\pi_{\fks}^{-1}(x_0)$.
 
\end{altenumerate}
\end{theorem}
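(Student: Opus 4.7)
\textbf{Proof proposal for Theorem \ref{thm match x non0}.}

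The plan is to derive the matching identities by substituting the explicit germ expansions from Theorem \ref{thm germ ss s} (on the symmetric space side) and Theorem \ref{thm germ ss u} (on the unitary side) into the transfer relation $\omega(\sigma(x))\Orb(\sigma(x),\phi')=\Orb(\sigma(x),\phi_i)$ valid for $x\in V_{x_0,\rs}\cap\fkb_{\red,\rs,i}$, and then extracting linear equations by separating neighborhoods on which $\eta(-\Delta(x))$ (and, in case $(0ii)$, $\eta(z_1)$) is constant. First I would fix, for each case, the explicit section $\sigma$ used in Theorem \ref{thm germ ss s} and compute the transfer factor $\omega(\sigma(x))$ from \eqref{eqn omega n=3} by evaluating the Wronskian-like determinant of $\wt\sigma(x)^{i}e$; a direct computation gives $\omega(\sigma(x))=1$ in cases $(0i)$ and $(1)$ where $\sigma$ is as in \eqref{eqn sigma}, and a constant multiple of $\eta(-\alpha)$ in case $(0ii)$ for $\sigma$ as in \eqref{eqn sigma 1}. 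These are the constants that reappear on the unitary side of the stated identities.

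In case $(0i)$ with $F'\not\simeq F,F_0\times F_0$, Theorem \ref{thm germ ss u}(a) says $\Orb(\sigma(x),\phi_i)=\Orb(y_0,\phi_i)$ is locally constant, while Theorem \ref{thm germ ss s}(a) gives, at $s=0$,
\[
\Orb(\sigma(x),\phi')=\Orb(y_+,\phi')+\eta(\lambda^{-1}\Delta)\Orb(y_-,\phi').
\]
Using $\eta(-\Delta)=(-1)^i$ on $\fkb_{\red,\rs,i}$ (Proposition \ref{eta b01}) and $\lambda$ locally constant near $x_0$, one computes $\eta(\lambda^{-1}\Delta)=(-1)^i\eta(-\lambda)$, so matching on $\fkb_{\red,\rs,0}$ and on $\fkb_{\red,\rs,1}$ gives respectively the first and second identities in (a). Case $(1)$ is entirely analogous, using Theorem \ref{thm germ ss s}(c), the local constancy of $|u|$, and the identity $\eta(\Delta/\varpi)=(-1)^i$ (after absorbing $\eta(-\varpi)$ in the definition of transfer factor or equivalently noting that the two relevant non-semi-simple orbits on $\fks_\red$ lie above a single semi-simple $\fku$-orbit).

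Case $(0ii)$ is the richest and will be the main obstacle: there are four $H'$-orbits above $x_0$ in $\fks_\red$ and, for each $i$, two $H_i$-orbits above $x_0$ in $\fku_{\red,i}$. Theorem \ref{thm germ ss s}(b) gives a four-term germ expansion with coefficients $\eta(z_1)^a\eta(z_2)^b$ for $a,b\in\{0,1\}$, and Theorem \ref{thm germ ss u}(b) gives a two-term expansion with coefficients $\mathbf 1_{V_{x_0,\rs,\pm}}$ specified by the sign of $\eta(z_1)$. Here one uses the labelling convention of \S\ref{sec S 19 ss} so that $\eta(z_1)=\pm 1$ on $V_{x_0,\pm}$ and $\eta(z_1z_2)=\eta(\Delta)\cdot(\text{unit norm})$, which together with $\eta(-\Delta)=(-1)^i$ pins down $\eta(z_2)$ as well. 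Subdividing $V_{x_0}\cap\fkb_{\red,\rs}$ into the four pieces $V_{x_0,\varepsilon}\cap\fkb_{\red,\rs,i}$ with $\varepsilon\in\{+,-\}$ and $i\in\{0,1\}$, the transfer relation produces four linear equations relating $\Orb(y_\pm,\phi_i)$ to the $\Orb(y_{\pm\pm},\phi')$; inverting the resulting $4\times 4$ system (which is essentially a tensor product of two characters of $\BZ/2$ and has determinant a power of $2$) yields exactly the four identities displayed in (b), with the factor $\eta(-\alpha)$ coming from the transfer factor computed above. The hardest bookkeeping point is keeping track of which pair $(\eta(z_1),\eta(z_2))$ corresponds to each of the four stated identities; this is essentially character orthogonality on $(\BZ/2)^2$ and is the only nontrivial ingredient beyond the two germ expansion theorems.
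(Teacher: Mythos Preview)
Your proposal is correct and follows essentially the same approach as the paper: compare the germ expansions of Theorem~\ref{thm germ ss s} at $s=0$ with those of Theorem~\ref{thm germ ss u}, use Proposition~\ref{eta b01} to separate the cases $i=0,1$ via $\eta(-\Delta)$, and in case $(0ii)$ use the identities $\eta(\Delta)=\eta(z_1z_2)$ (from $\Delta=-4\alpha^2\varpi z_1z_2$ and $\eta(-\varpi)=1$), the labelling $\eta(z_1)=\pm1$ on $V_{x_0,\rs,\pm}$, and the transfer factor $\omega(\sigma(x))=\eta(-\alpha)$. The paper's own proof records exactly these three bullet points for case $(0ii)$ and otherwise just says ``compare the two theorems''; your write-up is more detailed but the method is identical.
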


\begin{proof} This follows by comparing Theorem \ref{thm germ ss s} (specialized to $s=0$) and Theorem \ref{thm germ ss u}. Note that $x\in \fkb_{\red,\rs,0}$ if and only if $\eta(\Delta/\varpi)=\eta(-\Delta)=1$ (cf.~Proposition \ref{eta b01}). In case $(0ii)$, we note  the following facts about the section $\sigma$ defined by (\ref{eqn sigma 1}):
\begin{itemize}
\item $\Delta=-4\alpha^2\varpi z_1z_2$ and hence $\eta(\Delta)=\eta(z_1z_2)$.
\item The choice of $y_+$ is such that $\eta(z_1)=1$ for $x\in V_{x_0,\rs,+}$.
\item The transfer factor (\ref{eqn omega n=3}) is given by $\omega(\sigma(x))=\eta(-\alpha)$.\qedhere
\end{itemize}
\end{proof}

\section{Proofs of Theorems \ref{thm germ converse} and  \ref{thm error term}}
\label{sec proof}

\subsection{Proof of Theorem \ref{thm germ converse}}
To show the \emph{``only if''} part, by Theorem \ref{thm germ x=0} and \ref{thm germ ss s}, it suffices to show that the function $\Orb_{\phi'}$ lies in $\CC_1(F_0)$. This is proved in \cite[Lem.~2.3]{Z12b}.

To show the \emph{``if''} part, by \cite[Prop.~3.8]{Z14}, it suffices to show that $\varphi$ is a local orbital integral function around every $x_0\in \fkb_\red$, comp.~Theorem \ref{thm stab const red}. This amounts to showing the following two lemmas.
\begin{lemma}
For each $x_0$, and each \emph{discrete} $n_0\in \pi^{-1}(x_0)$ with nonzero germ function value $\Gamma_{n_0}(x, 0)$, there exists a function $\phi'\in C_c^\infty(\fks_\red)$ such that 
\[
   \Orb(n,\phi') =
	\begin{cases}
		1,  &  n=n_0;\\
      0,  &  \text{$n$ is not in the same orbit as $n_0$.}
\end{cases}
\]
\end{lemma}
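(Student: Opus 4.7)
My plan is to construct $\phi'$ as an $H'$-invariantly-propagated bump function supported near the orbit $H' \cdot n_0$. I would first extract from the classification in \S\ref{orbitsinfksminusnull} and the nilpotent list preceding Theorem \ref{thm germ x=0} the key structural fact: every discrete relevant $n_0$ above $x_0$ has trivial $H'$-stabilizer, so $H' \cdot n_0 \subset \fks_\red$ is locally closed of dimension $\dim H' = 4$, and the orbital integral $\Orb(n,\phi') = \int_{H'} \phi'(h^{-1}nh)\,dh$ is an absolutely convergent integral over $H'$ for every such $n$ and every $\phi' \in C_c^\infty(\fks_\red)$.

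Next I would separate $n_0$ from all other relevant orbits $n$ above $x_0$. Among the finitely many discrete orbits in $\pi_\fks^{-1}(x_0)$, none lies in the closure of another: within a single fibre of $\pi_\fks$, the semi-simple orbits are closed, while each listed non-semi-simple orbit can degenerate only to the semi-simple orbit(s), not to other non-semi-simple orbits. For the one-parameter family $n(\mu)$ that appears when $x_0 = 0$, I would use the simple observation that since $e = (0,0,1)^{t}$ is fixed by $H'$, the map $y \mapsto y^2 e$ is $H'$-equivariant in the obvious sense; an explicit computation from \eqref{nilpins} and \eqref{nilpins pm} shows $n(\mu)^2 e = 0$ for every $\mu$ while $n_{0,\pm}^2 e \neq 0$. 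Consequently I can choose an open neighbourhood $V$ of $n_0$ in $\fks_\red$ whose $H'$-saturation $H' \cdot V$ meets no other discrete orbit above $x_0$ and also misses every $H' \cdot n(\mu)$.

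Finally, with $V$ fixed I would build $\phi'$ via a transverse slice construction. Choose a smooth $3$-dimensional slice $\Sigma \ni n_0$ contained in $V$ and transverse to $T_{n_0}(H' \cdot n_0)$; triviality of the stabilizer makes the map $H' \times \Sigma \to \fks_\red$, $(h,s) \mapsto h^{-1}sh$, a local diffeomorphism at $(e,n_0)$, so on an $H'$-saturated open neighbourhood $U$ of $n_0$ we get a product decomposition $U \simeq H' \times \Sigma'$ for a small open $\Sigma' \subset \Sigma$. Pick $\chi \in C_c^\infty(\Sigma')$ with $\chi(n_0) = 1$ and a cutoff $\rho \in C_c^\infty(H')$ with $\int_{H'} \rho(h)\,dh = 1$, and define $\phi'(h^{-1}sh) := \rho(h)\chi(s)$ on $U$, extended by $0$ outside $U$. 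A standard unfolding of the orbital integral then gives $\Orb(n_0,\phi') = \chi(n_0)\int_{H'}\rho = 1$, while for every other orbit $n$ the support of $\phi'$ is disjoint from $H' \cdot n$, so $\Orb(n,\phi') = 0$.

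The main obstacle I anticipate is the second step: while the $H'$-invariant $y \mapsto y^2 e$ cleanly distinguishes the regular nilpotents $n_{0,\pm}$ from the closure of $\bigcup_\mu H' \cdot n(\mu)$ in the case $x_0 = 0$, one must verify case by case, for each $x_0 \in \fkb_\red$ appearing in \S\ref{orbitsinfksminusnull}, that the finitely many orbits in $\pi_\fks^{-1}(x_0)$ admit an analogous $H'$-invariant separation. This reduces to routine but slightly delicate explicit matrix computations in each case, and in particular to checking that no discrete orbit in our list lies in the closure of any other orbit of the same fibre.
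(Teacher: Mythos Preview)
Your approach is sound and supplies details that the paper merely defers to \cite[Lem.~2.1, 2.3]{Z12b}. There is, however, a computational slip in your separation step at $x_0=0$: for $n_{0,-}={}^t n_{0,+}$ one has $n_{0,-}e=0$, hence $n_{0,-}^2e=0$, so the invariant $y\mapsto y^2e$ does \emph{not} separate $n_{0,-}$ from the family $n(\mu)$. The fix is immediate: use the dual invariant $y\mapsto {}^t e\,y^2$, which is likewise $H'$-invariant (since ${}^t e\,h^{-1}={}^t e$), and satisfies ${}^t e\,n_{0,-}^2=\pi^2(1,0,0)\neq 0$ while ${}^t e\,n(\mu)^2=0$ for all $\mu$. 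So the pair $(y^2e,\,{}^t e\,y^2)$ separates both regular nilpotents $n_{0,\pm}$ from the closure of $\bigcup_\mu H'\cdot n(\mu)$.

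With that correction, and with the analogous case-by-case separations for nonzero $x_0$ (each handled by an $H'$-invariant condition of the same flavour: rank of $\mathbf{b}$, rank of $\mathbf{c}$, or the eigenvalue of $A$ acting on the line $F\mathbf{b}$ in case $(0ii)$), your transverse-slice construction goes through. The key structural inputs you identify---triviality of the stabilizer for each discrete $n_0$ entering the germ expansion, and absolute convergence of the resulting integral over $H'$---are exactly the ones needed, and the construction is the standard one underlying the cited lemmas.
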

\begin{proof}
Though not stated explicitly in \cite{Z12b}, this can be proved in the same way as \cite[Lem.~2.1, 2.3]{Z12b}.
\end{proof}

\begin{lemma}
Every function in $\CC_1(F_0)$ arises as $\Orb_{\phi'}$ for some $\phi'$, and such $\phi'$ can be chosen so that $\Orb(n, \phi')=0$ for the two regular nilpotents $n=n_{0,\pm}$.
\end{lemma}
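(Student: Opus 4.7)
We treat the two clauses separately. For the vanishing condition on $\Orb(n_{0,\pm},\phi')$, the preceding lemma produces, for each of the two regular nilpotents $n_{0,\pm}$, an auxiliary test function whose orbital integral equals $1$ at $n_{0,\pm}$ and vanishes on every other nilpotent orbit, including all of the continuous family $\{n(\mu)\}_{\mu\in F_0}$. Hence once we have any $\phi'_0\in C_c^\infty(\fks_\red)$ realizing $\Orb_{\phi'_0}=f$, subtracting suitable scalar multiples of these two auxiliary functions yields a new $\phi'$ with the same continuous-family orbital integrals but with $\Orb(n_{0,\pm},\phi')=0$. It therefore suffices to prove that $\phi'\mapsto\Orb_{\phi'}$ surjects $C_c^\infty(\fks_\red)$ onto $\CC_1(F_0)$.

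\textbf{Surjectivity.} Decompose $f=f_c+f_\infty$, where $f_c\in C_c^\infty(F_0)$ and $f_\infty$ lies in the two-dimensional space spanned by the asymptotic generators
\[
\phi_1(x)=\frac{\eta(x)}{|x|}\mathbf{1}_{|x|>1},\qquad \phi_3(x)=\frac{\eta(x)\log|x|}{|x|}\mathbf{1}_{|x|>1}
\]
recalled in the proof of Lemma~\ref{lem nil orb}. For $f_c$, I would use the fact that the orbit map $(N\bs H')\times F_0\to\fks_\red$, $(h,\mu)\mapsto h^{-1}n(\mu)h$, is a submersion onto an open subset of $\fks_\red$ disjoint from any neighborhood of the discrete nilpotents once $\mu$ varies in a compact set: choose a smooth bump function $g$ on $N\bs H'$ with $\int g=1$ and define $\phi'$ by $\phi'(h^{-1}n(\mu)h):=g(h)f_c(\mu)$, extended by $0$ off a tubular neighborhood of the nilpotent family. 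Then $\phi'\in C_c^\infty(\fks_\red)$ and $\Orb_{\phi'}=f_c$ by construction.

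For $f_\infty$, the key input is Theorem~\ref{thm match nil Orb}, which says that matching of smooth transfers intertwines the nilpotent orbital-integral map $\phi\mapsto\Orb_\phi$ on $\fku_{0,\red}$ with a nonzero scalar multiple of the extended Fourier transform of $\Orb_{\phi'}$ on the $\fks$ side. Consulting the table of extended Fourier transforms in the proof of Lemma~\ref{lem nil orb}, the images $\wt\phi_0$ and $\wt\phi_2$ span, modulo $C_c^\infty(F_0)$, the same plane as $\phi_1$ and $\phi_3$. Thus it suffices to exhibit two test functions $\psi^{(0)},\psi^{(2)}\in C_c^\infty(\fku_{0,\red})$ whose nilpotent orbital integrals yield linearly independent combinations of $\phi_0$ and $\phi_2$ modulo $C_c^\infty(F_0)$; transferring these via the ST-conjecture result \cite{Z14} produces functions on $\fks_\red$ whose $\Orb_{\phi'}$ spans the asymptotic plane, and the matching constant is absorbed by scaling.

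\textbf{Main obstacle.} The computation in the proof of Lemma~\ref{lem nil orb} already gives one such function, namely $\mathbf{1}_{\fkk_{0,\red}}$, with $\Orb_{\mathbf{1}_{\fkk_{0,\red}}}=q\zeta(1)(\phi_0+\phi_2)$. The delicate point is producing a \emph{second}, linearly independent combination: I would perform an analogous Iwasawa-decomposition calculation with $\mathbf{1}_{\fkk_{0,\red}}$ replaced by the characteristic function of a smaller open compact (for example, a principal congruence subgroup of level $\pi$, or a sublocus cut out by a valuation condition on one matrix entry), and track the large-$|\mu|$ asymptotics carefully to show that $\phi_0$ and $\phi_2$ appear with a different ratio. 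Once this linear independence is verified, an invertible $2\times2$ change of basis converts the pair $(\psi^{(0)},\psi^{(2)})$ into test functions whose orbital integrals are exactly $\phi_0$ and $\phi_2$ (up to compactly supported error, which is reabsorbed into $f_c$), completing the surjectivity argument.
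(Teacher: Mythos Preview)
Your reduction of the vanishing condition to surjectivity via the preceding lemma is fine. The surjectivity argument, however, has a genuine gap.

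For $f_c$: the submersion claim is false on dimension grounds. One has $\dim_{F_0}\bigl((N\backslash H')\times F_0\bigr)=3+1=4$ while $\dim_{F_0}\fks_\red=7$, so the orbit map is an immersion whose image (the union of the $n(\mu)$-orbits) is a $4$-dimensional locally closed subset of $\fks_\red$, not an open one. Your $\phi'$, extended by zero off this measure-zero set, is identically zero as an element of $C_c^\infty(\fks_\red)$. Any repair requires a transverse thickening, after which the orbital integral --- which carries the twist $\eta(\det h)$ --- must be recomputed; the bump-function identity $\Orb_{\phi'}=f_c$ does not survive this without further work.

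For $f_\infty$: the transfer route is creative but only relocates the problem. Via Theorem~\ref{thm match nil Orb} you reduce to showing that $\phi_0\mapsto\Orb_{\phi_0}$ on $\fku_{0,\red}$ hits the full asymptotic plane in $\CC_2(F_0)$, which is a statement of exactly the same type as the one you are trying to prove and which you leave as the unresolved ``main obstacle''. It also invokes the smooth transfer theorem of \cite{Z14}, which is far deeper than the lemma at hand.

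The argument in \cite{Z12b} (to which the paper defers) is instead a direct construction from the explicit Iwasawa-decomposition formula for $\Orb(n(\mu),\phi')$: one takes $\phi'$ built from characteristic functions on the individual matrix entries, localizing $|a|$ and $|b|$ while prescribing the dependence on the $(1,2)$-entry, and reads off $\Orb_{\phi'}(\mu)$ by an elementary Tate-type computation. Both the compactly supported piece and the two asymptotic generators of $\CC_1(F_0)$ are produced this way, without appeal to transfer.
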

\begin{proof}
This is proved in the same way as \cite[Lem.~2.3]{Z12b}.
\end{proof}

\subsection{Proof of Theorem \ref{thm error term}}

We need to verify the hypotheses of  Theorem \ref{thm germ converse}.

\smallskip

\noindent\emph{The case $x_0=0$.}
First assume that $i=0$. Then for the one-dimensional family $n(\mu)$,  by Corollary \ref{cor germ=0} the germ function $\Gamma_{n(\mu)}(x,0)$ vanishes identically. By Theorem \ref{thm germ x=0}, we have for $x\in\fkb_{\red,\rs}$ around $x_0=0$,
$$
\varphi(x)=\del(n_{0,+},\phi,0)\, \Gamma_{n_{0,+}}(x,0)+\del(n_{0,-},\phi,0)\, \Gamma_{n_{0,-}}(x,0).
$$
Clearly $\del(n_{0\pm},\phi,0)$ are constants. Therefore the function  satisfies the hypotheses of Theorem \ref{thm germ converse} concerning the summands for these two elements. This proves the case $i=0$.

Now assume that $i=1$. To show that the function in Theorem \ref{thm error term} satisfies the hypotheses of  Theorem \ref{thm germ converse} around $x_0=0$,  it suffices to show that the function 
\[
   \del_{\phi'}(\mu) := \frac d{ds} \Big|_{s=0} \Orb\bigl(n(\mu),\phi',s\bigr),\quad \mu\in F_0,
\]
lies in $\CC_1(F_0)$.

Since we are assuming that $\phi'$ transfers to the zero function on $\fku_{0,\red}$, by Theorem \ref{thm match nil Orb} we have $\Orb_{\phi'}=0$ identically as a function on $F_0$.  The claim follows from the next lemma. 
\begin{lemma}
If $\Orb_{\phi'}=0$, then $\del_{\phi'}\in \CC_1(F_0)$.
\end{lemma}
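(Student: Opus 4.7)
The plan is to establish an asymptotic expansion of $\Orb(n(\mu), \phi', s)$ for large $|\mu|$ and deduce the structure of $\del_{\phi'}$. The key observation is that the orbital integral carries an overall $\eta(\mu)$-factor: parametrizing $N\backslash H'$ via Iwasawa-type representatives $\diag(a_1,a_2) \in A$ and making the substitution $t = \mu a_2/a_1$, $b = a_1$, the twist $\eta(\det h)|\det h|^s = \eta(a_1a_2)|a_1a_2|^s$ contributes a prefactor $\eta(\mu)|\mu|^{\beta(s)}$ (with $\beta(s)$ a linear polynomial satisfying $\beta(0)=-1$), while the residual $\mu$-dependence of the integrand concentrates in the $(3,2)$-entry of the conjugate matrix, which tends to $0$ as $|\mu|\to\infty$.

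By compactness of $\supp(\phi')$ and local constancy of $\phi'$, for $|\mu|$ exceeding a threshold depending on $\phi'$ the integrand stabilizes on most of its domain; combining with boundary contributions yields an asymptotic expansion
\[
   \Orb(n(\mu), \phi', s) = \eta(\mu)\bigl[A_0(s) + A_1(s) \log|\mu|\bigr]|\mu|^{-1} + (\text{faster decay}),
\]
for $|\mu|$ sufficiently large, where $A_0(s), A_1(s)$ are holomorphic in $s$ near $s=0$. This is the $s$-dependent refinement of the analysis of \cite[Lem.~2.3]{Z12b} that establishes $\Orb_{\phi'} \in \CC(F_0)$.

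Given the hypothesis $\Orb_{\phi'} \equiv 0$, comparing leading asymptotics forces $A_0(0) = A_1(0) = 0$ (by linear independence of $\eta(\mu)|\mu|^{-1}$ and $\eta(\mu)\log|\mu||\mu|^{-1}$). Differentiating at $s=0$ and using these vanishings to eliminate any $\log^2|\mu|$-term that would otherwise arise from the $s$-dependence in the exponent, one obtains
\[
   \del_{\phi'}(\mu) = \eta(\mu)\bigl[A_0'(0) + A_1'(0)\log|\mu|\bigr]|\mu|^{-1} + (\text{faster decay}),
\]
which is precisely of the form defining membership in $\CC_1(F_0)$; together with the automatic local constancy of $\del_{\phi'}$ on $F_0$, this concludes $\del_{\phi'} \in \CC_1(F_0)$. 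The main technical obstacle is the explicit change-of-variables computation and verifying that no non-$\eta$-odd tail (such as $|\mu|^{-1}$ or $|\mu|^{-1}\log|\mu|$ without the $\eta(\mu)$ prefactor) arises in the expansion, as this would spoil membership in $\CC_1(F_0)$; the overall $\eta(\mu)$ prefactor coming from the $\eta(\det h)$ twist in the orbital integral is what guarantees no such contribution.
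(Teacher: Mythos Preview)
Your overall strategy---establish an explicit large-$|\mu|$ expansion of $\Orb(n(\mu),\phi',s)$, use the hypothesis $\Orb_{\phi'}\equiv 0$ to pin down the coefficients, then differentiate---is the same as the paper's. But the form of the expansion you write down is wrong, and the error is not cosmetic.

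You claim that for large $|\mu|$ one has
\[
   \Orb(n(\mu),\phi',s)=\eta(\mu)\bigl[A_0(s)+A_1(s)\log|\mu|\bigr]\,|\mu|^{-1}+(\text{faster decay})
\]
with $A_0,A_1$ \emph{holomorphic} at $s=0$. This cannot hold: for a fixed $s\neq 0$ the left side is not a linear combination of $|\mu|^{-1}$ and $|\mu|^{-1}\log|\mu|$. What actually happens is that, after the substitution, the constraint on the $(1,2)$-entry $\mu ab$ forces one of the two remaining entries ($b$ in position $(1,3)$ or $a$ in position $(3,2)$) to be small, and the integral splits \emph{exactly} into two pieces
\[
   \Orb(n(\mu),\phi',s)=\eta(\mu)|\mu|^{-1}\bigl(|\mu|^{-s}A(s)+|\mu|^{s}B(s)\bigr),
\]
where $A(s)$ and $B(s)$ are Tate-type integrals, each with a \emph{simple pole} at $s=0$ and with opposite residues $A_{-1}+B_{-1}=0$. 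There is no ``faster decay'' remainder; the identity is exact for $|\mu|$ large. Your own parenthetical about eliminating a $\log^2|\mu|$-term ``from the $s$-dependence in the exponent'' is inconsistent with the expansion you wrote (which has no $s$ in the exponent), and is exactly a trace of this correct two-term structure.

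Once you have the correct form, the argument runs as in the paper: the constant term in $s$ is $(A_0+B_0)+(B_{-1}-A_{-1})\log|\mu|$, so $\Orb_{\phi'}\equiv 0$ forces $A_0+B_0=0$ and (with $A_{-1}+B_{-1}=0$) also $A_{-1}=B_{-1}=0$; the $s^1$-coefficient is then $(A_1+B_1)+(B_0-A_0)\log|\mu|$, with the $\log^2|\mu|$-coefficient $\tfrac12(A_{-1}+B_{-1})$ vanishing. Hence $\del_{\phi'}(\mu)=\eta(\mu)|\mu|^{-1}\bigl[(A_1+B_1)+(B_0-A_0)\log|\mu|\bigr]$ for $|\mu|\gg 0$, which lies in $\CC_1(F_0)$. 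So the fix is to replace your single holomorphic-coefficient expansion by the two-term polar expansion and carry out the Laurent bookkeeping.
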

\begin{proof}
We have
$$
 \Orb\bigl(n(\mu),\phi',s\bigr)=  \bigints \phi'_K\left(\pi\begin{bmatrix} 0 &
\mu ab& b\\
 0 & 0 &0\\ 0& a&0 \end{bmatrix}\right)\eta(ab)|a^{-1}b|^s \,da\,db.
$$
Since $|a|$ and $|b|$ are bounded in the integration, the function $\del_{\phi'}(\mu)$  is locally constant in $\mu\in F_0$. It remains to check the asymptotic behavior when $|\mu|$ goes to $\infty$. 
So assume that $|\mu|\gg 0.$
The same idea as in the proof of Theorem \ref{thm germ ss s} shows that this integral is a sum of two terms, 
$$
 \bigints \phi'_K\left(\pi\begin{bmatrix} 0 &
\mu ab& 0\\
 0 & 0 &0\\ 0& a&0 \end{bmatrix}\right)\eta(ab)|a^{-1}b|^s \,da\,db
$$
and
$$
 \bigints \phi'_K\left(\pi\begin{bmatrix} 0 &
\mu ab& b\\
 0 & 0 &0\\ 0& 0&0 \end{bmatrix}\right)\eta(ab)|a^{-1}b|^s \,da\,db.
$$
These may be rewritten as 
$$
\eta(\mu)|\mu|^{-1-s} \bigints \phi'_K\left(\pi\begin{bmatrix} 0 &b& 0\\
 0 & 0 &0\\ 0& a&0 \end{bmatrix}\right)\eta(b)|a^{-2}b|^s \frac{\,da\,db}{|a|}
$$
and
$$
\eta(\mu)|\mu|^{-1+s} \bigints \phi'_K\left(\pi\begin{bmatrix} 0 &
a& b\\
 0 & 0 &0\\ 0& 0&0 \end{bmatrix}\right)\eta(a)|a^{-1}b^2|^s \frac{\,da\,db}{|b|},
$$
respectively. For simplicity we write the sum as
$$
\eta(\mu)|\mu|^{-1} \bigl( |\mu|^{-s} A(s)+ |\mu|^{s} B(s)\bigr),
$$
where $A(s)$ and $B(s)$ both have a simple pole at $s=0$ with opposite residues. Write the Laurent expansions as 
\[
   A(s)=\frac{A_{-1}}{s}+A_0+A_1s+\dotsb
	\quad\text{and}\quad
	B(s)=\frac{B_{-1}}{s}+B_0+B_1s+\dotsb,
\]
where $A_{-1}+B_{-1}=0$. Then the constant term of the Laurent expansion of 
$
|\mu|^{-s} A(s)+ |\mu|^{s} B(s)$ is
$$(A_0+B_0)+\log|\mu|(-A_{-1}+B_{-1}).
$$
Since $\Orb_{\phi'}=0$ by assumption, we have $A_0+B_0=0$ and $A_{-1}=B_{-1}=0$. This implies that the degree one term in the Laurent expansion of 
$
|\mu|^{-s} A(s)+ |\mu|^{s} B(s)$ is
$$
(A_1+B_1)s+\log|\mu|(-A_0+B_0)s.
$$
We conclude that when $|\mu|\gg 0$,
$$
\del_{\phi'}(\mu)=\eta(\mu)|\mu|^{-1} \bigl((A_1+B_1)+\log|\mu|(-A_0+B_0)\bigr),
$$and hence $\del_{\phi'}$
belongs to $\CC_1(F_0)$, as desired.
\end{proof}

\smallskip

\noindent\emph{The case $x_0\neq 0$.}
This follows easily from the explicit germ expansion Theorem \ref{thm germ ss s}, \ref{thm germ ss u}, and \ref{thm match x non0}, with a similar argument as in the case $x_0=0$. We omit the details.

\smallskip
\noindent With this Theorem \ref{thm error term} is proved.

\end{document}